\definecolor{red}{rgb}{0.7,0.15,0.15}
\definecolor{green}{rgb}{0,0.5,0}
\definecolor{blue}{rgb}{0,0,0.7}
\numberwithin{equation}{section}
\newtheorem{theorem}{Theorem}[section]
\newtheorem{assumption}[theorem]{Assumption}
\newtheorem{standing_assumption}[theorem]{Standing assumption}
\newtheorem{corollary}[theorem]{Corollary}
\newtheorem{example}[theorem]{Example}
\newtheorem{lemma}[theorem]{Lemma}
\newtheorem{proposition}[theorem]{Proposition}
\newtheorem{definition}[theorem]{Definition}
\newtheorem{remark}[theorem]{Remark}
\newcommand\cA{\mathcal A}
\newcommand\cB{\mathcal B}
\newcommand\cE{\mathcal E}
\newcommand\cF{\mathcal F}
\newcommand\cG{\mathcal G}
\newcommand\cH{\mathcal H}
\newcommand\cK{\mathcal K}
\newcommand\cL{\mathcal L}
\newcommand\cM{\mathcal M}
\newcommand\cN{\mathcal N}
\newcommand\cO{\mathcal O}
\newcommand\cP{\mathcal P}
\newcommand\cS{\mathcal S}
\newcommand\cT{\mathcal T}
\newcommand\cU{\mathcal U}
\newcommand\cW{\mathcal W}
\newcommand\cY{\mathcal Y}
\newcommand\cZ{\mathcal Z}
\newcommand\sL{\mathscr L}
\newcommand\sN{\mathscr N}
\newcommand\sT{\mathscr T}
\newcommand\sU{\mathscr U}
\newcommand\sY{\mathscr Y}
\newcommand\sZ{\mathscr Z}
\newcommand\sfB{\mathsf B}
\newcommand\sfC{\mathsf C}
\newcommand\frM{\mathfrak M}
\newcommand\fH{\mathfrak H}
\newcommand\fP{\mathfrak{P}}
\newcommand\ff{\mathfrak f}
\newcommand\fg{\mathfrak g}
\newcommand{\smallertext}[1]{\text{\fontsize{5}{5}\selectfont$#1$}}
\newcommand{\smalltext}[1]{\text{\fontsize{4}{4}\selectfont$#1$}}
\newcommand{\tinytext}[1]{\text{\fontsize{3}{3}\selectfont$#1$}}
\newcommand{\vertiii}[1]{{\left\vert\kern-0.25ex\left\vert\kern-0.25ex\left\vert #1 \right\vert\kern-0.25ex\right\vert\kern-0.25ex\right\vert}}
\def \D{\mathbb{D}}
\def \E{\mathbb{E}}
\def \F{\mathbb{F}}
\def \G{\mathbb{G}}
\def \H{\mathbb{H}}
\def \L{\mathbb{L}}
\def \N{\mathbb{N}}
\def \P{\mathbb{P}}
\def \Q{\mathbb{Q}}
\def \R{\mathbb{R}}
\def \S{\mathbb{S}}
\def\Ec{\mathcal{E}}
\def\Pc{\mathcal{P}}
\newcommand{\bcdot}{\boldsymbol{\cdot}}
\newcommand{\1}{\mathbf{1}}
\def\d{\mathrm{d}}
\DeclareMathOperator*{\esssup}{ess\,sup}
\DeclareMathOperator*{\essinf}{ess\,inf}
\newcommand\sgn{\text{sgn}}
\begin{document}
	
	\title{Mind the jumps: when 2BSDEs meet semi-martingales}

	\author{Dylan {\sc Possama\"{i}}\footnote{ETH Z\"{u}rich, Department of Mathematics, Switzerland, dylan.possamai@math.ethz.ch. This author gratefully acknowledges partial support by the SNF project MINT 205121-219818.}\and Marco {\sc Rodrigues}\footnote{ETH Z\"{u}rich, Department of Mathematics, Switzerland, marco.rodrigues@math.ethz.ch.} \and Alexandros {\sc Saplaouras}\footnote{ETH Z\"{u}rich, Department of Mathematics, Switzerland, alexandros.saplaouras@math.ethz.ch.  This author gratefully acknowledges the financial support from the Hellenic foundation for research and innovation grant 235 `Stability and numerics for BSDEs under model uncertainty and applications' (2nd call for H.F.R.I. research projects to support post-doctoral researchers).}}
	
	\date{July 2, 2025}
	
	\maketitle
		
	\begin{abstract}
		We construct an aggregated version of the value processes associated with stochastic control problems, where the criterion to optimise is given by solutions to semi-martingale backward stochastic differential equations (BSDEs). The results can be applied to control problems where the triplet of semi-martingale characteristics is controlled in a possibly non-dominated case or where uncertainty about the characteristics is present in the optimisation. The construction also provides a time-consistent system of fully nonlinear conditional expectations on the Skorokhod space. We find the semi-martingale decomposition of the value function and characterise it as the solution to a semi-martingale second-order BSDE. The generality we seek allows for the treatment of controlled diffusions, pure-jump processes, and discrete-time processes in a unified setting.
		
		\medskip
		\noindent{\bf Key words:}  \vspace{5mm} BSDEs, 2BSDEs, semi-martingales, characteristics, model uncertainty
	\end{abstract}

	\tableofcontents

\section{Introduction}
	
Value functions of many stochastic control problems, when viewed on the canonical space of c\`adl\`ag paths with canonical process $X$ and canonical filtration $\F = (\cF_{t})_{t \in [0,\infty)}$, are of the form
\begin{equation*}
V^\P_t = \underset{\bar{\P} \in \fP_\smalltext{0}(\cF_{\smalltext{t}\tinytext{+}},\P)}{{\esssup}^\P} \cY^{\bar{\P}}_t(T,\xi),\;\textnormal{$\P$--a.s.}, \; \textnormal{$\P \in \fP_0$.}
\end{equation*}
	Here $\fP_0$ should in general be seen as a collection of possible laws for the canonical process, from which one can define $\fP_0(\cF_{t\smallertext{+}},\P) = \{\overline{\P} \in \fP_0 : \textnormal{$\overline{\P} = \P$ on $\cF_{t\smallertext{+}}$}\}$, and $\cY^{\bar{\P}}(T,\xi)$ denotes the first component of the solution $(\cY,\cZ,\cU,\cN)$ to a $\P$--backward stochastic differential equation (BSDE) of the form
\begin{align}\label{eq_BSDEs_intro}
	\cY_t &= \xi + \int_t^{T} f^{\bar{\P}}_r\big(\cY_r,\cY_{r\smallertext{-}},\cZ_r,\cU_r(\cdot)\big)\d C_r - \int_t^{T} \cZ_r \d X^{c,\bar{\P}}_r  - \int_t^{T}\int_{\R^\smalltext{d}} \cU_r(x)\tilde\mu^{X,\bar{\P}}(\d r, \d x) - \int_t^{T}\d \cN_r, \; t \in [0,\infty],
\end{align}
where $X^{c,\bar{\P}}$ denotes the $\overline{\P}$--continuous local martingale part of $X$, $\tilde{\mu}^{X,\bar{\P}}$ denotes the $\overline{\P}$--compensated jump measure of $X$, and $\cN$ is a $\overline{\P}$-martingale orthogonal to $X^{c,\bar{\P}}$ and $\tilde\mu^{X,\bar{\P}}$. We refer to \citeauthor*{nutz2012quasi} \cite{nutz2012quasi}, \citeauthor*{cvitanic2018dynamic} \cite{cvitanic2018dynamic}, and \citeauthor*{soner2013dual} \cite{soner2013dual}, for specific examples. In this work, we construct an aggregator $\widehat{\cY}^\smallertext{+}$ for the family $(V^\P)_{\P \in \fP_\smalltext{0}}$, find its semi-martingale decomposition relative to each $\P \in \fP_0$, and then characterise $\widehat{\cY}^\smallertext{+}$ together with its decomposition as the solution to a second-order BSDE (2BSDE) system.
	
\medskip
When the generator $f^{\bar{\P}}$ of the BSDE \eqref{eq_BSDEs_intro} is identically zero, we have the representation $\cY^{\bar{\P}}_t = \E^{\bar{\P}}[\xi|\cF_{t\smallertext{+}}]$. In this case, the problem of aggregation is resolved by the construction of conditional sublinear expectations, as established in \citeauthor*{nutz2013constructing} \cite{nutz2013constructing} and \citeauthor*{nutz2012superhedging} \cite{nutz2012superhedging}. The scenario with a non-zero generator and the control of a continuous semi-martingale with absolutely continuous characteristics was addressed in \citeauthor*{possamai2018stochastic} \cite{possamai2018stochastic}. In the present work, we extend the level of generality to include applications involving controlled diffusions with jumps, pure-jump processes, and discrete-time processes, all within a unified framework. Consequently, our problem can also be viewed as constructing fully nonlinear conditional expectations on the Skorokhod space or as solving semi-martingale BSDEs with jumps under sublinear expectation.

\medskip
Since BSDEs lie at the core of these problems, we provide some background. Backward stochastic differential equations and their second-order extensions have become central tools in numerous applications, such as probabilistic representations for solutions of nonlinear partial integro-differential equations, associated probabilistic numerical methods, stochastic control, stochastic differential games, theoretical economics, and mathematical finance. Over the last three and a half decades, the theory’s development has been driven by these applications, which have, in turn, spurred further theoretical advancements. The primary motivation for this work is to extend the well-posedness of second-order BSDEs to a level of generality that enables a unified treatment of controlled diffusions, pure-jump processes, and discrete-time processes, while also overcoming the technical restrictions imposed by earlier approaches.	

\medskip
To provide some intuition, let us informally introduce the general form of our BSDEs in the context of control problems in weak formulation. We regard $X$ as the state process, and suppose that its $\P$--semi-martingale characteristics are of the form $(\mathsf{b}_t\d C_t, \mathsf{c}_t\d C_t, \mathsf{K}_t(\d x)\d C_t)$ for a fixed and predictable process $C$ whose paths are right-continuous and non-decreasing. The controller then faces the problem to
\begin{equation}\label{eq::control_problem}
	\textnormal{optimise} \; \mathbb{E}^{\mathbb{P}^\alpha}\bigg[\frac{1}{D_T}\xi_T + \int_0^T \frac{1}{D_r}g_r(X_{\cdot \land r},\alpha_r)\mathrm{d} C_r\bigg], \; \textnormal{over $\alpha \in \mathcal{A}$.}
\end{equation}
Here, $D \coloneqq \cE(\int_0^\cdot d_r \d C_r)$ is a stochastic exponential discount factor, $\xi_T$ is the terminal reward, $g$ is the running reward, $\cA$ is the space of control variables, and $\P^\alpha$ is the probability law for $X$ induced by the control $\alpha$; those laws are usually constructed via stochastic exponentials, which implies that $\P^\alpha$ is absolutely continuous with respect to $\P$ and that $\P^\alpha$ affects solely the drift and likelihood of the jumps, see \cite[Theorem III.3.24]{jacod2003limit}. Following classical martingale optimality approaches, or derivations of representation formulas for linear BSDEs, one can show that the expectation in \eqref{eq::control_problem} is of the form $\E^\P[Y^\alpha_0]$, where $Y^\alpha$ is the first component of the solution $(Y^\alpha,Z^\alpha,U^\alpha,N^\alpha)$ to a BSDE
\[
	Y^\alpha_t = \xi + \int_t^T f^\alpha_s\big(Y^\alpha_s,Y^\alpha_{s\smallertext{-}},Z^\alpha_s,U^\alpha_s(\cdot)\big) \mathrm{d} C_s - \int_t^T Z^\alpha_s\mathrm{d}X^{c,\mathbb P}_s - \int_t^T\int_{\mathbb{R}^\smalltext{d}} U^\alpha_s(x) \tilde{\mu}^{X,\P}(\mathrm{d}s,\mathrm{d}x)- \int_t^T \mathrm{d} N^\alpha_s,\; t\in[0,T],
\]
whose generator $f^\alpha$ is linear in its components and of the form
\begin{equation*}
	f^\alpha_t\big(\omega,y,\mathrm{y},z,u(\cdot)\big) = g_t(X_{\cdot\land t}(\omega),\alpha_t(\omega)) - \frac{d_t(\omega)}{1+d_t(\omega)\Delta C_t(\omega)}y +  z^\top \mathsf{c}_t(\omega) \beta^\alpha_t(\omega) + \int_{\R^\smalltext{d}} u(x) ( \gamma^\alpha_t(\omega,x) - 1) \mathsf{K}_{\omega,t}(\d x).
\end{equation*}
Here, at an intuitive level, each control $\alpha$ induces functions $\beta^\alpha : \Omega \times [0,T] \longrightarrow \R^d$ and $\gamma^\alpha : \Omega \times [0,T]\times\R^d\longrightarrow \R$. Then, if one can (reasonably) define the generator $f$ through
\[
	f\big(y,\mathrm{y},z,u(\cdot)\big) = \inf_{\alpha \in \cA} f^\alpha\big(y,\mathrm{y},z,u(\cdot)\big),
\]
one can construct the solution $(Y,Z,U,N)$ to the BSDE with generator $f$, and then construct a process $\alpha^\star$ satisfying
\[
	f_s\big(Y_s,Y_{s\smallertext{-}},Z_s,U_s(\cdot)\big) = f^{\alpha^\smalltext{\star}}_s\big(Y_s,Y_{s\smallertext{-}},Z_s,U_s(\cdot)\big), \; \textnormal{$\P\otimes\d C_s$--a.e.},
\]
then, it follows from comparison principles for BSDEs, that this process $\alpha^\star$ is an optimal control to the problem \eqref{eq::control_problem}. We immediately see here that the integrator $C$ should, on the one hand, reflect the time scale of the problem, and, on the other hand, allow for applications of BSDE techniques, which means that the $\P$--semi-martingale characteristics of the (uncontrolled) process $X$ should be absolutely continuous with respect to $C$. Moreover, we also see that when controlling the volatility, that is, the second characteristic, we must simultaneously deal with mutually singular probability measures, rather than absolutely continuous ones; we are thus outside the scope of the above BSDE approach.

\medskip
Linear BSDEs\footnote{The term `BSDEs' was coined much later, in 1990, by \citeauthor*{pardoux1990adapted} \cite{pardoux1990adapted}.} date back to the early 1970s. They were introduced by \citeauthor*{davis1973dynamic} \cite{davis1973dynamic} to study stochastic control problems with drift control. Around the same time, they also appeared in the works of \citeauthor*{kushner1972necessary} \cite{kushner1972necessary} and \citeauthor*{bismut1973analyse} \cite{bismut1973analyse, bismut1973conjugate}, where they served as adjoint equations in the Pontryagin stochastic maximum principle. It is important to note that as early as \cite{davis1973dynamic}, the value process of certain stochastic control problems was identified as the $y$-component of the solution to the associated BSDE. This insight, combined with comparison principles, enabled characterisations of optimal strategies for controlling stochastic systems. Linear BSDEs continued to play a central role in the context of the stochastic maximum principle, as seen in the works of \citeauthor*{haussmann1976general} \cite{haussmann1976general}, \citeauthor*{kabanov1978on} \cite{kabanov1978on}, \citeauthor*{arkin1979necessary} \cite{arkin1979necessary}, and \citeauthor*{bensoussan1983maximum} \cite{bensoussan1983lectures, bensoussan1983maximum}. Nonlinear BSDEs, however, began to emerge later in the works of \citeauthor*{bismut1978controle} \cite{bismut1978controle, bismut1978introductory}, \citeauthor*{chitashvili1983martingale} \cite{chitashvili1983martingale}, and \citeauthor*{chitashvili1987optimal} \cite{chitashvili1987optimal, chitashvili1987optimal2}. The first systematic treatment of BSDEs with Lipschitz-continuous generators was in the seminal works of \citeauthor*{pardoux1992backward} \cite{pardoux1990adapted, pardoux1992backward}, in which they showed the well-posedness of BSDEs with $\mathbb{L}^2$-data. We refer to the illuminating survey by \citeauthor*{el1997backward} \cite{el1997backward} for various applications in the early stages of the theory.

\medskip
In subsequent years, most of the research focused on BSDEs for which $X$ is a $\P$--Brownian motion and $\d C_s = \d s$. Various extensions of well-posedness were explored, for example, random horizon, monotone generators, quadratic or super-linear growth in the $z$-variable, and $\L^p$-data, to name but a few. In many of these settings, BSDEs have also been shown to provide probabilistic representations of PDEs. Among further generalisations, reflected BSDEs represent a class of equations in which the $Y$-component is constrained to stay above a fixed obstacle process. They were introduced by \citeauthor*{el1997reflected} \cite{el1997reflected,el1997reflected2}; it was also shown that these equations provide probabilistic representations of certain obstacle problems for PDEs. In parallel, the literature on BSDEs including jumps also progressed, albeit less actively, along similar themes. There, the BSDEs are most often driven by a Brownian motion and a compensated Poisson random measure; these equations also provide probabilistic representations of certain partial integro-differential equations. BSDEs driven by general c\`adl\`ag martingales and possibly a general compensated random measure were also studied more recently by \citeauthor*{papapantoleon2016existence} \cite{papapantoleon2016existence,papapantoleon2021stability} and \citeauthor*{possamai2024reflections} \cite{possamai2024reflections}; the latter work studies BSDEs and reflected BSDEs, and both generalises and resolves issues that appeared in \cite{papapantoleon2016existence}. For a more detailed history of BSDEs and reflected BSDEs, we refer to these latter sources.

\medskip
While \citeauthor*{pardoux1999forward} \cite{pardoux1999forward} provided a probabilistic representation for solutions of second-order quasi-linear PDEs using coupled systems of forward--backward differential equations, one significant limitation remained: the literature on BSDEs could not address second-order fully nonlinear PDEs, that is, equations involving nonlinear dependence on the Hessian matrix. As discussed in the BSDE approach to weak control previously, this limitation is to be expected: volatility control naturally leads to nonlinear second-order terms in the PDE approach to stochastic control. In what follows, we focus on the simpler class of semi-linear equations to illustrate the point. We rewrite the BSDE presented at the beginning of the section in its simplest Markovian form for continuous processes, namely,
\begin{align}\label{eq_markov_bsde}
Y_t = g(X_T) + \int_t^T f(s,X_s,Y_s,Z_s)\mathrm{d}s - \int_t^T Z_s\mathrm{d} X_s,
\end{align}
where $X$ is now a Brownian motion and the generator $f$ is deterministic and globally Lipschitz-continuous. \citeauthor*{pardoux1992backward} \cite{pardoux1992backward} and \citeauthor*{pardoux1997probabilistic} \cite{pardoux1997probabilistic} provided a probabilistic representation of the solution of the semi-linear PDE
\begin{align*}
\partial_t u(t,x) +   \frac{1}{2}\nabla^2_x u(t,x) + f\big(t,x,u(t,x), \nabla_x u (t,x)\big) = 0,\; \text{on}\; [0,T)\times \mathbb{R}^d,\; \text{with}\; u(T,x)=g(x),\;  \text{for}\; x\in\mathbb{R}^d,
\end{align*}
in terms of the $Y$-component of the solution to \eqref{eq_markov_bsde}. Supposing now that the function $f$ also depends on the Hessian matrix $\nabla^2_x u(t,x)$, consider the non-linear partial differential equation 
\begin{align}\label{eq_PDE_nonlinear}
\partial_t u(t,x) + f\big(t,x,u(t,x), \nabla_x u (t,x), \nabla^2_x u(t,x)\big) = 0,\; \text{on}\; [0,T)\times \R^d,\; \text{with}\; u(T,x)=g(x),\; \text{for}\; x\in\R^d.
\end{align}

The first\footnote{This is not \emph{stricto sensu} the case: there were earlier attempts to obtain representations for certain fully non-linear HJB equations using value functions of control problems whose criterion was given by the solution of a BSDE, see \citeauthor*{peng1992generalized} \cite{peng1992generalized}, \citeauthor*{duffie1992stochastic} \cite{duffie1992stochastic}, \citeauthor*{buckdahn2010probabilistic} \cite{buckdahn2010probabilistic}.} attempt to obtain a probabilistic representation for the above was carried out by \citeauthor*{cheridito2007second} \cite{cheridito2007second}. They connected the PDE \eqref{eq_PDE_nonlinear} to the BSDE
\begin{align}\label{eq_bsde_gamma}
	Y_t &= g(X_T) + \int_t^T f\big(s,X_s,Y_{s},Z_s, \Gamma_s\big) \mathrm{d}s - \int_t^T Z_s \circ\mathrm{d} X_s, \;
	\d\langle Z, X \rangle_s = \Gamma_s \d s,
\end{align}
where `$\circ$' denotes the Fisk--Stratonovitch integral. The process $\Gamma$ essentially corresponds to the Hessian matrix. It is because of the dependence on the process $\Gamma$ that the term second-order BSDEs was coined in \cite{cheridito2007second}.

\medskip
The approach described in the previous lines has a notable drawback. Specifically, the existence and uniqueness of solutions to \eqref{eq_bsde_gamma} could, in general, only be ensured through the existence of a smooth solution and the comparison principle for the associated PDE.\footnote{These limitations have been partially addressed in recent works by \citeauthor*{bouchard2019second} \cite{bouchard2019second} and \citeauthor*{bouchard2022understanding} \cite{bouchard2022understanding}.} Nonetheless, this approach provides two key insights. First, it underscores the need to reconsider the class of BSDEs under study, allowing for generators $f$ that explicitly depend on processes linked to the Hessian matrix. Second, it introduces the auxiliary stochastic target problems discussed in \cite{cheridito2007second}, which emerge in the course of identifying the appropriate class in which solutions ought to exist.

\medskip
The first systematic study of 2BSDEs was then carried out in a follow-up work by \citeauthor*{soner2012wellposedness} \cite{soner2012wellposedness,soner2013dual}; there, the authors established the well-posedness of a second-order extension of BSDEs, which led to probabilistic representations of the fully nonlinear PDE \eqref{eq_PDE_nonlinear}. The strong regularity assumptions imposed on the generator and terminal condition were lifted by \citeauthor*{possamai2018stochastic} \cite{possamai2018stochastic} through the use of selection theorems in a Brownian framework. 

\medskip
Naturally, the development of the 2BSDE theory followed paths similar to those of classical BSDEs, particularly in relaxing the Lipschitz-continuity assumption on the generator. Monotonicity conditions in the $y$-variable were introduced by \citeauthor*{possamai2013second1} \cite{possamai2013second1}. These were later extended by \citeauthor*{popier2019second} \cite{popier2019second}, who dropped the linear growth assumption and allowed for singular terminal conditions. A further contribution by \citeauthor*{o2020representation} \cite{o2020representation} employed a novel approach using $\mathbb{L}^p$-estimates for  $p \in [1,2)$  (instead of the traditional $\mathbb{L}^2$-estimates), ensuring the well-posedness of 2BSDEs under an extended monotonicity condition. Another research direction involves assuming super-linear growth in the $z$-variable; see \citeauthor*{possamai2013second} \cite{possamai2013second},  \citeauthor*{lin2016new} \cite{lin2016new}, and \citeauthor*{sheng2022quadratic} \cite{sheng2022quadratic}, which focused on the quadratic case. \citeauthor*{matoussi2013second} \cite{matoussi2013second, matoussi2021corrigendum} deal with a lower c\`adl\`ag obstacle on the first component, while \citeauthor*{matoussi2014second} \cite{matoussi2014second} address the case of two completely separated obstacles. For additional developments, see the PhD thesis of \citeauthor*{noubiagain2017equations} \cite{noubiagain2017equations} and the work of \citeauthor*{o2024wellposedness} \cite{o2024wellposedness}, which introduces a new formulation of reflected 2BSDEs based on non--Skorokhod-type minimality conditions. For more general constraints on the solution to 2BSDEs, we refer to \citeauthor*{fabre2012some} \cite[Chapter 3]{fabre2012some}. Extensions also include cases where the time horizon is a stopping time, as in \citeauthor*{lin2020second} \cite{lin2020second}, a random time, as in \citeauthor*{gennaro20252bsde} \cite{gennaro20252bsde}, or well-posedness of 2BSDEs under weaker integrability conditions, as in \citeauthor*{ren2022nonlinear} \cite{ren2022nonlinear}. Applications of 2BSDEs span several fields: zero-sum differential games in \citeauthor*{possamai2020zero} \cite{possamai2020zero}, optimal planning problems in \citeauthor*{ren2023entropic} \cite{ren2023entropic}, stochastic partial differential equations in \citeauthor*{matoussi2014probabilistic} \cite{matoussi2014probabilistic}, and mean-field games in \citeauthor*{barrasso2022controlled} \cite{barrasso2022controlled}. They have also been instrumental in studying Stackelberg games in \citeauthor*{hernandez2024closed} \cite{hernandez2024closed} and time-inconsistent stochastic control in \citeauthor*{hernandez2023me} \cite{hernandez2023me}.

\medskip
Numerical schemes with rigorous convergence proofs for approximating 2BSDE solutions have been provided by \citeauthor*{possamai2015weak} \cite{possamai2015weak} and \citeauthor*{ren2015convergence} \cite{ren2015convergence}. A natural continuation of \cite{possamai2015weak}, focusing on 2RBSDEs driven by a continuous canonical process, can be found in \citeauthor*{noubiagain2017equations} \cite[Chapter 5]{noubiagain2017equations}.\footnote{It is worth noting that crucial properties were assumed rather than proven in this work; see \cite[Remark 5.3.3]{noubiagain2017equations}.} More informal numerical schemes have also been explored in \citeauthor*{beck2019machine} \cite{beck2019machine}, \citeauthor*{pak2025nonequidistant} \cite{pak2025nonequidistant} and \citeauthor*{xiao2024numerical} \cite{xiao2024numerical}.

\medskip
Few works have ventured beyond the Brownian framework. Notably, \citeauthor*{kazi2015second} \cite{kazi2015second, kazi2015second2} addressed the well-posedness of 2BSDEs in a Brownian--Poisson framework under regularity assumptions similar to \cite{soner2012wellposedness,soner2013dual}. During the final stages of preparing this manuscript, a preprint by \citeauthor*{denis2024second} \cite{denis2024second} appeared, where the strategy from \cite{possamai2018stochastic} was extended to 2BSDEs with jumps, as introduced in \cite{kazi2015second}. 

\medskip
We turn to the applications of 2BSDEs, which are primarily concentrated in two domains: model uncertainty in mathematical finance and contract theory. In mathematical finance, \citeauthor*{matoussi2015robust} \cite{matoussi2015robust} study utility maximisation under volatility uncertainty, while \cite{matoussi2013second} and \citeauthor*{matoussi2014second} \cite{matoussi2014second, matoussi2021corrigendum} address super-hedging problems for American and game options under volatility uncertainty in incomplete and nonlinear markets (see also \cite{possamai2018stochastic}). Additionally, 2BSDEs naturally arise in second-order stochastic target problems, as explored in \cite{bouchard2019second,soner2013dual}. An optimal liquidation problem under model uncertainty has also been studied in \cite{popier2019second} and \cite[Chapter 2]{fabre2012some}. In contract theory, 2BSDEs formed the foundation for the first comprehensive treatment of continuous-time principal-agent problems with moral hazard by \citeauthor*{cvitanic2017moral} \cite{cvitanic2017moral, cvitanic2018dynamic}; extensions to random horizons have been considered by \citeauthor*{lin2022random} \cite{lin2022random}.\footnote{\citeauthor*{chiusolo2024new} \cite{chiusolo2024new} have recently demonstrated that the 2BSDE framework can be bypassed in this context.} These ideas have since been applied to incentive structures in electricity markets by \citeauthor*{aid2022optimal} \cite{aid2022optimal}, \citeauthor*{aid2023principal} \cite{aid2023principal}, and \citeauthor*{elie2021mean} \cite{elie2021mean}, green bond markets by \citeauthor*{baldacci2022governmental} \cite{baldacci2022governmental}, and in asset pricing by \citeauthor*{cvitanic2018asset} \cite{cvitanic2018asset}. Ambiguity aversion on volatility was studied in \citeauthor*{hernandez2019moral} \cite{hernandez2019moral}, \citeauthor*{mastrolia2015moral} \cite{mastrolia2015moral}, and \citeauthor*{sung2022optimal} \cite{sung2022optimal}. 2BSDEs can also be applied to optimal contracting problems with (multi-)hierarchical structures, see \citeauthor*{hubert2023continuous} \cite{hubert2023continuous}.
	
\medskip
As mentioned earlier, one of the major applications of 2BSDEs is in problems involving models with uncertain volatility. The foundational $G$-expectation framework is of particular relevance in this area; see \citeauthor*{peng2019nonlinear} \cite{peng2019nonlinear}. Both the $G$-expectation and 2BSDE theory share a common foundational influence: the work of \citeauthor*{denis2006theoretical} \cite{denis2006theoretical}. However, their approaches were different. The theory of $G$-expectation builds on solutions to PDEs, which require bounds on volatility uncertainty, making the corresponding results more restrictive compared to 2BSDEs; see \cite[Section 3.3]{soner2012wellposedness} and \Cref{sec:GBSDE}. Nonetheless, the $G$-expectation theory has progressed immensely. For example, a complete $G$--stochastic calculus has emerged, with fundamental results analogous to those in (classical) stochastic calculus. Moreover, \citeauthor*{soner2013dual} \cite{soner2013dual} proved a martingale representation result under $G$-expectation, complementing earlier partial results by \citeauthor*{xu2009martingale} \cite{xu2009martingale} and \citeauthor*{song2011some} \cite{song2011some} for symmetric $G$-martingales; see also \citeauthor*{peng2014complete} \cite{peng2014complete}. Naturally, $G$-BSDEs followed; see \citeauthor*{hu2014backward} \cite{hu2014backward,hu2014comparison} and \citeauthor*{liu2019multi} \cite{liu2019multi}. Further results include a $G$--Doob--Meyer decomposition by \citeauthor*{chen2015g} \cite{chen2015g}, Jensen’s inequality by \citeauthor*{hu2015jensen} \cite{hu2015jensen}, and a study on $G$--Sobolev spaces by \citeauthor*{peng2015g} \cite{peng2015g}. Recent works have relaxed the Lipschitz-continuity assumption on generators in $G$-BSDEs. For instance, \citeauthor*{hu2020bsde} \cite{hu2020bsde} considered time-varying Lipschitz-continuous generators, \citeauthor*{zhang2021solutions} \cite{zhang2021solutions} examined $G$-BSDEs whose generator is not Lipschitz in the $y$-variable, and \citeauthor*{wang2021bsdes} \cite{wang2021bsdes} considered uniformly continuous generators. Another area in $G$-BSDE theory involves obstacle problems, motivated by their potential applications. For example, \citeauthor*{yang2017multi} \cite{yang2017multi} studied $G$-BSDEs with a sub-differential operator, while subsequent works examined classical cases of reflection: lower obstacle in \citeauthor*{li2018reflected} \cite{li2018reflected}, upper obstacle in \citeauthor*{li2020reflected} \cite{li2020reflected}, double obstacles with approximate Skorokhod condition in \citeauthor*{li2020doubly} \cite{li2020doubly}, and mean reflection in \citeauthor*{liu2019bsde} \cite{liu2019bsde}. Further extensions were studied by \citeauthor*{he2024mean} \cite{he2024mean}, \citeauthor*{li2022reflected} \cite{li2022reflected}, and \citeauthor*{li2024multi} \cite{li2024multi}. Generators with quadratic growth in the $z$-variable were also considered in \citeauthor*{hu2018quadratic} \cite{hu2018quadratic}, \citeauthor*{hu2022quadratic} \cite{hu2022quadratic}, and \citeauthor*{sun2024g} \cite{sun2024g}. Connections to nonlinear PDEs were explored in \citeauthor*{hu2018stochastic} \cite{hu2018stochastic} and \citeauthor*{hu2024bsde} \cite{hu2024bsde}, and generators with mean-field terms were studied in \citeauthor*{sun2020mean} \cite{sun2020mean2,sun2020mean,sun2025mean}. Other variations include ergodic $G$-BSDEs in \citeauthor*{hu2018ergodic} \cite{hu2018ergodic}, stochastic recursive optimal control problems in \citeauthor*{hu2017dynamic} \cite{hu2017dynamic}, and large deviation principles for the $Y$-component of $G$-BSDE solutions in \citeauthor*{dakaou2021large} \cite{dakaou2021large}.
	
\medskip
The literature on $G$-BSDEs is essentially limited to BSDEs driven by $G$--Brownian motion. To the best of our knowledge, the only work that extends the $G$-calculus framework to processes with jumps is \citeauthor*{hu2021g} \cite{hu2021g}, which introduces the so-called $G$--Lévy processes. However, this work exclusively deals with jumps of finite variation, and subsequent research on $G$--Lévy processes remains confined to this specific setting. Slightly more relevant to the topic of $G$-BSDEs are the unpublished works by \citeauthor*{paczka2014gmartingale} \cite{paczka2014gmartingale, paczka2014ito}, which investigate $G$--martingale representations in a jump setting. These can be considered as `proto-BSDEs' with a zero-generator. The lack of further development in this area stems from intrinsic restrictions in the construction of the sublinear $G$-expectation operator, which relies heavily on the use of PDEs. Interestingly, these limitations were in part addressed by a probabilistic approach inspired by the construction of nonlinear L\'evy process by \citeauthor{neufeld2016nonlinear} \cite{neufeld2016nonlinear}. The construction is based on results by \citeauthor*{neufeld2014measurability} \cite{neufeld2014measurability}, where they show that one can establish versions of the semi-martingale characteristics which are measurable in the probability parameter $\P$. In the present work, completely new, but related, measurability issues arise, which we address through adaptations of \cite{neufeld2014measurability}.
	
\medskip
The aim of this work is to unify the ideas of \cite{neufeld2014measurability,neufeld2016nonlinear,possamai2024reflections} to establish a general well-posedness result for 2BSDEs. By intertwining these approaches with the techniques in \cite{nutz2013constructing,possamai2018stochastic}, we relax the strong continuity assumptions on the terminal condition and generator from \cite{kazi2015second,kazi2015second2}, while also departing from the semi-martingale characteristics associated with the quasi-left-continuous case. To put our efforts into context, a few comments are necessary. At first glance, it may seem that our work is merely a straightforward generalisation of the 2BSDE theory for continuous processes, especially with regard to the main hierarchy used to prove well-posedness:
	\begin{enumerate}
		\item[$(i)$] define the appropriate candidate value function corresponding to the supremum of conditional expectations of BSDEs;
		\item[$(ii)$] prove that the value function admits left- and right-hand side limits on a dense subset of $[0,T]$, outside a negligible set;
		\item[$(iii)$] show that the path-regularisation is a nonlinear super-martingale;
		\item[$(iv)$] use reflected BSDEs to find the decomposition of the regularisation;
		\item[$(v)$] characterise the regularisation and its decomposition as the unique solution to a system which resembles 2BSDEs as introduced in the literature.
	\end{enumerate}
	However, there are substantial technical obstacles that need to be overcome for the programme above to go through. In particular, this is due to the fact that the processes we are able to accommodate can be of jump--diffusion, pure-jump, or even discrete-time nature.
	
	\medskip
	The key challenges with regard to the above programme are the following: for step $(i)$, we develop novel measurability results for integrands of stochastic integrals and for integrands of stochastic integrals of compensated random measures. This is necessary to prove that the candidate value function is indeed measurable in an appropriate sense. Step $(ii)$ is similar in spirit but does not follow exactly the arguments used in the proof of \cite[Lemma 3.2]{possamai2018stochastic}. A careful examination of that proof reveals an erroneous application of the classical down-crossing inequality by Doob. Our proof addresses this gap, and the corresponding statements remain valid; see also \Cref{rem::gap_regularisation}. Step $(iii)$ necessitates new stability results for our BSDEs, which we develop in \Cref{sec_stability} by methods similar to those used in \cite{possamai2024reflections}; interestingly, those stability results do not rely on an application of It\^o's formula, and are a contribution on their own.
	 In step $(iv)$, we apply the well-posedness result for reflected BSDEs from \cite{possamai2024reflections} to derive the decomposition of the regularised value function. Finally, in step $(v)$, we provide two complementary characterisations of the regularised value function and its decomposition. We refer to them as the \textit{extrinsic} and \textit{intrinsic} 2BSDE systems. The former requires auxiliary BSDEs in the definition, while the latter does not and coincides in spirit with the usual notion of 2BSDEs in the literature, but requires additional assumptions.

	\medskip
	Nevertheless, when comparing the notion of 2BSDE in this work (either in the intrinsic or extrinsic sense) to \cite{denis2024second,kazi2015second,kazi2015second2}, one notices that we consider a family of integrands $({U}^\P)_{\P\in\fP_\smalltext{0}}$ for the stochastic integrals of the compensated jump measures, rather than a single integrand ${U}$ that acts as an aggregator of the former family. The claim that the family $({U}^\P)_{\P\in\fP_\smalltext{0}}$ can be defined independently of the underlying probability measures $\P\in\fP_0$ appears in the literature, but, to the best of our knowledge, lacks a rigorous proof; compare the discussion following \cite[Remark 4.8]{kazi2015second2} or the proof of \cite[Lemma 2.11]{denis2024second}. We were not able to confirm this claim and doubt that the aggregation of these integrands is possible in general. Our doubts are reinforced by the fact that, when considering semi-martingale characteristics, the only term that can be defined independently of the probability measure is the diffusion term, which is related only to the integrand of the usual continuous It\^o--stochastic integral in the dynamics of the 2BSDE. The same issue does not appear in the recent work of \citeauthor*{gennaro20252bsde} \cite{gennaro20252bsde} because the particular 2BSDE with jumps considered there can be reduced to a 2BSDE driven by continuous processes; see \cite[Theorem~1]{gennaro20252bsde}. Lastly, the proofs in \cite{denis2024second} primarily rely on earlier arguments from the literature, without addressing or correcting the issues we have identified in those previous proofs. These issues are fully resolved in the present work.
	
	\medskip
	One might naturally wonder whether the issue of aggregation can be resolved by considering a different class of BSDEs than the one in \eqref{eq_BSDEs_intro}, namely those that are not driven by the continuous part $X^{c,\P}$ of $X$ and its compensated jump measure $\tilde{\mu}^{X,\P}$, but by $X$ itself, provided the latter is an appropriate (local) martingale. However, the assumptions required in a genuine 2BSDE setting, where the generator is not identically zero and depends on the semi-martingale characteristics of $X$, imply that $X$ is $\P$--a.s. continuous. We consider these assumptions to be minimal, and we are not aware of any weaker conditions in a jump setting under which aggregation of the integrand with respect to $X$ would be possible. As a result, any alternative decomposition of $X \leadsto (X^\textnormal{I},\mu^\textnormal{J})$ into an It\^o integrator $X^{\textnormal{I}}$ and a jump measure $\mu^{\textnormal{J}}$, generating a compensated jump measure $\tilde{\mu}^{\textnormal{J},\P}$, does not appear to resolve the issue either. As before, $X^{\textnormal{I}}$ is again forced to be continuous in a genuine 2BSDE setting, and the integrand with respect to $\tilde{\mu}^{\textnormal{J},\P}$ still cannot be aggregated. However, we do not exclude the possibility that the aggregation issue for such integrands in a jump setting may be resolved through alternative tools or methods; our claim is merely that this does not seem possible with the techniques currently available.

	\medskip
	Second-order BSDEs have natural applications to control problems involving uncertainty. In such cases, the value of the problem with uncertainty can usually be associated with the first component $Y$ of the solution to the 2BSDE, and an optimal control can be constructed by maximising the generator along the solution of the 2BSDE. If all the terms inside the generator do not depend on the probability measure $\P$, then the control $\alpha^\star$ constructed in this way will be robust in the sense that, regardless of the realisation of the probability law $\P$, $\alpha^\star$ will be the best response to the uncertainty. When the uncertainty lies in the characteristic triplet of the semi-martingale $X$, but the control affects only the drift of this process, the Hamiltonian of the problem, which corresponds to the generator of the associated 2BSDE, can, in many cases, be independent of the probability measure if it depends only on the $Z$-integrand. However, as soon as the control affects the compensator of the jump measure $\mu^X$, the generator of the 2BSDE will, in general, also depend on the $U$-integrand. Since this leads to the aggregation problem outlined previously, the optimal control $\alpha^\star$, if it can be constructed by maximising the generator, will depend on the probability law $\P$, and is thus not robust in the sense described previously. 
	
	\medskip
	In contract theory, and in particular in principal--agent problems, similar issues appear. The reduction method, which uses (2)BSDEs to reduce this layered Stackelberg-type game to a more conventional stochastic control problem, currently seems to rely, in the case of continuous processes, on the fact that the solution of a particular 2BSDE does not depend on the probability law $\P$; compare with the proof of \cite[Theorem 3.6]{cvitanic2018dynamic}. However, whether aggregation can be circumvented, and optimal contracts by the principal or optimal responses by the agent can be described by 2BSDEs without the need to aggregate (particularly when introducing jumps, since this seems to cause issues) will be part of our future research endeavours.

\medskip
\textbf{Outline:} \Cref{sec::preliminaries} lays the foundation for the analysis that follows: we introduce semi-martingales, describe their characteristics, discuss stochastic integration without the usual conditions, and fix the data of our (2)BSDEs. In \Cref{sec::main_results}, we present our main results, namely the construction and well-posedness of our 2BSDE with jumps, along with certain key properties. \Cref{sec_2bsde_X} introduces a variant of our 2BSDE with jumps: there is no random measure, but the driving It\^o-integrator is allowed to jump. The problem of (non-)aggregation is addressed in \Cref{sec_aggregation_problems}, and a comparison with $G$-BSDEs is given in \Cref{sec:GBSDE}. The proofs of our main results appear in \Cref{sec::proofs_main_results}. \Cref{sec::proofs_preliminaries} contains proofs of results stated in \Cref{sec::preliminaries}, while \Cref{sec::lemmas_main_results} provides the proofs of technical lemmata from \Cref{sec::proofs_main_results}. Finally, \Cref{sec_stability} contains new comparison and stability results on our BSDEs; they are stated in the generality of \cite{possamai2024reflections}.
	
\medskip
{\small\textbf{\small Notation and terminology:} throughout this work, we fix a positive integer $d$. Let $\N$, $\Q$ and $\R$ denote the nonnegative integers, rational numbers, and real numbers, respectively. We write $\Q_\smallertext{+} \coloneqq \Q \cap [0,\infty)$. For $n \in \N$, we write $\D^n_\smallertext{+} = \{k2^{-n}: k\in \N\}$, and then denote by $\D_\smallertext{+} = \cup_{n\in \N} \D^n_\smallertext{+}$ the collection of non-negative dyadic numbers, and by $\D$ the collection of all real dyadic numbers. We write $\S^d_\smallertext{+}$ for the set of positive semi-definite (that is, non-negative), symmetric and real $d \times d$ matrices. Points in $\R^d$ are understood to be column vectors.
		
\smallskip
For a matrix $A$, we denote its transpose by $A^\top$, its Moore–Penrose (pseudo-)inverse by $A^\oplus$, and, in the case that $A$ is a square matrix, we denote its trace by $\textnormal{Tr}[A]$. For a set $\Omega$ and $A \subseteq \Omega$, we denote by $\mathbf{1}_A$ its indicator function defined on $\Omega$. We will abuse notation and also denote by $\1_{\{P\}}$ the Iverson bracket of a mathematical statement $P$, that is, $\1_{\{P\}}$ equals $1$ if and only if $P$ is true; otherwise, it takes the value $0$. For a measurable space $(\Omega,\cF)$, we denote the Dirac measure at $x \in \Omega$ by $\boldsymbol{\delta}_x$. For $(a, b) \in [-\infty,\infty]^2$, we write $a \lor b \coloneqq \max\{a,b\}$ and $a \land b \coloneqq \min\{a,b\}$. For two measurable spaces $(\Omega,\cF)$ and $(\Omega^\prime,\cF^\prime)$, we denote by $\cF \otimes \cF^\prime$ the product $\sigma$-algebra of $\cF$ and $\cF^\prime$ on the product space $\Omega \times \Omega^\prime$. 
		
\smallskip
For a c\`adl\`ag function $X$ defined on some interval $I\subseteq [0,\infty]$ with $0 \in I$, we let $\Delta X_t \coloneqq X_t - X_{t\smallertext{-}}$ if $t \in I \setminus\{0\}$ and $\Delta X_0 \coloneqq 0$. For $t \in (0,\infty]$, a limit of the form $s \uparrow\uparrow t$ (resp. $s\uparrow t$) means that $s \longrightarrow t$ along $s < t$ (resp. along $s\leq t$). We define $s \downarrow\downarrow t$ (resp. $s\downarrow t$) analogously. We use throughout the conventions $\sup\varnothing = - \infty$ and $0 / 0 \coloneqq 0$. 
		
\smallskip
For a probability measure $\P$ on a measurable space $(\Omega,\cF)$, and a subset $A \subseteq \Omega$, we refer to $A$ as an $(\cF,\P)$--null set, if there exists $B \in \cF$ such that $A \subseteq B$ and $\P[B] = 0$. We always use the convention $\infty - \infty \coloneqq -\infty$. In particular, if $\xi : \Omega \longrightarrow [-\infty,\infty]$ is $\cF$-measurable, then $\E^\P[\xi] \coloneqq \E^\P[\xi\lor 0] - \E^\P[(-\xi) \lor 0] = -\infty$ in case $\E^\P[(-\xi) \lor 0] = \infty$. For two measures $\mu$ and $\nu$ defined on the same underlying measurable space, we write $\mu \ll \nu$ if $\mu$ is absolutely continuous with respect to $\nu$. For a family of measures $\mathfrak{M}$ on the same underlying measurable space $(\Omega,\cF)$, a property $P$ holds $\mathfrak{M}$--quasi-surely (abbreviated as $\mathfrak{M}$--q.s.) if the subset of $\Omega$ on which $P$ does not hold is an  $(\cF,\P)$--null set for each measure in $\mathfrak{M}$. Then $\xi : \Omega \longrightarrow [-\infty,\infty]$ is $\mathfrak{M}$--essentially bounded if there exists $\mathfrak{C} \in (0,\infty)$ such that $|\xi| \leq \mathfrak{C}$ holds $\mathfrak{M}$--quasi-surely.
		
\smallskip
For two measurable spaces $(\Omega,\cF)$ and $(\Omega^\prime,\cF^\prime)$, a kernel $K$ on $(\Omega^\prime,\cF^\prime)$ given $(\Omega,\cF)$ is a map $K : \Omega \times \cF^\prime \longrightarrow [0,\infty]$ such that $\omega \longmapsto K_\omega(A)$ is $\cF$-measurable for every $A \in \cF^\prime$ and $K_\omega(\cdot)$ is a measure on $(\Omega^\prime,\cF^\prime)$ for each $\omega \in \Omega$; if $K_\omega(\Omega^\prime) = 1$ for each $\omega \in \Omega$, then $K$ is referred to as a stochastic kernel.
		
\smallskip
Lastly, an integral over an interval $I \subseteq [0,\infty]$ never includes the points $0$ or $\infty$ in the domain of integration; that is, $\int_I$ is the same as $\int_{I\setminus\{0,\infty\}}$. Moreover, $\int_a^b$ is always understood as $\int_{(a,b]}$, and $\int_{a\smallertext{-}}^b$ as $\int_{[a,b]}$.}

\section{Preliminaries}\label{sec::preliminaries}
	
In this section, we establish the foundations necessary to achieve the objectives of our programme outlined in the introduction. Due to the presence of families of potentially non-dominated probability measures, we must avoid the usual conditions of stochastic calculus. Our primary references for this setting are \citeauthor*{weizsaecker1990stochastic} \cite{weizsaecker1990stochastic} and the foundational works by \citeauthor*{dellacherie1978probabilities} \cite{dellacherie1978probabilities, dellacherie1982probabilities}. Additionally, we will occasionally refer to \citeauthor*{jacod2003limit} \cite{jacod2003limit} when needed. We begin by discussing martingales and semi-martingales, along with their associated characteristics. Following this, we introduce the precise setup on the canonical space of càdlàg paths. This involves examining the conditioning and concatenation of semi-martingale laws. A key property underpinning this work is that, on the canonical space, semi-martingale characteristics can be chosen to be measurable with respect to the probability measure. This result, established by \citeauthor*{neufeld2014measurability} \cite{neufeld2014measurability}, plays a crucial role throughout this work. Accordingly, we will adopt the conventions introduced in their work. For clarity and convenience, the proofs of the results presented in this preliminary section are given in \Cref{sec::proofs_preliminaries}.

\medskip
Throughout this section, unless indicated otherwise, we denote by $(\Omega,\cG,\G = (\cG_t)_{t \in [0,\infty)},\P)$ an arbitrary filtered probability space. We assume that we are given an additional $\sigma$-algebra $\cG_{0\smallertext{-}}$ contained in $\cG_0$ and included in $\G$ whenever necessary. We define $\cG_{\infty\smallertext{+}} \coloneqq \cG_{\infty} \coloneqq \cG_{\infty\smallertext{-}} \coloneqq \sigma\big(\cup_{t \in [0,\infty)}\cG_t\big)$. The right-continuous version $\G_\smallertext{+} = (\cG_{t\smallertext{+}})_{t \in [0,\infty)}$ of $\G$ is defined as $\cG_{t\smallertext{+}} \coloneqq \cap_{s > t} \cG_s$, $t\geq0$, with $\cG_{0\smallertext{-}}$ also added to $\G_\smallertext{+}$. Similarly, the left-continuous version $\G_\smallertext{-} \coloneqq (\cG_{t\smallertext{-}})_{t \in [0,\infty)}$ is defined as $\cG_{t\smallertext{-}} \coloneqq \sigma(\cup_{s\in[0,t)}\cG_s)$ for $t \in (0,\infty)$. Then $\cG_\infty = \sigma\left(\cup_{t \in [0,\infty)}\cG_{t\smallertext{+}}\right) =  \sigma\left(\cup_{t \in [0,\infty)}\cG_{t\smallertext{-}}\right)$. We denote by $\cP(\G)$ the $\G$-predictable $\sigma$-algebra on $\Omega \times [0,\infty)$, generated by all real-valued, $\G_\smallertext{-}$-adapted processes that are left-continuous on $(0,\infty)$. Note that then $\cP(\G) = \cP(\G_\smallertext{+})$. 
	
\medskip
For $t \in \{0{-}\}\cup[0,\infty)$, we denote by $\cG^\P_t$ the $\sigma$-algebra generated by $\cG_t$ and the $(\cG,\P)$--null sets, and we then write $\G^\P = (\cG^\P_t)_{t \in [0,\infty)}$, adding $\cG^\P_{0\smallertext{-}}$ to it whenever necessary. We then write $\cP(\G)^\P \coloneqq \cP(\G^\P)$. The $\G$-optional $\sigma$-algebra $\cO(\G)$ on $\Omega\times[0,\infty)$ is generated by all $\G$-adapted, real-valued processes which are right-continuous at zero and c\`adl\`ag on $(0,\infty)$. We denote by $\textnormal{Prog}(\G)$ the progressive $\sigma$-algebra on $\Omega \times [0,\infty)$ consisting of those subsets $A \subseteq \Omega \times [0,\infty)$ such that $\1_A$ is $\G$-progressive, that is, $\Omega \times [0,t] \ni (\omega,s) \longmapsto \1_A(\omega,s) \in \R$ is $\cG_t \otimes \cB([0,t])$-measurable for every $t \in [0,\infty)$.
	
\medskip
For two maps $S: \Omega \longrightarrow [0,\infty]$ and $T : \Omega \longrightarrow [0,\infty]$, we denote by $\llparenthesis S,T\rrbracket$ the stochastic interval $\{(\omega,t) \in \Omega \times [0,\infty) \,|\,  S(\omega) < t \leq T(\omega)\}$. The stochastic intervals $\llbracket S,T\rrparenthesis$, $\llbracket S,T\rrbracket$ and $\llparenthesis S,T\rrparenthesis$ are defined analogously. We denote by $\cG_{S\smallertext{-}}$ the $\sigma$-algebra generated by $\cG_{0\smallertext{-}}$ and all sets of the form $A \cap \{t < S\}$, where $A \in \cG_t$ and $t \in [0,\infty)$. If we define $\H = (\cH_t)_{t \in [0,\infty)}$ by $\cH_t \coloneqq \cG_{t\smallertext{+}}$ and $\cH_{0\smallertext{-}} \coloneqq \cG_{0\smallertext{-}}$, then $\cH_{S\smallertext{-}} = \cG_{S\smallertext{-}}$. If $\llbracket S, \infty \rrparenthesis$ belongs to $\cP(\G)$, then $S$ is referred to as a $\G$-predictable stopping time. 
	
\medskip
In this work, $\G$--stopping time means that $\{S \leq t\} \in \cG_t$ for every $t \in [0,\infty)$. In \cite{weizsaecker1990stochastic}, this is referred to as a `strict stopping time', while in \cite{dellacherie1978probabilities}, it is additionally referred to as an optional time. Note that a $\G$--predictable stopping time is a $\G$--stopping time. A $\G$--stopping time is said to be finite(-valued) if it never attains the value $\infty$. For a $\G$--stopping time $S$, we denote by $\cG_S$ the $\sigma$-algebra consisting of all $A \in \cG_\infty$ for which $A \cap \{S \leq t\} \in \cG_t$ holds for all $t \in [0,\infty)$. If $S$ is an $\H$--stopping time, then $\cH_{S}$ will be denoted by $\cG_{S\smallertext{+}}$.

\medskip
Lastly, a sequence $(\tau_n)_{n \in \N}$ of $\G$--stopping times is a $(\G,\P)$--localising sequence if $\P[\tau_n \uparrow \infty] = 1$, that is, $(\tau_n(\omega))_{n \in \N}$ is a non-decreasing sequence of numbers in $[0,\infty]$ that converges to $\infty$ for $\P$--a.e. $\omega \in \Omega$.

\subsection{Semi-martingales and their characteristics}\label{sec::semimartingales}

We provide a brief recap of semi-martingales, their characteristics, and integrals, as they are central to our results. The main reason for being careful and detailed in this section is that certain manipulations later on require us to work with stochastic integrals that are adapted to the raw filtration $\G$. Let $M = (M_t)_{t \in [0,\infty)}$ be a real-valued, right-continuous and $\G$-adapted process. Then $M$ is a $(\G,\P)$--square-integrable martingale if $M$ is a $(\G,\P)$-martingale satisfying
\begin{equation*}
	\E^\P\bigg[\sup_{t \in [0,\infty)}|M_{t}|^2\bigg] < \infty.
\end{equation*}
We refer to $M$ as a $(\G,\P)$--locally square-integrable (resp. $(\G,\P)$--local) martingale, if there exists $(\G,\P)$--localising sequence $(\tau_n)_{n \in \N}$ such that, for each $n \in \N$, the stopped process $M_{\cdot\land\tau_n}$ is a $(\G,\P)$--square-integrable (resp. $(\G,\P)$--uniformly integrable) martingale. 

\medskip
In case $M$ is a $(\G,\P)$--locally square-integrable martingale, we denote by $\langle M \rangle^{(\G,\P)} = (\langle M \rangle^{(\G,\P)}_t)_{t \in [0,\infty)}$ the predictable quadratic variation of $M - M_0$ relative to $(\G,\P)$ (see \cite[Corollary 6.6.3]{weizsaecker1990stochastic}).

\begin{remark}\label{rem::quadratic_variation}\label{rem::G_local_martingale}
	Let us point out that $\langle M \rangle^{(\G,\P)}$ is characterised as the, up to $\P$-indistinguishability, unique real-valued, right-continuous, $\G$-predictable, \textnormal{$\P$--a.s.} non-decreasing process starting at zero for which there exists a $(\G_\smallertext{+},\P)$--localising sequence $(\tau_n)_{n \in \N}$ such that the stopped process $M^2_{\cdot\land\tau_\smalltext{n}} - \langle M \rangle^{(\G,\P)}_{\cdot\land\tau_\smalltext{n}}$ is a $(\G,\P)$--uniformly integrable martingale. However, one can also find a corresponding $(\G,\P)$--localising sequence of $\G$-predictable stopping times by {\rm \citeauthor*{dellacherie1978quelques} \cite[Th\'eor\`eme 3]{dellacherie1978quelques}} or \textnormal{\citeauthor*{dellacherie1982probabilities} \cite[Theorem VI.84.(a)]{dellacherie1982probabilities}} and then {\rm\citeauthor*{dellacherie1978probabilities} \cite[Theorem IV.78]{dellacherie1978probabilities}}.
\end{remark}

\begin{lemma}\label{lem::equivalence_loc_square_integrable_martingale}
	Let $M = (M_t)_{t \in [0,\infty)}$ be a real-valued, right-continuous, $\G$-adapted process. The following conditions are equivalent
	\begin{enumerate}
		\item[$(i)$] there exists a $(\G,\P)$--localising sequence of $\G$--predictable stopping times $(\tau_n)_{n \in \N}$ such that each $M_{\cdot \land \tau_n}$ is a $(\G,\P)$--square-integrable $($resp. $(\G,\P)$--uniformly integrable$)$ martingale$;$
		\item[$(ii)$] $M$ is a $(\G,\P)$--locally square-integrable martingale $($resp. $(\G,\P)$--local martingale$);$
		\item[$(iii)$] $M$ is a $(\G_\smallertext{+},\P)$--locally square-integrable martingale $($resp. $(\G_\smallertext{+},\P)$--local martingale$);$
		\item[$(iv)$] $M$ is a $(\G^\P_\smallertext{+},\P)$--locally square-integrable martingale $($resp. $(\G^\P_\smallertext{+},\P)$--local martingale$)$.
	\end{enumerate}
	Moreover, if $M$ satisfies any of the conditions $(i)$--$(iv)$, then the predictable quadratic variation $\langle M \rangle$ of $M$ relative to the above filtrations coincide up to $\P$-evanescence.
\end{lemma}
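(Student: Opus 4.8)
The plan is to establish the circle of implications $(i)\Rightarrow(ii)\Rightarrow(iii)\Rightarrow(iv)$ together with $(iv)\Rightarrow(i)$, and then treat the coincidence of predictable quadratic variations separately. The implication $(i)\Rightarrow(ii)$ is immediate from the definition of a $(\G,\P)$--locally square-integrable (resp. local) martingale, since a localising sequence of $\G$--predictable stopping times is in particular a $(\G,\P)$--localising sequence. For $(ii)\Rightarrow(iii)$, I would observe that a $(\G,\P)$--martingale which is right-continuous is automatically a $(\G_\smallertext{+},\P)$--martingale: this is the classical fact that for a right-continuous integrable adapted process the martingale property passes to the right-continuous augmentation of the filtration (one conditions $\E^\P[M_t\mid\cG_{s\smallertext{+}}]$ and uses right-continuity of paths plus Lévy's downward/upward theorem, or simply cites \cite{dellacherie1982probabilities}). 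Stopping commutes with this, so any localising sequence witnessing $(ii)$ also witnesses $(iii)$. For $(iii)\Rightarrow(iv)$, the step is that adding $\P$--null sets to each $\cG_{t\smallertext{+}}$ does not destroy the martingale property, because conditional expectations are defined up to $\P$--null sets; here one should be slightly careful that $M$ remains $\G^\P_\smallertext{+}$--adapted, which is trivial, and that the localising stopping times remain stopping times for the augmented filtration, which is also immediate.

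The genuinely substantive implication is $(iv)\Rightarrow(i)$: upgrading from a localising sequence in the augmented right-continuous filtration to one consisting of \emph{$\G$--predictable} stopping times for the \emph{raw} filtration. Here I would argue in two stages. First, from a $(\G^\P_\smallertext{+},\P)$--localising sequence, I would pass to a localising sequence of the form $\sigma_k = \inf\{t : |M_t| \geq k\}$ (or the analogous bound on $M$ together with its left limits); this is a $\G^\P_\smallertext{+}$--stopping time because $M$ is right-continuous and $\G^\P_\smallertext{+}$--adapted, and $\P[\sigma_k \uparrow \infty]=1$ since $M$ has càdlàg (hence locally bounded) paths $\P$--a.s. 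On $\llbracket 0, \sigma_k\rrparenthesis$ the process is bounded by $k$, so $M_{\cdot\wedge\sigma_k}$ is a bounded, hence uniformly integrable, $(\G^\P_\smallertext{+},\P)$--martingale; for the square-integrable case one stops instead at a sequence making $\sup_t|M_{t\wedge\tau}|$ square-integrable, combining the original localising sequence with the jump-truncation $\inf\{t: |\Delta M_t| \geq k\}$ in the usual way. Second — and this is where the care flagged in Remark \ref{rem::quadratic_variation} is needed — I would replace these $\G^\P_\smallertext{+}$--stopping times by $\G$--predictable ones. The route is: a $\G^\P_\smallertext{+}$--stopping time agrees $\P$--a.s. with a $\G_\smallertext{+}$--stopping time (standard, since the $\P$--augmentation only adds null sets); a $\G_\smallertext{+}$--stopping time is announced from the left, so by Dellacherie's results cited in the remark — \cite[Théorème 3]{dellacherie1978quelques} or \cite[Theorem VI.84.(a)]{dellacherie1982probabilities} followed by \cite[Theorem IV.78]{dellacherie1978probabilities} — one can extract, below any such stopping time, a sequence of $\G$--predictable stopping times increasing to it $\P$--a.s. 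Taking a diagonal sequence over $k$ and over this predictable approximation yields the desired $(\G,\P)$--localising sequence of $\G$--predictable stopping times, and stopping a martingale at a smaller stopping time preserves the (uniform integrability / square-integrability) martingale property, so $(i)$ holds.

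For the final assertion on predictable quadratic variations, I would invoke the characterisation recalled in Remark \ref{rem::quadratic_variation}: $\langle M\rangle^{(\G,\P)}$ is the $\P$--a.s. unique right-continuous, $\G$--predictable, non-decreasing process null at $0$ such that $M^2 - \langle M\rangle^{(\G,\P)}$ is a local martingale (for an appropriate localising sequence). Since $\cP(\G) = \cP(\G_\smallertext{+})$ was noted in the preliminaries, a $\G$--predictable process is the same as a $\G_\smallertext{+}$--predictable one, and likewise after $\P$--augmentation up to modification; and by the equivalences $(i)$–$(iv)$ already proved, $M^2 - A$ being a local martingale is the same statement relative to any of the four filtrations. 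Hence the defining property is filtration-independent, and uniqueness forces the three candidate processes to coincide up to $\P$--evanescence. The main obstacle, as anticipated, is the predictable-stopping-time refinement in $(iv)\Rightarrow(i)$; everything else is bookkeeping about right-continuous filtrations and null sets. I would expect to package the predictable approximation as a short lemma or to cite it cleanly from the Dellacherie references already in the remark rather than reprove it.
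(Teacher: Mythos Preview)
Your proposal follows the same route as the paper, which is extremely terse: the paper declares $(i)\Rightarrow(ii)\Rightarrow(iii)\Rightarrow(iv)$ immediate, sends the reader to the proof of \cite[Theorem VI.84]{dellacherie1982probabilities} for $(iv)\Rightarrow(i)$, and handles the quadratic variation via the characterising property plus Remark~\ref{rem::quadratic_variation}. Your argument for the quadratic variation and the easy implications is exactly this, and your $(iv)\Rightarrow(i)$ is an attempt to unpack VI.84 using the same references.

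One point to tighten in your sketch: the claim that $M_{\cdot\wedge\sigma_k}$ is \emph{bounded} for $\sigma_k=\inf\{t:|M_t|\geq k\}$ is false in general, since a jump at $\sigma_k$ can send $|M_{\sigma_k}|$ arbitrarily far past $k$; you correctly flag the jump issue for the square-integrable case but not for the uniformly integrable one. The intermediate hitting-time step is in any case unnecessary: the argument of VI.84 works directly on the original $(\G^\P_\smallertext{+},\P)$--localising sequence, first replacing each stopping time by a $\G_\smallertext{+}$--stopping time that agrees with it $\P$--a.s., and then using the predictable approximation (this is precisely the combination \cite[Th\'eor\`eme 3]{dellacherie1978quelques}/\cite[Theorem VI.84.(a)]{dellacherie1982probabilities} followed by \cite[Theorem IV.78]{dellacherie1978probabilities}) to descend to $\G$--predictable times. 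Also, ``announced from the left'' is not the right phrase---announceability is predictability, which an arbitrary $\G_\smallertext{+}$--stopping time need not have; what you want is that it can be \emph{approximated from below} by $\G$--predictable times, which is what those references give.
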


That $(i)$ implies $(ii)$, $(ii)$ implies $(iii)$ and $(iii)$ implies $(iv)$ is immediate. That $(iv)$ implies $(i)$ follows along the same arguments used in the proof of \citeauthor*{dellacherie1982probabilities} \cite[Theorem VI.84]{dellacherie1982probabilities}, so we omit the details. The fact that the predictable quadratic variations coincide follows from the characterising property, together with \Cref{rem::quadratic_variation}.

\medskip
Whenever there is no ambiguity regarding the underlying filtration, we omit referencing it in the notation of the predictable quadratic variation and simply write $\langle M \rangle^{(\P)}$. Moreover, if $M$ and $N$ are both $(\G,\P)$--locally square-integrable martingales, we define $\langle M, N \rangle^{(\P)}$ as usual through polarisation. In case $M$ is multidimensional, $\langle M \rangle^{(\P)}$ denotes the matrix-valued process whose $(i,j)$-th entry is $\langle M^i, M^j\rangle^{(\P)}$, for $i$ and $j$ ranging through the appropriate set of integers.

\medskip
We turn to semi-martingales. We fix an $\R^d$-valued, c\`adl\`ag, $\G$-adapted process $X = (X_t)_{t \in [0,\infty)}$ for the time being. We suppose that $X$ is a $(\G,\P)$--semi-martingale in the following sense: there exists an $\R^d$-valued, right-continuous, $\G$-adapted, $(\G,\P)$--local martingale $M = (M_t)_{t \in [0,\infty)}$ and an $\R^d$-valued, right-continuous, $\G$-adapted process $A = (A_t)_{t \in [0,\infty)}$ whose paths are $\P$--a.s. of locally finite variation with $M_0 = A_0 = 0$ and such that
\begin{equation*}
	X = X_0 + M + A, \; \text{$\P$--a.s.}
\end{equation*}
We fix a measurable and bounded map $h : \R^d \longrightarrow \R^d$, known as a truncation map, satisfying $h(x) = x$ in an open neighbourhood of the origin. Since all paths of $X$ are c\`adl\`ag, the process
	\begin{equation*}
		X^\prime \coloneqq X - X_0 - \sum_{s \in (0,\cdot]} \big(\Delta X_s - h(\Delta X_s)\big),
	\end{equation*}
	is well-defined. Moreover, the semi-martingale $X^\prime$ satisfies $\Delta X^\prime = h(\Delta X)$, and thus has bounded jumps. It is therefore a special $(\G,\P)$--semi-martingale (see \cite[Corollary 7.2.8]{weizsaecker1990stochastic}), and therefore it admits a unique decomposition
	\begin{equation*}
		X^\prime = X_0 + M^\prime + B^\prime, \; \text{$\P$--a.s.},
	\end{equation*}
	where $M^\prime = (M^\prime_t)_{t \in [0,\infty)}$ is a real-valued, right-continuous, $\G$-adapted, $(\G,\P)$--local martingale starting at zero, and $B^\prime = (B^\prime_t)_{t \in [0,\infty)}$ is a real-valued, right-continuous, $\G$-predictable process starting at zero, whose paths are $\P$--a.s. of locally finite variation (see \cite[Theorem 7.2.6]{weizsaecker1990stochastic} and \Cref{lem::equivalence_loc_square_integrable_martingale}). The process $B^\prime$ forms the first component of the characteristic triplet of $X$. To describe the second and third components, we need the following two results.

	\begin{lemma}\label{lem::martingale_decomposition}
		Let $M = (M_t)_{t \in [0,\infty)}$ be a real-valued, right-continuous, $\G$-adapted, $(\G,\P)$--local martingale. There exists a, up to $\P$--indistinguishability, unique pair $(M^c,M^d) = (M^c_t,M^d_t)_{t \in [0,\infty)}$ consisting of two real-valued, right-continuous, $\G$-adapted, $(\G,\P)$--local martingales starting at zero such that  
		\begin{equation*}
			M = M_0 + M^c + M^d, \; \text{{\rm$\P$--a.s.}},
		\end{equation*}
		$M^c$ has {\rm$\P$--a.s.} continuous paths, and $M^d$ is a purely discontinuous local martingale in the sense that $M^d N$ is a $(\G,\P)$--local martingale for each real-valued, right-continuous, $\G$-adapted, $(\G,\P)$--local martingale $N = (N_t)_{t \in [0,\infty)}$ with {\rm$\P$--a.s.} continuous paths. Furthermore, for a $(\G,\P)$--semi-martingale $X = (X_t)_{t \in [0,\infty)}$, there exists a, up to $\P$-indistinguishability, unique real-valued, right-continuous, $\G$-adapted, $(\G,\P)$--local martingale $X^c = (X^c_t)_{t \in [0,\infty)}$ with {\rm$\P$--a.s.} continuous paths such that any semi-martingale decomposition $X = X_0 + M + A$ satisfies $X^c = M^c$, up to $\P$-indistinguishability.
	\end{lemma}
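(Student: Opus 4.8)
The plan is to reduce the statement to the classical decomposition of a local martingale into its continuous and purely discontinuous parts, which is available in the raw-filtration framework of \cite{weizsaecker1990stochastic} (and which, if one prefers, can be imported from the usual-conditions setting by passing to $\G^\P_+$ via \Cref{lem::equivalence_loc_square_integrable_martingale}), and then to read off the invariance of $X^c$ from it. I would record at the outset one standard fact: a $(\G,\P)$--local martingale $L$ with $L_0=0$ and $\P$--a.s. paths of finite variation is purely discontinuous in the sense of the statement. This is classical (see \cite{weizsaecker1990stochastic} or \cite{dellacherie1982probabilities}); informally, for a real-valued, right-continuous, $\G$-adapted $(\G,\P)$--local martingale $N$ with $\P$--a.s. continuous paths, integration by parts gives $LN=L_0N_0+\int_0^\cdot L_{s-}\,\d N_s+\int_0^\cdot N_{s-}\,\d L_s+[L,N]$, where $L_0N_0=0$ and, $L$ being of finite variation, $[L,N]=\sum_{s\le\cdot}\Delta L_s\Delta N_s$, which vanishes as $N$ is continuous; hence $LN$ is a sum of stochastic integrals of locally bounded predictable integrands against local martingales, thus a $(\G,\P)$--local martingale.

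For existence, I would first remove the large jumps. Since $M$ is right-continuous and $\G$-adapted, hence $\P$--a.s. càdlàg, the process $\check M\coloneqq\sum_{s\in(0,\cdot]}\Delta M_s\1_{\{|\Delta M_s|>1\}}$ is $\G$-adapted, of locally finite variation, and locally integrable; its $(\G,\P)$--dual predictable projection exists and can be chosen $\cP(\G)$-measurable in the framework of \cite{weizsaecker1990stochastic}, so that $M^{(2)}\coloneqq\check M-(\textnormal{its compensator})$ is a finite-variation $(\G,\P)$--local martingale null at zero. The process $M^{(1)}\coloneqq M-M_0-M^{(2)}$ has jumps bounded by a constant, hence, after a further localisation, is locally square-integrable; for it one constructs the purely discontinuous part $N^d$ as the $\L^2$-limit of the compensated sums of jumps of modulus exceeding $1/n$ (using that $\sum_{s\le\cdot}(\Delta M^{(1)}_s)^2$ is locally integrable), so that $N^c\coloneqq M^{(1)}-N^d$ has $\P$--a.s. continuous paths. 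Every object above is assembled from $\G$-adapted data and $\cP(\G)$-measurable compensators, and so is $\G$-adapted. Putting $M^c\coloneqq N^c$ and $M^d\coloneqq N^d+M^{(2)}$ then works: $M^d$ is purely discontinuous because $N^d$ is (as an $\L^2$-limit of finite-variation local martingales) and $M^{(2)}$ is (by the fact above), and sums of purely discontinuous local martingales are purely discontinuous straight from the defining orthogonality. This entire step is also subsumed by the corresponding theorem in \cite{weizsaecker1990stochastic}.

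For uniqueness, suppose $M=M_0+A^c+A^d=M_0+B^c+B^d$ are two decompositions of the required type. Then $L\coloneqq A^c-B^c=B^d-A^d$ is simultaneously a continuous $(\G,\P)$--local martingale null at zero and a purely discontinuous one (differences of purely discontinuous local martingales are purely discontinuous, straight from the defining orthogonality). Taking $N=L$ in that orthogonality shows $L^2$ is a $(\G,\P)$--local martingale; being non-negative and null at zero, it is a $(\G,\P)$--supermartingale, so $\E^\P[L_t^2]\le\E^\P[L_0^2]=0$ for each $t$, and right-continuity gives $L\equiv0$ $\P$--a.s. Thus $(M^c,M^d)$ is unique up to $\P$-indistinguishability.

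Finally, for the semi-martingale statement, set $X^c\coloneqq M^c$ for some semi-martingale decomposition $X=X_0+M+A$ (recall $M_0=0$). If $X=X_0+M+A=X_0+N+B$ are two such decompositions, then $C\coloneqq M-N=B-A$ is at once a $(\G,\P)$--local martingale and a process with $\P$--a.s. finite-variation paths, both null at zero, hence purely discontinuous by the fact above; since $C=0+C$ is then a decomposition of the type treated in the first part, uniqueness forces the continuous part of $C$ to vanish, and by linearity of the decomposition this continuous part equals $M^c-N^c$, so $M^c=N^c$. Therefore $X^c$ does not depend on the chosen decomposition, and its uniqueness up to $\P$-indistinguishability follows. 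I expect the genuine obstacle to be the existence step in the raw-filtration setting: one must ensure that the dual predictable projections producing $M^d$ can be taken $\cP(\G)$-measurable and that all the resulting martingales are honestly $\G$-adapted rather than merely $\G^\P_+$-adapted — precisely the point of working within the framework of \cite{weizsaecker1990stochastic} and invoking \Cref{lem::equivalence_loc_square_integrable_martingale} — while the uniqueness and invariance arguments are soft by comparison.
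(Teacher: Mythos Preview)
Your proposal is correct and follows essentially the same route as the paper: invoke the classical continuous/purely-discontinuous decomposition (the paper cites \cite[Lemma~I.4.18]{jacod2003limit} and \cite[Lemma~4.3.5]{weizsaecker1990stochastic} for existence under $\G_+$, then transfers to $\G$ via the equivalence in \Cref{lem::equivalence_loc_square_integrable_martingale} and the Dellacherie--Meyer results; it cites \cite[Proposition~I.4.27]{jacod2003limit} for the semi-martingale claim), while you spell out the underlying construction (remove large jumps, compensate, pass to the locally square-integrable case, take $\L^2$-limits of compensated sums) and give the uniqueness and invariance arguments explicitly. Your closing diagnosis---that the only delicate point is securing genuine $\G$-adaptedness rather than $\G^\P_+$-adaptedness, handled by the raw-filtration framework of \cite{weizsaecker1990stochastic} together with \Cref{lem::equivalence_loc_square_integrable_martingale}---is exactly the point the paper's proof addresses.
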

	
	The process $X^c$ constructed in the previous result is the $(\G,\P)$--continuous local martingale part of the $(\G,\P)$--semi-martingale $X$. Since we will be working under multiple probability measures later on, we will also write $X^{c,\P}$ for clarity.
	
	\medskip
	Let $\mu^X$ be the jump measure on $[0,\infty) \times \R^d$ of $X$ defined through
	\begin{equation*}
		\mu^X(\omega; \d t, \d x) \coloneqq \sum_{s \in (0,\infty)} \1_{\{\Delta X_\smalltext{s}(\omega) \neq 0\}} \boldsymbol{\delta}_{(s,\Delta X_\smalltext{s}(\omega))}(\d t, \d x).
	\end{equation*}
	
	\begin{lemma}\label{lem::existence_predictable_compensator_mu}
		Let $X = (X_t)_{t \in [0,\infty)}$ be an $\R^d$-valued, c\`adl\`ag, $\G$-adapted process and let $\mu^X$ be its jump measure. There exists a random measure $\nu(\omega;\d t, \d x)$ on $[0,\infty)\times\R^d$ such that for every $\cP(\G)\otimes\cB(\R^d)$-measurable, non-negative function $W$
		\begin{enumerate}
			\item[$(i)$] the process $\displaystyle W\ast\nu \coloneqq \int_{(0,\cdot]\times\R^\smalltext{d}}W_s(x)\nu(\d s,\d x)$ is $\G$-predictable$,$ and
			\item[$(ii)$] $\E^\P[W\ast\mu^X_\infty] = \E^\P[W\ast\nu_\infty]$.
		\end{enumerate}
		Moreover, $(i)$ and $(ii)$ uniquely characterise the random measure $\nu$ up to a $\P$--null set, and $\nu$ can be constructed to be of the form
		\begin{equation}\label{eq::representation_nu}
			\nu(\omega;\d t, \d x) = K_{\omega,t}(\d x)\sum_{\ell = 1}^\infty \d A^\ell_t(\omega), \; \omega \in \Omega,
		\end{equation}
		where $K$ is a kernel on $(\R^d,\cB(\R^d))$ given $(\Omega \times [0,\infty),\cP(\G))$, and each $A^\ell = (A^\ell_t)_{t \in [0,\infty)}$, $\ell\in\N^\star$, is a $[0,\infty)$-valued, right-continuous and non-decreasing, $\G$-predictable process starting at zero. Furthermore, $\sum_{\ell = 1}^\infty A^\ell$ is $\P$-indistinguishable from a real-valued, right-continuous, \textnormal{$\P$--a.s.} non-decreasing, $\G$-predictable process $A = (A_t)_{t \in [0,\infty)}$ starting at zero. There also exists a {\rm`}good version{\rm'} of $\nu$ that additionally satisfies $\nu(\omega;\{t\}\times \R^d) \leq 1$ identically and such that  
		\begin{equation}\label{eq::definition_J}
			J \coloneqq \big\{(\omega,t) \in \Omega \times [0,\infty) : \nu(\omega ; \{t\} \times \R^d) > 0 \big\} = \bigcup_{n = 1}^\infty \llbracket \tau_n \rrbracket,
		\end{equation}
		identically, where $(\tau_n)_{n \in \N}$ are $\G$--predictable stopping times with disjoint graphs.
	\end{lemma}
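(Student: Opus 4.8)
\emph{Strategy.} At its core the statement is the existence of the predictable dual projection (compensator) of the integer-valued optional random measure $\mu^X$, together with a disintegration and a few structural refinements, all phrased for the raw filtration $\G$. The plan is to establish everything first on the usual augmentation $(\Omega,\cG^\P_\infty,\G^\P_\smallertext{+},\P)$, where the classical theory of random measures applies verbatim, and then to transfer it to $\G$ using the fact recorded in \Cref{sec::preliminaries} that any $\G^\P_\smallertext{+}$-predictable process, and any $\G^\P_\smallertext{+}$-predictable stopping time, agrees up to $\P$-evanescence, respectively up to a $\P$-null set, with a $\G$-predictable one.

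\emph{Existence and uniqueness.} Since $X$ is c\`adl\`ag, for every $n$ only finitely many jumps of modulus in $[1/n,n]$ occur before any finite time; stopping at the successive such jump times shows that $\mu^X$ is an integer-valued, $\sigma$-finite (in the sense of \cite{jacod2003limit}) optional random measure. By the existence theorem for compensators of such measures, see \cite[Theorem~II.1.8]{jacod2003limit}, there is a predictable random measure $\nu$ for which $W\ast\nu$ is $\G^\P_\smallertext{+}$-predictable and is the compensator of the increasing process $W\ast\mu^X$, for every non-negative $\cP(\G^\P_\smallertext{+})\otimes\cB(\R^d)$-measurable $W$; in particular $(ii)$ holds, and applying it to $W\1_{\llbracket 0,\tau\rrbracket}$ for $\G^\P_\smallertext{+}$-stopping times $\tau$ gives the stopped identity $\E^\P[(W\ast\mu^X)_\tau]=\E^\P[(W\ast\nu)_\tau]$ as well. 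Uniqueness up to a $\P$-null set follows by a monotone-class argument: since a random measure is determined by countably many of the processes $W\ast\nu$, it suffices to take $W$ of the product form $\1_A(\omega,t)g(x)$ with $A\in\cP(\G^\P_\smallertext{+})$ and $g$ ranging over a countable convergence-determining family of bounded continuous functions, and for each such $W$ the process $W\ast\nu$ is forced to be the $\P$-a.s.\ unique compensator of $W\ast\mu^X$.

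\emph{Disintegration and the good version.} Because $\mu^X$, hence $\nu$, is $\sigma$-finite, the standard disintegration of random measures, see \cite[Proposition~II.1.14]{jacod2003limit}, yields a version $\nu(\omega;\d t,\d x)=K_{\omega,t}(\d x)\,\d A_t(\omega)$ with $A$ a real-valued, right-continuous, non-decreasing, $\G^\P_\smallertext{+}$-predictable process starting at zero, and $K$ a kernel; concretely one may take $A=\sum_{\ell\geq1}A^\ell$, where each $A^\ell$ is a suitably rescaled version of the compensator of the finite counting process $s\longmapsto\mu^X\big((0,s]\times\Gamma_\ell\big)$ for an exhausting Borel partition $(\Gamma_\ell)_{\ell\geq1}$ of $\R^d\setminus\{0\}$ into annuli, the rescaling chosen so that $\sum_\ell A^\ell_\infty$ is finite; this gives \eqref{eq::representation_nu}. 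Since a c\`adl\`ag path jumps at most once at each instant, $\mu^X(\omega;\{t\}\times\R^d)\leq1$ for all $(\omega,t)$, and therefore the compensator may be chosen with $\nu(\omega;\{t\}\times\R^d)\leq1$ identically: atoms of $\nu$ only occur along accessible times, where they coincide with the corresponding $\P$-conditional jump probabilities and are thus at most $1$, and off those times one sets the atom to zero. With such a version, $t\longmapsto\nu(\omega;\{t\}\times\R^d)=K_{\omega,t}(\R^d)\,\Delta A_t(\omega)$ is $\G^\P_\smallertext{+}$-predictable, so $J$ is a predictable set contained in the thin union of the jump graphs of $A$; a thin predictable set is exhausted by graphs of predictable stopping times, and disjointifying these graphs, using that the restriction of a predictable stopping time to a predictable set is again predictable, produces \eqref{eq::definition_J}.

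\emph{Transfer to $\G$ and main obstacle.} Finally, replace $A$, the processes $K_\cdot(g)$ for $g$ in the countable family above, and the predictable times $\tau_n$ by $\G$-predictable versions agreeing with them off a single $\P$-null set; a monotone-class argument then recovers, $\P$-a.s., all the asserted properties, namely the random-measure identity, $(i)$–$(ii)$, the kernel form, the bound $\nu(\{t\}\times\R^d)\leq1$, and the description of $J$, now with $\G$-predictability throughout, while $\sum_\ell A^\ell$ is $\P$-indistinguishable from the $\G$-predictable finite-valued increasing process $A$. The main technical point, and the one demanding the most care, is precisely this transfer together with the $\sigma$-finiteness bookkeeping: since $X$ is only assumed c\`adl\`ag and not a semi-martingale, $(1\land|x|^2)\ast\mu^X$ may be infinite, so one cannot lean on the usual semi-martingale shortcuts and must set up the exhausting partitions and the rescaling of the $A^\ell$ by hand before any disintegration or downgrading can be carried out.
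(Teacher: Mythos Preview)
Your approach is viable but genuinely different from the paper's, and the transfer step hides more work than you indicate.

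The paper does \emph{not} pass through the augmented filtration. Instead it reruns the proof of \cite[Theorem~II.1.8]{jacod2003limit} directly in the raw filtration $\G$, making two substitutions: the auxiliary dominating function $V$ is replaced by the strictly positive $\widetilde\cP(\G)$-measurable function from \cite[Lemma~6.5]{neufeld2014measurability} satisfying $V\ast\mu^X\leq1$ (this handles the $\sigma$-finiteness issue in one stroke, without your exhausting partition and rescaling), and the dual predictable projection $(V\ast\mu^X)^p$ is taken to be the raw-filtration compensator of \cite[Appendix~I, Theorem~12]{dellacherie1982probabilities}, which is $\G$-predictable by construction. For the good version the paper follows \cite[Proposition~II.1.17]{jacod2003limit}, invoking \cite[Theorem~IV.88]{dellacherie1978probabilities} to exhaust the $\G$-optional set $\{\Delta X\neq0\}$ by $\G$-stopping times and the $\G$-predictable set $J$ by $\G$-predictable stopping times. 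Because every object is built with $\G$-measurability from the start, the ``identically'' statements for $\nu(\{t\}\times\R^d)\leq1$ and for the description of $J$ come for free.

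Your route---work in $\G^\P_\smallertext{+}$, then downgrade---is legitimate, but two points are underspecified. First, replacing $K_\cdot(g)$ by $\G$-predictable versions for countably many $g$ does not immediately yield a kernel on $(\R^d,\cB(\R^d))$ given $(\Omega\times[0,\infty),\cP(\G))$: you must reassemble a genuine kernel from these versions (standard on a Polish space, but a step that has to be carried out, and the reassembled kernel agrees with the original only $\P\otimes\d A$-a.e.). Second, after the transfer you obtain the good-version properties only $\P$-a.s., whereas the lemma asks for them \emph{identically}; fixing this requires redefining $\nu$ on a $\P$-null set while keeping the kernel form and the $\G$-predictable exhaustion of $J$ intact, which is possible but not automatic since the null set need not lie in $\cG_{0}$. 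The paper's direct approach sidesteps both issues.
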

	
	The random measure $\nu$ constructed in the preceding lemma is the $(\G,\P)$--predictable compensator of $\mu^X$. When it is necessary to emphasise the probability measure, we write $\nu^\P$.
	
	\medskip
	A triple $(\sfB, \sfC, \nu)$, consisting of an $\R^d$-valued process $\sfB = (\sfB_t)_{t \in [0,\infty)}$, an $\R^{d \times d}$-valued process $\sfC = (\sfC_t)_{t \in [0,\infty)}$, and a random measure $\nu$ on $[0,\infty) \times \R^d$, is referred to as the $(\G,\P)$–semi-martingale characteristics of $X$ (relative to $h$) if $(\sfB, \sfC)$ is $\P$-indistinguishable from $(B^\prime, \langle X^c \rangle)$, and $\nu$ coincides $\P$--a.s. with the $(\G,\P)$--predictable compensator of $\mu^X$.
	
	\begin{remark}\label{rem::equivalence_semimartingale}
		Although the notion of a semi-martingale and its characteristics depend on the filtration, for processes adapted to $\G$, the various notions and their characteristics relative to $\G$, $\G_\smallertext{+}$, or $\G^\P_\smallertext{+}$ coincide$;$ see {\rm\cite[Proposition 2.2]{neufeld2014measurability}}. This actually extends to any filtration $\H = (\cH_t)_{t \in [0,\infty)}$ satisfying $\cG_t \subseteq \cH_t \subseteq \cG^\P_{t\smallertext{+}}$, as follows immediately from a close examination of the proof of the aforementioned result. Moreover, the optional quadratic variations constructed under these different filtrations coincide up to a $\P$–null set, that is, $[X]^{(\H,\P)} = [X]^{(\G,\P)} = [X]^{(\G_\tinytext{+},\P)} = [X]^{(\G^\smalltext{\P}_\tinytext{+},\P)}$, \textnormal{$\P$–a.s.}, by {\rm\cite[Theorem I.4.47.a)]{jacod2003limit}} and {\rm\cite[Theorem 4.3.3]{weizsaecker1990stochastic}}$;$ we thus simply write $[X]^{(\P)}$. Furthermore, the continuous local martingale parts of $X$ relative to $\G$, $\G_\smallertext{+}$, and $\G^\P_\smallertext{+}$ are $\P$-indistinguishable.
	\end{remark}
	
	Lastly, the following result provides equivalent descriptions of the disintegration of the compensator $\nu$ relative to an auxiliary process, helping to simplify the formulation of the semi-martingale laws in \eqref{eq::semi_martingale_laws}.
	\begin{lemma}\label{lem::absolute_continuity_nu}
		Let $C = (C_t)_{t \in [0,\infty)}$ be a real-valued, right-continuous and $\P$--{\rm a.s.} non-decreasing, $\G$-predictable process starting at zero. Suppose that $X$ is an $\R^d$-valued, c\`adl\`ag, $(\G,\P)$--semi-martingale, and let $\nu$ be the $(\G,\P)$--predictable compensator of its jump measure $\mu^X$. There exists a $(\G,\P)$--localising sequence $(\tau_n)_{n \in \N}$ of $\G$--predictable stopping times satisfying $\E^\P[(|x|^2 \land 1)\ast\nu_{\tau_\smalltext{n}}] < \infty$ for each $n \in \N$. Moreover, the following are equivalent
		
		\begin{enumerate}
		\item[$(i)$] $\displaystyle (|x|^2 \land 1) \ast\nu = \int_0^\cdot\int_{\R^\smalltext{d}}(|x|^2 \land 1)\nu(\d s, \d x) \ll C$, \textnormal{$\P$--a.s.}$;$
		
		\item[$(ii)$] $\nu(\d t, \d x) = \mathsf{K}_{t}(\d x) \d C_t$, $\P$--{\rm a.s.}, for a kernel $\mathsf{K}$ on $(\R^d,\cB(\R^d))$ given $(\Omega\times[0,\infty),\cP(\G))$$;$
		
		\item[$(iii)$] if $\nu(\d t, \d x) = K_{t}(\d x)\d A_t$, \textnormal{$\P$--a.s.}, for a kernel $K$ on $(\R^d,\cB(\R^d))$ given $(\Omega\times[0,\infty),\cP(\G))$, and a real-valued, right-continuous and \textnormal{$\P$--a.s.} non-decreasing, $\G$-predictable process $A = (A_t)_{t \in [0,\infty)}$ starting at zero, then the compensator satisfies $\nu(\d t, \d x) = K_{t}(\d x)(\d A^\textnormal{ac}/\d C)_t\d C_t$, $\P$--{\rm a.s.}, where $\d A^\textnormal{ac}/\d C$ denotes the, \textnormal{$\P$--a.s.} defined, Radon--Nikod\'ym derivative of the absolutely continuous component of $A$ relative to $C$.
		\end{enumerate}
	\end{lemma}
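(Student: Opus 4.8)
The plan is to treat the two assertions of the lemma separately, first constructing the localising sequence and then proving the equivalence of $(i)$--$(iii)$ as a cyclic chain of implications.

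\emph{The localising sequence.} Since $X$ is a $(\G,\P)$--semi-martingale, one has $\sum_{s\in(0,t]}|\Delta X_s|^2\leq[X]^{(\P)}_t<\infty$, $\P$--a.s., for every $t\in[0,\infty)$, so the process $V\coloneqq(|x|^2\land1)\ast\mu^X=\sum_{s\in(0,\cdot]}(|\Delta X_s|^2\land1)$ is a real-valued, right-continuous, $\G$-adapted, $\P$--a.s. non-decreasing, $\P$--a.s. finite-valued process starting at zero whose jumps are bounded by $1$. Localising with $\sigma_n\coloneqq\inf\{t:V_t\geq n\}$, for which $V_{\cdot\land\sigma_n}\leq n+1$, shows that $V$ is $(\G,\P)$--locally integrable, and then \Cref{lem::existence_predictable_compensator_mu} together with a standard localisation argument identifies its $(\G,\P)$--predictable compensator as $(|x|^2\land1)\ast\nu$; in particular $(|x|^2\land1)\ast\nu$ is $\G$-predictable, right-continuous, $\P$--a.s. non-decreasing and $\P$--a.s. finite-valued. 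I would then set $\tau_n\coloneqq\inf\{t:(|x|^2\land1)\ast\nu_t\geq n\}$: because $(|x|^2\land1)\ast\nu$ is $\G$-predictable, right-continuous and non-decreasing, the set on which it is $\geq n$ is a right-closed stochastic interval $\llbracket\tau_n,\infty\rrparenthesis$ belonging to $\cP(\G)$, so $\tau_n$ is a $\G$--predictable stopping time, and $\tau_n\uparrow\infty$, $\P$--a.s., by $\P$--a.s. finiteness of the compensator. Finally, $(|x|^2\land1)\ast\nu_{\tau_n-}\leq n$ and $\Delta\big((|x|^2\land1)\ast\nu\big)_{\tau_n}\leq\nu(\{\tau_n\}\times\R^\smalltext{d})\leq1$ (good version in \Cref{lem::existence_predictable_compensator_mu}) give $(|x|^2\land1)\ast\nu_{\tau_n}\leq n+1$, hence $\E^\P[(|x|^2\land1)\ast\nu_{\tau_n}]\leq n+1<\infty$.

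\emph{The equivalence.} I would establish $(ii)\Rightarrow(i)\Rightarrow(iii)\Rightarrow(ii)$. The implication $(ii)\Rightarrow(i)$ is immediate, since $\nu(\d t,\d x)=\mathsf{K}_t(\d x)\d C_t$ yields $(|x|^2\land1)\ast\nu_t=\int_0^t\big(\int_{\R^\smalltext{d}}(|x|^2\land1)\mathsf{K}_s(\d x)\big)\d C_s$, which is absolutely continuous with respect to $C$. For $(i)\Rightarrow(iii)$, I would start from a representation $\nu(\d t,\d x)=K_t(\d x)\d A_t$, $\P$--a.s., and take a $\cP(\G)$-measurable version of the pathwise Lebesgue decomposition $\d A_s=Y_s\,\d C_s+\d A^\textnormal{s}_s$, with $Y=\d A^\textnormal{ac}/\d C\geq0$ predictable and $\d A^\textnormal{s}\perp\d C$, $\P$--a.s. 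Writing $g_s\coloneqq\int_{\R^\smalltext{d}}(|x|^2\land1)K_s(\d x)$, a non-negative $\G$-predictable process, one has $\d\big((|x|^2\land1)\ast\nu\big)=g\,\d A=gY\,\d C+g\,\d A^\textnormal{s}$, and by $(i)$ the non-negative measure $g\,\d A^\textnormal{s}$ satisfies $g\,\d A^\textnormal{s}\leq g\,\d A\ll\d C$ while also being carried by a $\d C$-null set; hence $g\,\d A^\textnormal{s}=0$, i.e.\ $g_s=0$ for $\d A^\textnormal{s}$-almost every $s$, $\P$--a.s. Since $\nu$, being the compensator of $\mu^X$, does not charge $[0,\infty)\times\{0\}$, we may take $K_s$ supported on $\R^\smalltext{d}\setminus\{0\}$, where $g_s=0$ forces $K_s\equiv0$ (as $|x|^2\land1>0$ off the origin); therefore $K_s(\d x)\,\d A^\textnormal{s}_s\equiv0$, and $\nu(\d t,\d x)=K_t(\d x)\d A_t=K_t(\d x)(\d A^\textnormal{ac}/\d C)_t\,\d C_t$, $\P$--a.s. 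For $(iii)\Rightarrow(ii)$, I would apply $(iii)$ to the representation $\nu=K\sum_{\ell}\d A^\ell$ provided by \Cref{lem::existence_predictable_compensator_mu} and put $\mathsf{K}_t(\d x)\coloneqq(\d A^\textnormal{ac}/\d C)_t\,K_t(\d x)$, choosing a finite-valued, $\cP(\G)$-measurable version of the density (modified on a $\P\otimes\d C$-null set); this is a kernel on $(\R^\smalltext{d},\cB(\R^\smalltext{d}))$ given $(\Omega\times[0,\infty),\cP(\G))$ with $\nu(\d t,\d x)=\mathsf{K}_t(\d x)\d C_t$, $\P$--a.s.

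\emph{The main obstacle.} The one genuinely delicate input is the \emph{measurable} Lebesgue decomposition of $\d A$ with respect to $\d C$ used in $(i)\Rightarrow(iii)$ and $(iii)\Rightarrow(ii)$: one needs a $\cP(\G)$-measurable version of $\d A^\textnormal{ac}/\d C$ together with a pathwise-singular residual $\d A^\textnormal{s}$. This is classical---for instance, applying the Radon--Nikod\'ym theorem to the Dol\'eans measures of $A$ and $C$ on $\cP(\G)$ after localisation, and then checking that the measure-theoretic decomposition on the product space descends to the pathwise one---but it must be invoked carefully. Two subsidiary points also require attention: the passage from ``$g\,\d A^\textnormal{s}=0$'' to ``$K_s(\d x)\,\d A^\textnormal{s}_s\equiv0$'' relies on $\nu$ being carried by $[0,\infty)\times(\R^\smalltext{d}\setminus\{0\})$, so that the potentially infinite L\'evy-type kernel $K_s$ vanishes exactly where $g_s$ does; and the first assertion must produce a \emph{predictable} localising sequence, which is precisely why one passes through the compensator $(|x|^2\land1)\ast\nu$ rather than working directly with $\mu^X$.
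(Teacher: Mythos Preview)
Your proposal is correct and follows essentially the same route as the paper: the same cyclic chain $(ii)\Rightarrow(i)\Rightarrow(iii)\Rightarrow(ii)$, the same Lebesgue decomposition of $\d A$ relative to $\d C$, and the same reduction ``$g\,\d A^{\textnormal{s}}=0$ together with $\nu$ not charging $\{0\}$ forces $K_s\,\d A^{\textnormal{s}}_s=0$''. The only notable difference is in execution: for the predictable density $\d A^{\textnormal{ac}}/\d C$ the paper writes it down explicitly as a $\limsup$ of difference quotients (Doob's construction), whereas you appeal abstractly to Dol\'eans measures; and for the localising sequence the paper simply cites \cite[II.2.13]{jacod2003limit} and then upgrades to $\G$-predictable times via \cite{dellacherie1978quelques}, while you construct $\tau_n$ directly from the predictable compensator---both are fine.
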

	
	Uniqueness of the kernel $\mathsf{K}$ in \Cref{lem::absolute_continuity_nu}.$(ii)$ is ensured by the following result. The assumptions are satisfied since there exists a $\cP(\G)\otimes\cB(\R^d)$--predictable process $V > 0$ satisfying $\E^\P[V\ast\nu_\infty] = \E^\P[V\ast\mu^X_\infty] \leq 1$ (see \cite[Lemma 6.5]{neufeld2014measurability}).
	\begin{lemma}\label{lem::uniqueness_of_kernel}
		Let $\mu$ be a $\sigma$-finite measure on a measurable space $(\Omega,\cF)$ and $(K,K^\prime)$ be kernels on a measurable space $(\overline{\Omega},\overline{\cF})$ given $(\Omega,\cF)$ that are $\mu$--{\rm a.e.} $\sigma$-finite. Suppose that $\overline{\cF}$ is separable $($that is, countably generated$)$. If $\mu\otimes K = \mu\otimes K^\prime$, then $K = K^\prime$ up to a $\mu$--null set.
	\end{lemma}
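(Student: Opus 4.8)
The plan is to reduce the statement to the case of \emph{finite} kernels, where it follows from the classical $\pi$-$\lambda$ argument applied to a countable generating family. Since $\mu$ is $\sigma$-finite, write $\Omega=\bigsqcup_{i\in\N}\Omega_i$ with $\Omega_i\in\cF$ and $\mu(\Omega_i)<\infty$; as it suffices to prove $K=K'$ up to a $\mu|_{\Omega_i}$-null set for each $i$, we may assume $\mu$ is finite. The crucial claim, discussed at the end, is that $\mu\otimes K$ is then $\sigma$-finite. Granting it, fix a $\cF\otimes\overline\cF$-measurable map $h:\Omega\times\overline\Omega\longrightarrow(0,\infty)$ with $\int h\,\d(\mu\otimes K)<\infty$ and put $\widetilde K_\omega(\d\bar\omega)\coloneqq h(\omega,\bar\omega)\,K_\omega(\d\bar\omega)$, and analogously $\widetilde K'$. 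Then $\widetilde K,\widetilde K'$ are kernels, one has $\mu\otimes\widetilde K=h\cdot(\mu\otimes K)=h\cdot(\mu\otimes K')=\mu\otimes\widetilde K'$ because we integrate the same density against the same measure, and $\omega\longmapsto\widetilde K_\omega(\overline\Omega)=\int h(\omega,\cdot)\,\d K_\omega$ and $\omega\longmapsto\widetilde K'_\omega(\overline\Omega)$ are $\mu$-integrable by the choice of $h$. Modifying on a $\mu$-null set, we may thus take $\widetilde K,\widetilde K'$ to be finite kernels whose total masses, and hence all the functions $\omega\longmapsto\widetilde K_\omega(A)$, $\omega\longmapsto\widetilde K'_\omega(A)$ for $A\in\overline\cF$, are $\mu$-integrable.

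Now choose a countable $\pi$-system $\cE$ with $\overline\Omega\in\cE$ and $\sigma(\cE)=\overline\cF$, which is possible because $\overline\cF$ is countably generated. For $A\in\cE$ and $F\in\cF$,
\[
\int_F\widetilde K_\omega(A)\,\mu(\d\omega)=(\mu\otimes\widetilde K)(F\times A)=(\mu\otimes\widetilde K')(F\times A)=\int_F\widetilde K'_\omega(A)\,\mu(\d\omega),
\]
and since both integrands are $\mu$-integrable this forces $\widetilde K_\cdot(A)=\widetilde K'_\cdot(A)$ off a $\mu$-null set $N_A$. On the complement of the $\mu$-null set $N\coloneqq\bigcup_{A\in\cE}N_A$ the finite measures $\widetilde K_\omega$ and $\widetilde K'_\omega$ agree on the $\pi$-system $\cE$ and on $\overline\Omega$, hence coincide by Dynkin's lemma; since $h(\omega,\cdot)>0$, integrating $\1_B/h(\omega,\cdot)$ against $\widetilde K_\omega=\widetilde K'_\omega$ gives $K_\omega(B)=K'_\omega(B)$ for all $B\in\overline\cF$. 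Reassembling the countably many decompositions of $\Omega$ then yields $K=K'$ up to a $\mu$-null set.

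The only delicate point is the claim that $\mu\otimes K$ is $\sigma$-finite, equivalently the existence of the jointly measurable, strictly positive, $\mu$-a.e.\ $K_\omega$-integrable density $h$; this is a \emph{measurable} version of the $\sigma$-finiteness of each $K_\omega$, and it is here that separability of $\overline\cF$ is needed. I would first push everything forward along the coding map $\overline\Omega\ni\bar\omega\longmapsto(\1_{E_n}(\bar\omega))_{n\in\N}\in\{0,1\}^\N$ attached to a countable generator $(E_n)_n$ of $\overline\cF$, which reduces to the case where $\overline\Omega$ is standard Borel without affecting any hypothesis or the conclusion. On a standard Borel space the length-$m$ cylinders $U_m(\bar\omega)$ form a measurable neighbourhood basis with $(\omega,\bar\omega)\longmapsto K_\omega(U_m(\bar\omega))$ jointly measurable, the sets $\{(\omega,\bar\omega):K_\omega(U_m(\bar\omega))\le n\}$ have finite $\mu\otimes K$-measure, and the residual set of points carrying infinite local mass is handled by iterating the same construction along the trace $\sigma$-algebras (or by invoking a measurable selection theorem); summing suitably normalised contributions produces $h$. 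I expect this measurable-selection step to be the main obstacle. In the intended application of \Cref{lem::absolute_continuity_nu}.$(ii)$ it is, however, automatic: the strictly positive process $V$ there provides a strictly positive $(\mu\otimes K)$-integrable density outright, so $\mu\otimes K$ is $\sigma$-finite by inspection.
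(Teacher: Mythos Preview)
Your reduction to finite kernels via a strictly positive jointly measurable density $h$ is valid, and you have correctly isolated the only nontrivial step as the $\sigma$-finiteness of $\mu\otimes K$. (Your iterative cylinder sketch for this does not obviously terminate---e.g., if $K_\omega=\sum_n\delta_{x_n(\omega)}$ with $\{x_n(\omega)\}_n$ dense in $\overline\Omega$, every nonempty cylinder has infinite $K_\omega$-mass at every depth, and passing to the trace on the residual set changes nothing---but you are right that the intended application supplies $h$ explicitly via the process $V$.) The paper takes a different and shorter route that never constructs $h$ or touches $\mu\otimes K$: from $\int_A K(\omega,B)\,\mu(\d\omega)=(\mu\otimes K)(A\times B)=(\mu\otimes K')(A\times B)$ and the $\sigma$-finiteness of $\mu$ alone it obtains $K(\cdot,B)=K'(\cdot,B)$ $\mu$-a.e.\ for \emph{every} $B\in\overline\cF$ (the argument allows the value $+\infty$), then fixes a countable generating family, unions the countably many exceptional null sets, and for $\omega$ outside this union invokes a cited uniqueness result for $\sigma$-finite measures agreeing on a generating system. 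The paper's route buys brevity and works entirely fibrewise; yours buys transparency, since the measurable-uniformisation of the fibrewise $\sigma$-finiteness---which is the genuine content in both arguments---is made explicit rather than absorbed into a citation.
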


	\subsection{Stochastic integrals without the usual conditions}\label{sec::stochastic_integrals}
	
	Let $M = (M_t)_{t \in [0,\infty)}$ be an $\R^d$-valued, right-continuous, $\G$-adapted, $(\G,\P)$--locally square-integrable martingale starting at zero. Let $C = (C_t)_{t \in [0,\infty)}$ be a right-continuous and $\P$--a.s. non-decreasing, $\G$-predictable process starting at zero satisfying $\langle M \rangle \ll C$, $\P$--a.s., component-wise. We then choose a factorisation $\langle M \rangle = \pi \bcdot C$, $\P$--a.s., where $\pi = (\pi_t)_{t \in [0,\infty)}$ is an $\S^d_\smallertext{+}$-valued, $\G$-predictable process (see \cite[Section 3.1]{shiryaev2002vector} or the proof of \cite[Proposition II.2.9]{jacod2003limit}). We denote by $\H^2_{\rm loc}(M;\G,\P)$ the space of $\R^d$-valued, $\G$-predictable processes $Z = (Z_t)_{t \in [0,\infty)}$ for which there exists a $(\G,\P)$--localising sequence $(\tau_n)_{n \in \N}$ such that
	\begin{equation*}
		\E^\P\bigg[\int_0^{\tau_\smalltext{n}} Z^\top_r \pi_r Z_r \d C_r \bigg] < \infty, \; n \in \N,
	\end{equation*}
	and then denote by $Z \bcdot M$ or $\int_0^\cdot Z_s \d M_s$ the vector stochastic integral of $Z$ with respect to $M$ in the sense of \cite[Theorem III.6.4]{jacod2003limit}. Whenever the underlying probability measure is clear from the context, we will omit the reference to $\P$. We note here that this space could equivalently be defined by $(\G_\smallertext{+},\P)$--localising sequences 
	or even by $(\G,\P)$--localising sequences of $\G$--predictable stopping times 
	(see the proof of {\rm\cite[Theorem VI.84]{dellacherie1982probabilities}}). 
	
	\medskip
	The stochastic integral $Z\bcdot M$ is \emph{a priori} a real-valued, $\G_\smallertext{+}$-adapted, $(\G_\smallertext{+},\P)$--locally square-integrable martingale with $\P$--a.s. c\`adl\`ag paths starting at zero. The properties that the vector stochastic integral ought to satisfy describe it uniquely up to $\mathbb{P}$-indistinguishability; therefore, we can always implicitly choose one representative---also referred to as $\P$-version in what follows. It is in fact the case that one can choose a $\P$-version which is right-continuous everywhere (see \cite[Theorem 3.2.6 and Theorem 4.3.3]{weizsaecker1990stochastic}). Moreover, because $M$ is right-continuous and $\G$-adapted, we will see in \Cref{prop::good_version_stochastic_integral} that there exists a `good' representative of the vector stochastic integral that is $\G$-adapted, with all of its paths being right-continuous. 
	
	\medskip
	We further denote by $\H^2(M;\G,\P)$ the subset of $\H^2_\text{loc}(M;\G,\P)$ consisting of those processes $Z$ satisfying
	\begin{equation*}
		\|Z\|^2_{\H^{\smalltext{2}}(M;\G,\P)} \coloneqq \E^\P\bigg[\int_0^\infty Z^\top_r \pi_r Z_r \d C_s \bigg] < \infty,
	\end{equation*}
	which then yields a true $(\G_\smallertext{+},\P)$--square-integrable martingale $Z \bcdot M$. Moreover, the space $\H^2(M;\G,\P)$ together with $\|\cdot\|_{\H^{\smalltext{2}}(M;\G,\P)}$ forms a complete semi-normed space such that $\|Z - Z^\prime\|_{\H^{\smalltext{2}}(M;\G,\P)} = 0$ implies $Z \bcdot X = Z^\prime \bcdot X$ up to $\P$-indistinguishability. For additional background, we refer to \cite[Section III.6]{jacod2003limit}.

	\medskip
	We turn to the construction of the stochastic integral relative to the compensated jump measure $\mu^X - \nu$ of an $\R^d$-valued, càdlàg, $\G$-adapted process $X$. First, we note that by \cite[Theorem B, page xiii, and Remark E, page xvii]{dellacherie1982probabilities}, and subsequently \cite[Theorem IV.88.(a), page 139]{dellacherie1978probabilities}, the optional set $D \coloneqq \{\Delta X \neq 0\}$ is the union of a sequence of disjoint graphs of $\G$--stopping times. Let $C$ be a right-continuous and $\P$--a.s. non-decreasing, $\G$-predictable process starting at zero\footnote{The same process $C$ can be chosen to satisfy the required properties for both the vector stochastic integral and the stochastic integral with respect to a compensated random measure.} satisfying $\nu(\d t, \d x) = K_{t}(\d x)\d C_t$, $\P$--a.s., for some transition kernel $K$ on $(\R^d, \cB(\R^d))$ given $(\Omega \times [0,\infty), \cP(\G))$; compare with \eqref{eq::representation_nu}. We denote by $\nu$ a `good version' of the $(\G,\P)$--predictable compensator as described in \Cref{lem::existence_predictable_compensator_mu}. For an $\cF\otimes\cB([0,\infty))\otimes\cB(\R^d)$-measurable, $[-\infty,\infty]$-valued function $U$, we define
	\begin{equation*}
		\widehat U_t(\omega) \coloneqq \int_{\{t\} \times \R^\smalltext{d}} U_t(\omega;x) \nu(\omega; \d t, \d x), \; (\omega,t) \in \Omega \times [0,\infty),
	\end{equation*}
	and then $\widetilde U_t(\omega) \coloneqq U_t(\omega;\Delta X_t(\omega))\1_{\{\Delta X_\smalltext{t}(\omega) \neq 0\}} - \widehat U_t(\omega)$. Recall that we are using the convention $\infty-\infty = -\infty$ throughout. Note that $\widetilde U$ is a $\G$-optional process with\footnote{See \eqref{eq::definition_J} for the definition of $J$.} $\{\widetilde U \neq 0\} \subseteq D \cup J$, and therefore $\{\widetilde U \neq 0\}$
	is the countable union of disjoint graphs of $\G$--stopping times by \cite[Theorem IV.88.(a), page 139]{dellacherie1978probabilities}. Thus, $\sum_{s \in (0,\infty)} |\widetilde U_s|^2$ is well-defined and $\cG_{\infty\smallertext{-}}$-measurable. 
	
	\medskip
	We denote by $\H^2(\mu^X;\G,\P)$ the linear space of real-valued and $\cP(\G)\otimes\cB(\R^d)$-measurable functions $U$ satisfying 
	\[
		\E^\P \Bigg[ \sum_{s \in (0,\infty)} |\widetilde U_s|^2 \Bigg] < \infty.
	\]
	For each $U \in \H^2(\mu^X;\G,\P)$, we denote by $U \ast\tilde\mu^X$ the stochastic integral of $U$ with respect to the compensated random measure $\mu^X - \nu$ (see \cite[Definition II.1.27]{jacod2003limit}), that is, $U \ast\tilde\mu^X$ is the, up to $\P$-indistinguishability, unique real-valued, right-continuous, $\G_\smallertext{+}$-adapted, $(\G_\smallertext{+},\P)$--purely discontinuous local martingale with $\Delta (U \ast\tilde\mu^X) = \widetilde U$ up to $\P$-evanescence. We again omit the reference to $\P$ in the notation when the underlying probability measure is clear from the context. Since $[U\ast\tilde\mu^X] = \sum_{s \in (0,\cdot]}(\widetilde U_s)^2$ is $\P$-integrable, $U\ast\tilde\mu^X$ is even a $(\G_\smallertext{+},\P)$--square-integrable martingale. Moreover
	\[
	(U + U^\prime) \ast\tilde\mu^X = U \ast\tilde\mu^X + U^\prime \ast\tilde\mu^X,\; \P\text{\rm--a.s.}, \; \text{for}\; \text{any}\; (U,U^\prime) \in \big(\H^2(\mu^X;\G,\P)\big)^2. 
	\]
	
	By \cite[Theorem II.1.33.a)]{jacod2003limit}, the predictable quadratic variation relative to $(\G_\smallertext{+},\P)$ of the process $U \ast\tilde\mu^X$ is given by
	\begin{align*}
		\langle U \ast\tilde\mu^{X} \rangle^{(\G_\tinytext{+},\P)}_t(\omega) 
		&= (U - \widehat U)^2\ast \nu_t(\omega) + \sum_{s \in (0,t]} \big( 1-a_s(\omega) \big) |\widehat U_s(\omega)|^2 \\
		&= \int_{(0,t]} \Bigg( \int_{\R^\smalltext{d}}\big(U_s(\omega; x) - \widehat U_s(\omega)\big)^2 K_{\omega,s}(\d x)  + \big(1 - a_s(\omega)\big) \bigg( \int_{\R^\smalltext{d}} U_s(\omega;x) K_{\omega,s}(\d x) \bigg)^2 \Delta C_s(\omega) \Bigg) \d C_s(\omega) \\
		&= \int_{(0,t]} \Bigg( \int_{\R^\smalltext{d}}\bigg(U_s(\omega; x) - \int_{\R^\smalltext{d}} U_s(\omega;x)K_{\omega,s}(\d x)\Delta C_s(\omega)\bigg)^2 K_{\omega,s}(\d x)  \\
		&\quad+ \big(1 - K_{\omega,s}(\R^d)\Delta C_s(\omega)\big) \bigg( \int_{\R^\smalltext{d}} U_s(\omega;x) K_{\omega,s}(\d x) \bigg)^2 \Delta C_s(\omega) \Bigg) \d C_s(\omega), \; t \in [0,\infty), \; \text{for $\P$--a.e. $\omega \in \Omega$}.
	\end{align*}
	Since $C$ is only $\P$--a.s. non-decreasing, we denote by $\Delta C = (\Delta C_t)_{t \in [0,\infty)}$ the $\G$-predictable process that coincides with the jump process of $C$, up to a $\P$--null set, defined by
	\begin{equation*}
		\Delta C_t \coloneqq 
		\begin{cases}
			\displaystyle\limsup_{n \rightarrow \infty}\big\{C_t - C_{(t\smallertext{-}1/n)\lor 0}\big\}, \;\textnormal{if this is finite}, \\
			0, \;\textnormal{otherwise.}
		\end{cases}
	\end{equation*}
	For $(\omega,s) \in \Omega \times [0,\infty)$, a measure $F$ on $(\R^d,\cB(\R^d))$, and a $\cB(\R^d)$-measurable maps $\cU : \R^d \longrightarrow \R$, we write
	\begin{align*}
		\|\cU(\cdot)\|^2_{\hat{\L}^\smalltext{2}_{\smalltext{\omega}\smalltext{,}\smalltext{s}}(F)} \coloneqq\int_{\R^\smalltext{d}}\bigg(\cU(x) - \int_{\R^\smalltext{d}}\cU(x)F(\d x) \Delta C_s(\omega)\bigg)^2 F(\d x) + \big|1-F(\R^d)\Delta C_s(\omega)\big|\bigg(\int_{\R^\smalltext{d}} \cU(x) F(\d x)\bigg)^2 \Delta C_s(\omega).
	\end{align*}
	Then $\widehat\L^2_{\omega,s}(F)$ denotes the collection of $\cB(\R^d)$-measurable maps $\cU : \R^d \longrightarrow \R$ satisfying $\|\cU(\cdot)\|_{\hat{\L}^\smalltext{2}_{\smalltext{\omega}\smalltext{,}\smalltext{s}}(F)} < \infty$. Since $K_{\omega,s}(\R^d)\Delta C_t(\omega) = \nu(\omega;\{t\}\times\R^d) \leq 1$, $t \in [0,\infty)$, $\P$--a.s., we have
	\begin{equation*}
		\langle U \ast\tilde\mu^{X} \rangle^{(\G_\tinytext{+},\P)}(\omega) = \int_0^\cdot \|U_s(\omega;\cdot)\|^2_{\hat\L^\smalltext{2}_{\smalltext{\omega}\smalltext{,}\smalltext{s}}(K_{\smalltext{\omega}\smalltext{,}\smalltext{s}})} \d C_s(\omega), \; \text{for $\P$--a.e. $\omega \in \Omega$},
	\end{equation*}
	which implies that $U_s(\omega;\cdot) \in \widehat\L^2_{\omega,s}(K_{\omega,s})$ for $\P\otimes \mathrm{d}C$--a.e. $(\omega,s) \in \Omega \times [0,\infty)$, whenever $U \in \H^2(\mu^X;\G,\P)$. Conversely, if we are given a $\cP(\G)\otimes\cB(\R^d)$-measurable function $U$, the map $(\omega, s) \longmapsto \|U_s(\omega; \cdot)\|^2_{\hat\L^2_{\omega, s}(K_{\omega, s})}$ is $\G$-predictable. Furthermore, if 
	\begin{equation*}
		\|U\|^2_{\H^\smalltext{2}(\mu^\smalltext{X}; \G, \P)} \coloneqq \E^\P \bigg[ \int_{(0,\infty)} \|U_s(\cdot)\|^2_{\hat\L^\smalltext{2}_\smalltext{s}(K_\smalltext{s})}  \d C_s \bigg] < \infty,
	\end{equation*}
	then $U \in \H^2(\mu^X; \G, \P)$ (see \cite[Theorem II.1.33.a)]{jacod2003limit}), and we can thus associate to such a $U$ the stochastic integral $U \ast \tilde{\mu}^X$. The space $\H^2(\mu^X; \G, \P)$, together with the semi-norm $\|\cdot\|_{\H^\smalltext{2}(\mu^\smalltext{X}; \G, \P)}$, forms a complete semi-normed space, such that $\|U - U^\prime\|^2_{\H^\smalltext{2}(\mu^\smalltext{X}; \G, \P)} = 0$ implies $U \ast \tilde{\mu}^X = U^\prime \ast \tilde{\mu}^X$ up to $\P$-indistinguishability. For $U \in \H^2(\mu^X;\G,\P)$, define $U^\prime_s(\omega;x) \coloneqq U_s(\omega;x)\1_{\Omega\times[0,\infty)\setminus N}(\omega,s)$, where
	\begin{equation*}
		N \coloneqq \big\{(\omega,s) \in \Omega \times [0,\infty) : \|U_s(\omega;\cdot)\|_{\hat{\L}^\smalltext{2}_{\smalltext{\omega}\smalltext{,}\smalltext{s}}(K_{\smalltext{\omega}\smalltext{,}\smalltext{s}})} = \infty\big\} \in \cP(\G).
	\end{equation*}
	Then $\|U - U^\prime\|^2_{\H^2(\mu^X;\G,\P)} = 0$ and $U^\prime_s(\omega;\cdot) \in \widehat\L^2_{\omega,s}(K_{\omega,s})$ for each $(\omega,s) \in \Omega \times [0,\infty)$. Therefore, we will implicitly assume that any $U \in \H^2(\mu^X;\G,\P)$ satisfies $U_s(\omega;\cdot) \in \widehat\L^2_{\omega,s}(K_{\omega,s})$ for every $(\omega,s) \in \Omega \times [0,\infty)$.
	
	\medskip
	As before, this type of stochastic integral is uniquely defined only up to $\P$-indistinguishability and is not necessarily $\G$-adapted for all $\omega \in \Omega$. However, the next result implies that we can choose such a $\P$-version.	
	\begin{proposition}\label{prop::good_version_stochastic_integral}
		Let $M = (M_t)_{t \in [0,\infty)}$ be an $\R^d$-valued, right-continuous, $\G$-adapted, $(\G, \P)$--locally square-integrable martingale, and let $\mu^X$ be the random jump measure on $[0,\infty) \times \R^d$ of an $\R^d$-valued, c\`adl\`ag, $\G$-adapted process $X = (X_t)_{t \in [0,\infty)}$. For $Z \in \H^2(X; \G^\P_\smallertext{+}, \P)$ and $U \in \H^2(\mu^X; \G^\P_\smallertext{+}, \P)$, there exist real-valued, $\G$-adapted and right-continuous $\P$-modifications of $(Z \bcdot M)^{(\P)}$ and $(U \ast \tilde{\mu}^X)^{(\P)}$.
	\end{proposition}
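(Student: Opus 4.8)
The plan is to realise the two good versions as $\P$--almost-sure, locally uniform limits of the stochastic integrals of \emph{simple} integrands --- for which the integral is given by an explicit formula that is visibly $\G$-adapted and has \emph{all} of its paths right-continuous --- and then to perform the passage to the limit in a way that preserves both features. That the simple integrals have these properties is exactly where the hypotheses enter: $M$ is right-continuous and $\G$-adapted, hence $\G$-progressive, and $\mu^X$ is the jump measure of a $\G$-adapted, c\`adl\`ag process, so that $\mu^X$ and (via \Cref{lem::existence_predictable_compensator_mu}) its compensator $\nu$ are built from $\G$-adapted, resp.\ $\G$-predictable, data.

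\textbf{The vector stochastic integral.} Using $\cP(\G) = \cP(\G_\smallertext{+})$, $\langle M\rangle^{(\G^\P_\smallertext{+},\P)} = \langle M\rangle^{(\G,\P)}$ (\Cref{lem::equivalence_loc_square_integrable_martingale}), and the standard fact that every $\G^\P_\smallertext{+}$-predictable process agrees, off a $\P$--evanescent set, with a $\G$-predictable one, we may replace $Z$ by a $\G$-predictable representative, so that $Z\in\H^2(M;\G,\P)$ and $Z\bcdot M$ is unchanged up to $\P$--indistinguishability. By the usual construction of the vector stochastic integral (see \cite[Section III.6]{jacod2003limit}), the deterministic-time simple $\G$-predictable integrands of finite $\H^2(M;\G,\P)$-seminorm are dense in $\H^2(M;\G,\P)$: such an integrand is a finite sum $Z^{(n)} = \sum_k H_k\1_{(s_k,s_{k+1}]}$ with $0\leq s_0 < \dots < s_{m+1}$ and $H_k$ bounded and $\cG_{s_k}$-measurable, and its integral is $I^{(n)} \coloneqq Z^{(n)}\bcdot M = \sum_k H_k\big(M_{\cdot\land s_{k+1}} - M_{\cdot\land s_k}\big)$, which is a $(\G_\smallertext{+},\P)$--square-integrable martingale that is $\G$-adapted with all paths right-continuous.

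\textbf{Passage to the limit.} Pick $Z^{(n)}\to Z$ in $\|\cdot\|_{\H^2(M;\G,\P)}$. The $\H^2$-isometry and Doob's $\L^2$-maximal inequality for right-continuous martingales give $\E^\P[\sup_t|I^{(n)}_t - I^{(m)}_t|^2]\leq 4\|Z^{(n)} - Z^{(m)}\|^2_{\H^2(M;\G,\P)}$, so along a subsequence $(n_j)_j$ the process $G_t \coloneqq \sum_j\sup_{s\leq t}|I^{(n_j)}_s - I^{(n_{j+1})}_s|$ has $\E^\P[G_\infty] < \infty$, hence $G_\infty < \infty$, $\P$--a.s. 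The key observation is that right-continuity of each $I^{(n_j)}$ yields $\sup_{s\leq t}|I^{(n_j)}_s - I^{(n_{j+1})}_s| = |I^{(n_j)}_t - I^{(n_{j+1})}_t|\lor\sup_{s\in\Q\cap[0,t)}|I^{(n_j)}_s - I^{(n_{j+1})}_s|$, which is $\cG_t$-measurable and right-continuous in $t$; thus $\{G_t < \infty\}\in\cG_t$, $G$ has right-continuous, non-decreasing paths, and for each $\omega$ the set $\{t\geq 0: G_t(\omega) < \infty\}$ equals $[0,\gamma(\omega))$ for some $\gamma(\omega)\in(0,\infty]$, since right-continuity of $G$ forbids it from being finite at a point and infinite immediately after. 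Put $\widehat I_t \coloneqq \1_{\{G_t < \infty\}}\limsup_j I^{(n_j)}_t$; on $\{G_t < \infty\}$ the increments are summable up to time $t$, so $\widehat I_t = \lim_j I^{(n_j)}_t\in\R$, whence $\widehat I_t$ is $\cG_t$-measurable, i.e.\ $\widehat I$ is $\G$-adapted. For the paths: on $[0,\gamma(\omega))$ the convergence $I^{(n_j)}(\omega)\to\widehat I(\omega)$ is locally uniform (tail estimate by $G(\omega)$), so $\widehat I(\omega)$ is right-continuous there; while $\widehat I(\omega)\equiv 0$ on $[\gamma(\omega),\infty)$, and right-continuity at $\gamma(\omega)$ holds as $\widehat I_{\gamma(\omega)}(\omega) = 0 = \lim_{t\downarrow\gamma(\omega)}\widehat I_t(\omega)$. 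Hence all paths of $\widehat I$ are right-continuous, and on the full-measure event $\{G_\infty < \infty\}$ the convergence is globally uniform, so $\widehat I$ is a right-continuous version of the $\H^2$-limit $Z\bcdot M$, hence $\P$--indistinguishable from (in particular a $\P$--modification of) $(Z\bcdot M)^{(\P)}$. For $U\ast\tilde\mu^X$ one argues identically: after reducing $U$ to a $\cP(\G)\otimes\cB(\R^d)$-representative, one approximates in $\|\cdot\|_{\H^2(\mu^X;\G,\P)}$ by simple integrands $\sum_i c_i\1_{F_i}(\omega)\1_{(a_i,b_i]}(s)\1_{B_i}(x)$ with $F_i\in\cG_{a_i}$ and $B_i$ Borel with $0\notin\overline{B_i}$, whose integrals are the finite sums $\sum_i c_i\1_{F_i}\big((\1_{B_i}\ast\mu^X)_{\cdot\land b_i} - (\1_{B_i}\ast\mu^X)_{\cdot\land a_i} - (\1_{B_i}\ast\nu)_{\cdot\land b_i} + (\1_{B_i}\ast\nu)_{\cdot\land a_i}\big)$; here $\1_{B_i}\ast\mu^X_t = \#\{s\leq t:\Delta X_s\in B_i\}$ is $\G$-adapted and has all paths right-continuous (finite on compacts since $0\notin\overline{B_i}$), and $\1_{B_i}\ast\nu$ is $\G$-predictable and right-continuous by \Cref{lem::existence_predictable_compensator_mu}, so the integrals are again $\G$-adapted with all paths right-continuous, and the $G$-construction applies verbatim.

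\textbf{The main obstacle.} I expect the genuinely delicate point to be the density of simple integrands of the above special structure in $\H^2(M;\G,\P)$ and $\H^2(\mu^X;\G,\P)$. For $Z\bcdot M$ this is a classical $\L^2$-density statement (simple predictable processes generate $\cP(\G)$, and bounded left-continuous $\G_\smallertext{-}$-adapted processes are approximated by their dyadic left-endpoint step functions). For $U\ast\tilde\mu^X$ it is more subtle: the seminorm equals $\E^\P[\int_{(0,\infty)}(\int_{\R^d}U_s^2\,\d K_s - \Delta C_s(\int_{\R^d}U_s\,\d K_s)^2)\,\d C_s]$, which is not the $\L^2$-norm of a measure on $\Omega\times[0,\infty)\times\R^d$ because of the correction term on the jump set $\{\Delta C > 0\}$ (and a naive simple integrand such as $\1_F\1_{(a,b]}\1_B$ need not even have finite seminorm), so one cannot invoke $\L^2$-density directly and must argue through the construction of the integral relative to $\tilde\mu^X$, cf.\ \cite[Chapter II]{jacod2003limit}. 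A secondary but non-trivial issue is the measurability bookkeeping in the passage to the limit: the naive ``good set'' $\{$uniform convergence on $[0,t]\}$ need not yield right-continuity at its right endpoint, and it is exactly the right-continuity in $t$ --- together with the $\cG_t$-measurability --- of the running maximal-increment process $G$ that repairs this, by forcing the finiteness set of $G$ to be right-open.
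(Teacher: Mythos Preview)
Your treatment of $Z\bcdot M$ is correct and matches the paper's approach: reduce to a $\G$-predictable integrand, approximate by elementary $\G$-predictable processes, and pass to the limit via Doob's maximal inequality. Your explicit $G$-construction for the limit is a clean way of making the passage rigorous; the paper instead invokes \cite[Theorem 4.3.3]{weizsaecker1990stochastic} as a black box for precisely this step, so the two arguments are essentially the same with different levels of detail.

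For $U\ast\tilde\mu^X$, however, the obstacle you flag --- density of simple integrands of the form $\sum_i c_i\1_{F_i}\1_{(a_i,b_i]}\1_{B_i}$ in $\H^2(\mu^X;\G,\P)$ --- is genuine, and you do not resolve it. The seminorm is not an $\L^2$-norm with respect to a measure on $\Omega\times[0,\infty)\times\R^d$, and there is no standard density result to cite; the construction of $U\ast\tilde\mu^X$ in \cite[Chapter II]{jacod2003limit} does not proceed via such an approximation. The paper sidesteps this entirely by taking a different route: it follows the proof of \cite[Theorem I.4.56]{jacod2003limit} and builds the integral directly from the jump decomposition. Concretely, one writes $\{\widetilde U \neq 0\} \subseteq \big(\bigcup_n \llbracket S_n\rrbracket\big) \cup \big(\bigcup_n \llbracket T_n\rrbracket\big)$ with the $S_n$ totally inaccessible $\G$--stopping times and the $T_n$ $\G$--predictable stopping times, sets $M^n \coloneqq \widetilde U_{S_n}\1_{\llbracket S_n,\infty\rrparenthesis} - \big(\widetilde U_{S_n}\1_{\llbracket S_n,\infty\rrparenthesis}\big)^{p}$ and $N^n \coloneqq \widetilde U_{T_n}\1_{\llbracket T_n,\infty\rrparenthesis}$, and shows that $Y^n \coloneqq \sum_{m=1}^n (M^m + N^m)$ converges to $U\ast\tilde\mu^X$ in $\cH^2$. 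Each $Y^n$ is manifestly $\G$-adapted and right-continuous (the compensators can be chosen $\G$-predictable and right-continuous by \cite[Theorem 6.6.1]{weizsaecker1990stochastic}), and one concludes exactly as for $Z\bcdot M$. This is what you should do instead of trying to force through the simple-integrand density.
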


	We will always choose right-continuous and $\G$-adapted $\P$-modifications for both types of stochastic integrals. However, when it is necessary to emphasise both the filtration and probability measure, we will write $(Z \bcdot M)^{(\G, \P)}$ and $(U \ast \tilde{\mu}^X)^{(\G, \P)}$. If only the probability measure needs to be highlighted and the underlying filtration is clear, we will use the simplified notation $(Z \bcdot M)^{(\P)}$ and $(U \ast \tilde{\mu}^X)^{(\P)}$.

	\medskip
	Lastly, we turn to orthogonal decompositions of martingales. We borrow the notation from \cite[Chapters II and III]{jacod2003limit}. Let $\widetilde\cP(\G) \coloneqq \cP(\G)\otimes\cB(\R^d)$, and let $M^\P_{\mu^\smalltext{X}}$ be the measure on $\cF\otimes\cB([0,\infty))\otimes\cB(\R^d)$ defined by
	\begin{equation*}
		M^\P_{\mu^\smalltext{X}}[A] \coloneqq \E^\P[\1_A\ast\mu^X],\; A\in \cF\otimes\cB([0,\infty))\otimes\cB(\R^d).
	\end{equation*}
	We also denote by $M^\P_{\mu^\smalltext{X}}[W]$ the integral with respect to $M^\P_{\mu^\smalltext{X}}$ of an $\cF\otimes\cB([0,\infty))\otimes\cB(\R^d)$-measurable, non-negative function $W$. Then $M^\P_{\mu^\smalltext{X}}[W|\widetilde\cP(\G)]$ denotes the, up to a $M^\P_{\mu^\smalltext{X}}$--null set, unique $\widetilde\cP(\G)$-measurable function $W^\prime$ satisfying
	\begin{equation*}
		M^\P_{\mu^\smalltext{X}}[W U] = M^\P_{\mu^\smalltext{X}}[W^\prime U],\; \text{for every non-negative and $\widetilde\cP(\G)$-measurable function $U$.}
	\end{equation*}
	For a $[-\infty,\infty]$-valued, $\cF\otimes\cB([0,\infty))\otimes\cB(\R^d)$-measurable function $W$, we let $M^\P_{\mu^\smalltext{X}}[W|\widetilde\cP] \coloneqq M^\P_{\mu^\smalltext{X}}[W^\smallertext{+}|\widetilde\cP] - M^\P_{\mu^\smalltext{X}}[W^\smallertext{-}|\widetilde\cP]$, where we again used the convention $\infty - \infty = -\infty$. The following result appears as Proposition 2.6 in \citeauthor*{possamai2024reflections} \cite{possamai2024reflections}.
	\begin{proposition}
		Let $m$ be a positive integer, let $M = (M_t)_{t \in [0,\infty)}$ be an $\R^m$-valued, right-continuous, $\G$-adapted, $(\G,\P)$--locally square-integrable martingale, and let $X = (X_t)_{t \in [0,\infty)}$ be an $\R^d$-valued, c\`adl\`ag, $\G$-adapted process. Suppose that $M^\P_{\mu^\smalltext{X}}[\Delta M^i|\widetilde\cP(\G)] = 0$ for $i \in \{1,\ldots,m\}$. For every real-valued, right-continuous, $\G_\smallertext{+}$-adapted, $(\G_\smallertext{+},\P)$--square-integrable martingale $L$, there exists a unique pair $(Z,U) \in \H^2(M;\G,\P)\times\H^2(\mu^X;\G,\P)$ such that
		\begin{equation*}
			N \coloneqq L - L_0 - Z\bcdot M - U\ast\tilde\mu^X,
		\end{equation*}
		satisfies $\langle N, M^i\rangle^{(\G_\tinytext{+},\P)} = 0$, for each $i \in \{1,\ldots,m\}$, as well as $M^\P_{\mu^\smalltext{X}}[\Delta N|\widetilde\cP(\G)] = 0$.
	\end{proposition}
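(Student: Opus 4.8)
The plan is to obtain this Galtchouk--Kunita--Watanabe-type splitting by an orthogonal projection in the Hilbert space of square-integrable martingales. First I would work in $\mathbb{H}^2 \coloneqq \H^2(\G^\P_\smallertext{+},\P)$, the space of $\P$--square-integrable $(\G^\P_\smallertext{+},\P)$--martingales null at $0$, identified up to $\P$-indistinguishability, endowed with $(A,B)\longmapsto\E^\P[A_\infty B_\infty]=\E^\P[\langle A,B\rangle^{(\P)}_\infty]$; this is a genuine Hilbert space because $\G^\P_\smallertext{+}$ is right-continuous and $\P$-complete, and by \Cref{lem::equivalence_loc_square_integrable_martingale} and \Cref{rem::equivalence_semimartingale} passing between $\G$, $\G_\smallertext{+}$ and $\G^\P_\smallertext{+}$ alters none of the objects in the statement, so no generality is lost. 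Using a factorisation $\langle M\rangle = \pi\bcdot C$, the isometry identities $\|Z\bcdot M\|_{\mathbb{H}^\smalltext{2}} = \|Z\|_{\H^\smalltext{2}(M;\G,\P)}$ and $\|U\ast\tilde\mu^X\|_{\mathbb{H}^\smalltext{2}} = \|U\|_{\H^\smalltext{2}(\mu^\smalltext{X};\G,\P)}$ from \Cref{sec::stochastic_integrals} make $Z\longmapsto Z\bcdot M$ and $U\longmapsto U\ast\tilde\mu^X$ linear isometries out of the complete semi-normed spaces $\H^2(M;\G,\P)$ and $\H^2(\mu^X;\G,\P)$, so their ranges
\[
\cL_M \coloneqq \big\{Z\bcdot M : Z\in\H^2(M;\G,\P)\big\}, \qquad \cL_\mu \coloneqq \big\{U\ast\tilde\mu^X : U\in\H^2(\mu^X;\G,\P)\big\}
\]
are closed linear subspaces of $\mathbb{H}^2$.

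The technical heart is a covariation identity: for every real-valued, right-continuous, $\G$-adapted $(\G,\P)$--local martingale $N'$ and every $U\in\H^2(\mu^X;\G,\P)$,
\[
\big\langle N', U\ast\tilde\mu^X\big\rangle^{(\P)} = \big(U\cdot M^\P_{\mu^\smalltext{X}}[\Delta N'\,|\,\widetilde\cP(\G)]\big)\ast\nu.
\]
I would establish this by computing the dual $\G$--predictable projection of $[N', U\ast\tilde\mu^X] = \sum_{s\in(0,\cdot]}\Delta N'_s\widetilde U_s$: writing $\widetilde U_s = U_s(\Delta X_s)\1_{\{\Delta X_\smalltext{s}\neq0\}} - \widehat U_s$, the first part is $h\ast\mu^X$ with $h(\omega,s,x)\coloneqq\Delta N'_s(\omega)U_s(\omega;x)$, whose dual predictable projection is $M^\P_{\mu^\smalltext{X}}[h\,|\,\widetilde\cP(\G)]\ast\nu = \big(U\cdot M^\P_{\mu^\smalltext{X}}[\Delta N'\,|\,\widetilde\cP(\G)]\big)\ast\nu$ since the $\widetilde\cP(\G)$-measurable factor $U$ pulls out of $M^\P_{\mu^\smalltext{X}}[\,\cdot\,|\,\widetilde\cP(\G)]$, while the second part $\sum_s\Delta N'_s\widehat U_s$ is carried by $\bigcup_n\llbracket\tau_n\rrbracket$ with $\tau_n$ $\G$--predictable, where the predictable projection ${}^{\mathrm{p}}(\Delta N')$ of the local martingale $N'$ vanishes, so it contributes nothing. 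Localising along a common $(\G,\P)$--localising sequence of $\G$--predictable stopping times makes these manipulations legitimate. Granting the identity, the hypothesis $M^\P_{\mu^\smalltext{X}}[\Delta M^i\,|\,\widetilde\cP(\G)]=0$ gives $\langle M^i, U\ast\tilde\mu^X\rangle^{(\P)}=0$ for all $i$ and $U$, hence $\langle Z\bcdot M, U\ast\tilde\mu^X\rangle^{(\P)} = Z^\top\bcdot\langle M, U\ast\tilde\mu^X\rangle^{(\P)} = 0$, that is, $\cL_M\perp\cL_\mu$ in $\mathbb{H}^2$.

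Now $\cK\coloneqq\cL_M\oplus\cL_\mu$ is a closed subspace, and $L-L_0\in\mathbb{H}^2$ since $L$ is a $(\G_\smallertext{+},\P)$--square-integrable martingale; I would write the orthogonal projection as $\Pi_\cK(L-L_0) = Z\bcdot M + U\ast\tilde\mu^X$ with $Z\in\H^2(M;\G,\P)$ and $U\in\H^2(\mu^X;\G,\P)$, so that $N\coloneqq L-L_0-Z\bcdot M-U\ast\tilde\mu^X$ is a $(\G_\smallertext{+},\P)$--square-integrable martingale lying in $\cK^\perp$, and then read off the two properties. For the first, given bounded $\G$-predictable $H$ and a localising sequence $(\tau_n)$ one has $H\1_{\llbracket 0,\tau_n\rrbracket}e_i\in\H^2(M;\G,\P)$, so $\E^\P\big[\int_0^\infty H\1_{\llbracket 0,\tau_n\rrbracket}\,\d\langle N, M^i\rangle^{(\P)}\big] = \E^\P\big[N_\infty\big(H\1_{\llbracket 0,\tau_n\rrbracket}\bcdot M^i\big)_\infty\big] = 0$, and varying $H$ and letting $n\uparrow\infty$ yields $\langle N, M^i\rangle^{(\G_\tinytext{+},\P)}=0$. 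For the second, I would apply the covariation identity with $N' = N$ and with $U$ the truncations $W\1_{\{|W|\leq n\}}\1_{\llbracket 0,\tau_n\rrbracket}$ of $W\coloneqq M^\P_{\mu^\smalltext{X}}[\Delta N\,|\,\widetilde\cP(\G)]$ itself: $N\perp\cL_\mu$ gives $0 = \E^\P\big[\langle N, U\ast\tilde\mu^X\rangle^{(\P)}_\infty\big] = \E^\P\big[\big(W^2\1_{\{|W|\leq n\}}\1_{\llbracket 0,\tau_n\rrbracket}\ast\nu\big)_\infty\big]$, and $n\uparrow\infty$ forces $W = 0$ $\nu$--a.e., $\P$--a.s., i.e. $M^\P_{\mu^\smalltext{X}}[\Delta N\,|\,\widetilde\cP(\G)]=0$. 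Uniqueness follows the same circle: a second admissible pair $(Z',U')$ with remainder $N'$ has, by the covariation identity read in reverse, $N'\in\cK^\perp$, so subtracting the two decompositions gives $(Z-Z')\bcdot M+(U-U')\ast\tilde\mu^X = N'-N\in\cK\cap\cK^\perp=\{0\}$, and $\cL_M\perp\cL_\mu$ then forces $\|Z-Z'\|_{\H^\smalltext{2}(M;\G,\P)}=\|U-U'\|_{\H^\smalltext{2}(\mu^\smalltext{X};\G,\P)}=0$.

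The step I expect to be the main obstacle is the covariation identity of the second paragraph: one must handle dual predictable projections of sums of jumps with merely $\G$-optional coefficients ($\Delta N'$), carefully splitting off the accessible part (the set $J$ of \Cref{lem::existence_predictable_compensator_mu} and the $\widehat U$-term) from the rest and keeping enough integrability that $M^\P_{\mu^\smalltext{X}}[\Delta N'\,|\,\widetilde\cP(\G)]$ is meaningful; this is precisely where the random-measure calculus of \cite[Chapter~II]{jacod2003limit} and a careful localisation are needed. Everything else is the routine Hilbert-space projection together with the bookkeeping above.
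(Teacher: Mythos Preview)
The paper does not prove this proposition; it quotes it verbatim from \cite[Proposition~2.6]{possamai2024reflections}. Your Hilbert-space projection argument is the standard route to Galtchouk--Kunita--Watanabe decompositions (essentially the proof in \cite[Theorem~III.4.20]{jacod2003limit}, adapted to the present filtration set-up without the usual conditions), and it is correct. The covariation identity you isolate is the right technical crux and your computation of it is sound: the compensator of $(U\Delta N')\ast\mu^X$ is indeed $(U\cdot M^\P_{\mu^\smalltext{X}}[\Delta N'\,|\,\widetilde\cP(\G)])\ast\nu$, and the $\widehat U$-contribution vanishes because the predictable projection of $\Delta N'$ is zero at the predictable times exhausting $J$.

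One small point of care: when you test with truncations $U=W\1_{\{|W|\leq n\}}\1_{\llbracket 0,\tau_n\rrbracket}$ to force $W=0$, boundedness of $U$ alone does not place it in $\H^2(\mu^X;\G,\P)$, since $K_{\cdot,s}(\R^d)$ need not be $\d C$-integrable on $\llbracket 0,\tau_n\rrbracket$. Multiplying additionally by $(|x|^2\land 1)$ and taking $(\tau_n)_{n\in\N}$ from \Cref{lem::absolute_continuity_nu}, or instead by the strictly positive $\widetilde\cP(\G)$-measurable function $V$ with $V\ast\mu^X\leq 1$ from \cite[Lemma~6.5]{neufeld2014measurability}, fixes this without altering the conclusion.
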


	\subsection{Setup and semi-martingale laws on the canonical space}\label{sec::semimartingale_laws}
	
	The previous section provided a brief recap of semi-martingale theory when dealing with a general filtration. In this section, we will fix the underlying filtered probability space and define all the corresponding processes and objects necessary for constructing second-order BSDEs. The setup is chosen precisely so that we can apply the results of \cite{neufeld2014measurability}, following the programme in \cite{nutz2013constructing} and \cite{possamai2018stochastic}. To ensure the well-posedness of the underlying BSDEs, we rely on the assumptions in \cite{possamai2024reflections}. 
	
	\medskip
	We denote by $\Omega$ the collection of all c\`adl\`ag paths $\omega : [0,\infty) \longrightarrow \R^d$ with $\omega_0 = 0$, by $\F = (\cF_t)_{t \in [0,\infty)}$ the canonical (raw) filtration generated by the canonical process $X = (X_t)_{t \in [0,\infty)}$, and we let $\cF \coloneqq \cF_{\infty\smallertext{+}} \coloneqq \cF_\infty \coloneqq \cF_{\infty\smallertext{-}} \coloneqq \sigma(\cup_{t\in[0,\infty)}\cF_t)$ and $\cF_{0\smallertext{-}}\coloneqq \{\varnothing,\Omega\}$. We endow $\Omega$ with the Skorokhod topology, which in turn makes $\Omega$ Polish and yields $\cB(\Omega) = \cF$ (see \cite[Theorem VI.1.14]{jacod2003limit}). For a probability measure $\P$ on $(\Omega,\cF)$, we then write $\cP \coloneqq \cP(\F)$, $\cP^\P \coloneqq \Pc(\F^\P_\smallertext{+})$, $\widetilde{\cP} \coloneqq \cP(\F)\otimes\cB(\R^d)$ and $\widetilde{\cP}^\P \coloneqq \cP(\F^\P_\smallertext{+})\otimes\cB(\R^d)$ for simplicity. Then, instead of stating that a process is $\cP$-measurable (resp. $\cP^\P$-measurable), we also use the terminology $\F$-predictable (resp. $\F^\P_\smallertext{+}$-predictable). If a property holds outside an $(\cF,\P)$--null set, we may also say that it holds outside a $\P$--null set, or that it holds $\P$--a.s., or for $\P$--a.e. $\omega \in \Omega$.

\medskip
We denote by $\fP(\Omega)$ the collection of all probability measures on $(\Omega,\cF)$ and endow it with the topology of convergence in distribution, that is, the coarsest topology for which $\P \longmapsto \E^\P[f]$ is continuous for every bounded and continuous function $f : \Omega \longrightarrow \R$. Since $\Omega$ is Polish, so is $\fP(\Omega)$ (see \citeauthor*{aliprantis2006infinite} \cite[Theorem 15.15]{aliprantis2006infinite} or \citeauthor*{bertsekas1978stochastic} \cite[Proposition 7.23]{bertsekas1978stochastic}), and the Borel $\sigma$-algebra $\cB(\fP(\Omega))$ is generated by the mappings $\fP(\Omega) \ni \P \longmapsto \P[A] \in [0,1]$, $A \in \cB(\Omega)$ (see \cite[Proposition 7.25]{bertsekas1978stochastic}). Moreover, the subset
	\[
		\fP_\textnormal{sem} \coloneqq \{\P\in\fP(\Omega) : \textnormal{$X$ is an $(\F,\P)$-semi-martingale}\} \subseteq \fP(\Omega),
	\]
	is Borel-measurable (see \cite[Theorem 2.5]{neufeld2014measurability}). We denote by $\omega\otimes_\tau\tilde\omega \in \Omega$ the concatenation of two paths $(\omega,\tilde\omega) \in \Omega \times \Omega$ at an $\F$--stopping time $\tau$ defined by
	\begin{equation*}
		(\omega \otimes_\tau \tilde\omega)_s \coloneqq \omega_s \1_{\{0 \leq s < \tau(\omega)\}} + (\omega_{\tau(\omega)} + \tilde\omega_{s-\tau(\omega)})\1_{\{\tau(\omega) \leq s\}}, \; s \in [0,\infty).
	\end{equation*}
	
	For $\P \in \fP(\Omega)$ and an $\F$--stopping time $\tau$, there exists a $\P$--a.s. unique stochastic kernel $(\P^\tau_\omega)_{\omega \in \Omega}$ on $(\Omega,\cF)$ given $(\Omega,\cF_\tau)$ such that $\E^{\P^\smalltext{\tau}_\smalltext{\cdot}}[\xi]$ is a $\P$-version of the conditional expectation $\E^\P[\xi|\cF_\tau]$ of a (Borel-)measurable function $\xi$ with values in $[-\infty,\infty]$; we refer to \cite[Theorem 8.5]{kallenberg2021foundations} for its existence.
	Exactly as in \citeauthor*{stroock1997multidimensional} \cite[page 34]{stroock1997multidimensional}, we choose $(\P^\tau_\omega)_{\omega \in \Omega}$ in such a way that additionally
	\begin{equation*}
		\P^\tau_\omega\big[\tau = \tau(\omega), \; X_{\cdot \land \tau(\omega)} = \omega_{\cdot\land\tau(\omega)}\big] = 1, \; \omega \in \Omega,
	\end{equation*}
	holds. We then refer to $(\P^\tau_\omega)_{\omega \in \Omega}$ as the regular conditional probability (r.c.p.d.) of $\P$ given $\cF_\tau$. For background and further details, see also \citeauthor*{karoui2013capacities} \cite[Section 4.1]{karoui2013capacities} and \citeauthor*{dellacherie1978probabilities} \cite[pages 145--152]{dellacherie1978probabilities}. For a finite $\F$--stopping time $\tau$ and every $\omega \in \Omega$, we then define $\P^{\tau,\omega} \in \fP(\Omega)$ by
	\[
		\P^{\tau,\omega}[A] \coloneqq \P^{\tau}_{\omega}[\omega\otimes_\tau A], \; A \in \cF,
	\]
	where $\omega\otimes_\tau A \coloneqq \{\omega\otimes_\tau\tilde\omega : \tilde\omega \in A\}$,
	and $\xi^{\tau,\omega} : \Omega \longrightarrow [-\infty,\infty]$ is defined by
	\[
		\xi^{\tau,\omega}(\tilde\omega) \coloneqq \xi(\omega\otimes_\tau\tilde\omega), \; \tilde\omega \in \Omega.
	\]
	Then $\E^{\P^{\tau,\omega}}[\xi^{\tau,\omega}] = \E^{\P^\smalltext{\tau}_\smalltext{\omega}}[\xi]$, $\omega \in \Omega$, which means that $\E^{\P^{\tau,\cdot}}[\xi^{\tau,\cdot}]$ is a $\P$-version of $\E^\P[\xi|\cF_\tau]$. Lastly, for two finite $\F$--stopping times $\tau$ and $\sigma$, a process $Z$ with index set $[0,\infty)$ or $[0,\infty]$, and $\omega \in \Omega$, we write $Z^{\tau,\omega}_{\tau\smallertext{+}\smallertext{\cdot}} \coloneqq (Z_{\tau\smallertext{+}\smallertext{\cdot}})^{\tau,\omega}$ for the process given by 
	\[
	(Z^{\tau,\omega}_{\sigma\smallertext{+}\smallertext{\cdot}})_r(\tilde\omega) = Z_{\sigma(\omega\otimes_\smalltext{\tau}\tilde\omega)\smallertext{+}r}(\omega\otimes_\tau\tilde\omega), \; (\tilde\omega,r) \in \Omega \times [0,\infty).
	\]
For $\sigma = \tau$, we have $\tau(\omega\otimes_\tau\tilde\omega) = \tau(\omega)$ by Galmarino's test (see \cite[Theorem IV.100]{dellacherie1978probabilities}), and thus $(Z^{\tau,\omega}_{\tau\smallertext{+}\smallertext{\cdot}})_r(\tilde\omega) = Z_{\tau(\omega)\smallertext{+}r}(\omega\otimes_\tau\tilde\omega)$.

	\begin{remark}
		For a finite $\F$--stopping time $\tau$, it can be readily verified that
		\[
		\P^{\tau,\omega}[A] = \P\big[X_{\tau\smallertext{+}\smallertext{\cdot}}-X_\tau \in A\big|\cF_\tau\big](\omega), \; \textnormal{$\P$--a.e. $\omega \in \Omega$.}
		\]
		This implies that the family $(\mathbb{P}^{\tau, \omega})_{\omega \in \Omega}$ is the conditional distribution of the shifted process $X_{\tau\smallertext{+}\smallertext{\cdot}} - X_\tau = (X_{\tau\smallertext{+}t} - X_\tau)_{t \in [0, \infty)}$ given $\mathcal{F}_\tau$. 
	\end{remark}
	
	The next result concerns the conditioning of martingale laws. The proof of \cite[Lemma 3.3]{neufeld2016nonlinear} considers the filtration $\F$. However, since handling $\F_\smallertext{+}$ requires additional care, we provide a proof in \Cref{sec::proofs_preliminaries} for completeness.
	\begin{lemma}\label{lem::conditioning_martingale2}
		Let $\P\in\fP(\Omega)$, and let $\tau$ be a finite $\F$--stopping time. Suppose that $M = (M_t)_{t \in [0,\infty)}$ is a real-valued, right-continuous, $\F_\smallertext{+}$-adapted $(\F_\smallertext{+},\P)$--square-integrable martingale. Then $M^{\tau,\omega}_{\tau\smallertext{+}\smallertext{\cdot}}$ is a real-valued, right-continuous, $\F_\smallertext{+}$-adapted, $(\F_\smallertext{+},\P^{\tau,\omega})$--square-integrable martingale for \textnormal{$\P$--a.e.} $\omega \in \Omega$. The same assertion holds when $\F_\smallertext{+}$ is replaced by $\F$.
	\end{lemma}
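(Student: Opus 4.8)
The plan is to reduce the statement to the standard conditioning property of martingales under regular conditional probability distributions, taking care of the two subtleties specific to our setting: the filtration is the \emph{raw} filtration (or its right-continuous version, \emph{not} the completed/augmented one), and we work with square-integrable rather than merely uniformly integrable martingales. First I would recall that, since $M$ is $(\F_\smallertext{+},\P)$--square-integrable, the martingale property can be tested on a countable generating collection: for $s \leq t$ in $\Q_\smallertext{+}$ and $A$ ranging over a countable algebra generating $\cF_{s\smallertext{+}}$ (such an algebra exists because $\Omega$ is Polish and $\cF = \cB(\Omega)$), we have $\E^\P[\1_A(M_t - M_s)] = 0$; by right-continuity of $M$ and of $\F_\smallertext{+}$, these countably many identities are equivalent to the full martingale property. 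Likewise $\E^\P[\sup_{t}|M_t|^2] < \infty$ is a single integrability condition. This countability is exactly what allows a single $\P$--null set to be thrown away.

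Next I would transport these identities through the r.c.p.d. Fix a countable dense set $\Q_\smallertext{+}$ of times and the countable algebra above. For each such $s \leq t$ and each $A \in \cF_{s\smallertext{+}}$ in the algebra, the map $\omega \longmapsto \E^{\P^{\tau,\omega}}\big[\1_A^{\tau,\omega}(M_{\tau\smallertext{+}t}^{\tau,\omega} - M_{\tau\smallertext{+}s}^{\tau,\omega})\big] = \E^{\P^\smalltext{\tau}_\smalltext{\omega}}\big[\1_{\tilde A}(M_{\tau\smallertext{+}t} - M_{\tau\smallertext{+}s})\big]$, where $\tilde A \coloneqq \{\tilde\omega : \tilde\omega \in A \text{ after shifting by } \tau\}$ lies in $\cF_{(\tau\smallertext{+}s)\smallertext{+}} \supseteq \cF_\tau$, is a $\P$-version of the conditional expectation $\E^\P[\1_{\tilde A}(M_{\tau\smallertext{+}t} - M_{\tau\smallertext{+}s})\,|\,\cF_\tau]$. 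Since $M$ is an $(\F_\smallertext{+},\P)$--martingale and $\tau + s \geq \tau$, the tower property gives $\E^\P[\1_{\tilde A}(M_{\tau\smallertext{+}t} - M_{\tau\smallertext{+}s})] = 0$, hence this conditional expectation vanishes $\P$--a.s.; intersecting the corresponding null sets over the countable family of $(s,t,A)$ yields a single $\P$--null set outside of which $M^{\tau,\omega}_{\tau\smallertext{+}\smallertext{\cdot}}$ satisfies all the discrete martingale identities, and by right-continuity is a genuine $(\F_\smallertext{+},\P^{\tau,\omega})$--martingale. An entirely analogous argument with $\E^{\P^\smalltext{\tau}_\smalltext{\omega}}[\sup_t |M_{\tau\smallertext{+}t}|^2]$ — which is a $\P$-version of $\E^\P[\sup_t|M_{\tau\smallertext{+}t}|^2\,|\,\cF_\tau]$ and whose $\P$-expectation is finite — shows that $\E^{\P^{\tau,\omega}}[\sup_t|M^{\tau,\omega}_{\tau\smallertext{+}t}|^2] < \infty$ off a further $\P$--null set, giving square-integrability. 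For the version with $\F$ in place of $\F_\smallertext{+}$, one repeats the same steps with $\cF_s$ (no right-continuity of the filtration needed, only right-continuity of the paths of $M$), which is actually slightly easier.

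The one point that genuinely needs care — and which I expect to be the main obstacle — is the interaction between $\cF_\smallertext{+}$ and conditioning, precisely the reason the authors note that ``handling $\F_\smallertext{+}$ requires additional care.'' The subtlety is twofold: (a) one must check that $\1_A^{\tau,\omega}$ really equals the indicator of a set in $\cF_{(\tau\smallertext{+}s)\smallertext{+}}$ (so that adaptedness of the conditioned process to $\F_\smallertext{+}$ is not lost), which follows from the compatibility of the shift operator $\otimes_\tau$ with the $\cF_{s\smallertext{+}}$-measurability structure and Galmarino-type arguments; and (b) one must ensure that the exceptional $\P$--null set can still be taken countably, despite $\cF_{s\smallertext{+}} = \cap_{u > s}\cF_u$ being an uncountable intersection — this is resolved by the fact that $\cF_{s\smallertext{+}} = \sigma(\cup_{u \in \Q_\smallertext{+}, u > s} \cF_u)$ up to the generated $\sigma$-algebra, so a countable generating algebra still exists, and by right-continuity of $M$ it suffices to test the martingale property against $\cup_{u \in \Q_\smallertext{+}}\cF_u$ at rational times. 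Once these measurability bookkeeping points are in place, the proof is a routine application of the tower property along a countable skeleton, exactly in the spirit of the proof of \cite[Lemma 3.3]{neufeld2016nonlinear}.
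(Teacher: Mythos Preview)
Your overall strategy---reduce to a countable skeleton via the r.c.p.d., then use right-continuity of $M$ to recover the full martingale property---is exactly the paper's approach. However, your handling of the key obstacle (your point (b)) contains two errors that need repair. First, the claim that $\cF_{s\smallertext{+}}$ admits a countable generating algebra ``because $\Omega$ is Polish and $\cF = \cB(\Omega)$'' is not a valid deduction: sub-$\sigma$-algebras of countably generated $\sigma$-algebras need not be countably generated. Second, and more concretely wrong, you write $\cF_{s\smallertext{+}} = \sigma(\cup_{u \in \Q_\smallertext{+}, u > s} \cF_u)$, but the right-hand side is $\cF_\infty$, not $\cF_{s\smallertext{+}} = \cap_{u > s}\cF_u$.

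The paper avoids this entirely by \emph{never} testing directly against $\cF_{s\smallertext{+}}$. Instead it fixes $\varepsilon > 0$ and tests against bounded $\cF_{s+\varepsilon}$-measurable functions $g$---this $\sigma$-algebra \emph{is} separable, being $\sigma(X_r : r \leq s+\varepsilon)$. One shows that the pullback $\tilde g(\omega') \coloneqq g(\omega'_{\tau(\omega')+\cdot} - \omega'_{\tau(\omega')})$ is $\cF_{\tau+s+\varepsilon}$-measurable, applies optional sampling for $M$ under $\P$, and via the separability of $\cF_{s+\varepsilon}$ and a monotone class argument gets $\E^{\P^{\tau,\omega}}[M^{\tau,\omega}_{\tau+t}\mid\cF_{s+\varepsilon}] = \E^{\P^{\tau,\omega}}[M^{\tau,\omega}_{\tau+s+\varepsilon}\mid\cF_{s+\varepsilon}]$ for $\P$--a.e.\ $\omega$. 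Only \emph{then} does one condition on $\cF_{s\smallertext{+}}$ and send $\varepsilon \downarrow 0$ along rationals, using right-continuity of $M^{\tau,\omega}_{\tau+\cdot}$ on the right-hand side. Your final sentence (``it suffices to test the martingale property against $\cup_{u \in \Q_\smallertext{+}}\cF_u$ at rational times'') is groping towards exactly this, but as written it is not a correct statement; make the $\varepsilon$-trick explicit and the proof goes through.
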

	
	Unless stated otherwise, we take the following assumption as given throughout this work, without explicitly mentioning it in the statements of our results.
	\begin{standing_assumption}
		$C = (C_t)_{t \in [0,\infty)}$ is a real-valued, right-continuous and non-decreasing, $\F$-predictable process starting at zero.
	\end{standing_assumption}
		
	For $\P \in \fP_\textnormal{sem}$, we denote the $(\F,\P)$-characteristics of $X$ by $(\mathsf{B}^{\P},\mathsf{C}^{\P},\nu^{\P})$. For a finite $\F$--stopping time $\tau$, we let\footnote{Recall that $\mu \ll \nu$ means that the measure $\mu$ is absolutely continuous with respect to the measure $\nu$.}
	\begin{equation}\label{eq::semi_martingale_laws}
		\widehat{\Omega}^C_\tau \coloneqq \big\{(\omega,\P) \in \Omega \times \fP_\textnormal{sem} : (\mathsf{B}^{\P},\mathsf{C}^{\P},\nu^{\P}) \ll (C^{\tau,\omega}_{\tau\smallertext{+}\smallertext{\cdot}}-C_{\tau(\omega)}(\omega)), \; \text{$\P$--a.s.} \big\}.
	\end{equation}
	For the third characteristic, this means that $(|x|^2 \land 1)\ast\nu^\P \ll (C^{\tau,\omega}_{\tau\smallertext{+}\smallertext{\cdot}}-C_{\tau(\omega)}(\omega))$ up to a $\P$--null set. By \Cref{lem::absolute_continuity_nu}, this is equivalent to $\nu^{\P}(\d t, \d x)= \mathsf{K}^{\tau,\omega,\P}_{\cdot,t}(\d x)\d (C^{\tau,\omega}_{\tau\smallertext{+}\smallertext{\cdot}}-C_{\tau(\omega)}(\omega))_t$, $\P$--a.s., where $\mathsf{K}^{\tau,\omega,\P}$ is a kernel on $(\R^d,\cB(\R^d))$ given $(\Omega\times[0,\infty),\cP)$. The corresponding derivatives $(\mathsf{b}^{\tau,\omega,\P},\mathsf{c}^{\tau,\omega,\P},\mathsf{K}^{\tau,\omega,\P})$ are defined $\P \otimes \mathrm{d}C$--a.e. and are the $(\F,\P)$--differential characteristics of $X$ relative to $C^{\tau,\omega}_{\tau\smallertext{+}\smallertext{\cdot}}-C_{\tau(\omega)}(\omega)$. 
	
	\medskip
	Our next result is reminiscent of \cite[Theorem 2.6]{neufeld2014measurability} and implies that the differential characteristics can be chosen to be measurable with respect to the probability. To state this result, we introduce some additional notation, following \cite[Section 2.1]{neufeld2014measurability}. Let $\frM(\R^d)$ denote the collection of all (non-negative) measures on $(\R^d,\cB(\R^d))$. The subset of L\'evy measures is given by
	\begin{equation*}
		\cL\coloneqq\bigg\{\kappa \in \frM(\R^d) : \int_{\R^\smalltext{d}}(|x|^2 \land 1) \kappa(\d x) < \infty \; \text{and} \; \kappa(\{0\}) = 0\bigg\}.
	\end{equation*}
	We endow $\cL$ with a metric $d_{\cL}$ and the corresponding Borel $\sigma$-algebra $\cB(\cL)$, which are such that the following holds: for any measurable space $(Y,\cY)$, a map $\kappa : Y \longrightarrow \cL$ is $(\cY,\cB(\cL))$-measurable if and only if for all bounded and Borel-measurable functions $f : \R^d \longrightarrow \R$, the map
	\begin{equation*}
		Y \ni y \longmapsto \int_{\R^\smalltext{d}} (|x|^2 \land 1)f(x)\kappa(y;\d x) \in \R,
	\end{equation*}
	is $(\cY,\cB(\R))$-measurable.
	We refer to \cite[Section 2.1]{neufeld2014measurability} for details and further references.
	
	\medskip
	We will repeatedly, and without further mention, use the following fact: if $S$ and $T$ are Polish spaces, and if $A \subseteq S$ and $B \subseteq T$ are both endowed with the corresponding subspace topologies, then $\cB(A \times B) = \cB(A) \otimes \cB(B)$ holds; this follows from \cite[Lemma 6.4.2(i)]{bogachev2007measure}, as the properties of being Hausdorff and having a topology with a countable base are inherited by the subspaces  $A$ and $B$.
	
	\medskip
	We provide the proof of the following result in \Cref{sec::proofs_preliminaries}. 
	\begin{lemma}\label{lem::measurability_characteristics}
		Let $\tau$ be a finite $\F$--stopping time. The set $\widehat\Omega^C_\tau \subseteq \Omega \times \fP_\textnormal{sem}$ defined in \eqref{eq::semi_martingale_laws} is Borel-measurable. Moreover, there exists a Borel-measurable map
		\begin{equation*}
			\Omega \times \fP_\textnormal{sem} \times \Omega\times [0,\infty) \ni (\omega,\P,\tilde\omega,t) \longmapsto \big(\mathsf{b}^{\tau,\omega,\P}_t(\tilde\omega),\mathsf{a}^{\tau,\omega}_t(\tilde\omega),\mathsf{K}^{\tau,\omega,\P}_{\tilde\omega,t}\big) \in \R^d \times \S^{d}_\smallertext{+} \times \cL,
		\end{equation*}
		such that
		\begin{enumerate}
			\item[$(i)$] $\mathsf{b}^{\tau,\omega,\P}$ is $\F_\smallertext{+}$-progressive and $\F^\P_\smallertext{+}$-predictable{\rm;}
			\item[$(ii)$] $\mathsf{a}^{\tau,\omega}$ is $\F$-predictable{\rm;}
			\item[$(iii)$] $(\omega,\P,\tilde\omega,t,B) \longmapsto \mathsf{K}^{\tau,\omega,\P}_{\tilde\omega,t}(B)$ is a kernel on $(\R^d,\cB(\R^d))$ given $(\Omega\times\fP_\textnormal{sem} \times \Omega \times [0,\infty),\cF\otimes\cB(\fP_\textnormal{sem})\otimes \cF\otimes\cB([0,\infty))$, and, for every $(\omega,\P) \in \widehat{\Omega}^C_\tau$, the map $(\tilde\omega,t,B) \longmapsto \mathsf{K}^{\tau,\omega,\P}_{\tilde\omega,t}(B)$ is a kernel on $(\R^d,\cB(\R^d))$ given $(\Omega \times [0,\infty),\textnormal{Prog}(\F_\smallertext{+}))$ and given $(\Omega \times [0,\infty),\cP^\P)$$;$
			\item[$(iv)$] for every $(\omega,\P) \in \widehat{\Omega}^C_\tau$, the collection $(\mathsf{b}^{\tau,\omega,\P},\mathsf{a}^{\tau,\omega},\mathsf{K}^{\tau,\omega,\P})$ constitutes $(\F,\P)$--differential characteristics of $X$ relative to $C^{\tau,\omega}_{\tau\smallertext{+}\smallertext{\cdot}}-C_{\tau(\omega)}(\omega)$, that is outside a $\P$--null set
			\begin{equation*}
				\mathsf{B}^{\P} = \int_0^\cdot \mathsf{b}^{\tau,\omega,\P}_t \d (C^{\tau,\omega}_{\tau\smallertext{+}\smallertext{\cdot}}-C_{\tau(\omega)}(\omega))_t, \; \mathsf{C}^{\P} = \int_0^\cdot \mathsf{a}^{\tau,\omega}_t \d (C^{\tau,\omega}_{\tau\smallertext{+}\smallertext{\cdot}}-C_{\tau(\omega)}(\omega))_t, \; \nu^{\P}(\d t,\d x) = \mathsf{K}^{\tau,\omega,\P}_t(\d x)\d (C^{\tau,\omega}_{\tau\smallertext{+}\smallertext{\cdot}}-C_{\tau(\omega)}(\omega))_t.
			\end{equation*}
		\end{enumerate}
			\end{lemma}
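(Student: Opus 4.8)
The plan is to mirror the strategy of \cite[Theorem 2.6]{neufeld2014measurability}, which establishes joint measurability of the differential characteristics of the (unconditioned) canonical process in the probability parameter, and then to push it through the conditioning-and-concatenation operation $(\omega,\P)\mapsto\P^{\tau,\omega}$. First I would recall from \cite[Theorem 2.5]{neufeld2014measurability} and the discussion preceding the statement that $\fP_\textnormal{sem}$ is Borel, and from \cite[Theorem 2.6]{neufeld2014measurability} that there is a Borel map $\fP_\textnormal{sem}\times\Omega\times[0,\infty)\ni(\P,\tilde\omega,t)\mapsto(\mathsf b^{\P}_t(\tilde\omega),\mathsf c^{\P}_t(\tilde\omega),\mathsf K^{\P}_{\tilde\omega,t})$ that is $\F$-predictable in $(\tilde\omega,t)$ and gives a version of the differential characteristics of $X$ relative to $C$ itself, whenever these exist in the absolutely-continuous-w.r.t.-$C$ sense. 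Then I would use the concatenation map $\Phi_\tau:(\omega,\tilde\omega)\mapsto\omega\otimes_\tau\tilde\omega$, which is jointly measurable by Galmarino's test, and the measurability of $(\omega,\P)\mapsto\P^{\tau,\omega}$ on $\Omega\times\fP(\Omega)$ (standard for r.c.p.d.'s, see e.g.\ \cite[Section 4.1]{karoui2013capacities}), to transport everything.

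The key steps, in order, would be: (1) Show $\widehat\Omega^C_\tau$ is Borel. The conditions "$X$ is an $(\F,\P)$-semimartingale" and "$(\mathsf B^\P,\mathsf C^\P,\nu^\P)\ll(C^{\tau,\omega}_{\tau+\cdot}-C_{\tau(\omega)}(\omega))$ $\P$-a.s." must each be expressed as Borel conditions on $(\omega,\P)$; the first is handled by $\fP_\textnormal{sem}$ being Borel and the map $(\omega,\P)\mapsto\P^{\tau,\omega}$ being Borel into $\fP(\Omega)$, while the absolute-continuity condition, by \Cref{lem::absolute_continuity_nu}, reduces to $(|x|^2\wedge1)\ast\nu^{\P}\ll(C^{\tau,\omega}_{\tau+\cdot}-C_{\tau(\omega)}(\omega))$, which can be phrased (as in \cite{neufeld2014measurability}) via a countable family of measurability-preserving integral conditions; this is where one invokes that for shifted processes the various notions of semimartingale and characteristics coincide (\Cref{rem::equivalence_semimartingale}) and that conditioning preserves martingale/semimartingale laws (\Cref{lem::conditioning_martingale2}). (2) Define $\mathsf b^{\tau,\omega,\P}_t(\tilde\omega)$, $\mathsf a^{\tau,\omega}_t(\tilde\omega)$, $\mathsf K^{\tau,\omega,\P}_{\tilde\omega,t}$ by composing the Neufeld–Nutz map at the law $\P$ with the shifted integrator $C^{\tau,\omega}_{\tau+\cdot}-C_{\tau(\omega)}(\omega)$ in place of $C$: concretely, one would first obtain characteristics of $X$ relative to $C$ under $\P$, then divide by the Radon–Nikodým derivative of $(C^{\tau,\omega}_{\tau+\cdot}-C_{\tau(\omega)}(\omega))$ with respect to $C$ (using the "good version" and part (iii) of \Cref{lem::absolute_continuity_nu}), with the measurability in $(\omega,\P,\tilde\omega,t)$ inherited from joint measurability of $\Phi_\tau$, of $(\omega,\P)\mapsto\P^{\tau,\omega}$, and of the Neufeld–Nutz map. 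Here one exploits that $\mathsf c^{\tau,\omega}=\mathsf a^{\tau,\omega}$ does not depend on $\P$, since $\langle X^c\rangle$ and the quadratic variation do not (as recalled in the excerpt). (3) Verify the regularity claims $(i)$–$(iv)$: $\F_\smallertext{+}$-progressivity and $\F^\P_\smallertext{+}$-predictability of $\mathsf b^{\tau,\omega,\P}$, $\F$-predictability of $\mathsf a^{\tau,\omega}$, and the kernel property of $\mathsf K^{\tau,\omega,\P}$ both relative to $\textnormal{Prog}(\F_\smallertext{+})$ and to $\cP^\P$, which follow from the corresponding properties in \cite[Theorem 2.6]{neufeld2014measurability} together with the fact that $\cP=\cP(\F)=\cP(\F_\smallertext{+})$ and that predictability is preserved under the shift and under passing to the $\P$-completion; and finally $(iv)$, the identification of the integrals, which is essentially the definition of differential characteristics combined with \Cref{lem::absolute_continuity_nu}.$(iii)$ applied to $A=C$ (or to the canonical predictable process $A$ from \Cref{lem::existence_predictable_compensator_mu}) and the integrator $C^{\tau,\omega}_{\tau+\cdot}-C_{\tau(\omega)}(\omega)$.

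I expect the main obstacle to be step (1), and within it the subtle point already flagged in the excerpt: making precise, in a Borel way and simultaneously for all $(\omega,\P)$, the absolute-continuity condition for the \emph{conditioned} law $\P$ relative to the \emph{pathwise-shifted} integrator $C^{\tau,\omega}_{\tau+\cdot}-C_{\tau(\omega)}(\omega)$ — the integrator itself now depends measurably on $\omega$, so one cannot simply quote \cite{neufeld2014measurability} verbatim but must re-run its argument with a parametrised integrator. The secondary difficulty is bookkeeping around the filtrations: one must carefully track which objects are $\F$-predictable versus only $\F_\smallertext{+}$-progressive or $\F^\P_\smallertext{+}$-predictable (the drift $\mathsf b$ can only be made $\F^\P_\smallertext{+}$-predictable, not $\F$-predictable, exactly as in \cite{neufeld2014measurability}), and to ensure that the "good version" of $\nu$ and the disjoint predictable jump times of \Cref{lem::existence_predictable_compensator_mu} are chosen measurably in $(\omega,\P)$; this is routine but must be done with care since the entire later construction of the 2BSDE relies on these measurable selections.
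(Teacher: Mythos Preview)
Your proposal contains two genuine missteps. First, the map $(\omega,\P)\mapsto\P^{\tau,\omega}$ plays no role here: in the definition of $\widehat\Omega^C_\tau$, the measure $\P$ is already a semimartingale law on $\Omega$ (it will later arise as a conditioned law, but the lemma treats it as a free variable), and the task is simply to compute \emph{its} differential characteristics relative to the $\omega$-parametrised integrator $C^{\tau,\omega}_{\tau+\cdot}-C_{\tau(\omega)}(\omega)$. No r.c.p.d.\ or transport is involved; invoking it only adds confusion.

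Second, and more seriously, your step (2) would fail: you propose to ``first obtain characteristics of $X$ relative to $C$ under $\P$, then divide by the Radon--Nikod\'ym derivative of $(C^{\tau,\omega}_{\tau+\cdot}-C_{\tau(\omega)}(\omega))$ with respect to $C$''. But for $(\omega,\P)\in\widehat\Omega^C_\tau$, the characteristics of $X$ under $\P$ are absolutely continuous with respect to the \emph{shifted} integrator, not with respect to $C$ itself---indeed $C^{\tau,\omega}_{\tau+t}(\tilde\omega)=C_{\tau(\omega)+t}(\omega\otimes_\tau\tilde\omega)$ is a different process from $C_t(\tilde\omega)$, and neither need be absolutely continuous with respect to the other. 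So differential characteristics relative to $C$ need not exist, and the division you describe is ill-defined. The paper instead starts from the \emph{integral} characteristics $(\mathsf{B}^\P,\mathsf{C},\nu^\P=\mathsf{K}^\P\,\mathrm{d}\mathsf{A}^\P)$ of \cite[Theorem~2.5]{neufeld2014measurability}, which always exist for $\P\in\fP_{\textnormal{sem}}$ and are already measurable in $\P$, and then differentiates these directly against the shifted integrator via explicit $\limsup$ difference quotients, e.g.\ $\tilde{\mathsf{b}}^{\tau,\omega,\P}_t\coloneqq\limsup_n(\mathsf{B}^\P_t-\mathsf{B}^\P_{(t-1/n)\vee0})/(C^{\tau,\omega}_{\tau+t}-C^{\tau,\omega}_{\tau+(t-1/n)\vee0})$. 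This makes the joint Borel-measurability in $(\omega,\P,\tilde\omega,t)$ immediate (since the concatenation map is Borel), and yields the correct differential characteristics on $\widehat\Omega^C_\tau$ by the Lebesgue differentiation theorem applied pathwise. The Borel-measurability of $\widehat\Omega^C_\tau$ is obtained similarly, by first constructing a measurable density $\varphi^{\omega,\P}$ of the absolutely continuous part of a dominating process $R^\P$ (built from total variations of the integral characteristics) relative to the shifted integrator, and then writing $\widehat\Omega^C_\tau=\{(\omega,\P):\E^\P[\1_G(\omega,\P,\cdot)]=1\}$ for a Borel set $G$ encoding the equality $R^\P=\int\varphi^{\omega,\P}\,\mathrm{d}(C^{\tau,\omega}_{\tau+\cdot}-C_{\tau(\omega)}(\omega))$ at rational times. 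Your closing paragraph correctly identifies the obstacle (re-running \cite{neufeld2014measurability} with a parametrised integrator), but your concrete construction does not implement it.
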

	
	For $\tau \equiv 0$, we omit the dependence on $\tau$ and $\omega$ in the notation of the differential characteristics constructed above.
	
	\medskip
	We recall the following property concerning conditioning and products of semi-martingale laws from \cite[Theorem 3.1 and Proposition 4.1]{neufeld2016nonlinear}; the second property stated in $(i)$ follows from \Cref{lem::measurability_characteristics}.$(iv)$.
	\begin{proposition}\label{prop::conditioning_characteristics2}
		Let $\P \in \fP_\textnormal{sem}$, and let $\tau$ be a finite $\F$--stopping time.
		\begin{enumerate}
			\item[$(i)$] Suppose that $(\mathsf{B},\mathsf{C},\mathsf{K}(\d x)\d\mathsf{A})$ are $(\F,\P)$--semi-martingale characteristics of $X$. Then $\P^{\tau,\omega} \in  \fP_\textnormal{sem}$ and 
			\begin{equation*}
				\big(\mathsf{B}^{\tau,\omega}_{\tau\smallertext{+}\smallertext{\cdot}}-\mathsf{B}_{\tau(\omega)}(\omega),\mathsf{C^{\tau,\omega}_{\tau\smallertext{+}\smallertext{\cdot}}}-\mathsf{C}_{\tau(\omega)}(\omega),\mathsf{K}^{\tau,\omega}_{\tau\smallertext{+}\smallertext{\cdot}}(\d x)\d(\mathsf{A}^{\tau,\omega}_{\tau\smallertext{+}\smallertext{\cdot}}-\mathsf{A}_{\tau(\omega)}(\omega))\big),
			\end{equation*}
			are $(\F,\P^{\tau,\omega})$--semi-martingale characteristics of $X$ for \textnormal{$\P$--a.e.} $\omega \in \Omega$. In particular, if $(\tilde\omega,\P) \in \Omega^C_s$ for some $s \in [0,\infty)$ and $\tau \geq s$, then $(\tilde\omega\otimes_s\omega,\P^{\tau,\omega}) \in \Omega^C_{\tau\smallertext{+}s}$ and
			\[
				\big(\mathsf{b}^{s,\omega,\P}_{\tau\smallertext{+}\smallertext{\cdot}}(\omega\otimes_\tau\cdot), \mathsf{a}^{s,\omega}_{\tau\smallertext{+}\smallertext{\cdot}}(\omega\otimes_\tau\cdot), \mathsf{K}^{s,\omega,\P}_{\tau\smallertext{+}\smallertext{\cdot}}(\omega\otimes_\tau\cdot)\big),
			\]
			are $(\F,\P^{\tau,\omega})$--differential characteristics of $X$ relative to $C^{\tilde\omega\otimes_s\omega,\tau}_{\tau\smallertext{+}s\smallertext{+}\smallertext{\cdot}}-C_{(\tau\smallertext{+}s)(\tilde\omega\otimes_s\omega)}(\tilde\omega\otimes_s\omega)$ for \textnormal{$\P$--a.e. $\omega \in \Omega$}.
			\item[$(ii)$] If $\Q$ is a stochastic kernel on $(\Omega,\cF)$ given $(\Omega,\cF_\tau)$ with $\Q(\omega) \in \fP_\textnormal{sem}$ for \textnormal{$\P$--a.e.} $\omega \in \Omega$, then
			\begin{align*}
				\overline{\P}[A] \coloneqq & \iint_{\Omega\times\Omega} \big(\1_A\big)^{\tau,\omega}(\omega^\prime)\Q(\omega;\d\omega^\prime)\P(\d\omega), \; A \in \cF,
			\end{align*}
			belongs to $\fP_\textnormal{sem}$.
		\end{enumerate}
	\end{proposition}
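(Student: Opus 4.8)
The plan is to reduce statements $(i)$ and $(ii)$ to the martingale-problem characterisation of semi-martingale characteristics. The first assertion of $(i)$ and assertion $(ii)$ are, respectively, \cite[Theorem 3.1]{neufeld2016nonlinear} and \cite[Proposition 4.1]{neufeld2016nonlinear} specialised to the raw filtration $\F$, so at that level the plan is simply to invoke them; the one genuinely new point --- the ``in particular'' refinement in $(i)$ about \emph{differential} characteristics relative to $C$ --- is not covered there, and I would obtain it by combining the mechanism behind the first assertion with \Cref{lem::measurability_characteristics}.$(iv)$. Throughout, \Cref{lem::equivalence_loc_square_integrable_martingale} and \Cref{rem::equivalence_semimartingale} let me pass freely between $\F$, $\F_\smallertext{+}$ and $\F^\P_\smallertext{+}$, so I will not track the filtration below.

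For $(i)$, I would use that a triple $(\mathsf{B},\mathsf{C},\nu)$ of admissible type is the collection of $(\F,\P)$--characteristics of $X$ relative to $h$ if and only if, for every $f$ in a fixed countable, characteristic-determining family of bounded $C^2$-functions (one may take $\cos\langle u,\cdot\rangle$ and $\sin\langle u,\cdot\rangle$ for $u$ in a countable dense subset of $\R^d$), the process $M^f$ obtained by subtracting from $f(X)-f(X_0)$ the drift term $\int_0^\cdot\nabla f(X_{s-})^\top\d\mathsf{B}_s$, the diffusion term $\tfrac{1}{2}\int_0^\cdot\textnormal{Tr}[\nabla^2 f(X_{s-})\d\mathsf{C}_s]$, and the jump-compensation term $(f(X_{s-}+x)-f(X_{s-})-\nabla f(X_{s-})^\top h(x))\ast\nu$ is an $(\F,\P)$--local martingale; see \cite[Theorem~II.2.42]{jacod2003limit}. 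Since the jumps of each $M^f$ are bounded, it can be localised by stopping times into square-integrable martingales, and, pooling the countably many resulting $\P$--null sets, \Cref{lem::conditioning_martingale2} yields that $(M^f)^{\tau,\omega}_{\tau\smallertext{+}\smallertext{\cdot}}$ is an $(\F,\P^{\tau,\omega})$--local martingale for every such $f$ and $\P$--a.e. $\omega$. A computation with the shift and concatenation operators --- using $\tau(\omega\otimes_\tau\tilde\omega)=\tau(\omega)$ (Galmarino, \cite[Theorem IV.100]{dellacherie1978probabilities}), the identity $X_{\tau(\omega)\smallertext{+}\smallertext{\cdot}}(\omega\otimes_\tau\tilde\omega)=\omega_{\tau(\omega)}+\tilde\omega$, and the way $\mu^X$ and integrals against predictable integrators transform under $(\cdot)^{\tau,\omega}$ --- shows that $(M^f)^{\tau,\omega}_{\tau\smallertext{+}\smallertext{\cdot}}$ is \emph{exactly} the process ``$M^f$'' built from the shifted triple $\big(\mathsf{B}^{\tau,\omega}_{\tau\smallertext{+}\smallertext{\cdot}}-\mathsf{B}_{\tau(\omega)}(\omega),\mathsf{C}^{\tau,\omega}_{\tau\smallertext{+}\smallertext{\cdot}}-\mathsf{C}_{\tau(\omega)}(\omega),\mathsf{K}^{\tau,\omega}_{\tau\smallertext{+}\smallertext{\cdot}}(\d x)\d(\mathsf{A}^{\tau,\omega}_{\tau\smallertext{+}\smallertext{\cdot}}-\mathsf{A}_{\tau(\omega)}(\omega))\big)$. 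The converse direction of the characterisation then gives $\P^{\tau,\omega}\in\fP_\textnormal{sem}$ with this triple as its $(\F,\P^{\tau,\omega})$--characteristics, for $\P$--a.e. $\omega$.

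For the ``in particular'' statement, fix $(\tilde\omega,\P)\in\Omega^C_s$ and a stopping time $\tau\geq s$. First I would compute the shift of the $\F$-predictable process $C$: Galmarino's test and the structure of the concatenation map identify $\big(C^{s,\tilde\omega}_{s\smallertext{+}\smallertext{\cdot}}-C_s(\tilde\omega)\big)^{\tau,\omega}_{\tau\smallertext{+}\smallertext{\cdot}}$, up to a $\P$--null set, with $C^{\tilde\omega\otimes_s\omega,\tau}_{\tau\smallertext{+}s\smallertext{+}\smallertext{\cdot}}-C_{(\tau\smallertext{+}s)(\tilde\omega\otimes_s\omega)}(\tilde\omega\otimes_s\omega)$. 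Combining this with the conditioning formula for the characteristics just obtained, and with the equivalences in \Cref{lem::absolute_continuity_nu}, the absolute continuity defining $\Omega^C_s$ is preserved under conditioning, so $(\tilde\omega\otimes_s\omega,\P^{\tau,\omega})\in\Omega^C_{\tau\smallertext{+}s}$ for $\P$--a.e. $\omega$. Next, \Cref{lem::measurability_characteristics}.$(iv)$ applied to $\P$ gives, outside a $\P$--null set, $\mathsf{B}^\P=\int_0^\cdot\mathsf{b}^{s,\tilde\omega,\P}_t\,\d\big(C^{s,\tilde\omega}_{s\smallertext{+}\smallertext{\cdot}}-C_s(\tilde\omega)\big)_t$ and the analogous identities for $\mathsf{C}^\P$ and $\nu^\P$. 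Applying the shift $(\cdot)^{\tau,\omega}_{\tau\smallertext{+}\smallertext{\cdot}}$ to these identities --- integration against a predictable integrator commutes with this operation up to $\P$--null sets, by measurability of the integrand and the defining property of the r.c.p.d. --- and matching the left-hand sides with the conditioned characteristics, the almost-everywhere uniqueness of Radon--Nikod\'ym derivatives forces $\big(\mathsf{b}^{s,\omega,\P}_{\tau\smallertext{+}\smallertext{\cdot}}(\omega\otimes_\tau\cdot),\mathsf{a}^{s,\omega}_{\tau\smallertext{+}\smallertext{\cdot}}(\omega\otimes_\tau\cdot),\mathsf{K}^{s,\omega,\P}_{\tau\smallertext{+}\smallertext{\cdot}}(\omega\otimes_\tau\cdot)\big)$ to be a choice of $(\F,\P^{\tau,\omega})$--differential characteristics of $X$ relative to the process just described, for $\P$--a.e. $\omega$.

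For $(ii)$, observe that by construction $\overline\P=\P$ on $\cF_\tau$ while $\overline\P^{\tau,\omega}=\Q(\omega)$ for $\P$--a.e. $\omega$. I would verify the martingale problem for $\overline\P$ by forming the ``pasted'' triple on $\Omega$ --- equal to the $(\F,\P)$--characteristics of $X$ on $\llbracket 0,\tau\rrbracket$ and, after $\tau$, to the $(\F,\Q(\cdot))$--characteristics transported by concatenation --- where \Cref{lem::measurability_characteristics} is exactly what guarantees that this object is a well-defined, suitably measurable random measure/process, by furnishing a version of the characteristics measurable in the law. For each $f$ in the family above, the associated $M^f$ stopped at $\tau$ is an $(\F,\overline\P)$--local martingale because $\overline\P=\P$ on $\cF_\tau$ and the stopped pasted triple coincides with the $\P$--characteristics, while the identification from $(i)$, applied with the kernel $\Q$, shows that $(M^f_{\tau\smallertext{+}\smallertext{\cdot}}-M^f_\tau)^{\tau,\omega}$ equals the ``$M^f$'' built from the $\Q(\omega)$--characteristics, hence is an $(\F,\Q(\omega))$--local martingale for $\P$--a.e. $\omega$; the standard gluing of these two facts makes $M^f$ an $(\F,\overline\P)$--local martingale for every $f$, so $\overline\P\in\fP_\textnormal{sem}$. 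The main obstacle I expect is not a single deep step but the accumulated bookkeeping: the $M^f$ are only \emph{local} martingales, so the localising sequences must be chosen compatibly with the shift and the countably many exceptional $\P$--null sets (over $f$, and over a countable set of conditioning times) must be absorbed into one; every appearance of the shift/concatenation operator on integrals, jump measures and predictable processes has to be justified; and in $(ii)$ the measurability in the law needed even to \emph{define} the pasted characteristics --- the content of \Cref{lem::measurability_characteristics} --- is essential.
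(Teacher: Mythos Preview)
Your proposal is correct and follows the same approach as the paper: the first assertion of $(i)$ and assertion $(ii)$ are simply \cite[Theorem~3.1]{neufeld2016nonlinear} and \cite[Proposition~4.1]{neufeld2016nonlinear}, respectively, while the ``in particular'' refinement is obtained from \Cref{lem::measurability_characteristics}.$(iv)$. The paper in fact does not give a proof but only cites these references, whereas you additionally sketch the underlying martingale-problem mechanism; this extra detail is sound but not required.
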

	
	\begin{remark}
		For $s = 0$, the above result thus says that  $\mathsf{a}^{\tau,\omega} = \mathsf{a}_{\tau\smallertext{+}\smallertext{\cdot}}(\omega\otimes_\tau\cdot)$, \textnormal{$\d(C^{\tau,\omega}_{\tau\smallertext{+}\smallertext{\cdot}}-C_{\tau(\omega)}(\omega))$--a.e.}, \textnormal{$\P^{\tau,\omega}$--a.s.}, whenever the characteristics of $X$ relative to $(\F,\P)$ are absolutely continuous with respect to $C$.
	\end{remark}

	We denote by $\langle X^c \rangle$ the $\S^d_{\smallertext{+}}$-valued, $\F$-predictable, $\P$--a.s. non-decreasing and continuous process, which coincides $\P$--a.s. with $\langle X^{c,\P} \rangle^{(\P)}$ for every $\P \in \fP_{\textnormal{sem}}$ (see \cite[Proposition 2.5(iii)]{neufeld2014measurability}). Here $\P$--a.s. non-decreasing means that $\langle X^c \rangle_t - \langle X^c \rangle_s$ is positive semi-definite for all $0 \leq s \leq t < \infty$ outside a $\P$--null set. 
	\begin{proposition}\label{cor::shif_quadratic_variation_continuous_martingale_part}
		Let $\tau$ be a finite $\F$--stopping time and $\P\in\fP_\textnormal{sem}$. If $X$ is an $(\F,\P)$--semi-martingale, then $(X^{c,\P})^{\tau,\omega}_{\tau\smallertext{+}\smallertext{\cdot}} - X^{c,\P}_{\tau(\omega)}(\omega)$ is a version of the $(\F,\P^{\tau,\omega})$--continuous local martingale part $X^{c,\P^{\tau,\omega}}$ of $X$ for \textnormal{$\P$--a.e.} $\omega \in \Omega$. Furthermore, $\langle X^c \rangle^{\tau,\omega}_{\tau\smallertext{+}\smallertext{\cdot}} - \langle X^c \rangle_{\tau(\omega)}(\omega) = \langle X^c\rangle$, \textnormal{$\P^{\tau,\omega}$--a.s.}, for \textnormal{$\P$--a.e.} $\omega \in \Omega$.
	\end{proposition}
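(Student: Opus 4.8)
First I would dispose of the second identity, which is immediate from the conditioning of characteristics. Recall that $\langle X^c\rangle$ is, for every $\Q\in\fP_\textnormal{sem}$, a version of the second $(\F,\Q)$--characteristic of $X$ (it is $\Q$--a.s.\ equal to $\langle X^{c,\Q}\rangle^{(\Q)}$). Applying \Cref{prop::conditioning_characteristics2}$(i)$ with this choice of second characteristic gives $\P^{\tau,\omega}\in\fP_\textnormal{sem}$ and shows that $\langle X^c\rangle^{\tau,\omega}_{\tau+\cdot}-\langle X^c\rangle_{\tau(\omega)}(\omega)$ is a version of the second $(\F,\P^{\tau,\omega})$--characteristic of $X$ for $\P$--a.e.\ $\omega$; since that characteristic is also $\P^{\tau,\omega}$--a.s.\ equal to the fixed process $\langle X^c\rangle$ (because $\P^{\tau,\omega}\in\fP_\textnormal{sem}$), uniqueness of characteristics yields $\langle X^c\rangle^{\tau,\omega}_{\tau+\cdot}-\langle X^c\rangle_{\tau(\omega)}(\omega)=\langle X^c\rangle$, $\P^{\tau,\omega}$--a.s., for $\P$--a.e.\ $\omega$.

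For the first identity, write $X^{c,\P}=M^{\prime c}$, the continuous part of the local martingale $M^\prime$ in the canonical decomposition $X^\prime=X_0+M^\prime+B^\prime$ of the bounded-jump special semi-martingale $X^\prime=X-X_0-\sum_{s\in(0,\cdot]}(\Delta X_s-h(\Delta X_s))$ from \Cref{sec::semimartingales}, and split $M^\prime=M^{\prime c}+M^{\prime d}$ (all of $M^\prime,M^{\prime c},M^{\prime d}$ have bounded jumps, since $\Delta X^\prime$ and $\Delta B^\prime$ are bounded). The idea is to produce an $(\F,\P^{\tau,\omega})$--canonical decomposition of $X^\prime$ whose martingale part has continuous part $Y\coloneqq(X^{c,\P})^{\tau,\omega}_{\tau+\cdot}-X^{c,\P}_{\tau(\omega)}(\omega)$, and then conclude via the uniqueness statement in \Cref{lem::martingale_decomposition}. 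The preparatory facts are: $(a)$ the pathwise identity $(X^\prime)^{\tau,\omega}_{\tau+\cdot}-X^\prime_{\tau(\omega)}(\omega)=X^\prime$ on $\Omega$, which follows from the definition of $X^\prime$, Galmarino's test, and the fact that the jumps of $\omega\otimes_\tau\tilde\omega$ strictly after $\tau(\omega)$ coincide with those of $\tilde\omega$; $(b)$ shifting sends an $(\F,\P)$--local martingale $N$ with bounded jumps and $N_0=0$ to an $(\F,\P^{\tau,\omega})$--local martingale for $\P$--a.e.\ $\omega$ --- localise $N$ by the genuine $\F$--stopping times $\inf\{t\ge0:|N_t|\ge n\}$ to obtain bounded, hence square-integrable, stopped martingales, apply \Cref{lem::conditioning_martingale2}, and note that the corresponding shifted stopping times form an $(\F,\P^{\tau,\omega})$--localising sequence; $(c)$ shifting preserves adaptedness, predictability, and the finite-variation and monotonicity of paths (as in \Cref{prop::conditioning_characteristics2}), while a $\P$--null set $A$ satisfies $\P^{\tau,\omega}\bigl[\{\tilde\omega:\omega\otimes_\tau\tilde\omega\in A\}\bigr]=0$ for $\P$--a.e.\ $\omega$, because $\E^{\P^{\tau,\cdot}}[\mathbf 1_A^{\tau,\cdot}]$ is a $\P$--version of $\E^\P[\mathbf 1_A|\cF_\tau]=0$. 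Combining $(a)$--$(c)$: for $\P$--a.e.\ $\omega$, $X^\prime=X_0+\bigl((M^\prime)^{\tau,\omega}_{\tau+\cdot}-M^\prime_{\tau(\omega)}(\omega)\bigr)+\bigl((B^\prime)^{\tau,\omega}_{\tau+\cdot}-B^\prime_{\tau(\omega)}(\omega)\bigr)$ is the $(\F,\P^{\tau,\omega})$--canonical decomposition of $X^\prime$, so \Cref{lem::martingale_decomposition} gives $X^{c,\P^{\tau,\omega}}=\bigl((M^\prime)^{\tau,\omega}_{\tau+\cdot}-M^\prime_{\tau(\omega)}(\omega)\bigr)^c$. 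Writing the shift of $M^\prime$ as $Y+Y^d$ with $Y^d\coloneqq(M^{\prime d})^{\tau,\omega}_{\tau+\cdot}-M^{\prime d}_{\tau(\omega)}(\omega)$, fact $(b)$ makes $Y$ and $Y^d$ $(\F,\P^{\tau,\omega})$--local martingales and fact $(c)$ makes $Y$ have $\P^{\tau,\omega}$--a.s.\ continuous paths; hence, by uniqueness in \Cref{lem::martingale_decomposition}, it suffices to show that $Y^d$ is purely discontinuous.

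This is the crux, and the main obstacle, of the argument: ``purely discontinuous'' is defined through orthogonality to \emph{all} continuous local martingales, a property that does not obviously localise or transfer to an r.c.p.d. I would route around this via the characterisation that a local martingale $L$ with $L_0=0$ is purely discontinuous precisely when its optional quadratic variation has vanishing continuous part, $[L]=\sum_{s\in(0,\cdot]}(\Delta L_s)^2$. Since $M^{\prime d}$ is purely discontinuous under $\P$, one has $[M^{\prime d}]^{(\P)}=\sum_{s\in(0,\cdot]}(\Delta M^{\prime d}_s)^2$, $\P$--a.s.; transferring this pathwise identity via fact $(c)$ and matching $\Delta Y^d_r=\Delta M^{\prime d}_{\tau(\omega)+r}(\omega\otimes_\tau\cdot)$ for $r>0$ gives $\bigl([M^{\prime d}]^{(\P)}\bigr)^{\tau,\omega}_{\tau+\cdot}-[M^{\prime d}]^{(\P)}_{\tau(\omega)}(\omega)=\sum_{s\in(0,\cdot]}(\Delta Y^d_s)^2$, $\P^{\tau,\omega}$--a.s. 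It then remains to identify the left-hand side with $[Y^d]^{(\P^{\tau,\omega})}$ --- that is, to show stability of the optional quadratic variation under conditioning --- which I would establish either $(\mathrm{i})$ from the characterisation of $[\,\cdot\,]$ as a ucp limit of quadratic sums along refining deterministic partitions, using localisation to control the random shift time $\tau(\omega)$; or $(\mathrm{ii})$ from $[L]=L^2-2\,L_-\bcdot L$ together with stability of stochastic integrals under conditioning; or $(\mathrm{iii})$ by re-expressing $[M^{\prime d}]$ through $[X]$ and $\langle X^c\rangle$ and invoking the (already proved) second identity, since $[X]^{(\P)}=\langle X^c\rangle+\sum_{s\in(0,\cdot]}(\Delta X_s)^2$ is independent of $\P$. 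Any of these yields $[Y^d]^{(\P^{\tau,\omega})}=\sum_{s\in(0,\cdot]}(\Delta Y^d_s)^2$, so $Y^d$ is purely discontinuous, $\bigl((M^\prime)^{\tau,\omega}_{\tau+\cdot}-M^\prime_{\tau(\omega)}(\omega)\bigr)^c=Y$, and therefore $(X^{c,\P})^{\tau,\omega}_{\tau+\cdot}-X^{c,\P}_{\tau(\omega)}(\omega)$ is a version of $X^{c,\P^{\tau,\omega}}$ for $\P$--a.e.\ $\omega$, as claimed.
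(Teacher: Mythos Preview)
Your argument is correct, and the crux you identify --- transferring pure discontinuity to the r.c.p.d.\ via the characterisation $[L]=\sum_{s\le\cdot}(\Delta L_s)^2$ --- is exactly the key step. The paper, however, reaches it by a shorter route that bypasses the canonical decomposition of $X^\prime$ entirely. Writing $Y\coloneqq(X^{c,\P})^{\tau,\omega}_{\tau+\cdot}-X^{c,\P}_{\tau(\omega)}(\omega)$, the paper first notes (citing \cite[Lemma~3.4]{neufeld2016nonlinear}, which is essentially your fact~$(b)$) that $Y$ is an $(\F,\P^{\tau,\omega})$--continuous local martingale for $\P$--a.e.~$\omega$. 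It then computes $[X-Y]^{(\P^{\tau,\omega})}$ directly: since $[X-X^{c,\P}]^{(\P)}=\sum_{s\le\cdot}(\Delta X_s)^2$ under $\P$, the stability of $[\,\cdot\,]$ under shifting (\cite[Theorem~I.4.47.a)]{jacod2003limit}, i.e.\ your route~$(\mathrm{i})$) together with the pathwise identity $X^{\tau,\omega}_{\tau+\cdot}-X_{\tau(\omega)}(\omega)=X$ gives $[X-Y]^{(\P^{\tau,\omega})}=\sum_{s\le\cdot}(\Delta X_s)^2$, $\P^{\tau,\omega}$--a.s. Then \cite[Theorem~I.4.52]{jacod2003limit} yields $\langle(X-Y)^{c,\P^{\tau,\omega}}\rangle=0$; since $(X-Y)^{c,\P^{\tau,\omega}}=X^{c,\P^{\tau,\omega}}-Y$ (using additivity of the continuous part and that $Y$ is already continuous), the conclusion follows. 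Your detour through $X^\prime=X_0+M^\prime+B^\prime$ works, but buys nothing: the paper's computation with $X-Y$ replaces your $Y^d$ by the full ``non-continuous'' remainder of $X$, and the quadratic-variation step is identical.
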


	\subsection{Data of the BSDEs and corresponding weighted spaces}\label{sec::data}
	
	Recall that $C$ is a real-valued, right-continuous, non-decreasing, $\F$-predictable process starting at zero. For each $(\omega,s) \in \Omega \times [0,\infty)$, we suppose that we are given a subset
	\[
		\fP(s,\omega) \subseteq \big\{\P\in \fP_\textnormal{sem} : (\mathsf{B}^{\P},\mathsf{C}^{\P},\nu^{\P}) \ll (C^{s,\omega}_{s\smallertext{+}\smallertext{\cdot}}-C_{s}(\omega)), \; \text{$\P$--a.s.} \big\},
	\]
	which is adapted in the sense that
	\begin{equation}\label{eq::adaptedness_probabilities}
		\fP(s,\omega) = \fP(s,\omega_{\cdot\land s}), \; s \in [0,\infty),\; \omega\in\Omega.
	\end{equation}
	Since each $\omega \in \Omega$ starts at zero, the collection $\fP(0,\omega)$ does not depend on $\omega$ and we therefore denote it by $\fP_0$. We suppose throughout that $\fP_0 \neq \varnothing$. Before stating the main assumptions on this indexed family of probability measures, let us recall the definition of analytic sets; we refer to \citeauthor*{cohn2013measure} \cite[Chapter 8]{cohn2013measure} for details and properties. A subset of a Polish space is analytic if it is the image of a continuous map defined on some Polish space. This is equivalent to being the image of a Borel-measurable subset of a Polish space under a Borel-measurable map (see \cite[Propositions 8.2.3 and 8.2.6]{cohn2013measure}). For a $\sigma$-algebra $\cE$ on a set $E$, we denote by $\cE^\ast$ the universal completion of $\cE$, that is, $\cE^\ast \coloneqq \cap_\P \cE(\P)$ where the intersection is taken over all probability measures on $(E,\cE)$, and $\cE(\P)$ denotes the $\P$-completion of the $\sigma$-algebra $\cE$. Sets in $\cE^\ast$ are then said to be $\cE$--universally measurable, and a map defined on $E$ is then said to be $\cE$--universally measurable if it is measurable with respect to $\cE^\ast$. We note that every analytic subset of a Polish space $E$ is $\cB(E)$--universally measurable (see \cite[Corollary 8.4.3]{cohn2013measure}). Moreover, a measurable map between measurable spaces $(E,\cE)$ and $(H,\cH)$ is also ($\cE^\ast$,$\cH^\ast$)-measurable (see \cite[Lemma 8.4.6]{cohn2013measure}); we also use the terminology $(\cE, \cH)$-universally measurable at times.
	
	\medskip
	The following conditions on the family $(\fP(s,\omega))_{(\omega,s)\in\Omega\times[0,\infty)}$ originate from \cite[Assumption 2.1]{nutz2013constructing} (see also \cite[Condition (A)]{neufeld2016nonlinear}); however, the weaker conditions in \cite[Condition (A)]{nutz2015robust} are sufficient for our purpose.
	\begin{assumption}\label{ass::probabilities2}
		For all $(s,t) \in [0,\infty)^2$ with $s \leq t$, $\bar\omega \in \Omega$, and $\P \in \fP(s,\bar{\omega})$,
		\begin{enumerate}
			\item[$(i)$] $\{(\omega,\P^\prime) :\omega \in \Omega,  \; \P^\prime \in \fP(s,\omega)\} \subseteq \Omega \times \fP(\Omega)$ is analytic$;$
			\item[$(ii)$] $\P^{t-s,\omega} \in \fP(t,\bar{\omega}\otimes_s \omega)$, for {\rm$\P$--a.e.} $\omega\in\Omega;$
			\item[$(iii)$] if $\Q$ is a stochastic kernel on $(\Omega,\cF)$ given $(\Omega,\cF_{t\smallertext{-}s})$ such that $\Q(\omega) \in \fP(t,\bar{\omega} \otimes_s\omega)$ for {\rm$\P$--a.e.} $\omega \in \Omega$, then the probability measure
			\begin{align*}
				\overline{\P}[A] \coloneqq & \iint_{\Omega\times\Omega} \big(\1_A\big)^{t\smallertext{-}s,\omega}(\omega^\prime)\Q(\omega;\d\omega^\prime)\P(\d\omega), \; A \in \cF,
			\end{align*}
			belongs to $\fP(s,\bar{\omega})$.
		\end{enumerate}
	\end{assumption}
	
	\begin{example}\label{ex::differential_characteristics}
		This assumption is, for example, satisfied if $C_t \coloneqq t$ and $\fP(s,\omega) \equiv \fP_\Theta \subseteq \fP_\textnormal{sem}$ consists of all semi-martingale measures for which $X$ has differential characteristics $(\mathsf{b}_t,\mathsf{c}_t,\mathsf{K}_{\cdot,t})$ $($with respect to the Lebesgue measure$)$ taking values in a Borel-measurable subset $\Theta \subseteq \R^d \times \S^d_\smallertext{+}\times \cL$$;$ see \textnormal{\cite[Theorem 2.1.$(i)$]{neufeld2016nonlinear}}. Further examples of such indexed families of probability measures satisfying the aforementioned assumptions can be found in {\rm\cite[Sections 3 and 4]{nutz2013constructing}}; see also {\rm \citeauthor*{possamai2013robust} \cite{possamai2013robust}} for examples satisfying a slightly weaker assumption.  
	\end{example}
	
	\begin{remark}\label{rem::measures_in_fP}
		$(i)$ By Galmarino's test $($see \textnormal{\cite[Theorem IV.100]{dellacherie1978probabilities}}$)$, the probability measure $\overline{\P}$ constructed in {\rm\Cref{ass::probabilities2}.$(iii)$} agrees with $\P$ on $\cF_{t\smallertext{-}s}$. As such, by the discussion on the disintegration of measures in \textnormal{\cite[page 78, paragraphs 70--73]{dellacherie1978probabilities}}, it can be readily verified that $\bar{\P}^{t\smallertext{-}s,\omega} = \Q(\omega)$ for \textnormal{$\P$--a.e.} $($and \textnormal{$\overline{\P}$--a.e.}$)$ $\omega \in \Omega$.
		
		\medskip
		$(ii)$ As already pointed out in {\rm\citeauthor*{nutz2013constructing} \cite[Remark 2.2.(b)]{nutz2013constructing}}, even if at the intuitive level one may think that \textnormal{`}$\fP(s,\omega) = \{\P^{s,\omega} : \P \in \fP_0\}$\textnormal{'}, the family $(\P^{s,\omega})_{\omega \in \Omega}$ being only uniquely defined up to a $\P$--null set, the statement cannot be made rigorous.		
	\end{remark}
	
	We now describe the data $(T,\xi,f)$ determining our BSDEs and 2BSDE.
		\begin{assumption}\label{ass::generator2}
		$(i)$ $T$ is an $\F$--stopping time and $\xi : \Omega \longrightarrow \R$ is $\cF_T$-measurable.
		
		\medskip
		$(ii)$ The generator\footnote{The symbol $\bigsqcup$ denotes the disjoint union, and therefore each $f(t,\omega,\cdot,\cdot,\cdot,\cdot,\cdot,\cdot,K)$ is a map from $\R \times \R \times \R^d\times\R^d\times\S^d_\smallertext{+} \times{\widehat\L}^2_{\omega,t}(K)$ into $\R$.} $f:\bigsqcup_{ (\omega,t,K) \in \Omega \times [0,\infty) \times \cL} \R \times \R \times \R^d \times \R^d \times \S^d_{\smallertext{+}} \times {\widehat\L}^2_{\omega,t}(K) \longrightarrow \R$ satisfies
		\begin{align*}
			&\big|f\big(t,\tilde\omega,y,\mathrm{y},z,u(\cdot),b,a,K\big) - f\big(t,\tilde\omega,y^\prime,\mathrm{y}^\prime,z^\prime,u^{\prime}(\cdot),b,a,K\big)\big|^2 \\
			& \leq r_t(\tilde\omega) |y-y^\prime|^2 + \mathrm{r}_t(\tilde\omega) |\mathrm{y}-\mathrm{y}^\prime|^2 + \theta^X_t(\tilde\omega) (z-z^\prime)^\top a(z-z^\prime) + \theta^\mu_t(\tilde\omega)  \|u(\cdot) - u^{\prime}(\cdot)\|^2_{{\hat\L}^\smalltext{2}_{\smalltext{\tilde\omega}\smalltext{,}\smalltext{t}}(K)}, \; (\tilde\omega,t) \in \llparenthesis 0,T\rrbracket,
		\end{align*}
		for all $(y,y^\prime,\mathrm{y},\mathrm{y}^\prime,z,z^\prime,u(\cdot),u^\prime(\cdot),b,a) \in \R^2 \times \R^2 \times (\R^d)^2 \times \big(\widehat\L^2_{\tilde\omega,t}(K)\big)^2 \times \R^d \times \S^d_\smallertext{+}$, for some $\F$-predictable, $[0,\infty)$-valued processes $(r,\mathrm{r},\theta^X,\theta^\mu) = (r_t,\mathrm{r}_t,\theta^X_t,\theta^\mu_t)_{t \in [0,\infty)}$.
		
		\medskip
		$(iii)$ For any $s \in [0,\infty)$ and any Borel-measurable function
		\[
			\widehat{\Omega}^C_s \times \Omega \times [0,\infty) \times \R^d \ni (\omega,\P,\tilde{\omega},r,x) \longmapsto u^\P_r(\tilde{\omega};x) \in \R,
		\]
		with $u^\P_r(\tilde{\omega};\cdot) \in \widehat{\L}^2_{\omega\otimes_\smalltext{s}\tilde\omega,s+r}(\mathsf{K}^{s,\omega,\P}_{\tilde\omega,r})$, which for fixed $(\omega,\P) \in \widehat\Omega^C_s$ is also $\cP^\P\otimes \cB(\R^d)$-measurable, the function 
		\begin{equation*}
			\widehat\Omega^C_s \times \Omega \times [0,\infty) \times \R \times \R \times \R^d \times \R^d \times \S^d_\smallertext{+}\ni (\omega,\P,\tilde\omega,r,y,\mathrm{y},z,b,a) \longmapsto f\big(s+r,\omega\otimes_s\tilde\omega,y,\mathrm{y},z,u^\P_r(\tilde\omega;\cdot),b,a,\mathsf{K}^{s,\omega,\P}_{\tilde\omega,r}\big) \in \R,
		\end{equation*}
		is Borel-measurable, and for every fixed $(\omega,\P) \in \widehat\Omega^C_s$, the function
		\begin{equation*}
			\Omega \times [0,\infty) \times \R \times \R \times \R^d \times \R^d \times \S^d_\smallertext{+} \ni (\tilde\omega,r,y,\mathrm{y},z,b,a) \longmapsto f\big(s+r,\omega\otimes_s\tilde\omega,y,\mathrm{y},z,u^\P_r(\tilde\omega;\cdot),b,a,\mathsf{K}^{s,\omega,\P}_{\tilde\omega,r}\big) \in \R,
		\end{equation*}
		is $\textnormal{Prog}(\F^\P_\smallertext{+})\otimes\cB(\R)\otimes\cB(\R)\otimes\cB(\R^d)\otimes\cB(\R^d)\otimes\cB(\S^d_\smallertext{+})$-measurable.
		
		\medskip
		$(iv)$ There exists $\hat\beta \in (0,\infty)$ such that for all $(\omega,s) \in \Omega\times[0,\infty)$,
		\begin{align*}
			\sup_{\P\in\fP(s,\omega)}\E^\P \bigg[&\cE\big(\hat\beta A^{s,\omega}_{s\smallertext{+}\smallertext{\cdot}})_{(T\smallertext{-}s\land T)^{\smalltext{s}\smalltext{,}\smalltext{\omega}}}|\xi^{s,\omega}|^2 + \int_0^{(T\smallertext{-}s\land T)^{\smalltext{s}\smalltext{,}\smalltext{\omega}}} \cE(\hat\beta (A^{s,\omega}_{s\smallertext{+}\smallertext{\cdot}})_r \frac{|f^{s,\omega,\P}_r(0,0,0,\mathbf{0})|^2}{|\alpha^{s,\omega}_r|^2} \d (C^{s,\omega}_{s\smallertext{+}\smallertext{\cdot}})_r\bigg] < \infty,
		\end{align*}
		where
		\[
			f^{s,\omega,\P}_r(\tilde\omega,0,0,0,\mathbf{0}) \coloneqq f\big(s+r,\omega\otimes_s\tilde\omega,0,0,0,\mathbf{0},\mathsf{b}^{s,\omega,\P}_r(\tilde\omega),\hat{ \mathsf{a}}^{s,\omega}_r(\tilde\omega),\mathsf{K}^{s,\omega,\P}_{\tilde{\omega},r}(\d x)\big),
		\]
		 $\alpha^2 \coloneqq\max\{\sqrt{r},\sqrt{\mathrm{r}},\theta^X,\theta^\mu\} > 0$, the process $A = (A_t)_{t \in [0,\infty)}$ given by $A_t \coloneqq \int_0^{t \land T} \alpha^2_s\d C_s$ is assumed to be finite-valued, and
		\begin{equation}\label{eq::stieltjes_exponential2}
			\cE(\hat\beta A)_r \coloneqq \mathrm{e}^{\hat\beta (A_\smalltext{r}\smallertext{-}A_\smalltext{0})} \prod_{s \in (0,r]} (1+\hat\beta\Delta A_s)\mathrm{e}^{\smallertext{-}\hat\beta\Delta A_\smalltext{s}}, \; u \in [0,\infty).
		\end{equation}
		
		\medskip
		$(v)$ There exists $\Phi \in [0,1)$ such that $\Delta A \leq \Phi$ and $\widetilde M^\Phi_1(\hat\beta) <1$, where
		\begin{equation*}
			\widetilde M^\Phi_1(\hat\beta) \coloneqq \ff^\Phi(\hat\beta) + \frac{1}{\hat\beta} + \max\bigg\{1,\frac{1+\hat\beta\Phi}{\hat\beta}\bigg\}\bigg(\frac{1}{\hat\beta} + \hat\beta\fg^\Phi(\hat\beta)\bigg),
		\end{equation*}
		with
		\begin{align*}
			\ff^\Phi(\hat\beta) 
			\coloneqq \frac{4(1+\hat\beta\Phi)}{\hat\beta^2},\;
			\fg^\Phi(\hat\beta)
			\coloneqq \frac{4}{\hat\beta^2} \mathbf{1}_{\{\Phi = 0\}} + \frac{\Phi^2\sqrt{1+\hat\beta\Phi}}{\Big(1+\hat\beta\Phi - \sqrt{1+\hat\beta\Phi}\Big)\Big(\sqrt{1+\hat\beta\Phi} - 1\Big)}\mathbf{1}_{\{\Phi > 0\}}.
		\end{align*}
	\end{assumption}
	
	Since we are interested in the aggregation of solutions to BSDEs whose driving martingale is $X^{c,\P}$ and driving random measure is $\mu^X$, where the characteristics of $X$ are absolutely continuous relative to the fixed process $C$, we write
	\begin{equation}\label{eq::definition_f_s_P}
		f^{s,\omega,\P}_r\big(\tilde\omega,y,\mathrm{y},z,u_r(\tilde{\omega};\cdot)\big) \coloneqq f\big(s+r,\omega\otimes_s\tilde\omega,y,\mathrm{y},z,u_r(\tilde{\omega};\cdot),\mathsf{b}^{s,\omega,\P}_r(\tilde\omega),\hat{ \mathsf{a}}^{s,\omega}_r(\tilde\omega),\mathsf{K}^{s,\omega,\P}_{\tilde{\omega},r}(\d x)\big),
	\end{equation}
	for $(\omega,\P) \in \widehat{\Omega}^C_s$. In case $s = 0$, we simply write $f^{\P}_r(\tilde\omega,y,\mathrm{y},z,u_t(\omega;\cdot))$.
	
	\medskip
	Some remarks on the structure and potentially awkward assumptions of our generator are appropriate here.
	\begin{remark}
		$(i)$ Note that $\cE(\hat\beta A^{s,\omega}_{s\smallertext{+}\smallertext{\cdot}}) = \cE(\hat\beta A^{s,\omega}_{s\smallertext{+}\smallertext{\cdot}} - A_s(\omega))$, and that, according to our conventions, an integral term of the form $\int_0^\cdot \d (C^{s,\omega}_{s\smallertext{+}\smallertext{\cdot}})$ is identical to $\int_0^\cdot \d (C^{s,\omega}_{s\smallertext{+}\smallertext{\cdot}} - C_s(\omega))$, since the point $0$ is never included in the domain of integration.
		
		\medskip
		$(ii)$Although not our main focus in this work, we eventually aim to apply our results to stochastic control problems. Therefore, for each $(\omega, t)$, we need to be able to evaluate the generator along a function $u_t(\omega; \cdot) : \mathbb{R}^d \longrightarrow \mathbb{R}$, which appears in the stochastic integral of the compensated jump measure. Indeed, the generator associated to generic stochastic control problems will typically take the form
		\begin{align*}
			&f\big(t,\omega,y, \mathrm{y},z,u_t(\omega;\cdot), b,a,K_{\omega,t}(\d x)\big) \\
			&= \sup_{\alpha \in A} \bigg\{g_t(\omega,\alpha) - \frac{d_t(\omega)}{1+d_t(\omega)\Delta C_t(\omega)}y +  z^\top a \beta_t(\omega,\alpha) + \int_{\R^\smalltext{d}} u_t(\omega;x) ( \gamma_t(\omega,x,\alpha) - 1) K_{\omega,t}(\d x)\bigg\},
		\end{align*}
		where $A$ denotes the space in which the control variables take values$;$ see also \textnormal{\citeauthor*{confortola2013backward} \cite[Equation 4.8]{confortola2013backward}} for the case where one solely controls the compensator of a marked point process. It therefore seems that the measurability condition outlined in \textnormal{\Cref{ass::generator2}.$(iii)$} is unavoidable in the \textnormal{2BSDE} case. To clarify this, we revisit the assumptions imposed on the generator in \textnormal{\cite{possamai2024reflections}}, where the following issue arises: it is desirable to define the generator $f$ on a product space $\Omega \times [0,\infty) \times \R \times \R^d \times \fH$ such that the mapping
		\begin{equation}\label{eq::measurability_f}
			(\omega,t) \longmapsto f\big(t,\omega,Y_t(\omega),Z_t(\omega),U_t(\omega;\cdot)\big),
		\end{equation}
		is at least progressively measurable for the finite variation component in the dynamics of the \textnormal{BSDE} to be well-defined and adapted. However, since the function $U_t(\omega; \cdot) : \R^d \longrightarrow \R$ is passed to the generator, $\fH$ should consist of a collection of functions from $\R^d$ to $\R$. This raises the questions: which functions should the collection $\fH$ at least contain for our results to be applicable to study control problems, and is there an appropriate $\sigma$-algebra on $\fH$ for which measurability of $f$ on its domain implies progressive measurability in \eqref{eq::measurability_f}?
		
		\medskip
		One approach is to let $\fH$ be the space of all Borel-measurable functions from $\R^d$ to $\R$, see \textnormal{\cite[Section 2.3]{confortola2014backward}}. We would then require that the map $\Omega \times [0,\infty) \ni (\omega,t) \longmapsto f\big(t,\omega,y,z,u_t(\omega;\cdot)\big) \in \R,$ is progressively measurable for every $y$, $z$, and every predictable function $u : \Omega \times [0,\infty) \times \R^d \longrightarrow \R$. If the generator is additionally continuous in $y$ and $z$, then the map \eqref{eq::measurability_f} is progressively measurable.
		
		\medskip
		Alternatively, we note that the integrands $U$ passed to the generator satisfy $U_t(\omega; \cdot) \in \widehat{\L}^2(K_{\omega,t})$, where $K$ is the kernel in the disintegration of the compensator $\nu(\mathrm{d}t, \mathrm{d}x) = K_{\cdot,t}(\mathrm{d}x)\mathrm{d}C_t$. We could therefore impose that for each $(\omega, t, y, z)$, we are given a function $f(t, \omega, y, z, \cdot) : \widehat{\L}^2(K_{\omega,t}) \longrightarrow \R$ such that for every predictable function $u : \Omega \times [0,\infty) \times \R^d \longrightarrow \R$ with $u_t(\omega; \cdot) \in \widehat{\L}^2(K_{\omega,t})$, the map $(\omega,t) \longmapsto f_t(\omega,y,z,u_t(\omega; \cdot))$
		is progressively measurable. Continuity in $y$ and $z$ then ensures that \eqref{eq::measurability_f} is progressively measurable. In this case, $f$ can be seen as a function on a disjoint union of product spaces
		\begin{equation*}
			f : \bigsqcup_{(\omega,t) \in \Omega \times [0,\infty)} \R \times \R^d \times \widehat{\L}^2(K_{\omega,t}) \longrightarrow \R.
		\end{equation*}
		This approach was taken in \textnormal{\cite{confortola2013backward}} and in our previous work \textnormal{\cite{possamai2024reflections}}, and is more convenient for control applications since the norm associated with $\widehat{\L}^2(K_{\omega,t})$ is defined via the kernel in the disintegration $\nu(\omega; \mathrm{d}t, \mathrm{d}x) = K_{\omega,t}(\mathrm{d}x)\mathrm{d}C_t$, which naturally appears in control problems \textnormal{\cite[Equation 4.8]{confortola2013backward}}.
		
		\medskip
		It turns out that the measurability issue worsens in the \textnormal{2BSDE} case since we want the---expectation of the---first component of the \textnormal{BSDE} solution to be measurable in the probability law $\P$. The solution to the \textnormal{BSDE} in \textnormal{\cite{possamai2024reflections}} is constructed via a fixed-point argument, making it the limit of Picard iterations. To ensure measurability in $\P$, we must assume that the generator $f$ is somehow measurable in $\P$, which then must be passed on to each step of the iteration. However, the spaces $\widehat{\L}^2(K_{\omega,t}^\P)$ depend on $\P$ since they are defined through the disintegration of the $\P$-compensator $\nu^\P$ relative to $C$. The condition in $(iii)$ appears---at least to us---to be the minimal requirement to ensure that each iteration in the fixed-point argument remains measurable with respect to $\P$. For the $z$-component in the generator---and similarly for the $(y,\mathrm{y})$-components---we do not need a condition involving $\P$, since the integrand $\cZ^\P$ takes values in $\R^d$. Therefore, the measurability of $(\P, \omega, t) \longmapsto \cZ_t^\P(\omega)$ ensures the measurability of $(\P, \omega, t) \longmapsto f(t,\omega, \ldots, \cZ_t^\P(\omega), \ldots)$ in $\P$, provided $f$ is Borel-measurable in the $z$-variable.
	\end{remark}

	It is important to note that, unlike in the case of BSDEs and reflected BSDEs in \cite{possamai2024reflections}, the well-posedness of second-order BSDEs does not rely on a fixed-point argument. Nonetheless, in \cite{possamai2024reflections}, we proved well-posedness of BSDEs in weighted solution spaces, which we will introduce shortly, as they remain essential in our development of second-order BSDE theory. \textcolor{black}{To define them in full generality, we denote by $\cG$ an arbitrary $\sigma$-algebra on $\Omega$ containing $\cF$, and by $\G = (\cG_t)_{t \in [0,\infty)}$ an arbitrary filtration on $(\Omega,\cG)$ satisfying $\cF_t \subseteq \cG_t$, and denote by $\cG_{0-}$ an additional $\sigma$-algebra contained in $\cG_0$. We use the conventions $\cG_{\infty\smallertext{+}} = \cG_\infty \coloneqq \cG_{\infty\smallertext{-}} \coloneqq \sigma (\cup_{t \in [0,\infty)} \cG_t)$.}
	
	\medskip	
	Let $M$ be an $\R^d$-valued, right-continuous, $\G$-adapted, $(\G,\P)$--locally square-integrable martingale, and let $\mu^X$ be the jump measure on $[0,\infty)\times\R^d$ of an $\R^d$-valued, c\`adl\`ag, $\G$-adapted process $X$. Fix $(\omega,s) \in \Omega \times [0,\infty)$. Suppose that $\langle M \rangle$ is absolutely continuous with respect to $C^{s,\omega}_{s\smallertext{+}\smallertext{\cdot}}-C_{s}(\omega)$, $\P$--a.s., and $\langle M \rangle = \pi \bcdot (C^{s,\omega}_{s\smallertext{+}\smallertext{\cdot}}-C_{s}(\omega))$, $\P$--a.s., where $\pi = (\pi_t)_{t \in [0,\infty)}$ is an $\S^d_\smallertext{+}$-valued, $\G$-predictable process (see \Cref{sec::stochastic_integrals}). Suppose further that the $(\G,\P)$-compensator of $\mu^X$ is $\P$--a.s. of the form $\nu(\d t,\d x) = K_{t}(\d x)\d (C^{s,\omega}_{s\smallertext{+}\smallertext{\cdot}}-C_{s}(\omega))_t$ for some kernel $K$ on $(\R^d,\cB(\R^d))$ given $(\Omega \times [0,\infty),\cP(\G))$. For $\beta \in [0,\infty)$ and a probability measure $\P$ on $(\Omega,\cG)$, we consider the following weighted spaces
	\begin{itemize}[leftmargin=0.6cm]
		\item $\L^{2,s,\omega}_{T,\beta}(\cG,\P)$: Banach space of $\cG$-measurable random variables $\zeta : \Omega \longrightarrow \R$ satisfying 
		\[
		\|\zeta\|^{2}_{\L^{\smalltext{2}\smalltext{,}\smalltext{s}\smalltext{,}\smalltext{\omega}}_{\smalltext{T}\smalltext{,}\smalltext{\beta}}(\cG,\P)} \coloneqq \E^\P\Big[\big|\cE(\beta A^{s,\omega}_{s\smallertext{+}\smallertext{\cdot}})^{1/2}_{(T\smallertext{-}s\land T)^{\smalltext{s}\smalltext{,}\smalltext{\omega}}}\zeta\big|^2\Big] < \infty;
		\]
		\item $\cH^{2,s,\omega}_{T,\beta}(\G,\P)$: Banach space of real-valued, right-continuous, $(\G,\P)$--square-integrable martingales $L = (L_t)_{t \in [0,\infty)}$ with $L = L_{\cdot\land (T\smallertext{-}s\land T)^{s,\omega}}$ and 
		\[
		\|L\|^2_{\cH^{\smalltext{2}\smalltext{,}\smalltext{s}\smalltext{,}\smalltext{\omega}}_{\smalltext{T}\smalltext{,}\smalltext{\beta}}(\G,\P)} \coloneqq \E^\P[L^2_0] + \E^\P\bigg[\int_0^{(T\smallertext{-}s \land T)^{\smalltext{s}\smalltext{,}\smalltext{\omega}}} \cE(\beta A^{s,\omega}_{s\smallertext{+}\smallertext{\cdot}})_r \d \langle L \rangle_r\bigg] < \infty;
		\]
		\item $\cT^{2,s,\omega}_{T,\beta}(\G,\P)$: Banach space of real-valued, $\P$--a.s. c\`adl\`ag, $\G$-optional $Y = (Y_t)_{t \in [0,\infty]}$ with $Y = Y_{\cdot \land (T\smallertext{-}s\land T)^{s,\omega}}$ and
		\begin{equation*}
			\|Y\|^2_{\cT^{\smalltext{2}\smalltext{,}\smalltext{s}\smalltext{,}\smalltext{\omega}}_{\smalltext{T}\smalltext{,}\smalltext{\beta}}(\G,\P)} 
			\coloneqq \sup_{\sigma \in \sT_{\smalltext{0}\smalltext{,}\smalltext{(}\smalltext{T}\tinytext{-}\smalltext{s}\smalltext{\land}\smalltext{T}\smalltext{)}^{\tinytext{s}\tinytext{,}\tinytext{\omega}}}(\G)}\E^\P\Big[ \big| \cE(\beta A^{s,\omega}_{s\smallertext{+}\smallertext{\cdot}})^{1/2}_\sigma Y_\sigma \big|^2\Big] < \infty,
		\end{equation*}
		where $\sT_{0,(T\smallertext{-}s\land T)^{\smalltext{s}\smalltext{,}\smalltext{\omega}}}(\G)$ denotes the collection of $\G$--stopping times $\sigma$ satisfying $0 \leq \sigma \leq (T-s\land T)^{s,\omega}$;
		\item $\cS^{2,s,\omega}_{T,\beta}(\G,\P)$: Banach space of real-valued, $\P$--a.s. c\`adl\`ag, $\G$-optional $Y = (Y_t)_{t \in [0,\infty]}$ with $Y = Y_{\cdot \land (T\smallertext{-}s\land T)^{\smalltext{s}\smalltext{,}\smalltext{\omega}}}$ and\footnote{The measurability of the supremum inside the expectation follows from \cite[Proposition 2.21.(i)]{karoui2013capacities}. To be precise, the supremum is $\cG_\infty$--universally measurable, and the probability $\P$ uniquely extends to $\cG_\infty$--universally measurable sets.}
		\begin{equation*}
			\|Y\|^2_{\cS^{\smalltext{2}\smalltext{,}\smalltext{s}\smalltext{,}\smalltext{\omega}}_{\smalltext{T}\smalltext{,}\smalltext{\beta}}(\G,\P)} 
			\coloneqq \E^\P\bigg[ \sup_{r \in [0,(T\smallertext{-}s\land T)^{\smalltext{s}\smalltext{,}\smalltext{\omega}}]} \big| \cE(\beta A^{s,\omega}_{s\smallertext{+}\smallertext{\cdot}})^{1/2}_r Y_r \big|^2\bigg] < \infty;
		\end{equation*}
		\item $\H^{2,s,\omega}_{T,\beta}(\G,\P)$: Banach space of real-valued, $\G$-optional $\phi = (\phi_t)_{t \in [0,\infty]}$ with $\phi = \phi_{\cdot \land (T\smallertext{-}s\land T)^{s,\omega}}$ and
		\begin{equation*}
			\|\phi\|^2_{\H^{\smalltext{2}\smalltext{,}\smalltext{s}\smalltext{,}\smalltext{\omega}}_{\smalltext{T}\smalltext{,}\smalltext{\beta}}(\G,\P)} \coloneqq \E^\P\bigg[ \int_0^{(T\smallertext{-}s\land T)^{\smalltext{s}\smalltext{,}\smalltext{\omega}}} \cE(\beta A^{s,\omega}_{s\smallertext{+}\smallertext{\cdot}})_r |\phi_r|^2 \d (C^{s,\omega}_{s\smallertext{+}\smallertext{\cdot}})_r\bigg] < \infty;
		\end{equation*}
		\item $\H^{2,s,\omega}_{T,\beta}(M;\G,\P)$: Banach space of $\R^d$-valued, $\G$-predictable $Z = (Z_t)_{t \in [0,\infty)}$ with $Z = Z \mathbf{1}_{\llbracket 0, (T\smallertext{-}s\land T)^{\smalltext{s}\smalltext{,}\smalltext{\omega}} \rrbracket}$ and
		\begin{equation*}
			\|Z\|^2_{\H^{\smalltext{2}\smalltext{,}\smalltext{s}\smalltext{,}\smalltext{\omega}}_{\smalltext{T}\smalltext{,}\smalltext{\beta}}(M;\G,\P)} \coloneqq \E^\P\bigg[\int_0^{(T\smallertext{-}s\land T)^{s,\omega}} \cE(\beta A^{s,\omega}_{s\smallertext{+}\smallertext{\cdot}})_r Z^\top_r \pi_r Z_r \d (C^{s,\omega}_{s\smallertext{+}\smallertext{\cdot}})_r\bigg] < \infty;
		\end{equation*}
		\item $\H^{2,s,\omega}_{T,\beta}(\mu^X;\G,\P)$: Banach space of real-valued, $\widetilde{\cP}(\G)$-measurable functions $U$ with $U = U \mathbf{1}_{\llbracket 0, (T\smallertext{-}s\land T)^{\smalltext{s}\smalltext{,}\smalltext{\omega}} \rrbracket}$ and
		\begin{equation*}
			\|U\|^2_{\H^{\smalltext{2}\smalltext{,}\smalltext{s}\smalltext{,}\smalltext{\omega}}_{\smalltext{T}\smalltext{,}\smalltext{\beta}}(\mu^\smalltext{X};\G,\P)} \coloneqq \E^\P\bigg[\int_0^T \cE(\beta A^{s,\omega}_{s\smallertext{+}\smallertext{\cdot}})_r \|U_r(\cdot)\|^2_{\hat\L^\smalltext{2}_\smalltext{s}(K_\smalltext{r})} \d (C^{s,\omega}_{s\smallertext{+}\smallertext{\cdot}})_r\bigg] < \infty;
		\end{equation*}
		\item $\cH^{2,s,\omega,\perp}_{0,T,\beta}(M,\mu^X;\G,\P)$: subspace consisting of all martingales $N \in \cH^{2,s,\omega}_{T,\beta}(\G,\P)$ starting at zero with $\langle N, M\rangle^{(\P)} = 0$ and $M^\P_{\mu^\smalltext{X}}[\Delta N | \widetilde\cP(\G)] = 0$$.$
	\end{itemize}
	For $s = 0$, the spaces do not depend on $\omega$, and we will omit $(s,\omega)$ from the notation in the above spaces and norms. We will also adopt this simplification for $\beta = 0$ and in the case where $\cG = \cF$.

\section{Main results}\label{sec::main_results}

In this section, we present our main results. First, we construct a function that can be interpreted as a conditional nonlinear expectation on the path space $\Omega$. We then show that a suitably defined path regularisation of this function solves our aggregation problem, and we subsequently determine its semi-martingale decomposition. Lastly, we characterise the regularisation, along with its semi-martingale decomposition, as the unique solution of a second-order BSDE. For clarity and readability, we defer the proofs to \Cref{sec::proofs_main_results}.

\medskip
To provide an overview of the results and methods, we informally discuss a guiding example: the case where the generator $f$ is identically zero. This case has been studied in \citeauthor*{cohen2012quasi} \cite{cohen2012quasi}, \citeauthor*{nutz2012quasi} \cite{nutz2012quasi,nutz2013random,nutz2013constructing}, \citeauthor*{nutz2012superhedging} \cite{nutz2012superhedging}, \citeauthor*{soner2011quasi} \cite{soner2011quasi}, and subsequently in \citeauthor*{karoui2013capacities} \cite{karoui2013capacities, karoui2013capacities2}, and \citeauthor*{bartl2020conditional} \cite{bartl2020conditional}. In a first step, one should find a single (measurable) process $\widehat\cY = (\widehat\cY_t)_{t \in [0,\infty)}$ satisfying
\begin{equation}\label{eq::aggregation_generator_zero}
	\widehat\cY_t = \underset{\bar\P \in \fP_\smalltext{0}(\cF_\smalltext{t},\P)}{{\esssup}^\P} \E^{\bar\P} [ \xi | \cF_t], \; \textnormal{$\P$--a.s.}, \; t \in [0,\infty), \; \text{for all $\P \in \fP_0$.}
\end{equation}
Here, $\fP_0(\cF_t,\P)$ denotes the collection of probabilities $\overline\P$ in $\fP_0$ that coincide with $\P$ on $\cF_t$. The discussion in \Cref{rem::measures_in_fP}.$(ii)$ suggests choosing the candidate $\widehat\cY$ as
\begin{equation*}
	\widehat\cY_t(\omega) 
	= \sup_{\P \in \fP(t,\omega)}\E^{\P}[\xi^{t,\omega}] , \; (\omega,t) \in \Omega \times [0,\infty).
\end{equation*}
The assumptions imposed on the family $(\fP(t,\omega))_{(\omega,t) \in \Omega \times [0,\infty)}$ ensure that defining $\widehat\cY_t(\omega)$ exactly as above implies that \eqref{eq::aggregation_generator_zero} holds; see \cite[Theorem 2.3]{nutz2013constructing}.   The proof relies on the theory of analytic sets and the corresponding Jankov--von Neumann selection theorem. However, in our case, we replace the conditional expectation $\E^{\bar\P}[\xi|\cF_t]$ with $\E^{\bar\P}[\cY^{\bar\P}_t(T,\xi)|\cF_t]$, where the term inside the conditional expectation is the first component of the solution to a BSDE. The correct replacement for $\E^\P[\xi^{t,\omega}]$ in defining $\widehat\cY_t$ is suggested by \Cref{lem::conditioning_bsde2} and is discussed in \Cref{sec::value_function}. This requires revisiting the fixed-point argument used in the proof of well-posedness for our BSDEs in \cite{possamai2024reflections}, and developing new results on the measurability of integrands with respect to the probability law.

\medskip
A natural question that follows is whether the value function $\widehat{\cY}$ admits a c\`adl\`ag modification. In general, this is not the case; see \cite[Example 4.6]{nutz2012superhedging} for a counterexample. Nonetheless, \eqref{eq::aggregation_generator_zero} implies that  
\begin{equation*}
	\E^\P[\widehat\cY_t| \cF_s] \leq \widehat\cY_s, \; 0 \leq s \leq t < \infty, \; \text{$\P$--a.s.}, \; \P \in \fP_0,
\end{equation*}
so that $\widehat\cY = (\widehat\cY_t)_{t \in [0,\infty]}$ is a $\P$--super-martingale for every $\P \in \fP_0$. By following the proof of \cite[Proposition 4.5]{nutz2012superhedging} and \cite[Theorem VI.2, page 67]{dellacherie1982probabilities}, as well as applying classical results on down-crossings of super-martingales, one constructs a c\`adl\`ag process $\widehat\cY^\smallertext{+} = (\widehat\cY^\smallertext{+}_t)_{t \in [0,\infty]}$ satisfying
\begin{equation*}
	\widehat\cY^\smallertext{+}_t = \underset{\bar\P \in \fP_\smalltext{0}(\cF_{\smallertext{t}\tinytext{+}},\P)}{{\esssup}^\P} \E^{\bar\P} [ \xi | \cF_{t\smallertext{+}}], \; \textnormal{$\P$--a.s.}, \; t \in [0,\infty], \; \P \in \fP_0,
\end{equation*}
which is adapted to a filtration $\G_\smallertext{+}$ containing $\F$, whose precise form is given in \Cref{sec::regularisation}. The regularisation $(\widehat\cY^\smallertext{+}_t)_{t \in [0,\infty]}$ remains a $\P$--super-martingale for every $\P \in \fP_0$, but now relative to the filtration $\G_\smallertext{+}$. When the generator is non-zero, the super-martingale property of the value function has to be turned into a nonlinear super-martingale property relative to $f$. This, and the construction of the regularisation $\widehat\cY^\smallertext{+}$ are discussed in \Cref{sec::regularisation}.

\medskip
By applying the Doob--Meyer decomposition under each $\P \in \fP_0$, we find a $\G^\P_\smallertext{+}$-predictable process $\widehat{Z}^\P$ with values in $\R^d$, a $\cP(\G^\P_\smallertext{+})\otimes\cB(\R^d)$-measurable function $\widehat{U}^\P$, a $(\G^\P_\smallertext{+},\P)$-martingale $\widehat{N}^\P$, $\P$-orthogonal to $X^{c,\P}$ and $\mu^X$, and a $\G^\P_\smallertext{+}$-predictable process $\widehat{K}^\P$ with $\P$--a.s. right-continuous and non-decreasing paths starting at zero, satisfying
\begin{equation}\label{eq::2bsde_generator_zero}
	\widehat\cY^\smallertext{+}_t = \xi - \bigg(\int_t^T \widehat{Z}^\P_r \d X^{c,\P}_r\bigg)^{(\P)} - \bigg(\int_t^T\int_{\R^\smalltext{d}}\widehat{U}^\P_r(x)\tilde\mu^{X,\P}(\d r,\d x)\bigg)^{(\P)} - \int_t^T \d \widehat{N}^\P_r + \int_t^T \d \widehat{K}^\P_r, \; t \in [0,\infty], \; \P \in \fP_0.
\end{equation}
Here, the superscript indicates that the integrals are constructed with respect to $\P$ (see also \Cref{prop::good_version_stochastic_integral}). It turns out that through the second characteristic of the joint pair $(\widehat\cY^\smallertext{+},X)$, one can find a single, $\G$-predictable integrand $\widehat{Z}$ such that $(\widehat{Z} \bcdot X^{c,\P})^{(\P)} = (\widehat{Z}^\P\bcdot X^{c,\P})^{(\P)}$ for each $\P \in \fP_0$. Note also that the family $(\widehat{K}^\P)_{\P \in \fP_\smalltext{0}}$ is actually implicitly defined through
\begin{equation}\label{eq::K_characterised_generator_zero}
	\widehat{K}^\P_t = \widehat\cY^\smallertext{+}_{0} - \widehat\cY^\smallertext{+}_{t} + \bigg(\int_0^{t} \widehat{Z}_r \d X^{c,\P}_r\bigg)^{(\P)} + \bigg(\int_0^t\int_{\R^\smalltext{d}}\widehat{U}^\P_r(x)\tilde\mu^{X,\P}(\d r,\d x)\bigg)^{(\P)} + \int_0^t \d \widehat{N}^\P_r, \; t \in [0,\infty], \; \P \in \fP_0,
\end{equation}
and additionally satisfies the minimality condition
\begin{equation}\label{eq::minimality_generator_zero}
	\underset{\bar\P \in \fP_\smalltext{0}(\cG_{\smallertext{t}\tinytext{+}},\P)}{{\essinf}^\P} \E^{\bar\P} \big[ \widehat{K}^{\bar\P}_T - \widehat{K}^{\bar\P}_{t}\big | \cG_{t\smallertext{+}}\big] = 0, \; \textnormal{$\P$--a.s.}, \; t \in [0,\infty], \; \P \in \fP_0.
\end{equation}

The analogous decomposition for a non-zero generator will be discussed in \Cref{sec::decomposition_of_regularised_value_function}. It turns out that the family of processes $(\widehat\cY^\smallertext{+}, \widehat{Z}, (\widehat{U}^\P,\widehat{K}^\P)_{\P \in \fP_\smalltext{0}})$ constructed above is characterised by \eqref{eq::2bsde_generator_zero} and the minimality condition \eqref{eq::minimality_generator_zero}. Therefore, it is referred to as the solution of the 2BSDE \eqref{eq::2bsde_generator_zero}. The extension to a non-zero generator will be discussed in \Cref{sec::2bsdes}.

\subsection{Construction of the value function and its measurability}\label{sec::value_function}

For $(\omega,s) \in \Omega\times[0,\infty)$ and then $\P \in \fP(s,\omega)$, we denote by $\cY^{s,\omega,\P}((T- s \land T)^{s,\omega},\xi^{s,\omega})$ the first component of the solution $(\cY,\cZ,\cU,\cN)$ to the BSDE with terminal time $(T-s\land T)^{s,\omega}$, terminal condition $\xi^{s,\omega}$ and generator $f^{s,\omega,\P}$ defined in \eqref{eq::definition_f_s_P}
\begin{align}\label{eq::P_BSDE2}
	\cY_t &= \xi^{s,\omega} + \int_t^{(T\smallertext{-}s\land T)^{\smalltext{s}\smalltext{,}\smalltext{\omega}}} f^{s,\omega,\P}_r\big(\cY_r,\cY_{r\smallertext{-}},\cZ_r,\cU_r(\cdot)\big)\d (C^{s,\omega}_{s\smallertext{+}})_r - \bigg(\int_t^{(T\smallertext{-}s\land T)^{\smalltext{s}\smalltext{,}\smalltext{\omega}}} \cZ_r \d X^{c,\P}_r\bigg)^{(\P)} \nonumber\\
	&\quad - \bigg(\int_t^{(T\smallertext{-}s\land T)^{\smalltext{s}\smalltext{,}\smalltext{\omega}}}\int_{\R^\smalltext{d}} \cU_r(x)\tilde\mu^{X,\P}(\d r, \d x)\bigg)^{(\P)} - \int_t^{(T\smallertext{-}s\land T)^{\smalltext{s}\smalltext{,}\smalltext{\omega}}}\d \cN_r, \; t \in [0,\infty], \; \text{$\P$--a.s.},
\end{align}
in the sense of \cite[Section 3]{possamai2024reflections}, relative to $(\F_\smallertext{+},\P)$, where $X^{c,\P}$ denotes the continuous local martingale part of $X$ relative to $(\F,\P)$ (see also \Cref{rem::equivalence_semimartingale}). For $s = 0$, the BSDE above does not depend on $\omega$; thus, we drop the dependence on $(\omega,s)$ from the notation and write $\cY^\P(T,\xi)$ instead.

\medskip
We recall that a map $g : E \longrightarrow \overline\R$ defined on some Polish space $E$ is upper semi-analytic, if $\{x \in E : g(x) > c\} \subseteq E$ is analytic for every $c \in \R$. 
\begin{theorem}\label{thm::measurability2}
	Suppose that {\rm\Cref{ass::probabilities2}} and {\rm\ref{ass::generator2}} hold. For every $s \in [0,\infty)$, the function $\widehat\cY_s(T,\xi) : \Omega \longrightarrow [-\infty,\infty]$ defined by
	\begin{equation*}
		\widehat\cY_s(T,\xi)(\omega) \coloneqq
			\displaystyle \sup_{\P \in \fP(s,\omega)} \E^\P\big[\cY^{s,\omega,\P}_0((T-s\land T)^{s,\omega},\xi^{s,\omega})\big]
	\end{equation*} 
	is upper semi-analytic and $\cF_s$--universally measurable. Moreover, $\widehat\cY_s(T,\xi)(\omega) = \widehat\cY_{s\land T(\omega)}(T,\xi)(\omega)$ for all $\omega\in\Omega$, and
	\begin{equation}\label{eq::dynamic_programming_principle2}
		\widehat\cY_s(T,\xi) = \underset{\bar{\P} \in \fP_\smalltext{0}(\cF_\smallertext{s},\P)}{{\esssup}^\P} \E^{\bar{\P}} \big[ \cY^{\bar{\P}}_s(T,\xi)\big| \cF_s\big], \; \textnormal{$\P$--a.s.}, \; \P \in \fP_0,
	\end{equation}
	where $\fP_0(\cF_s,\P) \coloneqq \big \{\overline{\P} \in \fP_0 : \overline{\P} = \P \; \text{\rm on} \; \cF_s\big\}$.
\end{theorem}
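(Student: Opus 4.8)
The plan is to follow the now-classical strategy for establishing measurability and the dynamic programming principle for nonlinear value functions over families of semi-martingale laws, as in \cite[Theorem 2.3]{nutz2013constructing} and \cite[Lemma 3.1]{possamai2018stochastic}, but with the conditional expectation of the terminal condition replaced by $\E^\P[\cY^{s,\omega,\P}_0(\cdot)]$. First I would establish the \emph{upper semi-analyticity} of $(\omega,s,\P)\longmapsto \E^\P[\cY^{s,\omega,\P}_0((T-s\land T)^{s,\omega},\xi^{s,\omega})]$ as a function on the analytic set $\{(\omega,\P):\P\in\fP(s,\omega)\}$ (using \Cref{ass::probabilities2}.$(i)$). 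The point is that the solution map of the BSDE \eqref{eq::P_BSDE2} is obtained as the limit of Picard iterations in the weighted spaces of \Cref{sec::data}; each iteration involves stochastic integrals against $X^{c,\P}$ and $\tilde\mu^{X,\P}$ whose integrands depend measurably on $\P$ by the new measurability results for integrands (alluded to in the introduction, step $(i)$), together with the measurability of the differential characteristics $(\mathsf b^{s,\omega,\P},\mathsf a^{s,\omega},\mathsf K^{s,\omega,\P})$ from \Cref{lem::measurability_characteristics} and the measurability assumptions on $f$ in \Cref{ass::generator2}.$(iii)$. Since a pointwise (or monotone) limit of Borel/universally measurable functions is universally measurable, and since $\E^\P[\cdot]$ is Borel in $\P$ on the set where the relevant integrability bound of \Cref{ass::generator2}.$(iv)$ holds, one concludes that the integrand is upper semi-analytic; then $\widehat\cY_s(T,\xi)(\omega)=\sup_{\P\in\fP(s,\omega)}(\cdot)$ is upper semi-analytic by the projection theorem for analytic sets, hence $\cF_s$--universally measurable. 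The identity $\widehat\cY_s(T,\xi)(\omega)=\widehat\cY_{s\land T(\omega)}(T,\xi)(\omega)$ is immediate from $(T-s\land T)^{s,\omega}=(T-(s\land T(\omega))\land T)^{s\land T(\omega),\omega}$ on $\{s\ge T(\omega)\}$ together with \eqref{eq::adaptedness_probabilities}.

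Next I would prove \eqref{eq::dynamic_programming_principle2}. The inequality ``$\ge$'' is the easy direction: for $\overline\P\in\fP_0(\cF_s,\P)$, \Cref{ass::probabilities2}.$(ii)$ gives $\overline\P^{s,\omega}\in\fP(s,\omega)$ for $\overline\P$--a.e.\ $\omega$, and the conditioning result for BSDEs (the cited \Cref{lem::conditioning_bsde2}) shows that $\cY^{\overline\P}_s(T,\xi)(\omega)=\cY^{s,\omega,\overline\P^{s,\omega}}_0((T-s\land T)^{s,\omega},\xi^{s,\omega})$ $\overline\P$--a.s.; taking $\E^{\overline\P}[\cdot|\cF_s]$ and using that $\overline\P=\P$ on $\cF_s$ yields $\E^{\overline\P}[\cY^{\overline\P}_s(T,\xi)|\cF_s]\le \widehat\cY_s(T,\xi)$, $\P$--a.s. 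For the reverse inequality ``$\le$'', I would use a measurable selection argument: by the Jankov--von Neumann theorem applied to the analytic set $\{(\omega,\P):\P\in\fP(s,\omega)\}$ and the upper semi-analytic integrand, for each $\eps>0$ there is a universally measurable selector $\omega\longmapsto \P_\eps(\omega)\in\fP(s,\omega)$ with $\E^{\P_\smalltext{\eps}(\omega)}[\cY^{s,\omega,\P_\smalltext{\eps}(\omega)}_0(\cdots)]\ge \widehat\cY_s(T,\xi)(\omega)-\eps$ (or $\ge 1/\eps$ on the set where the sup is $+\infty$). Gluing $\P$ on $\cF_s$ with the kernel $\omega\longmapsto \P_\eps(\omega)$ via \Cref{ass::probabilities2}.$(iii)$ produces $\overline\P_\eps\in\fP_0(\cF_s,\P)$ with $\overline\P_\eps^{s,\omega}=\P_\eps(\omega)$ for $\P$--a.e.\ $\omega$ (cf.\ \Cref{rem::measures_in_fP}.$(i)$), and then $\E^{\overline\P_\smalltext{\eps}}[\cY^{\overline\P_\smalltext{\eps}}_s(T,\xi)|\cF_s]\ge \widehat\cY_s(T,\xi)-\eps$, $\P$--a.s., again by the BSDE conditioning lemma. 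Letting $\eps\downarrow 0$ along a sequence gives ``$\le$''. A standard lattice/upward-filtering argument (the family $\{\E^{\overline\P}[\cY^{\overline\P}_s(T,\xi)|\cF_s]:\overline\P\in\fP_0(\cF_s,\P)\}$ is directed upward, using pasting) legitimises the essential-supremum manipulation.

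The main obstacle I expect is the upper semi-analyticity of $\P\longmapsto\E^\P[\cY^{s,\omega,\P}_0]$, i.e.\ \emph{propagating measurability in $\P$ through the fixed-point construction} of the BSDE solution. Unlike the generator-zero case where one only conditions $\xi$, here one must verify that each Picard iterate, which is built from stochastic integrals $Z^\P\bcdot X^{c,\P}$ and $U^\P\ast\tilde\mu^{X,\P}$ and from evaluating $f^{s,\omega,\P}$ along these integrands, is jointly measurable in $(\omega,\P,\tilde\omega,t)$ in the right sense --- and crucially that the \emph{integrands} $Z^\P,U^\P$ themselves can be chosen measurably in $\P$, given that the spaces $\widehat\L^2_{\omega,t}(\mathsf K^{s,\omega,\P}_{\tilde\omega,t})$ vary with $\P$. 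This is exactly the point flagged in the introduction as requiring ``novel measurability results for integrands of stochastic integrals and for integrands of stochastic integrals of compensated random measures'', and it is where \Cref{ass::generator2}.$(iii)$ is used essentially. Once that is in place, the selection/pasting machinery for the dynamic programming principle is routine, modulo the careful bookkeeping of null sets inherent to working with r.c.p.d.'s on the raw filtration.
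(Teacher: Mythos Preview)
Your proposal is correct and follows essentially the same approach as the paper: Borel-measurability of $(\omega,\P)\mapsto\E^\P[\cY^{s,\omega,\P}_0]$ is obtained by propagating joint measurability through the Picard iterations (after a preliminary reduction to bounded data via stability), then upper semi-analyticity of $\widehat\cY_s$ follows by projection, and the dynamic programming principle is proved via Jankov--von Neumann selection plus pasting (for $\widehat\cY_s\le\esssup$) and the BSDE conditioning lemma (for $\widehat\cY_s\ge\esssup$). Two minor refinements worth noting: the paper actually obtains \emph{Borel} (not merely upper semi-analytic) measurability of the inner map, and before invoking \Cref{ass::probabilities2}.$(iii)$ one must upgrade the universally measurable selector to an $\cF_s$-measurable kernel (equal $\P$--a.s.), which the paper does explicitly; the upward-directedness argument you mention is not needed for this theorem.
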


\begin{remark}\label{rem::measurability_value_function}
	Note that by definition, $\widehat{\cY}_s(T,\xi)(\omega) = \widehat{\cY}_s(T,\xi)(\omega_{\cdot\land s})$ since $\fP(s,\omega) = \fP(s,\omega_{\cdot\land s})$. Moreover, we shall simply write $\widehat{\cY}(T,\xi)$ to denote the associated process defined on $\Omega \times [0,\infty)$, or simply $\widehat{\cY}$ if no confusion arises regarding the terminal stopping time $T$ and the terminal random variable $\xi$.
\end{remark}

\subsection{Path-regularisation of the value function and (path-wise) dynamic programming}\label{sec::regularisation}

Let $\G = (\cG_t)_{t \in [0,\infty)}$ be the filtration given by
\begin{equation}\label{eq::filtration_G}
\cG_t \coloneqq \cF^\ast_t \lor \sN^{\fP_\smalltext{0}},
\end{equation}
where $\cF^\ast_t$ denotes the universal completion of $\cF_t$ and $\sN^{\fP_\smalltext{0}}$ denotes all subsets $A \subseteq \Omega$ that are $(\cF,\P)$--null sets for all $\P\in\fP_0$. We then implicitly adjoin $\cG_{0\smallertext{-}} \coloneqq \{\varnothing, \Omega\}$ and $\cG_{\infty\smallertext{+}} \coloneqq \cG_{\infty} \coloneqq \cG_{\infty\smallertext{-}} \coloneqq \sigma(\cup_{t \in [0,\infty)} \cG_t)$ to $\G$ whenever necessary. Recall that $\D_\smallertext{+}$ is the set of non-negative dyadic numbers.

\medskip
The value function $\widehat{\mathcal{Y}}(T,\xi)$ constructed in \Cref{thm::measurability2} constitutes a process defined on $\Omega \times [0,\infty)$, and it is natural to ask at this stage whether it has a c\`adl\`ag modification or not. However, in the seemingly simpler---yet actually already complex---setting of sublinear expectation, the answer is no in general; see \cite[Example 4.6]{nutz2012superhedging}. Nonetheless, as we will see in this section, the representation \eqref{eq::dynamic_programming_principle2} implies that, under some additional assumptions, the paths $s \longmapsto \widehat{\mathcal{Y}}_s(T,\xi)$ on $\D_\smallertext{+} \cap [0,K]$ are bounded and have finitely many down-crossings of $[a,b]$ for all rational numbers $a < b$ on the complement of a $\fP_0$--polar set, for any $K \in \D_\smallertext{+}$. This allows us to define a c\`adl\`ag $\G_\smallertext{+}$-adapted process $\widehat{\cY}^\smallertext{+}(T,\xi)$, which is the right-hand side limit of $\widehat{\cY}(T,\xi)$ along $\D_\smallertext{+}$ outside a $\fP_0$--polar set. We then show that this regularisation satisfies
 \[
	\widehat\cY^\smallertext{+}_t(T,\xi) = \underset{\bar{\P} \in \fP_\smalltext{0}(\cG_{t\smalltext{+}},\P)}{{\esssup}^\P} \cY^{\bar{\P}}_t(T,\xi), \; \textnormal{$\P$--a.s.}, \; t \in [0,\infty), \; \textnormal{$\P \in \fP_0$},
\]
which precisely solves the aggregation problem outlined in the introduction. Our approach closely follows the methodology in the proofs of \cite[Lemma 3.2]{possamai2018stochastic} and \cite[Lemma 4.8]{soner2013dual}.

\medskip
The overarching idea is to show, through a linearisation argument, that $\widehat\cY(T,\xi) - \E^\P[\cY^\P_\cdot(T,\xi) | \cF_\cdot]$, where the latter denotes the $\F$-optional projection of $\cY^\P(T,\xi)$, is a nonlinear super-martingale relative to $(\F^\P,\P)$ for every $\P\in \fP_0$. We then apply an up- and down-crossing inequality to a transformation of this nonlinear super-martingale. Due to the generality of our setting, the bound on the crossings does not follow from existing results in the literature. The most general result on up- and down-crossings of nonlinear super-martingales, \cite[Lemma A.1]{bouchard2016general}, contains an unfortunate mistake in its proof, which we will correct using the arguments provided in this section. We will return to this in \Cref{rem::gap_regularisation}.

\medskip
Naturally, since linearisation and changes of measures are involved, we will need our generator to satisfy additional assumptions that are closely related to sufficient conditions for the comparison of solutions of BSDEs. To state them, we first note that the Lipschitz-continuity of the generator with respect to the $(\mathrm{y},z)$-variables allows us to write
\begin{align}\label{eq::lipschitz_linearisation}
	&f(t,\tilde\omega,y,\mathrm{y},z,u(\cdot),b,a,K) - f(t,\tilde\omega,y^\prime,\mathrm{y}^\prime,z^\prime,u^\prime(\cdot),b,a,K) \nonumber\\
	& \geq f(t,\tilde\omega,y,\mathrm{y},z,u(\cdot),b,a,K) - f(t,\tilde\omega,y^\prime,\mathrm{y},z,u(\cdot),b,a,K) \nonumber\\
	&\quad + \widehat\lambda^{\mathrm{y},\mathrm{y}^\smalltext{\prime}}_t(\tilde\omega)(\mathrm{y}-\mathrm{y}^\prime)  + (\eta^{z,z^\smalltext{\prime}}_t)^\top(\tilde\omega)a (z-z^\prime) + f(t,\tilde\omega,y^\prime,\mathrm{y}^\prime,z^\prime,u(\cdot),b,a,K) - f(t,\omega,y^\prime,\mathrm{y}^\prime,z^\prime,u^\prime(\cdot),a,b,K),
\end{align}
where $(y,y^\prime,\mathrm{y},\mathrm{y}^\prime,z,z^\prime,u(\cdot),u^\prime(\cdot)) \in (\R)^4 \times (\R^d)^2 \times  (\widehat{\L}^2_{\tilde\omega,t}\big(K)\big)^2$, and
\begin{equation}\label{eq::definition_lambda_hat_lambda}
	\widehat\lambda^{\mathrm{y},\mathrm{y}^\smalltext{\prime}}_t(\tilde\omega) \coloneqq - \sqrt{\mathrm{r}_t(\tilde\omega)}\sgn(\mathrm{y}-\mathrm{y}^\prime),\; 
	\eta^{z,z^\smalltext{\prime},a}_t(\tilde\omega) \coloneqq -\sqrt{\theta^{X}_t(\tilde\omega)} \frac{(z-z^\prime)}{| (z-z^\prime)^\top\mathsf{a}_t(\omega)(z-z^\prime)|^{1/2}}\1_{\{(z-z^\smalltext{\prime})^\smalltext{\top}\mathsf{a}_\smalltext{t}(z-z^\smalltext{\prime}) \neq 0\}}(\omega).
\end{equation}

The following is our main additional assumption in this section. We will comment on its necessity further below.
\begin{assumption}\label{ass::crossing}
	For every $(\omega,s) \in \Omega \times [0,\infty)$ and every $\P \in \fP(s,\omega)$ the following hold
	\begin{enumerate}
	\item[$(i)$] for every $(\cY,\cY^{\prime},\cZ,\cU) \in \big(\cS^{2,s,\omega}_{T,\hat\beta}(\F_\smallertext{+},\P)\big)^2 \times \H^{2,s,\omega}_{T,\hat\beta}(X^{c,\P};\F,\P) \times \H^{2,s,\omega}_{T,\hat\beta}(\mu^X;\F,\P)$, there exists an $\F^\P$-progressive process $\lambda = \lambda^{s,\omega,\P}\1_{\llparenthesis 0,(T-s\land T)^{\smallertext{s}\smallertext{,}\smallertext{\omega}} \rrbracket}$ such that $\lambda\Delta C^{s,\omega}_{s\smallertext{+}\smallertext{\cdot}} > -1$ holds \textnormal{$\P$--a.s.}, $|\lambda| \leq r^{s,\omega}_{s\smallertext{+}\smallertext{\cdot}}$ holds \textnormal{$\P\otimes \d C^{s,\omega}_{s\smallertext{+}\smallertext{\cdot}}$--a.e.} on $\llparenthesis 0, (T-s\land T)^{s,\omega} \rrbracket$, and
	\begin{equation*}
		f^{s,\omega,\P}\big(\cY,\cY_{\smallertext{-}},\cZ,\cU(\cdot)\big) - f^{s,\omega,\P}\big(\cY^{\prime},\cY_{\smalltext{-}},\cZ,\cU(\cdot)\big) 
		\geq \lambda (\cY - \cY^{\prime}), \; \textnormal{$\P \otimes \mathrm{d} C^{s,\omega}_{s\smallertext{+}\smallertext{\cdot}}$--a.e. on $\llparenthesis 0, (T-s\land T)^{s,\omega} \rrbracket$$;$}
	\end{equation*}
	
	\item[$(ii)$] there exists $\mathfrak{C} \in [0,\infty)$ such that
	\begin{equation}\label{eq::bounded_lipschitz_constants}
		\int_0^{(T\smallertext{-}s\land T)^{\smalltext{s}\smalltext{,}\smalltext{\omega}}}\max\Big\{\sqrt{\mathrm{r}^{s,\omega}_{s\smallertext{+}t}}, (\theta^{X})^{s,\omega}_{s\smallertext{+}t}, (\theta^\mu)^{s,\omega}_{s\smallertext{+}t})\Big\} \d (C^{s,\omega}_{s\smallertext{+}\smallertext{\cdot}})_t \leq \mathfrak{C}, \; \textnormal{$\P$--a.s.};
	\end{equation}

	\item[$(iii)$] for every $(\cY,\cZ,\cU,\cU^{\prime}) \in \cS^{2,s,\omega}_{T,\hat\beta}(\F_\smallertext{+},\P) \times \H^{2,s,\omega}_{T,\hat\beta}(X^{c,\P};\F,\P) \times \big(\H^{2,s,\omega}_{T,\hat\beta}(\mu^X;\F,\P)\big)^2$, there exists $\rho \in \H^{2,s,\omega}_{T}(\mu^X;\F^\P_\smallertext{+},\P)$ such that $\Delta (\rho \ast\tilde{\mu}^{X,\P}) > -1$, \textnormal{$\P$--a.s.}, and
	\begin{equation}\label{eq::rho_bounded_theta}
		\frac{\d\langle \rho \ast\tilde\mu^{X,\P}\rangle^{(\P)}}{\d  C^{s,\omega}_{s\smallertext{+}\smallertext{\cdot}}} \leq (\theta^\mu)^{s,\omega}_{s\smallertext{+}\smallertext{\cdot}},
	\end{equation}
	\begin{equation*}
		f^{s,\omega,\P}\big(\cY,\cY_{\smallertext{-}},\cZ,\cU(\cdot)\big) - f^{s,\omega,\P}\big(\cY,\cY_{\smallertext{-}},\cZ,\cU^{\prime}(\cdot)\big) 
		\geq \frac{\d\langle \rho \ast\tilde\mu^{X,\P},(\cU-\cU^{\prime})\ast\tilde\mu^{X,\P}\rangle^{(\P)}}{\d C^{s,\omega}_{s\smallertext{+}\smallertext{\cdot}}},
	\end{equation*}
	both hold $\P \otimes \mathrm{d}C^{s,\omega}_{s\smallertext{+}\smallertext{\cdot}}$--{\rm a.e.} on $\llparenthesis 0, (T-s\land T)^{s,\omega} \rrbracket$.
	\end{enumerate}
	
	Moreover, for every $\P\in \fP_0$
	\begin{enumerate}
	\item[$(iv)$] there exists $\rho^\dagger \in \H^{2}_{T}(\mu^X;\F^\P_\smallertext{+},\P)$ with $\Delta (\rho^\dagger \ast\tilde{\mu}^{X,\P}) > -1$, \textnormal{$\P$--a.s.}, and such that for every $(\cY,\cZ,\cU,\cU^{\prime}) \in \cS^{2}_{T,\hat\beta}(\F_\smallertext{+},\P) \times \H^{2}_{T,\hat\beta}(X^{c,\P};\F,\P) \times \big(\H^{2}_{T,\hat\beta}(\mu^X;\F,\P)\big)^2$
	\begin{equation}\label{eq::rho_dagger_bounded_theta}
		\frac{\d\langle \rho^\dagger \ast\tilde\mu^{X,\P}\rangle^{(\P)}}{\d  C} \leq \theta^\mu,
	\end{equation}
	\begin{equation*}
		f^{\P}\big(\cY,\cY_{\smallertext{-}},\cZ,\cU(\cdot)\big) - f^{\P}\big(\cY,\cY_{\smallertext{-}},\cZ,\cU^{\prime}(\cdot)\big) 
		\geq \frac{\d\langle \rho^\dagger \ast\tilde\mu^{X,\P},(\cU-\cU^{\prime})\ast\tilde\mu^{X,\P}\rangle^{(\P)}}{\d C},
	\end{equation*}
	both hold $\P \otimes \mathrm{d}C$--{\rm a.e.} on $\llparenthesis 0, T \rrbracket$.
	\end{enumerate}
\end{assumption}

\begin{remark}\label{rem::ass_regularisation}
	$(i)$ The Lipschitz-continuity of $f^{s,\omega,\P}$ with respect to the $y$-variable yields
	\begin{equation*}
		- \sqrt{r_t} |y-y^\prime| \leq f^{s,\omega,\P}_t\big(\omega,y,\mathrm{y},z,u_t(\omega;\cdot)\big) - f^{s,\omega,\P}_t\big(\omega,y^\prime,\mathrm{y},z,u_t(\omega;\cdot)\big) \leq \sqrt{r_t} |y-y^\prime|.
	\end{equation*}
	If we now let $\lambda = (\lambda_t)_{t \in [0,\infty)}$ be
	\begin{equation*}
		\lambda \coloneqq -\sqrt{r} \,\textnormal{\sgn}(\cY-\cY^\prime)\1_{\llparenthesis 0,(T-s\land T)^{\smallertext{s}\smallertext{,}\smallertext{\omega}} \rrbracket},
	\end{equation*}
	for solutions $\cY$ and $\cY^\prime$ to our \textnormal{BSDEs},  
	then the conditions of \textnormal{\Cref{ass::crossing}.$(i)$} are met, except that $\lambda$ might not necessarily be $\F^\P$-progressive due to the $\F^\P_\smallertext{+}$-adaptedness of $\cY$ and $\cY^{\prime}$. Since the measurability requirements in \textnormal{\Cref{ass::crossing}.$(i)$} are crucial in the proof of \textnormal{\Cref{lem::solv_bsde_cond}}, we need to impose them.
	
	\medskip
	$(ii)$ \textnormal{\Cref{ass::crossing}.$(ii)$--$(iii)$} are sufficient for a comparison principle to hold for our \textnormal{BSDEs}$;$ see \textnormal{\Cref{prop::comparison}}. This assumption is weaker than \textnormal{\cite[Assumption 7.1]{possamai2024reflections}} under which we showed a comparison principle, see \textnormal{\cite[Proposition 7.3]{possamai2024reflections}}. Although the condition \eqref{eq::rho_bounded_theta} does not appear in \textnormal{\Cref{ass::comparison}}, it allows us to deduce the inequality in \eqref{eq::bounding_supermartingale_over_dyadics} in the proof of \textnormal{\Cref{thm::down-crossing}}.
	
	\medskip
	$(iii)$ The fact that the order of quantifiers in \textnormal{\Cref{ass::crossing}.$(iv)$} is reversed compared to \textnormal{\Cref{ass::crossing}.$(iii)$} is crucial in the proof of \textnormal{\Cref{thm::down-crossing}.$(i)$}.
\end{remark}

The result on the path regularisation of the value function then reads as follows.
\begin{theorem}\label{thm::down-crossing}
	Suppose that {\rm\Cref{ass::probabilities2}} and {\rm\ref{ass::generator2}} hold, that {\rm\Cref{ass::crossing}} holds for $s = 0$, and that
	\begin{equation}\label{eq::constant_phi}
		\phi^{2,\hat{\beta}}_{\xi,f} 
		\coloneqq \sup_{\P \in \fP_\smalltext{0}} \E^\P\Bigg[\sup_{s \in \D_\tinytext{+}} \underset{\bar{\P} \in \fP_\smalltext{0}(\cF_{\smalltext{s}},\P)}{{\esssup}^\P}\E^{\bar{\P}}\bigg[\cE(\hat\beta A)_T|\xi|^2 + \int_s^T \cE(\hat\beta A)_r \frac{|f^{\bar\P}_r(0,0,0,\mathbf{0})|^2}{\alpha^2_r} \d C_r\bigg| \cF_{s}\bigg] \Bigg] < \infty.
	\end{equation}
	Then the following hold
	\begin{enumerate}
		\item[$(i)$] there exists a real-valued, right-continuous and c\`adl\`ag, $\G_\smallertext{+}$-adapted process $\widehat\cY^\smallertext{+}(T,\xi) = (\widehat\cY^\smallertext{+}_t(T,\xi))_{t \in [0,\infty]}$ that satisfies $\widehat\cY^{\smallertext{+}}(T,\xi) = \widehat\cY^{\smallertext{+}}_{\cdot \land T}(T,\xi)$ and $\widehat\cY^{\smallertext{+}}_T(T,\xi) = \xi$ identically, and
		\begin{equation}\label{eq::hat_y_limit}
			\widehat\cY^\smallertext{+}_t(T,\xi) = \lim_{\D_\tinytext{+} \ni s \downarrow\downarrow t} \widehat\cY_s(T,\xi), \; t \in [0,\infty), \; \textnormal{$\fP_0$--q.s.;}
		\end{equation}
		\item[$(ii)$] there exists a constant $\mathfrak{C} \in (0,\infty)$ depending only on $\hat\beta$ and $\Phi$ such that
		\begin{equation*}
			\sup_{\P\in\fP_\smalltext{0}}\E^\P\bigg[\sup_{s \in \D_\tinytext{+}}\big|\cE(\hat\beta A)^{1/2}_{s \land T}\widehat\cY_{s \land T}(T,\xi)\big|^2\bigg] + \sup_{\P\in\fP_\smalltext{0}}\E^\P\bigg[\sup_{s \in [0,T]}\big|\cE(\hat\beta A)^{1/2}_{s}\widehat\cY^\smallertext{+}_s(T,\xi)\big|^2\bigg] \leq \mathfrak{C}\phi^{2,\hat{\beta}}_{\xi,f} < \infty,
		\end{equation*}
		\begin{equation}\label{eq::aggregation}
			\widehat\cY^\smallertext{+}_t(T,\xi) = \underset{\bar{\P} \in \fP_\smalltext{0}(\cG_{t\smalltext{+}},\P)}{{\esssup}^\P} \cY^{\bar{\P}}_t(T,\xi), \; \textnormal{$\P$--a.s.}, \; t \in [0,\infty], \; \textnormal{$\P \in \fP_0$;}
		\end{equation}
		\item[$(iii)$] if, in addition, {\rm\Cref{ass::crossing}} holds, then for all $\G_\smallertext{+}$--stopping times $\sigma$ and $\tau$
		\begin{equation}\label{eq::nonlinear_supermartingale_property}
			\widehat\cY^\smallertext{+}_{\sigma \land \tau \land T}(T,\xi) \geq \cY^\P_{\sigma \land \tau \land T}\big(\tau\land T,\widehat\cY^\smallertext{+}_{\tau\land T}(T,\xi)\big), \; \textnormal{$\P$--a.s.}, \; \text{\rm $\P \in \fP_0$.}
		\end{equation}
	\end{enumerate}
\end{theorem}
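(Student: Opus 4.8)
The plan is to run the standard regularisation argument for nonlinear super-martingales, replacing the flawed down-crossing step flagged in \Cref{rem::gap_regularisation} by one that linearises first and only then appeals to Doob's theory. \textbf{Step 1 (a nonlinear super-martingale inequality along the dyadics).} Fix $\P\in\fP_\smalltext{0}$. Combining the flow property of our BSDEs at the level of the shifted equations (the conditioning statement behind \Cref{lem::conditioning_bsde2}), the comparison principle \Cref{prop::comparison} — available because \Cref{ass::crossing}.$(ii)$--$(iii)$ hold — and the dynamic programming principle \eqref{eq::dynamic_programming_principle2} applied at a later dyadic time, I would show, for $s\le t$ in $\D_\smallertext{+}$,
\[
\widehat\cY_{s\land T}(T,\xi)\ \ge\ \E^\P\big[\cY^\P_{s\land T}\big(t\land T,\widehat\cY_{t\land T}(T,\xi)\big)\,\big|\,\cF_{s}\big],\quad \P\text{-a.s.},
\]
the universal measurability of $\widehat\cY_t(T,\xi)$ from \Cref{thm::measurability2} making the right-hand side well posed and the finiteness \eqref{eq::constant_phi} placing the whole family inside the weighted spaces of \Cref{sec::data}.

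\textbf{Step 2 (linearisation and reduction to Doob).} I would next write the difference $\cY^\P_\cdot(t\land T,\eta)-\cY^\P_\cdot(t\land T,\eta')$ of two BSDE solutions via the Lipschitz decomposition \eqref{eq::lipschitz_linearisation} and the linearising coefficients $\lambda$ and $\rho^\dagger$ supplied by \Cref{ass::crossing}.$(i)$ and $(iv)$; the resulting linear BSDE gives the usual representation $\E^{\Q^\P}[\Gamma^\P(\eta-\eta')+\cdots\mid\cF_{s\smallertext{+}}]$, with positive discount $\Gamma^\P$ and admissible Girsanov change $\Q^\P\sim\P$ thanks to the sign conditions $\lambda\Delta C^{s,\omega}_{s\smallertext{+}\smallertext{\cdot}}>-1$, $\Delta(\rho^\dagger\ast\tilde\mu^{X,\P})>-1$ and the uniform bounds \eqref{eq::bounded_lipschitz_constants}, \eqref{eq::rho_dagger_bounded_theta}. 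The decisive point — and the reason \Cref{ass::crossing}.$(iv)$ is stated in the order ``there exists $\rho^\dagger$ \dots for every $(\cY,\cZ,\cU,\cU')$'', cf. \Cref{rem::ass_regularisation} — is that the \emph{same} $(\Gamma^\P,\Q^\P)$ can be used simultaneously for all pairs of solutions occurring in a given down-crossing. Transforming $\widehat\cY(T,\xi)$ by $\cE(\hat\beta A)^{1/2}\Gamma^\P$ then yields, on $\D_\smallertext{+}\cap[0,K]$, a non-negative $\Q^\P$-super-martingale (in $\L^2$ by \eqref{eq::constant_phi}) whose number of down-crossings of every $[a,b]$, $a<b$ rational, dominates that of $\widehat\cY(T,\xi)$; Doob's down-crossing and $\L^2$-maximal inequalities, together with a countable union over $a<b$, $K\in\D_\smallertext{+}$, and a countable dominating subfamily of $\fP_\smalltext{0}$, then produce a $\fP_\smalltext{0}$-polar set outside of which $s\longmapsto\widehat\cY_s(T,\xi)$ is bounded on the dyadics and has finitely many crossings, so that $\widehat\cY^\smallertext{+}_t(T,\xi)\coloneqq\lim_{\D_\tinytext{+}\ni s\downarrow\downarrow t}\widehat\cY_s(T,\xi)$ exists, and it also delivers the uniform $\L^2$-bound of part $(ii)$.

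\textbf{Step 3 (properties of $\widehat\cY^\smallertext{+}$, and the aggregation identity).} Boundedness and finiteness of crossings make $\widehat\cY^\smallertext{+}(T,\xi)$ càdlàg; $\G_\smallertext{+}$-adaptedness follows from the $\cF_s$-universal measurability of each $\widehat\cY_s(T,\xi)$ (\Cref{thm::measurability2}) and the definition \eqref{eq::filtration_G} of $\G$, since the right-limit over dyadics $s>t$ is $\cG_{t\smallertext{+}}$-measurable; $\widehat\cY^\smallertext{+}=\widehat\cY^\smallertext{+}_{\cdot\land T}$ and $\widehat\cY^\smallertext{+}_T=\xi$ follow from $\widehat\cY_s(T,\xi)=\widehat\cY_{s\land T}(T,\xi)$ (\Cref{thm::measurability2}) and right-continuity. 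For \eqref{eq::aggregation}, the inequality ``$\ge$'' is obtained by letting $s\downarrow\downarrow t$ along dyadics in the Step~1 inequality, using right-continuity of the $\cY^{\bar\P}$ and stability in the terminal time (from their construction in \cite{possamai2024reflections}), for each $\bar\P\in\fP_\smalltext{0}(\cG_{t\smallertext{+}},\P)$, and transporting the resulting $\bar\P$-a.s.\ statement to $\P$ because the relevant event lies in $\cG_{t\smallertext{+}}$; ``$\le$'' follows from \eqref{eq::dynamic_programming_principle2} at dyadic $s\downarrow\downarrow t$, the $\F_\smallertext{+}$-martingale convergence of the conditional expectations, and the super-martingale property of $\widehat\cY^\smallertext{+}$ to absorb the loss coming from the shrinking families $\fP_\smalltext{0}(\cF_s,\P)\supseteq\fP_\smalltext{0}(\cG_{t\smallertext{+}},\P)$.

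\textbf{Step 4 (arbitrary $\G_\smallertext{+}$-stopping times, and the main obstacle).} To obtain \eqref{eq::nonlinear_supermartingale_property} for general $\G_\smallertext{+}$-stopping times $\sigma,\tau$, I would approximate $\sigma\land\tau$ and $\tau$ from above by dyadic-valued $\G_\smallertext{+}$-stopping times, apply the Step~1 inequality at these conditioning times — which is exactly why \Cref{ass::crossing} is required at every $(s,\omega)$, and why one also needs the conditioning of characteristics \Cref{prop::conditioning_characteristics2} together with the conditioning of BSDEs behind \Cref{lem::conditioning_bsde2} — and pass to the limit using right-continuity of $\widehat\cY^\smallertext{+}$ and the stability of BSDE solutions with respect to terminal time and terminal condition from \Cref{sec_stability}. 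The main obstacle is Step~2: producing a \emph{single} linear super-martingale under a \emph{single} measure whose crossings dominate those of $\widehat\cY$, which is precisely what the naive change-of-measure argument fails to deliver and what is rescued here by the uniform choice of linearising coefficients in \Cref{ass::crossing}.$(iv)$ and the companion bounds $(ii)$--$(iii)$; a pervasive secondary difficulty is keeping every object jointly measurable in $(\omega,\P)$ throughout the regularisation.
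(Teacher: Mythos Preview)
Your overall architecture is right, but Step~2 has two genuine gaps, and there is a third problem in how you conclude.

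\textbf{First gap: you never pass to a non-negative process.} You transform $\widehat\cY(T,\xi)$ itself by a positive weight and assert the result is a ``non-negative $\Q^\P$-super-martingale''. But $\widehat\cY$ has no sign; multiplying by $\cE(\hat\beta A)^{1/2}\Gamma^\P>0$ does not help. The paper does \emph{not} work with $\widehat\cY$ directly: for each $\P\in\fP_0$ it introduces the auxiliary process
\[
V_t \coloneqq \widehat\cY_t(T,\xi) - \E^\P\big[\cY^\P_t(T,\xi)\,\big|\,\cF_t\big]\ \ge\ 0,
\]
and shows that controlling the down-crossings of $V$ (together with those of the $\P$--super-martingale $\E^\P[\cY^\P_\cdot|\cF_\cdot]$) controls those of $\widehat\cY$. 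The point of $V$ is twofold: it is non-negative, and the shifted generator $F$ governing its dynamics on each dyadic interval satisfies $F_s(0,0,0,\mathbf{0})=0$. Both are essential: by comparison with the BSDE $\overline\cY^i$ having generator $-\sqrt{r}|y|-\sqrt{\mathrm{r}}|\mathrm{y}|-\sqrt{\theta^X}\|\mathsf{a}^{1/2}z\|+F(0,0,0,u)$, one gets $0\le\overline\cY^i\le\widetilde\cY^i$, and only then does the linearised super-martingale $S^{n,K}_i = \cE(w)_{t_i}\cE(v)_{t_i}V_{t_i}$ become non-negative. Without $F(0,0,0,\mathbf{0})=0$ the lower bound $\overline\cY^i\ge 0$ fails; this is precisely the difficulty flagged in \Cref{rem::gap_regularisation} regarding \cite[Theorem~6]{chen2000general}.

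\textbf{Second gap: your weight is non-decreasing.} You use $\cE(\hat\beta A)^{1/2}\Gamma^\P$ as the multiplicative transform. But $\cE(\hat\beta A)^{1/2}$ is \emph{non-decreasing}, so the moving barriers $\ell_i=a\cdot(\text{weight})_{t_i}$, $u_i=b\cdot(\text{weight})_{t_i}$ in Doob's generalised down-crossing inequality \cite[Equation~12.3, page~446]{doob1984classical} are increasing, and the inequality does not apply --- this is exactly the mistake in \cite[Lemma~A.1]{bouchard2016general} that the paper corrects. The fix in the paper is to take $\lambda=-\sqrt{r}$, $\widehat\lambda=-\sqrt{\mathrm{r}}$ so that $w=\int\lambda\,\d C$ and $v=\int\widehat\lambda/(1-\widehat\lambda\Delta C)\,\d C$ are \emph{non-increasing}; then $\cE(w)\cE(v)$ is non-increasing, the barriers $\ell_i,u_i$ are non-increasing, and Doob applies. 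Going back from the $\Q^{n,K}$-statement to a $\P$-statement on $\{D^b_a(V;\D_+\cap[0,K])=\infty\}$ is done via the auxiliary linear BSDE $\sY$ of \Cref{lem::linearising_bsde}, not by a direct change of measure.

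\textbf{Third problem: there is no countable dominating subfamily of $\fP_0$.} The measures in $\fP_0$ are typically mutually singular, so you cannot ``take a countable dominating subfamily''. The paper instead observes that the good set $\Omega_0$ (bounded paths, finitely many crossings on dyadics) lies in $\bigcup_{t<\infty}\cF^\ast_t$, proves $\P[\Omega_0]=1$ for each $\P\in\fP_0$ separately by the argument above, and concludes that $\Omega_0^c$ is $\fP_0$-polar by definition. Your Steps~3 and~4 are otherwise close to the paper, though for the ``$\le$'' direction in \eqref{eq::aggregation} the paper needs an upward-directedness argument for the family $\{\E^{\bar\P}[\cY^{\bar\P}_{t_n}|\cF_{t_n}]:\bar\P\in\fP_0(\cF_{t_n},\P)\}$ (via pasting of measures through \Cref{ass::probabilities2}.$(iii)$), which you do not mention.
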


\begin{remark}
	Let $\F^\ast = (\cF^\ast_t)_{t \in [0,\infty)}$, that is, $\F^\ast$ is the filtration obtained by universally completing each $\sigma$-algebra in $\F$. Then the proof of \textnormal{\Cref{thm::down-crossing}.$(i)$} even shows that one could define $\widehat{\cY}^+(T,\xi)$ to be a real-valued, right-continuous, and $\F^\ast_\smallertext{+}$-optional process on $[0,\infty)$ at the expense of the paths then only being \textnormal{$\fP_0$--q.s.} c\`adl\`ag.
\end{remark}

The previous result guarantees the square-integrability of $\widehat{\cY}^\smallertext{+}(T,\xi)$ and subsequently establishes its non-linear super-martingale property relative to the BSDE evaluation $\cY^\P(\cdot, \cdot)$; these play a key role in deriving the semi-martingale decomposition of $\widehat{\cY}^\smallertext{+}(T,\xi)$ in \Cref{sec::decomposition_of_regularised_value_function}.

\medskip
The following result describes the relationship between $\widehat{\cY}(T,\xi)$ and $\widehat{\cY}^\smallertext{+}(T,\xi)$ in the context of optimisation problems.

\begin{corollary}\label{cor::optimisation}
	Suppose that {\rm\Cref{ass::probabilities2}} and {\rm\ref{ass::generator2}} hold, that {\rm\Cref{ass::crossing}} holds for $s = 0$, that $\phi^{2,\hat{\beta}}_{\xi,f} < \infty$, and that
	\begin{equation}\label{eq::implies_uniform_integrability}
		\E^\P\Bigg[\underset{\bar{\P} \in \fP_\smalltext{0}(\cF_{\smalltext{0}\tinytext{+}},\P)}{{\esssup}^\P}\E^{\bar{\P}}\bigg[\cE(\hat\beta A)_T|\xi_T|^2 + \int_0^T \cE(\hat\beta A)_r \frac{|f^{\bar\P}_r(0,0,0,\mathbf{0})|^2}{\alpha^2_r} \d C_r\bigg| \cF_{0\smallertext{+}}\bigg] \Bigg] < \infty, \; \P \in \fP_0.
	\end{equation}
	Let $\widehat{\cY}^\smallertext{+}(T,\xi)$ be the process constructed in \textnormal{\Cref{thm::down-crossing}}. Then
	\begin{equation}\label{eq::relation_to_optimisation}
		\widehat{\cY}_0(T,\xi) = \sup_{\P\in\fP_\smalltext{0}}\E^\P\big[\widehat{\cY}^\smallertext{+}_0(T,\xi)\big].
	\end{equation}
	Moreover, if $\P^\ast \in \fP_0$ satisfies $\widehat{\cY}_0(T,\xi) = \E^{\P^\smalltext{\star}}\big[\cY^{\P^\smalltext{\ast}}_0(T,\xi)\big]$, then $\widehat{\cY}^\smallertext{+}_0(T,\xi) = \widehat{\cY}^{\P^\smalltext{\ast}}_0(T,\xi)$,  \textnormal{$\P^\ast$--a.s.}, and
	\begin{equation}\label{eq::max_widehat_y_plus}
		 \sup_{\P\in\fP_\smalltext{0}}\E^\P\big[\widehat{\cY}^\smallertext{+}_0(T,\xi)\big] = \E^{\P^\smalltext{\ast}}\big[\widehat{\cY}^\smallertext{+}_0(T,\xi)\big].
	\end{equation}
	Conversely, if $\P^\ast\in\fP_0$ satisfies \eqref{eq::max_widehat_y_plus} and $\bar{\P}^\ast \in \fP_0(\cG_{0\smallertext{+}},\P^\ast)$ satisfies $\widehat{\cY}^\smallertext{+}_0(T,\xi) = \cY^{\bar\P^\smalltext{\ast}}_0(T,\xi)$, \textnormal{$\P^\ast$--a.s.}, then
	\[
		\widehat{\cY}_0(T,\xi) = \E^{\bar\P^\smalltext{\ast}}\big[\cY^{\bar\P^\smalltext{\ast}}_0(T,\xi)\big]
	\]
	holds.
\end{corollary}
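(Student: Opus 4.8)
The plan is to extract everything from three facts. \textbf{Fact~1}: \Cref{thm::measurability2} applied with $s=0$ --- where $\cF_0$ is trivial and $\fP(0,\cdot)\equiv\fP_0$ --- gives $\widehat{\cY}_0(T,\xi)=\sup_{\P\in\fP_0}\E^\P[\cY^\P_0(T,\xi)]$. \textbf{Fact~2}: $\widehat{\cY}^\smallertext{+}_0(T,\xi)$ is $\cG_{0+}$-measurable by \Cref{thm::down-crossing}.$(i)$, belongs to $\L^2(\P)$ for every $\P\in\fP_0$ by \Cref{thm::down-crossing}.$(ii)$ (take $s=0$, where $\cE(\hat\beta A)_0=1$), and, since $\cF_{0+}\subseteq\cG_{0+}$, every $\bar\P\in\fP_0(\cG_{0+},\P)$ satisfies $\E^\P[\widehat{\cY}^\smallertext{+}_0]=\E^{\bar\P}[\widehat{\cY}^\smallertext{+}_0]$. \textbf{Fact~3}: for such a $\bar\P$, the time-zero value $\cY^{\bar\P}_0(T,\xi)$, being the initial value of an $\F_\smallertext{+}$-optional solution, is $\cF_{0+}$-measurable, so $\E^\P[\cY^{\bar\P}_0]=\E^{\bar\P}[\cY^{\bar\P}_0]\le\widehat{\cY}_0(T,\xi)$ by Fact~1. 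The integrability required for these manipulations is supplied by \Cref{thm::down-crossing}.$(ii)$ and by hypothesis \eqref{eq::implies_uniform_integrability}.

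I would first prove \eqref{eq::relation_to_optimisation}. The inequality ``$\le$'' is immediate: choosing $\bar\P=\P$ in \eqref{eq::aggregation} yields $\widehat{\cY}^\smallertext{+}_0\ge\cY^\P_0$, $\P$--a.s., hence $\E^\P[\cY^\P_0]\le\E^\P[\widehat{\cY}^\smallertext{+}_0]\le\sup_{\Q\in\fP_0}\E^\Q[\widehat{\cY}^\smallertext{+}_0]$, and one takes the supremum over $\P\in\fP_0$ and invokes Fact~1. For ``$\ge$'' fix $\P\in\fP_0$; the family $\{\cY^{\bar\P}_0 : \bar\P\in\fP_0(\cG_{0+},\P)\}$ is upward directed under $\P$ --- this is the pasting property already behind \eqref{eq::aggregation}, obtained by concatenating two competitors on the $\cG_{0+}$-set where one dominates the other, via \Cref{ass::probabilities2} --- so by \eqref{eq::aggregation} there are $\bar\P_n\in\fP_0(\cG_{0+},\P)$ with $\cY^{\bar\P_n}_0\uparrow\widehat{\cY}^\smallertext{+}_0$, $\P$--a.s.; monotone convergence then gives $\E^\P[\widehat{\cY}^\smallertext{+}_0]=\sup_n\E^\P[\cY^{\bar\P_n}_0]\le\widehat{\cY}_0(T,\xi)$ by Fact~3, and one takes the supremum over $\P$.

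Next I would treat the two statements about optimal measures. If $\P^\ast\in\fP_0$ satisfies $\widehat{\cY}_0(T,\xi)=\E^{\P^\ast}[\cY^{\P^\ast}_0(T,\xi)]$, then \eqref{eq::aggregation} with $\bar\P=\P^\ast$ gives $\widehat{\cY}^\smallertext{+}_0\ge\cY^{\P^\ast}_0$, $\P^\ast$--a.s., whereas \eqref{eq::relation_to_optimisation} gives $\E^{\P^\ast}[\widehat{\cY}^\smallertext{+}_0]\le\widehat{\cY}_0(T,\xi)=\E^{\P^\ast}[\cY^{\P^\ast}_0]$; since a nonnegative random variable with nonpositive expectation vanishes, $\widehat{\cY}^\smallertext{+}_0=\cY^{\P^\ast}_0=\widehat{\cY}^{\P^\ast}_0$, $\P^\ast$--a.s., and therefore $\sup_{\Q\in\fP_0}\E^\Q[\widehat{\cY}^\smallertext{+}_0]=\widehat{\cY}_0(T,\xi)=\E^{\P^\ast}[\cY^{\P^\ast}_0]=\E^{\P^\ast}[\widehat{\cY}^\smallertext{+}_0]$, which is \eqref{eq::max_widehat_y_plus}. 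Conversely, if $\P^\ast$ satisfies \eqref{eq::max_widehat_y_plus} and $\bar\P^\ast\in\fP_0(\cG_{0+},\P^\ast)$ satisfies $\widehat{\cY}^\smallertext{+}_0=\cY^{\bar\P^\ast}_0$, $\P^\ast$--a.s., then chaining equalities gives $\widehat{\cY}_0(T,\xi)=\sup_{\Q\in\fP_0}\E^\Q[\widehat{\cY}^\smallertext{+}_0]=\E^{\P^\ast}[\widehat{\cY}^\smallertext{+}_0]=\E^{\P^\ast}[\cY^{\bar\P^\ast}_0]=\E^{\bar\P^\ast}[\cY^{\bar\P^\ast}_0]$, where the last step uses Fact~3 (the integrand is $\cF_{0+}$-measurable and $\bar\P^\ast=\P^\ast$ on $\cF_{0+}$).

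The main obstacle is the ``$\ge$'' half of \eqref{eq::relation_to_optimisation}: it forces one to interchange the essential supremum of \eqref{eq::aggregation} with $\E^\P[\cdot]$, which in turn requires establishing the upward-directedness (stability under pasting at $\cG_{0+}$) of the family of conditional BSDE values and keeping careful track of the fact that each $\cY^{\bar\P}_0$ is a priori tied to a $\bar\P$-dependent filtration while all competitors must be compared on the common $\sigma$-algebra $\cF_{0+}$. These are precisely the phenomena that motivate the enlarged filtration $\G$ and the uniform-integrability hypothesis \eqref{eq::implies_uniform_integrability}; for the pasting lemma I would reuse the construction already carried out in the proof of \Cref{thm::down-crossing}.
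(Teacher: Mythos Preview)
Your treatment of the two statements about optimal measures is correct and matches the paper's argument, as does the ``$\le$'' direction of \eqref{eq::relation_to_optimisation}. The gap lies in the ``$\ge$'' direction: you assert that the family $\{\cY^{\bar\P}_0 : \bar\P\in\fP_0(\cG_{0\smallertext{+}},\P)\}$ is upward directed, citing ``the pasting property already behind \eqref{eq::aggregation}, obtained by concatenating two competitors on the $\cG_{0\smallertext{+}}$-set where one dominates the other, via \Cref{ass::probabilities2}''. But \Cref{ass::probabilities2}.$(iii)$ only furnishes concatenation along the \emph{raw} filtration $\cF_{t-s}$ at deterministic times; it says nothing about pasting along $\cF_{0\smallertext{+}}$ or $\cG_{0\smallertext{+}}$. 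The paper is explicit that $\F_\smallertext{+}$-stability of $\fP_0$ is \emph{not} assumed (see \Cref{rem_increasing_process_bound}), and the proof of \eqref{eq::aggregation} never establishes upward directedness at $\cG_{t\smallertext{+}}$: it proves directedness only at the dyadic $\cF_{t_n}$ (see \eqref{eq::upward_directed} and the argument following it), then passes to the limit.

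The paper closes the gap precisely by reusing that double-limit construction. From the proof of \Cref{thm::down-crossing}.$(ii)$ one has $\widehat{\cY}^\smallertext{+}_0 = \lim_{n}\lim_{m}\cY^{\P^\smalltext{m}_\smalltext{n}}_0$, $\P$--a.s., with $\P^m_n\in\fP_0(\cF_{t_\smalltext{n}},\P)$ for dyadic $t_n\downarrow 0$ (see \eqref{eq::lim_upward_directed}). Hypothesis \eqref{eq::implies_uniform_integrability} together with \Cref{prop::stability} supplies a $\P$-integrable upper bound for the whole family, and two applications of Fatou's lemma give
\[
\E^\P\big[\widehat{\cY}^\smallertext{+}_0\big] \le \liminf_{n}\liminf_{m}\E^\P\big[\cY^{\P^\smalltext{m}_\smalltext{n}}_0\big] = \liminf_{n}\liminf_{m}\E^{\P^\smalltext{m}_\smalltext{n}}\big[\cY^{\P^\smalltext{m}_\smalltext{n}}_0\big] \le \widehat{\cY}_0(T,\xi),
\]
where the middle equality uses $\P^m_n=\P$ on $\cF_{t_\smalltext{n}}\supseteq\cF_{0\smallertext{+}}$ and the $\cF_{0\smallertext{+}}$-measurability of $\cY^{\P^\smalltext{m}_\smalltext{n}}_0$. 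This is the step your upward-directedness shortcut was meant to replace, and the point is that it cannot be replaced without an additional stability hypothesis on $\fP_0$ that the paper deliberately avoids.
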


\begin{remark}
	From a control perspective, the analysis of $\widehat{\cY}_0(T,\xi)$ and $\widehat{\cY}_0^{\smallertext{+}}(T,\xi)$ is essentially equivalent, as both representations provide the same information about optimal controls. The distinction lies in the methodology for constructing these controls: using \eqref{eq::max_widehat_y_plus}, one first determines $\P^\ast \in \fP_0$ satisfying \eqref{eq::max_widehat_y_plus}, and then identifies $\bar{\P}^\ast \in \fP_0(\cG_{0\smallertext{+}},\P^\ast)$ such that $\widehat{\cY}^\smallertext{+}_0(T,\xi) = \cY^{\bar\P^\smalltext{\ast}}_0(T,\xi)$, up to a \textnormal{$\P^\ast$--null set}. The regularity of $\widehat{\cY}^\smallertext{+}(T,\xi)$, however, represents a significant advantage.
\end{remark}

\subsection{Decomposition of the regularised value function}\label{sec::decomposition_of_regularised_value_function}

The condition satisfied by $\widehat\cY^\smallertext{+}(T,\xi)$ in \Cref{thm::down-crossing}.$(iii)$ is the (strong) non-linear super-martingale property in the context of non-linear expectations induced by the solutions $\cY^\P(\cdot,\cdot)$ of the BSDEs. As with classical super-martingales with respect to (linear) expectation, there exist non-linear super-martingale decompositions, see \citeauthor*{bouchard2016general} \cite{bouchard2016general}, \citeauthor*{chen2015g} \cite{chen2015g}, \citeauthor*{grigorova2020optimal} \cite{grigorova2020optimal}, \citeauthor*{lin2003nonlinear} \cite{lin2003nonlinear}, \citeauthor*{peng1999monotonic} \cite{peng1999monotonic}, \citeauthor*{peng2007expectations} \cite{peng2007expectations}, \citeauthor*{ren2022nonlinear} \cite{ren2022nonlinear}, \citeauthor*{royer2006backward} \cite{royer2006backward}, \citeauthor*{shi2014nonlinear} \cite{shi2014nonlinear}; these are referred to as \emph{non-linear Doob--Meyer decompositions}. Finding the decomposition of $\widehat\cY^\smallertext{+}(T,\xi)$ is the main topic of this section. 

\medskip
We follow here the suggestion of Nicole El Karoui mentioned in \citeauthor*{soner2013dual} \cite[Remark 4.9]{soner2013dual} to derive the semi-martingale decomposition by means of reflected BSDEs. The idea is to consider, for each $\P \in \fP_0$, the corresponding reflected BSDE with (lower) obstacle $\widehat\cY^\smallertext{+}(T,\xi)$ and then show, using the non-linear super-martingale property, that the first component of the solution to the reflected BSDE cannot be strictly above $\widehat\cY^\smallertext{+}(T,\xi)$. Incidentally, this leads to the desired semi-martingale decomposition under $\P$.

\medskip
Up to this point, we have only dealt with BSDEs, assuming their well-posedness, which is ensured in part by the condition $\widetilde M^\Phi_1(\hat\beta) < 1$ stated in \Cref{ass::generator2}.$(v)$; see \cite[Section 3.2]{possamai2024reflections}. Since we will now consider reflected BSDEs, we must ensure their well-posedness as well. Following \cite[Section 3.1]{possamai2024reflections}, this means adding the condition $M_1^\Phi(\hat{\beta}) < 1$, where
\begin{align}\label{eq::contraction_reflected_bsde}
	M_1^\Phi(\beta) = \ff^\Phi(\beta) + \frac{1}{\beta} + \max\bigg\{1,\frac{(1+\beta\Phi)}{\beta}\bigg\}\bigg(\frac{5}{\beta} +\frac{4}{\beta}(1+\beta\Phi)^{1/2} + \beta \fg^\Phi(\beta)\bigg), \; \beta \in (0,\infty).
\end{align}
Here, $\ff^\Phi(\beta)$ and $\fg^\Phi(\beta)$ are defined as in \Cref{ass::generator2}.$(v)$.

\begin{remark}
Note that $\widetilde M^\Phi_1(\hat\beta) \leq M^\Phi_1(\hat\beta)$ and that $M^\Phi_1(\beta)$ is strictly decreasing in $\beta$. By \textnormal{\cite[Lemma B.2]{possamai2024reflections}}, it follows that $\lim_{\beta \uparrow \uparrow \infty} M^\Phi_1(\beta) = \max\{1, \Phi\} \Phi$. Therefore, for $\Phi < 1$, there exists a unique number $\beta^\star$ such that $M^\Phi_1(\beta^\star) = 1$.
\end{remark}

The decomposition of $\widehat\cY^\smallertext{+}(T,\xi)$ is given in the following result. There, $\mathsf{a} = (\mathsf{a}_t)_{t \in [0,\infty)}$ denotes the $\S^d_\smallertext{+}$-valued, $\F$-predictable process constructed in \Cref{lem::measurability_characteristics}, satisfying $\langle X^{c,\P} \rangle^{(\P)} = \mathsf{a} \bcdot C$, $\P$--a.s., for every $\P \in \fP_0$.
\begin{theorem}\label{prop::decomposition_yplus}
	Suppose that {\rm Assumptions \ref{ass::probabilities2}}, {\rm\ref{ass::generator2}}, and {\rm\ref{ass::crossing}} hold, that $M^\Phi_1(\hat\beta) < 1$, and that $\phi^{2,\hat{\beta}}_{\xi,f} < \infty$, where $\phi^{2,\hat{\beta}}_{\xi,f}$ is given by \eqref{eq::constant_phi}. Let $\beta^\star$ be the unique number in $(0,\hat{\beta})$ at which $M^\Phi_1(\beta^\star) = 1$, and suppose that there exists $\beta \in (\beta^\star,\hat\beta)$ such that $\P[\cE(\beta A)_{T\smallertext{-}} < \infty] = 1,$ for every $\P \in \fP_0$.

\medskip
$(i)$ There exists a tuple $(\widehat{Z},(\widehat{U}^\P,\widehat{N}^\P,\widehat{K}^\P)_{\P \in \fP_\smalltext{0}}) = (\widehat{Z}(T,\xi),(\widehat{U}^\P(T,\xi),\widehat{N}^\P(T,\xi),\widehat{K}^\P(T,\xi))_{\P \in \fP_\smalltext{0}})$ such that $(\widehat{Z},\widehat{U}^\P,\widehat{N}^\P)$ belongs to $\H^2_{T,\beta}(X^{c,\P};\G,\P) \times \H^2_{T,\beta}(\mu^X;\G,\P) \times \cH^{2,\perp}_{T,\beta}(X^{c,\P},\mu^X;\G,\P)$, and $\widehat{K}^\P = (\widehat{K}^\P_t)_{t \in [0,\infty]}$ is a c\`adl\`ag, non-decreasing, $\G^\P_\smallertext{+}$-predictable process starting at zero, satisfying $\widehat{K}^\P = \widehat{K}^\P_{\cdot \land T}$ and $\E^\P\big[|\widehat{K}^\P_T|^2\big] < \infty$, such that
		\begin{equation*}
			\widehat\cY^\smallertext{+}_t = \xi + \int_t^T f^\P_r\big(\widehat\cY^\smallertext{+}_r,\widehat\cY^\smallertext{+}_{r\smallertext{-}},\widehat{Z}_r, \widehat{U}^\P_r(\cdot)\big) \d C_r - \bigg(\int_t^T \widehat{Z}_r \d X^{c,\P}_r\bigg)^{(\P)} - \bigg(\int_t^T\int_{\R^\smalltext{d}} \widehat{U}^\P_r(x)\tilde\mu^{X,\P}(\d r,\d x)\bigg)^{(\P)} - \int_t^T \d \widehat{N}^\P_r + \widehat{K}^\P_T - \widehat{K}^\P_t,
		\end{equation*}
		holds for every $t \in [0,\infty]$, \textnormal{$\P$--a.s.}, for all $\P \in \fP_0$.
		
		\medskip
		$(ii)$ Any other tuple $(Z,(U^\P,N^\P,K^\P)_{\P \in \fP_\smalltext{0}})$ meeting the conditions of $(i)$ satisfies $(Z - \widehat{Z})^\top \mathsf{a} (Z-\widehat{Z}) = 0$ outside a $\{\P\otimes \mathrm{d}C : \P \in \fP_0\}$--polar set, $\sup_{\P \in \fP_\smalltext{0}}\|U^\P - \widehat{U}^\P\|_{\H^\smalltext{2}_\smalltext{T}(\mu^\smalltext{X};\G,\P)} = 0$, and $(N^\P,K^\P) = (\widehat{N}^\P,\widehat{K}^\P)$ outside a $\P$--null set for each $\P \in \fP_0$.
\end{theorem}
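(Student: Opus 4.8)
The plan is to realise $\widehat\cY^\smallertext{+}$, for each $\P\in\fP_0$, as the first component of the solution to a reflected BSDE with lower obstacle $\widehat\cY^\smallertext{+}$ (the suggestion of Nicole El Karoui recalled above), to read off the decomposition under $\P$, and then to aggregate the $z$-integrand by means of a path-wise covariation. \emph{Step 1 (identification with reflected BSDEs).} Fix $\P\in\fP_0$. By \Cref{thm::down-crossing}.$(ii)$, $\widehat\cY^\smallertext{+}$ lies in $\cS^2_{T,\beta}(\G_\smallertext{+},\P)$ for the given $\beta\in(\beta^\star,\hat\beta)$, because $\cE(\beta A)\leq\cE(\hat\beta A)$ (one has $\cE(\beta A)_r=\mathrm{e}^{\beta A^c_r}\prod_{s\leq r}(1+\beta\Delta A_s)$, which is non-decreasing in $\beta$ when $A$ is non-decreasing), and by hypothesis $\P[\cE(\beta A)_{T\smallertext{-}}<\infty]=1$; together with $M^\Phi_1(\hat\beta)<1$ this puts us in the scope of the reflected-BSDE well-posedness theory of \cite[Section 3.1]{possamai2024reflections}. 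Hence the reflected BSDE with terminal time $T$, terminal condition $\xi$, generator $f^\P$, and lower obstacle $\widehat\cY^\smallertext{+}$, considered relative to $(\G^\P_\smallertext{+},\P)$, has a unique solution $(\cY^{R,\P},\cZ^{R,\P},\cU^{R,\P},\cN^{R,\P},\cK^{R,\P})$ with $\cY^{R,\P}\geq\widehat\cY^\smallertext{+}$, \textnormal{$\P$--a.s.}, and $\cY^{R,\P}_t$ equal to the $\P$--essential supremum of $\cY^\P_t(\sigma,\widehat\cY^\smallertext{+}_\sigma)$ over $\G^\P_\smallertext{+}$--stopping times $\sigma$ with $t\leq\sigma\leq T$ (the non-linear Snell envelope of the obstacle, using $\widehat\cY^\smallertext{+}_T=\xi$). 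On the other hand, specialising \eqref{eq::nonlinear_supermartingale_property} of \Cref{thm::down-crossing}.$(iii)$ to the deterministic time $t$ and to $\G_\smallertext{+}$--stopping times $\sigma$ with $t\leq\sigma\leq T$, and recalling that every $\G^\P_\smallertext{+}$--stopping time agrees \textnormal{$\P$--a.s.} with a $\G_\smallertext{+}$--stopping time, yields $\cY^\P_t(\sigma,\widehat\cY^\smallertext{+}_\sigma)\leq\widehat\cY^\smallertext{+}_t$, \textnormal{$\P$--a.s.} Therefore $\cY^{R,\P}\leq\widehat\cY^\smallertext{+}$, so $\cY^{R,\P}=\widehat\cY^\smallertext{+}$, \textnormal{$\P$--a.s.}, and substituting this into the reflected-BSDE dynamics gives the decomposition of part $(i)$ with $\widehat{Z}^\P\coloneqq\cZ^{R,\P}$ in place of $\widehat{Z}$, and $\widehat{U}^\P\coloneqq\cU^{R,\P}$, $\widehat{N}^\P\coloneqq\cN^{R,\P}$, $\widehat{K}^\P\coloneqq\cK^{R,\P}$; the asserted regularity and square-integrability of $\widehat{K}^\P$ are part of the reflected-BSDE solution.

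\emph{Step 2 (aggregation of the $z$-integrand).} Since $X$ and $\widehat\cY^\smallertext{+}$ are $(\G_\smallertext{+},\P)$--semi-martingales for every $\P\in\fP_0$, the covariation $[X,\widehat\cY^\smallertext{+}]$ admits a path-wise (Bichteler--Karandikar-type) construction independent of $\P$; subtracting the path-wise jump term $\sum_{s\leq\cdot}\Delta X_s\Delta\widehat\cY^\smallertext{+}_s$ and passing to a predictable version produces a $\G_\smallertext{+}$-predictable, continuous, $\R^d$-valued process $\langle X^c,(\widehat\cY^\smallertext{+})^c\rangle$ coinciding \textnormal{$\P$--a.s.} with $\langle X^{c,\P},(\widehat\cY^\smallertext{+})^{c,\P}\rangle^{(\P)}$ for every $\P\in\fP_0$. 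From Step 1, the orthogonality $\langle\widehat{N}^\P,X^{c,\P}\rangle^{(\P)}=0$, and the fact that $\widehat{U}^\P\ast\tilde\mu^{X,\P}$ is purely discontinuous, one reads off $\langle X^{c,\P},(\widehat\cY^\smallertext{+})^{c,\P}\rangle^{(\P)}=\int_0^\cdot\mathsf{a}_r\widehat{Z}^\P_r\,\d C_r$, \textnormal{$\P$--a.s.}; in particular $\langle X^c,(\widehat\cY^\smallertext{+})^c\rangle\ll C$ outside a $\fP_0$--polar set, so we may write $\langle X^c,(\widehat\cY^\smallertext{+})^c\rangle=\zeta\bcdot C$ with $\zeta$ an $\R^d$-valued, $\G_\smallertext{+}$-predictable process. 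Set $\widehat{Z}\coloneqq\mathsf{a}^\oplus\zeta$. Then $\mathsf{a}\widehat{Z}=\mathsf{a}\mathsf{a}^\oplus\mathsf{a}\widehat{Z}^\P=\mathsf{a}\widehat{Z}^\P$, \textnormal{$\P\otimes\d C$--a.e.}, so $(\widehat{Z}-\widehat{Z}^\P)^\top\mathsf{a}(\widehat{Z}-\widehat{Z}^\P)=0$, \textnormal{$\P\otimes\d C$--a.e.}; consequently $\widehat{Z}\in\H^2_{T,\beta}(X^{c,\P};\G,\P)$ with $(\widehat{Z}\bcdot X^{c,\P})^{(\P)}=(\widehat{Z}^\P\bcdot X^{c,\P})^{(\P)}$, and, by the Lipschitz-continuity of $f$ in $z$ through the quadratic form induced by $\mathsf{a}$ (\Cref{ass::generator2}.$(ii)$ and the finiteness of $\theta^X$), $f^\P_r(\widehat\cY^\smallertext{+}_r,\widehat\cY^\smallertext{+}_{r\smallertext{-}},\widehat{Z}_r,\widehat{U}^\P_r(\cdot))=f^\P_r(\widehat\cY^\smallertext{+}_r,\widehat\cY^\smallertext{+}_{r\smallertext{-}},\widehat{Z}^\P_r,\widehat{U}^\P_r(\cdot))$, \textnormal{$\P\otimes\d C$--a.e.} Thus the decomposition of Step 1 holds verbatim with $\widehat{Z}$, proving $(i)$.

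\emph{Step 3 (uniqueness).} Let $(Z,(U^\P,N^\P,K^\P)_{\P\in\fP_\smalltext{0}})$ be another tuple meeting the conditions of $(i)$, and fix $\P\in\fP_0$. Subtracting the two decompositions of $\widehat\cY^\smallertext{+}$ under $\P$, the process $((\widehat{Z}-Z)\bcdot X^{c,\P})^{(\P)}+((\widehat{U}^\P-U^\P)\ast\tilde\mu^{X,\P})^{(\P)}+(\widehat{N}^\P-N^\P)$ is \textnormal{$\P$--indistinguishable} from a $\G^\P_\smallertext{+}$-predictable process of \textnormal{$\P$--a.s.} finite variation; being a predictable finite-variation $(\G^\P_\smallertext{+},\P)$--local martingale, it is \textnormal{$\P$--a.s.} constant, and, starting at zero, it vanishes. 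Applying the orthogonal-decomposition uniqueness recalled before \Cref{sec::semimartingale_laws} (\cite[Proposition 2.6]{possamai2024reflections}) to the zero martingale gives $(\widehat{Z}-Z)\bcdot X^{c,\P}=0$, $(\widehat{U}^\P-U^\P)\ast\tilde\mu^{X,\P}=0$ and $\widehat{N}^\P=N^\P$, \textnormal{$\P$--a.s.}; equivalently $(Z-\widehat{Z})^\top\mathsf{a}(Z-\widehat{Z})=0$, \textnormal{$\P\otimes\d C$--a.e.}, and $\|U^\P-\widehat{U}^\P\|_{\H^\smalltext{2}_\smalltext{T}(\mu^\smalltext{X};\G,\P)}=0$. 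The Lipschitz bound on $f$ in the $z$- and $u(\cdot)$-arguments then forces the two generator terms to agree \textnormal{$\P\otimes\d C$--a.e.}, so $\widehat{K}^\P-K^\P$ is \textnormal{$\P$--a.s.} constant and, vanishing at $0$, $K^\P=\widehat{K}^\P$, \textnormal{$\P$--a.s.} Letting $\P$ vary over $\fP_0$ yields the three assertions of $(ii)$.

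\textbf{Main obstacle.} The conceptual crux is Step 1: one needs the non-linear Snell-envelope representation of the first component of the reflected BSDE available at the level of generality of \cite{possamai2024reflections}, together with the careful reconciliation of the filtration $\G^\P_\smallertext{+}$ used for the reflected BSDE with the $\G_\smallertext{+}$--stopping times appearing in \eqref{eq::nonlinear_supermartingale_property}. The second delicate point is the $\P$-independence of the continuous part of $[X,\widehat\cY^\smallertext{+}]$ exploited in Step 2, which rests on the path-wise construction of covariations and on the measurability of $\widehat\cY^\smallertext{+}$; both, however, are essentially at the same level of difficulty as the corresponding steps already carried out for continuous processes in \cite{possamai2018stochastic,soner2013dual}, and the remaining arguments are routine applications of the Lipschitz structure of the generator and of the uniqueness of canonical semi-martingale decompositions.
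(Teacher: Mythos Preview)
Your overall strategy---reflected BSDE under each $\P$, identification of its first component with $\widehat\cY^{+}$, then aggregation of the $z$-integrand via a covariation---matches the paper's. The substantive difference is in Step~1: you appeal to a non-linear Snell-envelope representation $\cY^{R,\P}_t=\esssup_\sigma\cY^\P_t(\sigma,\widehat\cY^{+}_\sigma)$ to obtain $\cY^{R,\P}\leq\widehat\cY^{+}$, whereas the paper argues by contradiction directly from the Skorokhod condition. Assuming $\P[Y^\P_\tau>\widehat\cY^{+}_\tau]>0$ for some bounded stopping time $\tau$, it introduces $\tau_\varepsilon\coloneqq\inf\{t\geq\tau:Y^\P_t<\widehat\cY^{+}_t+\varepsilon\cE(\beta A)^{-1/2}_{t}\}\land T$, uses Skorokhod minimality to deduce $K^\P_{\tau_\varepsilon}=K^\P_\tau$ (this is precisely where the hypothesis $\P[\cE(\beta A)_{T-}<\infty]=1$ is used, to make the $\varepsilon$-band strictly positive up to $T$), so that $Y^\P$ solves a plain BSDE on $[\tau,\tau_\varepsilon]$ with terminal value $Y^\P_{\tau_\varepsilon}\leq\widehat\cY^{+}_{\tau_\varepsilon}+\varepsilon\cE(\beta A)^{-1/2}_{\tau_\varepsilon}$; comparison and stability together with \eqref{eq::nonlinear_supermartingale_property} then yield $\E^\P[Y^\P_\tau]\leq\E^\P[\widehat\cY^{+}_\tau]+\varepsilon\mathfrak{C}^{1/2}$, a contradiction. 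Your route is cleaner \emph{if} the Snell-envelope representation is established in \cite{possamai2024reflections} at this level of generality, but as you yourself flag, that is not clear; the paper's argument is self-contained and uses only the Skorokhod condition plus the BSDE comparison and stability already developed.

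For Step~2, the paper invokes \cite[Proposition~6.6]{neufeld2014measurability} for the aggregated second characteristic of the pair $(\widehat\cY^{+},X)$ rather than a Bichteler--Karandikar construction; the resulting formula is the same $\widehat Z=(\mathsf{c}^{\widehat\cY^{+},X}(\mathsf{c}^X)^\oplus)^\top$. For Step~3, the paper simply observes that any admissible decomposition solves the reflected BSDE with obstacle $\widehat\cY^{+}$ and concludes via its uniqueness; your direct semi-martingale-decomposition argument is a valid and somewhat more elementary alternative.
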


\begin{remark}\label{rem::aggregation}
	$(i)$ The above decomposition is based on the well-posedness result for \textnormal{reflected BSDEs} relative to $(\G_\smallertext{+}, \P)$ established in \textnormal{\cite{possamai2024reflections}}. The integrand $\widehat{Z}$, which appears in the $X^{c,\P}$-integral, thus depends \emph{a priori} on the probability $\P$. Nonetheless, it turns out that the integrand can be constructed through the second characteristic, namely, the diffusion term, of the joint process $(\widehat\cY^\smallertext{+}, X)$, which can be defined simultaneously for each $\P \in \fP_0$$;$ see \textnormal{\cite[Proposition 6.6]{neufeld2014measurability}}\footnote{Proposition 6.6 in \cite{neufeld2014measurability} does not require the main separability assumption on the filtration used throughout their manuscript.} and the proof of \textnormal{\cite[Theorem 2.4]{nutz2015robust}}.
	
	\medskip
	$(ii)$ In the simpler case where $X$ is quasi--left-continuous, and the martingale representation property in the sense of \textnormal{\cite[Theorem III.4.22]{jacod2003limit}} holds relative to $X$ for each $\P \in \fP_0$, we would have $\Delta[\widehat\cY^\smallertext{+},V\ast\mu^X] = \widehat{U}^\P(\Delta X)\1_{\{\Delta X \neq 0\}} V(\Delta X)$ up to $\P$-evanescence, where $V > 0$ is a $\widetilde\cP$-measurable function satisfying $0 \leq V \ast \mu^X \leq 1$ identically$;$ see \textnormal{\cite[Lemma 6.5]{neufeld2014measurability}}. However, this only describes the value of $\widehat{U}^\P(x)$ at $x = \Delta X$ whenever $X$ jumps, and does not provide an aggregated and predictable function $\widehat{U}$ of the integrand $\widehat{U}^\P$ defined on $\Omega \times [0,\infty) \times \R^d$$;$ compare with the discussion after \textnormal{Remark 4.8} in \textnormal{\cite{kazi2015second2}} or the proof of \textnormal{\cite[Lemma 2.11]{denis2024second}}. What this does provide us is an aggregated version of the jumps of the stochastic integral with respect to the compensated random measure---the jumps uniquely determine this purely discontinuous martingale up to a null set. More precisely, we have
	\[
	\Delta (\widehat{U}^\P\ast\tilde{\mu}^{X,\P})^{(\P)}= \widehat{U}^\P(\Delta X) \1_{\{\Delta X \neq 0\}} = \frac{\Delta[\widehat\cY^\smallertext{+},V\ast\mu^X]}{V(\Delta X)} \1_{\{\Delta X \neq 0\}},\; \text{\rm up to $\P$-evanescence},
	\]
	and thus $(\widehat{U}^\P\ast\tilde{\mu}^{X,\P})^{(\P)}$ is the, up to $\P$-indistinguishability, unique purely discontinuous local martingale whose jumps are given by 
	\[
	W \coloneqq \frac{\Delta[\hat\cY^\smallertext{+},V\ast\mu^X]}{V(\Delta X)} \1_{\{\Delta X \neq 0\}}.
	\]
	This is contrary to the claim made in \textnormal{\cite[page 13]{kazi2015second2}} that the family $\{\widehat{U}^\P : \P \in \fP_0\}$ can be aggregated into a predictable integrand $\widehat{U}$. Nonetheless, we have $\widehat{W}^\P \coloneqq M^\P_{\mu^\smalltext{X}}\big[W\big|\widetilde{\cP}(\G)\big] = \widehat{U}^\P$ in $\H^2_T(\mu^X;\G,\P)$, see \textnormal{\cite[Theorem III.4.20]{jacod2003limit}}.
	\end{remark}
	
	\subsection{Characterisation as the solution to a 2BSDE system}\label{sec::2bsdes}
	
	We introduce two systems, which we refer to as the `extrinsic' and `intrinsic' systems, that characterise the regularised value function $\widehat{\cY}^\smallertext{+}$ and the family $(\widehat{Z},(\widehat{U}^\P, \widehat{N}^\P, \widehat{K}^\P)_{\P \in \fP_\smalltext{0}})$ appearing in the decomposition given in \Cref{prop::decomposition_yplus}. The main difference between the two systems is that, in the extrinsic one, uniqueness is ensured by introducing the BSDEs $(\cY^\P(T,\xi))_{\P\in\fP_\smalltext{0}}$. In the intrinsic one, following the approach of \cite{soner2012wellposedness, soner2013dual}, uniqueness can be established solely by imposing a condition on the family of non-decreasing and predictable processes $(\widehat{K}^\P)_{\P \in \fP_\smalltext{0}}$, although additional assumptions will be required to ensure well-posedness.
	
	\subsubsection{The extrinsic system}
	
	The solution to the extrinsic 2BSDE is a tuple $(Y, Z, (U^\P, N^\P, K^\P)_{\P \in \fP_\smalltext{0}})$ satisfying the following conditions
	\begin{enumerate}[{\bf (2B1)}, leftmargin=1.2cm]
		\item\label{2BSDE::measurability} $Y = (Y_t)_{t \in [0,\infty]}$ is real-valued and $\G_\smallertext{+}$-adapted with right-continuous and $\fP_0$--q.s.\ c\`adl\`ag paths, $Z = (Z_t)_{t \in [0,\infty)}$ is $\R^d$-valued and $\G$-predictable, and every $U^\P$ is real-valued and $\cP(\G) \otimes \cB(\R^d)$-measurable;
		\item\label{2BSDE::dynamics} $(Z,U^\P,N^\P) \in \H^2_T(X^{c,\P};\G,\P) \times \H^2_T(\mu^X;\G,\P) \times \cH^{2,\perp}_{0,T}(X^{c,\P},\mu^X;\G_\smallertext{+},\P)$, and $K^\P = (K^\P_t)_{t \in [0,\infty]}$ is a real-valued, c\`adl\`ag and non-decreasing, $\G^\P_\smallertext{+}$-predictable process starting at zero, satisfying $K^\P = K^\P_{\cdot\land T}$ and $\E^\P\big[|K^\P_T|^2\big] < \infty$, such that
		\[
		\int_0^T \big|f^\P_s\big(Y_s,Y_{s\smallertext{-}},Z_s,U^\P_s(\cdot)\big)\big|\d C_s < \infty, \; \text{$\P$--a.s.},
		\]
		\begin{align*}
			\displaystyle Y_t &= \xi + \int_t^T f^\P_s\big(Y_s,Y_{s\smallertext{-}},Z_s, U^\P_s(\cdot)\big) \d C_s- \bigg(\int_t^T Z_s \d X^{c,\P}_s\bigg)^{(\P)} - \bigg(\int_t^T\int_{\R^\smalltext{d}} U^\P_s(x)\tilde\mu^{X,\P}(\d s,\d x)\bigg)^{(\P)}- \int_t^T \d N^\P_s \\
			&\quad   + K^\P_T - K^\P_t, \; t \in [0,\infty], \; \textnormal{$\P$--a.s.},
		\end{align*}
		holds for all $\P \in \fP_0$;
		\item\label{2BSDE::aggregation} $Y_t = \underset{\bar{\P} \in \fP_\smalltext{0}(\cG_{\smallertext{t}\smalltext{+}},\P)}{{\esssup}^\P} \cY^{\bar{\P}}_t(T,\xi)$, $\P$--a.s., $t \in [0,\infty], \; \P \in \fP_0$.
	\end{enumerate}
	
	\begin{remark}
		$(i)$ In the above system, the difference $S^\P \coloneqq N^\P - K^\P$ satisfies
		\begin{equation*}
			S^\P_t = Y_t - Y_0 + \int_0^t f^\P_s\big(Y_s,Y_{s\smallertext{-}},Z_s, U^\P_s(\cdot)\big) \d C_s  - \bigg(\int_0^t Z_s \d X^{c,\P}_s\bigg)^{(\P)} - \bigg(\int_0^t\int_{\R^\smalltext{d}} U^\P_s(x)\tilde\mu^{X,\P}(\d s,\d x)\bigg)^{(\P)}, \; t \in [0,\infty], \; \textnormal{$\P$--a.s.}
		\end{equation*}
		Hence, we could drop $(N^\P,K^\P)_{\P \in \fP_0}$ from the tuple $(Y,Z,(U^\P,N^\P,K^\P)_{\P \in \fP_\smalltext{0}})$, and then merely  say that for each $\P \in \fP_0$, the process $S^\P$, defined as above, has to be a $(\G^\P_\smallertext{+},\P)$--super-martingale such that $[S^\P,X^{c,\P}]^{(\P)} = 0$ and its Doob--Meyer decomposition $S^\P = N^\P - K^\P$ meets the requirement $M^\P_{\mu^\smalltext{X}}[\Delta N^\P |\widetilde\cP(\G)] = 0$$;$ compare with \textnormal{\cite[Definition 4.14]{nutz2012superhedging}} and \textnormal{\cite[Definition 3.11]{lin2020second}}.
		
		\medskip
		$(ii)$ As described in \textnormal{\Cref{rem::aggregation}}, in case $X$ is $\P$--quasi--left-continuous and satisfies the $(\F_\smallertext{+},\P)$--martingale representation property for each $\P \in \fP_0$, then each $(\widehat{U}^\P\ast\tilde{\mu}^{X,\P})^{(\P)}$ is the unique, $(\F_\smallertext{+},\P)$--purely discontinuous square-integrable martingale whose jumps are given by $\frac{\Delta[\hat\cY^\smallertext{+},V\ast\mu^X]}{V(\Delta X)} \1_{\{\Delta X \neq 0\}}$.
	\end{remark}

	Let
	\[
		\sL^2_{T,\beta}(\fP_0) \coloneqq \Big\{\big(Z,(U^\P,N^\P)_{\P \in \fP_0}\big) : (Z,U^\P,N^\P) \in \H^2_{T,\beta}(X^{c,\P};\G,\P) \times \H^2_{T,\beta}(\mu^X;\G,\P) \times \cH^{2,\perp}_{T,\beta}(X^{c,\P},\mu^X;\G_\smallertext{+},\P), \; \P \in \fP_0 \Big\}.
	\]
	The following result establishes the well-posedness of the extrinsic 2BSDE system \ref{2BSDE::measurability}--\ref{2BSDE::aggregation}.

	\begin{theorem}\label{thm::2BSDE_wellposed}
		Let {\rm Assumptions \ref{ass::probabilities2}}, {\rm\ref{ass::generator2}}, and {\rm\ref{ass::crossing}} hold, that $M^\Phi_1(\hat\beta) < 1$, and that $\phi^{2,\hat{\beta}}_{\xi,f} < \infty$, where $\phi^{2,\hat{\beta}}_{\xi,f}$ is given by \eqref{eq::constant_phi}. Let $\beta^\star$ be the unique number in $(0,\hat{\beta})$ at which $M^\Phi_1(\beta^\star) = 1$, and suppose that $\inf_{\P \in \fP_\smalltext{0}}\inf_{\beta \in (\beta^\star,\hat\beta)}\P[\cE(\beta A)_{T\smallertext{-}} < \infty] = 1$.\footnote{The second condition holds, for example, when $\P[\cE(\hat\beta A)_{T\smallertext{-}} < \infty] = 1$ for every $\P \in \fP_0$.}
		Let $\big(\widehat{Z},(\widehat{U}^\P,\widehat{N}^\P,\widehat{K}^\P\big)_{\P \in \fP_\smalltext{0}}\big) = \big(\widehat{Z}(T,\xi),(\widehat{U}^\P(T,\xi),\widehat{N}^\P(T,\xi),\widehat{K}^\P(T,\xi)\big)_{\P \in \fP_\smalltext{0}}\big)$ denote the decomposition of $\widehat\cY^\smallertext{+} = \widehat\cY^\smallertext{+}(T,\xi)$ from \textnormal{\Cref{prop::decomposition_yplus}} relative to an arbitrary $\beta \in (\beta^\star,\hat\beta)$. Then the following hold
		\begin{enumerate}
			\item[$(i)$] $\big(\widehat\cY^\smallertext{+},\widehat{Z},(\widehat{U}^\P,\widehat{N}^\P,\widehat{K}^\P\big)_{\P \in \fP_\smalltext{0}}\big)$ satisfies \ref{2BSDE::measurability}--\ref{2BSDE::aggregation}, and $(\widehat{Z},(\widehat{U}^\P,\widehat{N}^\P\big)_{\P \in \fP_\smalltext{0}}\big) \in \bigcap_{\beta^\smalltext{\prime} \in (0,\hat\beta)} \sL^2_{T,\beta^\smalltext{\prime}}(\fP_0);$
			\item[$(ii)$] any other tuple $(Y,Z,(U^\P,N^\P,K^\P)_{\P \in \fP_\smalltext{0}})$ with $(Z,(U^\P,N^\P)_{\P \in \fP_\smalltext{0}}) \in \bigcup_{\beta^\smalltext{\prime} \in (\beta^\smalltext{\star},\hat\beta)} \sL^2_{T,\beta^\smalltext{\prime}}(\fP_0)$ for which \ref{2BSDE::measurability}--\ref{2BSDE::aggregation} holds satisfies $(Y,N^\P,K^\P) = (\widehat\cY^\smallertext{+},\widehat{N}^\P,\widehat{K}^\P)$ outside a $\P$--null set for each $\P \in \fP_0$, $(Z - \widehat{Z})^\top \mathsf{a}(Z - \widehat{Z}) = 0$ outside a \textnormal{$\{\P \otimes \mathrm{d}C : \P \in \fP_0\}$}--polar set, and $\sup_{\P \in \fP_\smalltext{0}} \|U^\P - \widehat{U}^\P\|_{\H^\smalltext{2}_\smalltext{T}(\mu^\smalltext{X};\G,\P)} = 0$.
		\end{enumerate}
	\end{theorem}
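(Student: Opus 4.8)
# Proof Proposal for Theorem \ref{thm::2BSDE_wellposed}

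\textbf{Overall strategy.} The plan is to prove existence by verifying that the tuple coming out of the decomposition in \Cref{prop::decomposition_yplus} already satisfies all the requirements \ref{2BSDE::measurability}--\ref{2BSDE::aggregation}, and then to prove uniqueness by a \emph{representation argument}: showing that any solution $(Y,Z,(U^\P,N^\P,K^\P)_{\P\in\fP_\smalltext{0}})$ of the system must coincide with $\widehat\cY^\smallertext{+}$, and then extracting from \Cref{prop::decomposition_yplus}.$(ii)$ that the remaining components are essentially unique. For part $(i)$, the measurability and integrability statements in \ref{2BSDE::measurability}--\ref{2BSDE::dynamics} are read off directly from \Cref{prop::decomposition_yplus}.$(i)$ (the dynamics are literally the displayed equation there), property \ref{2BSDE::aggregation} is exactly \eqref{eq::aggregation} from \Cref{thm::down-crossing}.$(ii)$, and the claim that $(\widehat Z,(\widehat U^\P,\widehat N^\P)_{\P\in\fP_\smalltext{0}})\in\bigcap_{\beta'\in(0,\hat\beta)}\sL^2_{T,\beta'}(\fP_0)$ follows because the $\mathbb{L}^2$-estimates underlying \cite{possamai2024reflections} can be run at any integrability exponent $\beta'$ for which $\cE(\beta' A)_{T\smallertext{-}}<\infty$, combined with the integrability bound \eqref{eq::constant_phi} and \Cref{thm::down-crossing}.$(ii)$; since $\widehat\cY^\smallertext{+}$ is already known to lie in the relevant $\cS^2$-space for \emph{every} $\beta'<\hat\beta$, a standard a-priori estimate for the reflected BSDE transfers this to $(\widehat Z,\widehat U^\P,\widehat N^\P)$.

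\textbf{Uniqueness --- the main step.} Fix any solution $(Y,Z,(U^\P,N^\P,K^\P)_{\P\in\fP_\smalltext{0}})$ with $(Z,(U^\P,N^\P)_{\P\in\fP_\smalltext{0}})\in\sL^2_{T,\beta'}(\fP_0)$ for some $\beta'\in(\beta^\star,\hat\beta)$. The heart of the argument is to show $Y_t = \widehat\cY^\smallertext{+}_t$, $\P$--a.s., for every $t$ and every $\P\in\fP_0$. First I would show $Y_t\ge\cY^\P_t(T,\xi)$, $\P$--a.s.: under any $\bar\P\in\fP_0(\cG_{t\smallertext{+}},\P)$, property \ref{2BSDE::dynamics} together with $K^{\bar\P}$ non-decreasing and the super-martingale structure noted in the Remark (the process $S^{\bar\P}=N^{\bar\P}-K^{\bar\P}$) exhibits $Y$ as a $(\G^{\bar\P}_\smallertext{+},\bar\P)$--supersolution of the BSDE with data $(T,\xi,f^{\bar\P})$ dominating the obstacle; the comparison principle \Cref{prop::comparison} (valid under \Cref{ass::crossing}) then yields $Y_t\ge\cY^{\bar\P}_t(T,\xi)$, $\bar\P$--a.s., hence $\P$--a.s. since $\bar\P=\P$ on $\cG_{t\smallertext{+}}$. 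Taking the essential supremum over $\bar\P\in\fP_0(\cG_{t\smallertext{+}},\P)$ gives $Y_t\ge\widehat\cY^\smallertext{+}_t$ by \eqref{eq::aggregation}. For the reverse inequality, I would use the minimality of $\widehat K^\P$: from \eqref{eq::aggregation} the value $\widehat\cY^\smallertext{+}$ is the \emph{smallest} process dominating all the $\cY^{\bar\P}$; more operationally, one runs the standard 2BSDE argument of \cite{soner2013dual,possamai2018stochastic} --- applying the a-priori estimate to $Y-\cY^\P(T,\xi)$ and using $\mathbb{E}^{\bar\P}[K^{\bar\P}_T-K^{\bar\P}_t\,|\,\cG_{t\smallertext{+}}]\ge 0$ together with a linearisation of $f$ (Lipschitz-continuity, \Cref{ass::generator2}.$(ii)$ and \Cref{ass::crossing}) and Hölder --- to bound $0\le Y_t-\widehat\cY^\smallertext{+}_t$ by a constant times $\big(\essinf^\P_{\bar\P}\mathbb{E}^{\bar\P}[K^{\bar\P}_T-K^{\bar\P}_t|\cG_{t\smallertext{+}}]\big)$-type quantities that vanish because $\widehat\cY^\smallertext{+}$ solves the reflected-BSDE-driven system with the Skorokhod/minimality property built into \Cref{prop::decomposition_yplus}. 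Once $Y=\widehat\cY^\smallertext{+}$ up to $\P$-indistinguishability for each $\P$, the dynamics in \ref{2BSDE::dynamics} for $Y$ coincide with those for $\widehat\cY^\smallertext{+}$, so $(Z,(U^\P,N^\P,K^\P)_{\P\in\fP_\smalltext{0}})$ satisfies all conditions of \Cref{prop::decomposition_yplus}.$(i)$, and part $(ii)$ of that theorem delivers $(Z-\widehat Z)^\top\mathsf{a}(Z-\widehat Z)=0$ polarly, $\sup_{\P}\|U^\P-\widehat U^\P\|_{\H^\smalltext{2}_\smalltext{T}(\mu^\smalltext{X};\G,\P)}=0$, and $(N^\P,K^\P)=(\widehat N^\P,\widehat K^\P)$ $\P$--a.s.

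\textbf{Where the difficulty lies.} The delicate point is the reverse inequality $Y_t\le\widehat\cY^\smallertext{+}_t$. Unlike in the classical continuous-2BSDE setting, here the change-of-measure/linearisation has to be performed with the jump term present, which is why \Cref{ass::crossing}.$(iii)$--$(iv)$ (the existence of the integrands $\rho,\rho^\dagger$ with $\Delta(\rho\ast\tilde\mu^{X,\P})>-1$ and the corresponding one-sided estimate on $f$) is needed: it is precisely the ingredient that makes the linearised generator's ``stochastic exponential'' change of measure well-defined and keeps the exponential weights $\cE(\beta A)$ under control, so that the a-priori $\mathbb{L}^2$-estimates of \cite{possamai2024reflections} close. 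A second subtlety is that the weight exponent must be chosen in the window $(\beta^\star,\hat\beta)$: $\beta>\beta^\star$ is required for the \emph{reflected}-BSDE contraction $M_1^\Phi(\beta)<1$ (hence for \Cref{prop::decomposition_yplus} to even produce the decomposition and a-priori bounds), while $\beta<\hat\beta$ is needed so that the data-integrability \eqref{eq::constant_phi}/\eqref{ass::generator2}.$(iv)$ is still finite; I would verify that the finiteness hypothesis $\inf_{\P}\inf_{\beta\in(\beta^\star,\hat\beta)}\P[\cE(\beta A)_{T\smallertext{-}}<\infty]=1$ guarantees a non-empty such window uniformly in $\P$, and that all estimates are stable as $\beta'\uparrow\hat\beta$, which is what gives the ``$\bigcap_{\beta'\in(0,\hat\beta)}$'' membership in part $(i)$ and the ``$\bigcup_{\beta'\in(\beta^\star,\hat\beta)}$'' scope of uniqueness in part $(ii)$.
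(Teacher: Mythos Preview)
Your proof of part $(ii)$ misidentifies where the work lies. The condition \ref{2BSDE::aggregation} in the \emph{extrinsic} system is not a minimality condition on $K^\P$; it is the representation formula
\[
Y_t = \underset{\bar{\P} \in \fP_\smalltext{0}(\cG_{\smallertext{t}\smalltext{+}},\P)}{{\esssup}^\P} \cY^{\bar{\P}}_t(T,\xi),\quad \P\text{--a.s.},\ t\in[0,\infty],\ \P\in\fP_0.
\]
But by \eqref{eq::aggregation} in \Cref{thm::down-crossing}.$(ii)$, the right-hand side equals $\widehat\cY^\smallertext{+}_t$, $\P$--a.s. Hence $Y_t=\widehat\cY^\smallertext{+}_t$, $\P$--a.s., for every $t$, and right-continuity gives $Y=\widehat\cY^\smallertext{+}$ up to $\P$-indistinguishability. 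That is the whole argument for identifying $Y$---no comparison principle, no linearisation with $\rho$, no minimality of $\widehat K^\P$ is needed. Once $Y=\widehat\cY^\smallertext{+}$, the tuple $(Z,(U^\P,N^\P,K^\P)_{\P\in\fP_\smalltext{0}})$ is a decomposition of $\widehat\cY^\smallertext{+}$ in the sense of \Cref{prop::decomposition_yplus}.$(i)$ relative to some $\beta'\in(\beta^\star,\hat\beta)$; comparing with the decomposition $(\widehat Z,(\widehat U^\P,\widehat N^\P,\widehat K^\P)_{\P\in\fP_\smalltext{0}})$ relative to $\beta$, one of the two parameters dominates the other, so the larger-parameter decomposition also qualifies at the smaller parameter, and \Cref{prop::decomposition_yplus}.$(ii)$ finishes.

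The comparison and change-of-measure machinery you describe---supersolution arguments, the one-sided estimates from \Cref{ass::crossing}.$(iii)$--$(iv)$, bounding $Y_t-\widehat\cY^\smallertext{+}_t$ by $\essinf^\P_{\bar\P}\E^{\bar\P}[K^{\bar\P}_T-K^{\bar\P}_t\mid\cG_{t\smallertext{+}}]$---is precisely what is required for the \emph{intrinsic} system \ref{2BSDE::measurability}, \ref{2BSDE::dynamics}, \ref{2BSDE::minimality_condition_2} in \Cref{thm::2BSDE_wellposed_2}, where \ref{2BSDE::aggregation} is replaced by a condition on $(K^\P)_{\P\in\fP_\smalltext{0}}$ and one genuinely has to reconstruct the representation. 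You have essentially written a proof sketch for that other theorem. For part $(i)$, your outline is correct in spirit; the paper obtains the $\bigcap_{\beta'\in(0,\hat\beta)}$-membership not via a-priori estimates but by running \Cref{prop::decomposition_yplus} at each $\beta'\in(\beta,\hat\beta)$ and invoking its uniqueness clause to see the decompositions coincide, then using monotonicity of $\sL^2_{T,\beta'}(\fP_0)$ in $\beta'$.
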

	
	\begin{remark}\label{rem::extrinsic_vs_intrinsic}
		When the generator $f$ does not depend on the integrand of the stochastic integral with respect to the compensated jump measure of $X$, and the process $A$, which we have used to define our weighted spaces, is $\fP_0$--essentially bounded, we can replace \ref{2BSDE::aggregation} with the equivalent and intrinsic minimality condition
		\begin{enumerate}[{\bf (2B3')}, leftmargin=1.3cm]
			\item\label{2BSDE::minimality_condition} $\underset{\bar\P \in \fP_\smalltext{0}(\cG_{t\tinytext{+}},\P)}{{\essinf}^\P} \E^{\bar\P}\big[K^{\bar\P}_T - K^{\bar\P}_t \big| \cG_{t\smallertext{+}}\big] = 0$, $\P$--{\rm a.s.}, $t \in [0,\infty]$, $\P \in \fP_0$,
		\end{enumerate}
		in the statement of the previous result. Our \textnormal{2BSDE} system would then consist of the conditions \ref{2BSDE::measurability}, \ref{2BSDE::dynamics}, and \ref{2BSDE::minimality_condition}, which is in line with the seminal work of \textnormal{\citeauthor*{soner2012wellposedness} \cite{soner2012wellposedness}}$;$ see also \textnormal{\citeauthor*{nutz2012superhedging} \cite{nutz2012superhedging}}. However, it was already noted by \textnormal{\citeauthor*{popier2019second} \cite{popier2019second}} and \textnormal{\citeauthor*{o2024wellposedness} \cite{o2024wellposedness}} that when the generator is not necessarily Lipschitz-continuous, the above minimality condition needs to be modified. 
		
		\medskip
		If one aims to aggregate \textnormal{reflected BSDEs} instead of \textnormal{BSDEs}, then \textnormal{\ref{2BSDE::minimality_condition}} does not ensure uniqueness$;$ see \textnormal{\citeauthor*{matoussi2021corrigendum} \cite{matoussi2013second, matoussi2021corrigendum}} and also \textnormal{\citeauthor*{matoussi2014second} \cite{matoussi2014second}}. Moreover, if the generator depends on the integrand of a martingale that jumps, additional assumptions on the generator seem necessary for \ref{2BSDE::aggregation} to imply \ref{2BSDE::minimality_condition}$;$ see \textnormal{\citeauthor*{kazi2015second} \cite{kazi2015second,kazi2015second2}}. 

		\medskip
		Nonetheless, the main advantage of using the extrinsic condition \ref{2BSDE::aggregation} instead is that it provides a universal approach to defining the \textnormal{2BSDE} system, independent of the form of the generator, the driving martingales, or whether one seeks to aggregate \textnormal{BSDEs} or \textnormal{reflected BSDEs}.
	\end{remark}
	
	We conclude this section with norm bounds for the solution to the 2BSDE system in terms of the terminal condition and the generator evaluated at zero, along with a comparison principle.
	
	\begin{proposition}\label{prop::bound_and_comparison}
		Suppose that {\rm Assumptions \ref{ass::probabilities2}}, {\rm\ref{ass::generator2}} and {\rm\ref{ass::crossing}} hold, that $M^\Phi_1(\hat\beta) < 1$ and that $\phi^{2,\hat{\beta}}_{\xi,f} < \infty$, where $\phi^{2,\hat{\beta}}_{\xi,f}$ is given by \eqref{eq::constant_phi}. Let $\big(\widehat\cY^\smallertext{+},\widehat{Z},(\widehat{U}^\P,\widehat{N}^\P,\widehat{K}^\P\big)_{\P \in \fP_\smalltext{0}}\big) = \big(\widehat\cY^\smallertext{+}(T,\xi),\widehat{Z}(T,\xi),(\widehat{U}^\P(T,\xi),\widehat{N}^\P(T,\xi),\widehat{K}^\P(T,\xi)\big)_{\P \in \fP_\smalltext{0}}\big)$ denote the solution to \ref{2BSDE::measurability}--\ref{2BSDE::aggregation} from \textnormal{\Cref{thm::2BSDE_wellposed}}.
		\begin{enumerate}
			\item[$(i)$] For each $\beta \in (0,\hat\beta)$, there exists a constant $\mathfrak{C} \in (0,\infty)$ that depends only on $\hat\beta$, on $\beta$, and on $\Phi$, such that
			\begin{equation*}
				\sup_{\P \in \fP_\smalltext{0}}
				\Bigg\{\big\|\widehat\cY^\smallertext{+}\big\|^2_{\cS^\smalltext{2}_{\smalltext{T}\smalltext{,}\smalltext{\hat\beta}}(\G_\tinytext{+},\P)} 
				+ \big\|\widehat{Z}\big\|^2_{\H^\smalltext{2}_{\smalltext{T}\smalltext{,}\smalltext{\beta}}(X^{c,\P};\G,\P)}
				+ \big\|\widehat{U}^\P\big\|^2_{\H^\smalltext{2}_{\smalltext{T}\smalltext{,}\smalltext{\beta}}(\mu^X;\G,\P)} 
				+ \big\|\widehat{N}^\P\big\|^2_{\cH^{\smalltext{2}}_{\smalltext{T}\smalltext{,}\smalltext{\beta}}(\G_\tinytext{+},\P)}
				+ \E^\P\bigg[\int_0^T\cE(\beta A)_r \d[\widehat{K}^\P]_r\bigg] \Bigg\}
				\leq \mathfrak{C}\phi^{2,\hat{\beta}}_{\xi,f}.
			\end{equation*}
			
			\item[$(ii)$] Let $f^\prime$ and $\xi^\prime$ be maps satisfying the same assumptions as $f$ and $\xi$, respectively. Let $\widehat\cY^{\prime,\smallertext{+}}(T,\xi^\prime)$ denote the first component of the solution to \ref{2BSDE::measurability}--\ref{2BSDE::aggregation} with generator $f^\prime$ and terminal condition $\xi^\prime$. Suppose that $\xi^\prime \leq \xi$, \textnormal{$\fP_0$}--quasi-surely, and that
			\begin{equation*}
				f^{\prime,\P}_r\big(\cY^{\prime,\P}_r,\cY^{\prime,\P}_{r\smallertext{-}},\cZ^{\prime,\P}_r,\cU^{\prime,\P}_r(\cdot)\big) 
				\leq f^\P_r\big(\cY^{\prime,\P}_r,\cY^{\prime,\P}_{r\smallertext{-}},\cZ^{\prime,\P}_r,\cU^{\prime,\P}_r(\cdot)\big), \; \textnormal{$\P\otimes\d C_r$--a.e.}, \; \P \in \fP_0,
			\end{equation*}
			where each $(\cY^{\prime,\P},\cZ^{\prime,\P},\cU^{\prime,\P},\cN^{\prime,\P})$ denotes the solution to the corresponding \textnormal{BSDE} with generator $f^{\prime,\P}$ and terminal condition $\xi^\prime$. Then, $\widehat\cY^{\prime,\smallertext{+}}(T,\xi^\prime) \leq \widehat\cY^\smallertext{+}(T,\xi)$, \textnormal{$\fP_0$}--quasi-surely.
		\end{enumerate}
	\end{proposition}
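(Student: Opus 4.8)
\textbf{Proof proposal for Proposition~\ref{prop::bound_and_comparison}.}

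\emph{Part $(i)$.} The plan is to transfer the a priori estimates for reflected BSDEs from \cite{possamai2024reflections} through the decomposition established in \Cref{prop::decomposition_yplus}. Recall that $\widehat{\cY}^\smallertext{+}$ together with the family $(\widehat{Z},(\widehat{U}^\P,\widehat{N}^\P,\widehat{K}^\P)_{\P\in\fP_\smalltext{0}})$ was obtained, for each $\P\in\fP_0$, as the solution of the reflected BSDE relative to $(\G_\smallertext{+},\P)$ with lower obstacle $\widehat{\cY}^\smallertext{+}$ itself. The key observation is that this reflected BSDE has the \emph{same} terminal condition $\xi$ and \emph{same} generator $f^\P$ as the BSDE whose solution is $\cY^\P(T,\xi)$, and its obstacle is $\widehat{\cY}^\smallertext{+}\geq\cY^\P(T,\xi)$ by \eqref{eq::aggregation}. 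First I would invoke the a priori bounds for reflected BSDEs in \cite{possamai2024reflections}, which under the standing data assumptions (in particular $M_1^\Phi(\hat\beta)<1$) control $\|\widehat{\cY}^\smallertext{+}\|^2_{\cS^\smalltext{2}_{T,\hat\beta}(\G_\tinytext{+},\P)}$, $\|\widehat{Z}\|^2_{\H^\smalltext{2}_{T,\beta}(X^{c,\P};\G,\P)}$, $\|\widehat{U}^\P\|^2_{\H^\smalltext{2}_{T,\beta}(\mu^X;\G,\P)}$, $\|\widehat{N}^\P\|^2_{\cH^\smalltext{2}_{T,\beta}(\G_\tinytext{+},\P)}$ and $\E^\P[\int_0^T\cE(\beta A)_r\d[\widehat{K}^\P]_r]$ by a constant (depending only on $\hat\beta,\beta,\Phi$) times the norm of the terminal condition plus the generator evaluated at zero, together with the norm of the obstacle. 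Concretely, the bound should read, after using the Lipschitz property in \Cref{ass::generator2}.$(ii)$ to replace $f^\P_r(\widehat{\cY}^\smallertext{+}_r,\ldots)$ by $f^\P_r(0,0,0,\mathbf{0})$ plus weighted norms of the solution (which are absorbed for $\beta$ sufficiently large, precisely the role of $M_1^\Phi(\hat\beta)<1$),
\begin{equation*}
  \text{(all the norms above)}\leq\mathfrak{C}\,\E^\P\bigg[\cE(\hat\beta A)_T|\xi|^2+\int_0^T\cE(\hat\beta A)_r\frac{|f^\P_r(0,0,0,\mathbf{0})|^2}{\alpha^2_r}\d C_r+\sup_{r\in[0,T]}\big|\cE(\hat\beta A)^{1/2}_r\widehat{\cY}^\smallertext{+}_r\big|^2\bigg].
\end{equation*}
The last term on the right is then controlled by \Cref{thm::down-crossing}.$(ii)$, which bounds $\sup_{\P\in\fP_\smalltext{0}}\E^\P[\sup_{s\in[0,T]}|\cE(\hat\beta A)^{1/2}_s\widehat{\cY}^\smallertext{+}_s|^2]$ by $\mathfrak{C}\phi^{2,\hat\beta}_{\xi,f}$, and the first two terms are bounded uniformly in $\P\in\fP_0$ by $\phi^{2,\hat\beta}_{\xi,f}$ by the very definition \eqref{eq::constant_phi} (take $s=0$, drop the $\sup_{s\in\D_\tinytext{+}}$ and the extra essential supremum, which only makes the quantity larger). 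Taking the supremum over $\P\in\fP_0$ concludes $(i)$.

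\emph{Part $(ii)$: comparison.} I would argue $\P$ by $\P$ and then conclude quasi-surely. Fix $\P\in\fP_0$. By \eqref{eq::aggregation} applied to both data sets,
\begin{equation*}
  \widehat{\cY}^{\prime,\smallertext{+}}_t(T,\xi^\prime)=\underset{\bar\P\in\fP_\smalltext{0}(\cG_{t\smalltext{+}},\P)}{{\esssup}^\P}\cY^{\prime,\bar\P}_t(T,\xi^\prime),\qquad
  \widehat{\cY}^\smallertext{+}_t(T,\xi)=\underset{\bar\P\in\fP_\smalltext{0}(\cG_{t\smalltext{+}},\P)}{{\esssup}^\P}\cY^{\bar\P}_t(T,\xi),\quad\textnormal{$\P$--a.s.}
\end{equation*}
So it suffices to show $\cY^{\prime,\bar\P}_t(T,\xi^\prime)\leq\cY^{\bar\P}_t(T,\xi)$, $\bar\P$--a.s., for every $\bar\P\in\fP_0(\cG_{t\smalltext{+}},\P)$; taking essential suprema and then $t=0$ on a countable dense set and using right-continuity yields the claim. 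Now $\cY^{\prime,\bar\P}(T,\xi^\prime)$ and $\cY^{\bar\P}(T,\xi)$ are, for fixed $\bar\P$, solutions of standard BSDEs relative to $(\F_\smallertext{+},\bar\P)$ with terminal conditions $\xi^\prime\leq\xi$ (since $\xi^\prime\leq\xi$ holds $\fP_0$--q.s., in particular $\bar\P$--a.s.) and with generators $f^{\prime,\bar\P}$ and $f^{\bar\P}$. The hypothesis supplies exactly the diagonal generator inequality $f^{\prime,\bar\P}_r(\cY^{\prime,\bar\P}_r,\cY^{\prime,\bar\P}_{r\smallertext{-}},\cZ^{\prime,\bar\P}_r,\cU^{\prime,\bar\P}_r(\cdot))\leq f^{\bar\P}_r(\cY^{\prime,\bar\P}_r,\cY^{\prime,\bar\P}_{r\smallertext{-}},\cZ^{\prime,\bar\P}_r,\cU^{\prime,\bar\P}_r(\cdot))$ needed to invoke the comparison principle \Cref{prop::comparison} (whose hypotheses are met because \Cref{ass::crossing} holds, cf. \Cref{rem::ass_regularisation}.$(ii)$). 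Hence $\cY^{\prime,\bar\P}_t(T,\xi^\prime)\leq\cY^{\bar\P}_t(T,\xi)$, $\bar\P$--a.s., as required, and the quasi-sure statement $\widehat{\cY}^{\prime,\smallertext{+}}(T,\xi^\prime)\leq\widehat{\cY}^\smallertext{+}(T,\xi)$ follows.

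\emph{Where the work concentrates.} Part $(ii)$ is essentially immediate once one notices that the aggregation identity \eqref{eq::aggregation} reduces everything to BSDE comparison. The substantive point is $(i)$: one must check that the reflected-BSDE a priori estimate from \cite{possamai2024reflections} is applicable here with the obstacle being the solution itself, and that the obstacle's contribution is genuinely controlled by $\phi^{2,\hat\beta}_{\xi,f}$ via \Cref{thm::down-crossing}.$(ii)$ rather than circularly re-introducing the solution norms. The main obstacle is therefore bookkeeping the constants: one needs $\beta\in(0,\hat\beta)$ and the gap $M_1^\Phi(\hat\beta)<1$ to absorb the weighted solution norms coming from linearising the generator around zero, exactly as in the well-posedness proof, and to verify that the resulting constant depends only on $\hat\beta$, $\beta$, and $\Phi$. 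There is also a minor subtlety in that $\widehat{N}^\P$ and $\widehat{K}^\P$ a priori live in $\beta$-weighted spaces for $\beta\in(\beta^\star,\hat\beta)$ (as in \Cref{prop::decomposition_yplus}), so for $\beta$ close to $0$ one obtains the bound on $\widehat{Z},\widehat{U}^\P,\widehat{N}^\P,\widehat{K}^\P$ in the weaker weighted norms by monotonicity of $\cE(\beta A)$ in $\beta$ (since $A$ is non-decreasing, $\cE(\beta A)\leq\cE(\hat\beta A)$), while the $\cS^2$-bound on $\widehat{\cY}^\smallertext{+}$ is stated with the full weight $\hat\beta$ and comes directly from \Cref{thm::down-crossing}.$(ii)$.
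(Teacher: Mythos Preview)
Your proposal is correct and follows essentially the same strategy as the paper. For part $(ii)$, both you and the paper reduce to the BSDE comparison principle via the aggregation identity \eqref{eq::aggregation}. For part $(i)$, the high-level idea is identical: bound the solution norms by the data plus the obstacle $\widehat\cY^\smallertext{+}$, then control the obstacle term via \Cref{thm::down-crossing}.$(ii)$.

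The only difference is in how the first step of $(i)$ is carried out. You propose to invoke a priori estimates for reflected BSDEs from \cite{possamai2024reflections} as a black box. The paper instead re-derives the estimate from scratch: it applies It\^o's formula to $|\widehat\cY^\smallertext{+}|^2$, takes conditional expectations with respect to $\cF_{t\smallertext{+}}$, uses an integration-by-parts identity (\Cref{eq::FV_exponential}) to pass from $\cE(\beta A)$-weighted integrals to iterated integrals, and then carefully tracks the constants so that the condition $M^\Phi_1(\hat\beta)<1$ allows absorption of the weighted solution norms. The payoff of the paper's route is that it yields a \emph{conditional} bound (\Cref{lem::conditional_bound_2bsde}), which is reused later in \Cref{lem::bound_k_2bsde} for the intrinsic characterisation. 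Your black-box citation would give only the unconditional bound needed for the proposition itself, which is sufficient here but would not serve the subsequent lemma.
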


	\subsubsection{The intrinsic system}
	
	The intrinsic system, consistent with previous work, is defined by the conditions \ref{2BSDE::measurability}, \ref{2BSDE::dynamics}, and a condition similar to, but slightly different from, \ref{2BSDE::minimality_condition}. However, as noted in \Cref{rem::extrinsic_vs_intrinsic}, additional assumptions on our data appear to be necessary for an intrinsic characterisation of $\widehat{\cY}^\smallertext{+}$ and its decomposition $\big(\widehat{Z},(\widehat{U}^\P,\widehat{N}^\P,\widehat{K}^\P)_{\P \in \fP_\smalltext{0}}\big)$ to hold. A sufficient set of assumptions is provided below.
	
	\begin{assumption}\label{ass::intrinsic_2bsde}
		 \begin{enumerate} 
		 \item[$(i)$] There exists $(\delta,\Theta) \in (0,\infty)^2$ such that for every $\P\in\fP_0$, every $\cY \in \cS^{2}_{T,\hat\beta}(\F_\smallertext{+},\P)$, and every $(\cZ,\cU,\cU^{\prime}) \in  \bigcup_{\beta \in (\beta^\star,\hat{\beta})}\H^{2}_{T,\beta}(X^{c,\P};\F,\P) \times \big(\H^{2}_{T,\beta}(\mu^X;\F,\P)\big)^2$, there exist $(\rho_{1},\rho_{2}) \in \big(\H^{2}_{T}(\mu^X;\F^\P_\smallertext{+},\P)\big)^2$ satisfying $\Theta \geq \Delta (\rho_{i} \ast\tilde\mu^{X,\P}) > -1+\delta$, \textnormal{$\P$--a.s.} for $i \in \{1,2\}$, and
		 \begin{equation*}
		 	\frac{\d\langle \rho_{i} \ast\tilde\mu^{X,\P}\rangle^{(\P)}}{\d C} \leq \theta^\mu, \; i \in \{1,2\},
		\end{equation*}
			\begin{equation*}
		 	\frac{\d\langle \rho_2 \ast\tilde\mu^{X,\P},(\cU-\cU^{\prime})\ast\tilde\mu^{X,\P}\rangle^{(\P)}}{\d C}
		 	\geq 
		 	f^{\P}\big(\cY,\cY_{\smallertext{-}},\cZ,\cU(\cdot)\big) - f^{\P}\big(\cY,\cY_{\smallertext{-}},\cZ,\cU^{\prime}(\cdot)\big) 
		 	\geq \frac{\d\langle \rho_1 \ast\tilde\mu^{X,\P},(\cU-\cU^{\prime})\ast\tilde\mu^{X,\P}\rangle^{(\P)}}{\d C},
		 \end{equation*}
		 both hold $\P \otimes \mathrm{d}C$--{\rm a.e.} on $\llparenthesis 0, T \rrbracket$.
		 
		 \item[$(ii)$] For all $t \in [0,\infty)$ and every $\P\in\fP_0$
		 \begin{align*}
		 	\underset{\bar{\P} \in \fP_\smalltext{0}(\cF_{\smalltext{t}\tinytext{+}},\P)}{{\esssup}^{\P}} \E^{\bar{\P}}\bigg[\sup_{t < s \in \D_\smalltext{+}} \underset{\P^\smalltext{\prime} \in \fP_\smalltext{0}(\cF_{s},\bar{\P})}{{\esssup}^{\bar\P}}\E^{\P^\smalltext{\prime}}\bigg[ \cE(\hat\beta A)_T |\xi|^2 +\int_s^T &\cE(\hat\beta A)_r  \frac{|f^{\P^\smalltext{\prime}}_r(0,0,0,\mathbf{0})|^2}{\alpha^2_r} \d C_r \bigg| \cF_{s}\bigg] \\
		 	&  + \int_t^T\cE(\beta^\prime A)_{r}\frac{|f^{\bar{\P}}_r(0,0,0,\mathbf{0})|^2}{\alpha^2_r}\d C_r\bigg|\cF_{t\smallertext{+}}\bigg] < \infty , \; \textnormal{$\P$--a.s.}
		 \end{align*}
				 \item[$(iii)$] There exists $\mathfrak{C} \in (0,\infty)$ such that
		 \[
		 	\int_0^T \max\{\sqrt{\mathrm{r}_s},\theta^X_s,\theta^\mu_s\}\d C_s \leq \mathfrak{C}, \; \textnormal{$\fP_0$--q.s.}
		 \]
		 
		 \item[$(iv)$] For every $\P \in \fP_0$, the compensator $\nu^\P$ of $\mu^X$ satisfies $\nu^{\P}(\{t\} \times \R^d) = 0$, for $t \in [0,\infty)$, outside a $\P$--null set. In other words, $X$ is quasi--left-continuous relative to every $\P \in \fP_0$.
		 \end{enumerate}
	\end{assumption}
	
	\begin{remark}
		\textnormal{\Cref{ass::intrinsic_2bsde}.$(ii)$} may appear awkward at first sight. However, the intrinsic characterisation, in general, relies crucially on it. We comment on its necessity in \textnormal{\Cref{rem_increasing_process_bound}}.
	\end{remark}

	\begin{proposition}\label{prop::minimality}
		Suppose that the assumptions in \textnormal{\Cref{prop::decomposition_yplus}} are satisfied and that, in addition, \textnormal{\Cref{ass::intrinsic_2bsde}} holds. Then, the family of processes $(\widehat{K}^\P)_{\P \in \fP_0}$, obtained from the decomposition in \textnormal{\Cref{prop::decomposition_yplus}}, satisfies
		\begin{enumerate}[{\bf (2B3$^\ast$)}, leftmargin=1.3cm]
			\item\label{2BSDE::minimality_condition_2}
			$\displaystyle\underset{\bar{\P} \in \fP_\smalltext{0}(\cG_{\smalltext{t}\tinytext{+}},\P)}{{\essinf}^{\P}} \E^{\bar{\P}}\bigg[\int_t^T \cE\Big(\int_t^\cdot \lambda^{\bar{\P}}_s\d C_s\Big)_{r\smalltext{-}} \d\widehat{K}^{\bar{\P}}_r \bigg| \cG_{t\smallertext{+}}\bigg] = 0, \; \textnormal{$\P$--a.s.}, \; t \in [0,\infty], \; \P \in \fP_0$,
		\end{enumerate}
		where
		\[
			\lambda^{\bar{\P}} \coloneqq \frac{f^{\bar{\P}}(Y,Y_{\smallertext{-}},\widehat{Z},\widehat{U}^{\bar{\P}}(\cdot)) - f^{\bar{\P}}(\cY^{\bar{\P}},Y_{\smallertext{-}},\widehat{Z},\widehat{U}^{\bar{\P}}(\cdot))}{Y-\cY^{\bar{\P}}} \1_{\{Y \neq \cY^\smalltext{\bar{\P}}\}}, \; \overline{\P} \in \fP_0.
		\]
	\end{proposition}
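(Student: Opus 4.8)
\textbf{Proof strategy for \Cref{prop::minimality}.}
The plan is to reduce the minimality condition \ref{2BSDE::minimality_condition_2} to the aggregation property \eqref{2BSDE::aggregation} via a localised linearisation of the generator along the two solutions $\widehat\cY^\smallertext{+}$ and $\cY^{\bar\P}(T,\xi)$. Fix $\P\in\fP_0$, $t\in[0,\infty]$, and $\bar\P\in\fP_0(\cG_{t\smallertext{+}},\P)$. On $\llparenthesis t,T\rrbracket$ the difference $\delta Y^{\bar\P}:=\widehat\cY^\smallertext{+}-\cY^{\bar\P}(T,\xi)$ satisfies, under $\bar\P$, a linear equation whose dynamics combine $\widehat{K}^{\bar\P}$ with the increments from comparing the generator $f^{\bar\P}$ evaluated at the two sets of arguments. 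Writing $f^{\bar\P}(\widehat\cY^\smallertext{+},\widehat\cY^\smallertext{+}_{\smallertext{-}},\widehat Z,\widehat U^{\bar\P})-f^{\bar\P}(\cY^{\bar\P},\cY^{\bar\P}_{\smallertext{-}},\widehat Z,\widehat U^{\bar\P})$ and splitting it into $y$-, $\mathrm y$-, $z$-, and $u$-contributions — using that the $z$-integrands already agree and that $\widehat U^{\bar\P}$ appears in both — the $y$-part gives precisely the factor $\lambda^{\bar\P}$ in the statement, the $\mathrm y$-part is handled by a second (bounded, predictable) multiplier $\widehat\lambda^{\mathrm y,\mathrm y^\prime}$ as in \eqref{eq::lipschitz_linearisation}, and the $u$-part is dominated, via \Cref{ass::intrinsic_2bsde}.$(i)$, by a stochastic integral against $(\cU-\cU^\prime)\ast\tilde\mu^{X,\bar\P}$ with predictable integrand $\rho_i$ satisfying $\Delta(\rho_i\ast\tilde\mu^{X,\bar\P})>-1+\delta$. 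The point of the two-sided bound in $(i)$ is that we can pick whichever sign of $\rho_i$ makes the resulting linearised coefficient have the correct sign; together with $(iv)$ (quasi-left-continuity) this means the stochastic exponential of the linearised martingale part is a genuine, strictly positive $(\G^\P_\smallertext{+},\bar\P)$--martingale on $\llbracket t,T\rrbracket$, uniformly integrable thanks to the bounds $\Theta\ge\Delta(\rho_i\ast\tilde\mu^{X,\bar\P})$, the bound $\Delta A\le\Phi$, and the $\cS^2$-estimates from \Cref{prop::bound_and_comparison}.$(i)$.

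Concretely, I would carry this out as follows. First, set up the linearised BSDE for $\delta Y^{\bar\P}$ on $\llparenthesis t,T\rrbracket$, identifying its terminal value $\delta Y^{\bar\P}_T=\xi-\xi=0$ and its driver as $-\lambda^{\bar\P}\delta Y^{\bar\P}-\widehat\lambda\,\delta Y^{\bar\P}_{\smallertext{-}}+(\text{martingale increments})\,\d C_r-\d\widehat K^{\bar\P}_r$; the integrands $\widehat Z$ and $\widehat U^{\bar\P}$ enter only through martingale terms and are square-integrable by \Cref{prop::decomposition_yplus}. Second, define the change-of-measure density $\Gamma^{\bar\P}:=\cE\big(\int_t^\cdot(\lambda^{\bar\P}_s+\widehat\lambda_s)\d C_s+(\text{compensated martingale part})\big)$, check using \Cref{ass::intrinsic_2bsde}.$(i)$, $(iii)$, $(iv)$ that it is a strictly positive $(\G^\P_\smallertext{+},\bar\P)$--uniformly integrable martingale (the boundedness of $\int_t^T\max\{\sqrt{\mathrm r_s},\theta^X_s,\theta^\mu_s\}\d C_s$ controls the finite-variation part, $\Delta A\le\Phi<1$ and the bounds on $\rho_i$ keep the jumps of the exponential bounded away from $0$ and $\infty$, and $(iv)$ kills predictable jump terms of the form $\nu^{\bar\P}(\{s\}\times\R^d)$). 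Third, apply the integration-by-parts / product formula to $\Gamma^{\bar\P}\,\delta Y^{\bar\P}$ and take $\cG_{t\smallertext{+}}$-conditional expectations under $\bar\P$: the martingale terms vanish and, using the discount factor $\cE(\int_t^\cdot\lambda^{\bar\P}_s\d C_s)$ absorbed appropriately together with $\widehat\lambda$, one obtains an identity of the shape
\begin{equation*}
	\delta Y^{\bar\P}_t=\E^{\bar\P}\bigg[\int_t^T\cE\Big(\textstyle\int_t^\cdot\lambda^{\bar\P}_s\d C_s\Big)_{r\smalltext{-}}\,\d\widehat K^{\bar\P}_r\,\Big|\,\cG_{t\smallertext{+}}\bigg],
\end{equation*}
modulo a rescaling by the (bounded, strictly positive) weight coming from $\widehat\lambda$ and $\Gamma^{\bar\P}$, and up to error terms controlled by the two-sided estimate in $(i)$ which pinch the $u$-linearisation; the uniform integrability from step two together with \Cref{ass::intrinsic_2bsde}.$(ii)$ justifies the conditional Fubini argument across $\bar\P\in\fP_0(\cG_{t\smallertext{+}},\P)$.

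Fourth and last, take the $\bar\P$-essential infimum over $\bar\P\in\fP_0(\cG_{t\smallertext{+}},\P)$. Since $\widehat K^{\bar\P}$ is non-decreasing and the weight $\cE(\int_t^\cdot\lambda^{\bar\P}_s\d C_s)_{r\smalltext{-}}$ is strictly positive, the right-hand side above is non-negative, giving the $\ge 0$ half of \ref{2BSDE::minimality_condition_2}. For the reverse inequality, the aggregation identity \eqref{2BSDE::aggregation} gives $\widehat\cY^\smallertext{+}_t=\operatorname*{ess\,sup}^\P_{\bar\P\in\fP_\smalltext{0}(\cG_{\smalltext{t}\tinytext{+}},\P)}\cY^{\bar\P}_t(T,\xi)$, so $\operatorname*{ess\,inf}^\P_{\bar\P}\delta Y^{\bar\P}_t=0$ $\P$--a.s.; combined with the identity from step three (and a standard argument that the family $\{\delta Y^{\bar\P}_t\}$ is downward directed, using \Cref{ass::probabilities2}.$(iii)$ to paste conditional optimisers), the essential infimum of the right-hand side is $0$ as well. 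I expect the main obstacle to be step two and step three together: making the change of measure rigorous when the linearising multiplier for the $u$-component is only pinned between the two $\rho_i$ from \Cref{ass::intrinsic_2bsde}.$(i)$ rather than being canonically defined, and simultaneously ensuring the resulting exponential martingale is uniformly integrable so that the conditional expectation manipulations and the interchange of $\operatorname*{ess\,inf}$ with the expectation are legitimate — this is exactly where the quasi-left-continuity in $(iv)$, the boundedness assumptions in $(i)$ and $(iii)$, and the integrability reservoir in $(ii)$ are all needed, and matching them up carefully is the delicate part.
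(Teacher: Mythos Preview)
Your overall architecture is right—linearise, change measure, and pass to the essential infimum using the aggregation identity—but step three contains a genuine gap. The claimed identity does not hold, even ``modulo a bounded, strictly positive weight''. After the linearisation and Girsanov change of measure with density $\cE(L)=\cE(\eta\bcdot X^{c,\bar\P}+\rho_1\ast\tilde\mu^{X,\bar\P})$, what one actually obtains (this is the paper's inequality \eqref{eq::inequality_delta_y_minimality}) is
\[
\delta Y_t \;\ge\; \E^{\bar\P}\!\bigg[\int_t^{t^\prime} \frac{\cE(L)_s\,\cE(w)_{s\smallertext{-}}\,\cE(v)_s}{\cE(L)_t\,\cE(w)_t\,\cE(v)_t}\,\d\widehat K^{\bar\P}_s\,\Big|\,\cF^{\bar\P}_{t\smallertext{+}}\bigg],
\]
where $w=\int_0^{\cdot\land T}\lambda^{\bar\P}_s\,\d C_s$ and $v$ absorbs $\widehat\lambda$. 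The factor $\cE(v)_s/\cE(v)_t$ is indeed bounded above and below (by \Cref{ass::intrinsic_2bsde}.$(iii)$), but the martingale density ratio $\cE(L)_s/\cE(L)_t$ is \emph{not} pathwise bounded in either direction—it is merely a positive martingale. You therefore cannot strip it off to reach the target expression $\E^{\bar\P}[\int_t^T\cE(\int_t^\cdot\lambda^{\bar\P}\,\d C)_{r\smallertext{-}}\,\d\widehat K^{\bar\P}_r\,|\,\cG_{t\smallertext{+}}]$ by a simple rescaling.

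The paper closes this gap via a three-factor H\"older inequality with exponents $\tfrac13,\tfrac13,\tfrac13$ (see \eqref{eq::minimality_condition}): one artificially inserts and removes the factor $(\inf_s\cE(L)_s\cE(v)_s/\cE(L)_t\cE(v)_t)^{1/3}$ to split the desired conditional expectation into $(\delta Y_t)^{1/3}$ times two further conditional expectations raised to $1/3$. The first of these, $\E^{\bar\P}[\sup_s\cE(L)_t/\cE(L)_s\,|\,\cF^{\bar\P}_{t\smallertext{+}}]$, is controlled by a separate lemma on the inverse of a stochastic exponential (\Cref{eq::inverse_stochastic_exponential} and \Cref{rem::inverse_exponential})—this is exactly where quasi-left-continuity (iv) and the jump bounds in (i) are used, to get a multiplicative decomposition $\cE(L)^{-1}=\cE(\tilde N)D$ with bounded $D$. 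The second factor, $\E^{\bar\P}[(\int_t^T\cE(\beta A)^{1/2}_s\,\d\widehat K^{\bar\P}_s)^2\,|\,\cF_{t\smallertext{+}}]$, requires a uniform-in-$\bar\P$ bound established in \Cref{lem::bound_k_2bsde}; this is where \Cref{ass::intrinsic_2bsde}.$(ii)$ enters—not as a Fubini device, but to guarantee that the essential supremum of this second moment over $\bar\P\in\fP_0(\cF_{t\smallertext{+}},\P)$ is $\P$--a.s.\ finite. Only once both auxiliary factors are bounded can one take the essential infimum and use $\operatorname*{ess\,inf}^\P_{\bar\P}\delta Y^{\bar\P}_t=0$ from \ref{2BSDE::aggregation} to conclude.
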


	\begin{remark}
		The somewhat awkward condition \textnormal{\ref{2BSDE::minimality_condition_2}} has already appeared in a similar form in \textnormal{\cite{popier2019second}}, where \textnormal{2BSDEs} were studied under a monotonicity condition in the $y$-variable of the generator. In our setting, if the integral $\int_0^T \sqrt{r_s} \d C_s$ is \textnormal{$\fP_0$--q.s.} bounded, then, since $|\lambda^{\bar{\P}}_s| \leq \sqrt{r_s}$, the condition \textnormal{\ref{2BSDE::minimality_condition_2}} becomes equivalent to \textnormal{\ref{2BSDE::minimality_condition}}$;$ one just has to use the arguments that lead to \textnormal{\eqref{eq::boundedness_ev}}.
	\end{remark}

	\begin{theorem}\label{thm::2BSDE_wellposed_2}
		Let {\rm Assumptions \ref{ass::probabilities2}}, {\rm\ref{ass::generator2}}, {\rm\ref{ass::crossing}}, and {\rm\ref{ass::intrinsic_2bsde}} hold, assume  $M^\Phi_1(\hat\beta) < 1$ and $\phi^{2,\hat{\beta}}_{\xi,f} < \infty$, where $\phi^{2,\hat{\beta}}_{\xi,f}$ is given by \eqref{eq::constant_phi}. Let $\beta^\star$ be the unique number in $(0,\hat{\beta})$ with $M^\Phi_1(\beta^\star) = 1$, and suppose that $\inf_{\P \in \fP_\smalltext{0}}\inf_{\beta \in (\beta^\star,\hat\beta)}\P[\cE(\beta A)_{T\smallertext{-}} < \infty] = 1$. Let $\big(\widehat{Z},(\widehat{U}^\P,\widehat{N}^\P,\widehat{K}^\P\big)_{\P \in \fP_\smalltext{0}}\big) = \big(\widehat{Z}(T,\xi),(\widehat{U}^\P(T,\xi),\widehat{N}^\P(T,\xi),\widehat{K}^\P(T,\xi)\big)_{\P \in \fP_\smalltext{0}}\big)$ denote the decomposition of $\widehat\cY^\smallertext{+} = \widehat\cY^\smallertext{+}(T,\xi)$ from \textnormal{\Cref{prop::decomposition_yplus}} relative to an arbitrary $\beta \in (\beta^\star,\hat\beta)$. Then the following hold

\medskip
$(i)$ $\big(\widehat\cY^\smallertext{+},\widehat{Z},(\widehat{U}^\P,\widehat{N}^\P,\widehat{K}^\P\big)_{\P \in \fP_\smalltext{0}}\big)$ satisfies \ref{2BSDE::measurability}, \ref{2BSDE::dynamics}, \ref{2BSDE::minimality_condition_2}, and $(\widehat{Z},(\widehat{U}^\P,\widehat{N}^\P\big)_{\P \in \fP_\smalltext{0}}\big) \in \bigcap_{\beta^\smalltext{\prime} (0,\hat\beta)} \sL^2_{T,\beta^\smalltext{\prime}}(\fP_0)$$;$

\medskip
$(ii)$ any other tuple $(Y,Z,(U^\P,N^\P,K^\P)_{\P \in \fP_\smalltext{0}})$ with $\sup_{\P \in \fP_\smalltext{0}}\|Y\|_{\cS^\smalltext{2}_{\smalltext{T}\smalltext{,}\smalltext{\hat\beta}}(\G_\tinytext{+},\P)} < \infty$, $(Z,(U^\P,N^\P)_{\P \in \fP_\smalltext{0}})\in\bigcup_{\beta^\smalltext{\prime} \in (\beta^\smalltext{\star},\hat\beta)} \sL^2_{T,\beta^\smalltext{\prime}}(\fP_0)$ for which \ref{2BSDE::measurability}, \ref{2BSDE::dynamics}, and \ref{2BSDE::minimality_condition_2} holds satisfies $(Y,N^\P,K^\P) = (\widehat\cY^\smallertext{+},\widehat{N}^\P,\widehat{K}^\P)$ outside a $\P$--null set for each $\P \in \fP_0$, $ (Z - \widehat{Z})^\top\mathsf{a}(Z-\widehat{Z})=0$ outside a \textnormal{$\{\P \otimes \mathrm{d}C : \P \in \fP_0\}$}--polar set, and $\sup_{\P \in \fP_\smalltext{0}}\|U^\P - \widehat{U}^\P\|_{\H^\smalltext{2}_\smalltext{T}(\mu^\smalltext{X};\G,\P)} = 0$.
	\end{theorem}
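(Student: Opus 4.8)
The plan is to deduce everything from the already-established well-posedness of the \emph{extrinsic} system in \Cref{thm::2BSDE_wellposed}, from the decomposition in \Cref{prop::decomposition_yplus}, and from \Cref{prop::minimality}; no genuinely new machinery should be needed.

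For $(i)$, let $\big(\widehat{Z},(\widehat{U}^\P,\widehat{N}^\P,\widehat{K}^\P)_{\P\in\fP_\smalltext{0}}\big)$ be the decomposition of $\widehat\cY^\smallertext{+}$ furnished by \Cref{prop::decomposition_yplus} relative to the chosen $\beta\in(\beta^\star,\hat\beta)$; its hypotheses coincide with ours. By \Cref{thm::2BSDE_wellposed}.$(i)$ the tuple $\big(\widehat\cY^\smallertext{+},\widehat{Z},(\widehat{U}^\P,\widehat{N}^\P,\widehat{K}^\P)_{\P\in\fP_\smalltext{0}}\big)$ already satisfies \ref{2BSDE::measurability}--\ref{2BSDE::aggregation} and $\big(\widehat{Z},(\widehat{U}^\P,\widehat{N}^\P)_{\P\in\fP_\smalltext{0}}\big)\in\bigcap_{\beta^\smalltext{\prime}\in(0,\hat\beta)}\sL^2_{T,\beta^\smalltext{\prime}}(\fP_0)$. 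In particular \ref{2BSDE::measurability} and \ref{2BSDE::dynamics}---which are shared verbatim with the intrinsic system---hold, and the remaining condition \ref{2BSDE::minimality_condition_2} is exactly the conclusion of \Cref{prop::minimality}, whose hypotheses are those of \Cref{prop::decomposition_yplus} together with \Cref{ass::intrinsic_2bsde}, all of which are in force. This establishes $(i)$.

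For $(ii)$, the plan is to show that \emph{any} tuple $(Y,Z,(U^\P,N^\P,K^\P)_{\P\in\fP_\smalltext{0}})$ satisfying \ref{2BSDE::measurability}, \ref{2BSDE::dynamics}, \ref{2BSDE::minimality_condition_2}, with $\sup_{\P\in\fP_0}\|Y\|_{\cS^\smalltext{2}_{\smalltext{T}\smalltext{,}\smalltext{\hat\beta}}(\G_\tinytext{+},\P)}<\infty$ and $(Z,(U^\P,N^\P)_{\P\in\fP_0})\in\bigcup_{\beta^\smalltext{\prime}\in(\beta^\smalltext{\star},\hat\beta)}\sL^2_{T,\beta^\smalltext{\prime}}(\fP_0)$, automatically satisfies the aggregation condition \ref{2BSDE::aggregation}; once this is done the claimed identifications follow immediately from the uniqueness statement \Cref{thm::2BSDE_wellposed}.$(ii)$. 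To prove \ref{2BSDE::aggregation}, fix $\P\in\fP_0$ and $t\in[0,\infty)$ and establish, for every $\bar{\P}\in\fP_0(\cG_{t\smallertext{+}},\P)$, a two-sided comparison of $Y_t$ with $\cY^{\bar{\P}}_t(T,\xi)$. The inequality $Y_t\geq{\esssup}^\P_{\bar{\P}\in\fP_0(\cG_{t\smallertext{+}},\P)}\cY^{\bar{\P}}_t(T,\xi)$, $\P$--a.s., comes from the comparison principle for our BSDEs: on $\llbracket t,T\rrbracket$ the restriction of $(Y,Z,U^{\bar{\P}},N^{\bar{\P}})$ solves the equation with generator $f^{\bar{\P}}$ and the extra non-decreasing driver $K^{\bar{\P}}$, so since $K^{\bar{\P}}_T-K^{\bar{\P}}_t\geq0$ and since \Cref{ass::crossing} supplies exactly the linearisation and sign structure in the $(\mathrm{y},z,u)$-arguments that the a priori estimates of \cite{possamai2024reflections} require, one obtains $Y_t\geq\cY^{\bar{\P}}_t(T,\xi)$, $\bar{\P}$--a.s., hence $\P$--a.s.\ on $\cG_{t\smallertext{+}}$. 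For the reverse inequality, linearise the difference $\delta Y\coloneqq Y-\cY^{\bar{\P}}$: on $\llbracket t,T\rrbracket$ it solves a linear equation whose only bounded-variation source is $\d K^{\bar{\P}}$, with $y$--drift coefficient $\lambda^{\bar{\P}}$, the differences in the $\mathrm{y}$--, $z$-- and jump-integrand arguments being absorbed into an equivalent probability change $\Q^{\bar{\P}}\approx\bar{\P}$; the resulting representation $\delta Y_t=\E^{\Q^{\bar{\P}}}\big[\int_t^T\cE\big(\int_t^\cdot\lambda^{\bar{\P}}_s\d C_s\big)_{r\smalltext{-}}\d K^{\bar{\P}}_r\,\big|\,\cG_{t\smallertext{+}}\big]\geq0$, together with the fact that the $\bar{\P}\leftrightarrow\Q^{\bar{\P}}$ correction terms have zero $\cG_{t\smallertext{+}}$--conditional expectation, lets one take the essential infimum over $\bar{\P}$ and invoke \ref{2BSDE::minimality_condition_2} to conclude ${\essinf}^\P_{\bar{\P}}\big(Y_t-\cY^{\bar{\P}}_t(T,\xi)\big)=0$, i.e.\ $Y_t={\esssup}^\P_{\bar{\P}}\cY^{\bar{\P}}_t(T,\xi)$, $\P$--a.s. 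Combining the two inequalities yields \ref{2BSDE::aggregation}, and then \Cref{thm::2BSDE_wellposed}.$(ii)$ gives $(Y,N^\P,K^\P)=(\widehat\cY^\smallertext{+},\widehat{N}^\P,\widehat{K}^\P)$ off a $\P$--null set for each $\P\in\fP_0$, $(Z-\widehat{Z})^\top\mathsf{a}(Z-\widehat{Z})=0$ off a $\{\P\otimes\d C:\P\in\fP_0\}$--polar set, and $\sup_{\P\in\fP_0}\|U^\P-\widehat{U}^\P\|_{\H^\smalltext{2}_\smalltext{T}(\mu^\smalltext{X};\G,\P)}=0$.

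The main obstacle is the ``$\leq$'' step above. Making the linearisation of the jump-integrand difference rigorous requires the equivalent change of measure $\Q^{\bar{\P}}$ to be a genuine, uniformly integrable probability change that is controlled \emph{uniformly} over $\bar{\P}\in\fP_0(\cG_{t\smallertext{+}},\P)$, so that the essential infimum of $\delta Y_t$ can actually be matched against the $\bar{\P}$--conditional expectations appearing in \ref{2BSDE::minimality_condition_2}; this is precisely what the uniform two-sided bounds $\Theta\geq\Delta(\rho_i\ast\tilde\mu^{X,\bar{\P}})>-1+\delta$ and $\d\langle\rho_i\ast\tilde\mu^{X,\bar{\P}}\rangle^{(\bar{\P})}/\d C\leq\theta^\mu$ in \Cref{ass::intrinsic_2bsde}.$(i)$, the quasi--left-continuity in \Cref{ass::intrinsic_2bsde}.$(iv)$, and the uniform integrability estimate in \Cref{ass::intrinsic_2bsde}.$(ii)$ are designed to deliver---without them the linearising exponential martingale may fail to be uniformly integrable or even a true martingale. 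A secondary, more routine difficulty is keeping the essential-supremum and essential-infimum manipulations compatible with the $\cG_{t\smallertext{+}}$--conditioning and with the universal-measurability conventions underlying the definition of $\fP_0(\cG_{t\smallertext{+}},\P)$, which I would handle exactly as in the corresponding passages of \cite{nutz2012superhedging} and \cite{possamai2018stochastic}.
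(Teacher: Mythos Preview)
Your overall architecture matches the paper's: $(i)$ is exactly \Cref{thm::2BSDE_wellposed}.$(i)$ plus \Cref{prop::minimality}, and for $(ii)$ the paper also reduces to showing that any intrinsic solution satisfies \ref{2BSDE::aggregation} and then invokes \Cref{thm::2BSDE_wellposed}.$(ii)$. The ``$\geq$'' half of \ref{2BSDE::aggregation} is as you say, obtained by rerunning the linearisation in the proof of \Cref{prop::minimality} (the part leading to \eqref{eq::conluding_comparison}).

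The gap is in your ``$\leq$'' step. Two points. First, you do \emph{not} get an equality $\delta Y_t=\E^{\Q^{\bar{\P}}}\!\big[\int_t^T\cE(\int_t^\cdot\lambda^{\bar{\P}}_s\d C_s)_{r\smallertext{-}}\d K^{\bar{\P}}_r\,\big|\,\cG_{t\smallertext{+}}\big]$: the linearisation in the $u$-argument is only one-sided, so for the upper bound you must use $\rho_2$ from \Cref{ass::intrinsic_2bsde}.$(i)$ (not $\rho_1$), which yields merely
\[
\delta Y_t\leq\E^{\bar{\P}}\!\bigg[\sup_{s\in(t,T]}\frac{\cE(L)_s\cE(v)_s}{\cE(L)_t\cE(v)_t}\int_t^T\frac{\cE(w)_{s\smallertext{-}}}{\cE(w)_t}\,\d K^{\bar{\P}}_s\,\bigg|\,\cF^{\bar{\P}}_{t\smallertext{+}}\bigg].
\]
Second, the passage from this bound to the $\bar{\P}$-expectations appearing in \ref{2BSDE::minimality_condition_2} is not because ``correction terms have zero conditional expectation''---the integrand is not $\cG_{t\smallertext{+}}$-measurable, so no such cancellation occurs. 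The paper instead applies H\"older's inequality twice to split off the $\sup\cE(L)_s/\cE(L)_t$ factor and obtains
\[
\delta Y_t\leq\mathfrak{C}\cdot\E^{\bar{\P}}\Big[\sup_{s}\big(\cE(L)_s/\cE(L)_t\big)^3\Big|\cF^{\bar{\P}}_{t\smallertext{+}}\Big]^{1/3}\cdot\E^{\bar{\P}}\Big[\big(\textstyle\int_t^T\cE(\beta A)^{1/2}_s\d K^{\bar{\P}}_s\big)^2\Big|\cF^{\bar{\P}}_{t\smallertext{+}}\Big]^{1/3}\cdot\E^{\bar{\P}}\Big[\textstyle\int_t^T\cE(\int_t^\cdot\lambda^{\bar{\P}}_r\d C_r)_{s\smallertext{-}}\d K^{\bar{\P}}_s\Big|\cF^{\bar{\P}}_{t\smallertext{+}}\Big]^{1/3}.
\]
The first factor is bounded uniformly in $\bar{\P}$ via Doob together with L\'epingle's moment bound on stochastic exponentials (this is precisely where the \emph{upper} jump bound $\Delta(\rho_2\ast\tilde\mu^{X,\bar{\P}})\leq\Theta$ in \Cref{ass::intrinsic_2bsde}.$(i)$ is used; the lower bound $-1+\delta$ and the quasi--left-continuity in $(iv)$ were already consumed in the ``$\geq$'' step through \Cref{eq::inverse_stochastic_exponential}). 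The second factor is $\P$--a.s.\ finite uniformly in $\bar{\P}$ by \Cref{lem::bound_k_2bsde} and \Cref{ass::intrinsic_2bsde}.$(ii)$. Only then can one take the essential infimum over $\bar{\P}$ and kill the third factor via \ref{2BSDE::minimality_condition_2}. So your diagnosis of \emph{what} the structural assumptions in \Cref{ass::intrinsic_2bsde} are for is correct, but the mechanism is this H\"older decoupling, not a vanishing-correction argument.
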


\subsection{An invariance principle}

The construction of the process $\widehat\cY^\smallertext{+}_t(T,\xi)$ in \Cref{thm::down-crossing} implies that the following invariance principle holds
\begin{equation}\label{eq_invariance_Y}
	\widehat\cY^\smallertext{+}_{t+s}(T,\xi)(\omega\otimes_t\cdot) = \underset{\bar{\P} \in \fP_{\smalltext{t}\smalltext{,}\smalltext{\omega}}(\cF_{s\smalltext{+}},\P^{\smalltext{t}\smalltext{,}\smalltext{\omega}})}{{\esssup}^{\P^{\smalltext{t}\smalltext{,}\smalltext{\omega}}}} \cY^{t,\omega,\bar{\P}}_s(T-t \land T,\xi^{t,\omega}), \; \textnormal{$\P^{t,\omega}$--a.s.,} \; \textnormal{$\P$--a.e. $\omega \in \Omega$}, \; s \in [0,\infty], \; \P\in\fP_0,
\end{equation}
where $\fP_{t,\omega}(\cF_{s\smallertext{+}},\P^{t,\omega}) = \big\{\overline{\P}\in \fP(t,\omega) : \overline{\P} = \P^{t,\omega} \; \text{\rm on} \; \cF_{s\smallertext{+}}\big\}$.
This means, that $\widehat\cY^\smallertext{+}_{t+\cdot}(T,\xi)(\omega\otimes_t\cdot)$ can be seen as the first component of the solution
\[
	\big(\widehat\cY^{t,\omega,\smallertext{+}}(T,\xi),\widehat{Z}^{t,\omega}(T,\xi),(\widehat{U}^{t,\omega,\P^\smalltext{\prime}}(T,\xi),\widehat{N}^{t,\omega,\P^\smalltext{\prime}}(T,\xi),\widehat{K}^{t,\omega,\P^\smalltext{\prime}}(T,\xi)\big)_{\P^\smalltext{\prime} \in \fP(t,\omega)}\big)
\]
 to the 2BSDE whose initial set of measures is not $\fP_0$ but $\fP(t,\omega)$, and whose corresponding BSDEs have terminal time $T - t \land T$, terminal random variable $\xi^{t,\omega}$, generator $f_{t+\cdot}(\omega\otimes_t\cdot,\ldots)$, and integrator $C_{t+\cdot}(\omega\otimes_t\cdot)-C_t(\omega)$. Here, we abuse notation: $\widehat{Z}^{t,\omega}(T,\xi)$ should not be understood as $\widehat{Z}(T,\xi)(\omega\otimes_t\cdot)$ (yet). It follows from \Cref{eq_invariance_Y}, from uniqueness of the solution to the 2BSDE (or rather, uniqueness of the decomposition of $\widehat{\cY}^{t,\omega,\smallertext{+}}$), and from arguments similar to those used in the proof of \Cref{lem::conditioning_bsde2} that
 \begin{gather}
 	\widehat\cY^\smallertext{+}_{t+s}(T,\xi)(\omega\otimes_t\cdot) = \widehat\cY^{t,\omega,\smallertext{+}}_s(T,\xi), \; s \in [0,\infty], \nonumber\\[0.25em]
 	\big(\widehat{Z}_{t+s}(T,\xi)(\omega\otimes_t\cdot) - \widehat{Z}^{t,\omega}_s(T,\xi)\big)^\top \mathsf{a}^{t,\omega}\big(\widehat{Z}_{t+s}(T,\xi)(\omega\otimes_t\cdot) - \widehat{Z}^{t,\omega}_s(T,\xi)\big) = 0, \; \textnormal{$\d(C_{t+\cdot}(\omega\otimes_t\cdot)-C_t(\omega))$--a.e. $s \in [0,\infty)$}, \nonumber\\[0.25em]
 	\widehat{U}^\P_{t+s}(T,\xi)(\omega\otimes_t\cdot)= \widehat{U}^{t,\omega,\P^{\smalltext{t}\smalltext{,}\smalltext{\omega}}}_{s}(T,\xi) \; \textnormal{in $\widehat{\L}^2_{\omega\otimes_\smalltext{t}\tilde\omega,t+s}(\mathsf{K}^{t,\omega,\P^{\smalltext{t}\smalltext{,}\smalltext{\omega}}}_{\tilde\omega,s})$,  $\d(C_{t+\cdot}(\omega\otimes_t\cdot)-C_t(\omega))$--a.e. $s \in  [0,\infty)$,} \nonumber\\[0.25em]
 	\widehat{N}^{\P}_{t+s}(T,\xi)(\omega\otimes_t\cdot) = \widehat{N}^{t,\omega,\P^{\smalltext{t}\smalltext{,}\smalltext{\omega}}}_s(T,\xi), s \in [0,\infty], \nonumber\\[0.25em]
 	\widehat{K}^{\P}_{t+s}(T,\xi)(\omega\otimes_t\cdot) = \widehat{K}^{t,\omega,\P^{\smalltext{t}\smalltext{,}\smalltext{\omega}}}_s(T,\xi), \; s \in [0,\infty], \nonumber
 \end{gather}
 holds $\P^{t,\omega}$--a.s., for $\P$--a.e. $\omega \in \Omega$, for all $\P\in\fP_0$.
 
 \medskip
 A similar identity to the above appeared in \cite[Lemma A.2.1]{hernandez2023me}. The difference stems from the fact that we are shifting paths rather than space, although, on an intuitive level, they point to a similar property of invariance of solutions to 2BSDEs.

\section{Semi-martingale 2BSDE without random measures}\label{sec_2bsde_X}
	
	It is possible to consider a completely different class of 2BSDEs, where the corresponding BSDEs are driven only by an It\^o-integrator with jumps. Specifically, we consider BSDEs driven by the canonical process $X$, under the condition that $X$ is a suitable martingale. To apply the results from \cite{possamai2024reflections}, we work with probability measures under which $X$ is a locally square-integrable martingale. While we omit most of the details, the proofs and techniques used to derive the results in \Cref{sec::main_results} also apply in this setting. Nonetheless, in this class of 2BSDEs, we are still unable to resolve the problem of aggregating the integrands. We discuss this in more detail in \Cref{sec_aggregation_problems}.
	
	\subsection{Martingale laws}\label{sec::martingale_laws}
	
	We restrict our attention to the measures in
	\begin{equation*}
		\fH^2_\textnormal{loc} \coloneqq \big\{\P \in \fP_\textnormal{sem} : \textnormal{$X$ is an $(\F,\P)$--locally square-integrable martingale} \big\}.
	\end{equation*}
	Whenever $\P \in \fH^2_\textnormal{loc}$ and $(\mathsf{B}^\P,\mathsf{C}^\P,\nu^\P)$ denote the corresponding characteristics, we have 
	\[
		\langle X \rangle^{(\P)} = \mathsf{C}^{\P} + (x x^\top) \ast \nu^\P, \; \textnormal{$\P$--a.s.},
	\]
	where the equality is understood to hold component-wise (see \cite[Proposition II.2.29.b)]{jacod2003limit}). 

\medskip
	For semi-martingale measures, it is possible to determine whether they belong to $\fH^{2}_\textnormal{loc}$ by examining the corresponding characteristics. The following provides a precise result; it follows directly from \Cref{lem::absolute_continuity_nu} and \cite[Proposition II.2.29]{jacod2003limit}.
	
	\begin{proposition}\label{prop::equivalence_locally_square_integrable}
		Let $\P \in \fP_\textnormal{sem}$, let $(\mathsf{B},\mathsf{C},\nu)$ be $(\F,\P)$--semi-martingale characteristics of $X$, and let $\mathsf{c} = (\mathsf{c}_t)_{t \in [0,\infty)}$ be the \textnormal{$\P\otimes\d\textnormal{Tr}(\mathsf{C})$--a.e.} uniquely defined $\S^{d}_\smallertext{+}$-valued $\F$-predictable process such that $\mathsf{C} = \mathsf{c} \bcdot \textnormal{Tr}(\mathsf{C})$, \textnormal{$\P$--a.s.}, which is a factorisation of the second characteristic. Then the following conditions are equivalent
		\begin{enumerate}
			\item[$(i)$] $X$ is an $(\F,\P)$--locally square-integrable martingale relative to $(\F,\P);$
			\item[$(ii)$] the characteristics satisfy, outside a $\P$--null set
			\begin{enumerate}
				\item[$(a)$] $|x|^2\ast\nu_t < \infty,\; t\in[0,\infty);$
				\item[$(b)$] $ \mathsf{B} = -(x-h(x))\ast\nu.$
			\end{enumerate}
		\end{enumerate}
	\end{proposition}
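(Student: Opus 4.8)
\textbf{Proof plan for Proposition \ref{prop::equivalence_locally_square_integrable}.}

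The strategy is to reduce everything to the classical characterisation of locally square-integrable martingales via their predictable characteristics, namely \cite[Proposition II.2.29]{jacod2003limit}, and then to translate the absolute-continuity setup provided by \Cref{lem::absolute_continuity_nu} into the form required there. First I would recall that by \cite[Proposition II.2.29.a)]{jacod2003limit}, a semi-martingale $X$ with characteristics $(\mathsf{B},\mathsf{C},\nu)$ relative to the truncation map $h$ is a locally square-integrable martingale if and only if $|x|^2 \ast \nu$ is a locally integrable (equivalently, \textnormal{$\P$--a.s.} finite-valued, since it is $\F$-predictable and non-decreasing, by an application of \cite[Theorem I.3.11]{jacod2003limit} together with the argument in \Cref{rem::quadratic_variation}) increasing process, and $\mathsf{B} + (x - h(x)) \ast \nu = 0$, \textnormal{$\P$--a.s.} The latter is precisely condition $(ii)(b)$, and the former is condition $(ii)(a)$; note here that $|x|^2 \ast \nu < \infty$ identically on $[0,\infty)$ already forces $(x - h(x)) \ast \nu$ to be of locally finite variation, so that the sum $\mathsf{B} + (x-h(x))\ast\nu$ is well-defined. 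This gives the equivalence $(i) \Longleftrightarrow (ii)$ directly, once one has checked that the notions of characteristics relative to $\F$, $\F_\smallertext{+}$ and $\F^\P_\smallertext{+}$ coincide, which is exactly \Cref{rem::equivalence_semimartingale}.

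The role of \Cref{lem::absolute_continuity_nu} and of the factorisation $\mathsf{C} = \mathsf{c} \bcdot \textnormal{Tr}(\mathsf{C})$ in the statement is essentially cosmetic: one wants the equivalent conditions to be phrased in terms of the disintegrated data $(\mathsf{c},\nu)$ that one actually carries around in the rest of the paper. Concretely, I would invoke \Cref{lem::absolute_continuity_nu} with $C \coloneqq \textnormal{Tr}(\mathsf{C})$ (which is indeed real-valued, right-continuous, \textnormal{$\P$--a.s.} non-decreasing, $\F$-predictable and starts at zero, since $\mathsf{C} \sim \langle X^c \rangle$ is $\S^d_\smallertext{+}$-valued and \textnormal{$\P$--a.s.} non-decreasing), so that $\mathsf{C} = \mathsf{c} \bcdot \textnormal{Tr}(\mathsf{C})$, \textnormal{$\P$--a.s.}, with $\mathsf{c}$ an $\S^d_\smallertext{+}$-valued $\F$-predictable process, the uniqueness \textnormal{$\P \otimes \d \textnormal{Tr}(\mathsf{C})$--a.e.} following from the Radon--Nikod\'ym argument combined with \Cref{lem::uniqueness_of_kernel} applied to the (scalar) entries. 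Then $\langle X \rangle^{(\P)} = \mathsf{C}^\P + (xx^\top)\ast\nu^\P$, \textnormal{$\P$--a.s.}, which is the content of \cite[Proposition II.2.29.b)]{jacod2003limit} and is valid precisely when $(i)$ holds; I would state this as part of the conclusion rather than as a step in the equivalence.

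The only genuinely nontrivial point — and the one I would spell out carefully rather than leave implicit — is the equivalence, \emph{under the standing semi-martingale hypothesis}, between \enquote{$|x|^2 \ast \nu$ is locally integrable} (which is the hypothesis in \cite[Proposition II.2.29]{jacod2003limit}) and \enquote{$|x|^2 \ast \nu_t < \infty$ for all $t$, \textnormal{$\P$--a.s.}} (condition $(ii)(a)$). Local integrability obviously implies \textnormal{$\P$--a.s.}\ finiteness. For the converse, $|x|^2 \ast \nu$ is a non-decreasing, right-continuous, $\F$-predictable process, and any such process that is \textnormal{$\P$--a.s.}\ finite-valued is locally integrable: one localises along the $\F$-predictable stopping times $\sigma_n \coloneqq \inf\{t \geq 0 : |x|^2\ast\nu_t \geq n\}$, using that $|x|^2\ast\nu$ has \textnormal{$\P$--a.s.}\ no positive jumps at predictable times that escape to infinity because $\nu(\{t\}\times\R^d) \leq 1$ for the good version (so $\Delta(|x|^2\ast\nu) = \widehat{|x|^2}\leq |x|^2\ast\nu$ controlled on $\llbracket 0,\sigma_n\rrbracket$ is not automatic — here one instead stops at $\sigma_n$ \emph{strictly before} the graph, i.e.\ uses $\sigma_n - $ in the sense of the pre-$\sigma_n$ localisation, or directly appeals to \cite[Lemma I.3.10 and I.3.11]{jacod2003limit}). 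This is a standard manoeuvre and I expect it to be the main — though modest — obstacle; everything else is bookkeeping and an appeal to the cited results. I would close by remarking that this is exactly the argument already used to produce the localising sequence in \Cref{lem::absolute_continuity_nu}, so in the write-up it can be compressed to a single sentence referencing that lemma.
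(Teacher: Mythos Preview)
Your approach is essentially the same as the paper's: the paper simply states that the result ``follows directly from \Cref{lem::absolute_continuity_nu} and \cite[Proposition II.2.29]{jacod2003limit}'', and you correctly identify \cite[Proposition II.2.29]{jacod2003limit} as the main ingredient, supplying the translation between ``$|x|^2\ast\nu$ locally integrable'' and ``$|x|^2\ast\nu_t < \infty$ for all $t$, $\P$--a.s.'' via predictability (the appeal to \cite[Lemma I.3.10]{jacod2003limit} is the clean way to do this; your parenthetical about jumps at predictable times is unnecessary and somewhat muddled, so drop it). Note that the factorisation $\mathsf{C} = \mathsf{c}\bcdot\textnormal{Tr}(\mathsf{C})$ in the statement is indeed just notation and plays no role in the equivalence itself, so you need not invoke \Cref{lem::absolute_continuity_nu} for it.
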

	
	The next result concerns measurability.

	\begin{proposition}\label{prop::borel_measurability_martingale_laws}
		The set $\fH^{2}_\textnormal{loc}\subseteq \fP(\Omega)$ is Borel-measurable.
	\end{proposition}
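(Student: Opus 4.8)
The plan is to express $\fH^2_\textnormal{loc}$ as the intersection of $\fP_\textnormal{sem}$ with sets defined through the measurable selection of characteristics provided by \Cref{lem::measurability_characteristics} (applied with $\tau\equiv 0$). Recall from \Cref{lem::measurability_characteristics} that $\fP_\textnormal{sem}$ is Borel, and that there is a Borel map $\fP_\textnormal{sem}\times\Omega\times[0,\infty)\ni(\P,\tilde\omega,t)\longmapsto(\mathsf b^\P_t(\tilde\omega),\mathsf a_t(\tilde\omega),\mathsf K^\P_{\tilde\omega,t})$ taking values in $\R^d\times\S^d_\smallertext{+}\times\cL$, such that for every $\P\in\widehat\Omega^C_0$ (i.e.\ every $\P\in\fP_\textnormal{sem}$ whose characteristics are absolutely continuous with respect to $C$) these are $(\F,\P)$--differential characteristics of $X$ relative to $C$. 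Here there is a subtlety: \Cref{lem::measurability_characteristics} gives differential characteristics only for $\P$ in $\widehat\Omega^C_0$, whereas $\fH^2_\textnormal{loc}$ ranges over all of $\fP_\textnormal{sem}$. So first I would replace $C$ by a ``universal'' reference process: by \cite[Proposition 2.5 and Theorem 2.6]{neufeld2014measurability} (which is the template for \Cref{lem::measurability_characteristics}) one can choose a single predictable increasing process $\sw$ — for instance $\sw_t\coloneqq\mathrm{Tr}[\mathsf C^\P]+(|x|^2\wedge 1)\ast\nu^\P+\mathrm{Var}(\mathsf B^\P)$ in the sense of the good versions there, or more simply just invoke the measurable version of the characteristics themselves $(\mathsf B^\P,\mathsf C^\P,\nu^\P)$ as functions of $\P$ directly from \cite[Theorem 2.6]{neufeld2014measurability} — so that $(\P,\tilde\omega,t)\longmapsto(\mathsf B^\P_t(\tilde\omega),\mathsf C^\P_t(\tilde\omega),\nu^\P(\tilde\omega;[0,t]\times\cdot))$ is jointly measurable on all of $\fP_\textnormal{sem}$.

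With such a jointly measurable version of $(\mathsf B^\P,\mathsf C^\P,\nu^\P)$ in hand, the next step is to translate the criterion of \Cref{prop::equivalence_locally_square_integrable} into a countable sequence of Borel conditions on $\P$. By that proposition, $\P\in\fH^2_\textnormal{loc}$ if and only if $\P\in\fP_\textnormal{sem}$ and, outside a $\P$--null set, (a) $|x|^2\ast\nu^\P_t<\infty$ for all $t\in[0,\infty)$, and (b) $\mathsf B^\P=-(x-h(x))\ast\nu^\P$. Condition (a): for each rational $q\geq 0$, the map $\P\longmapsto \E^\P\big[\,(|x|^2\wedge m)\ast\nu^\P_q\,\big]$ is Borel for each $m\in\N$ (integral of a jointly measurable non-negative functional against $\P$, using a Fubini-type argument exactly as in the proof of \Cref{lem::measurability_characteristics} / \cite[Section 5]{neufeld2014measurability}), and ``$|x|^2\ast\nu^\P_q<\infty$ $\P$--a.s.'' is equivalent to $\sup_m \E^\P\big[(|x|^2\wedge m)\ast\nu^\P_q\wedge 1\big]<1$, wait — cleaner: it is equivalent to $\P\big[|x|^2\ast\nu^\P_q=\infty\big]=0$, and $\P\longmapsto\P\big[|x|^2\ast\nu^\P_q=\infty\big]$ is Borel because $\{|x|^2\ast\nu^\P_q=\infty\}=\bigcap_M\{(|x|^2\wedge M)\ast\nu^\P_q\geq M'\}$-type representations make the event a Borel function of $(\P,\tilde\omega)$. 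Intersecting over $q\in\Q_\smallertext{+}$ and using right-continuity of $\nu^\P(\cdot;[0,t]\times\cdot)$ handles all $t$. Condition (b): the processes $\mathsf B^\P$ and $-(x-h(x))\ast\nu^\P$ are both (jointly measurable in $\P$ and) $\P$--a.s.\ of finite variation, and ``two such càdlàg processes are $\P$-indistinguishable'' can be tested on a countable dense set of times: $\P\in\{$ (b) holds $\}$ iff for every $q\in\Q_\smallertext{+}$, $\P\big[\mathsf B^\P_q=-(x-h(x))\ast\nu^\P_q\big]=1$ — provided one has first restricted to the Borel set where $(x-h(x))\ast\nu^\P$ is well-defined and finite-valued (which follows on the set where (a) holds, since $x-h(x)$ vanishes near $0$ and grows at most linearly, so $|x-h(x)|\ast\nu^\P\lesssim (|x|^2\wedge 1 + |x|^2\mathbf 1_{\{|x|>1\}})\ast\nu^\P<\infty$ there). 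Each such event is again a Borel function of $\P$ by the joint-measurability of the characteristics. Finally $\fH^2_\textnormal{loc}$ is the intersection of $\fP_\textnormal{sem}$ with these countably many Borel sets, hence Borel.

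The main obstacle I expect is purely bookkeeping: namely pinning down a genuinely $\cB(\fP_\textnormal{sem})$-measurable version of the triplet $(\mathsf B^\P,\mathsf C^\P,\nu^\P)$ valid for \emph{all} $\P\in\fP_\textnormal{sem}$ (not just those absolutely continuous relative to $C$, which is what \Cref{lem::measurability_characteristics} literally delivers), and then arguing that the pointwise-in-$\omega$ conditions (a)–(b) of \Cref{prop::equivalence_locally_square_integrable} can be reduced to countably many events each of which is, jointly in $(\P,\omega)$, Borel, so that integrating out $\omega$ under $\P$ preserves measurability. The first point is exactly \cite[Theorem 2.6]{neufeld2014measurability}; the second is the same Fubini/monotone-class argument used repeatedly in \Cref{sec::proofs_preliminaries}. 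Everything else — the structural characterisation of $\fH^2_\textnormal{loc}$ via characteristics, and the fact that $\fP_\textnormal{sem}$ is Borel — is already available in the excerpt.
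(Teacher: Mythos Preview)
Your proposal is correct and follows essentially the same approach as the paper: invoke the jointly $\cB(\fP_\textnormal{sem})\otimes\cF\otimes\cB([0,\infty))$-measurable version of the characteristics from \cite[Theorem~2.5]{neufeld2014measurability} (you correctly note this is needed rather than \Cref{lem::measurability_characteristics}, which only covers $\widehat\Omega^C_0$), encode conditions (a) and (b) of \Cref{prop::equivalence_locally_square_integrable} as a jointly Borel set $G\subseteq\fP_\textnormal{sem}\times\Omega$ tested at countably many times, and conclude via the Borel-measurability of $\P\longmapsto\E^\P[\1_G(\P,\cdot)]$. The paper does exactly this, only more tersely: it defines $G=G_1\cap G_2$ with $G_1$ encoding condition (a) and $G_2$ condition (b) on the dyadics, writes $\fH^2_\textnormal{loc}=\{\P\in\fP_\textnormal{sem}:\E^\P[\1_G(\P,\cdot)]=1\}$, and cites \cite[Lemma~3.1]{neufeld2014measurability} for the measurability of the last expression.
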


	\begin{proof}
		Let
		\[
			G_1 \coloneqq \big\{ (\P,\omega) \in \fP_\textnormal{sem} \times \Omega:|x|^2\ast\nu^\P_t(\omega) < \infty, \, t \in \D_\smallertext{+} \big\},
		\]
		and
		\[
			G_2 \coloneqq \big\{ (\P,\omega) \in \fP_\textnormal{sem} \times \Omega : \sfB^\P_t(\omega) = -(x-h(x))\ast\nu^\P_t(\omega), \, t \in \D_\smallertext{+} \big\},
		\]
		which are both Borel-measurable subsets of $\fP_\textnormal{sem}\times\Omega$ (see \cite[Theorem 2.5]{neufeld2014measurability}). By letting $G \coloneqq G_1 \cap G_2$, we find with \Cref{prop::equivalence_locally_square_integrable} that
		\[
			\fH^{2}_\textnormal{loc} = \big\{\P\in \fP_\textnormal{sem} : \E^\P[\1_G(\P,\cdot)] = 1\big\}.
		\]
		The Borel-measurability of $\fH^{2}_\textnormal{loc}$ then follows from \cite[Lemma 3.1]{neufeld2014measurability}, which concludes the proof.
	\end{proof}
	
	The next result shows that $\fH^{2}_{\textnormal{loc}}$ satisfies the invariance and stability properties specified in \Cref{ass::probabilities2}. 
	\begin{proposition}\label{prop::stab_invar_martingale_measures}
		Let $\P \in \fH^{2}_\textnormal{loc}$, and let $t \in [0,\infty)$.
		\begin{enumerate}
			\item[$(i)$]  Then $\P^{t,\omega} \in \fH^{2}_\textnormal{loc}$ for \textnormal{$\P$--a.e.} $\omega \in \Omega$. 

			\item[$(ii)$] If $\Q$ is a stochastic kernel on $(\Omega,\cF)$ given $(\Omega,\cF_t)$ with $\Q(\omega) \in \fH^{2}_\textnormal{loc}$ for \textnormal{$\P$--a.e.} $\omega \in \Omega$, then
			\[
				\overline{\P}[A] \coloneqq \iint_{\Omega\times\Omega} \big(\1_A\big)^{t,\omega}(\omega^\prime)\Q(\omega;\d\omega^\prime)\P(\d\omega), \; A \in \cF,
			\]
			belongs to $\fH^{2}_\textnormal{loc}$.
		\end{enumerate}
	\end{proposition}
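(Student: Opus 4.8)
The plan is to leverage the fact that the conditioning and concatenation properties for semi-martingale laws are already established in \Cref{prop::conditioning_characteristics2}, and that membership in $\fH^2_\textnormal{loc}$ is a property that can be read off from the characteristics by \Cref{prop::equivalence_locally_square_integrable}. Thus the strategy is: first invoke \Cref{prop::conditioning_characteristics2} to know that $\P^{t,\omega} \in \fP_\textnormal{sem}$ (resp.\ $\overline{\P} \in \fP_\textnormal{sem}$), and then verify that the characteristic conditions $(a)$ and $(b)$ of \Cref{prop::equivalence_locally_square_integrable} are preserved under conditioning (resp.\ under the kernel construction).

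For part $(i)$, I would proceed as follows. Fix $\P \in \fH^2_\textnormal{loc}$ and let $(\mathsf{B}^\P,\mathsf{C}^\P,\nu^\P)$ be its characteristics. By \Cref{prop::conditioning_characteristics2}.$(i)$ applied with $\tau \equiv t$ (and $h$ the truncation map), for $\P$--a.e.\ $\omega$ the process $X$ is an $(\F,\P^{t,\omega})$--semi-martingale with characteristics obtained by shifting: $\mathsf{B}^{\P^{t,\omega}} = (\mathsf{B}^\P)^{t,\omega}_{t+\cdot} - \mathsf{B}^\P_t(\omega)$, and similarly for the second and third characteristics. Now by \Cref{prop::equivalence_locally_square_integrable}, $\P \in \fH^2_\textnormal{loc}$ means that, outside a $\P$--null set, $|x|^2\ast\nu^\P_s < \infty$ for all $s$ and $\mathsf{B}^\P = -(x-h(x))\ast\nu^\P$. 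Both of these are pathwise identities/finiteness statements, and shifting a path $\omega \mapsto \omega\otimes_t\tilde\omega$ preserves them on the set of full $\P$--measure where they hold; more precisely, the set $G = G_1 \cap G_2$ from the proof of \Cref{prop::borel_measurability_martingale_laws} satisfies $\P[G(\P,\cdot)]=1$, and one checks that $\1_{G(\P^{t,\omega},\cdot)}(\tilde\omega)$ can be related to $\1_{G(\P,\cdot)}(\omega\otimes_t\tilde\omega)$ using the shift-compatibility of the characteristics just recalled, so that $\P^{t,\omega}[G(\P^{t,\omega},\cdot)] = \E^{\P^{t,\omega}}[\1_{G(\P,\cdot)}(\omega\otimes_t\cdot)] = \P[G(\P,\cdot)\,|\,\cF_t](\omega) = 1$ for $\P$--a.e.\ $\omega$. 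This gives $\P^{t,\omega} \in \fH^2_\textnormal{loc}$ by the converse direction of \Cref{prop::equivalence_locally_square_integrable}.

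For part $(ii)$, the argument is entirely analogous but uses \Cref{prop::conditioning_characteristics2}.$(ii)$ to first obtain $\overline{\P} \in \fP_\textnormal{sem}$, together with \Cref{rem::measures_in_fP}.$(i)$, which tells us that $\overline{\P}$ agrees with $\P$ on $\cF_t$ and that $\overline{\P}^{t,\omega} = \Q(\omega)$ for $\overline{\P}$--a.e.\ $\omega$. Then, by disintegrating, $\overline{\P}[G(\overline{\P},\cdot)] = \int_\Omega \E^{\Q(\omega)}[\1_{G(\overline{\P},\cdot)}(\omega\otimes_t\cdot)]\,\P(\d\omega)$, and using the shift-compatibility of characteristics together with $\Q(\omega) = \overline{\P}^{t,\omega} \in \fH^2_\textnormal{loc}$ (which holds $\P$--a.s.\ by hypothesis), each inner integral equals $1$. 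Hence $\overline{\P}[G(\overline{\P},\cdot)] = 1$ and \Cref{prop::equivalence_locally_square_integrable} yields $\overline{\P} \in \fH^2_\textnormal{loc}$.

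The main obstacle I anticipate is bookkeeping the precise relationship between the "good versions" of the compensators $\nu^\P$, $\nu^{\P^{t,\omega}}$, and the shifted object $(\nu^\P)^{t,\omega}_{t+\cdot}$, and making rigorous the claim that the pathwise conditions in \Cref{prop::equivalence_locally_square_integrable}.$(ii)$ transfer under concatenation without exceptional-set issues; this is where one must be careful that the $\P$--null set outside which the characteristics behave well is itself measurable in the right filtration, so that its conditional probability given $\cF_t$ is well-defined and vanishes $\P$--a.s. Once the shift-compatibility of the characteristics from \Cref{prop::conditioning_characteristics2} is in hand, the rest is essentially a measure-theoretic verification that mirrors the proof of \Cref{prop::borel_measurability_martingale_laws}.
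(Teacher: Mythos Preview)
Your proposal is correct and follows essentially the same approach as the paper: both use \Cref{prop::conditioning_characteristics2} (equivalently, the Neufeld--Nutz results) to handle semi-martingale membership and shift-compatibility of characteristics, and then invoke \Cref{prop::equivalence_locally_square_integrable} to check the two characteristic conditions for $\fH^2_\textnormal{loc}$. The paper's treatment of part $(ii)$ is slightly more explicit in that it first separates the time interval into $[0,t]$ (where $\overline{\P}=\P$ on $\cF_t$ gives the characteristics agree and the conditions hold because $\P\in\fH^2_\textnormal{loc}$) and $(t,\infty)$ (where a set $\sN$ defined via the post-$t$ increments of the characteristics is shown to be $\overline{\P}$--null by disintegration against $\Q(\omega)=\overline{\P}^{t,\omega}$); your version folds both regions into the single set $G(\overline{\P},\cdot)$, which works but requires you to remember that the $[0,t]$ portion is handled by $\P\in\fH^2_\textnormal{loc}$ and not by the hypothesis on $\Q$.
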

	
	\begin{remark}
	The same statement and proof also hold if $t$ is replaced by a finite $\F$--stopping time $\tau$.
	\end{remark}
	
	\begin{proof}
		Assertion $(i)$ follows from \Cref{prop::equivalence_locally_square_integrable} and the formula for the characteristics with respect to conditioning in \cite[Theorem 3.1]{neufeld2016nonlinear}.
		
		\medskip
		We turn to $(ii)$. By \cite[Proposition 4.1]{neufeld2016nonlinear}, we have $\overline{\P} \in \fP_\textnormal{sem}$, and we denote the characteristics relative to $\overline{\P}$ by $(\mathsf{B}^{\bar\P},\mathsf{C},\mathsf{K}^{\bar\P}(\d x)\d\mathsf{A}^{\bar\P})$.
		 Since $\P = \overline{\P}$ on $\cF_t$, it follows that
		\[
			\big(\mathsf{B}^{\bar\P}_{\cdot\land t},\mathsf{C}_{\cdot\land t},\1_{\llbracket 0,t\rrbracket}\mathsf{K}^{\bar\P}(\d x)\d\mathsf{A}^{\bar\P}\big) = \big(\mathsf{B}^\P_{\cdot\land t},\mathsf{C}_{\cdot\land t},\1_{\llbracket 0,t\rrbracket}\mathsf{K}^\P(\d x)\d\mathsf{A}^\P\big),
		\]
		holds up to a $\overline{\P}$--null set and up to a $\P$--null set. It thus suffices to show that the set $\sN$ defined through
		\[
			\sN^c = \bigcap_{r \in \D_\tinytext{+}}\bigg\{ \mathsf{B}^{\bar\P}_{t\smallertext{+}r}-\mathsf{B}^{\bar\P}_{t} = - \int_{t}^{t\smallertext{+}r} (x-h(x)) \mathsf{K}^{\bar\P}_s(\d x)\d\mathsf{A}^{\bar\P}_s, \; \int_{t}^{t\smallertext{+}r}\int_{\R^\smalltext{d}} |x|^2 \mathsf{K}^{\bar\P}_s(\d x)\d\mathsf{A}^{\bar\P}_s < \infty \bigg\},
		\]
		is a $\overline{\P}$--null set. The measurability of $\sN$ follows from \cite[Theorem 2.5]{neufeld2014measurability}. Since $\bar{\P}^{t,\omega} = \Q(\omega) \in \fH^{2}_\textnormal{loc}$ for $\overline{\P}$--a.e. $\omega \in \Omega$, and then $\P$--a.e. $\omega \in\Omega$ since $\omega \longmapsto \overline{\P}^{t,\omega}$ is $\cF_t$-measurable, it follows from the formula for the conditional characteristics under $\overline{\P}^{t,\omega}$ (see \cite[Theorem 3.1]{neufeld2016nonlinear} or \Cref{prop::conditioning_characteristics2}.$(i)$) that
		\[
			\overline{\P}[\sN] = \iint_{\Omega\times\Omega} \big(\1_\sN\big)^{t,\omega}(\omega^\prime)\Q(\omega;\d\omega^\prime)\P(\d\omega) = 0.
		\]
		This yields $\overline{\P} \in \fH^{2}_\textnormal{loc}$, which completes the proof.
	\end{proof}

	\subsection{Data and assumptions}

	\subsubsection{The family of martingale measures}

	The additional assumption we make on the family of probability measures $(\fP(s,\omega))_{(\omega,s)\in\Omega\times[0,\infty)}$ in this section is the following; it enables us to take $X$ as the It\^o-integrator in the (2)BSDEs.
	
	\begin{assumption}\label{ass::probabilities2_dom}
		$\fP(s,\omega) \subseteq \fH^{2}_\textnormal{loc}$ for all $(\omega,s) \in \Omega \times [0,\infty)$, and \textnormal{\Cref{ass::probabilities2}} holds.
	\end{assumption}
	
	\begin{example}
		For $C = (C_t)_{t \in [0,\infty)}$ given by $C_t \coloneqq t$, the family $(\fP(s,\omega))_{(\omega,s)\in\Omega\times[0,\infty)}$ given by $\fP(s,\omega) \equiv \fP_\Theta \cap \fH^{2}_\textnormal{loc}$, where $\fP_\Theta$ was defined in \textnormal{\Cref{ex::differential_characteristics}}, satisfies \textnormal{\Cref{ass::probabilities2_dom}}. This follows from \textnormal{\Cref{prop::borel_measurability_martingale_laws}}, \textnormal{\Cref{prop::stab_invar_martingale_measures}} and \textnormal{\cite[Theorem 2.1.$(i)$]{neufeld2016nonlinear}}.

		\medskip
		As a concrete example, previously inaccessible in the literature, we consider the truncation function $h(x) = x \1_{\{|x|\leq 1\}}$ and the collection of differential characteristic triplets
		\[
		\Theta \coloneqq \{(0,\sigma^2,\lambda\mathbf{\delta}_1(\d x)) : (\sigma^2,\lambda) \in \Sigma\times\Lambda\},
		\]
		for some nonempty Borel-measurable subsets $\Sigma\subseteq [0,\infty)$ and $\Lambda \subseteq [0,\infty)$. For each triplet $(0,\sigma^2,\lambda\mathbf{\delta}_1(\d x))$ in $\Theta$, the canonical process is the sum of a Wiener process with variance $\sigma^2$ $($in the sense of \textnormal{\cite[Definition I.4.9.a)]{jacod2003limit}}$)$ and a compensated Poisson process with intensity $\lambda$; see \textnormal{\cite[Theorem II.2.34]{jacod2003limit}}. Then $\Theta \subseteq \R\times[0,\infty)\times\cL$ is Borel-measurable $($see the arguments in \textnormal{\cite[Example 2.6]{neufeld2016nonlinear}}$)$ and $\fP_\Theta \subseteq \fH^{2}_\textnormal{loc}$. From a modelling perspective, however, a subtlety must be noted. Although under each $\P \in \fP_\Theta$ the process $X$ can be viewed over short time intervals as the sum of a Wiener process and a compensated Poisson process, the volatility and intensity may vary over time across the entire horizon, evolving within the sets $\Sigma$ and $\Lambda$, respectively.
\end{example}

	We write  
\begin{equation}\label{eq::semi_martingale_laws_dom}  
    \widetilde{\Omega}^C_s \coloneqq \big\{(\omega,\P) \in \Omega \times \fH^2_\textnormal{loc} :(\mathsf{B}^{\P},\mathsf{C}^{\P},\nu^{\P}) \ll (C^{s,\omega}_{s\smallertext{+}\smallertext{\cdot}}-C_{s}(\omega)), \; \text{$\P$--a.s.} \big\}, \; s \in [0,\infty).
\end{equation}  

We only sketch the proof of the following result, as it is analogous to the proof of \Cref{lem::measurability_characteristics}, with only minor differences in the measurability arguments.  
	\begin{lemma}
		Let $s \in [0,\infty)$. The subset $\widetilde\Omega^C_s \subseteq \Omega \times \fP_\textnormal{sem}$ defined in \eqref{eq::semi_martingale_laws_dom} is Borel-measurable. Moreover, there exists a Borel-measurable map
		\begin{equation*}
			\Omega \times \fP_\textnormal{sem} \times \Omega\times [0,\infty) \ni (\omega,\P,\tilde\omega,t) \longmapsto c^{s,\omega,\P}_t(\tilde\omega) \in \S^{d}_\smallertext{+},
		\end{equation*}
		such that $c^{s,\omega,\P}$ is $\F^\P_\smallertext{+}$-predictable and for any $(\omega,\P) \in \widetilde{\Omega}^C_s$, and
		\[
			\langle X \rangle^{(\P)} = \int_0^\cdot c^{s,\omega,\P}_t \d (C^{s,\omega}_{s\smallertext{+}\smallertext{\cdot}}-C_{s}(\omega))_t
		\]
		holds outside a $\P$--null set.
	\end{lemma}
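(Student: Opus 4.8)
The plan is to mirror the proof of \Cref{lem::measurability_characteristics} essentially verbatim, since $\fH^2_\textnormal{loc}$ is a Borel subset of $\fP_\textnormal{sem}$ by \Cref{prop::borel_measurability_martingale_laws}, and to extract from the already-constructed measurable second characteristic the desired factorisation $c^{s,\omega,\P}$ of $\langle X\rangle^{(\P)}$. First I would observe that $\widetilde\Omega^C_s = \widehat\Omega^C_s \cap (\Omega \times \fH^2_\textnormal{loc})$, so Borel-measurability of $\widetilde\Omega^C_s$ is immediate from \Cref{lem::measurability_characteristics} and \Cref{prop::borel_measurability_martingale_laws}. Next, recall that for $\P \in \fH^2_\textnormal{loc}$ one has, by \cite[Proposition II.2.29.b)]{jacod2003limit}, the identity $\langle X\rangle^{(\P)} = \mathsf{C}^\P + (xx^\top)\ast\nu^\P$, $\P$--a.s., and on $\widetilde\Omega^C_s$ both $\mathsf{C}^\P$ and $\nu^\P$ are absolutely continuous relative to $C^{s,\omega}_{s\smallertext{+}\smallertext{\cdot}} - C_s(\omega)$, $\P$--a.s. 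This means a natural candidate for the integrand is
\[
	c^{s,\omega,\P}_t(\tilde\omega) \coloneqq \mathsf{a}^{s,\omega}_t(\tilde\omega) + \int_{\R^\smalltext{d}} x x^\top \mathsf{K}^{s,\omega,\P}_{\tilde\omega,t}(\d x),
\]
where $\mathsf{a}^{s,\omega}$ and $\mathsf{K}^{s,\omega,\P}$ are the Borel-measurable maps furnished by \Cref{lem::measurability_characteristics}.

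The key steps in order are then: $(a)$ verify that the map $(\omega,\P,\tilde\omega,t) \longmapsto \int_{\R^\smalltext{d}} x x^\top \mathsf{K}^{s,\omega,\P}_{\tilde\omega,t}(\d x)$ is Borel-measurable and $\S^d_\smallertext{+}$-valued on $\widetilde\Omega^C_s \times \Omega \times [0,\infty)$ (finiteness being exactly the content of \Cref{prop::equivalence_locally_square_integrable}.$(ii)(a)$, which holds $\P$--a.s. for $\P \in \widetilde\Omega^C_s$; off the good set one can truncate, e.g. replace the integral by $\int_{\R^\smalltext{d}} (xx^\top \land n) \mathsf{K}(\d x)$ and pass to a limit, or simply set it to $0$, since measurability is all that is required there). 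For the measurability one uses that $\kappa \longmapsto \int (|x|^2\land 1) f(x)\kappa(\d x)$ is measurable for bounded Borel $f$ by the defining property of $\cB(\cL)$ recalled before \Cref{lem::measurability_characteristics}, together with a monotone-class/truncation argument to pass from $(|x|^2\land 1)f(x)$ to $x_i x_j$; $(b)$ define $c^{s,\omega,\P}$ by the displayed formula and note it inherits $\F^\P_\smallertext{+}$-predictability from \Cref{lem::measurability_characteristics}.$(i)$ and $(iii)$ (the integral against a predictable kernel of a fixed Borel function is predictable); $(c)$ for fixed $(\omega,\P) \in \widetilde\Omega^C_s$ combine \Cref{lem::measurability_characteristics}.$(iv)$ (which gives $\mathsf{C}^\P = \mathsf{a}^{s,\omega}\bcdot(C^{s,\omega}_{s\smallertext{+}\smallertext{\cdot}} - C_s(\omega))$ and $\nu^\P(\d t,\d x) = \mathsf{K}^{s,\omega,\P}_t(\d x)\d(C^{s,\omega}_{s\smallertext{+}\smallertext{\cdot}} - C_s(\omega))_t$, both $\P$--a.s.) with the identity $\langle X\rangle^{(\P)} = \mathsf{C}^\P + (xx^\top)\ast\nu^\P$ from \cite[Proposition II.2.29.b)]{jacod2003limit} and a Fubini-type interchange to conclude $\langle X\rangle^{(\P)} = c^{s,\omega,\P}\bcdot(C^{s,\omega}_{s\smallertext{+}\smallertext{\cdot}} - C_s(\omega))$ outside a $\P$--null set.

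The main obstacle, and the only place any genuine care is needed, is step $(a)$: showing that the kernel-integral $(\omega,\P,\tilde\omega,t) \longmapsto \int x_i x_j\, \mathsf{K}^{s,\omega,\P}_{\tilde\omega,t}(\d x)$ is jointly Borel-measurable despite $x \longmapsto x_i x_j$ being unbounded. The clean way is to write $x_i x_j = \lim_{n} x_i x_j \1_{\{|x|\le n\}}$ and, for each $n$, express $x_i x_j \1_{\{|x|\le n\}} = (|x|^2\land 1) g_n(x)$ with $g_n$ bounded and Borel (using $0/0 \coloneqq 0$ near the origin, which is harmless since $\mathsf{K}^{s,\omega,\P}_{\tilde\omega,t} \in \cL$ puts no mass at $0$), so each truncated integral is Borel-measurable by the characterisation of $\cB(\cL)$; a pointwise monotone limit of Borel functions is Borel, and on $\widetilde\Omega^C_s$ the limit is finite by \Cref{prop::equivalence_locally_square_integrable}. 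Everything else is a routine transcription of the proof of \Cref{lem::measurability_characteristics}, which is why only a sketch is warranted; I would simply remark that the argument is identical to that of \Cref{lem::measurability_characteristics} with the additional ingredient $\langle X\rangle^{(\P)} = \mathsf{C}^\P + (xx^\top)\ast\nu^\P$ and the measurable truncation just described, and refer to \Cref{sec::proofs_preliminaries} for the full details in the spirit of the surrounding results.
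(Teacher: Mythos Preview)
Your proposal is correct and complete; the argument works. It does, however, take a slightly different route from the paper. The paper constructs $c^{s,\omega,\P}$ by directly differentiating the \emph{integrated} process $\mathsf{C} + (xx^\top)\ast\nu^\P$ (which equals $\langle X\rangle^{(\P)}$ for $\P\in\fH^2_\textnormal{loc}$) via the limsup ratio
\[
\tilde c^{s,\omega,\P}_t \coloneqq \limsup_{n\to\infty}\frac{(\mathsf{C}_t + (xx^\top)\ast\nu^\P_t)-(\mathsf{C}_{(t-1/n)\lor 0} + (xx^\top)\ast\nu^\P_{(t-1/n)\lor 0})}{C^{s,\omega}_{s+t}-C^{s,\omega}_{s+(t-1/n)\lor 0}},
\]
then sets $c^{s,\omega,\P}_t \coloneqq \tilde c^{s,\omega,\P}_t\1_{\{\tilde c^{s,\omega,\P}_t\in\S^d_\smallertext{+}\}}$. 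You instead reuse the differential characteristics $\mathsf{a}^{s,\omega}$ and $\mathsf{K}^{s,\omega,\P}$ from \Cref{lem::measurability_characteristics} and take $c^{s,\omega,\P}=\mathsf{a}^{s,\omega}+\int xx^\top\,\mathsf{K}^{s,\omega,\P}(\d x)$. The paper's route leans on the joint Borel-measurability of $(\P,\tilde\omega,t)\longmapsto (xx^\top)\ast\nu^\P_t(\tilde\omega)$ already established in \cite[Theorem 2.5]{neufeld2014measurability}, sidestepping your truncation argument for the unbounded kernel integral; your route is more explicit about how $c$ decomposes into the second and third differential characteristics and recycles more of \Cref{lem::measurability_characteristics}. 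Both yield $\F^\P_\smallertext{+}$-predictability and Borel-measurability, and the verification of the integral identity is the same in either case.
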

	
	\begin{proof}
		We chose the third characteristic to be of the form $\nu^\P(\d t, \d x) \coloneqq \mathsf{K}^\P_t(\d x)\d\mathsf{A}^\P_t, \; \P\in \fP_\textnormal{sem},$ where $\fP_\textnormal{sem} \times \Omega \times [0,\infty) \ni (\P,\omega,t)\longmapsto \mathsf{A}^\P_t(\omega) \in \R$ is Borel-measurable, $\mathsf{A}^\P$ is right-continuous, non-decreasing, and $\F^\P_\smallertext{+}$-predictable, and $(\P,\omega,t) \longmapsto \mathsf{K}^\P_{\omega,t}(\d x)$ is a kernel on $(\R^d,\cB(\R^d))$ given $(\fP_\textnormal{sem}\times\Omega\times\times[0,\infty),\cB(\fP_\textnormal{sem})\otimes\cF\otimes\cB([0,\infty)))$, and for every $\P\in\fP_\textnormal{sem}$, $(\omega,t)\longmapsto \mathsf{K}^\P_{\omega,t}(\d x)$ is a kernel on $(\R^d,\cB(\R^d))$ given $(\Omega\times[0,\infty),\cP^\P)$ (see the proof of \cite[Theorem 6.4]{neufeld2014measurability}).
		Then $\langle X \rangle^{(\P)} = \mathsf{C} + (x^\top x) \ast \nu^\P$ component-wise, $\textnormal{$\P$--a.s.}$, whenever $\P\in\fH^2_\textnormal{loc}$ (see \cite[Proposition II.2.29.b)]{jacod2003limit}). For $(\omega,\P) \in \Omega\times\fP_\textnormal{sem}$, we let
		\[
			c^{s,\omega,\P}_t \coloneqq \tilde{c}^{s,\omega,\P}_t \1_{\{\tilde{c}^{\smalltext{s}\smalltext{,}\smalltext{\omega}\smalltext{,}\smalltext{\P}}_t \in \S^\smalltext{d}_\smalltext{+}\}}, \; \textnormal{where} \; \tilde{c}^{s,\omega,\P}_t \coloneqq \limsup_{n \rightarrow \infty} \frac{(\mathsf{C}_t+ (x x^\top) \ast \nu^\P_t) - (\mathsf{C}_{(t\smallertext{-}1/n)\lor 0}+ (x x^\top) \ast \nu^\P_{(t\smallertext{-}1/n)\lor 0})}{C^{s,\omega}_t - C^{s,\omega}_{(t\smallertext{-}1/n)\lor 0}}, t \in [0,\infty),
		\]
		where the limit superior is taken component-wise, and where we use our usual conventions $0/0 = 0$ and $\infty - \infty = -\infty$. Then $(\omega,\P,\tilde\omega,t) \longmapsto c^{s,\omega,\P}_t$ is Borel-measurable on $\Omega\times\fP_\textnormal{sem}\times\Omega\times[0,\infty)$ and each $c^{s,\omega,\P}$ is $\F^\P_\smallertext{+}$-predictable. This concludes the proof.
	\end{proof}

	\subsubsection{The generator}

	We abuse notations slightly here, and still denoted by $f$ the generator in this section, which is now a function that does not depend on the integrand of the jump measure, and thus of the form
	\begin{equation}\label{eq::generator_dom}
		f : \Omega \times [0,\infty) \times \R \times \R \times \R^d \times \R^d \times \S^d_\smallertext{+} \times \S^{d}_\smallertext{+} \times \cL \longrightarrow \R.
	\end{equation}
	We then define as before
	\[
		f^{s,\omega,\P}_t(\tilde\omega,y,\mathrm{y},z) \coloneqq f\big(\omega\otimes_s\tilde\omega,s+t,y,\mathrm{y},z,\mathsf{b}^{s,\omega,\P}_t(\tilde\omega),\mathsf{a}^{s,\omega}_t(\tilde\omega),c^{s,\omega,\P}_t(\tilde\omega),\mathsf{K}^{s,\omega,\P}_{\tilde\omega,t}\big),
	\]
	for any $s \in [0,\infty)$ and $(\omega,\P) \in \widetilde{\Omega}^C_s$. Recall that $(\mathsf{b}^{s,\omega,\P},\mathsf{a}^{s,\omega},\mathsf{K}^{s,\omega,\P})$ denote the (measurable) $\P$--differential characteristics of $X$ relative to $C^{s,\omega}_{s\smallertext{+}\smallertext{\cdot}}-C_s(\omega)$; see \Cref{lem::measurability_characteristics}.
	
	\begin{assumption}\label{ass::generator_without_random_measure}
		The triplet $(T, \xi, f)$, where $T$ is an $\F$--stopping time, $\xi$ is an $\cF_T$-measurable real-valued random variable, and $f$ has the form \textnormal{\eqref{eq::generator_dom}}, satisfies the following conditions

\medskip
$(i)$ for every $(y,\mathrm{y},z) \in \R \times \R \times \R^d$, the function
				\[
					\Omega \times [0,\infty) \times \R^d \times \S^d_\smallertext{+} \times \S^d_\smallertext{+} \times \cL \ni (\omega,t,b,a,c,\kappa) \longmapsto f(\omega,t,y,\mathrm{y},z,b,a,c,\kappa) \in \R,
				\]
				is $\textnormal{Prog}(\F)\otimes\cB(\R^d)\otimes\cB(\S^d_\smallertext{+})\otimes\cB(\S^d_\smallertext{+})\otimes\cB(\cL)$-measurable$;$
				
				\medskip
$(ii)$ there exist $\F$-predictable, $[0,\infty)$-valued processes $(r,\mathrm{r},\theta^X) = (r_t,\mathrm{r}_t,\theta^X_t)_{t \in [0,\infty)}$ with
			\begin{align*}
				&\big|f\big(t,\omega,y,\mathrm{y},z,b,a,c,\kappa\big) - f\big(t,\omega,y^\prime,\mathrm{y}^\prime,z^\prime,b,a,c,\kappa\big)\big|^2 \\
				& \leq r_t(\omega) |y-y^\prime|^2 + \mathrm{r}_t(\omega) |\mathrm{y}-\mathrm{y}^\prime|^2 + \theta^X_t(\omega) |(z-z^\prime)^\top c\,(z-z^\prime)|^2,
			\end{align*}
			for any $(\omega,t) \in \Omega \times [0,\infty)$, and $(y,y^\prime,\mathrm{y},\mathrm{y}^\prime,z,z^\prime,b,a,c,\kappa) \in \R^2 \times \R^2 \times (\R^d)^2 \times \R^d \times \S^d_\smallertext{+} \times \S^d_\smallertext{+} \times \cL;$
			
			\medskip
			$(iii)$ \textnormal{\Cref{ass::generator2}.$(iii)$--$(iv)$} hold with $\alpha^2 \coloneqq\max\{\sqrt{r},\sqrt{\mathrm{r}},\theta^X\}$. 
	\end{assumption}
	
	\begin{remark}
		First of all, we note that the $\F$-progressive $\sigma$-algebra and $\F$-optional $\sigma$-algebra on $\Omega \times [0,\infty)$ are identical$;$ see \textnormal{\cite[Theorem 97.(a)]{dellacherie1978probabilities}}.
		Secondly, it is important to note the following measurability properties to define the corresponding \textnormal{BSDEs} and the value function in this section: for fixed $s \in [0,\infty)$ and $(y,\mathrm{y},z) \in \R \times \R \times \R^d$, the map
		\[
			\Omega \times \Omega \times [0,\infty) \times \R^d \times \S^d_\smallertext{+} \times \S^d_\smallertext{+} \times \cL \ni (\bar\omega,\omega,t,b,a,c,\kappa) \longmapsto f(\bar\omega\otimes_s\omega,s+t,y,\mathrm{y},z,b,a,c,\kappa) \in \R
		\]
		is Borel-measurable by \textnormal{\Cref{ass::generator_without_random_measure}.$(i)$} and an application of \textnormal{\cite[Theorem IV.96.(d)]{dellacherie1978probabilities}}. In particular, for fixed $\bar\omega \in \Omega$,
		\begin{equation}\label{eq::borel_measurability_without_random_measure}
			\Omega \times [0,\infty) \times \R^d \times \S^d_\smallertext{+} \times \S^d_\smallertext{+} \times \cL \ni (\omega, t, b,c,a,\kappa) \longmapsto f(\bar\omega\otimes_s\omega,s+t,y,\mathrm{y},z,b,a,c,\kappa) \in \R,
		\end{equation}
		is Borel-measurable. Then
		 \textnormal{\Cref{ass::generator_without_random_measure}.$(i)$} together with \textnormal{\cite[Theorem IV.97.(b)]{dellacherie1978probabilities}} implies that
		\begin{align*}
			f\big(\bar\omega\otimes_s\omega,s+t,y,\mathrm{y},z,b,a,c,\kappa\big) 
			&= f\big((\bar\omega\otimes_s\omega)_{\cdot\land(s+t)},s+t,y,\mathrm{y},z,b,c,\kappa\big) = f\big(\bar\omega\otimes_s\omega_{\cdot\land t},s+t,y,\mathrm{y},z,b,c,\kappa\big).
		\end{align*}
	Therefore, the map \textnormal{\eqref{eq::borel_measurability_without_random_measure}} is $\textnormal{Prog}(\F)\otimes\cB(\R^d)\otimes\cB(\S^d_\smallertext{+})\otimes\cB(\cL)$-measurable $($see \textnormal{\cite[Theorem IV.97.(a)]{dellacherie1978probabilities}}$)$. The additional product-measurability in $(y, \mathrm{y}, z)$ follows from the continuity in those variables$;$ see \textnormal{\cite[Lemma 4.51]{aliprantis2006infinite}}.
	\end{remark}
	
	\subsection{Value function: measurability, regularisation and decomposition}

	For $(\omega,s) \in \Omega \times [0,\infty)$, and then $\P \in \fP(s,\omega)$, we denote by $\cY^{s,\omega,\P}((T- s \land T)^{s,\omega},\xi^{s,\omega})$ the first component of the solution $(\cY,\cZ,\cU,\cN)$ to the BSDE
	
	\begin{align}\label{eq::P_BSDE2_dom}
	\nonumber	\cY_t &= \xi^{s,\omega} + \int_t^{(T\smallertext{-}s\land T)^{\smalltext{s}\smalltext{,}\smalltext{\omega}}} f^{s,\omega,\P}_r(\cY_r,\cY_{r\smallertext{-}},\cZ_r)\d (C^{s,\omega}_{s\smallertext{+}})_r  - \bigg(\int_t^{(T\smallertext{-}s\land T)^{\smalltext{s}\smalltext{,}\smalltext{\omega}}} \cZ_r \d X_r\bigg)^{(\P)}
		- \int_t^{(T\smallertext{-}\tau\land T)^{\smalltext{\tau}\smalltext{,}\smalltext{\omega}}}\d \cN_r, \; t \in [0,\infty], \; \textnormal{$\P$--a.s.},
	\end{align}
	relative to $(\F_{+},\P)$, the well-posedness of which is guaranteed by the results in \cite[Section 3.2]{possamai2024reflections}, which also cover BSDEs driven by martingales with jumps.

	\begin{theorem}
		Suppose that \textnormal{\Cref{ass::probabilities2_dom}} and \textnormal{\ref{ass::generator_without_random_measure}} hold. For every $s \in [0,\infty)$, the function $\widehat{\cY}_s(T,\xi) : \Omega \longrightarrow [-\infty,\infty]$ defined by
		\[
			\widehat{\cY}_s(T,\xi)(\omega) \coloneqq \sup_{\P \in \fP(s,\omega)}\E^\P\big[\cY^{s,\omega,\P}_0((T-s\land T)^{s,\omega},\xi^{s,\omega})\big]
		\]
		is upper semi-analytic and $\cF_s$--universally measurable. Moreover, $\widehat{\cY}_s(T,\xi)(\omega) = \widehat{\cY}_{s \land T(\omega)}(T,\xi)(\omega)$ for all $\omega \in \Omega$ and
		\begin{equation}\label{eq::dynamic_programming_principle2_dom}
			\widehat\cY_s(T,\xi) = \underset{\bar{\P} \in \fP_\smalltext{0}(\cF_\smallertext{s},\P)}{{\esssup}^\P} \E^{\bar{\P}} \big[ \cY^{\bar{\P}}_s(T,\xi)\big| \cF_s\big], \; \textnormal{$\P$--a.s.}, \; \P \in \fP_0,
		\end{equation}
		where $\fP_0(\cF_s,\P) =\big\{\overline{\P} \in \fP_0: \overline{\P} = \P \; \textnormal{on} \; \cF_s\big\}$.
	\end{theorem}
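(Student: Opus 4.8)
The statement is the direct analogue, for the ``no random measures'' setting of \Cref{sec_2bsde_X}, of \Cref{thm::measurability2}. The plan is therefore to run the same programme that proves \Cref{thm::measurability2}, but with the BSDE \eqref{eq::P_BSDE2_dom} (driven by $X$ itself, under the constraint that $X$ is an $(\F,\P)$--locally square-integrable martingale) in place of \eqref{eq::P_BSDE2}, and with the measurability inputs supplied by \Cref{prop::borel_measurability_martingale_laws}, \Cref{prop::stab_invar_martingale_measures}, and the preceding lemma providing the Borel-measurable version of $c^{s,\omega,\P}$. Throughout, one relies on \Cref{ass::probabilities2_dom} (which bundles \Cref{ass::probabilities2} with $\fP(s,\omega)\subseteq\fH^2_\textnormal{loc}$) and on \Cref{ass::generator_without_random_measure}, whose parts $(iii)$ in particular reinstate the quantitative conditions \Cref{ass::generator2}.$(iii)$--$(iv)$ needed for well-posedness and for the integrability bounds.

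\textbf{Step 1: pathwise well-posedness and a conditioning lemma.} First I would record that, for each $(\omega,s)\in\Omega\times[0,\infty)$ and each $\P\in\fP(s,\omega)$, the BSDE \eqref{eq::P_BSDE2_dom} relative to $(\F_\smallertext{+},\P)$ is well-posed by \cite[Section 3.2]{possamai2024reflections} (which covers BSDEs driven by a locally square-integrable martingale with jumps); this uses $\widetilde M^\Phi_1(\hat\beta)<1$ from \Cref{ass::generator2}.$(v)$ and the integrability supplied through \Cref{ass::generator_without_random_measure}.$(iii)$. Next, the analogue of \Cref{lem::conditioning_bsde2} must be established: if $(\cY,\cZ,\cN)$ solves \eqref{eq::P_BSDE2_dom} under $(\F_\smallertext{+},\P)$ for $(T,\xi,f)$, then for $\P$--a.e.\ $\omega$ the shifted data $(\cY^{s,\omega}_{s\smallertext{+}\smallertext{\cdot}},\cZ^{s,\omega}_{s\smallertext{+}\smallertext{\cdot}},\cN^{s,\omega}_{s\smallertext{+}\smallertext{\cdot}})$ solves the conditioned BSDE under $(\F_\smallertext{+},\P^{s,\omega})$. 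The continuous-martingale-part input \Cref{cor::shif_quadratic_variation_continuous_martingale_part} is here replaced by \Cref{prop::conditioning_characteristics2} together with $\langle X\rangle^{(\P)}=\mathsf C^\P+(xx^\top)\ast\nu^\P$ and \Cref{prop::stab_invar_martingale_measures}.$(i)$ (so that $\P^{s,\omega}\in\fH^2_\textnormal{loc}$, hence $X$ remains a locally square-integrable martingale after conditioning); the stochastic-integral conditioning uses \Cref{lem::conditioning_martingale2}. This is the one place where the ``no random measures'' structure genuinely simplifies the argument, since only the $\cZ\bcdot X$ term must be tracked.

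\textbf{Step 2: measurability in $(\omega,\P)$.} I would then show that $(\omega,\P)\longmapsto\E^\P[\cY^{s,\omega,\P}_0((T-s\land T)^{s,\omega},\xi^{s,\omega})]$ is (upper semi-)analytically/Borel measurable on $\widetilde\Omega^C_s$. As in the proof of \Cref{thm::measurability2}, this proceeds by revisiting the fixed-point (Picard) construction of the BSDE solution in \cite{possamai2024reflections}: one shows each Picard iterate depends measurably on $(\omega,\P)$ and passes the property to the limit. The inputs are: Borel-measurability of $\widetilde\Omega^C_s$ and of the maps $(\omega,\P,\tilde\omega,t)\mapsto(\mathsf b^{s,\omega,\P}_t,\mathsf a^{s,\omega}_t,c^{s,\omega,\P}_t,\mathsf K^{s,\omega,\P}_{\tilde\omega,t})$ (from the lemma preceding \Cref{ass::generator_without_random_measure} and from \Cref{lem::measurability_characteristics}); the joint measurability of $f$ in all its arguments, which is exactly \Cref{ass::generator_without_random_measure}.$(i)$--$(ii)$ combined with the Galmarino/progressive-measurability remarks that follow it; and the new measurability results on integrands of stochastic integrals developed for \Cref{thm::measurability2} (here needed only for the $X$-integral). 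Since $\fP(s,\omega)$ has analytic graph by \Cref{ass::probabilities2}.$(i)$ and is contained in the Borel set $\fH^2_\textnormal{loc}$, taking the supremum over $\P\in\fP(s,\omega)$ yields an upper semi-analytic, hence $\cF_s$--universally measurable, function, via the measurable projection theorem exactly as in \Cref{thm::measurability2}. The normalisation $\widehat{\cY}_s(T,\xi)(\omega)=\widehat{\cY}_{s\land T(\omega)}(T,\xi)(\omega)$ is immediate from $(T-s\land T)^{s,\omega}=0$ when $s\ge T(\omega)$, making the BSDE trivial with $\cY_0=\xi^{s,\omega}=\xi^{s\land T(\omega),\omega}$, plus \eqref{eq::adaptedness_probabilities}.

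\textbf{Step 3: the dynamic programming principle \eqref{eq::dynamic_programming_principle2_dom}.} Finally I would prove the DPP. The ``$\geq$'' inequality: for $\bar\P\in\fP_0(\cF_s,\P)$, the conditioning lemma of Step 1 gives $\E^{\bar\P}[\cY^{\bar\P}_s(T,\xi)\,|\,\cF_s](\omega)=\E^{\bar\P^{s,\omega}}[\cY^{s,\omega,\bar\P^{s,\omega}}_0]$ for $\bar\P$--a.e.\ $\omega$, and $\bar\P^{s,\omega}\in\fP(s,\omega)$ for $\P$--a.e.\ $\omega$ by \Cref{ass::probabilities2}.$(ii)$ (with $\bar\omega$ the constant zero path, $s=s$, $t=s$); hence this quantity is $\le\widehat{\cY}_s(T,\xi)(\omega)$, $\P$--a.s. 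The ``$\leq$'' inequality is the harder direction: one uses a Jankov--von Neumann measurable selection to pick, for an arbitrary $\varepsilon>0$, a universally measurable map $\omega\mapsto\P_\omega\in\fP(s,\omega)$ with $\E^{\P_\omega}[\cY^{s,\omega,\P_\omega}_0]\ge\widehat{\cY}_s(T,\xi)(\omega)-\varepsilon$, then glue these kernels to $\P$ over $\cF_s$ using the pasting stability \Cref{ass::probabilities2}.$(iii)$ to obtain $\bar\P\in\fP_0(\cF_s,\P)$; \Cref{rem::measures_in_fP}.$(i)$ identifies $\bar\P^{s,\omega}=\P_\omega$ $\P$--a.e., and the conditioning lemma then gives $\E^{\bar\P}[\cY^{\bar\P}_s(T,\xi)\,|\,\cF_s]\ge\widehat{\cY}_s(T,\xi)-\varepsilon$, $\P$--a.s. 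Letting $\varepsilon\downarrow0$ along a sequence completes the proof.

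\textbf{Main obstacle.} The delicate point is not the structure of the DPP argument---which is standard once the building blocks are in place---but ensuring that the measurability machinery of \Cref{thm::measurability2} really does carry over, i.e.\ that the fixed-point construction of the BSDE solution in \cite{possamai2024reflections} can be made jointly measurable in $(\omega,\P)$ when the driving martingale is $X$ itself (a martingale that jumps) rather than $X^{c,\P}$. The subtlety is that the spaces $\H^2(X;\F^\P_\smallertext{+},\P)$ and the factorisation $\langle X\rangle=\pi\bcdot C$ now depend on $\P$ through $\mathsf c^{s,\omega,\P}$ \emph{and} through $(xx^\top)\ast\nu^\P$, so one must verify that the measurable version $c^{s,\omega,\P}$ constructed in the preceding lemma is compatible with the measurable selection of integrands used in each Picard step, and that the selection theorems apply on the Borel set $\fH^2_\textnormal{loc}$. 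This is exactly the kind of ``completely new, but related, measurability issue'' flagged in the introduction, and it is where the bulk of the verification work lies; the rest of the proof is a transcription of the argument for \Cref{thm::measurability2}.
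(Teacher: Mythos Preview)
Your proposal is correct and follows essentially the same approach as the paper: the paper states that the proof is analogous to that of \Cref{thm::measurability2}, with the sole substantive change being that \Cref{lem::measurable_decomposition} is replaced by the lemma immediately following the theorem, which measurably selects the $X$-integrand $\cZ^{\omega,\P}$ via the Moore--Penrose pseudo-inverse formula $\cZ^{\omega,\P}=(a^{X,\P})^\oplus a^{\cM,X,\omega,\P}$. Your identification of this measurable selection as the crux is spot-on, and your Steps~1--3 mirror exactly the structure of \Cref{prop::Borel-measurability_value_function}, \Cref{lem::conditioning_bsde2}, and the proof of \Cref{thm::measurability2}.
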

	
	The proof of the previous result is analogous to that of \Cref{thm::measurability2}, with the main ingredient being the fact that we can measurably select the $X$-integrand in the BSDE. More precisely, one invokes the following lemma, which replaces \Cref{lem::measurable_decomposition}.
	
	\begin{lemma}
		Let $\Omega \times \fH^{2}_\textnormal{loc} \times \Omega \times [0,\infty) \ni (\omega,\P,\tilde\omega,t) \longmapsto \cM^{\omega,\P}(\tilde\omega)$ be Borel-measurable. Suppose that for every $(\omega,\P) \in \Omega\times\fH^{2}_\textnormal{loc}$, the process $\cM^{\omega,\P}$ is a right-continuous, $(\F_\smallertext{+},\P)$--square-integrable martingale. There exists a Borel-measurable map
		\begin{equation}\label{eq::measurability_Z}
			\Omega\times\fH^{2}_\textnormal{loc} \times \Omega \times [0,\infty) \ni (\omega,\P,\tilde\omega,t) \longmapsto \cZ^{\omega,\P}_t(\tilde\omega) \in \R^d,
		\end{equation}
		such that $\cZ^{\omega,\P} \in \H^2(X;\F,\P)$ and
		\[
			\cN^{\omega,\P} \coloneqq \cM^{\omega,\P} - \cM^{\omega,\P}_0 - \big(\cZ^{\omega,\P}\bcdot X\big)^{(\P)},
		\]
		belongs to $\cH^{2,\perp}_0(X;\F_\smallertext{+},\P)$.
	\end{lemma}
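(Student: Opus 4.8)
The statement is a measurable-selection version of the Galtchouk–Kunita–Watanabe (GKW) decomposition: for each $(\omega,\P)$ the martingale $\cM^{\omega,\P}$ decomposes, relative to $(\F_\smallertext{+},\P)$, as $\cM^{\omega,\P}_0 + (\cZ^{\omega,\P}\bcdot X)^{(\P)} + \cN^{\omega,\P}$ with $\cN^{\omega,\P}$ orthogonal to $X$; existence and uniqueness of this decomposition for fixed $(\omega,\P)$ is classical (it is the special case of the Proposition in \Cref{sec::stochastic_integrals} applied to $X$ itself, with no random-measure part, or equivalently \cite[Lemma III.4.24]{jacod2003limit}). The entire content of the lemma is therefore to show that one representative of the integrand $\cZ^{\omega,\P}$ can be chosen to depend \emph{Borel-measurably} on $(\omega,\P,\tilde\omega,t)$. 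The strategy mirrors the proof of \Cref{lem::measurable_decomposition} (which handles the analogous statement with the driving martingale being $X^{c,\P}$ and $\mu^X$): build $\cZ^{\omega,\P}$ explicitly out of predictable covariations that are already known to be jointly measurable in $(\omega,\P)$, rather than invoking an abstract selection theorem.

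\textbf{Key steps.} First I would fix the measurable version of $\langle X\rangle^{(\P)}$: by \Cref{prop::stab_invar_martingale_measures} and the measurable choice of characteristics, one has on $\widetilde\Omega^C_s$ (and more generally on $\fH^2_\textnormal{loc}$) that $\langle X\rangle^{(\P)} = \mathsf{C}^\P + (xx^\top)\ast\nu^\P$, which by the cited measurability results for characteristics is Borel in $(\omega,\P,\tilde\omega,t)$; write $\langle X\rangle^{(\P)} = \mathsf{a}^{X,\P}\bcdot \textnormal{Tr}(\langle X\rangle^{(\P)})$ with $\mathsf{a}^{X,\P}$ an $\S^d_\smallertext{+}$-valued $\F^\P_\smallertext{+}$-predictable factor, chosen via a Radon–Nikod\'ym $\limsup$-construction exactly as in the proof of the preceding lemma in the excerpt, so that $(\omega,\P,\tilde\omega,t)\longmapsto \mathsf{a}^{X,\P}_t(\tilde\omega)$ is Borel. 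Second, since $\cM^{\omega,\P}$ is right-continuous and $(\F_\smallertext{+},\P)$--square-integrable, the covariation $\langle \cM^{\omega,\P}, X\rangle^{(\P)}$ is well-defined; using the polarisation identity and the measurable construction of predictable quadratic variations (the $\limsup$-of-increments recipe applied pathwise, which is automatically Borel in all parameters because $(\omega,\P,\tilde\omega,t)\longmapsto\cM^{\omega,\P}_t(\tilde\omega)$ is Borel and the quadratic-variation limit is taken over a countable dyadic grid) one obtains a Borel map $(\omega,\P,\tilde\omega,t)\longmapsto \langle \cM^{\omega,\P},X\rangle^{(\P)}_t(\tilde\omega)\in\R^d$, and likewise its density $\mathsf{g}^{\omega,\P}$ with respect to $\textnormal{Tr}(\langle X\rangle^{(\P)})$. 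Third, define
\[
	\cZ^{\omega,\P}_t(\tilde\omega) \coloneqq \big(\mathsf{a}^{X,\P}_t(\tilde\omega)\big)^\oplus \mathsf{g}^{\omega,\P}_t(\tilde\omega),
\]
using the Moore–Penrose pseudoinverse (measurable in its argument, so the composition is Borel). One then checks, for fixed $(\omega,\P)$, that this $\cZ^{\omega,\P}$ is the GKW integrand: it is $\F^\P_\smallertext{+}$-predictable, it lies in $\H^2(X;\F,\P)$ because $\|\cZ^{\omega,\P}\|^2_{\H^2}=\E^\P[\int_0^\infty (\cZ^{\omega,\P}_r)^\top\mathsf{a}^{X,\P}_r\cZ^{\omega,\P}_r\,\mathrm{d}\textnormal{Tr}(\langle X\rangle^{(\P)})_r]\le \E^\P[\langle\cM^{\omega,\P}\rangle_\infty]<\infty$ (here the pseudoinverse identity $\mathsf{g}^\top\mathsf{a}^\oplus\mathsf{g}\le$ the density of $\langle\cM^{\omega,\P}\rangle$ with respect to the same reference is the usual GKW bound), and by construction $\langle \cM^{\omega,\P}-(\cZ^{\omega,\P}\bcdot X)^{(\P)}, X\rangle^{(\P)}=0$. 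Finally, invoke \Cref{prop::good_version_stochastic_integral} to pick the $\F$-adapted right-continuous representative of $(\cZ^{\omega,\P}\bcdot X)^{(\P)}$, so that $\cN^{\omega,\P}\coloneqq\cM^{\omega,\P}-\cM^{\omega,\P}_0-(\cZ^{\omega,\P}\bcdot X)^{(\P)}$ is right-continuous, $(\F_\smallertext{+},\P)$--square-integrable (each summand is), starts at zero, and is orthogonal to $X$; hence $\cN^{\omega,\P}\in\cH^{2,\perp}_0(X;\F_\smallertext{+},\P)$. The Borel-measurability of $(\omega,\P,\tilde\omega,t)\longmapsto\cZ^{\omega,\P}_t(\tilde\omega)$ is immediate from the formula, being a composition of Borel maps.

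\textbf{Main obstacle.} The one genuinely delicate point is ensuring that the \emph{pathwise} $\limsup$-recipes for $\langle\cM^{\omega,\P},X\rangle^{(\P)}$ and for the Radon–Nikod\'ym densities actually produce, for \emph{each fixed} $(\omega,\P)$, the correct (probabilistically defined) predictable object $\P$--a.s., while simultaneously being Borel in the joint variable $(\omega,\P,\tilde\omega,t)$ — this is exactly the subtlety that was handled carefully for $X^{c,\P}$ and $\mu^X$ in \Cref{lem::measurable_decomposition} and in the measurable-characteristics machinery of \cite{neufeld2014measurability}, and here it transfers almost verbatim because $X$ itself is a locally square-integrable martingale under each $\P\in\fH^2_\textnormal{loc}$, so $\langle X\rangle^{(\P)}$ and all the required covariations are governed by the already-measurable characteristics $(\mathsf{B}^\P,\mathsf{C}^\P,\nu^\P)$. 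There is no new idea needed beyond adapting those arguments; the only bookkeeping cost is that $\cM^{\omega,\P}$ is an abstract input rather than a functional of the characteristics, but its Borel-dependence on $(\omega,\P)$ is assumed, so the covariation with $X$ — being a limit along a countable grid of products of Borel functions — inherits that dependence. Consequently I would present the proof as a short reduction to the construction in the preceding lemma of the excerpt, spelling out only the definition of $\cZ^{\omega,\P}$ via the pseudoinverse and the verification of the norm bound and orthogonality, and omitting the repeated measurability verifications with a pointer to \cite[Theorem 2.5 and Section 6]{neufeld2014measurability}.
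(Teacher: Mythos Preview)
Your proposal is correct and follows essentially the same approach as the paper: define $\cZ^{\omega,\P}$ as the Moore--Penrose pseudoinverse of the density $\mathsf{a}^{X,\P}$ of $\langle X\rangle^{(\P)}$ with respect to its trace, applied to the density of $\langle\cM^{\omega,\P},X\rangle^{(\P)}$, with both densities built via the $\limsup$-of-increments recipe from the measurable characteristics $\mathsf{C}+(xx^\top)\ast\nu^\P$ and from a jointly measurable version of $\langle\cM^{\omega,\P},X\rangle$ (the paper cites the argument of \cite[Proposition~5.1]{neufeld2014measurability} for the latter). The only cosmetic difference is that the paper first invokes the abstract GKW decomposition to obtain some $Z^{\omega,\P}$ and then checks $\|\cZ^{\omega,\P}-Z^{\omega,\P}\|_{\H^2}=0$, whereas you verify $\cZ^{\omega,\P}\in\H^2(X;\F,\P)$ and the orthogonality directly; both are equivalent and equally short.
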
	
	
	\begin{proof}
		For fixed $(\omega,\P)$, we decompose $\cM^{\omega,\P}$ along $X$ such that $Z^{\omega,\P} \in \H^2(X;\F,\P)$
		\[
			N^{\omega,\P} = \cM^{\omega,\P} - \cM^{\omega,\P}_0 - (Z^{\omega,\P}\bcdot X)^{(\P)}
		\]
		belongs to $\cH^{2,\perp}_0(X;\F_\smallertext{+},\P)$. Then (see \cite[Theorem III.6.4]{jacod2003limit})
		\[
			\langle \cM^{\omega,\P},X^j\rangle^{(\P)} = \langle (Z^{\omega,\P}\bcdot X)^{(\P)},X^j\rangle^{(\P)} = \Bigg(\sum_{i = 1}^d (Z^{\omega,\P})^i \frac{\d\langle X^i,X^j\rangle^{(\P)}}{\d\textnormal{Tr}(\langle X \rangle^{(\P)})}\Bigg)\bcdot \textnormal{Tr}(\langle X\rangle^{(\P)}), \; \textnormal{$\P$--a.s.}, \; j \in \{1,\ldots,d\}
		\]
		or, compactly as column vectors,
		\[
			\frac{\d\langle \cM^{\omega,\P},X\rangle^{(\P)}}{\d\textnormal{Tr}(\langle X\rangle^{(\P)})} = 
			\frac{\d\langle X\rangle^{(\P)}}{\d\textnormal{Tr}(\langle X\rangle^{(\P)})} Z^{\omega,\P}, \; \textnormal{$\P\otimes\d\textnormal{Tr}(\langle X\rangle^{(\P)})$--a.e.}
		\]
		It is then straightforward to check that
		\[
			\cZ^{\omega,\P}
			\coloneqq \bigg( \frac{\d\langle X\rangle^{(\P)}}{\d\textnormal{Tr}(\langle X\rangle^{(\P)})} \bigg)^{\oplus} \bigg(\frac{\d\langle \cM^{\omega,\P},X\rangle^{(\P)}}{\d\textnormal{Tr}(\langle X\rangle^{(\P)})}\bigg),
		\]
		where $A^\oplus$ denotes the Moore--Penrose pseudo-inverse of a matrix $A$, satisfies
		\[
			\|\cZ^{\omega,\P}-Z^{\omega,\P}\|_{\H^\smalltext{2}_\smalltext{T}(X;\F,\P)} = 0.
		\]
		Thus, it suffices, to show that $\cZ^{\omega,\P}$ can be constructed measurably in $\P$. This is the remaining task in this proof.
		
		\medskip
		It follows from the arguments used in the proof of \cite[Proposition 5.1]{neufeld2014measurability} that there exists a Borel-measurable map
		\[
			\Omega\times\fH^{2}_\textnormal{loc} \times \Omega \times [0,\infty) \ni (\omega,\P,\tilde\omega,t) \longmapsto \langle \cM^{\omega,\P},X\rangle_t(\tilde\omega) \in \R^d,
		\]
		such that each $\langle \cM^{\omega,\P},X\rangle$ is right-continuous, $\F_\smallertext{+}$-adapted, $\F^\P_\smallertext{+}$-predictable, and $\P$-indistinguishable from $\langle \cM^{\omega,\P},X\rangle^{(\P)}$. 
		
		\medskip
		Next, we choose versions of the characteristics $(\mathsf{B},\mathsf{C},\nu^\P)$ of $X$ which are measurable in the probability law $\P$; see \cite[Theorem 2.5]{neufeld2014measurability}. We then define, component-wise, the process
		\[
			a^{X,\P}_t \coloneqq \tilde{a}^{X,\P}_t \1_{\S^\smalltext{d}_\smalltext{+}}(\tilde{a}^{X,\P}_t),
		\]
		where
		\[
			\tilde{a}^{X,\P}_t 
			\coloneqq 
			\limsup_{n \rightarrow \infty} \frac{(\mathsf{C}_t+ (x x^\top) \ast \nu^\P_t) - (\mathsf{C}_{(t\smallertext{-}1/n)\lor 0}+ (x x^\top) \ast \nu^\P_{(t\smallertext{-}1/n)\lor 0})}{\textnormal{Tr}(\mathsf{C}_t+ (x x^\top) \ast \nu^\P_t) - \textnormal{Tr}(\mathsf{C}_{(t\smallertext{-}1/n)\lor 0}+ (x x^\top) \ast \nu^\P_{(t\smallertext{-}1/n)\lor 0})}, t \in [0,\infty).
		\]
		Here, we use our usual conventions $0/0 = 0$ and $\infty - \infty = -\infty$.
		Then
		\[
			a^{X,\P} = \frac{\d\langle X\rangle^{(\P)}}{\d\textnormal{Tr}(\langle X\rangle^{(\P)})}, \; \textnormal{$\P\otimes\d\textnormal{Tr}(\langle X\rangle^{(\P)})$--a.e.}
		\]
		We then also define
		\[
			a^{\cM,X,\omega,\P}_t \coloneqq \tilde{a}^{\cM,X,\omega,\P} \1_{\R^\smalltext{d}}(\tilde{a}^{\cM,X,\omega,\P}_t),
		\]
		where
		\[
			\tilde{a}^{\cM,X,\omega,\P}_t 
			\coloneqq 
			\limsup_{n \rightarrow \infty} \frac{\langle \cM^{\omega,\P},X\rangle_t - \langle \cM^{\omega,\P},X\rangle_{(t-1/n)\lor 0}}{\textnormal{Tr}(\mathsf{C}_t+ (x x^\top) \ast \nu^\P_t) - \textnormal{Tr}(\mathsf{C}_{(t\smallertext{-}1/n)\lor 0}+ (x x^\top) \ast \nu^\P_{(t\smallertext{-}1/n)\lor 0})}, t \in [0,\infty).
		\]
		We then have
		\[
			a^{\cM,X,\omega,\P} = \frac{\d\langle \cM^{\omega,\P},X\rangle^{(\P)}}{\d\textnormal{Tr}(\langle X\rangle^{(\P)})}, \; \textnormal{$\P\otimes\d\textnormal{Tr}(\langle X\rangle^{(\P)})$--a.e.},
		\]
	 	and can thus define $\cZ^{\omega,\P}$ as
		\[
			\cZ^{\omega,\P} \coloneqq (a^{X,\P})^\oplus a^{\cM,X,\omega,\P},
		\]
		which implies that the map in \eqref{eq::measurability_Z} is Borel-measurable, and each $\cZ^{\omega,\P}$ is $\F_\smallertext{+}$-adapted and $\F^\P_\smallertext{+}$-predictable. This concludes the proof.
	\end{proof}
	
	Our main additional assumption in this section is as follows, see also \Cref{ass::crossing} and the subsequent \Cref{rem::ass_regularisation} for further insight.
	
\begin{assumption}\label{ass::crossing_dom}
	For every $(\omega,s) \in \Omega \times [0,\infty)$ and every $\P \in \fP(s,\omega)$ the following hold
	
	\medskip
$(i)$ for any $(\cY,\cY^{\prime},\cZ) \in \big(\cS^{2,s,\omega}_{T,\hat\beta}(\F_\smallertext{+},\P)\big)^2 \times \H^{2,s,\omega}_{T,\hat\beta}(X;\F,\P)$, there exists an $\F^\P$-progressive process $\lambda = \lambda^{s,\omega,\P}\1_{\llparenthesis 0,(T-s\land T)^{\smallertext{s}\smallertext{,}\smallertext{\omega}} \rrbracket}$ such that $\lambda\Delta C^{s,\omega}_{s\smallertext{+}\smallertext{\cdot}} > -1$ holds \textnormal{$\P$--a.s.}, $|\lambda| \leq r^{s,\omega}_{s\smallertext{+}\smallertext{\cdot}}$ holds \textnormal{$\P\otimes \d C^{s,\omega}_{s\smallertext{+}\smallertext{\cdot}}$--a.e.} on $\llparenthesis 0, (T-s\land T)^{s,\omega} \rrbracket$, and
		\begin{equation*}
			f^{s,\omega,\P}(\cY,\cY_{\smallertext{-}},\cZ) - f^{s,\omega,\P}(\cY^{\prime},\cY_{\smalltext{-}},\cZ) 
			\geq \lambda (\cY - \cY^{\prime}), \; \textnormal{$\P \otimes \mathrm{d} C^{s,\omega}_{s\smallertext{+}\smallertext{\cdot}}$--a.e. on $\llparenthesis 0, (T-s\land T)^{s,\omega} \rrbracket$$;$}
		\end{equation*}
		
$(ii)$ there exists $\mathfrak{C} \in [0,\infty)$ such that
		\begin{equation}\label{eq::bounded_lipschitz_constants2}
			\int_0^{(T\smallertext{-}s\land T)^{\smalltext{s}\smalltext{,}\smalltext{\omega}}}\max\Big\{\sqrt{\mathrm{r}^{s,\omega}_{s\smallertext{+}t}}, (\theta^{X})^{s,\omega}_{s\smallertext{+}t})\Big\} \d (C^{s,\omega}_{s\smallertext{+}\smallertext{\cdot}})_t \leq \mathfrak{C}, \; \textnormal{$\P$--a.s.};
		\end{equation}
\end{assumption}
	
\begin{remark}
	We refer to \textnormal{\Cref{rem::ass_regularisation}} for the (analogous) reasoning behind \textnormal{\Cref{ass::crossing_dom}}.
\end{remark}

\begin{theorem}\label{thm::down-crossing_dom}
	Suppose that {\rm\Cref{ass::probabilities2_dom}} and {\rm\ref{ass::generator_without_random_measure}} hold, and that {\rm\Cref{ass::crossing_dom}} holds for $s = 0$, and that
	\begin{equation}\label{eq::constant_phi_dom}
		\phi^{2,\hat{\beta}}_{\xi,f} 
		\coloneqq \sup_{\P \in \fP_\smalltext{0}} \E^\P\Bigg[\sup_{s \in \D_\tinytext{+}} \underset{\bar{\P} \in \fP_\smalltext{0}(\cF_{\smalltext{s}},\P)}{{\esssup}^\P}\E^{\bar{\P}}\bigg[\cE(\hat\beta A)_T|\xi|^2 + \int_s^T \cE(\hat\beta A)_r \frac{|f^{\bar\P}_r(0,0,0)|^2}{\alpha^2_r} \d C_r\bigg| \cF_{s}\bigg] \Bigg] < \infty.
	\end{equation}
	Then the following hold
	\begin{enumerate}
		\item[$(i)$] there exists a real-valued, right-continuous and c\`adl\`ag, $\G_\smallertext{+}$-adapted process $\widehat\cY^\smallertext{+}(T,\xi) = (\widehat\cY^\smallertext{+}_t(T,\xi))_{t \in [0,\infty]}$ that satisfies $\widehat\cY^{\smallertext{+}}(T,\xi) = \widehat\cY^{\smallertext{+}}_{\cdot \land T}(T,\xi)$ and $\widehat\cY^{\smallertext{+}}_T(T,\xi) = \xi$ identically, and
		\begin{equation}\label{eq::hat_y_limit2}
			\widehat\cY^\smallertext{+}_t(T,\xi) = \lim_{\D_\tinytext{+} \ni s \downarrow\downarrow t} \widehat\cY_s(T,\xi), \; t \in [0,\infty), \; \textnormal{$\fP_0$--q.s.;}
		\end{equation}
		\item[$(ii)$] there exists a constant $\mathfrak{C} \in (0,\infty)$ depending only on $\hat\beta$ and $\Phi$ such that
		\begin{equation*}
			\sup_{\P\in\fP_\smalltext{0}}\E^\P\bigg[\sup_{s \in \D_\tinytext{+}}\big|\cE(\hat\beta A)^{1/2}_{s \land T}\widehat\cY_{s \land T}(T,\xi)\big|^2\bigg] + \sup_{\P\in\fP_\smalltext{0}}\E^\P\bigg[\sup_{s \in [0,T]}\big|\cE(\hat\beta A)^{1/2}_{s}\widehat\cY^\smallertext{+}_s(T,\xi)\big|^2\bigg] \leq \mathfrak{C}\phi^{2,\hat{\beta}}_{\xi,f} < \infty,
		\end{equation*}
		\begin{equation}\label{eq::aggregation2}
			\widehat\cY^\smallertext{+}_t(T,\xi) = \underset{\bar{\P} \in \fP_\smalltext{0}(\cG_{t\smalltext{+}},\P)}{{\esssup}^\P} \cY^{\bar{\P}}_t(T,\xi), \; \textnormal{$\P$--a.s.}, \; t \in [0,\infty], \; \textnormal{$\P \in \fP_0$;}
		\end{equation}
		\item[$(iii)$] if, in addition, {\rm\Cref{ass::crossing_dom}} holds, then for all $\G_\smallertext{+}$--stopping times $\sigma$ and $\tau$
		\begin{equation}\label{eq::nonlinear_supermartingale_property2}
			\widehat\cY^\smallertext{+}_{\sigma \land \tau \land T}(T,\xi) \geq \cY^\P_{\sigma \land \tau \land T}\big(\tau\land T,\widehat\cY^\smallertext{+}_{\tau\land T}(T,\xi)\big), \; \textnormal{$\P$--a.s.}, \; \text{\rm $\P \in \fP_0$.}
		\end{equation}
	\end{enumerate}
\end{theorem}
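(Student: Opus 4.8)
The plan is to mirror, step by step, the proof of \Cref{thm::down-crossing}: the only structural change is that the underlying BSDEs \eqref{eq::P_BSDE2_dom} are driven by the single It\^o-integrator $X$ rather than by the pair $(X^{c,\P},\tilde\mu^{X,\P})$, so the $\cU$-component and every term carrying $\theta^\mu$ drop out, $\langle X^{c,\P}\rangle$ is replaced throughout by $\langle X\rangle^{(\P)}$, the weighted space $\H^{2,s,\omega}_{T,\beta}(X^{c,\P};\F,\P)$ becomes $\H^{2,s,\omega}_{T,\beta}(X;\F,\P)$, and \Cref{ass::crossing_dom} plays the role of \Cref{ass::crossing}. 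With these substitutions, the well-posedness, comparison and stability theory of \cite[Section 3.2]{possamai2024reflections} for BSDEs driven by square-integrable martingales with jumps applies verbatim, and so do all the measurability ingredients assembled in this section.

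First I would establish the nonlinear super-martingale property of the \emph{raw} value function $\widehat\cY(T,\xi)$. Fixing $\P\in\fP_0$ and $s\le t$, one starts from the dynamic programming principle \eqref{eq::dynamic_programming_principle2_dom} and linearises the BSDE along two solutions exactly as in \eqref{eq::lipschitz_linearisation} (now with the $\rho$-terms absent), using the representation of the $y$-difference afforded by \Cref{ass::crossing_dom}.$(i)$ and the bound \Cref{ass::crossing_dom}.$(ii)$. Performing the associated change of measure, whose stochastic-exponential density is controlled uniformly on $\D_\smallertext{+}\cap[0,K]$ thanks to \eqref{eq::constant_phi_dom}, one concludes that $\widehat\cY_\cdot(T,\xi)-\E^\P[\cY^\P_\cdot(T,\xi)\mid\cF_\cdot]$ is an $(\F^\P,\P)$--super-martingale; equivalently, $\widehat\cY(T,\xi)$ is a nonlinear super-martingale relative to the evaluations $\cY^\P(\cdot,\cdot)$. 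This is the analogue of \cite[Lemma 3.2]{possamai2018stochastic}, with the down-crossing gap there corrected as announced in \Cref{rem::gap_regularisation}: applying an up-/down-crossing estimate — which I would prove from scratch rather than quote, since \cite[Lemma A.1]{bouchard2016general} is flawed — to a suitable exponential transform of this nonlinear super-martingale shows that, off a $\fP_0$--polar set, $s\longmapsto\widehat\cY_s(T,\xi)$ restricted to $\D_\smallertext{+}\cap[0,K]$ is bounded and has finitely many down-crossings of every rational interval $[a,b]$. One then defines $\widehat\cY^\smallertext{+}_t(T,\xi)$ as the right-limit of $\widehat\cY(T,\xi)$ along $\D_\smallertext{+}$ off that polar set, extended by $\xi$ from time $T$ on; following \cite[Proposition 4.5]{nutz2012superhedging} and \cite[Theorem VI.2]{dellacherie1982probabilities} one checks it is real-valued, right-continuous, c\`adl\`ag, $\G_\smallertext{+}$-adapted, equals $\widehat\cY^\smallertext{+}_{\cdot\land T}$ and satisfies $\widehat\cY^\smallertext{+}_T=\xi$, which gives $(i)$ and the limit in \eqref{eq::hat_y_limit2}.

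For $(ii)$, the super-martingale property survives the regularisation, so the maximal inequalities under each $\P$ combined with \eqref{eq::constant_phi_dom} and the a priori BSDE estimates of \cite{possamai2024reflections} yield the asserted norm bound; the aggregation identity \eqref{eq::aggregation2} is then obtained as in the proof of \Cref{thm::down-crossing}.$(ii)$, the ``$\ge$'' inequality being immediate by taking $\bar\P=\P$ on $\cF_s$ and passing to the dyadic right-limit, and the ``$\le$'' inequality following from the essential-supremum structure together with the approximation of $\cG_{t\smallertext{+}}$-conditioning by $\cF_{t+1/n}$-conditioning and the stability of $\cY^\P$ in $n$. Finally, for $(iii)$, assuming the full \Cref{ass::crossing_dom} and invoking the conditioning property of our BSDEs (\Cref{lem::conditioning_bsde2}) together with the comparison principle of \cite{possamai2024reflections}, one first proves \eqref{eq::nonlinear_supermartingale_property2} for deterministic times in $\D_\smallertext{+}$ and then upgrades to arbitrary $\G_\smallertext{+}$--stopping times $\sigma,\tau$ by approximating them from the right by dyadic-valued stopping times and using the stability estimates of \Cref{sec_stability} together with right-continuity of both sides.

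The main obstacle is the up-/down-crossing step. Because the BSDEs are only well-posed in the weighted norms, $X$ may have jumps with no quasi-left-continuity, and $C$ is a general predictable integrator, Doob's classical down-crossing inequality cannot be applied to $\widehat\cY$ directly: it must be preceded by an exponential change of variables and a change of measure whose Radon--Nikod\'ym density has to be controlled uniformly along the dyadic mesh, and one must keep the $\F$ versus $\F_\smallertext{+}$ bookkeeping straight so that the limiting object is genuinely $\G_\smallertext{+}$-adapted. Producing a correct quantitative down-crossing bound in this generality — filling the gap in \cite[Lemma A.1]{bouchard2016general} — is where the real work lies; once it is in place, every remaining step is a transcription of the corresponding argument in the continuous, random-measure case.
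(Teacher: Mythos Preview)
Your proposal is correct and matches the paper's approach exactly: the paper does not give an independent proof of this theorem but states that the proofs and techniques of \Cref{sec::main_results}, in particular the proof of \Cref{thm::down-crossing}, carry over to this setting with the obvious simplifications (no $\cU$-component, no $\theta^\mu$, $X$ replacing $X^{c,\P}$, \Cref{ass::crossing_dom} in place of \Cref{ass::crossing}). Your identification of the down-crossing step as the delicate point, and the need to correct the gap in \cite[Lemma A.1]{bouchard2016general} via the transformation and change-of-measure argument of \Cref{sec::proof_regularisation}, is precisely what the paper highlights.
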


\begin{remark}
	The same link in the context of optimisation problems holds, as in \textnormal{\Cref{cor::optimisation}}, between $\widehat{\cY}$ and $\widehat{\cY}^\smallertext{+}$.
\end{remark}

	We now state the decomposition of $\widehat{\cY}^\smallertext{+}$.
	
\begin{theorem}\label{prop::decomposition_yplus_dom}
	Suppose that {\rm\Cref{ass::probabilities2_dom}}, {\rm\ref{ass::generator_without_random_measure}} and {\rm\ref{ass::crossing_dom}} hold, that $M^\Phi_1(\hat\beta) < 1$ $($see \eqref{eq::contraction_reflected_bsde}$)$ and that $\phi^{2,\hat{\beta}}_{\xi, f} < \infty$, where $\phi^{2,\hat{\beta}}_{\xi, f}$ is given by \eqref{eq::constant_phi_dom}. Let $\beta^\star$ be the unique number in $(0,\hat{\beta})$ at which $M^\Phi_1(\beta^\star) = 1$, and suppose that there exists $\beta \in (\beta^\star,\hat\beta)$ such that $\P[\cE(\beta A)_{T\smallertext{-}} < \infty] = 1,$ for every $\P \in \fP_0$.
	
	\medskip
	$(i)$ There exists a tuple $(\widehat{Z}^\P,\widehat{N}^\P,\widehat{K}^\P)_{\P \in \fP_\smalltext{0}} = (\widehat{Z}^\P(T,\xi),\widehat{N}^\P(T,\xi),\widehat{K}^\P(T,\xi))_{\P \in \fP_\smalltext{0}}$ such that $(\widehat{Z}^\P,\widehat{N}^\P)\in\H^2_{T,\beta}(X;\G,\P) \times \cH^{2,\perp}_{T,\beta}(X;\G,\P)$, and $\widehat{K}^\P = (\widehat{K}^\P_t)_{t \in [0,\infty]}$ is a c\`adl\`ag, non-decreasing, $\G^\P_\smallertext{+}$-predictable process starting at zero, satisfying $\widehat{K}^\P = \widehat{K}^\P_{\cdot \land T}$ and $\E^\P\big[|\widehat{K}^\P_T|^2\big] < \infty$, such that
	\begin{equation*}
		\widehat\cY^\smallertext{+}_t = \xi + \int_t^T f^\P_r\big(\widehat\cY^\smallertext{+}_r,\widehat\cY^\smallertext{+}_{r\smallertext{-}},\widehat{Z}^\P_r\big) \d C_r - \bigg(\int_t^T \widehat{Z}^\P_r \d X_r\bigg)^{(\P)} - \int_t^T \d \widehat{N}^\P_r + \widehat{K}^\P_T - \widehat{K}^\P_t,
	\end{equation*}
	holds for every $t \in [0,\infty]$, \textnormal{$\P$--a.s.}, for all $\P \in \fP_0$.
	
	\medskip
	$(ii)$ Any other tuple $(Z^\P,N^\P,K^\P)_{\P \in \fP_\smalltext{0}}$ meeting the conditions of $(i)$ satisfies $(Z - \widehat{Z})^\top \mathsf{a} (Z-\widehat{Z}) = 0$ outside a $\{\P\otimes \mathrm{d}C : \P \in \fP_0\}$--polar set, $\sup_{\P \in \fP_\smalltext{0}}\|Z^\P - \widehat{Z}^\P\|_{\H^\smalltext{2}_\smalltext{T}(X;\G,\P)} = 0$, and $(N^\P,K^\P) = (\widehat{N}^\P,\widehat{K}^\P)$ outside a $\P$--null set for every $\P \in \fP_0$.
\end{theorem}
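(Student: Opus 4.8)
The plan is to follow the strategy behind \Cref{prop::decomposition_yplus}, which implements the suggestion of Nicole El~Karoui recalled in \cite[Remark 4.9]{soner2013dual}: fix $\P\in\fP_0$ and obtain the decomposition under $\P$ as a by-product of solving the reflected BSDE whose lower obstacle is $\widehat\cY^\smallertext{+}$. Concretely, pick $\beta\in(\beta^\star,\hat\beta)$ as in the statement, so that $M^\Phi_1(\beta)<1$ and the reflected BSDE theory of \cite[Section 3.1]{possamai2024reflections} applies; together with $\phi^{2,\hat\beta}_{\xi,f}<\infty$, \Cref{thm::down-crossing_dom}.$(ii)$ and the condition $\P[\cE(\beta A)_{T\smallertext{-}}<\infty]=1$, this guarantees that the obstacle $\widehat\cY^\smallertext{+}$ and the data $(T,\xi,f^\P)$ lie in the $\beta$-weighted spaces where well-posedness holds. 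This produces, for each $\P\in\fP_0$, a quadruple $(\cR^\P,\widehat{Z}^\P,\widehat{N}^\P,\widehat{K}^\P)$ with $(\widehat{Z}^\P,\widehat{N}^\P)\in\H^2_{T,\beta}(X;\G,\P)\times\cH^{2,\perp}_{T,\beta}(X;\G,\P)$, with $\widehat{K}^\P$ a c\`adl\`ag, non-decreasing, $\G^\P_\smallertext{+}$-predictable process starting at zero satisfying $\widehat{K}^\P=\widehat{K}^\P_{\cdot\land T}$ and $\E^\P[|\widehat{K}^\P_T|^2]<\infty$, driven by $X$ with generator $f^\P$ and obstacle $\widehat\cY^\smallertext{+}$, so that in particular $\cR^\P\geq\widehat\cY^\smallertext{+}$, \textnormal{$\P$--a.s.}

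The crux is to show $\cR^\P=\widehat\cY^\smallertext{+}$, \textnormal{$\P$--a.s.} Since $\widehat\cY^\smallertext{+}_T=\xi$ identically by \Cref{thm::down-crossing_dom}.$(i)$, and since the reflected BSDE's first component is a nonlinear Snell envelope, one has $\cR^\P_t=\underset{\tau}{{\esssup}^\P}\,\cY^\P_t\big(\tau\land T,\widehat\cY^\smallertext{+}_{\tau\land T}\big)$ over $\G_\smallertext{+}$--stopping times $\tau$ with $t\leq\tau\leq T$. Applying the nonlinear super-martingale property \eqref{eq::nonlinear_supermartingale_property2} with $\sigma=t$ and with $\tau$ running over such stopping times gives $\widehat\cY^\smallertext{+}_t\geq\cY^\P_t\big(\tau\land T,\widehat\cY^\smallertext{+}_{\tau\land T}\big)$, \textnormal{$\P$--a.s.}, hence $\widehat\cY^\smallertext{+}_t\geq\cR^\P_t$, \textnormal{$\P$--a.s.}, for every $t$; equivalently, one argues directly that, by the Skorokhod minimality of $\widehat{K}^\P$ and the comparison principle for our BSDEs, $\cR^\P$ cannot be strictly above $\widehat\cY^\smallertext{+}$ on a set of positive measure. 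Combined with the obstacle constraint and right-continuity of both processes, $\cR^\P=\widehat\cY^\smallertext{+}$, \textnormal{$\P$--a.s.} Substituting this identity into the reflected BSDE dynamics, and noting that its generator is then evaluated along $(\cR^\P_r,\cR^\P_{r\smallertext{-}},\widehat{Z}^\P_r)=(\widehat\cY^\smallertext{+}_r,\widehat\cY^\smallertext{+}_{r\smallertext{-}},\widehat{Z}^\P_r)$, yields exactly the decomposition of part $(i)$, with $\widehat{N}^\P$ orthogonal to $X$ in the required sense. In contrast with \Cref{prop::decomposition_yplus}, no aggregation of the $X$--integrand is attempted: the family $(\widehat{Z}^\P)_{\P\in\fP_\smalltext{0}}$ is genuinely $\P$--dependent.

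For uniqueness $(ii)$, one proceeds exactly as for \Cref{prop::decomposition_yplus}.$(ii)$. Fix $\P$ and subtract the two decompositions written in forward form: the process $\cM\coloneqq\big((\widehat{Z}^\P-Z^\P)\bcdot X\big)^{(\P)}+(\widehat{N}^\P-N^\P)$ then coincides with $\int_0^\cdot\big(f^\P_r(\widehat\cY^\smallertext{+}_r,\widehat\cY^\smallertext{+}_{r\smallertext{-}},Z^\P_r)-f^\P_r(\widehat\cY^\smallertext{+}_r,\widehat\cY^\smallertext{+}_{r\smallertext{-}},\widehat{Z}^\P_r)\big)\d C_r+(\widehat{K}^\P-K^\P)$, a predictable process of finite variation; being also a $(\G_\smallertext{+},\P)$--local martingale starting at zero, $\cM$ vanishes. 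Taking predictable quadratic covariation with $X$ and using $\langle\widehat{N}^\P-N^\P,X\rangle^{(\P)}=0$ forces both $\big((\widehat{Z}^\P-Z^\P)\bcdot X\big)^{(\P)}=0$ and $\widehat{N}^\P-N^\P=0$, \textnormal{$\P$--a.s.}; the former gives $\|Z^\P-\widehat{Z}^\P\|_{\H^\smalltext{2}_\smalltext{T}(X;\G,\P)}=0$, and, since $\mathsf{a}_r\leq\pi^\P_r$ in the positive semi-definite order \textnormal{$\P\otimes\d C$--a.e.}, also $(Z^\P-\widehat{Z}^\P)^\top\mathsf{a}(Z^\P-\widehat{Z}^\P)=0$ \textnormal{$\P\otimes\d C$--a.e.}, which holding for every $\P\in\fP_0$ is the asserted polar-set statement. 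The Lipschitz bound of \Cref{ass::generator_without_random_measure}.$(ii)$ then makes the generator integral vanish, whence $\widehat{K}^\P=K^\P$, \textnormal{$\P$--a.s.}

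The step I expect to be the main obstacle is the identification $\cR^\P=\widehat\cY^\smallertext{+}$: it requires checking that $\widehat\cY^\smallertext{+}$ meets the admissibility hypotheses for obstacles in \cite{possamai2024reflections} in the present generality (a right-continuous, $\fP_0$--q.s.\ c\`adl\`ag, $\G_\smallertext{+}$-adapted process with the correct weighted integrability, furnished by \Cref{thm::down-crossing_dom}.$(ii)$), and matching the reflected-BSDE representation or minimality condition—stated there for the relevant filtration and for a possibly jumping integrator $C$—with the nonlinear super-martingale inequality \eqref{eq::nonlinear_supermartingale_property2}, whose quantifiers range over $\G_\smallertext{+}$--stopping times. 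The remaining bookkeeping (choice of $\beta$, membership in the weighted spaces, and the uniqueness computation above) is a routine transcription of the proof of \Cref{prop::decomposition_yplus}, which is why we only sketch it.
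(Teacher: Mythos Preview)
Your existence argument for part $(i)$ is essentially the paper's: fix $\P$, solve the reflected BSDE with lower obstacle $\widehat\cY^\smallertext{+}$, and identify the first component with $\widehat\cY^\smallertext{+}$ via the nonlinear super-martingale property \eqref{eq::nonlinear_supermartingale_property2}. You correctly note that no aggregation of $(\widehat Z^\P)_{\P\in\fP_0}$ is performed here. One caveat: the Snell-envelope representation $\cR^\P_t=\esssup_\tau\cY^\P_t(\tau\land T,\widehat\cY^\smallertext{+}_{\tau\land T})$ is not what the paper uses and is not obviously available in the generality of \cite{possamai2024reflections}; the paper runs the second route you mention---the contradiction via $\tau_\varepsilon$, Skorokhod minimality, comparison, and \eqref{eq::nonlinear_supermartingale_property2}---exactly as in the proof of \Cref{prop::decomposition_yplus}.

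Your uniqueness argument in part $(ii)$ has a gap. You claim that $\int_0^\cdot\big(f^\P_r(\widehat\cY^\smallertext{+}_r,\widehat\cY^\smallertext{+}_{r\smallertext{-}},Z^\P_r)-f^\P_r(\widehat\cY^\smallertext{+}_r,\widehat\cY^\smallertext{+}_{r\smallertext{-}},\widehat Z^\P_r)\big)\d C_r+(\widehat K^\P-K^\P)$ is \emph{predictable}, and then invoke ``predictable finite-variation local martingale vanishes''. But the integrand depends on $\widehat\cY^\smallertext{+}_r$, which is only $\G_\smallertext{+}$-optional; since $C$ may jump (at $\G$-predictable times), the Lebesgue--Stieltjes integral of an optional integrand against $\d C$ is not $\G^\P_\smallertext{+}$-predictable in general. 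Hence $\cM$ being a square-integrable martingale of finite variation does \emph{not} force $\cM=0$---purely discontinuous martingales of finite variation abound. The paper sidesteps this entirely: since the first component of any decomposition in $(i)$ equals the obstacle $\widehat\cY^\smallertext{+}$ itself, the Skorokhod condition $\int(Y_{r\smallertext{-}}-\widehat\cY^\smallertext{+}_{r\smallertext{-}})\d K^\P_r=0$ is trivially satisfied, so every such decomposition is a solution to the reflected BSDE with obstacle $\widehat\cY^\smallertext{+}$; uniqueness then follows directly from the well-posedness result in \cite[Section 3.1]{possamai2024reflections}.
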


	\subsection{Characterisation as the solution to a 2BSDE system without random measures}
	
	The regularised value function and its decomposition solve a 2BSDE in the spirit of \cite{soner2012wellposedness, soner2013dual}. More precisely, $\big(\widehat\cY^\smallertext{+}(T,\xi), (\widehat{Z}^\P(T,\xi),\widehat{N}^\P(T,\xi), \widehat{K}^\P(T,\xi))_{\P \in \fP_\smalltext{0}}\big)$ is the unique (in a sense clarified below) solution $(Y, (Z^\P,N^\P, K^\P)_{\P \in \fP_\smalltext{0}})$ to the following 2BSDE system
		\begin{enumerate}[{\bf (2B1$^\prime$)}, leftmargin=1.3cm]
		\item\label{2BSDE::measurability_dom} $Y = (Y_t)_{t \in [0,\infty]}$ is a real-valued, c\`adl\`ag, $\G_\smallertext{+}$-adapted process, and $Z = (Z_t)_{t \in [0,\infty)}$ is an $\R^d$-valued, $\G$-predictable process;
		\item\label{2BSDE::dynamics_dom} $Z^\P \in \H^2_T(X;\G,\P)$, $N^\P \in \cH^{2,\perp}_{0,T}(X;\G^\P_\smallertext{+},\P)$ and $K^\P = (K^\P_t)_{t \in [0,\infty]}$ is a real-valued, c\`adl\`ag and non-decreasing, $\G^\P_\smallertext{+}$-predictable process starting at zero, satisfying $K^\P = K^\P_{\cdot\land T}$ and $\E^\P\big[|K^\P_T|^2\big] < \infty$ such that
		\[
		\int_0^T \big|f^\P_s(Y_s,Y_{s\smallertext{-}},Z^\P_s)\big|\d C_s < \infty, \; \text{$\P$--a.s.},
		\]
		and
		\begin{align*}
			\displaystyle Y_t &= \xi + \int_t^T f^\P_s(Y_s,Y_{s\smallertext{-}},Z^\P_s) \d C_s- \bigg(\int_t^T Z^\P_s \d X_s\bigg)^{(\P)} - \int_t^T \d N^\P_s + \int_t^T \d K^\P_s, \; t \in [0,\infty], \; \textnormal{$\P$--a.s.},
		\end{align*}
		holds for all $\P \in \fP_0$;
		\item\label{2BSDE::aggregation_dom} $Y_t = \underset{\bar{\P} \in \fP_\smalltext{0}(\cG_{\smallertext{t}\smalltext{+}},\P)}{{\esssup}^\P} \cY^{\bar{\P}}_t(T,\xi)$, $\P$--a.s., $t \in [0,\infty], \; \P \in \fP_0$.
	\end{enumerate}
	
	As done previously, we introduce the space
	\[
	\sL^2_{T,\beta}(\fP_0) \coloneqq \Big\{(Z^\P,N^\P)_{\P \in \fP_0} : (Z^\P,N^\P) \in \H^2_{T,\beta}(X;\G,\P) \times \cH^{2,\perp}_{T,\beta}(X;\G_\smallertext{+},\P),\; \P \in \fP_0 \Big\}.
	\]
	The well-posedness of the 2BSDE system \ref{2BSDE::measurability_dom}--\ref{2BSDE::aggregation_dom} is established in the following result.
\begin{theorem}\label{thm::2BSDE_wellposed_dom}
		Assume {\rm\Cref{ass::probabilities2_dom}}, {\rm\ref{ass::generator_without_random_measure}} and {\rm\ref{ass::crossing_dom}} hold, $M^\Phi_1(\hat\beta) < 1$, and $\phi^{2,\hat{\beta}}_{\xi, f} < \infty$, where $\phi^{2,\hat{\beta}}_{\xi, f}$ is given by \eqref{eq::constant_phi_dom}. Let $\beta^\star$ be the unique number in $(0,\hat{\beta})$ at which $M^\Phi_1(\beta^\star) = 1$, and suppose that $\inf_{\P \in \fP_\smalltext{0}}\inf_{\beta \in (\beta^\star,\hat\beta)}\P[\cE(\beta A)_{T\smallertext{-}} < \infty] = 1$. Let $(\widehat{Z}^\P,\widehat{N}^\P,\widehat{K}^\P)_{\P \in \fP_\smalltext{0}} = (\widehat{Z}^\P(T,\xi),(\widehat{N}^\P(T,\xi),\widehat{K}^\P(T,\xi))_{\P \in \fP_\smalltext{0}}$ denote the decomposition of $\widehat\cY^\smallertext{+} = \widehat\cY^\smallertext{+}(T,\xi)$ from \textnormal{\Cref{prop::decomposition_yplus_dom}} relative to an arbitrary $\beta \in (\beta^\star,\hat\beta)$. Then the following hold
		\begin{enumerate}
			\item[$(i)$] $\big(\widehat\cY^\smallertext{+},(\widehat{Z}^\P,\widehat{N}^\P,\widehat{K}^\P\big)_{\P \in \fP_\smalltext{0}}\big)$ satisfies \ref{2BSDE::measurability_dom}--\ref{2BSDE::aggregation_dom}, and $(\widehat{Z}^\P,\widehat{N}^\P\big)_{\P \in \fP_\smalltext{0}} \in \bigcap_{\beta^\smalltext{\prime} \in (0,\hat\beta)} \sL^2_{T,\beta^\smalltext{\prime}}(\fP_0)$$;$
			\item[$(ii)$] Any other tuple $(Y,(Z^\P,N^\P,K^\P)_{\P \in \fP_\smalltext{0}})$ with $(Z^\P,N^\P)_{\P \in \fP_\smalltext{0}}\in\bigcup_{\beta^\smalltext{\prime} \in (\beta^\smalltext{\star},\hat\beta)} \sL^2_{T,\beta^\smalltext{\prime}}(\fP_0)$ for which \ref{2BSDE::measurability_dom}--\ref{2BSDE::aggregation_dom} holds satisfies $(Y,N^\P,K^\P) = (\widehat\cY^\smallertext{+},\widehat{N}^\P,\widehat{K}^\P)$ outside a $\P$--null set for each $\P \in \fP_0$, and $\sup_{\P \in \fP_\smalltext{0}}\|Z^\P - \widehat{Z}^\P\|_{\H^\smalltext{2}_\smalltext{T}(X;\G,\P)} = 0$.
		\end{enumerate}
	\end{theorem}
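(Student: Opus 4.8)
The plan is to run the argument of \Cref{thm::2BSDE_wellposed} verbatim, deleting every term carrying the compensated jump measure $\tilde\mu^{X,\P}$; the substantial analytic work is already packaged in \Cref{thm::down-crossing_dom} and \Cref{prop::decomposition_yplus_dom}, so for part $(i)$ only bookkeeping remains. I would take the tuple $\big(\widehat\cY^\smallertext{+},(\widehat{Z}^\P,\widehat{N}^\P,\widehat{K}^\P)_{\P\in\fP_\smalltext{0}}\big)$ furnished by \Cref{prop::decomposition_yplus_dom} relative to the given $\beta\in(\beta^\star,\hat\beta)$ and verify \ref{2BSDE::measurability_dom}--\ref{2BSDE::aggregation_dom} one by one: \ref{2BSDE::measurability_dom} is precisely the regularity recorded in \Cref{thm::down-crossing_dom}.$(i)$ and \Cref{prop::decomposition_yplus_dom}.$(i)$; the dynamics in \ref{2BSDE::dynamics_dom} are literally those of \Cref{prop::decomposition_yplus_dom}.$(i)$, and the integrability $\int_0^T\big|f^\P_s(\widehat\cY^\smallertext{+}_s,\widehat\cY^\smallertext{+}_{s\smallertext{-}},\widehat{Z}^\P_s)\big|\d C_s<\infty$, $\P$--a.s., follows from the Lipschitz estimate \Cref{ass::generator_without_random_measure}.$(ii)$, the $\fP_0$--q.s.\ bound \eqref{eq::bounded_lipschitz_constants2} on $\int_0^T\max\{\sqrt{\mathrm r_s},\theta^X_s\}\d C_s$, the finiteness $\phi^{2,\hat{\beta}}_{\xi,f}<\infty$, and the memberships $\widehat\cY^\smallertext{+}\in\cS^2_{T,\hat\beta}(\G_\smallertext{+},\P)$, $\widehat{Z}^\P\in\H^2_{T,\beta}(X;\G,\P)$ already established; and \ref{2BSDE::aggregation_dom} is nothing but \eqref{eq::aggregation2}. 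For the last assertion of $(i)$ I would observe that the decomposition of \Cref{prop::decomposition_yplus_dom} may be performed relative to \emph{any} $\beta'\in(\beta^\star,\hat\beta)$ and, since the space $\H^2_{T,\beta'}$ shrinks as $\beta'$ increases, the uniqueness clause \Cref{prop::decomposition_yplus_dom}.$(ii)$ forces all of these decompositions to coincide, so the common pair $(\widehat{Z}^\P,\widehat{N}^\P)_{\P\in\fP_\smalltext{0}}$ lies in $\bigcap_{\beta'\in(\beta^\star,\hat\beta)}\sL^2_{T,\beta'}(\fP_0)$; monotonicity of $\beta'\longmapsto\cE(\beta'A)$ extends this to every $\beta'\in(0,\hat\beta)$.

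\textbf{Part $(ii)$.} Let $\big(Y,(Z^\P,N^\P,K^\P)_{\P\in\fP_\smalltext{0}}\big)$ be another tuple satisfying \ref{2BSDE::measurability_dom}--\ref{2BSDE::aggregation_dom} with $(Z^\P,N^\P)_{\P\in\fP_\smalltext{0}}\in\sL^2_{T,\beta'}(\fP_0)$ for some $\beta'\in(\beta^\star,\hat\beta)$. First, \ref{2BSDE::aggregation_dom} combined with \eqref{eq::aggregation2} gives $Y_t=\widehat\cY^\smallertext{+}_t$, $\P$--a.s., for every $t$ and $\P\in\fP_0$, whence $Y=\widehat\cY^\smallertext{+}$ outside a $\P$--null set for each $\P\in\fP_0$ by right-continuity. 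Fix $\P\in\fP_0$ and subtract the two instances of \ref{2BSDE::dynamics_dom} written with the common first component $\widehat\cY^\smallertext{+}$; with $\delta Z\coloneqq Z^\P-\widehat{Z}^\P$, $\delta N\coloneqq N^\P-\widehat{N}^\P$, $\delta K\coloneqq K^\P-\widehat{K}^\P$, this yields
\begin{equation*}
	(\delta Z\bcdot X)^{(\P)}_t+\delta N_t=\delta K_t+\int_0^t\big(f^\P_s(\widehat\cY^\smallertext{+}_s,\widehat\cY^\smallertext{+}_{s\smallertext{-}},Z^\P_s)-f^\P_s(\widehat\cY^\smallertext{+}_s,\widehat\cY^\smallertext{+}_{s\smallertext{-}},\widehat{Z}^\P_s)\big)\d C_s,\quad t\in[0,T],\ \P\text{--a.s.}
\end{equation*}
Linearising the generator difference in the $z$-variable via \Cref{ass::generator_without_random_measure}.$(ii)$, I would write the $\d C$-integral as $\langle\eta\bcdot X,\delta Z\bcdot X\rangle^{(\P)}$ for an $\F^\P$-predictable bounded $\eta$ with $\langle\eta\bcdot X\rangle^{(\P)}_T\leq\int_0^T\theta^X_s\d C_s$, which is $\fP_0$--q.s.\ bounded by \Cref{ass::crossing_dom}.$(ii)$; this is exactly the integrability that makes the Dol\'eans exponential $\cE(\eta\bcdot X)^{(\P)}$ a genuine $(\G^\P_\smallertext{+},\P)$--martingale, so one may pass to the equivalent measure $\Q\sim\P$ with density $\cE(\eta\bcdot X)^{(\P)}_T$. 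Under $\Q$, the left-hand side minus $\langle\eta\bcdot X,\delta Z\bcdot X\rangle^{(\P)}$ is a local martingale (using that $\delta N$, being orthogonal to $X$, stays a local martingale under $\Q$), so the displayed identity says that the $\G^\P_\smallertext{+}$-predictable finite-variation process $\delta K$ is a $\Q$--local martingale null at the origin, hence evanescent; thus $K^\P=\widehat{K}^\P$, $\P$--a.s., and then $(\delta Z\bcdot X)^{(\P)}+\delta N$, being a $\P$--local martingale equal to the predictable finite-variation process $\langle\eta\bcdot X,\delta Z\bcdot X\rangle^{(\P)}$, vanishes as well. Taking the predictable covariation of $(\delta Z\bcdot X)^{(\P)}=-\delta N$ with each coordinate of $X$ and using orthogonality of $\delta N$ forces $\delta Z_s^\top\d\langle X\rangle^{(\P)}_s\equiv0$, whence $\|\delta Z\|_{\H^2_T(X;\G,\P)}=0$ and $\delta N=-(\delta Z\bcdot X)^{(\P)}=0$. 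As $\P\in\fP_0$ was arbitrary, this gives $(Y,N^\P,K^\P)=(\widehat\cY^\smallertext{+},\widehat{N}^\P,\widehat{K}^\P)$ outside a $\P$--null set for each $\P\in\fP_0$ and $\sup_{\P\in\fP_\smalltext{0}}\|Z^\P-\widehat{Z}^\P\|_{\H^2_T(X;\G,\P)}=0$.

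\textbf{Main obstacle.} The existence part is essentially a corollary of \Cref{thm::down-crossing_dom} and \Cref{prop::decomposition_yplus_dom}, so the only genuine difficulty lies in the uniqueness step, and specifically in the finite-variation bookkeeping: because $C$ jumps at predictable times and the generator is evaluated at the merely $\G_\smallertext{+}$-optional value $\widehat\cY^\smallertext{+}_s$, some care is needed when splitting $(\delta Z\bcdot X)^{(\P)}+\delta N$ into a martingale part and a predictable part, and this is where the careful treatment of the drift term of semi-martingale BSDEs in \cite{possamai2024reflections}, the orthogonality encoded in $\cH^{2,\perp}_{0,T}(X;\G^\P_\smallertext{+},\P)$, and the a priori bound \Cref{ass::crossing_dom}.$(ii)$ (which secures the integrability for the change of measure) all enter. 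Compared with \Cref{thm::2BSDE_wellposed}, the argument is strictly simpler here, since there is no $\widehat{U}^\P$-integrand and hence no analogue of \Cref{ass::crossing}.$(iii)$--$(iv)$ to invoke.
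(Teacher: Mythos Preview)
Your treatment of part $(i)$ is correct and matches the paper: verify \ref{2BSDE::measurability_dom}--\ref{2BSDE::aggregation_dom} directly from \Cref{thm::down-crossing_dom} and \Cref{prop::decomposition_yplus_dom}, then bootstrap the $\beta'$-integrability by running the decomposition at every level in $(\beta^\star,\hat\beta)$ and invoking its uniqueness.

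For part $(ii)$, the paper takes a much shorter route than your change-of-measure argument. Once $Y=\widehat\cY^\smallertext{+}$ follows from \ref{2BSDE::aggregation_dom} and right-continuity, the remaining tuple $(Z^\P,N^\P,K^\P)_{\P\in\fP_\smalltext{0}}$ is itself a decomposition of $\widehat\cY^\smallertext{+}$ satisfying the conditions of \Cref{prop::decomposition_yplus_dom}.$(i)$ relative to some $\beta'\in(\beta^\star,\hat\beta)$; since either $\beta\leq\beta'$ or $\beta'\leq\beta$, one decomposition satisfies the other's integrability requirement, and \Cref{prop::decomposition_yplus_dom}.$(ii)$ gives the conclusion at once. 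The decomposition uniqueness is in turn inherited from the well-posedness of reflected BSDEs in \cite{possamai2024reflections}, whose proof proceeds via weighted \emph{a priori} estimates, not via linearisation and measure change.

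Your direct argument has a genuine gap at exactly the point you flag as the ``main obstacle''. Writing the generator difference as $\eta_s^\top c_s\,\delta Z_s$ with \emph{equality} forces $\eta_s$ to depend on $g_s=f^\P_s(\widehat\cY^\smallertext{+}_s,\widehat\cY^\smallertext{+}_{s\smallertext{-}},Z^\P_s)-f^\P_s(\widehat\cY^\smallertext{+}_s,\widehat\cY^\smallertext{+}_{s\smallertext{-}},\widehat Z^\P_s)$. Since \Cref{ass::generator_without_random_measure}.$(ii)$ allows genuine dependence on $y$ (not only on $\mathrm y$), $g_s$ involves $\widehat\cY^\smallertext{+}_s$, which is merely $\G_\smallertext{+}$-optional; hence $\eta$ is only progressive. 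Because $X$ is a jump martingale here, the stochastic integral $\eta\bcdot X$ is not defined for a non-predictable integrand, and the Girsanov density $\cE(\eta\bcdot X)$ cannot be formed. Equivalently, $\int_0^\cdot g_s\,\d C_s$ is not predictable when $C$ jumps, so the special-semimartingale decomposition does not force it to vanish together with $\delta K$. The comparison-type linearisation used elsewhere in the paper (see \eqref{eq::definition_lambda_hat_lambda}) sidesteps this by letting $\eta$ depend only on the predictable pair $(Z^\P,\widehat Z^\P)$, but that produces an \emph{inequality}, not the equality your argument needs. This is precisely why the paper routes uniqueness through the reflected-BSDE machinery rather than attempting the decomposition directly.
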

	
	Lastly, we close this section with some norm bounds of the solution to the 2BSDE system by the terminal condition and generator evaluated at zero.
	
	\begin{proposition}\label{prop::bound_and_comparison2}
		Suppose that {\rm\Cref{ass::probabilities2_dom}}, {\rm\ref{ass::generator_without_random_measure}} and that {\rm\ref{ass::crossing_dom}} hold, that $M^\Phi_1(\hat\beta) < 1$ and that $\phi^{2,\hat{\beta}}_{\xi, f} < \infty$, where $\phi^{2,\hat{\beta}}_{\xi, f}$ is given by \eqref{eq::constant_phi_dom}. Let $\big(\widehat\cY^\smallertext{+},(\widehat{Z}^\P,\widehat{N}^\P,\widehat{K}^\P\big)_{\P \in \fP_\smalltext{0}}\big) = \big(\widehat\cY^\smallertext{+}(T,\xi),(\widehat{Z}^\P(T,\xi),\widehat{N}^\P(T,\xi),\widehat{K}^\P(T,\xi)\big)_{\P \in \fP_\smalltext{0}}\big)$ denote the solution to \ref{2BSDE::measurability_dom}--\ref{2BSDE::aggregation_dom} from \textnormal{\Cref{thm::2BSDE_wellposed_dom}}.
		\begin{enumerate}
			\item[$(i)$] For each $\beta \in (0,\hat\beta)$, there exists a constant $\mathfrak{C} \in (0,\infty)$ that depends only on $\hat\beta$, on $\beta$ and on $\Phi$ such that
			\begin{equation*}
				\sup_{\P \in \fP_\smalltext{0}}
				\Bigg\{\big\|\widehat\cY^\smallertext{+}\big\|^2_{\cS^\smalltext{2}_{\smalltext{T}\smalltext{,}\smalltext{\hat\beta}}(\G_\tinytext{+},\P)} 
				+ \big\|\widehat{Z}^\P\big\|^2_{\H^\smalltext{2}_{\smalltext{T}\smalltext{,}\smalltext{\beta}}(X;\G,\P)} 
				+ \big\|\widehat{N}^\P\big\|^2_{\cH^{\smalltext{2}}_{\smalltext{T}\smalltext{,}\smalltext{\beta}}(\G_\tinytext{+},\P)}
				+ \E^\P\bigg[\int_0^T\cE(\beta A)_r \d[\widehat{K}^\P]_r\bigg] \Bigg\}
				\leq \mathfrak{C}\phi^{2,\hat{\beta}}_{\xi, f}.
			\end{equation*}
			
			\item[$(ii)$] Let $f^\prime$ and $\xi^\prime$ be maps satisfying the same assumptions as $f$ and $\xi$, respectively. Let $\widehat\cY^{\prime,\smallertext{+}}(T,\xi^\prime)$ denote the first component of the solution to \ref{2BSDE::measurability_dom}--\ref{2BSDE::aggregation_dom} with generator $f^\prime$ and terminal condition $\xi^\prime$. Suppose that $\xi^\prime \leq \xi$, \textnormal{$\fP_0$--}quasi-surely, and that
			\begin{equation*}
				f^{\prime,\P}_r\big(\cY^{\prime,\P}_r,\cY^{\prime,\P}_{r\smallertext{-}},\cZ^{\prime,\P}_r\big) 
				\leq f^\P_r\big(\cY^{\prime,\P}_r,\cY^{\prime,\P}_{r\smallertext{-}},\cZ^{\prime,\P}_r\big), \; \textnormal{$\P\otimes\d C_r$--a.e.}, \; \P \in \fP_0,
			\end{equation*}
			where each $(\cY^{\prime,\P},\cZ^{\prime,\P},\cN^{\prime,\P})$ denotes the solution to the corresponding \textnormal{BSDE} with generator $f^{\prime,\P}$ and terminal condition $\xi^\prime$. Then $\widehat\cY^{\prime,\smallertext{+}}(T,\xi^\prime) \leq \widehat\cY^\smallertext{+}(T,\xi)$, \textnormal{$\fP_0$--}quasi-surely.
		\end{enumerate}
	\end{proposition}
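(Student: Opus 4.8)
The plan is to follow the scheme used for \Cref{prop::bound_and_comparison}, specialised to the present setting without random measure and with the It\^o-integrator $X$ in place of $X^{c,\P}$; everything reduces to the \textnormal{(reflected) BSDE} theory for martingales with jumps of \textnormal{\cite{possamai2024reflections}}, together with \Cref{thm::down-crossing_dom}, \Cref{prop::decomposition_yplus_dom}, and \Cref{thm::2BSDE_wellposed_dom}, so I do not expect to need any genuinely new argument.

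For $(i)$, I would start from the identification made in the proof of \Cref{prop::decomposition_yplus_dom}: for each $\P\in\fP_0$, the tuple $\big(\widehat\cY^\smallertext{+},\widehat{Z}^\P,\widehat{N}^\P,\widehat{K}^\P\big)$ is the $(\G_\smallertext{+},\P)$--solution of the reflected \textnormal{BSDE} with data $(T,\xi,f^\P)$ whose lower obstacle is $\widehat\cY^\smallertext{+}$ itself, the first component coinciding with the obstacle precisely because of the non-linear super-martingale property \eqref{eq::nonlinear_supermartingale_property2}. \Cref{thm::down-crossing_dom}.$(ii)$ already gives $\sup_{\P\in\fP_0}\big\|\widehat\cY^\smallertext{+}\big\|^2_{\cS^\smalltext{2}_{\smalltext{T}\smalltext{,}\smalltext{\hat\beta}}(\G_\tinytext{+},\P)}\leq\mathfrak{C}\phi^{2,\hat{\beta}}_{\xi,f}$, while evaluating \eqref{eq::constant_phi_dom} at $s=0$ (so that $\cF_0=\{\varnothing,\Omega\}$ and $\fP_0(\cF_0,\P)=\fP_0$) and discarding the supremum over $s\in\D_\smallertext{+}$ shows that $\E^\P\big[\cE(\hat\beta A)_T|\xi|^2\big]$ and $\E^\P\big[\int_0^T\cE(\hat\beta A)_r\frac{|f^\P_r(0,0,0)|^2}{\alpha^2_r}\d C_r\big]$ are both bounded by $\phi^{2,\hat{\beta}}_{\xi,f}$, uniformly in $\P$. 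I would then feed these three bounds into the a priori estimates for reflected \textnormal{BSDEs} of \textnormal{\cite[Section 3.1]{possamai2024reflections}}, which for each $\beta\in(0,\hat\beta)$ control $\|\widehat{Z}^\P\|^2_{\H^\smalltext{2}_{\smalltext{T}\smalltext{,}\smalltext{\beta}}(X;\G,\P)}$, $\|\widehat{N}^\P\|^2_{\cH^{\smalltext{2}}_{\smalltext{T}\smalltext{,}\smalltext{\beta}}(\G_\tinytext{+},\P)}$ and $\E^\P\big[\int_0^T\cE(\beta A)_r\,\d[\widehat{K}^\P]_r\big]$ by a constant depending only on $\hat\beta,\beta,\Phi$ times the sum of the two data norms and $\|\widehat\cY^\smallertext{+}\|^2_{\cS^\smalltext{2}_{\smalltext{T}\smalltext{,}\smalltext{\hat\beta}}}$; together with the $\cS^2$-bound this gives the asserted inequality, the passage between $\cE(\hat\beta A)$- and $\cE(\beta A)$-weighted quantities being legitimate under $\Delta A\leq\Phi<1$ thanks to the Stieltjes-exponential comparisons in \textnormal{\cite[Appendix B]{possamai2024reflections}}.

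For $(ii)$, I would argue through essential suprema over conditioning measures. Fix $\P\in\fP_0$, $t\in[0,\infty]$, and $\overline{\P}\in\fP_0(\cG_{t\smallertext{+}},\P)$. Since $\xi^\prime\leq\xi$ holds $\fP_0$--quasi-surely, hence $\overline{\P}$--a.s., and since the assumed inequality between $f^{\prime,\overline{\P}}$ and $f^{\overline{\P}}$ along $\big(\cY^{\prime,\overline{\P}},\cY^{\prime,\overline{\P}}_{\smallertext{-}},\cZ^{\prime,\overline{\P}}\big)$ holds $\overline{\P}\otimes\d C$--a.e. — it is part of the hypothesis, which applies to every element of $\fP_0\supseteq\fP_0(\cG_{t\smallertext{+}},\P)$ — the comparison principle for the \textnormal{BSDEs} under $(\G_\smallertext{+},\overline{\P})$, whose standing requirements are supplied by \Cref{ass::crossing_dom} (cf. \Cref{prop::comparison} and \textnormal{\cite[Proposition 7.3]{possamai2024reflections}}), yields $\cY^{\prime,\overline{\P}}_t(T,\xi^\prime)\leq\cY^{\overline{\P}}_t(T,\xi)$, $\overline{\P}$--a.s. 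Both of these random variables being $\cG_{t\smallertext{+}}$-measurable up to completion, and $\overline{\P}$ agreeing with $\P$ on $\cG_{t\smallertext{+}}$, the inequality persists $\P$--a.s.; taking the $\P$-essential supremum over $\overline{\P}\in\fP_0(\cG_{t\smallertext{+}},\P)$ and invoking the aggregation identity \eqref{eq::aggregation2} for $\widehat\cY^{\prime,\smallertext{+}}(T,\xi^\prime)$ and for $\widehat\cY^\smallertext{+}(T,\xi)$ gives $\widehat\cY^{\prime,\smallertext{+}}_t(T,\xi^\prime)\leq\widehat\cY^\smallertext{+}_t(T,\xi)$, $\P$--a.s., for every $\P\in\fP_0$ and every $t$. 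Right-continuity of both processes (with $\fP_0$--q.s.\ c\`adl\`ag paths) then upgrades this, via a fixed countable dense subset of $[0,\infty)$, to $\widehat\cY^{\prime,\smallertext{+}}(T,\xi^\prime)\leq\widehat\cY^\smallertext{+}(T,\xi)$, $\fP_0$--quasi-surely.

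The hard part will be the bookkeeping in $(i)$: the reflected \textnormal{BSDE} a priori estimates have to be applied with the obstacle $\widehat\cY^\smallertext{+}$ controlled only in the stronger $\hat\beta$-weighted space, while the $Z$-, $N$- and $K$-components are measured with a weight $\beta<\hat\beta$, and one must check that the resulting constant still depends only on $\hat\beta,\beta,\Phi$ and stays uniform over $\fP_0$. This is exactly the type of estimate already isolated in \textnormal{\cite{possamai2024reflections}}, so I expect it only to need invoking; in $(ii)$ the sole mild subtlety is the standard ``$\overline{\P}$--a.s. $\Rightarrow$ $\P$--a.s.'' transfer, which is routine once one notes that the relevant random variables are $\cG_{t\smallertext{+}}$-measurable up to completion.
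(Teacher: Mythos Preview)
Your proposal is correct and follows the paper's intended approach: the paper does not give a separate proof of this proposition, explicitly stating that the techniques from \Cref{sec::main_results} carry over, so the reference point is the proof of \Cref{prop::bound_and_comparison}. Your argument for $(ii)$ via \ref{2BSDE::aggregation_dom} and the BSDE comparison principle is exactly what the paper does.

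For $(i)$ there is a minor difference in emphasis worth noting. You plan to invoke reflected-BSDE a~priori estimates from \cite[Section~3.1]{possamai2024reflections} as a black box, whereas the paper, in the proof of \Cref{prop::bound_and_comparison}, re-derives the key conditional estimate directly (\Cref{lem::conditional_bound_2bsde}) via It\^o's formula applied to $|\widehat\cY^\smallertext{+}|^2$, following the scheme of \cite[Proposition~5.1, Lemma~5.2, Lemma~5.3]{possamai2024reflections} rather than citing a ready-made bound. The reason is that the precise conditional form needed --- with the obstacle controlled in the $\hat\beta$-weighted $\cS^2$-norm but the remaining components in $\beta$-weighted norms, and with a constant depending only on $(\beta,\hat\beta,\Phi)$ --- is not stated verbatim in \cite{possamai2024reflections}, so the paper reproduces the relevant computation. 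Your approach would work if you are prepared to unpack those estimates to the required level of detail; the paper simply chooses to make that unpacking explicit.
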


	\section{(2)BSDEs with different decompositions: an informal take on the aggregation issue}\label{sec_aggregation_problems}
	
	Second-order BSDEs can be applied to study control problems, in classical formulation, under model uncertainty, see, for example, \cite{matoussi2015robust,possamai2013robust}, or to study stochastic target problems under model uncertainty, see, for example, \cite{soner2013dual,nutz2012superhedging} or \cite[Remark 12.4.1]{zhang2017backward}. Even \cite{neufeld2013superreplication,nutz2015robust} are related to second-order BSDEs with generator identically zero, although not explicitly stated. A commonality in those works is that the solution of the 2BSDE, except possibly the non-decreasing component, does not depend on the corresponding initial class of probability measures $\P\in\fP_0$ (playing the part of the uncertainty in the model). This feature allows one to construct robust optimal controls to the given problem. This means that the control is optimal for the worst-case outcome of the control problem under study. It is therefore desirable that the solution to the 2BSDEs considered in this work, except possibly the orthogonal martingale and predictable non-increasing components, are all aggregated and defined independently of the probability measures in $\fP_0$. In \Cref{sec_aggregation_without_random_measures}, we show that even the (2)BSDEs introduced \Cref{sec_2bsde_X} inherently suffer, from our point of view, from the issue of non-aggregation. We briefly discuss a different possible decomposition of $X$ in relation to (2)BSDEs in \Cref{sec_2bsdes_different_decompositions}, and explain why, from our point of view, the issue of aggregation persists.
	
	\subsection{The aggregation issue without random measures}\label{sec_aggregation_without_random_measures}
	
	\medskip
	We discuss the problem of aggregating the integrands $(\widehat{Z}^\P)_{\P\in\fP_0}$ from \Cref{thm::2BSDE_wellposed_dom} into a single process $\widehat{Z}$, in the sense that we would desire
	\begin{equation}\label{eq_Z_independent_P}
		\sup_{\P \in \fP_\smalltext{0}}\|\widehat{Z} - \widehat{Z}^\P\|_{\H^\smalltext{2}_\smalltext{T}(X;\G,\P)} = 0.
	\end{equation}
	
	Let us suppose for the moment that $d = 1$. We fix $\P\in\fP_0$. From \ref{2BSDE::dynamics_dom}, it follows that
	\[
		(\widehat{\cY}^\smallertext{+})^{c,\P} = (\widehat{Z}^\P \bcdot X^{c,\P})^{\P} + \widehat{N}^{c,\P}, \; \textnormal{$\P$--a.s.},
	\]
	which yields
	\begin{equation}\label{eq_aggreagtion_X_continuous}
		\langle (\widehat{\cY}^\smallertext{+})^{c,\P}, X^{c,\P} \rangle^{(\P)} = \widehat{Z}^\P \bcdot \langle X \rangle^{(\P)} + \langle \widehat{N}^{c,\P}, X^{c,\P} \rangle^{(\P)}, \; \textnormal{$\P$--a.s.}
	\end{equation}
	If $X$ were $\P$--a.s. continuous, then the $\P$--orthogonality of $X$ and $\widehat{N}$ would imply
	\[
		\langle \widehat{N}^{c,\P}, X^{c,\P} \rangle^{(\P)} = \langle \widehat{N}^{c,\P}, X^{c,\P} \rangle^{(\P)} + \langle \widehat{N}^{d,\P}, X^{c,\P} \rangle^{(\P)} = \langle \widehat{N}^{\P}, X^{c,\P} \rangle^{(\P)} = \langle \widehat{N}^{\P}, X \rangle^{(\P)} = 0, \; \textnormal{$\P$--a.s.}
	\]
	Here, $\widehat{N}^{d,\P}$ denotes the $\P$--purely discontinuous square-integrable martingale part of $\widehat{N}^\P$. It would then follow from \eqref{eq_aggreagtion_X_continuous} that
	\[
		\widehat{Z}^\P = \frac{\d \langle (\widehat{\cY}^\smallertext{+})^{c,\P}, X^{c,\P} \rangle^{(\P)}}{\d \langle X^{c,\P} \rangle}, \; \textnormal{$\P \otimes \d \langle X^{c,\P} \rangle$--a.e.}
	\]
	The predictable quadratic variations on the right-hand side aggregate into real-valued and $\G$-predictable terms (see \cite[Proposition 6.6]{neufeld2014measurability}), and thus the above derivative does so too, which yields a term $\widehat{Z}$ that is independent of the probability measure $\P$ and satisfies \eqref{eq_Z_independent_P}.
	
	\medskip
	However, if $X$ is not $\P$--a.s. continuous for some $\P\in\fP_0$, then $\langle \widehat{N}^{c,\P}, X \rangle^{(\P)}$ might not vanish under $\P$, and the same manipulations as above would yield
	\[
		\widehat{Z}^\P = \frac{\d \langle (\widehat{\cY}^\smallertext{+})^{c,\P}, X^{c,\P} \rangle^{(\P)}}{\d \langle X^{c,\P} \rangle} - \frac{\d \langle \widehat{N}^{c,\P}, X^{c,\P} \rangle^{(\P)}}{\d \langle X^{c,\P} \rangle}, \; \textnormal{$\P \otimes \d \langle X^{c,\P} \rangle$--a.e.}
	\]
	The aggregation of $(\widehat{Z}^\P)_{\P \in \fP_0}$ is thus now equivalent to the aggregation of $(\langle \widehat{N}^{c,\P}, X^{c,\P} \rangle^{(\P)})_{\P \in \fP_0}$; however, it is not clear whether such an aggregation is possible, as each process $\widehat{N}^\P$ arises from a specific $\P$--orthogonal martingale decomposition, and the underlying martingale itself depends on the probability measure $\P$.
	
	\medskip
	In that regard, the weakest condition we were able to come up with for the above arguments to go through is that every probability measure $\P\in\fP_0$ satisfies the following property: if $M^\P = (M^\P_t)_{t \in [0,\infty)}$ is an $(\F_\smallertext{+},\P)$--square-integrable martingale with orthogonal decomposition
	\[
		M^\P = M^\P_0 + (Z^\P\bcdot X)^{(\P)} + N^\P,
	\]
	along $X$, then $\langle N^\P, X^{c,\P} \rangle^{(\P)} = \langle N^{c,\P}, X^{c,\P} \rangle^{(\P)} = 0$.

\medskip
	In that case, we would be able to aggregate, as outlined above, the integrands $(\widehat{Z}^\P)_{\P \in \fP_0}$ of the stochastic integrals $((\widehat{Z}^\P \bcdot X^{c,\P})^{(\P)})_{\P\in\fP_0}$ into a single process $\widehat{Z}$. Together with the dominating diffusion property from \cite{nutz2015robust}, which requires that the compensator $\nu^\P$ of the jump measure of $X$ satisfy $(|x|^2 \land 1)\ast\nu^\P \ll\langle X^{c,\P} \rangle^{(\P)}$ for every $\P \in\fP_0$, we would be able to deduce that the aggregated integrand $\widehat{Z}$ even satisfies
	\[
		(\widehat{Z}\bcdot X)^{(\P)} = (\widehat{Z}^\P\bcdot X)^{(\P)}, \; \textnormal{$\P$--a.s.}, \; \P\in\fP_0,
	\]
	so that $\widehat{Z}$ would indeed be the desired aggregator. 
	
	\medskip
	However, the issue with the assumption $\fP_0 \subseteq \fH^{2,\dagger}_\textnormal{loc}$ is that, under each $\P \in \fH^{2,\dagger}_\textnormal{loc}$, the process $X$ is $\P$--a.s. continuous, and thus we fall back to the setting described at the beginning of this section. Indeed, for $\P \in \fH^{2,\dagger}_\textnormal{loc}$, we would have that any $(\F_\smallertext{+},\P)$--square-integrable martingale $M^\P = (M^\P_t)_{t \in [0,\infty)}$ with corresponding decomposition $M^\P -M^\P_0 = (Z^\P \bcdot X)^{(\P)} +N^\P$ satisfies
	\[
		\langle M^\P,X^{c,\P}\rangle^{(\P)} = Z^\P\bcdot \langle X^{c,\P}\rangle^{(\P)} + \langle N^\P,X^{c,\P} \rangle^{(\P)} = Z^\P\bcdot \langle X^{c,\P}\rangle^{(\P)} + \langle N^{c,\P},X^{c,\P} \rangle^{(\P)} = Z^\P\bcdot \langle X^{c,\P}\rangle^{(\P)},
	\]
	and therefore
	\begin{equation}\label{eq_decomposition_M_dom}
		Z^\P = \frac{\d\langle M^\P,X^{c,\P}\rangle^{(\P)}}{\d\langle X^{c,\P}\rangle^{(\P)}}, \; \textnormal{$\P\otimes\d\langle X^{c,\P}\rangle^{(\P)}$--a.e.}
	\end{equation}
	Since the orthogonal decomposition immediately implies
	\begin{equation*}
		Z^\P = \frac{\d\langle M^\P,X\rangle^{(\P)}}{\d\langle X\rangle^{(\P)}}, \; \textnormal{$\P\otimes\d\langle X\rangle^{(\P)}$--a.e.},
	\end{equation*}
	and the assumption of dominating diffusion implies $\langle X \rangle^{(\P)} = \langle X^{c,\P} \rangle^{(\P)} + x^2\ast\nu^\P \approx\langle X^{c,\P}\rangle^{(\P)}$, $\P$--a.s., we deduce that \eqref{eq_decomposition_M_dom} also holds outside a $\P \otimes \d \langle X \rangle^{(\P)}$--null set, which yields
	\[
		\frac{\d\langle M^\P,X\rangle^{(\P)}}{\d\langle X\rangle^{(\P)}} = Z^\P = \frac{\d\langle M^\P,X^{c,\P}\rangle^{(\P)}}{\d\langle X^{c,\P}\rangle^{(\P)}}, \; \textnormal{$\P\otimes\d\langle X\rangle^{(\P)}$--a.e.}
	\]
	For $M^\P = X^{d,\P}$, together with a localisation argument, we find
	\[
		\frac{\d\langle X^{d,\P},X^{d,\P}\rangle^{(\P)}}{\d\langle X\rangle^{(\P)}} = \frac{\d\langle X^{d,\P},X\rangle^{(\P)}}{\d\langle X\rangle^{(\P)}} = 0, \; \textnormal{$\P\otimes\d\langle X\rangle^{(\P)}$--a.e.},
	\]
	and therefore, since $	\langle X^{d,\P}\rangle^{(\P)} = \langle X^{d,\P},X^{d,\P}\rangle^{(\P)} \ll \langle X \rangle^{(\P)}$, $\P$--a.s., that
	\[
		\langle X^{d,\P}\rangle^{(\P)} = 0, \; \textnormal{$\P$--a.s.},
	\]
	which implies $X = X^{c,\P}$ outside a $\P$--null set. This shows that the problem of aggregation does not vanish in the 2BSDE setting of \Cref{sec_2bsde_X}.
	
	\subsection{(2)BSDEs driven by different decompositions}\label{sec_2bsdes_different_decompositions}
	
	The 2BSDE from \Cref{sec::2bsdes} and the one from \Cref{sec_2bsde_X} can be regarded as the two extreme cases, where the former is driven by the continuous part of $X$ and its compensated jump measure, and the latter is driven directly by $X$. Thus, at least intuitively, it does not seem that any other decomposition of $X$ would solve the issue of aggregation either.
	
	\medskip
	To illustrate this, let us consider a generic, but fixed, decomposition $X \leadsto  (X^\textnormal{I},\mu^\textnormal{J})$ into an It\^o-integrator part $X^\textnormal{I}$ and an integer-valued random measure $\mu^{\textnormal{J}}$. Suppose further that $X^\textnormal{I}$ is a $\P$--locally square-integrable martingale relative to each $\P\in\fP_0$. We can then consider the $\P$--BSDEs, for $\P\in\fP_0$, of the form
	\begin{align*}
		\cY^\P_t = \xi + \int_t^{T} f^{\P}_r\big(\cY^\P_r,\cY^\P_{r\smallertext{-}},\cZ^\P_r,\cU^\P_r(\cdot)&\big)\d C_r - \bigg(\int_t^{T} \cZ^\P_r \d X^\textnormal{I}_r\bigg)^{(\P)} - \bigg(\int_t^{T}\int_{\R^\smalltext{d}} \cU^\P_r(x)\tilde\mu^{\textnormal{J},\P}(\d r, \d x)\bigg)^{(\P)} - \int_t^{T}\d \cN^\P_r.
	\end{align*}
	Here, we suppose that under each measure $\P\in\fP_0$, the integrals generated by $X^\textnormal{I}$ and $\tilde{\mu}^{\textnormal{J},\P}$ are $\P$--orthogonal; well-posedness of the above BSDE is then ensured by \cite[Theorem 3.7]{possamai2024reflections}. The first component $Y$ of the corresponding 2BSDE should then satisfy
	\[
		Y_t = \underset{\bar{\P} \in \fP_\smalltext{0}(\cG_{\smallertext{t}\smalltext{+}},\P)}{{\esssup}^\P} \cY^{\bar{\P}}_t(T,\xi), \; \textnormal{$\P$--a.s.}, t \in [0,\infty], \; \P \in \fP_0,
	\]
	and the decomposition of $Y$, which is a result of well-posedness of reflected BSDEs, we expect, is of the form
	\begin{align*}
		\displaystyle Y_t &= \xi + \int_t^T f^\P_s\big(Y_s,Y_{s\smallertext{-}},Z^\P_s, U^\P_s(\cdot)\big) \d C_s- \bigg(\int_t^T Z^\P_s \d X^\textnormal{I}_s\bigg)^{(\P)} - \bigg(\int_t^T\int_{\R^\smalltext{d}} U^\P_s(x)\tilde\mu^{\textnormal{J},\P}(\d s,\d x)\bigg)^{(\P)}- \int_t^T \d N^\P_s+ K^\P_T - K^\P_t,
	\end{align*}
	where $N^\P$ is $\P$-orthogonal to $X^\textnormal{I}$ and $\tilde{\mu}^{\textnormal{J},\P}$, and $K^\P$ is predictable and non-decreasing,
	for each $\P\in\fP_0$. Note that our reflected BSDE result \cite[Theorem 3.4]{possamai2024reflections} is applicable in this generic setting.
	
	\medskip
	We see here that we encounter the same issues described in \Cref{rem::aggregation} for the integrands of $\tilde{\mu}^{\textnormal{J},\P}$, and in \Cref{sec_aggregation_without_random_measures} for the integrands of $X^\textnormal{I}$. More precisely, we are unable to aggregate the integrands of $\tilde{\mu}^{\textnormal{J},\P}$, and that the weakest condition we believe is required in a genuine 2BSDE setting to aggregate the integrands of $X^\textnormal{I}$ implies that the It\^o integrator must be continuous. The issue seems to arise inherently from the fact that the driving martingales, here $X^\textnormal{I}$ and $\tilde{\mu}^{\textnormal{J},\P}$, have jumps and their integrands appear in the generator of the corresponding BSDEs.
	
	\medskip
	Nevertheless, we do not exclude the possibility that the aggregation problem could be solved by other methods. Furthermore, we expect that a theory of 2BSDEs with jumps, as developed in \Cref{sec::main_results}, can be established for different decompositions of $X$ into a (local) martingale part with jumps and a compensated integer-valued random measure part, as described above with $X^\textnormal{I}$ and $\mu^\textnormal{J}$. That said, one would need to carefully check the construction described in \Cref{sec::main_results} in specific cases. The overall approach should be clear, and we expect at most technical challenges that may require further assumptions. This is, however, beyond the intended scope of this work.

	\section{Comparison with \emph{G}-BSDEs}\label{sec:GBSDE}
	We consider the setting of the $G$-expectation from \cite{peng2007g,peng2008multi,soner2011martingale} based on the description in \cite[Section 3]{nutz2013constructing}; we adopt the notation of the latter reference. For a nonempty, compact, and convex subset $\textbf{D} \subseteq \S^{d}_\smallertext{+}$, let $\fP_\textbf{D}$ denote the probability measures on $\Omega$ for which the canonical process $X$ is an $(\F,\P)$--local martingale with $\P$--a.s. continuous paths, such that $\d\langle X\rangle^{(\P)}_t \ll \d t$, $\P$--a.s., and $\d\langle X\rangle^{(\P)}_t/\d t \in \textbf{D}$, $\P\otimes\d t$--almost everywhere. The set $\fP_\textbf{D}$ satisfies \Cref{ass::probabilities2}; more precisely, the collection $(\fP(s,\omega))_{(\omega,s)\in\Omega\times[0,\infty)}$ for $\fP(s,\omega) \coloneqq \fP_\textbf{D}$ does so (see \cite[Proposition 3.1]{nutz2013constructing}). Let $G : \R^{d\times d} \longrightarrow \R$ be given by
	\begin{equation*}
		G(\Gamma) \coloneqq \frac{1}{2}\sup_{A\in\textbf{D}}\textnormal{Tr}[\Gamma A].
	\end{equation*}
	By \cite[Remark 3.6]{dolinsky2012weak}, the functional
	\begin{equation*}
		\cE^\textbf{D}_0[\xi] \coloneqq \sup_{\P \in \fP_\smalltext{\textbf{D}}}\E^\P[\xi],
	\end{equation*}
	which is defined for Borel-measurable functions $\xi$, coincides with the $G$-expectation on the space $\L^1_G(\Omega)$ of $G$-integrable random variables; this space consists exactly of Borel-measurable random variables $\xi : \Omega \longrightarrow \R$ that admit a quasi-continuous version and are $\fP_\textbf{D}$-uniformly integrable (see \cite[Theorem 52 and Remark 53]{denis2011function}), that is,
	\begin{equation*}
		\lim_{n \rightarrow \infty}\sup_{\P \in \fP_\smalltext{\textbf{D}}}\E^\P[|\xi|\1_{\{|\xi|\geq n\}}] = 0.
	\end{equation*}
 That $\xi$ admits a quasi-continuous version means that there exists $\xi^\prime : \Omega \longrightarrow \R$ such that for each $\varepsilon > 0$, there exists an open set $O$ with $\cE^\textbf{D}_0[\1_O] < \varepsilon$ and $\xi^\prime$ restricted to $\Omega \setminus O$ is continuous.
	
	\medskip
	Fix $T \in [0,\infty)$, and denote by $\H^2$ the space of $\R^d$-valued, $\F$-predictable processes $Z = (Z_t)_{t \in [0,T]}$ satisfying
	\begin{equation*}
		\|Z\|^2_{\H^\smalltext{2}} \coloneqq \cE^\textbf{D}_0\bigg[\int_0^T|Z_r|^2 \d r\bigg] < \infty.
	\end{equation*}
	Denote by $\H^{2,0}_G \subseteq \H^2$ the space of $\R^d$-valued, $\F$-predictable processes $Z = (Z_t)_{t \in [0,T]}$, with $Z_t = \sum_{i = 0}^{n-1} Z_{t_\smalltext{i}}\1_{(t_\smalltext{i},t_{\smalltext{i}\smalltext{+}\smalltext{i}}]}(t)$ for some $0 \leq t_1 \leq \ldots \leq t_n \leq T$ with $Z_{t_\smalltext{i}} = \varphi(X_{s_\smalltext{1}},\ldots, X_{s_\smalltext{\ell}})$ for some $0 \leq s_1 \leq \ldots \leq s_\ell \leq t_i$ and some bounded and Lipschitz-continuous function $\varphi : (\R^d)^\ell \longrightarrow \R$. Then $\H^2_G$ denotes the closure of $\H^{2,0}_G$ in $\H^2$. The elements of $\H^2_G$ are therefore $\R^d$-valued, $\F$-predictable processes with $\|Z\|^2_{\H^\smalltext{2}} < \infty$ for which there exists a sequence $(Z^n)_{n \in \N} \subseteq \H^{2,0}_G$ satisfying
	\begin{equation*}
		\lim_{n \rightarrow \infty}\|Z-Z^n\|^2_{\H^\smalltext{2}} = \lim_{n \rightarrow \infty}\cE^\textbf{D}_0\bigg[\int_0^T |Z_r-Z^n_r|^2\d r\bigg] = 0.
	\end{equation*}
	Similarly, we denote by $\S^2$ the space of real-valued, $\F$-optional processes $Y = (Y_t)_{t \in [0,T]}$ satisfying
	\begin{equation*}
		\|Y\|^2_{\S^\smalltext{2}} \coloneqq \cE^\textbf{D}_0\bigg[\sup_{r \in [0,T]}|Y_r|^2\bigg] < \infty,
	\end{equation*}
	and then by $\S^{2,0}_G \subseteq \S^2$ the subspace consisting of those of the form $Y_t = \varphi(t,X_{t_\smalltext{1}\land t},\ldots, X_{t_\smalltext{n}\land t})$, $t \in [0,T]$, for some bounded and Lipschitz-continuous function $\varphi : \R \times (\R^d)^n \longrightarrow \R$. We denote by $\S^2_G$ the closure of $\S^{2,0}_G$ in $\S^2$.

	\medskip
	Suppose now that $\xi$ and $f$ satisfy the conditions of \cite[Theorem 4.1]{hu2014backward} for some $\beta > 2$; this reduces to a standard integrability condition combined with a crucial regularity condition in the $\omega$-variable. Then there exists a (unique) triplet $(Y,Z,K)$ such that $(Y,Z) \in \S^2_G \times \H^2_G$ and
	\begin{equation}\label{eq::g_bsde}
		K_t \coloneqq Y_0 - Y_t - \int_0^t f_s(Y_s,Z_s) \d s + \int_0^t Z_s \d X_s, \; t \in [0,T],
	\end{equation}
	satisfies (see \cite[Proposition 3.4]{soner2011martingale})
	\begin{equation}\label{eq::minimality_g_expectation}
		K_t = \underset{\bar{\P} \in \fP_\smalltext{\textbf{D}}(\cF_\smallertext{t},\P)}{{\essinf}^\P}  \E^{\bar\P}[K_T|\cF_t], \; \textnormal{$\P$--a.s.}, \; t \in [0,T], \; \P \in \fP_\textbf{D},
	\end{equation}
	and then also (see \cite[Proposition 4.5.$(iii)$]{nutz2012superhedging})
	\begin{equation*}
		K_t = \underset{\bar{\P} \in \fP_\smalltext{\textbf{D}}(\cF_{\smallertext{t}\smalltext{+}},\P)}{{\essinf}^\P}  \E^{\bar\P}[K_T|\cF_{t\smallertext{+}}], \; \textnormal{$\P$--a.s.}, \; t \in [0,T], \; \P \in \fP_\textbf{D}.
	\end{equation*}
	The integral relative to $X$ in \eqref{eq::g_bsde} is defined in terms of the $G$--stochastic calculus and coincides, under every $\P \in \fP_\textbf{D}$, with $(\int_0^\cdot Z_r \d X_r)^{(\P)}$, the It\^o stochastic integral under $\P$ (see \cite[Proposition 3.3]{soner2011martingale}). This shows that the $G$-BSDE solution is also a 2BSDE solution in the sense of \Cref{sec::2bsdes}, and thus $Z = \widehat{Z}$, $\{\P\otimes \d t : \P \in \fP_\textbf{D}\}$--quasi-surely, and $(Y,K) = (\widehat{Y},\widehat{K}^\P)$ outside a $\P$--null set for every $\P \in \fP_\textbf{D}$.
	
	\medskip
	The converse statement is a bit more subtle. Once one is given a solution to the 2BSDE, the only point left is to prove that the solution belongs to the appropriate $G$-spaces. It is relatively easy to show that the process $Y$---seen as a solution to the 2BSDE---inherits the required regularity from the data of the equation: indeed, this was already proved in \cite{soner2012wellposedness} through standard \emph{a priori} estimates which do not require any notion of $G$--stochastic calculus. However, the regularity for $Z$ is less obvious, and does seem to require techniques from $G$--stochastic calculus; see, for instance, the approach in \cite{hu2014backward}.
	
	\medskip
	In our view, the 2BSDE framework is thus strictly more general than the $G$-BSDE one, particularly because it accommodates significantly more irregular data with regard to the terminal condition and generator. As such, in order to prove well-posedness of a $G$-BSDE, it seems more natural to first prove well-posedness of the corresponding and more general 2BSDE, and then show, as explained above, that under extra regularity requirements on the data, this is also a solution in the $G$-BSDE sense. One should also mention that, from the point of view of applications, the quasi-continuity in $\omega$, which is ubiquitous in the $G$--stochastic calculus, is a very strong requirement. It makes it, for instance, impossible to use $G$-BSDEs to prove the results in contract theory from \cite{cvitanic2018dynamic} (which uses 2BSDEs), and, from the point of view of model uncertainty in finance, since even standard stochastic integrals fail to have this level of regularity, this should typically be inherited by contingent claims, so much so that any duality result for the super-replication of contingent claims is of little practical use.

	\section{Proofs of the main results}\label{sec::proofs_main_results}

	In this section, we prove the main results stated in \Cref{sec::main_results}. Given the technical complexity involved, we introduce a series of lemmata for each main result to clarify the overarching concepts. The proofs of these lemmata are provided in \Cref{sec::lemmas_main_results}. One may initially skip these intermediate lemmata, proceed directly to the proofs of the main results, and refer back to them as needed. The numbering of each subsection corresponds to that in \Cref{sec::main_results}—for instance, \Cref{sec::proof_measurability} contains the proofs of the results in \Cref{sec::value_function}, and so on.
	
	\subsection{Measurability of the value function}\label{sec::proof_measurability}

	We now prove the measurability of the value function, as stated in \Cref{thm::measurability2}. The proofs of all auxiliary lemmata are deferred to \Cref{sec::proofs_lemmata_measurability}.
	
	\medskip
	For convenience, we introduce the filtered space $(\Omega^\dagger,\cF^\dagger,\F^\dagger = (\cF^\dagger_t)_{t \in [0,\infty)})$, where
	\begin{equation*}
		\Omega^\dagger \coloneqq \Omega \times \fP(\Omega) \times \Omega, \; \cF^\dagger \coloneqq \cF \otimes \cB(\fP(\Omega)) \otimes \cF, \; \cF^\dagger_t \coloneqq \cF \otimes \cB(\fP(\Omega)) \otimes \cF_t.
	\end{equation*}
	Note that the proof of the following lemma will be brief, since the results are slight generalisations of those in \cite[Section 3]{neufeld2014measurability}, whose proofs carry over without difficulty.
	\begin{lemma}\label{lem::measurable_martingale_modification}
		\begin{enumerate}[label=$( \roman* )$, leftmargin=*]
			\item Let $g : \Omega^\dagger \longrightarrow [-\infty,\infty]$ be Borel-measurable. Then $\Omega \times \fP(\Omega) \ni (\omega,\P) \longmapsto \E^\P[g(\omega,\P,\cdot)] \in [-\infty,\infty]$ is Borel-measurable. Moreover, for each $t \in [0,\infty)$, there exist versions of the conditional expectations $\E^\P[g(\omega,\P,\cdot)|\cF_t]$ and $\E^\P[g(\omega,\P,\cdot)|\cF_{t\smallertext{+}}]$ for which
			\begin{equation*}
				\Omega^\dagger \ni (\omega,\P,\tilde\omega) \longmapsto \E^\P[g(\omega,\P,\cdot)|\cF_t](\tilde\omega) \in [-\infty,\infty] \; \text{and} \; \Omega^\dagger \ni (\omega,\P,\tilde\omega) \longmapsto \E^\P[g(\omega,\P,\cdot)|\cF_{t\smallertext{+}}](\tilde\omega) \in [-\infty,\infty],
			\end{equation*}
			are measurable relative to $\cF^\dagger_t$ and $\cF^\dagger_{t\smallertext{+}}$, respectively.
			
			\item Let $g : \Omega^\dagger \times [0,\infty) \longrightarrow \R$, and suppose that $g(\cdot,\cdot,\cdot, t)$ is $\cF^\dagger_{t\smallertext{+}}$-measurable. Then $g(\omega,\P,\cdot, t)$ is $\cF_{t\smallertext{+}}$-measurable, and there exists a Borel-measurable and $\F^\dagger_{\smallertext{+}}$-optional function $\bar g : \Omega^\dagger \times [0,\infty) \longrightarrow \R$ satisfying the following properties
			\begin{enumerate}
				\item[$(a)$] $[0,\infty) \ni t \longmapsto \bar g(\omega,\P,\tilde\omega,t) \in \R$ is right-continuous for $(\omega,\P,\tilde\omega) \in \Omega^\dagger$$;$
				\item[$(b)$] if $g(\omega,\P,\cdot,\cdot)$ for $(\omega,\P) \in \Omega \times \fP(\Omega)$ is an $\F_\smallertext{+}$-adapted, $(\F_\smallertext{+},\P)$--super-martingale such that the map $[0,\infty) \ni t \longmapsto \E^\P[g(\omega,\P,\cdot, t)] \in \R$ is right-continuous, then the process $\bar g(\omega,\P,\cdot,\cdot)$ is an $\F_\smallertext{+}$-optional $\P$-modification of $g(\omega,\P,\cdot,\cdot)$ and thus also an $(\F_\smallertext{+},\P)$--super-martingale.
			\end{enumerate}
		\end{enumerate}
	\end{lemma}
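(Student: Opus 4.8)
The plan is to treat both items as soft generalisations of the measurable-modification results in \cite[Section 3]{neufeld2014measurability}, where the parameter $\P$ now ranges over $\fP(\Omega)$ and is carried along passively through all constructions. Throughout, the key point is that $\fP(\Omega)$ is Polish (so $\Omega^\dagger$ is Polish) and that the Borel $\sigma$-algebra $\cB(\fP(\Omega))$ is generated by the evaluation maps $\P\longmapsto\P[A]$, $A\in\cB(\Omega)$; combined with the product-structure identity $\cB(S\times T)=\cB(S)\otimes\cB(T)$ for Polish spaces, this lets us identify $\cB(\Omega^\dagger)$ with $\cF\otimes\cB(\fP(\Omega))\otimes\cF$ and similarly for the $\sigma$-algebras $\cF^\dagger_t$, $\cF^\dagger_{t\smallertext{+}}$.

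For $(i)$, the plan is: first reduce to bounded $g$ by the usual truncation $g\wedge n \vee(-n)$ and monotone/dominated passage, and to nonnegative $g$ by splitting into positive and negative parts with the convention $\infty-\infty=-\infty$ already fixed in the paper. For bounded $g$, I would apply a monotone-class / functional-form argument: the family of bounded Borel $g$ for which $(\omega,\P)\longmapsto\E^\P[g(\omega,\P,\cdot)]$ is Borel contains products $g(\omega,\P,\tilde\omega)=\mathbf 1_{B_1}(\omega)\mathbf 1_{B_2}(\P)\mathbf 1_{B_3}(\tilde\omega)$ — for which the integral is $\mathbf 1_{B_1}(\omega)\mathbf 1_{B_2}(\P)\P[B_3]$, Borel by the generating property of $\cB(\fP(\Omega))$ — and is closed under bounded monotone limits, so it is all of $\cB(\Omega^\dagger)$. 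For the conditional expectations, the cleanest route is to invoke the regular conditional distribution: fix a kernel representative and use that $\tilde\omega\longmapsto\P^{\cF_t}_{\tilde\omega}$ (resp. $\P^{\cF_{t\smallertext{+}}}_{\tilde\omega}$, constructed on $\cF_t\vee\cF_{0\smallertext{-}}$-type intersections as in \cite[pages 145--152]{dellacherie1978probabilities}) is $\cF_t$- (resp. $\cF_{t\smallertext{+}}$-) measurable and depends Borel-measurably on $\P$; then $\E^\P[g(\omega,\P,\cdot)\mid\cF_t](\tilde\omega)=\int g(\omega,\P,\omega')\,\P^{\cF_t}_{\tilde\omega}(\d\omega')$ is a legitimate version and is $\cF^\dagger_t$-measurable by the same monotone-class argument applied to the joint map $(\omega,\P,\tilde\omega)\longmapsto\P^{\cF_t}_{\tilde\omega}$. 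The right-continuous filtration case is identical with $\cF_{t\smallertext{+}}$ in place of $\cF_t$, which is exactly the extra care alluded to in the statement; no new idea is needed beyond noting that $\P^{\cF_{t\smallertext{+}}}_{\cdot}$ can still be chosen jointly measurably.

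For $(ii)$, that $g(\omega,\P,\cdot,t)$ is $\cF_{t\smallertext{+}}$-measurable for fixed $(\omega,\P)$ is immediate from $\cF^\dagger_{t\smallertext{+}}$-measurability and the product identity. The construction of $\bar g$ follows the classical regularisation of supermartingales: set $\bar g(\omega,\P,\tilde\omega,t)\coloneqq\lim_{\D_\smallertext{+}\ni q\downarrow\downarrow t}g(\omega,\P,\tilde\omega,q)$ wherever the limit along rational dyadics exists and equals, say, $0$ otherwise; this is Borel on $\Omega^\dagger\times[0,\infty)$ because it is a countable limsup/liminf of the Borel maps $(\omega,\P,\tilde\omega)\longmapsto g(\omega,\P,\tilde\omega,q)$, it is right-continuous in $t$ by construction, and it is $\F^\dagger_{\smallertext{+}}$-optional because each $g(\cdot,\cdot,\cdot,q)$ is $\cF^\dagger_{q\smallertext{+}}$-measurable so the right-limit process is $\F^\dagger_{\smallertext{+}}$-adapted and right-continuous, hence optional. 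For $(b)$, under the stated hypotheses the path-regularity theory for supermartingales (e.g.\ \cite[Theorem VI.2]{dellacherie1982probabilities} applied under $\P$, using that $t\longmapsto\E^\P[g(\omega,\P,\cdot,t)]$ is right-continuous so the right-limits exist $\P$-a.s.\ and agree with $g$ in expectation) gives that $\bar g(\omega,\P,\cdot,\cdot)$ is indeed a $\P$-modification of $g(\omega,\P,\cdot,\cdot)$, hence also an $(\F_\smallertext{+},\P)$-supermartingale; the only subtlety is that the exceptional null set depends on $(\omega,\P)$, but that is fine since $(b)$ is a pointwise-in-$(\omega,\P)$ statement.

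I expect the main obstacle to be purely bookkeeping rather than conceptual: verifying that the regular conditional probability kernels $\P^{\cF_t}_{\tilde\omega}$ and especially $\P^{\cF_{t\smallertext{+}}}_{\tilde\omega}$ can be selected so that $(\P,\tilde\omega)\longmapsto\P^{\cF_t}_{\tilde\omega}$ is jointly Borel-measurable, with the right adaptedness in $\tilde\omega$, and that the resulting versions of the conditional expectations inherit $\cF^\dagger_t$-measurability. This is where one must be careful to track the extra $\P$-coordinate through the constructions of \cite[Section 3]{neufeld2014measurability}, but since those proofs only use Polishness, the generating property of $\cB(\fP(\Omega))$, and monotone-class arguments — all of which survive the addition of a Polish parameter space — the generalisation goes through without new input, which is why the paper is content to sketch it.
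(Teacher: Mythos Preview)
Your proposal is essentially correct and aligns with the paper's approach, which simply refers to \cite[Section 3]{neufeld2014measurability} and declares that ``the proofs carry over without difficulty''. The monotone-class argument for the unconditioned expectation in $(i)$ and the dyadic right-limit regularisation in $(ii)$ match exactly what \cite{neufeld2014measurability} does in Lemmas~3.1 and~3.3.

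There is, however, a genuine methodological difference in how you handle the conditional expectations in $(i)$. You propose going through a regular conditional probability kernel $(\P,\tilde\omega)\longmapsto\P^{\cF_t}_{\tilde\omega}$ and then integrating $g$ against it; the burden then shifts to establishing that this kernel can be chosen jointly Borel-measurably in $(\P,\tilde\omega)$, which you flag as ``purely bookkeeping''. The approach in \cite[Lemma~3.2]{neufeld2014measurability}---and hence the one implicitly adopted by the paper---avoids the r.c.p.d.\ altogether: since $\cF_t$ is countably generated, one picks a generating sequence $(A_n)_{n\in\N}$, forms the finite partitions generating $\sigma(A_1,\ldots,A_n)$, and defines the conditional expectation as an explicit $\limsup$ of ratios of the form $\E^\P[g\mathbf 1_{A^m_n}]/\P[A^m_n]$ (a Radon--Nikod\'ym-by-hand construction \`a la \cite[V.56--58]{dellacherie1982probabilities}). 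Each term in this expression is Borel in $(\omega,\P,\tilde\omega)$ by the first part of $(i)$, so joint measurability is immediate without appealing to any kernel. The paper uses exactly this technique elsewhere (see the proof of \Cref{lem::measurability_cond_M_tilde_P2}). Your r.c.p.d.\ route is not wrong, but the joint measurability of the kernel in $\P$ is not a free fact---and the natural way to prove it would essentially reconstruct the direct $\limsup$ construction, so the detour buys nothing.
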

	
	\begin{lemma}\label{lem::measurable_decomposition}
		Let $\Omega \times \fP_\textnormal{sem} \times \Omega \times [0,\infty) \ni (\omega,\P,\tilde\omega,t) \longmapsto \cM^{\omega,\P}_t(\tilde\omega) \in \R$ be Borel-measurable. Suppose that for each $(\omega,\P) \in \Omega \times \fP_\textnormal{sem}$, the process $\cM^{\omega,\P}$ is a right-continuous, $(\F_\smallertext{+},\P)$--square-integrable martingale. There exist Borel-measurable maps
		\begin{equation*}\label{eq::measurable_integrands2}
			\Omega \times \fP_\textnormal{sem} \times \Omega \times [0,\infty) \ni (\omega,\P,\tilde\omega,t) \longmapsto \cZ^{\omega,\P}_t(\tilde\omega) \in \R^d
			\;
			\text{and}
			\;
			\Omega \times \fP_\textnormal{sem} \times \Omega \times [0,\infty) \times \R^d \ni (\omega,\P,\tilde\omega, t, x) \longmapsto \cU^{\omega,\P}_t(\tilde\omega;x) \in \R,
		\end{equation*}
		such that 
		
		\begin{enumerate}
			\item[$(i)$] every $(\cZ^{\omega,\P},\cU^{\omega,\P})$ belongs to $\H^2(X^{c,\P};\F,\P) \times \H^2(\mu^X;\F^\P_\smallertext{+},\P)$ and
			\begin{equation}\label{eq::martingale_representation2}
				\cN^{\omega,\P} \coloneqq \cM^{\omega,\P} - \cM^{\omega,\P}_0 - \bigg(\int_0^\cdot \cZ^{\omega,\P}_r \d X^{c,\P}_r\bigg)^{(\P)} -  \bigg(\int_0^\cdot\int_{\R^\smalltext{d}}\cU^{\omega,\P}_r(x)\tilde\mu^{X,\P}(\d r, \d x)\bigg)^{(\P)},
			\end{equation}
			belongs to $\cH^{2,\perp}_0(X^{c,\P},\mu^X;\F_{\smallertext{+}},\P)$, and
			
			\item[$(ii)$] if $s \in [0,\infty)$ and $(\omega,\P) \in \widehat{\Omega}^C_s \subseteq \Omega \times \fP_\textnormal{sem}$, then $\cU^{\omega,\P}_r(\tilde\omega;\cdot) \in \widehat{\L}^2_{\omega\otimes_\smalltext{s}\tilde\omega,s\smallertext{+}r}(\mathsf{K}^{s,\omega,\P}_{\tilde\omega, r})$ for every $(\tilde\omega, r) \in \Omega \times [0,\infty)$.
		\end{enumerate}
	\end{lemma}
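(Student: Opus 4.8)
The plan is to carry out the orthogonal martingale decomposition of $\cM^{\omega,\P}$ separately for each $(\omega,\P)$ and then to show that the resulting integrands can be written as Borel-measurable functionals of $(\omega,\P)$, up to modifications on sets that are null for the relevant measures. Fix $(\omega,\P)\in\Omega\times\fP_\textnormal{sem}$. Since $X^{c,\P}$ is continuous, $\Delta X^{c,\P}\equiv0$ and hence $M^\P_{\mu^X}[\Delta X^{c,\P}\,|\,\widetilde\cP(\F)]=0$; the orthogonal decomposition result of \cite[Proposition~2.6]{possamai2024reflections}, reproduced just before \Cref{sec::semimartingale_laws}, applied to the $(\F_\smallertext{+},\P)$--square-integrable martingale $L=\cM^{\omega,\P}$ with driving martingale $X^{c,\P}$ and jump measure $\mu^X$, then yields a pair $(Z^{\omega,\P},U^{\omega,\P})\in\H^2(X^{c,\P};\F,\P)\times\H^2(\mu^X;\F,\P)$ such that
\[
N^{\omega,\P}\coloneqq\cM^{\omega,\P}-\cM^{\omega,\P}_0-\big(Z^{\omega,\P}\bcdot X^{c,\P}\big)^{(\P)}-\big(U^{\omega,\P}\ast\tilde\mu^{X,\P}\big)^{(\P)}\in\cH^{2,\perp}_0(X^{c,\P},\mu^X;\F_\smallertext{+},\P).
\]
So a decomposition of the asserted form exists pointwise; the whole content of the lemma is the \emph{measurable} selection of $(\cZ^{\omega,\P},\cU^{\omega,\P})$. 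For $\cZ$ I would transcribe, \emph{mutatis mutandis}, the proof of \cite[Proposition~5.1]{neufeld2014measurability} (the same argument as for the $X$-driven analogue in \Cref{sec_2bsde_X}): since $(U^{\omega,\P}\ast\tilde\mu^{X,\P})^{(\P)}$ is purely discontinuous while $X^{c,\P}$ is continuous, and $\langle N^{\omega,\P},X^{c,\P}\rangle^{(\P)}=0$, one has $\frac{\d\langle\cM^{\omega,\P},X^{c,\P}\rangle^{(\P)}}{\d\textnormal{Tr}(\langle X^c\rangle)}=\frac{\d\langle X^c\rangle}{\d\textnormal{Tr}(\langle X^c\rangle)}\,Z^{\omega,\P}$, $\P\otimes\d\textnormal{Tr}(\langle X^c\rangle)$--a.e., with $\langle X^c\rangle$ the aggregated $\F$-predictable process from \cite[Proposition~2.5.(iii)]{neufeld2014measurability}; I then set
\[
\cZ^{\omega,\P}\coloneqq\Big(\frac{\d\langle X^c\rangle}{\d\textnormal{Tr}(\langle X^c\rangle)}\Big)^{\oplus}\frac{\d\langle\cM^{\omega,\P},X^{c,\P}\rangle^{(\P)}}{\d\textnormal{Tr}(\langle X^c\rangle)},
\]
realising the Radon--Nikod\'ym derivatives as component-wise $\limsup$ of difference quotients with the conventions $0/0=0$ and $\infty-\infty=-\infty$. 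A Borel-measurable-in-$(\omega,\P,\tilde\omega,t)$ version of $\langle\cM^{\omega,\P},X^{c,\P}\rangle^{(\P)}$ exists exactly as in that proof (using a measurable-in-$\P$ version of $X^{c,\P}$), so $(\omega,\P,\tilde\omega,t)\longmapsto\cZ^{\omega,\P}_t(\tilde\omega)$ is Borel-measurable, each $\cZ^{\omega,\P}$ is $\F$-predictable, and $\|\cZ^{\omega,\P}-Z^{\omega,\P}\|_{\H^\smalltext{2}(X^{c,\P};\F,\P)}=0$, whence the continuous stochastic integrals agree.

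The delicate and genuinely new point is the selection of $\cU^{\omega,\P}$, i.e.\ of the integrand against the compensated random measure. I would fix, once and for all, a $\widetilde\cP$-measurable $V>0$ with $V\ast\mu^X\le1$ identically (\cite[Lemma~6.5]{neufeld2014measurability}); crucially, $V$ and $\mu^X$ do not depend on $\P$. Then $V\ast\tilde\mu^{X,\P}\in\H^2(\mu^X;\F_\smallertext{+},\P)$, and since $\langle X^{c,\P},V\ast\tilde\mu^{X,\P}\rangle^{(\P)}=0$ and $M^\P_{\mu^X}[\Delta N^{\omega,\P}\,|\,\widetilde\cP(\F)]=0$ force $\langle N^{\omega,\P},V\ast\tilde\mu^{X,\P}\rangle^{(\P)}=0$, the identity recalled in \Cref{rem::aggregation} (via \cite[Theorem~III.4.20]{jacod2003limit}) yields, with $\Delta[\cM^{\omega,\P},V\ast\mu^X]_s=\Delta\cM^{\omega,\P}_s\,V_s(\Delta X_s)\1_{\{\Delta X_\smalltext{s}\neq0\}}$, that the jointly Borel-measurable function $W^{\omega,\P}_s(\tilde\omega;x)\coloneqq\Delta[\cM^{\omega,\P},V\ast\mu^X]_s(\tilde\omega)/V_s(\tilde\omega;x)$ (with $0/0=0$), whose value at $x=\Delta X_s$ equals $\Delta\cM^{\omega,\P}_s$ on $\{\Delta X_s\neq0\}$, satisfies $\cU^{\omega,\P}\coloneqq M^\P_{\mu^X}[W^{\omega,\P}\,|\,\widetilde\cP(\F)]=U^{\omega,\P}$ in $\H^2(\mu^X;\F_\smallertext{+},\P)$. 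The hard part is then to exhibit a version of the operator $M^\P_{\mu^X}[\,\cdot\,|\,\widetilde\cP(\F)]$ that is Borel-measurable in $\P$; I expect to obtain this by realising that conditional expectation through a measurable disintegration of $\mu^X$ and of its compensator $\nu^\P(\d t,\d x)=\mathsf{K}^\P_{\cdot,t}(\d x)\,\d\mathsf{A}^\P_t$, with $\mathsf{K}^\P$ and $\mathsf{A}^\P$ chosen measurably in $\P$, following the measurability machinery of \cite[Sections~5 and~6]{neufeld2014measurability}; this produces the required Borel-measurable map $(\omega,\P,\tilde\omega,t,x)\longmapsto\cU^{\omega,\P}_t(\tilde\omega;x)$.

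Granting the two selections, $(i)$ follows by putting $\cN^{\omega,\P}\coloneqq\cM^{\omega,\P}-\cM^{\omega,\P}_0-(\cZ^{\omega,\P}\bcdot X^{c,\P})^{(\P)}-(\cU^{\omega,\P}\ast\tilde\mu^{X,\P})^{(\P)}$: it is $\P$-indistinguishable from $N^{\omega,\P}$, hence lies in $\cH^{2,\perp}_0(X^{c,\P},\mu^X;\F_\smallertext{+},\P)$, while $\cZ^{\omega,\P}\in\H^2(X^{c,\P};\F,\P)$ and $\cU^{\omega,\P}=U^{\omega,\P}\in\H^2(\mu^X;\F^\P_\smallertext{+},\P)$. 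For $(ii)$, if $s\in[0,\infty)$ and $(\omega,\P)\in\widehat\Omega^C_s$, then \Cref{lem::absolute_continuity_nu} gives $\nu^\P(\d t,\d x)=\mathsf{K}^{s,\omega,\P}_{\cdot,t}(\d x)\,\d(C^{s,\omega}_{s\smallertext{+}\smallertext{\cdot}}-C_s(\omega))_t$, $\P$--a.s., so that $\cU^{\omega,\P}_r(\tilde\omega;\cdot)\in\widehat\L^2_{\omega\otimes_\smalltext{s}\tilde\omega,s\smallertext{+}r}(\mathsf{K}^{s,\omega,\P}_{\tilde\omega,r})$ for $\P\otimes\d C^{s,\omega}_{s\smallertext{+}\smallertext{\cdot}}$--a.e.\ $(\tilde\omega,r)$ because $\cU^{\omega,\P}\in\H^2(\mu^X;\F^\P_\smallertext{+},\P)$; replacing $\cU^{\omega,\P}$ by $\cU^{\omega,\P}\1_{(\Omega\times[0,\infty))\setminus N}$ on the $\cP^\P$--set $N\coloneqq\{(\tilde\omega,r):\cU^{\omega,\P}_r(\tilde\omega;\cdot)\notin\widehat\L^2_{\omega\otimes_\smalltext{s}\tilde\omega,s\smallertext{+}r}(\mathsf{K}^{s,\omega,\P}_{\tilde\omega,r})\}$, exactly as in the discussion preceding \Cref{prop::good_version_stochastic_integral}, then makes this hold for every $(\tilde\omega,r)$. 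The genuine obstacle is the one flagged above, namely the measurable-in-$\P$ realisation of the random-measure integrand; the remainder is a routine adaptation of \cite{neufeld2014measurability} and of the continuous-process argument in \Cref{sec_2bsde_X}.
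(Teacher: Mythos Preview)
Your construction of $\cZ^{\omega,\P}$ via the pseudo-inverse of the diffusion density and a measurable version of $\langle\cM^{\omega,\P},X^{c,\P}\rangle$ is the paper's approach; the paper isolates the Borel-measurable construction of this bracket in a separate lemma (\Cref{lem::borel_quadratic_variation2}, via Karandikar's pathwise integral), which you pass over but which is achievable along the lines you indicate. You also correctly anticipate that $M^\P_{\mu^X}[\,\cdot\,|\widetilde\cP(\F)]$ admits a Borel-in-$\P$ version; the paper proves this in \Cref{lem::measurability_cond_M_tilde_P2} using separability of $\widetilde\cP(\F)$ and precisely the $V$-weighted measure you introduce.

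The gap is in your formula for $\cU^{\omega,\P}$. Your function $W^{\omega,\P}$ satisfies $W^{\omega,\P}_s(\cdot;\Delta X_s)=\Delta\cM^{\omega,\P}_s$ on $\{\Delta X_s\neq0\}$, hence $M^\P_{\mu^X}[W^{\omega,\P}\,|\,\widetilde\cP(\F)]=M^\P_{\mu^X}[\Delta\cM^{\omega,\P}\,|\,\widetilde\cP(\F)]\eqqcolon\cW^{\omega,\P}$. But the identity $\cW^{\omega,\P}=U^{\omega,\P}$ in $\H^2(\mu^X;\P)$ that you quote from \Cref{rem::aggregation} is stated there \emph{only} in the quasi--left-continuous case, i.e.\ when $a^\P_t\coloneqq\nu^\P(\{t\}\times\R^d)\equiv0$. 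For general $\P\in\fP_\textnormal{sem}$, \cite[Theorem III.4.20]{jacod2003limit} gives
\[
U^{\omega,\P}_t(x)\;=\;\cW^{\omega,\P}_t(x)\;+\;\frac{1}{1-a^\P_t}\,\1_{\{a^\P_\smalltext{t}<1\}}\int_{\R^\smalltext{d}}\cW^{\omega,\P}_t(y)\,\nu^\P(\{t\}\times\d y),
\]
and the second term does not vanish in $\H^2(\mu^X;\P)$ when $\nu^\P$ charges singletons: with your $\cU^{\omega,\P}=\cW^{\omega,\P}$, the remainder $\cN^{\omega,\P}$ would in general fail the orthogonality condition $M^\P_{\mu^X}[\Delta\cN^{\omega,\P}\,|\,\widetilde\cP(\F)]=0$. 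The paper's proof uses exactly this formula, realising the correction term measurably in $(\omega,\P,\tilde\omega,t,x)$ via the Borel-in-$\P$ kernel $\mathsf{K}^\P$ and increasing process $\mathsf{A}^\P$ from \cite[Proposition 2.5]{neufeld2014measurability}. Once you insert this correction, your scheme coincides with the paper's; your treatment of $(ii)$ by cutting on the $\cP^\P$-set where the $\widehat\L^2$-norm is infinite is also what the paper does.
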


	\begin{proposition}\label{prop::Borel-measurability_value_function}
		Suppose that {\rm\Cref{ass::probabilities2}} and {\rm\ref{ass::generator2}} hold. Let $s \in [0,\infty)$. The function
		\begin{equation}\label{eq::borel_measurability_value_function}
			(\omega,\P) \longmapsto \E^\P[\cY^{s,\omega,\P}_0((T-s\land T)^{s,\omega},\xi^{s,\omega})],
		\end{equation}
		defined on the subset $\{(\omega,\P) : \omega \in \Omega,\, \P \in \fP(s,\omega)\} \subseteq \Omega \times \fP_\textnormal{sem}$ is Borel-measurable.\footnote{The set $\{(\omega,\P) : \omega \in \Omega,\, \P \in \fP(s,\omega)\}$ is only supposed to be analytic, however, we do endow it with the Borel $\sigma$-algebra generated by the subspace topology in $\Omega\times\fP_\textnormal{sem}$.}
	\end{proposition}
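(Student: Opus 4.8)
The plan is to follow the fixed-point construction of BSDE solutions in \cite[Section 3.2]{possamai2024reflections} and to check that joint Borel-measurability in the parameter $(\omega,\P)$ is propagated through every Picard iterate. Fix $s\in[0,\infty)$ and note that $\{(\omega,\P):\P\in\fP(s,\omega)\}\subseteq\widehat\Omega^C_s$, so that the measurable maps of \Cref{lem::measurability_characteristics}, namely $\mathsf{b}^{s,\omega,\P}$, $\mathsf{a}^{s,\omega}$, $\mathsf{K}^{s,\omega,\P}$, are available and Borel-measurable in $(\omega,\P,\tilde\omega,t)$; likewise $\xi^{s,\omega}$, the horizon $(T-s\land T)^{s,\omega}$, which I abbreviate $T'$, and the predictable finite-variation integrator $C^{s,\omega}_{s\smallertext{+}\smallertext{\cdot}}-C_s(\omega)$ are Borel-measurable in $(\omega,\tilde\omega)$ by measurability of concatenation. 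The solution $(\cY,\cZ,\cU,\cN)\coloneqq(\cY^{s,\omega,\P},\cZ^{s,\omega,\P},\cU^{s,\omega,\P},\cN^{s,\omega,\P})$ of \eqref{eq::P_BSDE2} is the limit, in the weighted norms of \cite[Section 3.2]{possamai2024reflections}, of the iterates $(\cY^{(n)},\cZ^{(n)},\cU^{(n)},\cN^{(n)})$ with $(\cY^{(0)},\cZ^{(0)},\cU^{(0)},\cN^{(0)})\equiv(0,0,0,0)$ and, inductively, obtained by \emph{freezing} the driver: with $g^{(n)}_r\coloneqq f^{s,\omega,\P}_r\big(\cY^{(n)}_r,\cY^{(n)}_{r\smallertext{-}},\cZ^{(n)}_r,\cU^{(n)}_r(\cdot)\big)$ and $\Xi^{(n)}\coloneqq\xi^{s,\omega}+\int_0^{T'}g^{(n)}_r\,\d (C^{s,\omega}_{s\smallertext{+}\smallertext{\cdot}})_r$, one sets
\[
	M^{(n+1)}_t\coloneqq\E^\P\big[\,\Xi^{(n)}\,\big|\,\cF_{t\smallertext{+}}\big],\qquad \cY^{(n+1)}_t\coloneqq M^{(n+1)}_t-\int_0^t g^{(n)}_r\,\d (C^{s,\omega}_{s\smallertext{+}\smallertext{\cdot}})_r,
\]
and reads $(\cZ^{(n+1)},\cU^{(n+1)},\cN^{(n+1)})$ off the $(\F_\smallertext{+},\P)$--orthogonal martingale decomposition of $M^{(n+1)}$ along $X^{c,\P}$ and $\tilde\mu^{X,\P}$.

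The core of the argument is the induction. Assume $(\omega,\P,\tilde\omega,t)\longmapsto\cY^{(n)}_t(\tilde\omega)$ and $(\omega,\P,\tilde\omega,t)\longmapsto\cY^{(n)}_{t\smallertext{-}}(\tilde\omega)$ are Borel-measurable, $(\omega,\P,\tilde\omega,t)\longmapsto\cZ^{(n)}_t(\tilde\omega)\in\R^d$ is Borel-measurable, $(\omega,\P,\tilde\omega,t,x)\longmapsto\cU^{(n)}_t(\tilde\omega;x)$ is Borel-measurable with $\cU^{(n)}_t(\tilde\omega;\cdot)\in\widehat\L^2_{\omega\otimes_s\tilde\omega,s\smallertext{+}t}(\mathsf{K}^{s,\omega,\P}_{\tilde\omega,t})$, and that, by the \emph{a priori} estimates of \cite[Section 3.2]{possamai2024reflections} under \Cref{ass::generator2}.$(iv)$--$(v)$, each iterate lies in the relevant weighted solution space under every $\P\in\fP(s,\omega)$. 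Then \Cref{ass::generator2}.$(iii)$ is precisely the statement that the frozen driver $(\omega,\P,\tilde\omega,r)\longmapsto g^{(n)}_r(\tilde\omega)$ is Borel-measurable (the base case $n=0$ being the special case of the zero integrand); a Fubini argument then gives Borel-measurability of $(\omega,\P,\tilde\omega)\longmapsto\Xi^{(n)}(\omega,\P,\tilde\omega)$, hence $\cF^\dagger$-measurability. By \Cref{lem::measurable_martingale_modification}.$(i)$ the conditional expectations $\E^\P[\Xi^{(n)}(\omega,\P,\cdot)\,|\,\cF_{t\smallertext{+}}]$ admit, for each $t$, $\cF^\dagger_{t\smallertext{+}}$-measurable versions; taking right-limits along the dyadics one assembles from them a jointly Borel-measurable, $\F^\dagger_\smallertext{+}$-optional, right-continuous process, to which \Cref{lem::measurable_martingale_modification}.$(ii)$ applies, and which under each $(\omega,\P)$ is a right-continuous $(\F_\smallertext{+},\P)$--martingale, square-integrable whenever $(\omega,\P)\in\fP(s,\omega)$. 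Subtracting the Borel-measurable process $\int_0^\cdot g^{(n)}_r\,\d (C^{s,\omega}_{s\smallertext{+}\smallertext{\cdot}})_r$ yields a Borel-measurable $\cY^{(n+1)}$, whose left-limit process is again Borel-measurable as a pointwise limit along rationals; and an application of \Cref{lem::measurable_decomposition} to $M^{(n+1)}$ (after replacing it by $0$ off the Borel set on which it is a square-integrable martingale, a set that contains $\{(\omega,\P):\P\in\fP(s,\omega)\}$) furnishes Borel-measurable integrands $\cZ^{(n+1)}$, $\cU^{(n+1)}$ — the latter valued in the correct $\widehat\L^2$ spaces by \Cref{lem::measurable_decomposition}.$(ii)$ — and the orthogonal remainder $\cN^{(n+1)}$. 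This closes the induction, so in particular every $(\omega,\P,\tilde\omega,t)\longmapsto\cY^{(n)}_t(\tilde\omega)$ is Borel-measurable.

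It remains to pass to the limit. By \cite[Section 3.2]{possamai2024reflections} the iterates converge to $(\cY,\cZ,\cU,\cN)$ in the weighted norms; in particular $\cY^{(n)}_0\longrightarrow\cY^{s,\omega,\P}_0$ in $\L^2(\P)$ (the weight $\cE(\hat\beta A^{s,\omega}_{s\smallertext{+}\smallertext{\cdot}})$ being equal to $1$ at the initial time), whence $\E^\P[\cY^{(n)}_0]\longrightarrow\E^\P[\cY^{s,\omega,\P}_0]$ pointwise on $\{(\omega,\P):\P\in\fP(s,\omega)\}$. Each map $(\omega,\P)\longmapsto\E^\P[\cY^{(n)}_0(\omega,\P,\cdot)]$ is Borel-measurable by \Cref{lem::measurable_martingale_modification}.$(i)$ applied with $g\coloneqq\cY^{(n)}_0$, so the pointwise limit $(\omega,\P)\longmapsto\E^\P[\cY^{s,\omega,\P}_0(T',\xi^{s,\omega})]$ is Borel-measurable on $\Omega\times\fP_\textnormal{sem}$ (or, after the modification above, everywhere); restricting it to the analytic subset $\{(\omega,\P):\P\in\fP(s,\omega)\}$ equipped with the trace Borel $\sigma$-algebra yields the claim.

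The main obstacle is the bookkeeping of measurability \emph{in the probability parameter} $\P$. Freezing the driver at the previous iterate means evaluating $f$ along the $\P$-dependent integrand $\cU^{(n)}$, which takes values in the $\P$-dependent spaces $\widehat\L^2(\mathsf{K}^{s,\omega,\P})$ associated with the disintegration of the $\P$-compensator $\nu^\P$; this is exactly the difficulty that dictates the (admittedly awkward) form of \Cref{ass::generator2}.$(iii)$, and it must be carried through each Picard step. The martingale step in turn forces one to know that conditional expectations and orthogonal martingale decompositions along $X^{c,\P}$ and $\tilde\mu^{X,\P}$ can be performed jointly Borel-measurably in $(\omega,\P)$ — this is the content of \Cref{lem::measurable_martingale_modification} and \Cref{lem::measurable_decomposition}, themselves adaptations of \cite[Section 3]{neufeld2014measurability}. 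The remaining checks — that the \emph{a priori} estimates of \cite{possamai2024reflections} keep all iterates square-integrable under each $\P\in\fP(s,\omega)$ (so that the martingale decomposition applies and the limit exists) and that the various measurability conventions in use (shifted stopping times, integrals against $C^{s,\omega}_{s\smallertext{+}\smallertext{\cdot}}$, left-limit processes, the trace $\sigma$-algebra of an analytic set) are genuinely joint — are routine via monotone-class and measurable-projection arguments together with the measurability of concatenation.
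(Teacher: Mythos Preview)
Your proposal is correct and follows essentially the same Picard-iteration strategy as the paper, relying on \Cref{lem::measurable_martingale_modification} for the conditional-expectation step and \Cref{lem::measurable_decomposition} for the orthogonal decomposition, exactly as the paper does.

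The one substantive difference is how you handle integrability. The paper first truncates the data---replacing $\xi$ by $\xi^n\coloneqq(\xi\lor(-n))\land n$, the generator by $f^n\coloneqq((f\lor(-n))\land n)\1_{\llbracket 0,\sigma_n\rrparenthesis}$ with $\sigma_n\coloneqq\inf\{t:C_t\geq n\}$---so that each iterated martingale $\cM^{\omega,\P,m}$ is bounded for \emph{every} $(\omega,\P)\in\widehat\Omega^C_s$, which feeds directly into the hypothesis of \Cref{lem::measurable_decomposition}; the truncation is then removed at the end via the stability result of \Cref{cor::stability}. You instead work with the original data and propose to set $M^{(n+1)}$ to zero off the set where it is a square-integrable $(\F_\smallertext{+},\P)$-martingale. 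This is valid, but you assert without proof that this set is Borel and contains $\{(\omega,\P):\P\in\fP(s,\omega)\}$; the former requires checking that the martingale property and square-integrability can each be expressed as countable intersections of Borel conditions via \Cref{lem::measurable_martingale_modification}.$(i)$, and the latter that the \emph{a priori} bounds of \cite[Section~3.2]{possamai2024reflections} keep every iterate, not just the limit, in the weighted space under \Cref{ass::generator2}.$(iv)$. Both checks go through, so your route works; the paper's truncation is cleaner and more self-contained, while yours avoids invoking a separate stability lemma.
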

	
	\begin{proof}
		We first explain why it suffices to consider the case of a bounded terminal condition and a bounded finite variation process in the dynamics of the BSDEs. Let 
		\begin{align*}
			\sigma_n\coloneqq & \inf\{t \in [0,\infty) : C_t \geq n\} = \inf\{t \in [0,\infty) \cap \Q : C_t \geq n\}, \; n \in \N^\star.
		\end{align*}
		Then, $(\sigma_n)_{n\in\N^\smalltext{\star}}$ is a sequence of positive $\F$--predictable stopping times tending to infinity (see \cite[Proposition I.2.13]{jacod2003limit}). For the moment, suppose that we have shown that the map
		\begin{equation}\label{eq::truncated_measurability}
			(\omega,\P) \longmapsto \E^\P[\cY^{s,\omega,\P,n}_0((T-s\land T)^{s,\omega},(\xi^n)^{s,\omega})],
		\end{equation}
		defined on $\{(\omega,\P) : \omega \in \Omega ,\; \P \in \fP(s,\omega)\}$ is Borel-measurable. Here, $\cY^{s,\omega,\P,n}((T - s \land T)^{s,\omega}, (\xi^n)^{s,\omega})$ denotes the solution to the BSDE with generator $f^n \coloneqq \min\{\max\{f, -n\}, n\} \1_{\llbracket 0, \sigma_\smalltext{n} \rrparenthesis}$ and terminal condition $\xi^n \coloneqq \min\{\max\{\xi, -n\}, n\}$, before shifting by $(\omega, s)$. Note that each $f^n$ satisfies the same properties as $f$, and that the finite variation component of the BSDE is always bounded from above by $n^2$. As $\sigma_n$ tends to infinity and $(\xi^n, f^n)$ converges to $(\xi, f)$, we deduce from the stability of solutions to BSDEs (see \Cref{cor::stability}) that
		\begin{equation*}
			\lim_{n \rightarrow \infty}\E^\P\big[\cY^{s,\omega,\P,n}_0((T-s\land T)^{s,\omega},(\xi^n)^{s,\omega})\big] = \E^\P\big[\cY^{s,\omega,\P}_0((T-s\land T)^{s,\omega},\xi^{s,\omega})\big],
		\end{equation*}
		pointwise on $\{(\omega,\P) : \omega \in \Omega,\; \P \in \fP(s,\omega)\}$, which implies the desired Borel-measurability.
		
		\medskip
		We fix $n \in \N^\star$ and turn to the Borel-measurability of \eqref{eq::truncated_measurability}.
		In what follows, all maps are implicitly extended to the whole space $\Omega \times \fP_\textnormal{sem} \times \Omega \times [0,\infty)$ by setting them to zero outside their original domains. To simplify the notation, we will write $f$ instead of $f^n$ and $\xi$ instead of $\xi^n$. The map
		\begin{equation*}
			\widehat{\Omega}^C_s \times \Omega \ni (\omega,\P,\tilde\omega) 
			\longmapsto 
			\xi^{s,\omega}(\tilde\omega) + \int_0^{(T\smallertext{-}s\land T)^{s,\omega}(\tilde\omega)} f^{s,\omega,\P}_r(\tilde\omega,0,0,0,\mathbf{0})\d(C^{s,\omega}_{s\smallertext{+}\smallertext{\cdot}})_r(\tilde\omega) \in \R,
		\end{equation*}
		is Borel-measurable and bounded by $n+n^2$. By \Cref{lem::measurable_martingale_modification}, there exists a Borel-measurable map
		\begin{equation*}
			\widehat{\Omega}^C_s \times \Omega \times [0,\infty) \ni (\omega,\P,\tilde\omega,t) \longmapsto \cM^{\omega,\P,0}_t(\tilde\omega) \in \R,
		\end{equation*}
		such that for every $(\omega,\P) \in \widehat{\Omega}^C_s$, the process $\cM^{\omega,\P,0}$ is a real-valued, right-continuous, $\P$--a.s. bounded, $\F_\smallertext{+}$-optional, $(\F_\smallertext{+},\P)$-martingale satisfying
		\begin{equation*}
			\cM^{\omega,\P,0}_t = \E^\P\bigg[ \xi^{s,\omega} + \int_0^{(T\smallertext{-}s \land T)^{\smalltext{s}\smalltext{,}\smalltext{\omega}}} f^{s,\omega,\P}_r(0,0,0,\mathbf{0})\d(C^{s,\omega}_{s\smallertext{+}\smallertext{\cdot}})_r \bigg|\cF_{t\smallertext{+}}\bigg], \; \textnormal{$\P$--a.s.}, \; t \in [0,\infty).
		\end{equation*}
		We then let $\cY^{\omega,\P,0} : \Omega \times [0,\infty] \longrightarrow \R$ be defined by $\cY^{\omega,\P,0}_\infty \coloneqq \xi^{s,\omega}$ and
		\begin{equation*}
			\cY^{\omega,\P,0}_t \coloneqq \cM^{\omega,\P,0}_t - \int_0^{(T\smallertext{-}s\land T)^{\smalltext{s}\smalltext{,}\smalltext{\omega}}\land t} f^{s,\omega,\P}_r(0,0,0,\mathbf{0})\d(C^{s,\omega}_{s\smallertext{+}\smallertext{\cdot}})_r, \; t \in [0,\infty).
		\end{equation*}
		Note that integral process above is c\`adl\`ag and $\F^\P_\smallertext{+}$-adapted.
		Then the map
		\begin{equation*}
			\widehat{\Omega}^C_s \times \Omega \times [0,\infty] \ni (\omega,\P,\tilde\omega,t) \longmapsto \cY^{\omega,\P,0}_t(\tilde\omega) \in \R,
		\end{equation*}
		is Borel-measurable and the process $\cY^{\omega,\P,0}$ is right-continuous, $\P$--a.s. c\`adl\`ag, and $\F^\P_\smallertext{+}$-adapted. By \Cref{lem::measurable_decomposition}, there exist Borel-measurable functions 
		\begin{equation*}
			\widehat{\Omega}^C_s \times \Omega \times [0,\infty) \ni (\omega,\P,\tilde\omega,t) \longmapsto \cZ^{\omega,\P,0}_t(\tilde\omega) \in \R^d,
			\;
			\text{and}
			\;
			\widehat{\Omega}^C_s \times \Omega \times [0,\infty) \times \R^d \ni (\omega,\P,\tilde\omega, t, x) \longmapsto \cU^{\omega,\P,0}_t(\tilde\omega;x) \in \R,
		\end{equation*}
		such that $(\cZ^{\omega,\P,0},\cU^{\omega,\P,0}) \in \H^2(X^{c,\P};\F,\P) \times \H^2(\mu^X;\F^\P_\smallertext{+},\P)$ with $\cU^{\omega,\P,0}_t(\tilde\omega;\cdot) \in \widehat{\L}^2_{\omega\otimes_\smalltext{s}\tilde\omega,s+t}(\mathsf{K}^{s,\omega,\P}_{\tilde\omega,t})$ for every $(\tilde\omega,t) \in \Omega \times [0,\infty)$, and
		\begin{equation*}
			\cN^{\omega,\P,0} \coloneqq \cM^{\omega,\P,0} - \cM^{\omega,\P,0}_0 - \bigg(\int_0^\cdot \cZ^{\omega,\P,0}_r \d X^{c,\P}_r\bigg)^{(\P)} -  \bigg(\int_0^\cdot\int_{\R^\smalltext{d}}\cU^{\omega,\P,0}_r(x)\tilde\mu^{X,\P}(\d r, \d x)\bigg)^{(\P)},
		\end{equation*}
		belongs to $\cH^{2,\perp}_0(X^{c,\P},\mu^X;\F_{\smallertext{+}},\P)$. Since $\cM^{\omega,\P,0} = \cM^{\omega,\P,0}_{\cdot \land (T-s\land T)^{\smalltext{s}\smalltext{,}\smalltext{\omega}}}$, $\P$--a.s., we can choose the integrands $\cZ^{\omega,\P,0}$ and $\cU^{\omega,\P,0}$ such that the identities $\cZ^{\omega,\P,0} = \cZ^{\omega,\P,0}\1_{\llparenthesis 0,(T-s\land T)^{\smalltext{s}\smalltext{,}\smalltext{\omega}}\rrbracket}$ and $\cU^{\omega,\P,0} = \cU^{\omega,\P,0}\1_{\llparenthesis 0,(T-s\land T)^{\smalltext{s}\smalltext{,}\smalltext{\omega}}\rrbracket}$ hold. This then yields
		\begin{align*}
			\cY^{\omega,\P,0}_t 
			&= \xi^{s,\omega} + \int_t^{(T\smallertext{-}s\land T)^{\smalltext{s}\smalltext{,}\smalltext{\omega}}} f^{s,\omega,\P}_r(0,0,0,\mathbf{0})\d(C^{s,\omega}_{s\smallertext{+}\smallertext{\cdot}})_r - \bigg(\int_t^{(T\smallertext{-}s\land T)^{\smalltext{s}\smalltext{,}\smalltext{\omega}}} \cZ^{\omega,\P,0}_r \d X^{c,\P}_r\bigg)^{(\P)} \\
			&\quad -  \bigg(\int_t^{(T\smallertext{-}s\land T)^{\smalltext{s}\smalltext{,}\smalltext{\omega}}}\int_{\R^\smalltext{d}}\cU^{\omega,\P,0}_r(x)\tilde\mu^{X,\P}(\d r, \d x)\bigg)^{(\P)} - \int_t^{(T\smallertext{-}s\land T)^{\smalltext{s}\smalltext{,}\smalltext{\omega}}}\d\cN^{\omega,\P,0}_r, \; t \in [0,\infty], \; \textnormal{$\P$--a.s.}
		\end{align*}
		We have thus constructed the first component $\cY^{\omega,\P,0}_t(\tilde\omega)$ of the BSDE above Borel-measurable in the arguments $(\omega,\P,\tilde\omega,t)$, and therefore, by \Cref{lem::measurable_martingale_modification}, the map $(\omega,\P) \longmapsto \E^\P[\cY^{\omega,\P,0}_0]$	is Borel-measurable on $\{(\omega,\P) : \omega \in \Omega,\; \P \in \fP(s,\omega)\}$. Note that $\cY^{\omega,\P,0}(\tilde\omega)$ restricted on $[0,\infty)$ is only known to be c\`adl\`ag for $\P$--a.e. $\tilde\omega \in \Omega$. We therefore define $\cY^{\omega,\P,0}_{t\smallertext{-}}(\tilde\omega)$ for $t \in [0,\infty)$ as in \cite[Equation (2.3.3)]{weizsaecker1990stochastic}, so that $\cY^{\omega,\P,0}_{t\smallertext{-}}(\tilde\omega)$ coincides with the left-limit of $\cY^{\omega,\P,0}_\cdot(\tilde\omega)$ at $t \in (0,\infty)$ for $\P$--a.e. $\tilde\omega \in \Omega$, and
		\begin{equation*}
			\widehat{\Omega}^C_s \times \Omega \times [0,\infty) \ni (\omega,\P,\tilde\omega,t) \longmapsto \cY^{\omega,\P,0}_{t\smallertext{-}}(\tilde\omega) \in \R,
		\end{equation*}
		is still Borel-measurable with $(\cY^{\omega,\P,0}_{t\smallertext{-}})_{t \in [0,\infty)}$ now being even $\F^\P_\smallertext{+}$-predictable instead.
		
		\medskip
		We now replace $f^{s,\omega,\P}_r(0,0,0,\mathbf{0})$ by $f_r^{s,\omega,\P}(\cY^{\omega,\P,0}_{r\smallertext{-}},\cY_r^{\omega,\P,0},\cZ^{\omega,\P,0}_r,\cU_r^{\omega,\P,0})$ in the arguments above and note that we can then construct a family $(\cY^{\omega,\P,1},\cZ^{\omega,\P,1},\cU^{\omega,\P,1})$ satisfying the same properties as $(\cY^{\omega,\P,0},\cZ^{\omega,\P,0},\cU^{\omega,\P,0})$ but with corresponding generator $f_r^{s,\omega,\P}(\cY^{\omega,\P,0}_{r\smallertext{-}},\cY_r^{\omega,\P,0},\cZ^{\omega,\P,0}_r,\cU_r^{\omega,\P,0})$. 
		
		\medskip
		By applying the same reasoning inductively, we can construct $(\cY^{\omega,\P, m},\cZ^{\omega,\P, m},\cU^{\omega,\P, m})$ for each $m \in \N$ with corresponding generator $f_r^{s,\omega,\P}(\cY^{\omega,\P, m\smallertext{-}1}_{r\smallertext{-}},\cY^{\omega,\P, m\smallertext{-}1}_r,\cZ^{\omega,\P, m\smallertext{-}1}_r,\cU_r^{\omega,\P, m\smallertext{-}1})$ such that $(\omega,\P) \longmapsto \E^\P[\cY^{\omega,\P, m}_0]$	is Borel-measurable on $\{(\omega,\P) : \omega \in \Omega,\; \P \in \fP(s,\omega)\}$. By Banach's fixed-point theorem and the proof of \cite[Theorem 3.7]{possamai2024reflections}, we deduce
		\[
			\E^\P\big[\cY^{s,\omega,\P}_0((T-s\land T)^{s,\omega},\xi^{s,\omega})\big]
			= \lim_{m \to \infty}\E^\P[\cY^{\omega,\P,m}_0].
		\]
		This implies that the map
		\begin{equation*}
			(\omega,\P) \longmapsto \E^\P\big[\cY^{s,\omega,\P}_0((T-s\land T)^{s,\omega},\xi^{s,\omega})\big] = \lim_{m \to \infty}\E^\P[\cY^{\omega,\P,m}_0]
		\end{equation*}
		is Borel-measurable on $\{(\omega,\P) : \omega \in \Omega, \; \P \in \fP(s,\omega)\}$, which concludes the proof.
	\end{proof}
	
	We need one last fact about the conditioning of solutions to BSDEs before proceeding to the proof of \Cref{thm::measurability2}.

	\begin{lemma}\label{lem::conditioning_bsde2}
		Suppose that {\rm\Cref{ass::probabilities2}} and {\rm\ref{ass::generator2}} hold. Let $0 \leq s \leq t < \infty$, $\bar{\omega} \in \Omega$, and $\P \in \fP(s,\bar\omega)$. Then there exists a $\P$--null set $\sN \in \cF_{t-s}$ such that
		\begin{align}\label{eq::conditioning_solution_bsde}
			\E^\P\big[\cY^{s,\bar\omega,\P}_{t-s}((T-s \land T)^{s,\bar\omega},\xi^{s,\bar\omega}) \big| \cF_{t\smallertext{-}s}\big](\omega)
			= \E^{\P^{\smalltext{t}\smalltext{-}\smalltext{s}\smalltext{,}\smalltext{\omega}}}\big[\cY^{t,\bar\omega\otimes_\smalltext{s}\omega,\P^{\smalltext{t}\smalltext{-}\smalltext{s}\smalltext{,}\smalltext{\omega}}}_0 ((T -t\land T)^{t,\bar\omega\otimes_\smalltext{s}\omega},\xi^{t,\bar\omega\otimes_\smalltext{s}\omega})\big],\; \forall \omega \in \Omega\setminus\sN.
		\end{align}
	\end{lemma}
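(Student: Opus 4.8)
\textbf{Proof strategy for \Cref{lem::conditioning_bsde2}.} The plan is to argue that both sides of \eqref{eq::conditioning_solution_bsde} are obtained as limits (in an appropriate $\L^2$-sense, along Picard iterations) of processes for which the conditioning identity can be verified directly, and that these limits commute with taking conditional expectations. I would first reduce to the case of a bounded terminal condition $\xi$ and a bounded finite-variation component in the dynamics of the BSDE, exactly as in the first paragraph of the proof of \Cref{prop::Borel-measurability_value_function}: using the predictable stopping times $\sigma_n = \inf\{r : C_r \geq n\}$ and the truncations $(\xi^n, f^n)$, the stability result \Cref{cor::stability} gives convergence of the corresponding $\cY^{s,\bar\omega,\P,n}$ to $\cY^{s,\bar\omega,\P}$, and similarly after conditioning and after shifting by $(t-s, \omega)$; passing to the limit on both sides of the (truncated) identity then yields the general case. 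So from now on $\xi$ is bounded and the drift term is bounded.

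\medskip
The core of the argument is an induction over the Picard iterations used to construct the BSDE solutions in \cite[Section 3]{possamai2024reflections}. Fix the iteration scheme $(\cY^{s,\bar\omega,\P,m},\cZ^{s,\bar\omega,\P,m},\cU^{s,\bar\omega,\P,m})_{m\in\N}$ as in the proof of \Cref{prop::Borel-measurability_value_function}: $\cY^{s,\bar\omega,\P,0}$ is built from the martingale $\cM^{\omega,\P,0}$ with terminal value $\xi^{s,\bar\omega} + \int_0^{(T-s\land T)^{s,\bar\omega}} f^{s,\bar\omega,\P}_r(0,0,0,\mathbf 0)\,\d(C^{s,\bar\omega}_{s+\cdot})_r$, and $\cY^{s,\bar\omega,\P,m+1}$ is the BSDE with zero-order generator $f^{s,\bar\omega,\P}_r(\cY^{s,\bar\omega,\P,m}_{r-},\cY^{s,\bar\omega,\P,m}_r,\cZ^{s,\bar\omega,\P,m}_r,\cU^{s,\bar\omega,\P,m}_r)$. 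For the base case $m=0$, the conditioning identity at level $t-s$ is a statement purely about conditional expectations of a random variable and a pathwise integral against $C$: by the tower property and the fact (from \Cref{prop::conditioning_characteristics2} and \Cref{cor::shif_quadratic_variation_continuous_martingale_part}) that the conditioned characteristics, the conditioned continuous martingale part, the conditioned compensated jump measure, and the conditioned integrator $C^{s,\bar\omega}_{s+\cdot}$ all agree $\P^{t-s,\omega}$-a.s. with the $(t,\bar\omega\otimes_s\omega)$-shifted data, together with \Cref{lem::conditioning_martingale2} for the martingale part, one gets that $\cY^{s,\bar\omega,\P,0}_{t-s+\cdot}$, shifted by $(t-s,\omega)$, solves the same level-$0$ BSDE as $\cY^{t,\bar\omega\otimes_s\omega,\P^{t-s,\omega},0}$ for $\P$-a.e. $\omega$; taking $\E^{\P}[\cdot\,|\,\cF_{t-s}]$ on the left and using $\E^{\P}[\,\cdot\,|\,\cF_{t-s}](\omega) = \E^{\P^{t-s,\omega}}[(\cdot)^{t-s,\omega}]$ gives the identity. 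For the inductive step, one assumes \eqref{eq::conditioning_solution_bsde} (and the analogous identities for the $\cZ$-, $\cU$-integrands, in the appropriate $\widehat\L^2$- and $\H^2$-senses) at level $m$; the uniqueness of the orthogonal martingale decomposition (\Cref{lem::measurable_decomposition} / the proposition preceding it) then transports this through the construction of level $m+1$, because the generator at level $m+1$ depends only on the level-$m$ solution, whose conditioned version matches the shifted one by the inductive hypothesis, and because all the integration data conditions as above. One then lets $m\to\infty$: the Picard iterations converge geometrically in the weighted norms (Banach fixed point, \cite[Theorem 3.7]{possamai2024reflections}), the convergence is uniform enough that $\E^{\P}[\cY^{s,\bar\omega,\P,m}_{t-s}\,|\,\cF_{t-s}] \to \E^{\P}[\cY^{s,\bar\omega,\P}_{t-s}\,|\,\cF_{t-s}]$ in $\L^2(\P)$ (hence along a subsequence $\P$-a.s., which fixes the exceptional set $\sN$), and likewise $\E^{\P^{t-s,\omega}}[\cY^{t,\bar\omega\otimes_s\omega,\P^{t-s,\omega},m}_0] \to \E^{\P^{t-s,\omega}}[\cY^{t,\bar\omega\otimes_s\omega,\P^{t-s,\omega}}_0]$ for $\P$-a.e. $\omega$. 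The exceptional null set is collected as a countable union over the iteration index $m$ (and over the dyadic rationals if one wants the identity simultaneously for a dense set of times, though here a single time $t$ suffices), and lies in $\cF_{t-s}$ because each iterate and each conditional expectation is $\F^\P_{+}$-predictable/adapted up to time $t-s$ and Galmarino's test places the relevant events in $\cF_{t-s}$ after intersecting with the appropriate completion.

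\medskip
\textbf{Main obstacle.} The delicate point is not the base case but controlling the bookkeeping in the inductive step: one must make sure that the $\P$-null set outside which the level-$m$ conditioning identity holds is, for $\P$-a.e.\ $\omega$, genuinely transported to a $\P^{t-s,\omega}$-null set at level $m+1$, and that this is compatible with the fact that the families $(\P^{t-s,\omega})_{\omega}$, $(\cU^{s,\bar\omega,\P,m})$, etc.\ are only defined up to null sets (cf.\ \Cref{rem::measures_in_fP}.$(ii)$). This is handled by working with the explicit Borel-measurable versions of the conditioned characteristics and integrands from \Cref{lem::measurability_characteristics} and \Cref{lem::measurable_decomposition} — which are defined \emph{everywhere}, not just almost everywhere — so that the shifted iterate $\cY^{s,\bar\omega,\P,m+1}_{t-s+\cdot}(\omega\otimes_{t-s}\cdot)$ and the iterate $\cY^{t,\bar\omega\otimes_s\omega,\P^{t-s,\omega},m+1}$ are literally constructed from the same deterministic recipe applied to the same (a.s.-equal) data, and then the a.s.-equality propagates by the $\L^2$-uniqueness of each step rather than by any pathwise identification. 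A secondary, more routine difficulty is verifying that the $\L^2$-norm bounds are uniform in $\omega$ (so that dominated convergence applies when one integrates the $\P$-a.e.\ convergence in $\omega$ against $\P$), which follows from \Cref{ass::generator2}.$(iv)$ and the a priori estimates in \cite{possamai2024reflections}, together with the conditioning of those integrability quantities. I do not expect genuinely new ideas to be needed beyond what already appears in the proof of \Cref{prop::Borel-measurability_value_function} and in the analogous statement \cite[Proposition 4.7]{possamai2018stochastic} for the continuous case; the novelty is purely in threading the jump data (the compensated measure and the $\widehat\L^2$-spaces, whose norms depend on $\P$) through the argument.
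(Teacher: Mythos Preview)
Your approach is correct in spirit but takes a more circuitous route than the paper. The paper does \emph{not} iterate through the Picard scheme. Instead, it works directly with the full solution $(\cY^{\P},\cZ^{\P},\cU^{\P},\cN^{\P})$ of the BSDE under $\P$, defines the shifted tuple
\[
(\cY^{\omega},\cZ^{\omega},\cU^{\omega},\cN^{\omega}) \coloneqq \big(\cY^{\P}_{t+\cdot}(\omega\otimes_t\cdot),\,\cZ^{\P}_{t+\cdot}(\omega\otimes_t\cdot),\,\cU^{\P}_{t+\cdot}(\omega\otimes_t\cdot),\,\cN^{\P}_{t+\cdot}(\omega\otimes_t\cdot)-\cN^{\P}_{t}(\omega\otimes_t\cdot)\big),
\]
and verifies, for $\P$--a.e.\ $\omega$, that this tuple solves the shifted BSDE under $\P^{t,\omega}$ with data $((T-t\land T)^{t,\omega},\xi^{t,\omega},f^{t,\omega,\P^{t,\omega}})$. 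Uniqueness of BSDE solutions then gives $\cY^{\omega}_0 = \cY^{t,\omega,\P^{t,\omega}}_0$, $\P^{t,\omega}$--a.s., and taking $\E^{\P^{t,\omega}}[\cdot]$ on both sides yields the identity. The technical core is exactly the verification you sketch for your base case---that the conditioned characteristics, continuous martingale part, and compensated jump measure agree with the shifted ones (\Cref{prop::conditioning_characteristics2}, \Cref{cor::shif_quadratic_variation_continuous_martingale_part}), that the stochastic integrals transform correctly under the shift (this is done by approximation with elementary predictable integrands and a diagonal-subsequence argument for the $X^{c}$-integral, and by identifying jumps for the $\tilde\mu^{X}$-integral), and that $\cN^{\omega}$ is orthogonal to $X^{c,\P^{t,\omega}}$ and $\mu^X$ under $\P^{t,\omega}$---but this work is performed \emph{once}, for the actual solution, rather than at each Picard level.

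What your approach buys is that each iterate has a frozen generator, so the BSDE at each level is linear and its analysis is marginally simpler; what it costs is the bookkeeping you correctly flag as the main obstacle (propagating null sets across the induction and the limit), plus the truncation/stability reduction at the start. The paper sidesteps both: no truncation reduction is needed, and only a single $\P$--null set arises, which is then upgraded to an $\cF_{t-s}$--null set at the end by composing the Borel-measurable map \eqref{eq::borel_measurability_value_function} with the $\cF_{t-s}$-measurable map $\omega\mapsto(\bar\omega\otimes_s\omega_{\cdot\land(t-s)},\P^{t-s,\omega})$ and invoking universal measurability of the analytic target set. Your route would work, but the paper's direct ``shift-and-identify-by-uniqueness'' argument is shorter and cleaner.
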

	
	\begin{proof}[Proof of \Cref{thm::measurability2}]
		Let $\textnormal{proj}_\Omega :\Omega \times \fP(\Omega) \ni (\omega,\P) \longmapsto \omega \in \Omega$. For every $\lambda \in \R$, we have
		\begin{align*}
			\big\{\widehat{\cY}_s(T,\xi) > \lambda\big\} 
			&= \textnormal{proj}_\Omega\big(\big\{(\omega,\P) : \omega \in \Omega, \, \P \in \fP(s,\omega),\, \E^\P\big[\cY^{s,\omega,\P}_0((T-\tau\land T)^{s,\omega},\xi^{s,\omega})\big] > \lambda\big\}\big).
		\end{align*} 
		Since Borel-measurable sets are analytic, and projections of analytic sets remain analytic (due to the continuity of the projection operator), the upper semi-analyticity of $\widehat{\cY}_s(T,\xi)$ follows from \Cref{prop::Borel-measurability_value_function}. Furthermore, it then follows from \cite[Corollary 8.4.3]{cohn2013measure} that $\widehat{\cY}_s(T,\xi)$ is $\cF$--universally measurable. Consider the map $\iota_s : \Omega \ni \omega \longmapsto X_{\cdot\land s}(\omega) \in \Omega$, which satisfies $\iota_s(\omega) = X_{\cdot\land s}(\omega) = \omega_{\cdot \land s}$ and is $(\cF_s,\cF)$-measurable by \cite[Proposition 2.3.10]{weizsaecker1990stochastic}. Then, $\iota_s$ is also $(\cF_s,\cF)$--universally measurable by \cite[Lemma 8.4.6]{cohn2013measure}. For $\eta \coloneqq \widehat{\cY}_s(T,\xi)$, we have $\eta(\iota_s(\omega)) = \eta(\omega)$ for all $\omega \in \Omega$ by definition. Therefore, we conclude that $\widehat{\cY}_s(T,\xi)$ is $\cF_s$--universally measurable. To see that $\widehat{\cY}_{s \land T(\omega)}(T,\xi)(\omega) = \widehat{\cY}_{s}(T,\xi)(\omega)$, note that for $s \geq T(\omega)$, we have $T(\omega\otimes_s\bar{\omega}) = T(\omega)$ by Galmarino's test (see \cite[Theorem IV.100.(a)]{dellacherie1978probabilities}), and therefore $(T - s \land T)^{s,\omega} \equiv 0$.
		
		\medskip
		We turn to \eqref{eq::dynamic_programming_principle2}, and follow closely the proof of \cite[Theorem 2.3]{nutz2013constructing}. Fix $\P \in \fP_0$ and $\varepsilon \in (0,\infty)$. By \cite[Proposition 7.50]{bertsekas1978stochastic}, there exists an analytically measurable\footnote{The analytic $\sigma$-algebra is generated by all analytic sets and is therefore contained in the universal completion.} map $\Q^\prime : \Omega \longrightarrow \fP(\Omega)$ such that 
		\begin{equation}\label{eq::analytic_selection2}
			\Q^\prime(\omega) \in \fP(s,\omega),
			\;
			\E^{\Q^\smalltext{\prime}(\omega)} \big[\cY^{s,\omega,{\Q^\smalltext{\prime}(\omega)}}_0((T-s\land T)^{s,\omega},\xi^{s,\omega})\big] 
			\geq \big(\widehat\cY_s(T,\xi)(\omega) - \varepsilon\big) \1_{\{\hat\cY_\smalltext{s}(T,\xi) < \infty\}}(\omega) + \frac{1}{\varepsilon}\1_{\{\hat\cY_\smalltext{s}(T,\xi) = \infty\}}(\omega),
		\end{equation}
		for each $\omega \in \Omega$ for which $\fP(s,\omega) \neq \varnothing$. The map $\widetilde\Q : \Omega \longrightarrow \fP(\Omega)$ given by $\widetilde\Q(\omega) \coloneqq \Q^\prime({\omega_{\cdot \land s}})$ is $\cF^\ast_s$-measurable and also satisfies \eqref{eq::analytic_selection2} for each $\omega \in \Omega$ with $\fP(s,\omega)\neq \varnothing$. We now choose an $\cF_s$-measurable map $\Q : \Omega \longrightarrow \fP(\Omega)$ satisfying $\Q(\cdot) = \widetilde\Q(\cdot)$, $\P$--a.s., see \cite[Lemma 1.27]{kallenberg2021foundations}.\footnote{Polish spaces are Borel spaces by \cite[Theorem 1.8]{kallenberg2021foundations}.} Since $\fP(s,\omega) \neq \varnothing$ for $\P$--a.e. $\omega \in\Omega$ by \Cref{ass::probabilities2}.$(ii)$, we have that $\Q(\omega)$ satisfies \eqref{eq::analytic_selection2} for $\P$--a.e. $\omega \in \Omega$. Let
		\begin{equation*}
			\overline{\P}[A] \coloneqq  \iint_{\Omega\times\Omega} \big(\1_A\big)^{s,\omega}(\omega^\prime)\Q(\omega;\d\omega^\prime)\P(\d\omega), \; A \in \cF.
		\end{equation*}
		Then, $\overline\P \in \fP_0$ by \Cref{ass::probabilities2}.$(iii)$, $\overline{\P} = \P$ on $\cF_s$ and $\Q(\omega) = \bar\P^{s,\omega}$ for $\P$--a.e. $\omega \in \Omega$ (see \Cref{rem::measures_in_fP}). Using \Cref{lem::conditioning_bsde2}, we obtain
		\begin{equation*}
			\E^{\bar\P}\big[\cY^{\bar\P}_s(T,\xi)\big|\cF_s\big](\omega) = \E^{\Q(\omega)}\big[\cY^{s,\omega,\Q(\omega)}_0((T-s\land T)^{s,\omega},\xi^{s,\omega})\big] 
			\geq 
			\big(\widehat\cY_s(T,\xi)(\omega) - \varepsilon\big) \1_{\{\hat\cY_\smalltext{s}(T,\xi) < \infty\}}(\omega) + \frac{1}{\varepsilon}\1_{\{\hat\cY_\smalltext{s}(T,\xi) = \infty\}}(\omega),
		\end{equation*}
		for $\P$--a.e. $\omega \in \Omega$. This then yields
		\begin{equation*}
			\underset{\P^\prime \in \fP_\smalltext{0}(\cF_{s},\P)}{{\esssup}^\P} \E^{\P^\smalltext{\prime}} \big[ \cY^{\P^\smalltext{\prime}}_s(T,\xi)\big| \cF_s\big] 
			\geq 
			\big(\widehat\cY_s(T,\xi) - \varepsilon\big) \1_{\{\hat\cY_\smalltext{s}(T,\xi) < \infty\}} + \frac{1}{\varepsilon}\1_{\{\hat\cY_\smalltext{s}(T,\xi) = \infty\}}, \; \text{$\P$--a.s.},
		\end{equation*}
		and we then let $\varepsilon$ tend to zero.
		
		\medskip
		The converse inequality can be established as follows. Fix $\P \in \fP_0$, and let $\tilde{\P} \in \fP_0(\cF_s, \P)$. From \Cref{ass::probabilities2}.$(ii)$ and \Cref{lem::conditioning_bsde2}, we obtain
		\begin{equation*}
			\E^{\tilde\P}\big[\cY^{\tilde\P}_s(T,\xi)\big| \cF_s \big](\omega) = \E^{\tilde\P^{\smalltext{s}\smalltext{,}\smalltext{\omega}}}\big[\cY^{s,\omega,\tilde\P^{\smalltext{s}\smalltext{,}\smalltext{\omega}}}_0((T-s\land T)^{s,\omega},\xi^{s,\omega})\big] \leq \widehat\cY_{s}(T,\xi)(\omega), \; \text{for $\tilde\P$--a.e. $\omega \in \Omega$.}
		\end{equation*}
		Note that the left-hand side is $\cF_s$-measurable, while the right-hand side is $\cF_s$--universally measurable. Thus, since $\P = \tilde{\P}$ on $\cF_s$, the inequality above also holds for $\P$--a.e. $\omega \in \Omega$. It remains to take the essential supremum over $\tilde\P \in \fP_0(\cF_s,\P)$ under $\P$ on the left-hand side, which completes the proof.
	\end{proof}

	\subsection{Path-regularisation of the value function}\label{sec::proof_regularisation}
	
	This section is devoted to the proof of the regularisation of the value function as stated in \Cref{thm::down-crossing}. The proofs of all auxiliary lemmata are deferred to \Cref{sec::proofs_lemmata_regularisation}.
		
	\begin{lemma}\label{lem::solv_bsde_cond}
		Suppose that {\rm Assumptions \ref{ass::probabilities2}}, {\rm \ref{ass::generator2}}, and {\rm\ref{ass::crossing}}.$(i)$--$(iii)$ hold. Let $s \in [0,\infty)$, $\omega \in \Omega$, $\P\in\fP(s,\omega)$, and $s \leq \tau$ be an $\F$--stopping time. Then
		\begin{align*}
			&\cY^{s,\omega,\P}_{\cdot\land (\tau \land T\smallertext{-}s \land T)^{\smalltext{s}\smalltext{,}\smalltext{\omega}}}((T-s\land T)^{s,\omega},\xi^{s,\omega})
			= \cY^{s,\omega,\P}\big((\tau \land T-s \land T)^{s,\omega},\cY^{s,\omega,\P}_{(\tau\land T\smallertext{-}s\land T)^{s,\omega}}((T-s\land T)^{s,\omega},\xi^{s,\omega})\big), \; \textnormal{$\P$--a.s.},
		\end{align*}
		and on $[0,(\tau\land T-s\land T)^{s,\omega})$
		\begin{align*}
			&\cY^{s,\omega,\P}\big((\tau \land T-s \land T)^{s,\omega},\cY^{s,\omega,\P}_{(\tau\land T\smallertext{-}s\land T)^{s,\omega}}((T-s\land T)^{s,\omega},\xi^{s,\omega})\big) \\
			&= \cY^{s,\omega,\P}\Big((\tau \land T-s \land T)^{s,\omega},\E^\P\big[\cY^{s,\omega,\P}_{(\tau\land T\smallertext{-}s\land T)^{s,\omega}}((T-s\land T)^{s,\omega},\xi^{s,\omega})\big|\cF_{(\tau\land T\smallertext{-}s\land T)^{s,\omega}}\big]\Big), \; \textnormal{$\P$--a.s.}
		\end{align*}
		Furthermore, for $s = 0$, the same assertion holds if \textnormal{\Cref{ass::crossing}}.$(i)$--$(iii)$ is satisfied only at $s = 0$.
	\end{lemma}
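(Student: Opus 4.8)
\textbf{Proof plan for \Cref{lem::solv_bsde_cond}.}

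The plan is to reduce everything to the case $s=0$ by the standard shifting identities. Indeed, writing $\tilde T\coloneqq(T-s\land T)^{s,\omega}$, $\tilde\tau\coloneqq(\tau\land T-s\land T)^{s,\omega}$ and noting that $\cY^{s,\omega,\P}$ solves a BSDE with data $(\tilde T,\xi^{s,\omega},f^{s,\omega,\P})$ relative to $(\F_\smallertext{+},\P)$, the claimed identities take exactly the form of the $s=0$ statement with $(T,\xi,f,\fP_0)$ replaced by $(\tilde T,\xi^{s,\omega},f^{s,\omega,\P},\fP(s,\omega))$, and \Cref{ass::crossing}.$(i)$--$(iii)$ is precisely designed to transfer to this shifted data. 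So from now on I take $s=0$ and prove the two displayed equalities for a BSDE with data $(T,\xi,f)$ relative to $(\F_\smallertext{+},\P)$ and a stopping time $\tau$.

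First I would prove the \emph{flow property} (the first identity). Set $\eta\coloneqq\cY^\P_{\tau\land T}(T,\xi)$. On the stochastic interval $\llbracket 0,\tau\land T\rrbracket$, the quadruple $(\cY^\P(T,\xi),\cZ^\P(T,\xi),\cU^\P(T,\xi),\cN^\P(T,\xi))$ restricted and stopped at $\tau\land T$ satisfies, by inspection of the dynamics in \eqref{eq::P_BSDE2} together with the fact that stochastic integrals stopped at $\tau\land T$ are the stochastic integrals of the stopped/restricted integrands, a BSDE with terminal time $\tau\land T$, terminal value $\eta$ and the same generator $f^\P$ (restricted to $\llparenthesis 0,\tau\land T\rrbracket$). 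The only genuine point is that this stopped object lies in the correct weighted solution spaces so that the uniqueness result of \cite[Section 3.2]{possamai2024reflections} applies: this follows from the \emph{a priori} bounds for $\cY^\P(T,\xi)$ together with $\cE(\hat\beta A)$ being non-decreasing and $A$ non-decreasing, so that stopping only decreases all the weighted norms, and from $\E^\P[|\cE(\hat\beta A)^{1/2}_{\tau\land T}\eta|^2]<\infty$, again by the \emph{a priori} estimate applied at the stopping time $\tau\land T$. Uniqueness of solutions to the BSDE with data $(\tau\land T,\eta,f^\P)$ then gives the first identity, $\P$--a.s.

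For the second identity, I would linearise. On $\llbracket 0,\tau\land T\rrbracket$ both $\cY^\P(\tau\land T,\eta)$ and $\cY^\P(\tau\land T,\E^\P[\eta|\cF_{\tau\land T}])$ are solutions of BSDEs with the \emph{same} generator $f^\P$ but terminal conditions $\eta$ and $\E^\P[\eta|\cF_{\tau\land T}]$ respectively; the first identity (already proven) identifies $\cY^\P(\tau\land T,\eta)$ with $\cY^\P(T,\xi)$ stopped. Using the Lipschitz decomposition \eqref{eq::lipschitz_linearisation} together with \Cref{ass::crossing}.$(i)$ (which furnishes a genuinely $\F^\P$-progressive process $\lambda$ with $\lambda\Delta C>-1$, $|\lambda|\le r$), \Cref{ass::crossing}.$(ii)$, and \Cref{ass::crossing}.$(iii)$ (supplying $\rho$ controlling the jump-integrand direction), I would write the difference $\delta\cY\coloneqq\cY^\P(\tau\land T,\eta)-\cY^\P(\tau\land T,\E^\P[\eta|\cF_{\tau\land T}])$ as the solution of a \emph{linear} BSDE with terminal value $\eta-\E^\P[\eta|\cF_{\tau\land T}]$ and bounded (in the appropriate sense) coefficients, and represent $\delta\cY_t$, for $t<\tau\land T$, via the associated adjoint/change-of-measure exponential $\Gamma$: $\delta\cY_t=\E^\P[\Gamma_{\tau\land T}\,\Gamma_t^{-1}(\eta-\E^\P[\eta|\cF_{\tau\land T}])\,|\,\cF_{t\smallertext{+}}]$. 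Since $\Gamma$ is built from $\lambda$, $\eta^{z,z',a}$ and $\rho$, all of which are $\F^\P$-progressive resp.\ predictable, $\Gamma_{\tau\land T}\Gamma_t^{-1}$ is $\cF_{\tau\land T}$-measurable, and therefore $\E^\P[\Gamma_{\tau\land T}\Gamma_t^{-1}(\eta-\E^\P[\eta|\cF_{\tau\land T}])\,|\,\cF_{t\smallertext{+}}]=\E^\P\big[\Gamma_{\tau\land T}\Gamma_t^{-1}\E^\P[\eta-\E^\P[\eta|\cF_{\tau\land T}]\,|\,\cF_{\tau\land T}]\,|\,\cF_{t\smallertext{+}}\big]=0$ on $\{t<\tau\land T\}$, which is exactly the second identity on $[0,(\tau\land T)^{s,\omega})$.

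\textbf{Main obstacle.} The routine part is the flow property; the delicate part is the linearisation step, and more precisely the \emph{measurability of $\lambda$} in the representation formula. This is exactly why \Cref{ass::crossing}.$(i)$ is imposed: the naive choice $\lambda=-\sqrt r\,\sgn(\cY-\cY')\1_{\llparenthesis 0,(\tau\land T)^{s,\omega}\rrbracket}$ (cf.\ \Cref{rem::ass_regularisation}.$(i)$) is only $\F^\P_\smallertext{+}$-adapted, not $\F^\P$-progressive, so the exponential $\Gamma$ built from it would not be $\F^\P$-predictable and the $\cF_{\tau\land T}$-measurability of $\Gamma_{\tau\land T}\Gamma_t^{-1}$ — which is the crux of killing the conditional-expectation difference — would fail. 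One must therefore carry the progressively-measurable $\lambda$ (and analogously the predictable $\rho$) from \Cref{ass::crossing}.$(i)$ and $(iii)$ all the way through the construction of $\Gamma$, checking that $\Gamma$ is a well-defined, strictly positive $\F^\P$-predictable process with the integrability needed for the linear representation (finiteness of $\Gamma_{\tau\land T}$, square-integrability of the relevant martingale), which uses the $\fP_0$--q.s.\ bound \eqref{eq::bounded_lipschitz_constants} together with $\lambda\Delta C>-1$ and $\Delta(\rho\ast\tilde\mu^{X,\P})>-1$. The final parenthetical claim about $s=0$ follows because \Cref{ass::crossing}.$(i)$--$(iii)$ at $s=0$ is literally what is used in the linearisation for the unshifted equation, with $\tau$ playing the role of the stopping time above.
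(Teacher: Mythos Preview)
Your overall strategy matches the paper's: the flow property is dispatched by uniqueness in the stopped BSDE, and the second identity is obtained by linearising $\delta\cY$, building the exponential weights from $\lambda$, $\widehat\lambda$, $\eta$ and $\rho$, and then using that these weights evaluated at the $\F$--stopping time $\tau\land T$ are $\cF_{\tau\land T}$-measurable so that $\E^\P[\delta\xi_{\tau\land T}\,|\,\cF_{\tau\land T}]=0$ annihilates the terminal contribution. You have also correctly identified the key obstacle, namely that $\lambda$ must be $\F^\P$-progressive rather than merely $\F^\P_\smallertext{+}$-adapted.

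Two points need tightening. First, \Cref{ass::crossing}.$(i)$ and $(iii)$ (and the explicit processes $\widehat\lambda$, $\eta$ in \eqref{eq::definition_lambda_hat_lambda}) only yield \emph{one-sided} linearisations, so you cannot write $\delta\cY_t=\E^\P[\Gamma_{\tau\land T}\Gamma_t^{-1}(\eta-\E^\P[\eta|\cF_{\tau\land T}])\,|\,\cF_{t\smallertext{+}}]$ as an equality. What the paper does is follow the arguments leading to \cite[Equation~(7.7)]{possamai2024reflections} to obtain $\cE(w)_{t}\cE(v)_{t}\delta\cY_{t}\geq\E^{\bar\Q}[\cE(w)_{t_n}\cE(v)_{t_n}\delta\cY_{t_n}\,|\,\cF_{t\smallertext{+}}]$, push $t_n\to\infty$ using an $L^1(\bar\Q)$ estimate to replace $\delta\cY_{t_n}$ by $\delta\xi_{\tau\land T}$, conclude $\delta\cY\geq0$ on $\{t<\tau\land T\}$, and then run the \emph{symmetric} argument (swapping the roles of the two solutions, which is permissible because \Cref{ass::crossing}.$(i)$ and $(iii)$ are assumed for all pairs $(\cY,\cY')$ and $(\cU,\cU')$) to get $\delta\cY\leq0$.

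Second, your tower-property step glosses over a genuine subtlety: $\cF_{t\smallertext{+}}$ is not in general contained in $\cF_{\tau\land T}$, so you cannot directly insert $\E^\P[\,\cdot\,|\cF_{\tau\land T}]$ inside $\E^\P[\,\cdot\,|\cF_{t\smallertext{+}}]$. The paper handles this by first establishing that $\E^\P[\zeta\,|\,\cF_{S\smallertext{+}}\cap\cF_{T\smallertext{-}}]\,\1_{\{S<T\}}=\E^\P[\zeta\,|\,\cF_{S\smallertext{+}}]\,\1_{\{S<T\}}$ for arbitrary $\F_\smallertext{+}$--stopping times $S,T$, which follows because $\{S<T\}\in\cF_{S\smallertext{+}}\cap\cF_{T\smallertext{-}}$ and $\E^\P[\zeta|\cF_{S\smallertext{+}}]\1_{\{S<T\}}$ is already $\cF_{S\smallertext{+}}\cap\cF_{T\smallertext{-}}$-measurable. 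Then on $\{t<\tau\land T\}$ one conditions via the intermediate $\sigma$-algebra $\cF_{t\smallertext{+}}\cap\cF_{(\tau\land T)\smallertext{-}}\subseteq\cF_{\tau\land T}$ and uses that $\cE(L)_{\tau\land T}$, $\cE(w)_{\cdot}$ and $\cE(v)_{\cdot}$ are all $\cF_{\tau\land T}$-measurable (the first by the $\F$-adapted version from \Cref{prop::good_version_stochastic_integral}, the latter two because $\lambda$ is $\F^\P$-progressive and $\widehat\lambda$ is $\F$-predictable). Your claim that ``$\Gamma$ is $\F^\P$-predictable'' is also slightly off: the stochastic exponential is c\`adl\`ag and hence optional; what matters is $\F$-adaptedness (up to $\P$--null sets), which is what the cited ingredients deliver.
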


	\begin{lemma}\label{lem::stopping_value_function}
		Suppose that {\rm Assumptions \ref{ass::probabilities2}}, {\rm\ref{ass::generator2}}, and {\rm\ref{ass::crossing}} hold. Let $0 \leq s \leq t < \infty$, and let $\bar\omega \in \Omega$. Then $ \widehat\cY_{t}(T,\xi)^{s,\bar\omega} = \widehat\cY_{t \land T^{\smalltext{s}\smalltext{,}\smalltext{\bar\omega}}}(T,\xi)^{s,\bar\omega}$ is $\cF^\ast_{(t \land T\smallertext{-}s\land T)^{s,\bar\omega}}$-measurable,
		\begin{gather}\label{eq::integrability_y_hat_tau}
			\sup_{\P \in \fP(s,\bar\omega)}\E^\P\Big[\big|\cE\big(\hat\beta A^{s,\bar\omega}_{s\smallertext{+}\smallertext{\cdot}})\big)^{1/2}_{(t \land T\smallertext{-}s\land T)^{s,\bar\omega}} \widehat\cY_{t \land T^{\smalltext{s}\smalltext{,}\smalltext{\bar\omega}}}(T,\xi)^{s,\bar\omega}\big|^2\Big] < \infty,\\
	\label{eq::dpp_y_hat_sup}
			\widehat\cY_s(T,\xi)(\bar\omega) =  \sup_{\P\in\fP(s,\bar\omega)}\E^\P\Big[\cY^{s,\bar\omega,\P}_0\big((t\land T-s\land T)^{s,\bar\omega},\widehat{\cY}_{t \land T^{\smalltext{s}\smalltext{,}\smalltext{\bar\omega}}} (T,\xi)^{s,\bar\omega}\big)\Big].
		\end{gather}
		\end{lemma}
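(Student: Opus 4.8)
\textbf{Proof strategy for \Cref{lem::stopping_value_function}.}
The plan is to establish the three assertions in order, leveraging \Cref{thm::measurability2}, \Cref{lem::solv_bsde_cond}, \Cref{lem::conditioning_bsde2}, and the \emph{a priori} estimates for BSDEs from \cite{possamai2024reflections}. For the measurability claim, I would start from the fact established in \Cref{thm::measurability2} that $\widehat\cY_t(T,\xi)$ is $\cF_t$--universally measurable and satisfies $\widehat\cY_t(T,\xi)(\omega) = \widehat\cY_t(T,\xi)(\omega_{\cdot\land t})$. Shifting by $(s,\bar\omega)$ and using $t\geq s$, the composed map $\tilde\omega\longmapsto\widehat\cY_t(T,\xi)(\bar\omega\otimes_s\tilde\omega)$ only depends on $\tilde\omega_{\cdot\land(t-s)}$, hence is $\cF^\ast_{t-s}$-measurable; restricting to the event $\{t\geq T^{s,\bar\omega}\}$ where $(t\land T-s\land T)^{s,\bar\omega}\equiv(T-s\land T)^{s,\bar\omega}$ versus $\{t<T^{s,\bar\omega}\}$ and using Galmarino's test (as in the proof of \Cref{thm::measurability2}) upgrades this to $\cF^\ast_{(t\land T-s\land T)^{s,\bar\omega}}$-measurability; the identity $\widehat\cY_t(T,\xi)^{s,\bar\omega} = \widehat\cY_{t\land T^{\smalltext{s}\smalltext{,}\smalltext{\bar\omega}}}(T,\xi)^{s,\bar\omega}$ is immediate from the corresponding unshifted identity in \Cref{thm::measurability2}.

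For the integrability bound \eqref{eq::integrability_y_hat_tau}, the idea is to exploit the dynamic programming representation \eqref{eq::dynamic_programming_principle2} shifted by $(s,\bar\omega)$: for each $\P\in\fP(s,\bar\omega)$, the invariance properties in \Cref{ass::probabilities2}.$(ii)$--$(iii)$ (see also \Cref{prop::conditioning_characteristics2}) allow us to write $\widehat\cY_{t\land T^{\smalltext{s}\smalltext{,}\smalltext{\bar\omega}}}(T,\xi)^{s,\bar\omega}$ as an essential supremum over $\bar{\P}\in\fP(s,\bar\omega)$ agreeing with $\P$ on $\cF_{(t\land T-s\land T)^{s,\bar\omega}}$ of a conditional expectation of a BSDE solution. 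Applying the $\L^2$--a priori estimate for BSDEs from \cite[Section 3.2]{possamai2024reflections} to each $\cY^{s,\bar\omega,\bar\P}$, bounding it by the terminal data $\xi^{s,\bar\omega}$ and $f^{s,\bar\omega,\bar\P}(0,0,0,\mathbf 0)$ weighted by $\cE(\hat\beta A^{s,\bar\omega}_{s\smallertext{+}\smallertext{\cdot}})$, and invoking \Cref{ass::generator2}.$(iv)$ to control the supremum of these weighted quantities over $\fP(s,\bar\omega)$, gives the finiteness. The conditional Jensen inequality (applied to $\cE(\hat\beta A)^{1/2}$ times the value, using that the weight is $\F$-predictable) transfers the bound from the conditional expectations back to $\widehat\cY$ itself; one has to be a little careful with the essential-supremum-over-measures versus the genuine supremum, but since \Cref{ass::generator2}.$(iv)$ gives a \emph{uniform} bound over $\fP(s,\bar\omega)$ this causes no difficulty.

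The third assertion \eqref{eq::dpp_y_hat_sup} is the tower-type property and is where most of the work sits. The plan is: fix $\P\in\fP(s,\bar\omega)$ and consider the BSDE on $[0,(t\land T-s\land T)^{s,\bar\omega}]$ with terminal condition $\widehat\cY_{t\land T^{\smalltext{s}\smalltext{,}\smalltext{\bar\omega}}}(T,\xi)^{s,\bar\omega}$. By \Cref{lem::solv_bsde_cond} (flow property of BSDEs, plus the fact that the value at $(\tau\land T-s\land T)$ can be replaced by its conditional expectation on $[0,(\tau\land T-s\land T))$), together with \Cref{lem::conditioning_bsde2} and the dynamic programming principle \eqref{eq::dynamic_programming_principle2} applied at the intermediate time, one shows that $\E^\P[\cY^{s,\bar\omega,\P}_0((t\land T-s\land T)^{s,\bar\omega},\widehat\cY_{t\land T^{\smalltext{s}\smalltext{,}\smalltext{\bar\omega}}}(T,\xi)^{s,\bar\omega})]\leq\widehat\cY_s(T,\xi)(\bar\omega)$, using the nonlinear super-martingale flavour: $\widehat\cY$ conditionally dominates $\cY^{\bar\P}$ for any $\bar\P$ agreeing with $\P$, and the BSDE evaluation operator is monotone (comparison principle, \Cref{prop::comparison}, available under \Cref{ass::crossing}.$(ii)$--$(iii)$). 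For the reverse inequality one selects, for $\varepsilon>0$, a near-optimal $\cF^\ast_{(t\land T-s\land T)^{s,\bar\omega}}$-measurable kernel $\bar\P^{\varepsilon}$ (exactly as in the proof of \eqref{eq::dynamic_programming_principle2} via \cite[Proposition 7.50]{bertsekas1978stochastic} and \Cref{ass::probabilities2}.$(iii)$), pastes it with an arbitrary near-optimal choice for $\widehat\cY_{t\land T^{\smalltext{s}\smalltext{,}\smalltext{\bar\omega}}}(T,\xi)$, and runs the BSDE flow property backwards; monotonicity and stability of BSDE solutions (\Cref{cor::stability}) close the gap as $\varepsilon\downarrow0$. \textbf{The main obstacle} I anticipate is the bookkeeping of the stopping time $(t\land T-s\land T)^{s,\bar\omega}$ together with the shift operation: one must consistently track whether the relevant $\sigma$-algebras are raw, universally completed, or $\F_\smallertext{+}$-type, and ensure the measurable selection producing the near-optimal kernel is compatible with conditioning at a \emph{random} time rather than a deterministic one—this is precisely the kind of subtlety flagged in \Cref{rem::measures_in_fP}.$(ii)$, so the argument must be phrased so as to avoid ever referring to ``$\fP(s,\omega)=\{\P^{s,\omega}:\P\in\fP_0\}$'' directly and instead work through the pasting and conditioning axioms of \Cref{ass::probabilities2}.
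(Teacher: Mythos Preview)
Your overall strategy and toolkit match the paper's proof closely: measurability via Galmarino's test, integrability via selection plus the \emph{a priori} estimate and \Cref{ass::generator2}.$(iv)$, and the identity \eqref{eq::dpp_y_hat_sup} via the flow property \Cref{lem::solv_bsde_cond}, the conditioning \Cref{lem::conditioning_bsde2}, comparison, stability, and a measurable $\varepsilon$-optimal kernel pasted through \Cref{ass::probabilities2}.$(iii)$.

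There is, however, a reversal in your description of the two inequalities in \eqref{eq::dpp_y_hat_sup}. For the inequality $\sup_\P\E^\P[\cY^{s,\bar\omega,\P}_0((t\land T-s\land T)^{s,\bar\omega},\widehat\cY_t^{s,\bar\omega})]\leq\widehat\cY_s(\bar\omega)$, the terminal condition $\widehat\cY_t^{s,\bar\omega}$ is itself a \emph{supremum}, so comparison cannot by itself produce an upper bound on the BSDE value---you must first \emph{realise} $\widehat\cY_t^{s,\bar\omega}$, up to $\varepsilon$, by a concrete conditional value $\E^{\bar\P}[\cY^{s,\bar\omega,\bar\P}_{t-s}\,|\,\cF_{t-s}]$ for some $\bar\P\in\fP(s,\bar\omega)$ with $\bar\P=\P$ on $\cF_{t-s}$. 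This is precisely where the selection kernel and the pasting axiom enter; once the replacement is made (with the $\varepsilon$-error absorbed by \Cref{cor::stability}), \Cref{lem::solv_bsde_cond} collapses the nested BSDE to $\cY^{s,\bar\omega,\bar\P}_0((T-s\land T)^{s,\bar\omega},\xi^{s,\bar\omega})$, which is $\leq\widehat\cY_s(\bar\omega)$ by definition. Conversely, the inequality $\widehat\cY_s(\bar\omega)\leq\sup_\P\E^\P[\ldots]$ needs \emph{no} selection: for any $\P$, \Cref{lem::solv_bsde_cond} and \Cref{lem::conditioning_bsde2} give $\E^\P[\cY^{s,\bar\omega,\P}_{(t\land T-s\land T)^{s,\bar\omega}}|\cF_{(t\land T-s\land T)^{s,\bar\omega}}]\leq\widehat\cY_t^{s,\bar\omega}$, and comparison plus the flow property yield $\E^\P[\cY^{s,\bar\omega,\P}_0((T-s\land T)^{s,\bar\omega},\xi^{s,\bar\omega})]\leq\E^\P[\cY^{s,\bar\omega,\P}_0((t\land T-s\land T)^{s,\bar\omega},\widehat\cY_t^{s,\bar\omega})]$.

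Finally, the obstacle you anticipate about conditioning at a random time does not arise here: $t$ is deterministic, and both in the paper and in your plan the selection is performed at the fixed time $t$ (then shifted by $s$), so \Cref{rem::measures_in_fP}.$(ii)$ is not an issue.
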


	\begin{lemma}\label{lem::linearising_bsde}
		Suppose that  {\rm\Cref{ass::generator2}} holds, and that {\rm\Cref{ass::crossing}}.$(ii)$ holds for $s = 0$. Let $\P \in \fP_0$, and let $\eta \in \H^2_T(X^{c,\P};\F,\P)$ and $\rho \in \H^2_T(\mu^X;\F,\P)$ with $\Delta(\rho\ast\tilde\mu^{X,\P}) > - 1$, \textnormal{$\P$--a.s.}, be such that
		\begin{equation}\label{eq::integrability_lipschitz_eta_rho}
			\frac{\d\langle\eta\bcdot X^{c,\P}\rangle^{(\P)}}{\d C} \leq \theta^{X}, \; \text{\rm and} \; \frac{\d\langle\rho\ast\tilde\mu^{X,\P} \rangle^{(\P)}}{\d C} \leq \theta^{\mu}, \; \textnormal{$\P \otimes \mathrm{d}C$--a.e.}
		\end{equation}
		Let $\zeta \in \L^2_T(\P)$, and let $(\cY,\cZ,\cU,\cN)$ and $(\sY,\sZ,\sU,\sN)$ be the solutions of the {\rm BSDEs}, for $t\in[0,\infty]$
		\begin{gather}\label{eq::linear_bsde}
			\cY_t = \zeta + \int_t^T \bigg( \frac{\d\langle \rho \ast\tilde\mu^{X,\P}, \cU \ast\tilde\mu^{X,\P}\rangle^{(\P)}_s}{\d C_s}  
			+ \eta^\top_s \mathsf{a}_s \cZ_s\bigg) \d C_s - \int_t^T \cZ_s \d X^{c,\P}_s - \int_t^T\int_{\R^\smalltext{d}} \cU_s(x)\tilde\mu^{X,\P}(\d s, \d x) - \int_t^T \d \cN_s,
			\\
\label{eq::lipschitz_linear_bsde}
			 \sY_t = \zeta + \int_t^T \bigg( \frac{\d\langle \rho \ast\tilde\mu^{X,\P}, \sU \ast\tilde\mu^{X,\P}\rangle^{(\P)}_s}{\d C_s} - \sqrt{\theta^{X}_s}\|\mathsf{a}_s^{\frac12} \sZ_s\|   \bigg)  \d C_s - \int_t^T \sZ_s \d X^{c,\P}_s - \int_t^T\int_{\R^\smalltext{d}} \sU_s(x)\tilde\mu^{X,\P}(\d s, \d x) - \int_t^T \d \sN_s,
		\end{gather}
		respectively, relative to $\P$, in the sense of {\rm\cite[Theorem 3.6]{possamai2024reflections}}. Let\footnote{Recall that $A^\oplus$ denotes the Moore--Penrose pseudo-inverse of a matrix $A$.}
		\begin{equation*}
			\eta^\prime_s \coloneqq -\sqrt{\theta^{X}_s} (\mathsf{a}^{1/2}_s)^{\oplus}  
			\frac{\mathsf{a}^{1/2}_s \sZ_s}{\big\|\mathsf{a}^{1/2}_s \sZ_s\big\|}\1_{\R^\smalltext{d}\setminus\{0\}}(\mathsf{a}^{1/2}_s\sZ_s), \; s \in [0,\infty).
		\end{equation*}
		Then $\eta^\prime \in \H^2_T(X^{c,\P};\F,\P)$, $\langle\eta^\prime\bcdot X^{c,\P}\rangle^{(\P)}$ is $\P$--essentially bounded, the random variables
		\begin{equation*}
			\frac{\d\mathscr{Q}}{\d\P} \coloneqq \cE\bigg( \eta^\prime \bcdot X^{c,\P} + \rho\ast\tilde\mu^{X,\P}\bigg)_T,
			\;
			\frac{\d\mathcal{Q}}{\d\P} \coloneqq \cE\bigg( \eta \bcdot X^{c,\P} + \rho\ast\tilde\mu^{X,\P}\bigg)_T,
		\end{equation*}
		are both well-defined, positive probability densities relative to $\P$ in $\L^2(\P)$, and
		\begin{equation*}
			\E^{\mathscr{Q}}[\zeta|\cF_{t\smallertext{+}}] = \sY_t \leq \cY_t = \E^{\mathcal{Q}}[\zeta | \cF_{t\smallertext{+}}], \; \textnormal{$\P$--a.s.}, \; t \in [0,\infty].
		\end{equation*}
	\end{lemma}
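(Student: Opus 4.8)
The plan is to recognise both \eqref{eq::linear_bsde} and \eqref{eq::lipschitz_linear_bsde} as \emph{linear} BSDEs in disguise and to solve them explicitly via Girsanov-type changes of measure, then to compare the two using the structure of their generators. First I would deal with the measurability/integrability of $\eta'$: since $\mathsf{a}^{1/2}_s\sZ_s$ is $\F$-predictable and $\mathsf{a}$ is $\F$-predictable, $\eta'$ is $\F$-predictable, and by construction $\|\mathsf{a}^{1/2}_s\eta'_s\|^2 = \theta^X_s\mathbf 1_{\R^d\setminus\{0\}}(\mathsf{a}^{1/2}_s\sZ_s) \leq \theta^X_s$, so $\langle\eta'\bcdot X^{c,\P}\rangle^{(\P)} = \int_0^\cdot (\eta'_s)^\top\mathsf{a}_s\eta'_s\,\d C_s \leq \int_0^T\theta^X_s\,\d C_s$, which is $\P$--essentially bounded by \Cref{ass::crossing}.$(ii)$ (note $\theta^X \le \max\{\sqrt{\mathrm r},\theta^X,\theta^\mu\}$); in particular $\eta'\in\H^2_T(X^{c,\P};\F,\P)$. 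The key identity to isolate is that, by the defining property of the Moore--Penrose pseudo-inverse, $(\eta'_s)^\top\mathsf{a}_s\sZ_s = -\sqrt{\theta^X_s}\,(\mathsf a^{1/2}_s\sZ_s)^\top(\mathsf a^{1/2}_s)^{\oplus\top}\mathsf a^{1/2}_s\sZ_s/\|\mathsf a^{1/2}_s\sZ_s\| = -\sqrt{\theta^X_s}\,\|\mathsf a^{1/2}_s\sZ_s\|$ on $\{\mathsf a^{1/2}_s\sZ_s\neq 0\}$, using $\mathsf a^{1/2}(\mathsf a^{1/2})^{\oplus}\mathsf a^{1/2} = \mathsf a^{1/2}$ and symmetry; this shows that \eqref{eq::lipschitz_linear_bsde} has generator $\tfrac{\d\langle\rho\ast\tilde\mu^{X,\P},\sU\ast\tilde\mu^{X,\P}\rangle^{(\P)}}{\d C} + (\eta')^\top\mathsf a\sZ$, i.e. it is of exactly the same linear form as \eqref{eq::linear_bsde} but with $\eta$ replaced by $\eta'$.

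Next I would establish the representation formulas. Both $\cE(\eta'\bcdot X^{c,\P} + \rho\ast\tilde\mu^{X,\P})$ and $\cE(\eta\bcdot X^{c,\P} + \rho\ast\tilde\mu^{X,\P})$ are positive: the continuous part contributes a positive exponential, and the jumps of $\rho\ast\tilde\mu^{X,\P}$ exceed $-1$ $\P$--a.s. by hypothesis, so the Doléans--Dade product has strictly positive factors; there are no jumps from the continuous integrator. Square-integrability in $\L^2(\P)$ follows because the stochastic logarithm has $\P$--essentially bounded quadratic variation: $\langle\eta'\bcdot X^{c,\P}\rangle^{(\P)}$ is bounded as shown, $\langle\eta\bcdot X^{c,\P}\rangle^{(\P)} = \int_0^\cdot\eta^\top_s\mathsf a_s\eta_s\,\d C_s \le \int_0^T\theta^X_s\,\d C_s$ by \eqref{eq::integrability_lipschitz_eta_rho} and \Cref{ass::crossing}.$(ii)$, and $\langle\rho\ast\tilde\mu^{X,\P}\rangle^{(\P)} \le \int_0^T\theta^\mu_s\,\d C_s$ is likewise bounded; standard estimates for stochastic exponentials (e.g.\ via the argument underlying the norm bounds in \cite{possamai2024reflections}) then give the $\L^2$-bound, so $\d\mathscr Q/\d\P$ and $\d\mathcal Q/\d\P$ are bona fide densities in $\L^2(\P)$. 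Then the product rule applied to $\cE(\cdots)_t\,\cY_t$ (respectively $\cE(\cdots)_t\,\sY_t$), together with the linearity of the generator and the orthogonality $\langle\cN,X^{c,\P}\rangle^{(\P)} = 0$, $M^\P_{\mu^X}[\Delta\cN|\widetilde\cP] = 0$, shows that $\cE(\eta\bcdot X^{c,\P}+\rho\ast\tilde\mu^{X,\P})_t\,\cY_t + \int_0^t\cE(\cdots)_s\,(\text{generator})_s\,\d C_s - \cdots$ is a $\P$--martingale whose terminal value is $\cE(\cdots)_T\,\zeta$; taking $\E^\P[\,\cdot\mid\cF_{t\smallertext{+}}]$ and using Bayes' formula yields $\cY_t = \E^{\mathcal Q}[\zeta\mid\cF_{t\smallertext{+}}]$, and identically $\sY_t = \E^{\mathscr Q}[\zeta\mid\cF_{t\smallertext{+}}]$. (Care is needed that the cross terms $\langle\cN,\eta'\bcdot X^{c,\P}\rangle^{(\P)}$ vanish and that the contribution of $\langle\cU\ast\tilde\mu^{X,\P},\rho\ast\tilde\mu^{X,\P}\rangle^{(\P)}$ is exactly cancelled by the generator term; this is the routine but slightly delicate Itô bookkeeping.)

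Finally I would prove the inequality $\sY_t \le \cY_t$. Having written $\cY_t = \E^{\mathcal Q}[\zeta\mid\cF_{t\smallertext{+}}]$ and $\sY_t = \E^{\mathscr Q}[\zeta\mid\cF_{t\smallertext{+}}]$, one option is to compare $\mathcal Q$ and $\mathscr Q$ directly, but that requires sign information on $\zeta$ which is not assumed, so instead I would argue at the level of the BSDEs. The generators of \eqref{eq::linear_bsde} and \eqref{eq::lipschitz_linear_bsde} agree in the $\cU$-dependence and differ only in the $z$-term: $\eta^\top_s\mathsf a_s\cZ_s$ versus $-\sqrt{\theta^X_s}\|\mathsf a^{1/2}_s\sZ_s\|$. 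By Cauchy--Schwarz and \eqref{eq::integrability_lipschitz_eta_rho}, $\eta^\top_s\mathsf a_s z = (\mathsf a^{1/2}_s\eta_s)^\top(\mathsf a^{1/2}_s z) \ge -\|\mathsf a^{1/2}_s\eta_s\|\,\|\mathsf a^{1/2}_s z\| \ge -\sqrt{\theta^X_s}\,\|\mathsf a^{1/2}_s z\|$ for every $z\in\R^d$; hence the generator of \eqref{eq::linear_bsde} dominates, pointwise in all variables, the function obtained from the generator of \eqref{eq::lipschitz_linear_bsde} by substituting $\cZ$ for $\sZ$. Both are linear (hence Lipschitz) in $(y,\cU)$ with the structural assumptions of \Cref{ass::generator2} satisfied, and the requisite sufficient conditions for comparison hold here because $\rho$ serves as the linearising process in the $\cU$-direction with $\d\langle\rho\ast\tilde\mu^{X,\P}\rangle^{(\P)}/\d C \le \theta^\mu$ and $\Delta(\rho\ast\tilde\mu^{X,\P}) > -1$; therefore the comparison principle for BSDEs (\Cref{prop::comparison}, as recorded in \cite{possamai2024reflections}) applies with identical terminal conditions $\zeta$ and gives $\sY_t \le \cY_t$, $\P$--a.s., for all $t$. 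The main obstacle I anticipate is not the comparison step but verifying cleanly that the two Doléans--Dade exponentials are genuine $\L^2(\P)$ densities and that the change-of-measure/martingale computation goes through under the raw filtration $\F_{\smallertext{+}}$ without invoking the usual conditions—this is where one must lean carefully on the stochastic-calculus-without-usual-conditions framework of \Cref{sec::stochastic_integrals} and on the boundedness afforded by \Cref{ass::crossing}.$(ii)$.
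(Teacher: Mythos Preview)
Your proposal is correct and follows essentially the same strategy as the paper's proof: verify that the quadratic variations of the stochastic logarithms are $\P$--essentially bounded (hence the exponentials are $\L^2(\P)$ densities, which the paper obtains by citing \cite[Lemma~7.4]{possamai2024reflections}), recognise both BSDEs as linear so that Girsanov's theorem yields the conditional-expectation representations $\cY_t=\E^{\mathcal Q}[\zeta|\cF_{t+}]$ and $\sY_t=\E^{\mathscr Q}[\zeta|\cF_{t+}]$, and then apply the comparison principle (\Cref{prop::comparison}) for the inequality $\sY\le\cY$. Your explicit computation of $(\eta')^\top\mathsf a\sZ=-\sqrt{\theta^X}\|\mathsf a^{1/2}\sZ\|$ and the Cauchy--Schwarz argument for the generator ordering are exactly the details underlying the paper's terse ``the same argument yields the stated representation of $\sY$'' and its one-line invocation of comparison.
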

	
	\begin{remark}
		The generators of the \textnormal{BSDEs} in \textnormal{\Cref{lem::linearising_bsde}} are independent of the $(y,\mathrm{y})$-variables. Consequently, \eqref{eq::integrability_lipschitz_eta_rho} and \textnormal{\Cref{ass::crossing}.$(ii)$} for $s = 0$ imply that the weighted norms in \textnormal{\cite[Section 2.4 and 3.2]{possamai2024reflections}} are equivalent.\footnote{Specifically, one sets $\alpha^2 \coloneqq \max\{\theta^X,\theta^\mu\}$ in \cite[Section 3.2]{possamai2024reflections}.} Therefore, square-integrability of the terminal condition alone guarantees the existence and uniqueness of the \textnormal{BSDEs} in \eqref{eq::linear_bsde} and \eqref{eq::lipschitz_linear_bsde}.
	\end{remark}
	
	\begin{proof}[Proof of \Cref{thm::down-crossing}.$(i)$]
		In this part, we prove the existence of $\widehat\cY^\smallertext{+}(T,\xi)$ satisfying the claimed properties, except for its representation \eqref{eq::aggregation}, which will be obtained after \Cref{rem::gap_regularisation}. We will be showing the following: the set
		\begin{equation}\label{eq::set_of_crossings}
			 \Omega_0 \coloneqq \bigg\{\sup_{t \in \D_\tinytext{+} \cap [0,K]}|\widehat{\cY}_t(T,\xi)| < \infty \; \textnormal{and} \; D_a^b(\widehat{\cY}(T,\xi); \D_\smallertext{+}\cap[0,K]) < \infty \; \textnormal{for all $(a,b) \in \D^2$ with $a < b$, and $K \in \N$}\bigg\},
		\end{equation}
		which belongs to $\cup_{t < \infty} \cF^\ast_t$, satisfies $\P[\Omega_0] = 1$ for all $\P\in\fP_0$. Here $D_a^b(\widehat{\cY}(T,\xi); \D_\smallertext{+}\cap[0,K])$ denotes the number of down-crossings of $[a,b]$ by $\widehat{\cY}(T,\xi)$ on $\D_\smallertext{+}\cap[0,K]$. Note that the number of down-crossings and up-crossings can differ by at most one; therefore, they are either both finite or both infinite. We then let
		\[
			\textstyle \tau^{b}_{a} \coloneqq \inf\big\{r \in \D_\smallertext{+} : D^b_a(\widehat{\cY}(T,\xi); \D_\smallertext{+} \cap [0,r]) = \infty \big\}, 
			\; 
			\sigma \coloneqq \inf\big\{r \in \D_\smallertext{+} : \sup_{\{s\in\D_{\tinytext{+}}: s \in[0, r]\}} |\widehat{\cY}_r(T,\xi)| = \infty\big\},
		\]
		and then
		\[
			\rho \coloneqq \sigma \land \inf_{\{(a,b) \in \D^\smalltext{2} :a < b\}} \tau^b_a,
		\]
		where $\D$ denotes the collection of all real dyadic numbers. Since $\P[\Omega_0] = 1$ for all $\P\in\fP_0$, $\rho = \infty$ holds $\fP_0$--quasi-surely. Therefore, defining $\widehat{\cY}^\smallertext{+}(T,\xi) = (\widehat{\cY}^\smallertext{+}_t(T,\xi))_{t \in [0,\infty]}$ as
		\[
			\widehat{\cY}^\smallertext{+}_t(T,\xi) \coloneqq \bigg(\limsup_{\D_\tinytext{+} \ni s \downarrow\downarrow t} \widehat{\cY}_s(T,\xi)\bigg)\1_{\{t < \rho\}} \1_{\{t < T\}} + \xi \1_{\{T \leq t\}} = \lim_{\D_\tinytext{+} \ni s \downarrow\downarrow t} \widehat{\cY}_s(T,\xi)\1_{\{t < \rho\}} \1_{\{t < T\}} + \xi \1_{\{T \leq t\}},
		\]
		yields a real-valued, right-continuous and $\fP_0$--q.s. c\`adl\`ag, $\F^\ast_\smallertext{+}$-optional process on $[0,\infty)$ (see \cite[Lemma 3.16]{legall2016brownian} and \cite[Remark VI.5.(a), pages 70--71]{dellacherie1982probabilities}); here $\F^\ast = (\cF^\ast_t)_{t \in [0,\infty)}$. We can thus now redefine $\widehat{\cY}^\smallertext{+}(T,\xi)$ to be zero on $\Omega_0$ strictly before time $T$, which then yields a c\`adl\`ag and $\G_\smallertext{+}$-adapted process on $[0,\infty)$ meeting the requirements described in $(i)$.
		
		\medskip
		We now prove the remaining claim that $\P[\Omega_0] = 1$ for all $\P\in\fP_0$. Here, the idea is to adapt the proofs of \cite[Lemma 4.8]{soner2013dual} (and \cite[Theorem 6]{chen2000general}). We fix $\P \in\fP_0$, and we will show that the non-negative process $V = (V_t)_{t \in \D_\smalltext{+}}$ given by $V_t \coloneqq \widehat\cY_t(T,\xi) - \E^\P[\cY^\P_t(T,\xi)|\cF_t]$ satisfies $\P[\Omega_1] = 1$,  where $\Omega_1 \in \cup_{t < \infty} \cF^\ast_t$ is defined analogously to \eqref{eq::set_of_crossings} but for $V$ instead of $\widehat{\cY}$. Since the process $W = (W_t)_{t \in \D_\tinytext{+}}$, where $W_t \coloneqq \E^\P[\cY^\P_t(T,\xi)|\cF_t]$ is merely a difference of two non-negative $(\F,\P)$--super-martingales, it follows that $\Omega_2 \in \cup_{t < \infty} \cF_t$, defined analogously to \eqref{eq::set_of_crossings} but for $W$, satisfies $\P[\Omega_2] = 1$; see, for example, the proof of \cite[Theorem 3.17]{legall2016brownian} or \cite[Theorem VI.2]{dellacherie1982probabilities}. Then, it is straightforward to check that $\Omega_1\cap\Omega_2 \subseteq \Omega_0$, which then yields $\P[\Omega_0] = 1$. The finiteness of the down-crossings is equivalent to the existence of limits in $[-\infty,\infty]$ along monotone sequences in $\D_\smallertext{+}$.
		
		\medskip
		It therefore remains to prove that $\P[\Omega_1] = 1$. To simplify the notation, we omit the reference to $(T,\xi)$ and $\P$ whenever it does not cause confusion. We start with the boundedness of the paths. By \Cref{thm::measurability2} and \Cref{prop::stability}, there exists a constant $\mathfrak{C}^\prime \in (0,\infty)$ depending only on $\hat\beta$ and $\Phi$ such that for every $s \in [0,\infty)$,
		\begin{align*}
			|V_s|^2 \leq 2 |\widehat\cY_s|^2 + 2 \E^\P[|\cY^\P_s(T,\xi)|^2|\cF_s]
			&\leq 2 \underset{\bar{\P} \in \fP_\smalltext{0}(\cF_{s},\P)}{{\esssup}^\P}\E^{\bar{\P}}\big[|\cY^{\bar{\P}}_s(T,\xi)|^2\big|\cF_s\big] \\
			&\leq \mathfrak{C}^\prime \underset{\bar{\P} \in \fP_\smalltext{0}(\cF_{s},\P)}{{\esssup}^\P}\E^{\bar{\P}}\bigg[ \frac{\cE(\hat\beta A)_T}{\cE(\hat\beta A)_{s\land T}} |\xi|^2 +\int_s^T \frac{\cE(\hat\beta A)_r}{\cE(\hat\beta A)_s} \frac{|f^{\bar\P}_r(0,0,0,\mathbf{0})|^2}{\alpha^2_r} \d C_r \bigg| \cF_{s}\bigg]	, \; \text{$\P$--a.s.}
		\end{align*}
		We then obtain from \eqref{eq::constant_phi} that
		\[
			\bigg(\sup_{s \in \D_\tinytext{+}} |V_s|\bigg)^2 = \sup_{s \in \D_\tinytext{+}} |V_s|^2 
			\leq \mathfrak{C}^\prime \sup_{s \in \D_\tinytext{+}} \underset{\bar{\P} \in \fP_\smalltext{0}(\cF_{s},\P)}{{\esssup}^\P}\E^{\bar{\P}}\bigg[ \frac{\cE(\hat\beta A)_T}{\cE(\hat\beta A)_{s\land T}} |\xi|^2 +\int_s^T \frac{\cE(\hat\beta A)_r}{\cE(\hat\beta A)_s} \frac{|f^{\bar\P}_r(0,0,0,\mathbf{0})|^2}{\alpha^2_r} \d C_r \bigg| \cF_{s}\bigg]  < \infty, \; \textnormal{$\P$--a.s.}
		\]
		
		\medskip
		We now turn to the down-crossings. For simplicity, we choose, for each $t \in \D_\smallertext{+}$, a $\P$-modification of $V_t$ that is real-valued, non-negative, and $\cF_t$-measurable. This is possible since $\widehat{\cY}_t(T,\xi)$ is $\cF^\ast_t$-measurable. Recall that for any $n\in\N$, $\D^n_\smallertext{+} = \{j2^{\smallertext{-}n} : j \in\N\}$ and define $t_j = j2^{\smallertext{-}n}$. For each $i \in \N^\star$, we let $(
		\cY^{i}, 
		\cZ^{i}, 
		\cU^{i}, 
		\cN^{i})$ be the solution to the following well-posed BSDE relative to $\P$
		\begin{multline*}
			\cY^{i}_t 
			=
			\widehat\cY_{t_\smalltext{i}} 
			+ 
			\int_t^{t_\smalltext{i} \land T} f^{\P}_s\big(\cY^{i}_s,\cY^{i}_{s\smallertext{-}}, \cZ^{i}_s, \cU^{i}_s(\cdot)\big) \d C_s 
			- 
			\int_t^{t_\smalltext{i} \land T} \cZ^{i}_s\d X^{c}_s 
			- \int_t^{t_\smalltext{i} \land T}\int_{\R^\smalltext{d}} \cU^{i}_s(x)\tilde\mu^{X}(\d s, \d x) - \int_t^{t_\smalltext{i} \land T}\d \cN^{i}_s, \; t \in [0,\infty].
		\end{multline*}
		Recall that $\widehat\cY_{t_\smalltext{i}} = \widehat\cY_{t_\smalltext{i} \land T}$ identically and satisfies the integrability condition necessary for the well-posedness of this BSDE (see \Cref{lem::stopping_value_function}).
		We then define $(
		\widetilde\cY^{i}, 
		\widetilde\cZ^{i}, 
		\widetilde\cU^{i}, 
		\widetilde\cN^{i})$ by
		\begin{gather*}
			\widetilde\cY^{i} \coloneqq \cY^{i} - \cY^\P(t_i \land T,\E^\P[\cY^\P_{t_\smalltext{i} \land T}|\cF_{t_\smalltext{i} \land T}]), 
			\;
			\widetilde\cZ^{i} \coloneqq \cZ^{i} - \cZ^\P(t_i \land T,\E^\P[\cY^\P_{t_\smalltext{i} \land T}|\cF_{t_\smalltext{i} \land T}]),
			\\
			\widetilde\cU^{i} \coloneqq \cU^{i} - \cU^\P(t_i \land T,\E^\P[\cY^\P_{t_\smalltext{i} \land T}|\cF_{t_\smalltext{i} \land T}]),
			\;
			\widetilde\cN^{i} \coloneqq \cN^{i} - \cN^\P(t_i \land T,\E^\P[\cY^\P_{t_\smalltext{i} \land T}|\cF_{t_\smalltext{i} \land T}]).
		\end{gather*}
		Then, since $\cY^\P_{t_\smalltext{i} \land T}$ is $\cF_T$-measurable (see \cite[Lemma 2.2.4]{weizsaecker1990stochastic}), we find
		\[
			\E^\P[\cY^\P_{t_\smalltext{i} \land T}|\cF_{t_\smalltext{i}\land T}] = \E^\P\big[ \E^\P[ \cY^\P_{t_\smalltext{i} \land T} \big| \cF_T] \big| \cF_{t_\smalltext{i}} \big] = \E^\P\big[ \cY^\P_{t_\smalltext{i} \land T} | \cF_{t_\smalltext{i}}] = \E^\P\big[ \cY^\P_{t_\smalltext{i}} | \cF_{t_\smalltext{i}}], \; \textnormal{$\P$--a.s.},
		\]
		and thus obtain, $\P$--a.s.
		\begin{align*}
			\widetilde\cY^{i}_t 
			&=
			V_{t_\smalltext{i}} 
			+ 
			\int_t^{t_\smalltext{i} \land T} F_s\big(\widetilde\cY^{i}_s,\widetilde\cY^{i}_{s\smallertext{-}}, \widetilde\cZ^{i}_s, \widetilde\cU^{i}_s(\cdot)\big) \d C 
			- 
			\int_t^{t_\smalltext{i} \land T} \widetilde\cZ^{i}_s\d X^{c}_s- \int_t^{t_i \land T}\int_{\R^\smalltext{d}} \widetilde\cU^{i}_s(x)\tilde\mu^{X}(\d s, \d x) - \int_t^{t_\smalltext{i} \land T}\d \widetilde\cN_s, \; t \in [0,\infty], 
		\end{align*}
		where the generator
		\begin{align*}
			&F_s\big(\omega, y,\mathrm{y}, z, u_s(\omega;\cdot)\big) \\
			&\coloneqq f^{\P}_s\big(y + \cY^\P_s(t_i \land T,\E[\cY^\P_{t_\smalltext{i} \land T}|\cF_{t_i \land T}])(\omega),\mathrm{y} + \cY^\P_{s\smallertext{-}}(t_i \land T,\E[\cY^\P_{t_\smalltext{i} \land T}|\cF_{t_\smalltext{i} \land T}])(\omega), z + \cZ^\P_s(t_i \land T,\E[\cY^\P_{t_\smalltext{i} \land T}|\cF_{t_\smalltext{i} \land T}])(\omega), \\
			&\qquad\quad u_s(\omega;\cdot) + \cU^\P_s(t_i \land T,\E[\cY^\P_{t_\smalltext{i} \land T}|\cF_{t_\smalltext{i} \land T}])(\omega;\cdot)\big) \\
			&\quad 
			- f^{\P}_s\big(\cY^\P_s(t_i \land T,\E[\cY^\P_{t_\smalltext{i} \land T}|\cF_{t_\smalltext{i} \land T}])(\omega),\cY^\P_{s\smallertext{-}}(t_i \land T,\E[\cY^\P_{t_\smalltext{i} \land T}|\cF_{t_\smalltext{i} \land T}])(\omega),\cZ^\P_s(t_i \land T,\E[\cY^\P_{t_\smalltext{i} \land T}|\cF_{t_i \land T}])(\omega), \\
			&\qquad\quad\cU^\P_s(t_i \land T,\E[\cY^\P_{t_\smalltext{i} \land T}|\cF_{t_i \land T}])(\omega;\cdot)\big),
		\end{align*}
		satisfies $F_s(0,0,0,\mathbf{0}) = 0$.
		From \Cref{lem::solv_bsde_cond}, it follows that
		\[
			\cY^\P_{t_{\smalltext{i}\smalltext{-}\smalltext{1}}}(t_i \land T,\E^\P[\cY^\P_{t_\smalltext{i} \land T}|\cF_{t_\smalltext{i} \land T}])
			= \cY^\P_{t_{\smalltext{i}\smalltext{-}\smalltext{1}} \land T}(t_i \land T,\E^\P[\cY^\P_{t_\smalltext{i} \land T}|\cF_{t_\smalltext{i} \land T}]) 
			= \cY^\P_{t_{\smalltext{i}\smalltext{-}\smalltext{1}} \land T}(t_i \land T, \cY^\P_{t_\smalltext{i} \land T}) 
			= \cY^\P_{t_{\smalltext{i}\smalltext{-}\smalltext{1}} \land T}
			= \cY^\P_{t_{\smalltext{i}\smalltext{-}\smalltext{1}}}, \; \textnormal{$\P$--a.s.}
		\]

		Therefore
		\begin{align}\label{eq::inequality_tilde_y_vP}
			\E^{\P}[\widetilde\cY^{i}_{t_{\smalltext{i}\smalltext{-}\smalltext{1}}}|\cF_{t_{\smalltext{i}\smalltext{-}\smalltext{1}}}]
			&=  \E^{\P}[\cY^{i}_{t_{\smalltext{i}\smalltext{-}\smalltext{1}}}|\cF_{t_{\smalltext{i}\smalltext{-}\smalltext{1}}}] - \E^{\P}\big[\cY^\P_{t_{\smalltext{i}\smalltext{-}\smalltext{1}}}\big(t_i \land T,\E[\cY^\P_{t_\smalltext{i} \land T}|\cF_{t_\smalltext{i} \land T}]\big)\big|\cF_{t_{\smalltext{i}\smalltext{-}\smalltext{1}}}\big] \nonumber\\
			&= \E^{\P}[\cY^\P_{t_{\smalltext{i}\smalltext{-}\smalltext{1}}}(t_i \land T,\widehat\cY_{t_\smalltext{i} \land T})|\cF_{t_{\smalltext{i}\smalltext{-}\smalltext{1}}}] - \E^{\P}[\cY^\P_{t_{\smalltext{i}\smalltext{-}\smalltext{1}}}|\cF_{t_{\smalltext{i}\smalltext{-}\smalltext{1}}}] 
			\leq \widehat \cY_{t_{\smalltext{i}\smalltext{-}\smalltext{1}}} - \E^\P[\cY^\P_{t_{\smalltext{i}\smalltext{-}\smalltext{1}}}|\cF_{t_{\smalltext{i}\smalltext{-}\smalltext{1}}}] 
			= V_{t_{\smalltext{i}\smalltext{-}\smalltext{1}}}, \; \text{$\P$--a.s.},
		\end{align}
		where the inequality follows from \Cref{ass::probabilities2}.$(ii)$, \Cref{lem::conditioning_bsde2}, and \eqref{eq::dpp_y_hat_sup}.
		
		\medskip
		We denote by $(
		\overline\cY^{i}, 
		\overline\cZ^{i}, 
		\overline\cU^{i}, 
		\overline\cN^{i})$ the solution to the well-posed BSDE relative to $\P$
		\begin{align*}
			\overline\cY^{i}_t 
			&=
			V_{t_\smalltext{i}} 
			+ 
			\int_t^{t_\smalltext{i} \land T} \Big( -\sqrt{r_s}\big|\overline\cY^{i}_s\big| - \sqrt{\mathrm{r}_s}\big|\overline\cY^{i}_{s\smallertext{-}}\big| - \sqrt{\theta^{X}_s} \big\| \mathsf{a}_s^{1/2} \overline\cZ^{i}_s \big\| + F_s\big(0,0, 0, \overline\cU^{i}_s(\cdot) \big) \Big)  \d C_s - 
			\int_t^{t_\smalltext{i} \land T} \overline\cZ^{i}_s\d X_s^{c}
			\\
			&\quad- \int_t^{t_\smalltext{i} \land T} \int_{\R^\smalltext{d}} \overline\cU^{i}_s(x)\tilde\mu^{X}(\d s, \d x) - \int_t^{t_\smalltext{i} \land T}\d \overline\cN^{i}_s, \; t \in [0,\infty].
		\end{align*}
		The comparison principle in the form of \Cref{prop::comparison} yields $\overline\cY^{i} \leq \widetilde\cY^{i}$, $\P$--a.s., and $\overline\cY^{i} \geq 0$, $\P$--a.s., since $V_{t_\smalltext{i}} \geq 0$, $\P$--a.s., and $F_s(0,0,0,\mathbf{0}) = 0$. We can thus write
		\begin{align*}
			\overline\cY^{i}_t 
			&=
			V_{t_\smalltext{i}} 
			+ 
			\int_t^{t_\smalltext{i} \land T} \Big( \lambda_s\overline\cY^{i}_s 
			+ \widehat\lambda_s\overline\cY^{i}_{s\smallertext{-}} 
			+ (\eta^{i}_s)^\top \mathsf{a}_s \overline\cZ^{i}_s  
			+ F_s\big(0,0, 0, \overline\cU^{i}_s(\cdot) \big) \Big)  \d C_s 
			- 
			\int_t^{t_\smalltext{i} \land T} \overline\cZ^{i}_s\d X_s^{c}
			- \int_t^{t_\smalltext{i} \land T}\int_{\R^\smalltext{d}} \overline\cU^{i}_s(x)\tilde\mu^{X}(\d s, \d x)\\
			&\quad - \int_t^{t_\smalltext{i} \land T}\d \overline\cN_s, \; t \in [0,\infty], \; \text{$\P$--a.s.},
		\end{align*}
		where
		\begin{equation*}
			\lambda \coloneqq -\sqrt{r}, 
			\; 
			\widehat\lambda \coloneqq - \sqrt{\mathrm{r}}, 
			\; 
			\textnormal{and}
			\;
			\eta^{i} \coloneqq -\sqrt{\theta^{X}} (\mathsf{a}^{1/2})^\oplus  
			\frac{\mathsf{a}^{1/2} \overline\cZ^{i}}{\big\|\mathsf{a}^{1/2} \overline\cZ^{i}\big\|}\1_{\R^\smalltext{d}\setminus\{0\}}(\mathsf{a}^{1/2}\overline{\cZ}^i) \1_{\llparenthesis 0, t_\smalltext{i} \land T \rrbracket}.
		\end{equation*}
		
		Note that $\eta^{i} \in \H^2_{t_\smalltext{i} \land T}(X^{c};\F,\P)$ and $\langle \eta^{i} \bcdot X^{c} \rangle \leq \int_{0}^{\cdot\land t_\smalltext{i} \land T} \theta^{X}_s \d C_s$. By \Cref{ass::crossing}.$(iv)$, there exists $\rho^\dagger \in \H^2_T(\mu^X;\F,\P)$ satisfying $\Delta (\rho^\dagger \ast\tilde\mu^X) > -1$, $\P$--a.s., and
		\begin{equation*}
			F_s\big(0,0, 0, \overline\cU^{i}_s(\cdot) \big) 
			\geq 
			\frac{\d\langle \rho^\dagger \ast\tilde\mu^{X} , \overline\cU^{i} \ast\tilde\mu^{X}\rangle_s}{\d C_s}
			, \; \text{$\P \otimes \mathrm{d}C$--a.e. on $\llparenthesis 0, T \rrbracket$.}
		\end{equation*}
		Hence
		\begin{align*}
			\overline\cY^{i}_t 
			&\geq
			V_{t_\smalltext{i}} 
			+ 
			\int_t^{t_\smalltext{i} \land T} \bigg( \lambda_s\overline\cY^{i}_s + \widehat\lambda_s\overline\cY^{i}_{s\smallertext{-}} + (\eta^{i}_s)^\top \hat{\mathsf{a}}^{1/2}_s \overline\cZ^{i}_s 
			+ \frac{\d\langle \rho^\dagger \ast\tilde\mu^{X} , \overline\cU^{i} \ast\tilde\mu^{X}\rangle_s}{\d C_s} \bigg) \d C_s
			- 
			\int_t^{t_\smalltext{i} \land T} \overline\cZ^{i}_s\d X_s^{c,\P}\\
			&\quad
			- \int_t^{t_\smalltext{i} \land T}\int_{\R^\smalltext{d}} \overline\cU^{i}_s(x) \tilde\mu^{X}(\d s, \d x)
			- \int_t^{t_\smalltext{i} \land T}\d \overline\cN^{i}_s, \; t \in [0,\infty], \; \text{$\P$--a.s.}
		\end{align*}
		By following the arguments that lead to \cite[Equation~7.7]{possamai2024reflections}, we find
		\begin{equation}\label{eq::ineq_conditional_exp_stoch_exp}
			\cE(w)_{t_{\smalltext{i}\smalltext{-}\smalltext{1}}}\cE(v)_{t_{\smalltext{i}\smalltext{-}\smalltext{1}}}\overline\cY^{i}_{t_{\smalltext{i}\smalltext{-}\smalltext{1}}} \geq \E^{\tilde\Q^\smalltext{i}}\big[ \cE(w)_{t_\smalltext{i}}\cE(v)_{t_\smalltext{i}} V_{t_\smalltext{i}}   \big| \cF_{t_{\smalltext{i}\smalltext{-}\smalltext{1}}\smallertext{+}}\big], \; \text{$\P$--a.s.},
		\end{equation}
		where $\d\widetilde\Q^i \coloneqq \cE(\widetilde L^i)_{t_\smalltext{i}}\d\P$ on $(\Omega,\cF_{t_\smalltext{i}})$ with 
		\begin{equation*}
			\widetilde L^i \coloneqq \int_{0}^{\cdot \land t_\smalltext{i} \land T} \eta^{i}_s \d X^{c}_s + \int_{0}^{\cdot \land t_\smalltext{i} \land T}\d (\rho^\dagger \ast\tilde\mu^{X})_s,\;
			w \coloneqq \int_{0}^{\cdot \land T} \lambda_s \d C_s,
			\; 
			\text{and} 
			\; 
			v \coloneqq \int_{0}^{\cdot \land T} \frac{\widehat\lambda_s}{1-\widehat\lambda_s\Delta C_s} \d C_s.
		\end{equation*}
		That $\widetilde \Q^i$ is indeed a probability measure follows from \Cref{ass::crossing}.$(ii)$--$(iii)$ and \cite[Lemma 7.4]{possamai2024reflections} since $\langle\widetilde L^i\rangle$ is $\P$--essentially bounded. Moreover, both $w$ and $v$ are non-increasing, and $\Phi < 1$ implies $\Delta w > -1$ and $\Delta v > -1$. Therefore $\cE(w)$ and $\cE(v)$ are positive and non-increasing. Rearranging the terms in \eqref{eq::ineq_conditional_exp_stoch_exp}, and then applying Bayes's formula for conditional expectation yields
		\begin{equation}\label{eq::ineq_conditional_exp_stoch_exp_2}
			\overline\cY^{i}_{t_{\smalltext{i}\smalltext{-}\smalltext{1}}} \geq \E^\P\big[ \cE(L^i)_{t_\smalltext{i}} \cE(w^i)_{t_\smalltext{i}}\cE(v^i)_{t_\smalltext{i}} V_{t_\smalltext{i}}   \big| \cF_{t_{\smalltext{i}\smalltext{-}\smalltext{1}}\smallertext{+}}\big], \; \text{$\P$--a.s.},
		\end{equation}
		where $\d\Q^i \coloneqq \cE(L^i)_{t_\smalltext{i}}\d\P = (\cE(\widetilde L^i)_{t_\smalltext{i}}/\cE(\widetilde L^i)_{t_{\smalltext{i}\smalltext{-}\smalltext{1}}})\d\P$ on $(\Omega,\cF_{t_\smalltext{i}})$ with
		\begin{equation}\label{eq::formulas_tilde_v_w}
			L^i \coloneqq \widetilde L^i - \widetilde L^i_{\cdot \land t_{\smalltext{i}\smalltext{-}\smalltext{1}} \land T} = \int_{t_{\smalltext{i}\smalltext{-}\smalltext{1}} \land T}^{\cdot \land t_\smalltext{i} \land T} \eta^{i}_s \d X^{c,\P}_s + \int_{t_{\smalltext{i}\smalltext{-}\smalltext{1}} \land T}^{\cdot \land t_\smalltext{i} \land T}\d (\rho^\dagger \ast\tilde\mu^{X})_s,\; w^i \coloneqq \int_{t_{\smalltext{i}\smalltext{-}\smalltext{1}} \land T}^{\cdot \land t_\smalltext{i} \land T} \lambda_s \d C_s,
			\; 
			v^i \coloneqq \int_{t_{\smalltext{i}\smalltext{-}\smalltext{1}}\land T}^{\cdot \land t_\smalltext{i} \land T} \frac{\widehat\lambda_s}{1-\widehat\lambda_s\Delta C_s} \d C_s.
		\end{equation}
		
		Taking conditional expectation with respect to $\cF_{t_{\smalltext{i}\smalltext{-}\smalltext{1}}}$ in \eqref{eq::ineq_conditional_exp_stoch_exp_2} implies
		\begin{equation*}
			\E^{\P} 
			\big[\overline\cY^{i}_{t_{\smalltext{i}\smalltext{-}\smalltext{1}}} \big|\cF_{t_{\smalltext{i}\smalltext{-}\smalltext{1}}} \big] 
			\geq 
			\E^{\P} 
			\big[ \cE(L^i)_{t_\smalltext{i}} \cE(w^i)_{t_\smalltext{i}}\cE(v^i)_{t_\smalltext{i}} V_{t_\smalltext{i}}   \big| \cF_{t_{\smalltext{i}\smalltext{-}\smalltext{1}}}\big], \; \text{$\P$--a.s.}
		\end{equation*}
		By \eqref{eq::inequality_tilde_y_vP} and the fact that $\overline\cY^{i} \leq \widetilde\cY^{i}$, $\P$--a.s., we then obtain
		\begin{equation*}
			V_{t_{\smalltext{i}\smalltext{-}\smalltext{1}}} \geq \E^{\P} 
			\big[\widetilde\cY^{i}_{t_{\smalltext{i}\smalltext{-}\smalltext{1}}} \big|\cF_{t_{\smalltext{i}\smalltext{-}\smalltext{1}}} \big] \geq \E^{\P} 
			\big[\overline\cY^{i}_{t_{\smalltext{i}\smalltext{-}\smalltext{1}}} \big|\cF_{t_{\smalltext{i}\smalltext{-}\smalltext{1}}} \big] 
			\geq 
			\E^{\P} 
			\big[ \cE(L^i)_{t_\smalltext{i}} \cE(w^i)_{t_\smalltext{i}}\cE(v^i)_{t_\smalltext{i}} V_{t_\smalltext{i}}   \big| \cF_{t_{\smalltext{i}\smalltext{-}\smalltext{1}}}\big], \; \text{$\P$--a.s.}
		\end{equation*} 
		Fix $K \in \N^\star$, and define $\d\Q^{n,K} \coloneqq \cE(L^{n,K})_{K}\d\P$, where
		\begin{equation*}
			L^{n,K} \coloneqq \sum_{i = 1}^{K 2^{\smalltext{n}}}\bigg(\int_{t_{\smalltext{i}\smalltext{-}\smalltext{1}} \land T}^{\cdot \land t_\smalltext{i} \land T} \eta^{i}_s \d X^{c}_s + \int_{t_{\smalltext{i}\smalltext{-}\smalltext{1}} \land T}^{\cdot \land t_\smalltext{i} \land T}\d (\rho^\dagger \ast\tilde\mu^{X})_s\bigg).
		\end{equation*}
		Then $\displaystyle\cE(L^i)_{t_\smalltext{i}} = \frac{\cE(L^{n,K})_{t_\smalltext{i}}}{\cE(L^{n,K})_{t_{\smalltext{i}\smalltext{-}\smalltext{1}}}}$ for $i \in\{1,\dots, K 2^n\}$, and therefore
		\begin{equation}\label{eq::ineq_conditional_exp_stoch_exp_3}
			V_{t_{\smalltext{i}\smalltext{-}\smalltext{1}}} 
			\geq 
			\E^{\P} 
			\bigg[ \frac{\cE(L^{n,K})_{t_\smalltext{i}}}{\cE(L^{n,K})_{t_{\smalltext{i}\smalltext{-}\smalltext{1}}}} \cE(w^i)_{t_i}\cE(v^i)_{t_i} V_{t_\smalltext{i} \land T}   \bigg| \cF_{t_{\smalltext{i}\smalltext{-}\smalltext{1}}}\bigg], \; \text{$\P$--a.s.}, \; i \in\{1,\dots, K 2^n\}.
		\end{equation}
		By choosing a $\P$-version of $\cE(L^{n,K})$ which is $\F$-adapted (see \Cref{prop::good_version_stochastic_integral}) and by \eqref{eq::formulas_tilde_v_w}, we then obtain
		\begin{equation*}
			\cE(L^{n,K})_{t_{\smalltext{i}\smalltext{-}\smalltext{1}}} \cE(w)_{t_{\smalltext{i}\smalltext{-}\smalltext{1}}}\cE(v)_{t_{\smalltext{i}\smalltext{-}\smalltext{1}}} V_{t_{\smalltext{i}\smalltext{-}\smalltext{1}}} 
			\geq 
			\E^{\P} 
			\big[ \cE(L^{n,K})_{t_\smalltext{i}} \cE(w)_{t_\smalltext{i}}\cE(v)_{t_\smalltext{i}} V_{t_\smalltext{i}}   \big| \cF_{t_{\smalltext{i}\smalltext{-}\smalltext{1}}}\big], \; \text{$\P$--a.s.}
		\end{equation*}
		
		This implies that the nonnegative, discrete-time process $S^{n,K} = (S^{n,K}_i)_{i\in\{0,\dots, K2^\smalltext{n}\}}$, defined by 
		\[
		S^{n,K}_i \coloneqq \cE(w)_{t_\smalltext{i}}\cE(v)_{t_\smalltext{i}} V_{t_\smalltext{i}},
		\] 
		is a $\Q^{n,K}$--super-martingale relative to the discrete-time filtration $(\cF_{t_\smalltext{i}})_{i\in\{0,\dots, K2^\smalltext{n}\}}$; the integrability follows immediately since $S^{n,K}$ is non-negative, and we have
		\[
		\E^{\Q^{\smalltext{n}\smalltext{,}\smalltext{K}}} [S^{n,K}_i | \cF_{t_\smalltext{0}}] \leq S^{n,K}_0 = \widehat{\cY}_0 - \E^{\P}[\cY_0 | \cF_0] \leq \widehat{\cY}_0 < \infty, \; \text{$\Q^{n,K}$--a.s.},
		\]
		where the last inequality follows from the definition of $\widehat{\cY}_0$, \Cref{prop::stability} and \Cref{ass::generator2}.$(iv)$.
		
		\medskip
		We now follow and adapt the proof of \cite[Lemma 2.5]{chen2001continuous}. Let $0 \leq a < b < \infty$. Define the processes $(\ell_i)_{ i\in\{1,\dots, K2^\smalltext{n}\}}$ and $(u_i)_{i\in\{1,\dots,K2^\smalltext{n}\}}$ by $\ell_i \coloneqq a \cE(w)_{t_\smalltext{i}}\cE(v)_{t_\smalltext{i}}$ and $u_i \coloneqq b\cE(w)_{t_\smalltext{i}}\cE(v)_{t_\smalltext{i}}$. By definition of $w$ and $v$, both $\ell$ and $u$ are non-increasing processes. We denote by $D^u_\ell(S;\D^n_\smallertext{+} \cap [0,K])$ the number of down-crossings of the interval $[\ell, u]$ by the process $S$ on $\D^n_\smallertext{+} \cap [0,K]$. Since $\langle L^{n,K} \rangle^{(\P)}$ is $\P$--essentially bounded by a constant $\mathfrak{C} \in (0,\infty)$ independent of $n$ and $K$ (see \Cref{ass::crossing}.$(ii)$--$(iii)$), the arguments in the proof of \cite[Lemma 7.4]{possamai2024reflections} imply that
		\begin{equation*}
			\E^\P\Bigg[\bigg(\frac{\d\Q^{n,K}}{\d\P}\bigg)^2\Bigg|\cF^\P_{0}\Bigg] 
			= \E\Big[\big(\cE(L^{n,K})_{K}\big)^2 \Big| \cF^\P_0\Big] 
			\leq 4\mathrm{e}^{\mathfrak{C}}, \; \textnormal{$\P$--a.s.}
		\end{equation*}
		Since, with analogous notation, we have $D^u_{\ell} (S;\D^n_\smallertext{+} \cap [0,K]) = D^b_a(V;\D^n_\smallertext{+} \cap [0,K])$, it follows from Doob's down-crossing inequality\footnote{Like most inequalities in martingale theory, this also holds when conditioning at time zero.} \cite[Equation 12.3, page 446]{doob1984classical} that
		\begin{align}\label{eq::bounding_supermartingale_over_dyadics}
			\E^{\Q^{n,K}}[\cE(w)_{K}\cE(v)_{K}D^b_a(V;\D^n_\smallertext{+}\cap[0,K])|\cF_{0}] 
			&= \E^{\Q^{\smalltext{n}\smalltext{,}\smalltext{K}}}[\cE(w)_{K}\cE(v)_{K}D^u_\ell(S;\D^n_\smallertext{+}\cap[0,K])|\cF_{0}] \\
			&\leq \frac{\E^{\Q^{\smalltext{n}\smalltext{,}\smalltext{K}}}[S_0 \land b |\cF_{0}]}{b-a}  \leq \frac{b}{b-a}, \; \textnormal{$\P$--a.s.}
		\end{align}
		Bayes's formula for conditional expectation then yields
		\begin{align}\label{eq::bayes_conditioned_F0}
			\E^{\P}\Big[\E^{\Q^{\smalltext{n}\smalltext{,}\smalltext{K}}}\big[\cE(w)_{K}\cE(v)_{K}D^b_a(V;\D^n_\smallertext{+}\cap[0,K])\big| \cF_{0\smallertext{+}}\big]\Big|\cF_{0}\Big]
			&=\E^{\P}\Bigg[\E^\P\bigg[\frac{\d\Q^{n,K}}{\d\P}\cE(w)_{K}\cE(v)_{K}D^b_a(V;\D^n_\smallertext{+}\cap[0,K])\bigg| \cF_{0\smallertext{+}}\bigg]\Bigg|\cF_{0}\Bigg] \nonumber \\
			&= \E^{\P}\bigg[\frac{\d\Q^{n,K}}{\d\P}\cE(w)_{K}\cE(v)_{K}D^b_a(V;\D^n_\smallertext{+}\cap[0,K])\bigg|\cF_{0}\bigg] \nonumber \\
			&= \E^{\Q^{n,K}}[\cE(w)_{K}\cE(v)_{K}D^b_a(V;\D^n_\smallertext{+}\cap[0,N])|\cF_{0}] \leq \frac{b}{b-a}, \; \textnormal{$\P$--a.s.}
		\end{align}
		For $k \in \N$, let $\xi^{n,K,k} \coloneqq \cE(\hat\beta A)^{-1/2}_{K \land T}\mathbf{1}_{\{\cE(w)_{\smalltext{K}}\cE(v)_{\smalltext{K}}D^\smalltext{b}_\smalltext{a}(V;\D^\smalltext{n}_\smalltext{+} \cap [0,K]) > k\}} \in \L^2_{\hat\beta}(\cF_{K \land T}).$ Let $\sY(K \land T,\xi^{n,K,k})$ be the first component of the solution to \eqref{eq::lipschitz_linear_bsde} with terminal time $K \land T$ and terminal condition $\xi^{n,K,k}$ and where $\rho \coloneqq \rho^\dagger$. Then
		\begin{align*}
			\sY_0(K \land T,\xi^{n,K,k}) 
			\leq \E^{\Q^{\smalltext{n}\smalltext{,}\smalltext{K}}}[\xi^{n,K,k}|\cF_{0\smallertext{+}}] 
			&\leq \E^{\Q^{\smalltext{n}\smalltext{,}\smalltext{K}}}[\mathbf{1}_{\{\cE(w)_{\smalltext{K}}\cE(v)_{\smalltext{K}}D^\smalltext{b}_\smalltext{a}(V;\D^\smalltext{n}_\smalltext{+}\cap[0,K]) > k\}}|\cF_{0\smallertext{+}}] \\
			&\leq \frac{1}{k} \E^{\Q^{\smalltext{n}\smalltext{,}\smalltext{K}}}[\cE(w)_{K}\cE(v)_{K}D^b_a(V;\D^n_\smallertext{+}\cap[0,K])|\cF_{0\smallertext{+}}], \; \text{$\P$--a.s.},
		\end{align*}
		where we use \Cref{lem::linearising_bsde} in the first inequality and a conditional version of Markov's inequality in the last line. By taking conditional expectation under $\P$ with respect to $\cF_0$ and using \eqref{eq::bayes_conditioned_F0}, we obtain
		\begin{equation}\label{eq::cond_ineq_script_y_1}
			\E^\P\big[\sY_0(K \land T,\xi^{n,K,k})\big| \cF_{0}\big] \leq  \frac{b}{k(b-a)}, \; \text{$\P$--a.s.}
		\end{equation}
		Let $\xi^{K,k} \coloneqq \lim_{n \rightarrow \infty} \xi^{n,K,k} = \cE(\hat\beta A)^{-1/2}_{K \land T}\mathbf{1}_{\{\cE(w)_{\smalltext{K}}\cE(v)_{\smalltext{K}}D^\smalltext{b}_\smalltext{a}(V;\D_\tinytext{+} \cap[0,K]) > k\}}$. \Cref{prop::stability} implies that
		\[
			|\sY_0(K \land T,\xi^{n,K,k})| \leq 1, \; \textnormal{$\P$--a.s.},
		\]
		and
		\[
			\lim_{n \rightarrow \infty} \sY_0(K \land T,\xi^{n,K,k}) = \sY_0(K \land T,\xi^{K,k}), \; \textnormal{$\P$--a.s.}
		\]
		We then obtain with \cite[Theorem 2.(a), page 259]{shiryaev2016probability} that
		\begin{equation}\label{eq::cond_ineq_script_y_2}
			\E^\P\big[\sY_0(K \land T,\xi^{K,k})\big| \cF^\P_{0}\big] = \lim_{n \rightarrow \infty} \E^\P\big[\sY_0(K \land T,\xi^{n,K,k})\big| \cF^\P_{0}\big] \leq  \frac{b}{k(b-a)}, \; \text{$\P$--a.s.}
		\end{equation}
		Since $\xi^K \coloneqq \lim_{k \rightarrow \infty} \xi^{K,k} = \cE(\hat\beta A)^{-1/2}_{K \land T}\mathbf{1}_{\{\cE(w)_{\smalltext{K}}\cE(v)_{\smalltext{K}}D^\smalltext{b}_\smalltext{a}(V;\D_\tinytext{+}\cap[0,K]) = \infty\}},$
		an analogous argument then yields
		\begin{equation*}
			\E^\P\big[\sY_0(K \land T,\xi^K)\big| \cF_{0}\big] = \lim_{k \rightarrow\infty} \E^\P\big[\sY_0(K \land T,\xi^{K,k})\big| \cF_{0}\big] \leq 0, \; \text{$\P$--a.s.}
		\end{equation*}
		By \Cref{lem::linearising_bsde}, we know that $\sY_0(K \land T,\xi) = \E^{\mathscr{Q}}[\xi|\cF_{0\smallertext{+}}]$, $\P$--a.s., for some probability $\mathscr{Q}$ equivalent to $\P$. Hence
		\begin{equation*}
			\E^\P\big[\E^{\mathscr{Q}}[\xi|\cF_{0\smallertext{+}}]\big|\cF_{0}\big] = \E^\P\big[\sY_0(K \land T,\xi)\big| \cF_{0}\big] \leq 0, \; \text{$\P$--a.s.}
		\end{equation*}
		Since $\xi \geq 0$, and thus $\E^{\mathscr{Q}}[\xi|\cF_{0\smallertext{+}}] \geq 0$, $\P$--a.s., we have $\E^{\mathscr{Q}}[\xi|\cF_{0\smallertext{+}}] = 0$, $\P$--a.s., and thus $\mathscr{Q}$--a.s. This in turn implies that $\xi = 0$, $\mathscr{Q}$--a.s. and thus also $\P$--a.s. Since $\cE(\hat\beta A)_{K \land T}$, $\cE(w)_{K}$ and $\cE(v)_{K}$ are $(0,\infty)$-valued, we obtain
		\begin{equation*}
			\P\big[D^b_a(V;\D_\smallertext{+} \cap [0,K]) < \infty\big] = 1,
		\end{equation*}
		and then
		\begin{equation*}
			\P\Bigg[ \bigcap_{K \in \N^\smalltext{\star}} \bigcap_{\{(a,b) \in \D^2_\tinytext{+}: a < b\}} \big\{D^b_a(V;\D_\smallertext{+} \cap [0,K]) < \infty\big\}\Bigg] = 1.
		\end{equation*}
		This completes the proof of part $(i)$.
	\end{proof}

	\begin{remark}\label{rem::gap_regularisation}
		The path-regularisation of the value function has previously appeared in a similar context in \textnormal{\cite[Lemma 3.2]{possamai2018stochastic}}. In that proof, following a transformation of $\widehat{\cY}$ into a super-martingale under an equivalent measure $($see \textnormal{\cite[page 575]{possamai2018stochastic}}$)$, the argument was completed by invoking the arguments used in the proof of \textnormal{\cite[Lemma A.1]{bouchard2016general}}. However, we must highlight a gap here. A careful examination of that proof shows that the conditions for Doob’s generalised down-crossing inequality \textnormal{\cite[page 446]{doob1984classical}} are not met for the processes denoted by $u$ and $l$ in \textnormal{\cite[Lemma A.1]{bouchard2016general}}. Specifically, $u$ is not necessarily a super-martingale under the equivalent measure, as this process could be increasing over time. Nevertheless, by applying our transformations outlined in the preceding proof, we have resolved the gaps in the arguments of \textnormal{\cite[Lemma 3.2]{possamai2018stochastic}}$;$ the corresponding statement therefore remains true.
		
		\medskip
		The original idea at the beginning of the preceding proof can be traced back to the proof of \textnormal{\cite[Lemma 4.8]{soner2013dual}}, where at some point further reference to \textnormal{\cite[Theorem 6]{chen2000general}} is made. However, there seems to be a gap in the proof of \textnormal{\cite[Theorem 6]{chen2000general}}$:$ in the notation of \textnormal{\cite[Theorem 6]{chen2000general}}, it is not clear that the conclusion $X \geq 0$ can be drawn without the assumption that $g_s(0,0) \equiv 0$. However, the generator {\rm`}$f^\P$' considered in the proof of \textnormal{\cite[Lemma 4.8]{soner2013dual}} satisfies $f^\P_s(0,0) \equiv 0$, so the argument in \textnormal{\cite[Lemma 4.8]{soner2013dual}} goes through. Thus, it seems that the statement in \textnormal{\cite[Lemma A.1]{bouchard2016general}}, again in the notation of that result, additionally needs that $X \geq 0$ and $g_s(0,0) \equiv 0$, and then one can use exactly the arguments in \textnormal{\cite[Theorem 6]{chen2000general}} to get the appropriate down-crossing inequalities over the stopping times. However, we currently would not know how to fill the gaps in the proof of \textnormal{\cite[Lemma A.1]{bouchard2016general}} without assuming $g_s(0,0) \equiv 0$ and $X \geq 0$.
	\end{remark}
	
	We turn to the stated integrability of the regularised value function $\widehat\cY^\smallertext{+}$ and the fact that it solves our aggregation problem.
	
	\begin{proof}
		[Proof of \Cref{thm::down-crossing}.$(ii)$]
		We start with the stated integrability. By \Cref{thm::measurability2} and \Cref{prop::stability}, there exists a constant $\mathfrak{C}^\prime \in (0,\infty)$, depending only on $\hat\beta$ and on $\Phi$, such that for every $\P \in \fP_0$ and $s \in [0,\infty)$,
		\begin{align}\label{eq_inequality_y_hat}
			|\widehat\cY_s|^2 
			&\leq \underset{\bar{\P} \in \fP_\smalltext{0}(\cF_{s},\P)}{{\esssup}^\P}\E^{\bar{\P}}\big[|\cY^{\bar{\P}}_s(T,\xi)|^2\big|\cF_s\big] \nonumber\\
			&\leq \mathfrak{C}^\prime \underset{\bar{\P} \in \fP_\smalltext{0}(\cF_{s},\P)}{{\esssup}^\P}\E^{\bar{\P}}\bigg[ \frac{\cE(\hat\beta A)_T}{\cE(\hat\beta A)_{s\land T}} |\xi|^2 +\int_s^T \frac{\cE(\hat\beta A)_r}{\cE(\hat\beta A)_s} \frac{|f^{\bar\P}_r(0,0,0,\mathbf{0})|^2}{\alpha^2_r} \d C_r \bigg| \cF_{s}\bigg]	, \; \text{$\P$--a.s.}
		\end{align}
		The $\F$-adaptedness of $\cE(\hat\beta A) = \cE(\hat\beta A)_{\cdot \land T}$ together with $\phi^{2,\hat{\beta}}_{\xi, f} < \infty$, then immediately yields
		\begin{equation*}
			\sup_{\P \in \fP_\smalltext{0}}\E^\P\bigg[\sup_{s \in \D_\tinytext{+}}\big|\cE(\hat\beta A)^{1/2}_{s \land T}\widehat\cY_{s \land T}(T,\xi)\big|^2\bigg] = \sup_{\P \in \fP_\smalltext{0}}\E^\P\bigg[\sup_{s \in \D_\tinytext{+}}\big|\cE(\hat\beta A)^{1/2}_{s \land T}\widehat\cY_{s}(T,\xi)\big|^2\bigg] \leq \mathfrak{C}^\prime \phi^{2,\hat{\beta}}_{\xi, f} < \infty,
		\end{equation*}
		and then
		\begin{equation*}
			\sup_{\P \in \fP_\smalltext{0}}\E^\P\bigg[\sup_{s \in [0,T]}\big|\cE(\hat\beta A)^{1/2}_s\widehat\cY^\smallertext{+}_s(T,\xi)\big|^2\bigg] \leq \sup_{\P \in \fP_\smalltext{0}}\E^\P\bigg[\sup_{s \in \D_\tinytext{+}}\big|\cE(\hat\beta A)^{1/2}_{s \land T}\widehat\cY_{s \land T}(T,\xi)\big|^2 + \cE(\hat\beta A)_T|\xi|^2\bigg] \leq (\mathfrak{C}^{\prime}+1) \phi^{2,\hat{\beta}}_{\xi, f} < \infty.
		\end{equation*}
		Here, we used \eqref{eq::hat_y_limit2}. We then let $\mathfrak{C} \coloneqq \mathfrak{C}^\prime + 1$.
		
		\medskip
		We turn to the representation \eqref{eq::aggregation}, and thus fix $\P \in \fP_0$ and $t \in [0,\infty)$; for $t = \infty$, the stated representation holds immediately. Moreover, we write $\widehat{\cY}^\smallertext{+}$ instead of $\widehat{\cY}^\smallertext{+}(T,\xi)$ for simplicity. We first prove the following auxiliary result: for any sequence $(t_n)_{n \in \N}$ of dyadic numbers converging to $t$ from above, there exists a subsequence $(t_{n_\smalltext{k}})_{k \in \N}$ such that
		\begin{equation}\label{eq::stability_terminal_condition}
			\lim_{k \rightarrow \infty} \sup_{s \in [0,t]}|\cY^\P_s(t_{n_\smalltext{k}} \land T,\widehat\cY_{t_{\smalltext{n}_\tinytext{k}} \land T}) - \cY^\P_s(t \land T,\widehat\cY^\smallertext{+}_{t \land T})| = 0, \; \textnormal{$\P$--a.s.}
		\end{equation}
		This can be argued as follows. We write
		\begin{align}\label{eq::convergence_t_n}
			\cY^\P_s(t_n \land T,\widehat\cY_{t_\smalltext{n} \land T}) 
			&- \cY^\P_s(t \land T,\widehat\cY^\smallertext{+}_{t \land T}) \nonumber\\
			&= \cY^\P_s(t_n \land T,\widehat\cY_{t_\smalltext{n} \land T})  - \widetilde\cY^\P_s\big(t_n \land T,\widehat\cY^\smallertext{+}_{t \land T}\cE(\hat\beta A)^{1/2}_{t \land T}/\cE(\hat\beta A)^{1/2}_{t_\smalltext{n} \land T}\big) \nonumber\\
			&\quad + \widetilde\cY^{\P}_s\big(t \land T,\widetilde\cY^\P_{t \land T}(t_n \land T,\widehat\cY^\smallertext{+}_{t \land T}\cE(\hat\beta A)^{1/2}_{t \land T}/\cE(\hat\beta A)^{1/2}_{t_\smalltext{n} \land T})\big) - \cY^\P_s(t \land T,\widehat\cY^\smallertext{+}_{t \land T}), \; \textnormal{$\P$--a.s.},
		\end{align}
		where the generic notation $\widetilde\cY^{\P}(S, \zeta)$ denotes the first component of the solution $(\widetilde\cY^\P(S,\zeta),\widetilde\cZ^\P(S,\zeta),\widetilde\cU^\P(S,\zeta),\widetilde\cN^\P(S,\zeta))$ to the BSDE with generator $f^\P \1_{\llparenthesis 0,t\rrbracket}$, terminal time $S$ and terminal condition $\zeta$, if well-posed according to \cite[Section 3.2]{possamai2024reflections}. The second difference after the equality in \eqref{eq::convergence_t_n} converges $\P$--a.s.\ (along a subsequence if necessary) uniformly in $s \in [0,t]$ to zero by \Cref{cor::stability}; hence, we focus on the first difference after the equality. To ease the notation, we write in the following lines $(\widetilde\cY^{\P,n},\widetilde\cZ^{\P,n},\widetilde\cU^{\P,n},\widetilde\cN^{\P,n})$ for the solution of the BSDE with generator $f^\P\1_{\llparenthesis 0,t\rrbracket}$, terminal time $t_n \land T$ and terminal condition $\zeta^{\P,n} \coloneqq \widehat\cY^\smallertext{+}_{t \land T}\cE(\hat\beta A)^{1/2}_{t \land T}/\cE(\hat\beta A)^{1/2}_{t_\smalltext{n} \land T}$. By \Cref{cor::stability}, there exists a constant $\mathfrak{C}^\prime \in (0,\infty)$, only depending on $\Phi$ and on $\hat\beta$, such that
		\begin{align*}
			&\E^\P\bigg[\sup_{s \in [0,\infty]}\big|\cE(\hat\beta A)^{1/2}_{s \land t \land T} \big(\cY^\P_{s \land t \land T}(t_n \land T, \widehat{\cY}_{t_\smalltext{n}\land T}) - \widetilde{\cY}^\P_{s \land t \land T}(t_n \land T, \zeta^{\P,n})  \big)\big|^2\bigg] \\
			&= \E^\P\bigg[\sup_{s \in [0,\infty]}\Big|\cE(\hat\beta A)^{1/2}_{s \land t \land T} \Big(\cY^\P_{s \land t \land T}\big(t \land T,\cY^\P_{t \land T}(t_n\land T, \widehat{\cY}_{t_\smalltext{n}\land T})\big) - \widetilde{\cY}^\P_{s \land t \land T}\big(t \land T, \widetilde{\cY}^\P_{t \land T}(t_n\land T, \zeta^{\P,n})\big)\Big)\Big|^2\bigg] \\
			&\leq \mathfrak{C}^\prime \E^\P\Big[ \cE(\hat\beta A)_{t \land T} \big|\cY^\P_{t \land T}(t_n \land T,\widehat\cY_{t_\smalltext{n} \land T})  - \widetilde\cY^\P_{t \land T}(t_n \land T,\zeta^{\P,n})\big|^2 \Big].
		\end{align*}
		It therefore suffices to show that the last term converges to zero as $n$ tends to infinity to deduce \eqref{eq::stability_terminal_condition}.
		
		\medskip
		By the stability result in \Cref{prop::stability}, there exists a constant $\mathfrak{C}$ only depending on $\hat\beta$ and on $\Phi$ such that
		\begin{align}\label{eq::convergence_terminal_time_t}
			& \E^\P\Big[\cE(\hat\beta A)_{t \land T}\big|\cY^\P_t(t_n \land T,\widehat\cY_{t_\smalltext{n} \land T})  - \widetilde\cY^\P_t(t_n \land T,\zeta^{\P,n})\big|^2\Big] \nonumber\\
			&\leq \mathfrak{C} \Bigg( \E^\P\bigg[\cE(\hat\beta A)_{t_\smalltext{n} \land T} \big| \widehat\cY_{t_\smalltext{n} \land T} - \zeta^{\P,n} \big|^2 + \int_t^{t_n \land T} \cE(\hat\beta A)_{r} \frac{|f^\P\big(\widetilde\cY^{\P,n}_r,\widetilde\cY^{\P,n}_{r\smallertext{-}},\widetilde\cZ^{\P,n}_r,\widetilde\cU^{\P,n}_r(\cdot)\big)|^2}{\alpha^2_r} \d C_r\bigg] \Bigg) \nonumber\\
			&\leq 2 \mathfrak{C} \E^\P\bigg[\cE(\hat\beta A)_{t_\smalltext{n} \land T} \big| \widehat\cY_{t_\smalltext{n} \land T} - \zeta^{\P,n} \big|^2 + \int_t^{t_n \land T}\cE(\hat\beta A)_r \big|\alpha_r\widetilde\cY^{\P,n}_r\big|^2 \d C_r \nonumber\\
			&\quad + \int_t^{t_n \land T} \cE(\hat\beta A)_r \big|\alpha_r\widetilde\cY^{\P,n}_{r\smallertext{-}}\big|^2 \d C_r   
			+ \int_t^{t_n \land T} \cE(\hat\beta A)_r \d\langle\widetilde\eta^{\P,n}\rangle_r + \int_t^{t_n \land T} \cE(\hat\beta A)_r \frac{|f^\P_r(0,0,0,\mathbf{0})|^2}{\alpha^2_r} \d C_r \bigg],
		\end{align}
			where
		\begin{equation*}
			\widetilde\eta^{\P,n} \coloneqq \int_{t}^{\cdot \land t_\smalltext{n} \land T} \widetilde\cZ^{\P,n}_r\d X^{c,\P}_r + \int_{t}^{\cdot \land t_\smalltext{n} \land T} \d (\widetilde\cU^{\P,n}\ast\tilde{\mu}^{X,\P})_r + \int_t^{t_n \land T} \d\cN^{\P,n}_r.
		\end{equation*}
		We now show that the terms after the last inequality in \eqref{eq::convergence_terminal_time_t} all converge to zero as $n$ tends to infinity.
		We make the following observations: from
		\begin{equation*}
			\widetilde{\cY}^{\P,n}_u = \E^\P\big[\zeta^{\P,n}\big|\cF_{u\smallertext{+}}\big] \; \textnormal{and} \; \big(\widetilde\cY^{\P,n}_{u} - \zeta^{\P,n}\big)^2 = \bigg(\int_{u}^{t_\smalltext{n} \land T} \d\widetilde\eta^{\P,n}_r\bigg)^2, \; \textnormal{$\P$--a.s.}, \; u \in [t,\infty],
		\end{equation*}
		we obtain
		\begin{align*}
			& \E^\P\bigg[\int_u^{t_\smalltext{n}\land T}\d\langle\widetilde\eta^{\P,n}\rangle_r \bigg| \cF_{u\smallertext{+}}\bigg]
			= \E^\P\bigg[\bigg(\int_{u}^{t_\smalltext{n} \land T} \d\widetilde\eta^{\P,n}_r\bigg)^2 \bigg| \cF_{u\smallertext{+}}\bigg] 
			= \E^\P\big[	\big(\widetilde\cY^{\P,n}_{u} - \zeta^{\P,n}\big)^2 \big| \cF_{t\smallertext{+}}\big] \\
			&= \big(\widetilde\cY^{\P,n}_{u}\big)^2 - 2 \widetilde\cY^{\P,n}_{u} \E^\P[\zeta^{\P,n}|\cF_{u\smallertext{+}}] + \E^\P\big[\big(\zeta^{\P,n}\big)^2\big|\cF_{u\smallertext{+}}\big] = \E^\P\big[\big(\zeta^{\P,n}\big)^2 - \big(\widetilde\cY^{\P,n}_{u}\big)^2\big|\cF_{u\smallertext{+}}\big], \; \textnormal{$\P$--a.s.}, \; u \in [t,\infty],
		\end{align*}
		and (see \cite[Lemma C.5]{possamai2024reflections})
		\begin{align*}
			\E^\P\bigg[\int_{u\smallertext{-}}^{t_\smalltext{n} \land T} \d\langle\widetilde\eta^{\P,n}\rangle_r \bigg| \cF_{u\smallertext{-}}\bigg]
			= \E^\P\bigg[\bigg(\int_{u\smallertext{-}}^{t_\smalltext{n} \land T} \d\widetilde\eta^{\P,n}_r\bigg)^2 \bigg| \cF_{u\smallertext{-}}\bigg] 
			&= \big(\widetilde\cY^{\P,n}_{u\smallertext{-}}\big)^2 - 2 \widetilde\cY^{\P,n}_{u\smallertext{-}} \E^\P[\zeta^{\P,n}|\cF_{u\smallertext{-}}] + \E^\P\big[\big(\zeta^{\P,n}\big)^2\big|\cF_{u\smallertext{-}}\big] \\
			&= \E^\P\big[\big(\zeta^{\P,n}\big)^2 - \big(\widetilde\cY^{\P,n}_{u\smallertext{-}}\big)^2\big|\cF_{u\smallertext{-}}\big], \; \text{$\P$--a.s.}, \; u \in (t,\infty].
		\end{align*}	
		Using $\cE(\hat\beta A)_r = \cE(\hat\beta A)_{t} + \int_t^r \cE(\hat\beta A)_{u-}\hat\beta \d A_u$, $r \in [t,\infty)$, the predictable projection (see \cite[Theorem VI.57]{dellacherie1982probabilities}), and Tonelli's theorem, we obtain
		\begin{align*}
			\E^\P\bigg[ \int_t^{t_n \land T} \cE(\hat\beta A)_r \d\langle\widetilde\eta^{\P,n}\rangle_r \bigg]
			&= \E^\P\big[\cE(\hat\beta A)_{t \land T}\langle\widetilde\eta^{\P,n}\rangle_{t_\smalltext{n} \land T} \big] + \hat\beta \E^\P\bigg[\int_t^{t_\smalltext{n} \land T} \int_t^r\cE(\hat\beta A)_{u\smallertext{-}}\d A_u \d\langle\widetilde\eta^{\P,n}\rangle_r \bigg] \nonumber\\
			&= \E^\P\big[\cE(\hat\beta A)_{t \land T}\langle\widetilde\eta^{\P,n}\rangle_{t_\smalltext{n} \land T}\big] + \hat\beta \E^\P\bigg[\int_{t}^{t_\smalltext{n}\land T} \cE(\hat\beta A)_{u\smallertext{-}} \int_{u\smallertext{-}}^{t_\smalltext{n} \land T} \d\langle\widetilde\eta^{\P,n}\rangle_r \d A_u \bigg] \nonumber\\
			&= \E^\P\big[\cE(\hat\beta A)_{t \land T}\langle\widetilde\eta^{\P,n}\rangle_{t_\smalltext{n} \land T} \big] + \hat\beta \E^\P\bigg[\int_{t}^{t_\smalltext{n}\land T} \cE(\hat\beta A)_{u\smallertext{-}} \E^\P\bigg[\int_{u\smallertext{-}}^{t_\smalltext{n} \land T} \d\langle\widetilde\eta^{\P,n}\rangle_r \bigg| \cF_{u\smallertext{-}} \bigg] \d A_u \bigg] \nonumber\\
			&\leq \E^\P\bigg[\cE(\hat\beta A)_{t \land T} \big(\widetilde\cY^{\P,n}_{t \land T} - \zeta^{\P,n}\big)^2 + \hat\beta \int_{t}^{t_\smalltext{n}\land T} \cE(\hat\beta A)_{u\smallertext{-}} \E^\P\big[\big(\zeta^{\P,n}\big)^2 - \big(\widetilde\cY^{\P,n}_{u\smallertext{-}}\big)^2\big|\cF_{u\smallertext{-}}\big]\d A_u \bigg] \nonumber\\
			&= \E^\P\bigg[ \cE(\hat\beta A)_{t \land T} \big(\widetilde\cY^{\P,n}_{t \land T} - \zeta^{\P,n}\big)^2 + \hat\beta \int_{t}^{t_\smalltext{n}\land T} \cE(\hat\beta A)_{u\smallertext{-}} \big(\big(\zeta^{\P,n}\big)^2 - \big(\widetilde\cY^{\P,n}_{u\smallertext{-}}\big)^2\big)\d A_u \bigg].
		\end{align*}
		We rearrange the terms, use $\cE(\hat\beta A) = \cE(\hat\beta A)_{\smallertext{-}}(1+\hat\beta \Delta A) \leq \cE(\hat\beta A)_{\smallertext{-}}(1+\hat\beta\Phi)$, and find
		\begin{align}\label{eq::convergence_terminal_eta}
			\E^\P\bigg[ \int_t^{t_n \land T} \cE(\hat\beta A)_r \d\langle\widetilde\eta^{\P,n}\rangle_r \bigg] 
			+ & \frac{\hat\beta}{(1+\hat\beta \Phi)} \E^\P\bigg[\int_{t}^{t_\smalltext{n}\land T} \cE(\hat\beta A)_{u} \big(\widetilde\cY^{\P,n}_{u\smallertext{-}}\big)^2\d A_u \bigg] \nonumber\\
			& \leq \E^\P\bigg[ \int_t^{t_n \land T} \cE(\hat\beta A)_r \d\langle\widetilde\eta^{\P,n}\rangle_r 
			+ \hat\beta \int_{t}^{t_\smalltext{n}\land T} \cE(\hat\beta A)_{u\smallertext{-}} \big(\widetilde\cY^{\P,n}_{u\smallertext{-}}\big)^2\d A_u \bigg] \nonumber\\
			& = \E^\P\bigg[\cE(\hat\beta A)_{t \land T} \big(\widetilde\cY^{\P,n}_{t \land T} - \zeta^{\P,n}\big)^2 + \hat\beta \int_{t}^{t_\smalltext{n}\land T} \cE(\hat\beta A)_{u\smallertext{-}} \big(\zeta^{\P,n}\big)^2\d A_u \bigg] \nonumber\\
			&= \E^\P\bigg[\cE(\hat\beta A)_{t \land T} \big(\widetilde\cY^{\P,n}_{t \land T} - \zeta^{\P,n}\big)^2 + \big(\zeta^{\P,n}\big)^2 \big(\cE(\hat\beta A)_{t_\smalltext{n} \land T} - \cE(\hat\beta A)_{t \land T}\big)  \bigg] \nonumber\\
			&= \E^\P\Bigg[\cE(\hat\beta A)_{t \land T}(\widehat\cY^\smallertext{+}_{t \land T})^2\Bigg(\E^\P\bigg[\frac{\cE(\hat\beta A)^{1/2}_{t \land T}}{\cE(\hat\beta A)^{1/2}_{t_\smalltext{n} \land T}}\bigg|\cF_{t+}\bigg] - \frac{\cE(\hat\beta A)^{1/2}_{t \land T}}{\cE(\hat\beta A)^{1/2}_{t_\smalltext{n} \land T}}\Bigg)^2  \nonumber\\
			&\quad+ \cE(\hat\beta A)_{t \land T}(\widehat\cY^\smallertext{+}_{t \land T})^2 \bigg(1 - \frac{\cE(\hat\beta A)_{t \land T}}{\cE(\hat\beta A)_{t_\smalltext{n} \land T}}\bigg) \Bigg].
		\end{align}

		Similarly
		\begin{align}\label{eq::convergence_terminal_y}
			\E^\P\bigg[ \int_t^{t_n \land T} \cE(\hat\beta A)_r \big|\alpha_r\widetilde\cY^{\P,n}_r\big|^2 \d C_r \bigg]
			&\leq \E^\P\bigg[\int_{t}^{t_\smalltext{n} \land T} \cE(\hat\beta A)_r (\zeta^{\P,n})^2 \d A_r \bigg] \nonumber\\
			&= \E^\P\bigg[(\zeta^{\P,n})^2 \big(\cE(\hat\beta A)_{t_\smalltext{n} \land T} - \cE(\hat\beta A)_{t \land T}\big) \bigg] \nonumber\\
			&= \E^\P\bigg[ \cE(\hat\beta A)_{t \land T} (\widehat\cY^\smallertext{+}_{t \land T})^2 \bigg(1 - \frac{\cE(\hat\beta A)_{t \land T}}{\cE(\hat\beta A)_{t_\smalltext{n} \land T}}\bigg) \bigg],
		\end{align}
		and
		\begin{align}\label{eq::cond_equality_terminal}
			 \E^\P\Big[\cE(\hat\beta A)_{t_\smalltext{n} \land T}  \big| \widehat\cY_{t_\smalltext{n} \land T} - \zeta^{\P,n} \big|^2 \Big] 
			 = \E^\P\bigg[\Big( \cE(\hat\beta A)^{1/2}_{t_\smalltext{n} \land T} \widehat\cY_{t_\smalltext{n} \land T} - \cE(\hat\beta A)_{t \land T}^{1/2}\widehat{\cY}^\smallertext{+}_{t\land T} \Big)^2 \bigg].
		\end{align}
		Now we substitute \eqref{eq::convergence_terminal_eta}--\eqref{eq::cond_equality_terminal} into \eqref{eq::convergence_terminal_time_t}, and end up with
		\begin{align}\label{eq::convergence_terminal_final}
			\lim_{n \rightarrow \infty}\E^\P\Big[ \cE(\hat\beta A)_{t \land T} \big|\cY^\P_t(t_n \land T,\widehat\cY_{t_\smalltext{n} \land T})  - \widetilde\cY^\P_t(t_n \land T,\zeta^{\P,n})\big|^2 \Big] = 0.
		\end{align}
		Here, we use the integrability established at the beginning together with dominated convergence. This yields \eqref{eq::stability_terminal_condition} along a suitable subsequence.

		\medskip
		We now go back to proving the representation \eqref{eq::aggregation}. Let $\overline{\P} \in \fP_0(\cG_{t\smallertext{+}},\P)$, and let $(t_n)_{n \in \N}$ be a sequence of dyadic numbers converging to $t$ from above. Since
		\begin{equation*}
			\E^{\bar{\P}}\big[ \cY^{\bar{\P}}_{t_\smalltext{n}} \big|\cF_{t_\smalltext{n}}\big] \leq \widehat\cY_{t_\smalltext{n}}, \; \text{$\overline{\P}$--a.s.}, \; n \in \N,
		\end{equation*}
		and
		\begin{equation*}
			\E^{\bar{\P}}\big[|\E^{\bar{\P}}[\cY^{\bar{\P}}_{t_\smalltext{n}}|\cF_{t_\smalltext{n}}] -\cY^{\bar{\P}}_{t}| \big] \leq  \E^{\bar{\P}}\big[|\cY^{\bar{\P}}_{t_\smalltext{n}} - \cY^{\bar{\P}}_{t}| \big] \xrightarrow{n\to\infty} 0,
		\end{equation*}
		we obtain (up to choosing a subsequence of $(t_n)_{n \in \N}$ if necessary) that
		\begin{equation*}
			\cY^{\bar{\P}}_{t} \leq \widehat\cY^\smallertext{+}_{t}, \; \text{${\overline{\P}}$--a.s.}
		\end{equation*}
		Since both sides are $\cG_{t\smallertext{+}}$--measurable, the inequality also holds $\P$--a.s., and therefore
		\begin{equation*}
			\underset{\bar\P \in \fP_\smalltext{0}(\cG_{\smalltext{t}\tinytext{+}},\P)}{{\esssup}^\P} \cY^{\bar\P}_t \leq \widehat\cY^\smallertext{+}_t, \; \textnormal{$\P$--a.s.}
		\end{equation*}

		We now turn to the reverse inequality. We again fix $t \in [0,\infty)$ and $\P \in \fP_0$, and a sequence of dyadic numbers $(t_n)_{n\in\N}$ converging to $t$. We recall from \eqref{eq::dynamic_programming_principle2} that
		\begin{equation*}
			\widehat\cY_{t_\smalltext{n}} = \underset{\bar{\P} \in \fP_\smalltext{0}(\cF_{t_\tinytext{n}},\P)}{{\esssup}^\P} \E^{\bar{\P}}\big[\cY^{\overline{\P}}_{t_\smalltext{n}}\big|\cF_{t_\smalltext{n}}\big], \; \textnormal{$\P$--a.s.}, \; n \in \N.
		\end{equation*}
		As an intermediate remark, let us note the following. For $r \in [0,\infty)$, we have that
		\begin{equation*}
			\cY^\P_{t_\smalltext{n}}\1_{\{T \leq r\}} = \cY^\P_{t_\smalltext{n}}\1_{\{T \leq r\}}\1_{\{t_\smalltext{n} \geq r\}} + \cY^\P_{t_\smalltext{n}}\1_{\{T \leq r\}}\1_{\{t_\smalltext{n} < r\}} = \xi\1_{\{T \leq r\}}\1_{\{t_\smalltext{n} \geq r\}} + \cY^\P_{t_\smalltext{n}}\1_{\{T \leq r\}}\1_{\{t_\smalltext{n} < r\}}
		\end{equation*}
		is $\cF_r$-measurable, which implies that $\cY^\P_{t_\smalltext{n}}$ is $\cF_T$-measurable (see \cite[Lemma 2.2.4.(a)]{weizsaecker1990stochastic}). This then yields
		\begin{equation}\label{eq::conditiong_on_F_T}
			\E^{\P}\big[\cY^{\P}_{t_\smalltext{n}}\big|\cF_{t_\smalltext{n}}\big] = \E^{\P}\Big[\E^\P\big[\cY^{\P}_{t_\smalltext{n}}\big|\cF_{T}\big]\Big|\cF_{t_\smalltext{n}}\Big] = \E^{\P}\big[\cY^{\P}_{t_\smalltext{n}}\big|\cF_{t_\smalltext{n} \land T}\big] = \E^{\P}\big[\cY^{\P}_{t_\smalltext{n} \land T}\big|\cF_{t_\smalltext{n} \land T}\big], \; \text{$\P$--a.s.},
		\end{equation}
		and, together with the fact that $\widehat\cY^\smallertext{+} = \widehat\cY^\smallertext{+}_{\cdot \land T}$, then also
		\begin{align*}
			\widehat\cY_{t_\smalltext{n} \land T} 
			= \widehat\cY_{t_\smalltext{n}} 
			= \underset{\bar{\P} \in \fP_\smalltext{0}(\cF_{t_\tinytext{n}},\P)}{{\esssup}^\P} \E^{\bar{\P}}\big[\cY^{\bar{\P}}_{t_\smalltext{n}}\big|\cF_{t_\smalltext{n}}\big]
			= \underset{\bar{\P} \in \fP_\smalltext{0}(\cF_{t_\tinytext{n}},\P)}{{\esssup}^\P} \E^{\bar{\P}}\big[\cY^{\bar{\P}}_{t_\smalltext{n} \land T}\big|\cF_{t_\smalltext{n} \land T}\big], \; \textnormal{$\P$--a.s.}
		\end{align*}
		
		\medskip
		Suppose, for the moment, that the collection
		\begin{equation}\label{eq::upward_directed}
			\Big\{\E^{\bar{\P}}\big[\cY^{\bar{\P}}_{t_\smalltext{n}}\big|\cF_{t_\smalltext{n}}\big] : \overline{\P} \in \fP_0(\cF_{t_\smalltext{n}},\P) \Big\},
		\end{equation}
		is $\P$--upward directed for each $n \in \N$. We can then choose sequences $(\P^m_n)_{m \in \N} \subseteq \fP_0(\cF_{t_\smalltext{n}},\P)$ (see \cite[Proposition VI-1-1, page 121]{neveu1975discrete}) such that 
		\begin{equation*}
			\E^\P\big[\cY^\P_{t_\smalltext{n}}\big|\cF_{t_\smalltext{n}}\big] \leq \E^{\P^\smalltext{m}_\smalltext{n}}\big[\cY^{\P^\smalltext{m}_\smalltext{n}}_{t_\smalltext{n}}\big|\cF_{t_\smalltext{n}}\big] \leq \E^{\P^{\smalltext{m}\smalltext{+}\smalltext{1}}_\smalltext{n}}\big[\cY^{\P^{\smalltext{m}\smalltext{+}\smalltext{1}}_\smalltext{n}}_{t_\smalltext{n}}\big|\cF_{t_\smalltext{n}}\big] \xrightarrow{m\rightarrow\infty}\widehat\cY_{t_\smalltext{n}}, \; \text{$\P$--a.s.}, \; n \in \N
		\end{equation*}
		Since
		\begin{equation*}
			\E^{\P}\big[\cY^{\P}_{t_\smalltext{n}}\big|\cF_{t_\smalltext{n}}\big] \leq \E^{\P^\smalltext{m}_\smalltext{n}}\big[\cY^{\P^\smalltext{m}_\smalltext{n}}_{t_\smalltext{n}}\big|\cF_{t_\smalltext{n}}\big] = \E^{\P^\smalltext{m}_\smalltext{n}}\big[\cY^{\P^\smalltext{m}_\smalltext{n}}_{t_\smalltext{n} \land T}\big|\cF_{t_\smalltext{n} \land T}\big] \leq \widehat\cY_{t_\smalltext{n}}, \; \text{$\P$--a.s.},
		\end{equation*}
		it follows from the integrability established at the beginning of this proof, by dominated convergence, that
		\begin{equation}\label{eq::m_n_for_stability}
			\lim_{m \rightarrow \infty}\E^\P\Big[\cE(\hat\beta A)_{t_\smalltext{n}\land T}\big|\E^{\P^m_n}\big[\cY^{\P^m_n}_{t_n \land T}\big|\cF_{t_n \land T}\big] - \widehat\cY_{t_n \land T}\big|^2\Big] = 0.
		\end{equation}
		Then, since $\P^m_n = \P$ on $\cF_{t_\smalltext{n}}$, and thus also $\cG_{t\smallertext{+}}$, we find by choosing a suitable subsequence of $(t_n)_{n \in \N}$ and then of each family $(\P^m_n)_{m \in \N}$ if necessary, that
		\begin{align}\label{eq::lim_upward_directed}
			\widehat\cY^\smallertext{+}_{t} = \widehat\cY^\smallertext{+}_{t \land T} = \lim_{n \rightarrow\infty} \cY^\P_{t \land T}(t_n \land T,\widehat\cY_{t_\smalltext{n} \land T}) 
			&= \lim_{n \rightarrow\infty} \cY^\P_{t \land T}\Big(t_n \land T,\lim_{m \rightarrow \infty}\E^{\P^\smalltext{m}_\smalltext{n}}\big[\cY^{\P^\smalltext{m}_\smalltext{n}}_{t_\smalltext{n} \land T}\big|\cF_{t_\smalltext{n} \land T}\big]\Big) \nonumber\\
			&= \lim_{n \rightarrow\infty} \lim_{m \rightarrow \infty} \cY^\P_{t \land T}\Big(t_n \land T,\E^{\P^\smalltext{m}_\smalltext{n}}\big[\cY^{\P^\smalltext{m}_\smalltext{n}}_{t_\smalltext{n} \land T}\big|\cF_{t_\smalltext{n} \land T}\big]\Big) \nonumber\\
			&= \lim_{n \rightarrow\infty} \lim_{m \rightarrow \infty} \cY^{\P^\smalltext{m}_\smalltext{n}}_{t \land T}\Big(t_n \land T,\E^{\P^\smalltext{m}_\smalltext{n}}\big[\cY^{\P^\smalltext{m}_\smalltext{n}}_{t_\smalltext{n} \land T}\big|\cF_{t_\smalltext{n} \land T}\big]\Big) \nonumber\\
			&= \lim_{n \rightarrow\infty} \lim_{m \rightarrow \infty} \cY^{\P^\smalltext{m}_\smalltext{n}}_{t \land T}\big(t_n \land T,\cY^{\P^\smalltext{m}_\smalltext{n}}_{t_\smalltext{n} \land T}\big) \nonumber\\
			&= \lim_{n \rightarrow\infty} \lim_{m \rightarrow \infty} \cY^{\P^\smalltext{m}_\smalltext{n}}_t 
			\leq \underset{\bar\P \in \fP_\smalltext{0}(\cG_{\smalltext{t}\tinytext{+}},\P)}{{\esssup}^\P} \cY^{\bar\P}_t, \; \text{$\P$--a.s.}
		\end{align}
		Here, the first equality follows from \eqref{eq::stability_terminal_condition}, the third equality follows from the stability result of BSDEs in \Cref{cor::stability} together with \eqref{eq::m_n_for_stability}, the fourth equality follows from the fact that $\P^m_n = \P$ on $\cF_{t_\smalltext{n}}$, and the fifth and sixth equalities follow from \Cref{lem::solv_bsde_cond}.

		\medskip
		It remains to show that the family \eqref{eq::upward_directed} is $\P$--upward directed. Let $(\P_1,\P_2) \in (\fP_0(\cF_{t_\smalltext{n}},\P))^2$, and define
		\begin{equation*}
			\Q(\omega;A) \coloneqq\P^{t_\smalltext{n},\omega}_1[A]\1_{B}(\omega) + \P^{t_\smalltext{n},\omega}_2[A]\1_{B^\smalltext{c}}(\omega), \; \omega \in \Omega,
		\end{equation*}
		where
		\begin{equation*}
			B \coloneqq \big\{\E^{\P_\smalltext{1}}[\cY^{\P_\smalltext{1}}_{t_\smalltext{n}}|\cF_{t_\smalltext{n}}] > \E^{\P_\smalltext{2}}[\cY^{\P_\smalltext{2}}_{t_\smalltext{n}}|\cF_{t_\smalltext{n}}]\big\} \in \cF_{t_\smalltext{n}}.
		\end{equation*}
		Then $\Q(\omega;\d\omega^\prime)$ is a kernel on $(\Omega,\cF)$ given $(\Omega,\cF_{t_\smalltext{n}})$ satisfying $\Q(\omega;\d\omega^\prime) \in \fP(t_n,\omega)$ for $\P$--a.e. $\omega \in \Omega$ by \Cref{ass::probabilities2}.$(ii)$.
		The probability measure
		\begin{align*}
			\overline{\P}[A] \coloneqq & \iint_{\Omega\times\Omega} \big(\1_A\big)^{t_\smalltext{n},\omega}(\omega^\prime)\Q(\omega;\d\omega^\prime)\P(\d\omega)
					= \E^\P[\P_1[A|\cF_{t_\smalltext{n}}]\1_B + \P_2[A|\cF_{t_\smalltext{n}}]\1_{B^\smalltext{c}}] = \P_1[A \cap B] + \P_2[A \cap B^c], \; A \in \cF,
		\end{align*}
		is then an element of $\fP_0$ by \Cref{ass::probabilities2}.$(iii)$. Since $\overline{\P}$ agrees with $\P$ on $\cF_{t_\smalltext{n}}$ and $\bar{\P}^{t_\smalltext{n},\omega}(\d\omega^\prime) = \Q(\omega,\d\omega^\prime)$ for $\P$--a.e. $\omega \in \Omega$, we obtain
		\begin{align*}
			\E^{\bar{\P}}\big[\cY^{\bar{\P}}_{t_n}\big|\cF_{t_\smalltext{n}}\big](\omega)
			&= \E^{\Q(\omega)}[\cY^{t_n,\omega,\Q(\omega)}_{0}((T-t_n\land T)^{t_\smalltext{n},\omega},\xi^{t_\smalltext{n},\omega})] \\
			&= \E^{\Q(\omega)}[\cY^{t_n,\omega,\Q(\omega)}_{0}((T-t_n\land T)^{t_\smalltext{n},\omega},\xi^{t_\smalltext{n},\omega})]\1_B(\omega) + \E^{\Q(\omega)}[\cY^{t_n,\omega,\Q(\omega)}_{0}((T-t_n\land T)^{t_\smalltext{n},\omega},\xi^{t_\smalltext{n},\omega})]\1_{B^\smalltext{c}}(\omega) \\
			&= \E^{\P^{{\smalltext{t}_\tinytext{n}}\smalltext{,}\smalltext{\omega}}_\smalltext{1}}[\cY^{t_n,\omega,\P^{{\smalltext{t}_\tinytext{n}}\smalltext{,}\smalltext{\omega}}_\smalltext{1}}_{0}((T-t_n\land T)^{t_\smalltext{n},\omega},\xi^{t_\smalltext{n},\omega})]\1_B(\omega) + \E^{\P^{{\smalltext{t}_\tinytext{n}}\smalltext{,}\smalltext{\omega}}_\smalltext{2}}[\cY^{t_n,\omega,\P^{{\smalltext{t}_\tinytext{n}}\smalltext{,}\smalltext{\omega}}_\smalltext{2}}_{0}((T-t_n\land T)^{t_\smalltext{n},\omega},\xi^{t_\smalltext{n},\omega})]\1_{B^\smalltext{c}}(\omega) \\
			&= \E^{\P_\smalltext{1}}[\cY^{\P_\smalltext{1}}_{t_\smalltext{n}}|\cF_{t_\smalltext{n}}](\omega)\1_B(\omega) + \E^{\P_\smalltext{2}}[\cY^{\P_\smalltext{2}}_{t_\smalltext{n}}|\cF_{t_\smalltext{n}}](\omega)\1_{B^\smalltext{c}}(\omega) \\
			&\geq \max\big\{\E^{\P_\smalltext{1}}[\cY^{\P_\smalltext{1}}_{t_\smalltext{n}}|\cF_{t_\smalltext{n}}](\omega),\E^{\P_\smalltext{2}}[\cY^{\P_\smalltext{2}}_{t_\smalltext{n}}|\cF_{t_\smalltext{n}}](\omega)\big\}, \; \text{$\P$--a.e. $\omega \in \Omega$.}
		\end{align*}
		Here, we used \Cref{lem::conditioning_bsde2} in the first and fourth equalities.
		Since $\E^{\bar{\P}}\big[\cY^{\bar{\P}}_{t_\smalltext{n}}\big|\cF_{t_\smalltext{n}}\big]$ is an element of the family \eqref{eq::upward_directed}, this completes the proof.
	\end{proof}

	We turn to the proof of the last assertion.

	\begin{proof}[Proof of \Cref{thm::down-crossing}.$(iii)$]
		For simplicity, we write $\widehat{\cY}$ and $\widehat{\cY}^\smallertext{+}$ instead of $\widehat{\cY}(T,\xi)$ and $\widehat{\cY}^\smallertext{+}(T,\xi)$, respectively. We fix $ \P \in \fP_0 $ and first prove the result for deterministic times. By \Cref{lem::conditioning_bsde2} and \Cref{lem::stopping_value_function}, we obtain
		\begin{equation*}
			\E^\P\big[ \cY^\P_s(t \land T,\widehat\cY_{t \land T}) \big|\cF_s\big](\omega) 
			= \E^{\P^{\smalltext{s}\smalltext{,}\smalltext{\omega}}} [\cY^{s,\omega,\P^{\smalltext{s}\smalltext{,}\smalltext{\omega}}}_0((t \land T - s \land T)^{s,\omega},\widehat\cY^{s,\omega}_{t \land T^{\smalltext{s}\smalltext{,}\smalltext{\omega}}})] \leq \widehat\cY_{s}(\omega), \; \text{$\P$--a.e. $\omega \in \Omega$}, \; 0 \leq s \leq t < \infty,
		\end{equation*}
		since $\P^{s,\omega} \in \fP(s,\omega)$ for $\P$--a.e. $\omega \in \Omega$ by \Cref{ass::probabilities2}.$(ii)$.
		Now let us fix $0 \leq s < t < \infty$, and let $(s_m)_{m \in \N}$ and $(t_n)_{n \in \N}$ be sequences of dyadic numbers converging to $s$ and $t$ from above, respectively, with $s_m < t$ for all $m$. Since $\cY^\P(t_n \land T,\widehat\cY_{t_\smalltext{n} \land T}) \in \cS^2_{t_\smalltext{n} \land T,\hat\beta}(\F_\smallertext{+},\P)$ by construction, we find by dominated convergence that
		\begin{equation*}
			\E^\P\big[|\E^\P[\cY^\P_{s_\smalltext{m}}(t_n \land T,\widehat\cY_{t_\smalltext{n} \land T})|\cF_{s_\smalltext{m}}] -\cY^\P_{s}(t_n \land T,\widehat\cY_{t_\smalltext{n} \land T})|^2 \big] \leq  \E^\P\big[|\cY^\P_{s_\smalltext{m}}(t_n \land T,\widehat\cY_{t_\smalltext{n} \land T}) - \cY^\P_{s}(t_n \land T,\widehat\cY_{t_\smalltext{n} \land T})|^2 \big] \xrightarrow{m\rightarrow\infty} 0.
		\end{equation*}
		By considering a $\P$--a.s. convergent subsequence if necessary, we find, together with $\E^\P\big[ \cY^\P_{s_\smalltext{m}}(t_n \land T,\widehat\cY_{t_\smalltext{n} \land T}) \big|\cF_{s_\smalltext{m}}\big] \leq \widehat\cY_{s_\smalltext{m}}$, $\P$--a.s. for all $(m, n) \in \N^2$, that
		\begin{equation*}
			\cY^\P_{s}(t_n \land T,\widehat\cY_{t_\smalltext{n} \land T}) \leq \widehat\cY^\smallertext{+}_{s}, \; \text{$n \in \N$}, \; \text{$\P$--a.s.}
		\end{equation*}
		By \eqref{eq::stability_terminal_condition}, and possibly considering a subsequence of $(t_n)_{n \in \N}$ if needed, it follows that
		\begin{align*}
			\cY^\P_s(t \land T,\widehat\cY^\smallertext{+}_{t \land T}) = \lim_{n \rightarrow \infty}\cY^\P_s(t_n \land T,\widehat\cY_{t_\smalltext{n} \land T}) \leq \widehat\cY^\smallertext{+}_s, \; \text{$\P$--a.s.}
		\end{align*}
		The inequality above holds immediately for $t = \infty$ by assertion $(ii)$ since $\widehat{\cY}^\smallertext{+}_T = \xi$, and it follows automatically for $s = t$ as well since $\widehat{\cY}^\smallertext{+}_{t \land T} = \widehat{\cY}^\smallertext{+}_t$. Thus, we have established that
		\begin{align*}
			\cY^\P_{s \land t \land T}(t \land T,\widehat\cY^\smallertext{+}_{t \land T}) = \cY^\P_s(t \land T,\widehat\cY^\smallertext{+}_{t \land T}) \leq \widehat\cY^\smallertext{+}_s = \widehat\cY^\smallertext{+}_{s \land T}, \; \text{$\P$--a.s.}, \; 0 \leq s \leq t \leq \infty
		\end{align*}

		We now show that the above inequality remains valid when $t$ is replaced by an $\F_\smallertext{+}$--stopping time $\tau$ and $s$ is replaced by $\tau \land s$. We proceed similarly to the proof of \cite[Lemma 2.1]{chen2001continuous}. We begin by fixing $s \in [0,\infty]$ and initially assume that $\tau$ takes at most finitely many values $0 \leq t_0 < t_1 < \cdots < t_{n-1} < t_n \leq \infty$. Suppose $t_{n-1} \leq s$. In this case, $\tau = \tau\1_{\{\tau\leq t_{\smalltext{n}\smalltext{-}\smalltext{1}}\}} + t_n\1_{\{\tau = t_\smalltext{n}\}}$, where both $\1_{\{\tau\leq t_{\smalltext{n}\smalltext{-}\smalltext{1}}\}}$ and $\1_{\{\tau = t_\smalltext{n}\}} = \1_{\Omega\setminus\{\tau \leq t_{\smalltext{n}\smalltext{-}\smalltext{1}}\}}$ are $\cF_{s\smallertext{+}}$-measurable. The arguments from the proof of \Cref{prop::stability} can be adapted to show that
		\begin{equation*}
			\cY^\P_s(\tau\land T, \widehat{\cY}^\smallertext{+}_{\tau\land T}) = \cY^\P_s(t_n\land T, \widehat{\cY}^\smallertext{+}_{t_\smalltext{n}\land T}), \; \textnormal{$\P$--a.s. on $\{\tau = t_n\}$},
		\end{equation*}
		and therefore
		\begin{align}
			\cY^\P_{s \land \tau \land T}\big(\tau \land T,\widehat\cY^\smallertext{+}_{\tau \land T}\big) = \cY^\P_{s}\big(\tau \land T,\widehat\cY^\smallertext{+}_{\tau \land T}\big) 
			&= \cY^\P_{s}\big(\tau \land T,\widehat\cY^\smallertext{+}_{\tau \land T}\big)\1_{\{\tau\leq t_{\smalltext{n}\smalltext{-}\smalltext{1}}\}} + \cY^\P_{s}\big(\tau \land T,\widehat\cY^\smallertext{+}_{\tau \land T}\big)\1_{\{\tau = t_{\smalltext{n}}\}} \nonumber\\
			&= \cY^\P_{s}\big(\tau \land T,\widehat\cY^\smallertext{+}_{\tau \land T}\big)\1_{\{\tau\leq t_{\smalltext{n}\smalltext{-}\smalltext{1}}\}} + \cY^\P_{s}\big(t_n \land T,\widehat\cY^\smallertext{+}_{t_\smalltext{n} \land T}\big)\1_{\{\tau = t_{\smalltext{n}}\}} \nonumber\\
			&= \widehat\cY^\smallertext{+}_{\tau \land T}\1_{\{\tau\leq t_{\smalltext{n}\smalltext{-}\smalltext{1}}\}} + \cY^\P_{s}\big(t_n \land T,\widehat\cY^\smallertext{+}_{t_\smalltext{n} \land T}\big)\1_{\{\tau = t_\smalltext{n}\}}, \nonumber\\
			&\leq \widehat\cY^\smallertext{+}_{\tau \land T}\1_{\{\tau\leq t_{\smalltext{n}\smalltext{-}\smalltext{1}}\}} + \widehat\cY^\smallertext{+}_{s \land t_\smalltext{n} \land T}\1_{\{\tau = t_\smalltext{n}\}} = \widehat\cY^\smallertext{+}_{s \land \tau \land T}, \; \text{$\P$--a.s.}
		\end{align}
		
		Suppose now that $t_{k-1}\leq s <t_{k}$ for some $k \in \{1,\ldots,n-1\}$, and note that each $t_j \land \tau$ for $j\in\{0,\dots,n\}$, is an $\F_\smallertext{+}$--stopping time with values in $\{t_0,t_1,\ldots, t_j\}$. We repeatedly use the comparison principle and time-consistency of the BSDEs (see \Cref{lem::solv_bsde_cond}) together with the above arguments and find
		\begin{align*}
			\cY^\P_s\big(\tau \land T,\widehat\cY^\smallertext{+}_{\tau \land T}\big)
			= \cY^\P_{s \land \tau \land T}\big(\tau \land T,\widehat\cY^\smallertext{+}_{\tau \land T}\big) 
			&= \cY^\P_{s \land \tau \land T}\big(t_{n-1} \land \tau\land T,\cY^\P_{t_{\smalltext{n}\smalltext{-}\smalltext{1}} \land \tau\land T}(\tau \land T,\widehat\cY^\smallertext{+}_{\tau \land T})\big) \\
			&\leq \cY^\P_{s \land \tau \land T}\big(t_{n-1} \land \tau \land T,\widehat\cY^+_{t_{n-1} \land \tau \land T}\big) \\
			&= \cY^\P_{s \land \tau \land T}\big(t_{n-2} \land \tau \land T,\cY^\P_{t_{n-2} \land \tau \land T}(t_{n-1} \land \tau \land T,\widehat\cY^\smallertext{+}_{t_{\smalltext{n}\smalltext{-}\smalltext{1}} \land \tau\land T})\big) \\
			&\leq \cY^\P_{s \land \tau \land T}\big(t_{n-2} \land \tau \land T,\widehat\cY^{\smallertext{+}}_{t_{\smalltext{n}\smalltext{-}\smalltext{2}} \land \tau \land T}\big) \\
			& \leq \cdots \\
			&\leq \cY^\P_{s \land \tau \land T}\big(t_{k} \land \tau \land T,\widehat\cY^\smallertext{+}_{t_{\smalltext{k}} \land \tau \land T}\big) \leq \widehat\cY^\smallertext{+}_{s \land \tau \land \and T}, \; \text{$\P$--a.s.}
		\end{align*}
		We therefore obtain
		\begin{equation*}
			\cY^\P_s\big(\tau \land T,\widehat\cY^\smallertext{+}_{\tau \land T}\big) 
			= \cY^\P_{s \land \tau \land T}\big(\tau \land T,\widehat\cY^\smallertext{+}_{\tau \land T}\big) 
			\leq \widehat\cY^\smallertext{+}_{s \land \tau \land T} = \widehat\cY^\smallertext{+}_{s \land \tau}, \; s \in [0,\infty], \; \text{$\P$--a.s.},
		\end{equation*}
		Now, suppose that $\tau$ is a general $\F_\smallertext{+}$--stopping time. \textcolor{black}{Let $\D^n_\smallertext{+} \coloneqq \{k2^{-n} : k \in\{0,1,\ldots,2^{2n}\}\}$},	 and define
		\begin{equation*}
			\tau^n \coloneqq \inf\big\{t \in \D^n_\smallertext{+}: t \geq \tau + 1/2^n\big\} = \sum_{k = 1}^{2^{2n}} k 2^{-n} \1_{\{(k-1)2^{-n} \leq \tau + 1/2^n \leq k2^{-n}\}} + \infty \1_{\{\tau + 1/2^n > 2^n\}}.
		\end{equation*} 
		It follows that $\tau^n$ is an $\F$--predictable stopping time (see \cite[Theorem IV.57.(a), Theorem IV.71.(a), and Comment IV.72]{dellacherie1978probabilities}) that converges decreasingly to $\tau$. Furthermore, each $\tau^n$ takes at most finitely many values in $\D^n_\smallertext{+} \cup \{\infty\}$. From the preceding considerations, we deduce that
		\begin{equation*}
			\cY^\P_{s}\big(\tau^n \land T,\widehat\cY^\smallertext{+}_{\tau^\smalltext{n} \land T}\big) = \cY^\P_{s \land \tau^\smalltext{n} \land T}\big(\tau^n \land T,\widehat\cY^\smallertext{+}_{\tau^\smalltext{n} \land T}\big) \leq \widehat\cY^\smallertext{+}_{s \land \tau^\smalltext{n} \land T} = \widehat\cY^\smallertext{+}_{s \land \tau^\smalltext{n}}, \; s \in [0,\infty], \; n \in \N, \; \text{$\P$--a.s.}
		\end{equation*}
		Next, let us fix $s \in [0,\infty)$. We express
		\begin{align}\label{eq::convergence_tau_n}
			&\cY^\P_{s \land \tau}\big(\tau^n \land T,\widehat\cY^\smallertext{+}_{\tau^\smalltext{n} \land T}\big) - \cY^\P_{s \land \tau}\big(\tau \land T,\widehat\cY^\smallertext{+}_{\tau \land T}\big) \nonumber\\
			&= \cY^\P_{s \land \tau}\big(\tau^n \land T,\widehat\cY_{\tau^\smalltext{n} \land T}\big) - \widetilde\cY^\P_{s \land \tau}\big(\tau^n \land T,\widehat\cY^\smallertext{+}_{\tau \land T}\cE(\hat\beta A)^{1/2}_{\tau \land T}/\cE(\hat\beta A)^{1/2}_{\tau^\smalltext{n} \land T}\big) \nonumber\\
			&\quad + \widetilde\cY^{\P}_{s \land \tau}\big(\tau \land T,\widetilde\cY^\P_{\tau \land T}(\tau^n \land T,\widehat\cY^\smallertext{+}_{\tau \land T}\cE(\hat\beta A)^{1/2}_{\tau \land T}/\cE(\hat\beta A)^{1/2}_{\tau^\smalltext{n} \land T})\big) - \cY^\P_{s \land \tau}\big(\tau \land T,\widehat\cY^\smallertext{+}_{\tau \land T}\big),
		\end{align}
		as in the proof of \Cref{thm::down-crossing}.$(ii)$, and then proceed analogously to the arguments following \eqref{eq::convergence_t_n} to deduce that both differences after the equality above converge $\P$--a.s. to zero, possibly along a subsequence $(\tau^{n_\smalltext{k}})_{k \in \N}$ if necessary. This yields
		\begin{equation*}
			\cY^\P_{s}\big(\tau \land T,\widehat\cY^\smallertext{+}_{\tau \land T}\big) 
			= \cY^\P_{s \land \tau}\big(\tau \land T,\widehat\cY^\smallertext{+}_{\tau \land T}\big)  
			= \lim_{n \rightarrow \infty}\cY^\P_{s \land \tau}\big(\tau^n \land T,\widehat\cY^\smallertext{+}_{\tau^\smalltext{n} \land T}\big)
			\leq \lim_{n \rightarrow \infty} \widehat\cY^\smallertext{+}_{s \land \tau \land \tau^\smallertext{n}} = \widehat\cY^\smallertext{+}_{s\land \tau}, \; \text{$\P$--a.s.}
		\end{equation*}
		As the above equality holds trivially for $s = \infty$, the right-continuity of $\cY^\P(\tau \land T,\widehat{\cY}^\smallertext{+}_{\tau \land T})$ and $\widehat{\cY}^\smallertext{+}$ yield \eqref{eq::nonlinear_supermartingale_property}. This concludes the proof.
	\end{proof}

	\begin{proof}[Proof of \Cref{cor::optimisation}]
			We begin with \eqref{eq::relation_to_optimisation} and, for simplicity, omit $(T,\xi)$ from the notation. For fixed $\P \in \fP_0$ and $t = 0$, we denote by $(\cY^{\P^\smalltext{m}_\smalltext{n}}_0)_{(m, n) \in \N^2}$ the family used in \eqref{eq::lim_upward_directed}, which is bounded by a $\P$-integrable random variable; this follows from \eqref{eq::implies_uniform_integrability} together with \Cref{prop::stability}. We then apply Fatou's lemma twice together with \eqref{eq::lim_upward_directed} and find 
			\begin{align*}
			\E^\P\big[\widehat{\cY}^\smallertext{+}_0\big]
			= \E^\P\bigg[\lim_{n\rightarrow\infty} \lim_{m \rightarrow \infty}\cY^{\P^\smalltext{m}_\smalltext{n}}_0\bigg]
			&\leq \liminf_{n\rightarrow\infty} \E^\P\bigg[\lim_{m \rightarrow \infty}\cY^{\P^\smalltext{m}_\smalltext{n}}_0\bigg]\\
			&\leq \liminf_{n\rightarrow\infty}\liminf_{m \rightarrow \infty} \E^\P\Big[\cY^{\P^\smalltext{m}_\smalltext{n}}_0\Big]
			= \liminf_{n\rightarrow\infty}\liminf_{m \rightarrow \infty} \E^{\P^\smalltext{m}_\smalltext{n}}\Big[\cY^{\P^\smalltext{m}_\smalltext{n}}_0\Big]
			\leq \sup_{\P\in\fP_0}\E^\P\big[\cY^\P_0\big] = \widehat{\cY}_0.
			\end{align*}
			It remains to take the supremum over $\fP_0$ on the left-hand side. The converse inequality follows immediately from the representation \eqref{eq::aggregation}, since for any $\P \in \fP_0$, we have $\cY^\P_0 \leq \widehat{\cY}^\smallertext{+}_0$, $\P$--a.s., which then yields
			\[
			\widehat{\cY}_0 = \sup_{\P\in\fP_0} \E^\P\big[ \cY^\P_0 \big] \leq \sup_{\P\in\fP_0} \E^\P\big[ \widehat{\cY}^\smallertext{+}_0 \big].
			\]
		This yields \eqref{eq::relation_to_optimisation}.
			
		\medskip
		Suppose now that $\P^\ast \in \fP_0$ satisfies $\widehat{\cY}_0 = \E^{\P^\smalltext{\ast}}\big[\cY^{\P^\smalltext{\ast}}_0\big]$. Since $\cY^{\P^\smalltext{\ast}}_0 \leq \widehat{\cY}^\smallertext{+}_0$, $\P^\ast$--a.s., we have
		\[
			\widehat{\cY}_0 = \E^{\P^\smalltext{\ast}}\big[\cY^{\P^\smalltext{\ast}}_0\big] \leq \E^{\P^\smalltext{\ast}}\big[\widehat{\cY}^\smallertext{+}_0\big] \leq \widehat{\cY}_0,
		\]
		from which \eqref{eq::max_widehat_y_plus} immediately follows.
		
		\medskip
		Lastly, suppose that $\P^\ast\in\fP_0$ satisfies \eqref{eq::max_widehat_y_plus} and $\bar{\P}^\ast \in \fP_0(\cG_{0\smallertext{+}},\P^\ast)$ satisfies $\widehat{\cY}^\smallertext{+}_0 = \cY^{\bar\P^\smalltext{\ast}}_0$, \textnormal{$\P^\ast$--a.s.}, then
		\[
			\widehat{\cY}_0 
			= \sup_{\P\in\fP_\smalltext{0}}\E^\P\big[\widehat{\cY}^\smallertext{+}_0\big] 
			= \E^{\P^\smalltext{\ast}}\big[\widehat{\cY}^\smallertext{+}_0\big] 
			= \E^{\P^\smalltext{\ast}}\big[\cY^{\bar\P^\smalltext{\ast}}_0\big] 
			= \E^{\bar\P^\smalltext{\ast}}\big[\cY^{\bar\P^\smalltext{\ast}}_0\big],
		\]
		where the first equality follows from \eqref{eq::relation_to_optimisation}, and the last one from $\P^\ast = \bar{\P}^\ast$ on $\cG_{0\smallertext{+}}$. This concludes the proof.
	\end{proof}

	\subsection{Semi-martingale decomposition of the regularised value function}
	
	\begin{proof}[Proof of \Cref{prop::decomposition_yplus}]
		We fix $\P \in \fP_0$ and note that $M^\Phi_1(\beta) < M^\Phi_1(\beta^\star) = 1$. Recall that $\widehat\cY^\smallertext{+}$ is $\G_\smallertext{+}$-optional on $[0,\infty)$ and satisfies $\widehat\cY^\smallertext{+} = \widehat\cY^\smallertext{+}_{\cdot \land T}$ by \Cref{thm::down-crossing}. The integrability from \Cref{thm::down-crossing} and the right-continuity of $\widehat\cY^\smallertext{+}$, together with \cite[Remark 2.10.$(vii)$, Lemma 3.3, Theorem 3.4, and Remark 3.5]{possamai2024reflections}, ensure that there exists a collection $(Y^\P,Z^\P,U^\P,N^\P,K^\P)$ such that $(Y^\P,\alpha Y^\P,\alpha Y^\P_\smallertext{-},Z^\P,U^\P,N^\P)$ belongs to $\cT^2_T(\G_\smallertext{+},\P) \times \H^2_{T,\beta}(\G_\smallertext{+},\P) \times \H^2_{T,\beta}(\G_\smallertext{+},\P) \times \H^2_{T,\beta}(X^{c,\P};\G,\P) \times \H^2_{T,\beta}(\mu^X;\G,\P) \times \cH^{2,\perp}_{T,\beta}(X^{c,\P},\mu^X;\G_\smallertext{+},\P)$, $K^\P = (K^\P_t)_{t \in [0,\infty]}$ is real-valued, c\`adl\`ag, non-decreasing and starting at zero, $\G^\P_\smallertext{+}$-predictable, satisfies $K^\P = K^\P_{\cdot \land T}$ and $\E^\P\big[|K^\P_T|^2\big] <\infty$, and for $t\in[0,\infty]$
		\[\begin{cases}
				\displaystyle Y^\P_t = \xi + \int_t^T f^\P_r\big(Y^\P_r,Y^\P_{r\smallertext{-}},Z^\P_r, U^\P_r(\cdot)\big) \d C_r - \int_t^T Z^\P_r \d X^{c,\P}_r - \int_t^T\int_{\R^\smalltext{d}} U^\P_r(x)\tilde\mu^{X,\P}(\d r,\d x) - \int_t^T \d N^\P_r + \int_t^T\mathrm{d}K^\P_r, \\[0.5em]
			\displaystyle Y^\P = Y^\P_{\cdot \land T} \geq \widehat\cY^\smallertext{+}_{\cdot \land T} = \widehat\cY^\smallertext{+}, \\[0.5em]
			\displaystyle \int_{(0,T)}(Y^\P_{r\smallertext{-}} - \widehat\cY^\smallertext{+}_{r\smallertext{-}}) \d K^{\P}_r + (Y^\P_{T\smallertext{-}} - \widehat\cY^\smallertext{+}_{T\smallertext{-}})\Delta K^\P_T = 0, 
		\end{cases}\]
		holds outside a $\P$--null set.	The goal is now to show that $Y^\P_{t \land T} = \widehat\cY^\smallertext{+}_{t \land T}$, $t \in [0,\infty]$, $\P$--a.s., or, equivalently, that $\P[Y^\P_{\tau \land T} > \widehat\cY^\smallertext{+}_{\tau \land T}] = 0$ for every $\G_\smallertext{+}$--stopping time $\tau$. For the sake of reaching a contradiction, suppose there exists a $\G_\smallertext{+}$--stopping time $\tau$ for which $\P[Y^\P_{\tau \land T} > \widehat\cY^+_{\tau \land T}] > 0$. Since
		\begin{equation*}
			0 < \P\big[Y^\P_\tau > \widehat\cY^\smallertext{+}_\tau\big] 
			= \P\big[Y^\P_{\tau \land T} > \widehat\cY^\smallertext{+}_{\tau \land T}\big] 
			= \P\big[Y^\P_{\tau \land T} > \widehat\cY^\smallertext{+}_{\tau \land T}, \tau \land T < \infty\big] 
			= \sum_{\ell = 1}^\infty \P\big[Y^\P_{\tau \land T} > \widehat\cY^\smallertext{+}_{\tau \land T}, \ell - 1\leq \tau \land T < \ell\big],
		\end{equation*}
		we can suppose, without loss of generality, that $\tau = \tau \land T$ is bounded.
		By \cite[Proposition 2.3.4]{weizsaecker1990stochastic}, the random time
		\begin{equation*}
			\tau_\varepsilon \coloneqq \inf\Big\{t \geq \tau \land T : Y^\P_t < \widehat\cY^\smallertext{+}_t + \varepsilon\cE(\beta A)^{\smallertext{-}1/2}_{t \land T}\Big\} \land T,
		\end{equation*}
		is a $\G_\smallertext{+}$--stopping time. Fix $\omega \in \{Y^\P_{\tau} > \widehat\cY^\smallertext{+}_{\tau}\} \cap \{\tau <  \tau_\varepsilon\} \cap \{\cE(\beta A)_{T\smallertext{-}} < \infty\}$. Then $Y^\P_t(\omega) \geq \widehat\cY^\smallertext{+}_t(\omega) + \varepsilon\cE(\beta A)^{-1/2}_t(\omega)$ for all $t \in [\tau(\omega),\tau_\varepsilon(\omega))$ and therefore $Y^\P_{t\smallertext{-}}(\omega) \geq \widehat\cY^\smallertext{+}_{t\smallertext{-}}(\omega) + \varepsilon\cE(\beta A)^{-1/2}_{t\smallertext{-}} > \widehat\cY^\smallertext{+}_{t\smallertext{-}}(\omega)$ for all $t \in (\tau(\omega),\tau_{\varepsilon}(\omega)]$. The Skorokhod condition
		\begin{equation*}
			\int_{(0,T)}(Y^\P_{r\smallertext{-}} - \widehat\cY^\smallertext{+}_{r\smallertext{-}}) \d K^{\P}_r + (Y^\P_{T\smallertext{-}} - \widehat\cY^\smallertext{+}_{T\smallertext{-}})\Delta K^\P_T = 0, \; \text{$\P$--a.s.},
		\end{equation*}
		then yields $K^\P_{\tau_\smalltext{\varepsilon}} = K^\P_{\tau}$, $\P$--a.s. on $\{Y^\P_\tau > \widehat\cY^\smallertext{+}_\tau\} \cap \{\tau < \tau_\varepsilon\} \cap \{\cE(\beta A)_{T\smallertext{-}} < \infty\}$.
		On the other hand, for $\omega \in \{\tau = \tau_\varepsilon\}$, we immediately have $K^\P_{\tau_\smalltext{\varepsilon}} = K^\P_{\tau}$, $\P$--a.s. on $\{Y^\P_\tau > \widehat\cY^\smallertext{+}_\tau\} \cap \{\tau = \tau_\varepsilon\} \cap \{\cE(\beta A)_{T\smallertext{-}} < \infty\}$. Therefore $K^\P_{\tau_\smalltext{\varepsilon}} = K^\P_{\tau}$, $\P$--a.s. on $\{Y^\P_\tau > \widehat\cY^\smallertext{+}_\tau\} \cap \{\cE(\beta A)_{T\smallertext{-}} < \infty\}$.
		Furthermore, we have $\tau_\varepsilon = \tau$ and thus $K^\P_{\tau_\smalltext{\varepsilon}} = K^\P_{\tau}$, $\P$--a.s. on $\{Y^\P_\tau \leq \widehat\cY^\smallertext{+}_\tau\}$. Therefore, to summarise, we must have $K^\P_{\tau_\smalltext{\varepsilon}} = K^\P_{\tau}$, $\P$--a.s. on $\{\cE(\beta A)_{T\smallertext{-}} < \infty\}$. Since $\P[\cE(\beta A)_{T\smallertext{-}} < \infty] = 1$, we find
		\begin{align*}
			Y^\P_t &= Y^\P_{\tau_\smalltext{\varepsilon}} + \int_t^{\tau_\smalltext{\varepsilon}} f^\P_r\big(Y^\P_r,Y^\P_{r\smallertext{-}},Z^\P_r, U^\P_r(\cdot)\big) \d C_r - \bigg(\int_t^{\tau_\smalltext{\varepsilon}} Z^\P_r \d X^{c,\P}_r\bigg)^{(\P)} - \bigg(\int_t^{\tau_\smalltext{\varepsilon}}\int_{\R^\smalltext{d}} U^\P_r(x)\tilde\mu^{X,\P}(\d r,\d x)\bigg)^{(\P)}\\
			&\quad  - \int_t^{\tau_\smalltext{\varepsilon}} \d N^\P_r, \; t \in [\tau,\tau_\varepsilon], \; \text{$\P$--a.s.}
		\end{align*}
		Note that $\cY^\P_t(\tau, Y^\P_{\tau})\1_{\{t < \tau\}} + Y^\P_{t \land \tau_\smalltext{\varepsilon}} \1_{\{t \geq \tau\}}$, $t \in [0,\infty]$, is the solution $\cY^\P(\tau_\varepsilon, Y^\P_{\tau_\smalltext{\varepsilon}})$ of the BSDE with generator $f^\P$ and terminal condition $Y^\P_{\tau_\smalltext{\varepsilon}}$ at time $\tau_\varepsilon$. Since $Y^\P_{\tau_\smalltext{\varepsilon}} \leq \widehat\cY^\smallertext{+}_{\tau_\smalltext{\varepsilon}} + \varepsilon\cE(\beta A)^{-1/2}_{\tau_\smalltext{\varepsilon}}$, $\P$--a.s., it follows from the comparison principle for our BSDEs (see \Cref{ass::comparison}) that
		\begin{equation*}
			Y^\P_{\tau} = \cY^\P_{\tau}(\tau_\varepsilon ,Y^\P_{\tau_\smalltext{\varepsilon}}) 
			\leq \cY^\P_{\tau}\big(\tau_\varepsilon,\widehat\cY^\smallertext{+}_{\tau_\smalltext{\varepsilon}} + \varepsilon\cE(\beta A)^{-1/2}_{\tau_\smalltext{\varepsilon}}\big), \; \text{$\P$--a.s.},
		\end{equation*}
		and by the stability result (see \Cref{prop::stability}), that there exists a constant $\mathfrak{C}$ only depending on $\beta$ and on $\Phi$ such that
		\begin{equation*}
			\E^\P[\cY^\P_{\tau}(\tau_\varepsilon,\widehat\cY^\smallertext{+}_{\tau_\smalltext{\varepsilon}} + \varepsilon\cE(\beta A)^{-1/2}_{\tau_\smalltext{\varepsilon}})] - \E^\P[\cY^\P_{\tau}(\tau_\varepsilon,\widehat\cY^\smallertext{+}_{\tau_\smalltext{\varepsilon}})] 
			\leq \big| \E^\P[\cY^\P_{\tau}(\tau_\varepsilon,\widehat\cY^\smallertext{+}_{\tau_\smalltext{\varepsilon}} + \varepsilon\cE(\beta A)^{-1/2}_{\tau_\smalltext{\varepsilon}})] - \E^\P[\cY^\P_{\tau}(\tau_\varepsilon,\widehat\cY^\smallertext{+}_{\tau_\smalltext{\varepsilon}})] \big| \leq \mathfrak{C}^{1/2}\varepsilon.
		\end{equation*}
		This yields
		\begin{align*}
			\E^\P[Y^\P_{\tau}] 
			&\leq \E^\P\big[\cY^\P_{\tau}(\tau_\varepsilon,\widehat\cY^\smallertext{+}_{\tau_\smalltext{\varepsilon}} + \varepsilon\cE(\beta A)^{-1/2}_{\tau_\smalltext{\varepsilon}})\big] 
			\leq \varepsilon\mathfrak{C}^{1/2} + \E^\P[\cY^\P_{\tau}(\tau_\varepsilon,\widehat\cY^\smallertext{+}_{\tau_\smalltext{\varepsilon}})] 
			\leq \varepsilon\mathfrak{C}^{1/2} + \E^\P\big[ \widehat\cY^\smallertext{+}_{\tau}\big],
		\end{align*}
		where we used \Cref{thm::down-crossing}.$(iii)$ in the final inequality, which is precisely the (strong) non-linear super-martingale property of $\widehat{\cY}^\smallertext{+}$. Given that $\varepsilon \in (0,\infty)$ was arbitrary, we must have $\E^\P[Y^\P_{\tau}] \leq \E^\P[\widehat\cY^+_{\tau}]$. It then follows from $Y^\P_\tau \geq \widehat\cY^+_{\tau}$, $\P$--a.s., that $Y^\P_{\tau} = \widehat\cY^+_{\tau}$, $\P$--a.s., which contradicts $\P[Y^\P_\tau > \widehat\cY^\smallertext{+}_\tau] > 0$. 
		
		\medskip
		We now search for the appropriate replacement $Z$ for the family $(Z^\P)_{\P \in \fP_\tinytext{0}}$. To simplify the notation, we will omit references to the filtration $\G_\smallertext{+}$ in what follows, provided no confusion arises. We follow the proof of \cite[Theorem 2.4]{nutz2015robust}, and denote by $\mathsf{C}^{(\hat\cY^\tinytext{+},X)}$ the $\G$-predictable, $\S^{d+1}$-valued process with $\P$--a.s. continuous paths, which agrees with the second characteristic of the joint semi-martingale $(\widehat\cY^\smallertext{+},X)$ relative to $(\G^\P_\smallertext{+},\P)$ for every $\P \in\fP_0$; see \cite[Proposition 6.6.$(i)$]{neufeld2014measurability}.\footnote{As noted in the proof of \cite[Theorem 2.4]{nutz2015robust}, the argument establishing the existence of this aggregator for the second characteristic in \cite[Proposition 6.6.$(i)$]{neufeld2014measurability} does not rely on any separability assumption of the filtration.} We then let $\mathsf{C}^{X}$ denote the sub-matrix and $\mathsf{C}^{\widehat\cY^\tinytext{+},X}$ denote the row vector in $\mathsf{C}^{(\widehat\cY^\tinytext{+},X)}$ corresponding to the predictable quadratic co-variation of $X$ with itself, and of $\widehat\cY^\smallertext{+}$ with $X$, respectively. We then define the $\G$-predictable processes 
		\begin{equation*}
			\mathsf{c}^{\widehat\cY^\tinytext{+},X}_t \coloneqq c^{\widehat\cY^\tinytext{+},X}_t \1_{\{c^{\hat\cY^\tinytext{+},X}_t \in \R^\smalltext{d}\}}, \; \text{where} \; c^{\widehat\cY^\tinytext{+},X}_t \coloneqq \limsup_{n \rightarrow \infty} \frac{\mathsf{C}^{\widehat\cY^\tinytext{+},X}_t - \mathsf{C}^{\widehat\cY^\tinytext{+},X}_{(t-1/n)\lor 0}}{C_t - C_{(t-1/n)\lor 0}}, \; t \in [0,\infty),
		\end{equation*}
		and
		\begin{equation*}
			\mathsf{c}^{X}_t \coloneqq c^{X}_t \1_{\{c^{\smalltext{X}}_t \in \S^\smalltext{d}_\tinytext{+}\}}, \; \text{where} \; c^{X}_t \coloneqq \limsup_{n \rightarrow \infty} \frac{\mathsf{C}^{X}_t - \mathsf{C}^{X}_{(t-1/n)\lor 0}}{C_t - C_{(t-1/n)\lor 0}}, \; t \in [0,\infty),
		\end{equation*}
		component-wise, which $\fP_0$--q.s. satisfy $\mathsf{C}^{\widehat\cY^\tinytext{+},X} = \mathsf{c}^{\widehat\cY^\tinytext{+},X} \bcdot C$ and  $\mathsf{C}^{X} = \mathsf{c}^{X} \bcdot C$.
		  We can then write for any $\P\in\fP_0$
		\begin{equation*}
			\langle (\widehat\cY^\smallertext{+})^{c,\P}, (X^{c,\P})^j \rangle^{(\P)} =  \langle \widehat\cY^\smallertext{+}, (X^{c,\P})^j \rangle^{(\P)} =  \langle Z^\P\bcdot X^{c,\P}, (X^{c,\P})^j \rangle^{(\P)} = \bigg(\sum_{i = 1}^d (Z^{\P})^i (\mathsf{c}^X)^{i,j}\bigg) \bcdot C, \; j \in \{1,\ldots,d\}, \; \text{$\P$--a.s.},
		\end{equation*}
		which expressed more compactly means $\mathsf{C}^{\widehat\cY^\tinytext{+},X} = \big((Z^\P)^\top \mathsf{c}^X\big) \bcdot C$, $\P$--a.s. for every $\P\in\fP_0$, and then
		\begin{equation*}
			\mathsf{c}^{\widehat\cY^\tinytext{+},X} = (Z^\P)^\top \mathsf{c}^X, \; \text{$\P \otimes \d C$--a.e.}, \; \P\in\fP_0.
		\end{equation*}
		We then define the $\R^d$-valued and $\G$-predictable process
		\begin{equation*}
			Z \coloneqq \big(\mathsf{c}^{\widehat\cY^\tinytext{+},X} \big( \mathsf{c}^{X} \big)^{\oplus}\big)^\top.
		\end{equation*}
		This process satisfies 
		\[
			(Z - Z^\P)^\top \mathsf{c}^X (Z - Z^\P) = 0, \; \textnormal{$\P\otimes \d C$--a.e.,} \; \P \in \fP_0,
		\]
		and therefore $\|Z - Z^\P\|_{\H^\smalltext{2}_{\smalltext{T}\smalltext{,}\smalltext{\beta}}(\G,\P)} = 0$ and $(Z \bcdot X^{c,\P})^{(\P)} = (Z^\P\bcdot X^{c,\P})^{(\P)}$, $\P$--a.s., for every $\P \in \fP_0$. We also observe that the previous equation and the Lipschitz-continuity property of the generator permits replacing $Z^\P$ with $Z$.
		
		\medskip
		Lastly, any decomposition as described in the statement is naturally a solution to the well-posed reflected BSDE with obstacle $\widehat\cY^\smallertext{+}$ in the sense of \cite[Section 3.1]{possamai2024reflections}. Thus by well-posedness, we immediately get the stated uniqueness. This concludes the proof.
	\end{proof}
	
	\subsection{Existence and uniqueness of 2BSDEs}
	
	In this part, we prove the well-posedness of the two 2BSDE systems introduced in \Cref{sec::2bsdes}.
	\subsubsection{Extrinsic characterisation}
	
	\begin{proof}[Proof of \Cref{thm::2BSDE_wellposed}]
		We start with $(i)$. Let $\beta^\prime \in (\beta,\hat\beta)$ and then $(\widehat{\cZ},(\widehat{\cU}^\P,\widehat{\cN}^\P,\widehat{\cK}^\P)_{\P \in \fP_0})$ denote the decomposition of $\widehat\cY^\smallertext{+}$ according to \Cref{prop::decomposition_yplus} relative to the parameter $\beta^\prime$. Since $(\widehat{\cZ},(\widehat{\cU}^\P,\widehat{\cN}^\P,\widehat{\cK}^\P)_{\P \in \fP_0})$ also satisfies the $\beta$-integrability requirement in the statement of \Cref{prop::decomposition_yplus}.$(i)$, the uniqueness stated in \Cref{prop::decomposition_yplus}.$(ii)$ implies that $(N^\P,K^\P) = (\cN^\P,\cK^\P)$ outside a $\P$--null set for each $\P \in \fP_0$, $\displaystyle (\widehat{Z} - \widehat{\cZ})^\top\mathsf{a}(\widehat{Z} - \widehat{\cZ}) = 0$ outside a $\{\P \times \mathrm{d}C : \P \in \fP_0\}$--polar set, and $\sup_{\P \in \fP_0}\|\widehat{U}^\P - \widehat{\cU}^\P\|_{\H^\smalltext{2}_\smalltext{T}(\mu^\smalltext{X};\F,\P)} = 0$. This immediately implies that $(\widehat{Z},(\widehat{U}^\P,\widehat{N}^\P)_{\P \in \fP_0})$ is in $\sL^2_{T,\beta^\smalltext{\prime}}(\fP_0)$. Since $\beta^\prime \in (\beta,\hat\beta)$ was arbitrary and the sets $\sL^2_{T,\beta^\smalltext{\prime}}(\fP_0)$ are monotonically decreasing in $\beta^\smalltext{\prime}$, we thus must have that $(\widehat{Z},(\widehat{U}^\P,\widehat{N}^\P)_{\P \in \fP_0})$ belongs to $\bigcap_{\beta^\smalltext{\prime} \in (\beta,\hat\beta)}\sL^2_{T,\beta^\smalltext{\prime}}(\fP_0) = \bigcap_{\beta^\smalltext{\prime} \in (0,\hat\beta)}\sL^2_{T,\beta^\smalltext{\prime}}(\fP_0)$.
		
		\medskip
		The arguments for assertion $(ii)$ are similar. First, \ref{2BSDE::aggregation} and the right-continuity of $Y$ and $\widehat{\cY}^\smallertext{+}$ imply that $Y = \widehat\cY^\smallertext{+}$ outside a $\P$--null set for every $\P \in \fP_0$. Then, for $(Z,(U^\P,N^\P)_{\P \in \fP_\smalltext{0}})$ in, say, $\sL^2_{T,\beta^\smalltext{\prime}}(\fP_0)$ for some $\beta^\prime \in (\beta^\star,\hat\beta)$, we must have that $(Z,(U^\P,N^\P,K^\P)_{\P \in \fP_0})$ is a decomposition of $\widehat{\cY}^\smallertext{+}$ according to \Cref{prop::decomposition_yplus} relative to the parameter $\beta^\prime$. Since either $\beta \geq \beta^\prime$ or $\beta < \beta^\prime$, we must have that either $(\widehat{Z},(\widehat{U}^\P,\widehat{N}^\P,\widehat{K}^\P)_{\P \in \fP_0})$ is a decomposition relative to $\beta^\prime$, or that $(Z,(U^\P,N^\P,K^\P)_{\P \in \fP_\smalltext{0}})$ is a decomposition relative to $\beta$. In both cases, it follows from \Cref{prop::decomposition_yplus}.$(ii)$ that $(N^\P,K^\P) = (\widehat{N}^\P,\widehat{K}^\P)$ outside a $\P$--null set for every $\P \in \fP_0$, $\displaystyle (Z - \widehat{Z})^\top \mathsf{a}(Z - \widehat{Z}) = 0$ outside a $\{\P \otimes \d C : \P \in \fP_0\}$--polar set, and $\sup_{\P \in \fP_\smalltext{0}}\|U^\P - \widehat{U}^\P\|_{\H^\smalltext{2}_\smalltext{T}(\mu^\smalltext{X};\G,\P)} = 0$. This concludes the proof.
	\end{proof}
	
	\begin{proof}[Proof of \Cref{prop::bound_and_comparison}]
		Assertion $(ii)$ follows immediately from \ref{2BSDE::aggregation} and the comparison principle in \Cref{prop::comparison}.
		
		\medskip
		We turn to $(i)$. Since the arguments are similar to those in the proofs of \cite[Proposition 5.1, Lemma 5.2, and Lemma 5.3]{possamai2024reflections}, we will be brief and refer to specific steps where appropriate. We additionally make the following preliminary observation.
		
		\begin{lemma}\label{eq::FV_exponential}
		For any real-valued, right-continuous and non-decreasing process $V = (V_t)_{t \in [0,\infty)}$, and any $\gamma \in (0,\infty)$,
		\[
		\int_t^T\cE(\gamma A)_r \d V_r = \cE(\gamma A)_{t\land T}(V_T - V_{t\land T}) + \gamma \int_t^T\cE(\gamma A)_{u\smallertext{-}}\int_{u\smallertext{-}}^T\d V_r \d A_u, \; t \in [0,\infty).
		\]
		\end{lemma}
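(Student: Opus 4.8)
The plan is to prove the identity pathwise, so fix $\omega$ and read $V$, $A$ and $T$ as a right-continuous non-decreasing function, a right-continuous non-decreasing function starting at zero, and a number $T = T(\omega)$ (on $\{T = \infty\}$ one reads $t\land T = t$ throughout and the argument below is unchanged, while in the applications $T < \infty$ and $A_T < \infty$ by \Cref{ass::generator2}.$(iv)$). First I would dispose of the case $t > T$: there both sides vanish, since the left-hand side integrates over the empty set $(t,T]$, and on the right $t\land T = T$, so $\cE(\gamma A)_{t\land T}(V_T - V_{t\land T}) = 0$ and the remaining integral is again over $(t,T]=\varnothing$. Hence we may assume $t \le T$, so that $t\land T = t$.

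The key input is the integral equation satisfied by the Stieltjes exponential: from the (analogue, with $\gamma$ in place of $\hat\beta$, of the) explicit product formula \eqref{eq::stieltjes_exponential2}, splitting $A$ into its continuous part and its jump part one checks directly that $\Delta \cE(\gamma A)_r = \gamma\,\cE(\gamma A)_{r\smallertext{-}}\Delta A_r$ at each jump time and $\d\cE(\gamma A) = \gamma\,\cE(\gamma A)_{\smallertext{-}}\d A$ on the continuous part, so that $\cE(\gamma A)_r = \cE(\gamma A)_t + \gamma\int_{(t,r]}\cE(\gamma A)_{u\smallertext{-}}\d A_u$ for all $r \ge t$. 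Substituting this identity into the integrand on the left-hand side gives
\[
\int_t^T\cE(\gamma A)_r\,\d V_r = \cE(\gamma A)_t\,(V_T - V_t) + \gamma\int_{(t,T]}\!\left(\int_{(t,r]}\cE(\gamma A)_{u\smallertext{-}}\,\d A_u\right)\d V_r .
\]

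Next I would apply Tonelli's theorem to the double integral. The integrand $(u,r)\longmapsto \cE(\gamma A)_{u\smallertext{-}}\,\mathbf{1}_{\{t<u\le r\le T\}}$ is non-negative, since $\cE(\gamma A)_{u\smallertext{-}}>0$ and $\d A$, $\d V$ are non-negative (Lebesgue--Stieltjes) measures, so the order of integration can be interchanged without further justification. Interchanging, and observing that for fixed $u \in (t,T]$ the variable $r$ then ranges over $[u,T]$, the inner integral is $\int_{[u,T]}\d V_r = V_T - V_{u\smallertext{-}}$; with the paper's conventions $\int_t^T = \int_{(t,T]}$ and $\int_{u\smallertext{-}}^T = \int_{[u,T]}$, this is exactly
\[
\gamma\int_{(t,T]}\!\left(\int_{(t,r]}\cE(\gamma A)_{u\smallertext{-}}\,\d A_u\right)\d V_r = \gamma\int_t^T \cE(\gamma A)_{u\smallertext{-}}\int_{u\smallertext{-}}^T \d V_r\,\d A_u ,
\]
and combining the last two displays yields the asserted formula. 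An equivalent route is the product rule for the finite-variation processes $\cE(\gamma A)$ and $V$ together with $\d\cE(\gamma A) = \gamma\,\cE(\gamma A)_{\smallertext{-}}\d A$, which gives $\int_t^T\cE(\gamma A)_r\,\d V_r = \cE(\gamma A)_T V_T - \cE(\gamma A)_t V_t - \gamma\int_{(t,T]}\cE(\gamma A)_{r\smallertext{-}}V_{r\smallertext{-}}\,\d A_r$; this coincides with the above after rewriting $\cE(\gamma A)_T - \cE(\gamma A)_t = \gamma\int_{(t,T]}\cE(\gamma A)_{r\smallertext{-}}\d A_r$. There is no real obstacle here: the only points needing care are the bookkeeping with half-open versus closed intervals and left limits (so that the atom of $\d V$ at $u$ is correctly allocated in the Fubini step), both of which are purely notational given the conventions already fixed in the paper.
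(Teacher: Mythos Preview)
Your proof is correct and follows essentially the same approach as the paper: substitute the Stieltjes-exponential identity $\cE(\gamma A)_r = \cE(\gamma A)_t + \gamma\int_{(t,r]}\cE(\gamma A)_{u\smallertext{-}}\,\d A_u$ into the left-hand side and apply Tonelli. The only cosmetic difference is that the paper expands $\cE(\gamma A)_r$ from $0$ rather than from $t$, which forces an extra splitting of the double integral over $\{0<u\le t\}$ and $\{t<u\le T\}$ before recombining; your choice to expand from $t$ is slightly more direct.
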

		
		\begin{proof}
		An application of Tonelli's theorem yields
		\begin{align*}
			\int_t^T\cE(\gamma A)_r \d V_r 
			&= \int_t^T\bigg(1+\int_0^r \cE(\beta A)_{u\smallertext{-}}\gamma \d A_u\bigg)\d V_r \\
			&= (V_T - V_{t\land T}) + \gamma \int_0^\infty\int_0^\infty \1_{\{t < r \leq T\}}\1_{\{0 < u \leq r\}} \cE(\gamma A)_{u\smallertext{-}} \d A_u \d V_r \\
			&= (V_T - V_{t\land T}) + \gamma \int_0^\infty\int_0^\infty \1_{\{u \leq r \leq T\}}\1_{\{t < u \leq T\}} \cE(\gamma A)_{u\smallertext{-}} \d V_r \d A_u \\
			&\quad + \gamma \int_0^\infty\int_0^\infty \1_{\{t < r \leq T\}}\1_{\{0 < u \leq t\}} \cE(\gamma A)_{u\smallertext{-}} \d V_r \d A_u \\
			&= (V_T - V_{t\land T}) + \gamma \int_t^T \cE(\gamma A)_{u\smallertext{-}} \int_{u\smallertext{-}}^T \d V_r \d A_u + \gamma \int_0^t \cE(\gamma A)_{u\smallertext{-}} \int_t^T \d V_r \d A_u \\
			&= (V_T - V_{t\land T}) + \gamma \int_t^T \cE(\gamma A)_{u\smallertext{-}} \int_{u\smallertext{-}}^T \d V_r \d A_u + (V_T-V_{t\land T}) \int_0^{t\land T} \cE(\gamma A)_{u\smallertext{-}} \d (\gamma A)_u \\
			&= (V_T - V_{t\land T}) + \gamma \int_t^T \cE(\gamma A)_{u\smallertext{-}} \int_{u\smallertext{-}}^T \d V_r \d A_u + (V_T-V_{t\land T}) (\cE(\gamma A)_{t\land T} - 1) \\
			&= \cE(\gamma A)_{t\land T}(V_T - V_{t\land T}) + \gamma \int_t^T\cE(\gamma A)_{u\smallertext{-}}\int_{u\smallertext{-}}^T\d V_r \d A_u,
		\end{align*}
		which is exactly the desired equality.
		\end{proof}
		
		We make another independent observation, which will also be useful later.
		
		\begin{lemma}\label{lem::conditional_bound_2bsde}
			There exists a constant $\mathfrak{C}(\beta,\hat\beta,\Phi) \in (0,\infty)$ depending only on $\beta$, on $\hat\beta$, and on $\Phi$ such that
			\begin{align*}
				&\E^\P\bigg[\int_t^T\cE(\beta A)_{r}|\widehat{\cY}^\smallertext{+}_r|^2\d A_r + \int_t^T \cE(\beta A)_{r}|\widehat{\cY}^\smallertext{+}_{r\smallertext{-}}|^2 \d A_r + \int_t^T \cE(\beta A)_r (\widehat{Z}_r)^\top\mathsf{a}_r\widehat{Z}_r \d C_r \nonumber\\
				&\quad  + \int_t^T \cE(\beta A)_s \|\widehat{U}^\P_r(\cdot)\|^2_{\hat{\L}^\smalltext{2}_{\smalltext{\cdot}\smalltext{,}\smalltext{r}}(\mathsf{K}^\P_{\smalltext{\cdot}\smalltext{,}\smalltext{r}})} \d C_r + \int_t^T \cE(\beta A)_r \d \langle\widehat{N}^\P\rangle^{(\P)}_r + \int_t^T\cE(\beta A)_r \d[\widehat{K}^\P]_r \bigg| \cF_{t\smallertext{+}} \bigg] \nonumber\\
				& \leq \mathfrak{C}(\beta,\hat\beta,\Phi) \Bigg(\E^\P\bigg[ \cE(\beta A)_T|\xi|^2
				+ \sup_{r \in [t,T]} |\cE(\hat\beta A)^{1/2}_r \widehat{\cY}^\smallertext{+}_r|^2 + \int_t^T\cE(\beta A)_{r}\frac{|f^{\P}_r(0,0,0,\mathbf{0})|^2}{\alpha^2_r}\d C_r\bigg|\cF_{t\smallertext{+}}\bigg] \Bigg), \; \textnormal{$\P$--a.s.},
			\end{align*}
			holds for all $t \in [0,\infty]$ and all $\P\in \fP_0$.
		\end{lemma}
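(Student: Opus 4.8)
The plan is to run the weighted \emph{a priori} (energy) estimate for the reflected BSDE solved by $(\widehat\cY^\smallertext{+},\widehat Z,(\widehat U^\P,\widehat N^\P,\widehat K^\P)_{\P\in\fP_\smalltext{0}})$, but carried out with conditional expectations and treating the quantity $\sup_{r\in[t,T]}|\cE(\hat\beta A)^{1/2}_r\widehat\cY^\smallertext{+}_r|^2$, already known to be finite by \Cref{thm::down-crossing}.$(ii)$, as part of the data. Fix $\P\in\fP_0$ and $t\in[0,\infty]$. Because $\widehat N^\P$ satisfies $\langle\widehat N^\P,X^{c,\P}\rangle^{(\P)}=0$ and $M^\P_{\mu^\smalltext{X}}[\Delta\widehat N^\P\mid\widetilde\cP(\G)]=0$, the martingale $M^\P\coloneqq\widehat Z\bcdot X^{c,\P}+\widehat U^\P\ast\tilde\mu^{X,\P}+\widehat N^\P$ has $\langle M^\P\rangle^{(\P)}=\langle\widehat Z\bcdot X^{c,\P}\rangle^{(\P)}+\langle\widehat U^\P\ast\tilde\mu^{X,\P}\rangle^{(\P)}+\langle\widehat N^\P\rangle^{(\P)}$, with $\langle\widehat Z\bcdot X^{c,\P}\rangle^{(\P)}=\int_0^\cdot(\widehat Z_r)^\top\mathsf a_r\widehat Z_r\d C_r$ and $\langle\widehat U^\P\ast\tilde\mu^{X,\P}\rangle^{(\P)}=\int_0^\cdot\|\widehat U^\P_r(\cdot)\|^2_{\hat\L^\smalltext{2}_\smalltext{r}(\mathsf K^\P_\smalltext{r})}\d C_r$; hence it suffices to bound $\E^\P[\int_t^T\cE(\beta A)_r\d\langle M^\P\rangle^{(\P)}_r\mid\cF_{t\smallertext{+}}]$ together with the two $\int\cE(\beta A)|\widehat\cY^\smallertext{+}|^2\d A$ terms and $\E^\P[\int_t^T\cE(\beta A)_r\d[\widehat K^\P]_r\mid\cF_{t\smallertext{+}}]$.

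First I would write down the $(\G_\smallertext{+},\P)$-semi-martingale decomposition of $\cE(\beta A)_\cdot|\widehat\cY^\smallertext{+}_\cdot|^2$ on $\llbracket t\land T,T\rrbracket$ from the product rule and the dynamics in \Cref{prop::decomposition_yplus}; this is precisely the computation carried out in \cite[proof of Proposition 5.1]{possamai2024reflections}, to which I would refer for the algebra. After a localisation legitimated by the weighted-space integrability of $(\widehat\cY^\smallertext{+},\widehat Z,\widehat U^\P,\widehat N^\P,\widehat K^\P)$ recorded in \Cref{prop::decomposition_yplus}, and after conditioning (first on the relevant $\cG^\P_\smallertext{+}$-$\sigma$-algebras, then on $\cF_{t\smallertext{+}}$) so that the local-martingale increments drop out, one is left with an identity whose left-hand side carries $\E^\P[\int_t^T\cE(\beta A)\d\langle M^\P\rangle^{(\P)}\mid\cF_{t\smallertext{+}}]$, $\E^\P[\int_t^T\cE(\beta A)\d[\widehat K^\P]\mid\cF_{t\smallertext{+}}]$ and $\hat\beta\,\E^\P[\int_t^T\cE(\beta A)(|\widehat\cY^\smallertext{+}_r|^2+|\widehat\cY^\smallertext{+}_{r\smallertext{-}}|^2)\d A_r\mid\cF_{t\smallertext{+}}]$ (the last from $\d\cE(\beta A)=\hat\beta\cE(\beta A)_{\smallertext{-}}\d A$), and whose right-hand side carries $\E^\P[\cE(\beta A)_{t\land T}|\widehat\cY^\smallertext{+}_{t\land T}|^2\mid\cF_{t\smallertext{+}}]$, $\E^\P[\cE(\beta A)_T|\xi|^2\mid\cF_{t\smallertext{+}}]$, a generator cross term $2\int_t^T\cE(\beta A)_r\widehat\cY^\smallertext{+}_r f^\P_r(\widehat\cY^\smallertext{+}_r,\widehat\cY^\smallertext{+}_{r\smallertext{-}},\widehat Z_r,\widehat U^\P_r(\cdot))\d C_r$, and an increasing-process cross term $2\int_t^T\cE(\beta A)_r\widehat\cY^\smallertext{+}_{r\smallertext{-}}\d\widehat K^\P_r$. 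The generator cross term is handled exactly as in \cite{possamai2024reflections}: the Lipschitz bound $|f^\P_r(\widehat\cY^\smallertext{+}_r,\ldots)|\le|f^\P_r(0,0,0,\mathbf0)|+\sqrt{r_r}|\widehat\cY^\smallertext{+}_r|+\sqrt{\mathrm r_r}|\widehat\cY^\smallertext{+}_{r\smallertext{-}}|+\sqrt{\theta^X_r}\|\mathsf a_r^{1/2}\widehat Z_r\|+\sqrt{\theta^\mu_r}\|\widehat U^\P_r(\cdot)\|_{\hat\L^\smalltext{2}_\smalltext{r}(\mathsf K^\P_\smalltext{r})}$ combined with the pointwise domination $\max\{\sqrt r,\sqrt{\mathrm r},\theta^X,\theta^\mu\}\d C=\d A$ and Young's inequality yields $\langle M^\P\rangle^{(\P)}$- and $|\widehat\cY^\smallertext{+}|^2\d A$-squares with absorption coefficients strictly below $1$ (here $\widetilde M^\Phi_1(\hat\beta)<1$, which holds since $M^\Phi_1(\hat\beta)<1$), which are moved to the left; the remainder is a multiple of $\E^\P[\int_t^T\cE(\beta A)_r|f^\P_r(0,0,0,\mathbf0)|^2/\alpha^2_r\d C_r\mid\cF_{t\smallertext{+}}]$ plus $|\widehat\cY^\smallertext{+}|^2\d A$-terms.

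The term I expect to be the main obstacle, since it is absent from the non-reflected BSDE estimates of \cite[Section 3.2]{possamai2024reflections}, is the pair $\E^\P[\int_t^T\cE(\beta A)\d[\widehat K^\P]\mid\cF_{t\smallertext{+}}]$ together with the cross term $2\int_t^T\cE(\beta A)_r\widehat\cY^\smallertext{+}_{r\smallertext{-}}\d\widehat K^\P_r$: the reflected BSDE at hand has obstacle equal to its first component, so the Skorokhod condition is vacuous and cannot be used to suppress this term, unlike in a typical reflected-BSDE estimate. This is exactly where \Cref{eq::FV_exponential} enters: applying it with $\gamma=\beta$ to $V=[\widehat K^\P]$ and to $V=\widehat K^\P$ re-expresses $\int_t^T\cE(\beta A)_r\d[\widehat K^\P]_r$ and $\int_t^T\cE(\beta A)_r\widehat\cY^\smallertext{+}_{r\smallertext{-}}\d\widehat K^\P_r$ as a boundary contribution at $t\land T$ and $T$ plus an integral against $\d A$, the boundary quantity $\widehat K^\P_T-\widehat K^\P_{\smallertext{\cdot}}$ being rewritten via the dynamics through $\xi$, $\widehat\cY^\smallertext{+}$, the generator and $M^\P$; a Young inequality with a small parameter then partially absorbs the $\d[\widehat K^\P]$-contribution on the left (again because $\widetilde M^\Phi_1(\hat\beta)<1$), and the leftover boundary and $\d A$-pieces are controlled by $\E^\P[\cE(\beta A)_T|\xi|^2\mid\cF_{t\smallertext{+}}]$, by $\sup_{r\in[t,T]}|\cE(\hat\beta A)^{1/2}_r\widehat\cY^\smallertext{+}_r|^2$ and by $\E^\P[\int_t^T\cE(\beta A)_r|f^\P_r(0,0,0,\mathbf0)|^2/\alpha^2_r\d C_r\mid\cF_{t\smallertext{+}}]$. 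Finally, every $\cE(\beta A)_r|\widehat\cY^\smallertext{+}_r|^2$ (or $\cE(\beta A)_r|\widehat\cY^\smallertext{+}_{r\smallertext{-}}|^2$) still sitting against $\d A_r$ — in particular the left-hand $\hat\beta\,\E^\P[\int_t^T\cE(\beta A)(|\widehat\cY^\smallertext{+}_r|^2+|\widehat\cY^\smallertext{+}_{r\smallertext{-}}|^2)\d A_r\mid\cF_{t\smallertext{+}}]$ term itself — is turned into the supremum on the right-hand side through the weight-swapping between $\cE(\beta A)$ and $\cE(\hat\beta A)$ used in \cite[Lemmata 5.2 and 5.3]{possamai2024reflections} (monotonicity of $\beta\mapsto\cE(\beta A)_r$ and $\int_t^T\big(\cE(\beta A)_r/\cE(\hat\beta A)_r\big)\d A_r\le(1+\hat\beta\Phi)/(\hat\beta-\beta)$), which is the source of the dependence of the final constant $\mathfrak C(\beta,\hat\beta,\Phi)$ on $\beta$ in addition to $\hat\beta$ and $\Phi$. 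Collecting all the estimates and enlarging $\mathfrak C(\beta,\hat\beta,\Phi)$ if necessary gives the claim.
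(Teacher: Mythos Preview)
Your proposal is correct and follows essentially the same approach as the paper: an energy estimate via It\^o's formula for $|\widehat\cY^\smallertext{+}|^2$, insertion of the weight $\cE(\beta A)$ through \Cref{eq::FV_exponential}, Lipschitz decomposition of the generator combined with Young's inequality so that the $\langle M^\P\rangle$- and $|\widehat\cY^\smallertext{+}|^2\d A$-terms are absorbed using $M^\Phi_1(\hat\beta)<1$, and the final weight-swapping $\int_t^T\cE(\beta A)_r/\cE(\hat\beta A)_r\,\d A_r\le(1+\hat\beta\Phi)/(\hat\beta-\beta)$ to reduce the residual $\cE(\beta A)|\widehat\cY^\smallertext{+}|^2\d A$ terms to the supremum on the right-hand side. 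The only cosmetic difference is that the paper applies It\^o to the un-weighted $|\widehat\cY^\smallertext{+}|^2$ and inserts the exponential weight afterwards via \Cref{eq::FV_exponential}, whereas you propose applying the product rule directly to $\cE(\beta A)|\widehat\cY^\smallertext{+}|^2$; and for the cross term $2\int\cE(\beta A)\widehat\cY^\smallertext{+}_{r\smallertext{-}}\d\widehat K^\P_r$ the paper proceeds by Young's inequality with a small parameter $\kappa$, rewriting $(\widehat K^\P_T-\widehat K^\P_\tau)^2$ through the dynamics---precisely the idea you describe.
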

		
		\begin{proof}
		 We fix $\P \in \fP_0$ and write
		\[
		f^\P \coloneqq f^\P\big(\widehat{\cY}^\smallertext{+},\widehat{\cY}^\smallertext{+}_{\smallertext{-}},\widehat{Z},\widehat{U}^\P(\cdot)\big),
		\;  
		f^{\P,0} \coloneqq f^\P(0,0,0,\mathbf{0}), 
		\; \textnormal{and} \;
		\eta^\P \coloneqq (\widehat{Z} \bcdot X^{c,\P})^{(\P)} + (\widehat{U}^\P\ast\tilde\mu^{X,\P})^{(\P)} + \widehat{N}^\P,
		\]
		for simplicity. Moreover, let $(\varepsilon,\kappa) \in (0,\infty)^2$ with $0 < 1-4\kappa \leq 1$. With an application of It\^o's formula to $|\widehat{\cY}^\smallertext{+}|^2$, see \cite[Equation (5.64)]{possamai2024reflections}, we obtain
		\begin{align*}
			&|\widehat{\cY}^\smallertext{+}_t|^2 + \int_{(t,\infty)} \d [\eta^\P]_r + \int_{(t,\infty)}\d[\widehat{K}^\P]_r - 2 \int_{(t,\infty)} \d [\eta^\P,\widehat{K}^\P]_r \\
			& \leq (1+4\kappa)|\xi|^2 + \frac{1}{\varepsilon}\sup_{r \in (t,\infty)} |\widehat{\cY}^\smallertext{+}_r|^2 + (\varepsilon+4\kappa) \bigg(\int_{(t,\infty)} |f^\P_r|\d C_r\bigg)^2 - \int_{(t,\infty)} \widehat{\cY}^\smallertext{+}_{r\smallertext{-}}\d\eta^\P_r + \sum_{r \in (t,\infty)} (f_r \Delta C_r)^2 \\
			&\quad + \frac{2}{\kappa} \sup_{r \in [t,\infty)}|\widehat{\cY}^\smallertext{+}_r \1_{\{r < T\}}|^2 + 4\kappa \bigg(|\widehat{\cY}^\smallertext{+}_t|^2 + \bigg(\int_{(t,\infty)}\d\eta^\P_r\bigg)^2\bigg), \; t \in [0,\infty], \; \textnormal{$\P$--a.s.}
		\end{align*}
		We rearrange the terms above, then take conditional expectations relative to $\cF_{\tau\smallertext{+}}$ for an $\F_\smallertext{+}$--stopping time $\tau$, use 
		\[
		\E^\P\bigg[\bigg(\int_{(\tau,\infty)}\d\eta^\P_r\bigg)^2\bigg|\cF_{\tau\smallertext{+}}\bigg] = \E^\P\bigg[\int_{(\tau,\infty)}\d[\eta^\P]_r\bigg|\cF_{\tau\smallertext{+}}\bigg]
		\;
		\textnormal{and}
		\;
		\sum_{r \in (\tau,\infty)} (f_r \Delta C_r)^2 \leq  \bigg(\int_{(\tau,\infty)} |f^\P_r| \d C_r\bigg)^2,
		\]
		and find
		\begin{align*}
			&(1-4\kappa)\Bigg(|\widehat{\cY}^\smallertext{+}_\tau|^2 + \E^\P\bigg[\int_{(\tau,\infty)} \d [\eta^\P]_r\bigg| \cF_{\tau\smallertext{+}}\bigg] + \E^\P\bigg[\int_{(\tau,\infty)}\d[\widehat{K}^\P]_r\bigg|\cF_{\tau\smallertext{+}}\bigg]\Bigg)  \\
			& \leq (1+4\kappa)\E^\P\big[|\xi|^2\big|\cF_{\tau\smallertext{+}}\big] + \frac{1}{\varepsilon}\E^\P\bigg[\sup_{r \in (\tau,\infty)} |\widehat{\cY}^\smallertext{+}_r|^2\bigg|\cF_{\tau\smallertext{+}}\bigg] \\
			&\quad + (1+\varepsilon+4\kappa) \E^\P\bigg[\bigg(\int_{(\tau,\infty)} |f^\P_r|\d C_r\bigg)^2\bigg|\cF_{\tau\smallertext{+}}\bigg] + \frac{2}{\kappa} \E^\P\bigg[\sup_{r \in [\tau,\infty)}|\widehat{\cY}^\smallertext{+}_r \1_{\{r < T\}}|^2\bigg|\cF_{\tau\smallertext{+}}\bigg], \; \textnormal{$\P$--a.s.}
		\end{align*}

		For an $\F$--predictable stopping time $\sigma$, we obtain similarly
		\begin{align*}
			&(1-4\kappa)\Bigg(|\widehat{\cY}^\smallertext{+}_{\sigma\smallertext{-}}|^2 + \E^\P\bigg[\int_{[\sigma,\infty)} \d [\eta^\P]_r\bigg| \cF_{\sigma\smallertext{-}}\bigg] + \E^\P\bigg[\int_{[\sigma,\infty)}\d[\widehat{K}^\P]_r\bigg|\cF_{\sigma\smallertext{-}}\bigg]\Bigg)  \\
			& \leq (1+4\kappa)\E^\P\big[|\xi|^2\big|\cF_{\sigma\smallertext{-}}\big] + \frac{1}{\varepsilon}\E^\P\bigg[\lim_{s \uparrow\uparrow \sigma}\sup_{r \in (s,\infty)} |\widehat{\cY}^\smallertext{+}_r|^2\bigg|\cF_{\sigma\smallertext{-}}\bigg] \\
			&\quad + (1+\varepsilon+4\kappa) \E^\P\bigg[\bigg(\int_{[\sigma,\infty)} |f^\P_r|\d C_r\bigg)^2\bigg|\cF_{\sigma\smallertext{-}}\bigg] + \frac{2}{\kappa} \E^\P\bigg[\lim_{s \uparrow\uparrow\sigma}\sup_{r \in [s,\infty)}|\widehat{\cY}^\smallertext{+}_r \1_{\{r < T\}}|^2\bigg|\cF_{\sigma\smallertext{-}}\bigg], \; \textnormal{$\P$--a.s.}
		\end{align*}
		Fix $\beta \in (0,\hat\beta)$. With \Cref{eq::FV_exponential} and \cite[Remark VI.58.(b)]{dellacherie1982probabilities}, we find
		\begin{align*}
			&\E^\P\bigg[\int_t^T \cE(\beta A)_r \d [\eta^\P]_r + \int_t^T\cE(\beta A)_r \d[\widehat{K}^\P]_r\bigg|\cF_{t\smallertext{+}}\bigg] \\
			&= \cE(\beta A)_{t\land T}\E^\P\big[[\eta^\P]_T - [\eta^\P]_{t \land T} \big|\cF_{t\smallertext{+}}\big] + \cE(\beta A)_{t\land T} \E^\P\big[[\widehat{K}^\P]_T - [\widehat{K}^\P]_{t \land T}\big|\cF_{t\smallertext{+}}\big] \\
			&\quad + \beta \E^\P\bigg[\int_t^T \cE(\beta A)_{u\smallertext{-}} \int_{u\smallertext{-}}^T \mathrm{d}\big([\eta^\P] + [\widehat{K}^\P]\big)_r \mathrm{d} A_u\bigg|\cF_{t\smallertext{+}}\bigg] \\
			&= \cE(\beta A)_{t\land T}\Bigg(\frac{1+4\kappa}{1-4\kappa} \E^\P\big[|\xi|^2\big|\cF_{t\smallertext{+}}\big] + \frac{\frac1\varepsilon + \frac2\kappa}{1-4\kappa} \E^\P\bigg[\sup_{r \in [t,T]}|\widehat{\cY}^\smallertext{+}_r|^2\bigg|\cF_{t\smallertext{+}}\bigg] + \frac{1+\varepsilon+4\kappa}{1-4\kappa} \E^\P\bigg[\bigg(\int_t^T |f^\P_r|\d C_r\bigg)^2\bigg|\cF_{t\smallertext{+}}\bigg]\Bigg) \\
			&\quad + \frac{(1+4\kappa)}{(1-4\kappa)} \beta \E^\P\bigg[\int_t^T\cE(\beta A)_{u\smallertext{-}}|\xi|^2 \d A_u\bigg|\cF_{t\smallertext{+}}\bigg] + \frac{(1/\varepsilon+2/\kappa)}{(1-4\kappa)} \beta \E^\P\bigg[\int_t^T \cE(\beta A)_{u\smallertext{-}}\lim_{s\uparrow\uparrow u}\sup_{r \in [s,\infty]}|\widehat{\cY}^\smallertext{+}_r|^2 \d A_u \bigg|\cF_{t\smallertext{+}}\bigg] \\
			&\quad + \frac{(1+\varepsilon+4\kappa)}{(1-4\kappa)} \beta \E^\P\bigg[\int_t^T\cE(\beta A)_{u\smallertext{-}} \bigg(\int_{u\smallertext{-}}^T |f^\P_r|\d C_r\bigg)^2 \d A_u\bigg|\cF_{t\smallertext{+}}\bigg] - \beta \E^\P\bigg[\int_t^T \cE(\beta A)_{r\smallertext{-}}|\widehat{\cY}^\smallertext{+}_{r\smallertext{-}}|^2 \d A_r\bigg|\cF_{t\smallertext{+}}\bigg], \; \textnormal{$\P$--a.s.}
		\end{align*}
		By following the arguments that lead to \cite[Equation 5.17 and 5.19]{possamai2024reflections}, we obtain, for an arbitrary $\gamma \in (0,\beta)$
		\begin{equation*}
			\int_t^T\cE(\beta A)_{u\smallertext{-}} \bigg(\int_{u\smallertext{-}}^T |f^\P_r|\d C_r\bigg)^2 \d A_u 
			\leq \frac{(1+\gamma\Phi)}{\gamma(\beta-\gamma)} \int_t^T\cE(\beta A)_r \frac{|f^\P_r|^2}{\alpha^2_r}\d C_r,
		\end{equation*}
		and by following the arguments that lead to \cite[Equation 5.21 and 5.26]{possamai2024reflections}, we obtain
		\[
			\cE(\beta A)_{t\land T}\bigg(\int_t^T|f^\P_r|^2\d C_r\bigg)^2 \leq \frac{1}{\beta}\int_t^T\cE(\beta A)_r \frac{|f^\P_r|^2}{\alpha^2_r}\d C_r.
		\]
		Furthermore
		\begin{align*}
			\int_t^T \cE(\beta A)_{u\smallertext{-}}\lim_{s\uparrow\uparrow u}\sup_{r \in [s,\infty]}|\widehat{\cY}^\smallertext{+}_r|^2 \d A_u
			&= \int_t^T \frac{\cE(\beta A)_{u\smallertext{-}}}{\cE(\hat\beta A)_{u\smallertext{-}}}\lim_{s\uparrow\uparrow u}\Big\{\cE(\hat\beta A)_{s}\sup_{r \in [s,T]}|\widehat{\cY}^\smallertext{+}_r|^2\Big\} \d A_u \\
			&\leq \int_t^T \frac{\cE(\beta A)_{u\smallertext{-}}}{\cE(\hat\beta A)_{u\smallertext{-}}}\lim_{s\uparrow\uparrow u}\sup_{r \in [s,T]}|\cE(\hat\beta A)^{1/2}_{r}\widehat{\cY}^\smallertext{+}_r|^2 \d A_u \\
			&\leq \sup_{r \in (t,T]}|\cE(\hat\beta A)^{1/2}_{r}\widehat{\cY}^\smallertext{+}_r|^2\int_t^T \frac{\cE(\beta A)_{u\smallertext{-}}}{\cE(\hat\beta A)_{u\smallertext{-}}} \d A_u \leq \frac{(1+\hat\beta\Phi)}{(\hat\beta-\beta)} \sup_{r \in (t,T]}|\cE(\hat\beta A)^{1/2}_{r}\widehat{\cY}^\smallertext{+}_r|^2,
		\end{align*}
		where the last inequality follows from the proof of \cite[Lemma 3.3]{possamai2024reflections}. We now replace and rearrange the terms and find
		\begin{align*}
			&\min\bigg\{1,\frac{\beta}{(1+\beta \Phi)}\bigg\} \E^\P\bigg[\int_t^T \cE(\beta A)_{r}|\widehat{\cY}^\smallertext{+}_{r\smallertext{-}}|^2 \d A_r + \int_t^T \cE(\beta A)_r \d [\eta^\P]_r + \int_t^T\cE(\beta A)_r \d[\widehat{K}^\P]_r \bigg| \cF_{t\smallertext{+}} \bigg] \\
			&\leq \E^\P\bigg[\beta \int_t^T \cE(\beta A)_{t\smallertext{-}}|\widehat{\cY}^\smallertext{+}_{t\smallertext{-}}|^2 \d A_t + \int_t^T \cE(\beta A)_r \d [\eta^\P]_r + \int_t^T\cE(\beta A)_r \d[\widehat{K}^\P]_r \bigg| \cF_{t\smallertext{+}} \bigg] \\
			& \leq \frac{(1+4\kappa)}{(1-4\kappa)} \E^\P\big[\cE(\beta A)_T|\xi|^2\big|\cF_{t\smallertext{+}}\big]
			+ \frac{(1/\varepsilon + 2/\kappa)}{(1-4\kappa)} \bigg(1+\beta \frac{(1+\hat\beta\Phi)}{(\hat\beta-\beta)}\bigg)\E^\P\bigg[\sup_{r \in [t,T]} |\cE(\hat\beta A)^{1/2}_r \widehat{\cY}^\smallertext{+}_r|^2\bigg|\cF_{t\smallertext{+}}\bigg] \\
			&\quad + \frac{(1+\varepsilon + 4\kappa)}{(1-4\kappa)} \bigg(\frac{1}{\beta} + \beta\mathfrak{g}^\Phi(\beta)\bigg)\E^\P\bigg[ \int_t^T\cE(\beta A)_r \frac{|f^\P_r|^2}{\alpha^2_r} \d C_r \bigg|\cF_{t\smallertext{+}}\bigg], \; \textnormal{$\P$--a.s.}
		\end{align*}
		Thus, there exists a constant $\mathfrak{C}(\varepsilon,\kappa,\beta,\hat\beta,\Phi) \in (0,\infty)$ depending only on $\varepsilon$, $\kappa$, $\beta$, $\hat\beta$ and $\Phi$ such that
		\begin{align*}
			&\E^\P\bigg[\int_t^T \cE(\beta A)_{r}|\widehat{\cY}^\smallertext{+}_{r\smallertext{-}}|^2 \d A_r + \int_t^T \cE(\beta A)_r \d [\eta^\P]_r + \int_t^T\cE(\beta A)_r \d[\widehat{K}^\P]_r \bigg| \cF_{t\smallertext{+}} \bigg] \\
			& \leq \mathfrak{C}(\varepsilon,\kappa,\beta,\hat\beta,\Phi) \Bigg( \E^\P\big[\cE(\beta A)_T|\xi|^2\big|\cF_{t\smallertext{+}}\big]
			+ \E^\P\bigg[\sup_{r \in [t,T]} |\cE(\hat\beta A)^{1/2}_r \widehat{\cY}^\smallertext{+}_r|^2\bigg|\cF_{t\smallertext{+}}\bigg] \Bigg) \\
			&\quad + \frac{(1+\varepsilon + 4\kappa)}{(1-4\kappa)} \max\bigg\{1,\frac{(1+\beta \Phi)}{\beta}\bigg\} \bigg(\frac{1}{\beta} + \beta\mathfrak{g}^\Phi(\beta)\bigg)\E^\P\bigg[ \int_t^T\cE(\beta A)_r \frac{|f^\P_r|^2}{\alpha^2_r} \d C_r \bigg|\cF_{t\smallertext{+}}\bigg] \\
			& \leq \mathfrak{C}(\varepsilon,\kappa,\beta,\hat\beta,\Phi) \Bigg( \E^\P\big[\cE(\beta A)_T|\xi|^2\big|\cF_{t\smallertext{+}}\big]
			+ \E^\P\bigg[\sup_{r \in [t,T]} |\cE(\hat\beta A)^{1/2}_r \widehat{\cY}^\smallertext{+}_r|^2\bigg|\cF_{t\smallertext{+}}\bigg] \Bigg) \\
			&\quad + \frac{(1+\varepsilon + 4\kappa)}{(1-4\kappa)} M^\Phi_1(\hat\beta)\E^\P\bigg[ \int_t^T\cE(\beta A)_r \frac{|f^\P_r|^2}{\alpha^2_r} \d C_r \bigg|\cF_{t\smallertext{+}}\bigg], \; \textnormal{$\P$--a.s.}
		\end{align*}

		We write $f^{\P,0} \coloneqq f^\P(0,0,0,\mathbf{0})$, use the inequality $(a+b)^2 \leq (1+\varpi)a^2 + (1+1/\varpi)b^2$ for an arbitrary $\varpi \in (0,\infty)$, and the Lipschitz-continuity property of $f^\P$ to obtain
		\begin{align*}
			\int_t^T\cE(\beta A)_{r}\frac{|f^\P_r|^2}{\alpha^2_r}\d C_r
			&= \int_t^T\cE(\beta A)_{r}\frac{|(f^\P_r-f^{\P,0}_r) + f^{\P,0}_r|^2}{\alpha^2_r}\d C_r \\
			&\leq (1+\varpi) \int_t^T\cE(\beta A)_{r}\frac{|f^\P_r-f^{\P,0}_r|^2}{\alpha^2_r}\d C_r + \bigg(1+\frac{1}{\varpi}\bigg) \int_t^T\cE(\beta A)_{r}\frac{|f^{\P,0}_r|^2}{\alpha^2_r}\d C_r \\
			&\leq (1+\varpi) \Bigg( \int_t^T\cE(\beta A)_{r}|\widehat{\cY}^\smallertext{+}_r|^2\d A_r + \int_t^T\cE(\beta A)_{r}|\widehat{\cY}^\smallertext{+}_{r\smallertext{-}}|^2\d A_r + \int_t^T \cE(\beta A)_r\d \langle\eta^\P\rangle_r \Bigg) \\
			&\quad + \bigg(1+\frac{1}{\varpi}\bigg) \int_t^T\cE(\beta A)_{r}\frac{|f^{\P,0}_r|^2}{\alpha^2_r}\d C_r.
		\end{align*}
		With a new constant $\mathfrak{C}(\varepsilon,\kappa,\beta,\hat\beta,\Phi,\varpi)$ now depending additionally on $\varpi$, we find
		\begin{align*}
			&\E^\P\bigg[\int_t^T \cE(\beta A)_{r}|\widehat{\cY}^\smallertext{+}_{r-}|^2 \d A_r + \int_t^T \cE(\beta A)_r \d [\eta^\P]_r + \int_t^T\cE(\beta A)_r \d[\widehat{K}^\P]_r \bigg| \cF_{t\smallertext{+}} \bigg] \\
			& \leq \mathfrak{C}(\varepsilon,\kappa,\beta,\hat\beta,\Phi,\varpi) \Bigg(\E^\P\bigg[ \cE(\beta A)_T|\xi|^2
			+ \sup_{r \in [t,T]} |\cE(\hat\beta A)^{1/2}_r \widehat{\cY}^\smallertext{+}_r|^2 + \int_t^T\cE(\beta A)_{r}|\widehat{\cY}^\smallertext{+}_r|^2\d A_r + \int_t^T\cE(\beta A)_{r}\frac{|f^{\P,0}_r|^2}{\alpha^2_r}\d C_r\bigg|\cF_{t\smallertext{+}}\bigg] \Bigg) \\
			&\quad + \frac{(1+\varepsilon + 4\kappa)}{(1-4\kappa)} (1+\varpi) M^\Phi_1(\hat\beta)\Bigg(\E^\P\bigg[ \int_t^T\cE(\beta A)_{r}|\widehat{\cY}^\smallertext{+}_{r\smallertext{-}}|^2\d A_r + \int_t^T \cE(\beta A)_r\d [\eta^\P]_r  \bigg|\cF_{t\smallertext{+}}\bigg] \Bigg), \; \textnormal{$\P$--a.s.},
		\end{align*}
		where we also used the fact that the dual predictable projection of $[\eta^\P]$ is $\langle\eta^\P\rangle$.
		We now choose $\varepsilon$, $\kappa$ and $\varpi$ small enough to ensure
		\[
			\frac{(1+\varepsilon + 4\kappa)}{(1-4\kappa)}(1+\varpi) M^\Phi_1(\hat\beta) < 1,
		\]
		which is possible since $M^\Phi_1(\hat\beta) < 1$. We thus obtain, by changing the constant $\mathfrak{C}(\varepsilon,\kappa,\beta,\hat\beta,\Phi,\varpi)$ that
		\begin{align*}
			&\E^\P\bigg[\int_t^T \cE(\beta A)_{r}|\widehat{\cY}^\smallertext{+}_{r-}|^2 \d A_r + \int_t^T \cE(\beta A)_r \d [\eta^\P]_r + \int_t^T\cE(\beta A)_r \d[\widehat{K}^\P]_r \bigg| \cF_{t\smallertext{+}} \bigg] \\
			& \leq \mathfrak{C}(\varepsilon,\kappa,\beta,\hat\beta,\Phi,\varpi) \Bigg(\E^\P\bigg[ \cE(\beta A)_T|\xi|^2
			+ \sup_{r \in [t,T]} |\cE(\hat\beta A)^{1/2}_r \widehat{\cY}^\smallertext{+}_r|^2 + \int_t^T\cE(\beta A)_{r}|\widehat{\cY}^\smallertext{+}_r|^2\d A_r + \int_t^T\cE(\beta A)_{r}\frac{|f^{\P,0}_r|^2}{\alpha^2_r}\d C_r\bigg|\cF_{t\smallertext{+}}\bigg] \Bigg),
		\end{align*}
		outside a $\P$--null set. Lastly, we bound similarly to before and using $\hat\beta > \beta$
		\begin{align*}
			\int_t^T\cE(\beta A)_{r}|\widehat{\cY}^\smallertext{+}_r|^2\d A_r 
			&\leq \sup_{r \in (t,T]}|\cE(\hat\beta A)^{1/2}_r\widehat{\cY}^\smallertext{+}_r|^2 \int_t^T \frac{\cE(\beta A)_r}{\cE(\hat\beta A)_r} \d A_r \\
			&= \sup_{r \in (t,T]}|\cE(\hat\beta A)^{1/2}_r\widehat{\cY}^\smallertext{+}_r|^2 \int_t^T \frac{(1+\beta \Delta A_r)\cE(\beta A)_{r\smallertext{-}}}{(1+\hat\beta \Delta A_r)\cE(\hat\beta A)_{r\smallertext{-}}} \d A_r \\
			& \leq\sup_{r \in (t,T]}|\cE(\hat\beta A)^{1/2}_r\widehat{\cY}^\smallertext{+}_r|^2 \int_t^T \frac{\cE(\beta A)_{r\smallertext{-}}}{\cE(\hat\beta A)_{r\smallertext{-}}} \d A_r \leq \frac{(1+\hat\beta\Phi)}{(\hat\beta-\beta)}\sup_{r \in (t,T]}|\cE(\hat\beta A)^{1/2}_{r}\widehat{\cY}^\smallertext{+}_r|^2.
		\end{align*}
		This now yields, after changing the constant $\mathfrak{C}(\varepsilon,\kappa,\beta,\hat\beta,\Phi,\varpi)$ again
		\begin{align*}
			&\E^\P\bigg[\int_t^T\cE(\beta A)_{r}|\widehat{\cY}^\smallertext{+}_r|^2\d A_r + \int_t^T \cE(\beta A)_{t}|\widehat{\cY}^\smallertext{+}_{t-}|^2 \d A_t + \int_t^T \cE(\beta A)_r \d [\eta^\P]_r + \int_t^T\cE(\beta A)_r \d[\widehat{K}^\P]_r \bigg| \cF_{t\smallertext{+}} \bigg] \nonumber\\
			& \leq \mathfrak{C}(\varepsilon,\kappa,\beta,\hat\beta,\Phi,\varpi) \Bigg(\E^\P\bigg[ \cE(\beta A)_T|\xi|^2
			+ \sup_{r \in [t,T]} |\cE(\hat\beta A)^{1/2}_r \widehat{\cY}^\smallertext{+}_r|^2 + \int_t^T\cE(\beta A)_{r}\frac{|f^{\P,0}_r|^2}{\alpha^2_r}\d C_r\bigg|\cF_{t\smallertext{+}}\bigg] \Bigg), \; \textnormal{$\P$--a.s.}
		\end{align*}
		This concludes the proof.
		\end{proof}
		
		The desired bound now follows by choosing $t = 0$ in \Cref{lem::conditional_bound_2bsde}, taking expectations, and using the fact that
		\begin{equation*}
			\|\xi\|^2_{\L^\smalltext{2}_{\smalltext{T}\smalltext{,}\smalltext{\beta}}(\P)} + \bigg\|\frac{f^{\P,0}}{\alpha}\bigg\|^2_{\H^\smalltext{2}_{\smalltext{T}\smalltext{,}\smalltext{\beta}}(\P)} \leq \phi^{2,\hat{\beta}}_{\xi, f},
		\end{equation*}
		and that there exists a constant $\mathfrak{C}(\hat\beta,\Phi) \in (0,\infty)$ from 
		\Cref{thm::down-crossing}.$(ii)$ such that $
			\|\widehat{\cY}^\smallertext{+}\|^2_{\cS^\smalltext{2}_{\smalltext{T}\smalltext{,}\smalltext{\hat\beta}}(\P)} \leq \mathfrak{C}(\hat\beta,\Phi) \phi^{2,\hat{\beta}}_{\xi, f}.$ This completes the proof of \Cref{prop::bound_and_comparison}.
		\end{proof}	
	
	\subsubsection{Intrinsic characterisation}
	
	The proofs of \Cref{prop::minimality} and \Cref{thm::2BSDE_wellposed_2} rely on the following two lemmata.
	
	\begin{lemma}\label{lem::bound_k_2bsde}
		Let $0 < \beta < \beta^\prime < \hat\beta$. Suppose that the assumptions in \textnormal{\Cref{prop::decomposition_yplus}} are satisfied and that \textnormal{\Cref{ass::intrinsic_2bsde}.$(i)$} holds. Then, the family of processes $(\widehat{K}^\P)_{\P \in \fP_0}$, obtained from the decomposition in that result satisfies
		\begin{align*}
			&\underset{\bar{\P} \in \fP_\smalltext{0}(\cF_{\smalltext{t}\tinytext{+}},\P)}{{\esssup}^{\P}} \E^{\bar{\P}}\bigg[ \bigg( \int_{t}^{T} \cE(\beta A)^{1/2}_s \d \widehat{K}^{\bar{\P}}_s\bigg)^{2}\bigg|\cF_{t\smallertext{+}}\bigg] \\
			&\leq \mathfrak{C}(\beta,\beta^\prime,\hat\beta,\Phi) \underset{\bar{\P} \in \fP_\smalltext{0}(\cF_{\smalltext{t}\tinytext{+}},\P)}{{\esssup}^{\P}} \E^{\bar{\P}}\bigg[\sup_{t < s \in \D_\smalltext{+}} \underset{\P^\smalltext{\prime} \in \fP_\smalltext{0}(\cF_{s},\bar{\P})}{{\esssup}^{\bar\P}}\E^{\P^\smalltext{\prime}}\bigg[ \cE(\hat\beta A)_T |\xi|^2 +\int_s^T \cE(\hat\beta A)_r \frac{|f^{\P^\smalltext{\prime}}_r(0,0,0,\mathbf{0})|^2}{\alpha^2_r} \d C_r \bigg| \cF_{s}\bigg] \\
			&\quad  + \int_t^T\cE(\beta^\prime A)_{r}\frac{|f^{\bar{\P}}_r(0,0,0,\mathbf{0})|^2}{\alpha^2_r}\d C_r\bigg|\cF_{t\smallertext{+}}\bigg], \; \textnormal{$\P$--a.s.},
		\end{align*}
		for all $t \in [0,\infty)$ and $\P \in \fP_0$.
	\end{lemma}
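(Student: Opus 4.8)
The plan is to adapt, to the present non-dominated and weighted setting, the two-step strategy used for controlling the increasing process of a reflected BSDE (compare the estimates in \cite{possamai2024reflections}). Fix $\P\in\fP_0$ and $t\in[0,\infty)$, and for $\bar\P\in\fP_0(\cF_{t+},\P)$ introduce the $\G^{\bar\P}_+$--predictable, non-decreasing process $\widehat L^{\bar\P}_r:=\int_{(t,r]}\cE(\beta A)^{1/2}_s\,\d\widehat K^{\bar\P}_s$, which vanishes on $[0,t]$ and satisfies $\widehat L^{\bar\P}=\widehat L^{\bar\P}_{\cdot\land T}$, so that the left-hand side of the asserted inequality is $\esssup^\P_{\bar\P}\E^{\bar\P}\big[(\widehat L^{\bar\P}_T)^2\,\big|\,\cF_{t+}\big]$. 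The first step is to establish a \emph{first-moment} estimate: there is an $\cF$-measurable $\Gamma^{\bar\P}\ge0$ with $\esssup^{\bar\P}_{\P'\in\fP_0(\cF_\sigma,\bar\P)}\E^{\P'}\big[\widehat L^{\P'}_T-\widehat L^{\P'}_\sigma\,\big|\,\cF_\sigma\big]\le\Gamma^{\bar\P}$ for every $\F_+$--stopping time $\sigma\ge t$. The second step upgrades this to the claimed bound via the elementary identity $(\widehat L^{\bar\P}_T)^2=2\int_{(t,T]}(\widehat L^{\bar\P}_T-\widehat L^{\bar\P}_u)\,\d\widehat L^{\bar\P}_u+[\widehat L^{\bar\P}]_T-[\widehat L^{\bar\P}]_t$: taking $\E^{\bar\P}[\cdot\,|\,\cF_{t+}]$, replacing $\widehat L^{\bar\P}_T-\widehat L^{\bar\P}_\cdot$ under the integral by its $\G^{\bar\P}_+$--predictable projection (legitimate since $\widehat L^{\bar\P}$ is predictable non-decreasing, exactly as in the proof of \Cref{lem::conditional_bound_2bsde}), bounding this projection by $\Gamma^{\bar\P}$, projecting once more and then using Cauchy--Schwarz gives $\E^{\bar\P}[(\widehat L^{\bar\P}_T)^2|\cF_{t+}]\le 2\,\E^{\bar\P}[(\Gamma^{\bar\P})^2|\cF_{t+}]^{1/2}\E^{\bar\P}[(\widehat L^{\bar\P}_T)^2|\cF_{t+}]^{1/2}+\E^{\bar\P}[\int_t^T\cE(\beta A)_r\,\d[\widehat K^{\bar\P}]_r|\cF_{t+}]$; the last term is already bounded by \Cref{lem::conditional_bound_2bsde}, and solving the resulting quadratic inequality yields $\E^{\bar\P}[(\widehat L^{\bar\P}_T)^2|\cF_{t+}]\lesssim\E^{\bar\P}[(\Gamma^{\bar\P})^2|\cF_{t+}]+\E^{\bar\P}[\int_t^T\cE(\beta A)_r\,\d[\widehat K^{\bar\P}]_r|\cF_{t+}]$. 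Finiteness of all quantities, needed to run this argument, follows from \Cref{prop::bound_and_comparison} after a preliminary truncation of $A$.

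For the first-moment estimate I would start from the decomposition of $\widehat{\cY}^{+}=\widehat{\cY}^{+}(T,\xi)$ in \Cref{prop::decomposition_yplus}, which under each $\P'$ is a reflected BSDE, and linearise its generator on $[\sigma,T]$: by the Lipschitz property \Cref{ass::generator2}.$(ii)$ in the $(y,\mathrm y)$--variables and by \Cref{ass::intrinsic_2bsde}.$(i)$ in the jump--integrand variable, $f^{\P'}_r(\widehat{\cY}^{+}_r,\widehat{\cY}^{+}_{r-},\widehat Z_r,\widehat U^{\P'}_r(\cdot))=\lambda_r\widehat{\cY}^{+}_r+\hat\lambda_r\widehat{\cY}^{+}_{r-}+\tfrac{\d\langle\rho\ast\tilde\mu^{X,\P'},\widehat U^{\P'}\ast\tilde\mu^{X,\P'}\rangle^{(\P')}_r}{\d C_r}+g^{\P'}_r$ with $|\lambda|\le\sqrt r$, $|\hat\lambda|\le\sqrt{\mathrm r}$, $\hat\lambda\Delta C>-1$, $\Theta\ge\Delta(\rho\ast\tilde\mu^{X,\P'})>-1+\delta$, $\tfrac{\d\langle\rho\ast\tilde\mu^{X,\P'}\rangle^{(\P')}}{\d C}\le\theta^\mu$, and $|g^{\P'}_r|\le|f^{\P'}_r(0,0,0,\mathbf{0})|+\sqrt{\theta^X_r}\,\|\mathsf a_r^{1/2}\widehat Z_r\|$. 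Passing to the $\cE(\beta A)^{1/2}$--weighted dynamics by an integration by parts (using $\Delta\cE(\beta A)^{1/2}\le\tfrac\beta2\,\cE(\beta A)^{1/2}\,\Delta A$, an analogue of \Cref{eq::FV_exponential} valid up to a $\d A$--remainder) and then changing measure via $\d\mathcal{Q}^{\P'}:=\cE(\rho\ast\tilde\mu^{X,\P'})_T\cE(w)_T\cE(v)_T\,\d\P'$ with $w=\int_0^\cdot\lambda_r\d C_r$ and $v=\int_0^\cdot\tfrac{\hat\lambda_r}{1-\hat\lambda_r\Delta C_r}\d C_r$ (as in the proof of \Cref{thm::down-crossing}.$(i)$), the linear drift terms are absorbed; the density is a positive $\L^2(\P')$--martingale whose $\cF_\sigma$--conditional second moment is controlled by a universal constant, precisely because the jumps of $\rho\ast\tilde\mu^{X,\P'}$ lie in $(-1+\delta,\Theta]$ --- this is the one place where \Cref{ass::intrinsic_2bsde}.$(i)$ is essential. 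Rearranging and conditioning then expresses $\E^{\mathcal{Q}^{\P'}}[\int_\sigma^T\cE(\beta A)^{1/2}_r\d\widehat K^{\P'}_r\,|\,\cF_{\sigma+}]$ through the $\cE(\beta A)^{1/2}$--weighted values of $\widehat{\cY}^{+}_\sigma$, $\xi$, $\widehat Z$ and $f^{\P'}(0,0,0,\mathbf{0})$; transferring back to $\P'$ by Cauchy--Schwarz, bounding $\cE(\hat\beta A)_\sigma|\widehat{\cY}^{+}_\sigma|^2$ by means of the representation \eqref{eq::aggregation} together with the a priori estimate \eqref{eq_inequality_y_hat}, and invoking \Cref{lem::conditional_bound_2bsde} for the $\widehat Z$--term, produces the bound $\Gamma^{\bar\P}$; after $\esssup^{\bar\P}_{\P'\in\fP_0(\cF_\sigma,\bar\P)}$ and then $\sup_{t<\sigma\in\D_+}$ (passing to dyadic times by right-continuity) it is dominated by exactly the right-hand side of the statement. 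The ladder $\beta<\beta^\prime<\hat\beta$ is consumed by the $\d A$--remainders of the weighted integration by parts and by estimates of the type $\int_\sigma^T\cE(\beta A)^{1/2}_r\sqrt{\theta^X_r}\|\mathsf a_r^{1/2}\widehat Z_r\|\d C_r\le\big(\int\cE(\beta^\prime A)_r\widehat Z^\top_r\mathsf a_r\widehat Z_r\d C_r\big)^{1/2}\big(\int\tfrac{\cE(\beta A)_r}{\cE(\beta^\prime A)_r}\theta^X_r\d C_r\big)^{1/2}$.

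Combining the two steps and taking $\esssup^\P_{\bar\P\in\fP_0(\cF_{t+},\P)}$ gives the assertion; commuting the inner $\esssup$ past the conditional expectation $\E^{\bar\P}[\cdot\,|\,\cF_{t+}]$ is handled by the upward-directedness and kernel-pasting argument already used in the proof of \Cref{thm::down-crossing}.$(ii)$, relying on \Cref{ass::probabilities2}. I expect the genuine difficulty to lie entirely in the first-moment estimate: carrying out the $\cE(\beta A)^{1/2}$--weighted linearisation and change of measure \emph{uniformly over the non-dominated family} $\{\P'\}$, while (i) keeping everything in conditional form, (ii) producing the right-hand side in the precise nested-$\esssup$ shape demanded by the statement, and (iii) routing every weight through $\beta,\beta^\prime,\hat\beta$ so that the $\d A$--terms can be absorbed; \Cref{ass::intrinsic_2bsde}.$(i)$, by furnishing a two-sided linearisation of the jump dependence with jumps trapped strictly inside $(-1+\delta,\Theta]$, is exactly what keeps the change of measure integrable enough for this to go through. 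The remaining ingredients --- the doubling identity, predictable projections, \Cref{lem::conditional_bound_2bsde}, and the truncation bookkeeping --- are routine.
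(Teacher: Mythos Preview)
Your approach is genuinely different from the paper's and substantially more elaborate. The paper does \emph{not} use a first-moment estimate followed by the doubling identity, nor any change of measure. Instead, for a fixed $\bar\P\in\fP_0(\cF_{t+},\P)$, it applies integration by parts directly to $\cE(\beta A)^{1/2}\widehat{\cY}^{+}$ under $\bar\P$, which expresses $\int_t^{t'\land T}\cE(\beta A)^{1/2}_s\,\d\widehat K^{\bar\P}_s$ as a sum of four pieces: the increment of $\cE(\beta A)^{1/2}\widehat{\cY}^{+}$, an integral of $\widehat{\cY}^{+}_{-}$ against $\d\cE(\beta A)^{1/2}$, the $\cE(\beta A)^{1/2}$-weighted generator integral, and the $\cE(\beta A)^{1/2}$-weighted martingale integral $\d\eta^{\bar\P}$. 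Each piece is squared and bounded separately by Cauchy--Schwarz and the weight ladder $\beta<\beta'<\hat\beta$ (this is where the ratio $\cE(\beta A)/\cE(\beta' A)$ enters, exactly as in your sketch of the $\widehat Z$-estimate); the martingale term contributes $\E^{\bar\P}[\int_t^{t'\land T}\cE(\beta A)_s\,\d\langle\eta^{\bar\P}\rangle_s\,|\,\cF_{t+}]$. One then lets $t'\to\infty$ by monotone convergence, uses the Lipschitz property of $f^{\bar\P}$, and invokes \Cref{lem::conditional_bound_2bsde} to control the generator and martingale contributions by $\E^{\bar\P}[\sup_{r\in[t,T]}\cE(\hat\beta A)_r|\widehat{\cY}^{+}_r|^2+\int_t^T\cE(\beta' A)_r|f^{\bar\P,0}_r|^2/\alpha_r^2\,\d C_r\,|\,\cF_{t+}]$. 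The nested-$\esssup$ shape on the right-hand side then comes, in one line, from bounding $\cE(\hat\beta A)_r|\widehat{\cY}^{+}_r|^2$ via \eqref{eq_inequality_y_hat} (transferred to $\widehat{\cY}^{+}$ through its dyadic-limit construction). Taking $\esssup$ over $\bar\P$ completes the proof.

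In particular, the paper's argument uses \emph{no} linearisation of the jump dependence and \emph{no} Girsanov density; \Cref{ass::intrinsic_2bsde}.$(i)$ plays no role in the proof of this lemma (it is carried in the statement only because the lemma is packaged for use in \Cref{prop::minimality}, where it \emph{is} essential). Your strategy --- a robust first-moment bound via change of measure, then the square via $L_T^2=2\int(L_T-L_u)\,\d L_u+[L]_T$ --- is a legitimate technique for reflected BSDEs and could likely be made to work, but it imports machinery (uniform density control over the non-dominated family, a predictable dominator $\Gamma$ at every stopping time) that the direct integration-by-parts route avoids entirely. You also correctly identify \Cref{lem::conditional_bound_2bsde} as the workhorse, but you only use it for $[\widehat K^{\bar\P}]$; the paper uses it for the full martingale and generator terms, which is what makes the argument short.
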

	
	\begin{proof}
		We fix $\P\in\fP_0$ and then $\overline{\P}\in\fP_0(\cF_{t\smallertext{+}},\P)$ for the time being. We write
		\[
		\eta^{\bar\P} 
		\coloneqq (\widehat{Z}^{\bar{\P}}\bcdot X^{c,\bar{\P}})^{(\bar{\P})} + (\widehat{\cU}^{\bar{\P}}\ast\tilde{\mu}^{X,\bar{\P}})^{(\bar{\P})} + \widehat{N}^{\bar{\P}}, \; f^{\bar{\P}} \coloneqq f^{\bar{\P}}(\widehat{\cY}^\smallertext{+},\widehat{\cY}^\smallertext{+}_{\smallertext{-}},\widehat{Z}^{\bar{\P}},\widehat{\cU}^{\bar{\P}}(\cdot)), \; f^{\bar{\P},0} \coloneqq f^{\bar{\P}}(0,0,0,\mathbf{0}),
		\]
		for simplicity. An application of the integration by parts formula under $\overline{\P}$ yields
		\[
		\d(\cE(\beta A)^{1/2} \widehat{\cY}^\smallertext{+}) = \widehat{\cY}^\smallertext{+}_{\smallertext{-}}\d\cE(\beta A)^{1/2} - \cE(\beta A)^{1/2} f^{\bar{\P}}\d C + \cE(\beta A)^{1/2}\d\eta^{\bar{\P}} - \cE(\beta A)^{1/2}\d\widehat{K}^{\bar{\P}}, \; \textnormal{$\overline{\P}$--a.s.,}
		\]
		and then, $\overline{\P}$--a.s., for $0 \leq t \leq t^\prime < \infty$,
		\[
		\int_t^{t^\prime\land T} \cE(\beta A)^{1/2}_s\d\widehat{K}^{\bar{\P}}_s 
		= \int_t^{t^\prime \land T}\d(\cE(\beta A)^{1/2}\widehat{\cY}^\smallertext{+})_s 
		+ \int_t^{t^\prime \land T} \widehat{\cY}^\smallertext{+}_{s\smallertext{-}} \d\cE(\beta A)^{1/2}_s - \int_t^{t^\prime \land T}\cE(\beta A)^{1/2}_s f^{\bar{\P}}_s\d C_s + \int_t^{t^\prime \land T} \cE(\beta A)^{1/2}_s\d\eta^{\bar{\P}}_s.
		\]
		We obtain
		\begin{align*}
			\bigg(\int_t^{t^\prime \land T}\cE(\beta A)^{1/2}_s f^{\bar{\P}}_s\d C_s\bigg)^2
			&\leq \bigg(\int_t^{t^\prime \land T}\frac{\cE(\beta A)^{1/2}_s}{\cE(\beta^\prime A)^{1/2}_s}\alpha_s \cE(\beta^\prime A)^{1/2}_s \frac{|f^{\bar{\P}}_s|}{\alpha_s}\d C_s\bigg)^2 \\
			&\leq \int_t^{t^\prime \land T}\frac{\cE(\beta A)^{1/2}_s}{\cE(\beta^\prime A)^{1/2}_s}\d A_s \int_t^{t^\prime \land T} \cE(\beta^\prime A)_s \frac{|f^{\bar{\P}}_s|^2}{\alpha^2_s}\d C_s \\
			&= \int_t^{t^\prime \land T}\frac{\cE(\beta A)^{1/2}_{s\smallertext{-}}(1+\beta\Delta A_s)}{\cE(\beta^\prime A)^{1/2}_{s\smallertext{-}}(1+\beta^\prime\Delta A_s)}\d A_s \int_t^{t^\prime \land T} \cE(\beta^\prime A)_s \frac{|f^{\bar{\P}}_s|^2}{\alpha^2_s}\d C_s \\
			&\leq \int_t^{t^\prime \land T}\frac{\cE(\beta A)^{1/2}_{s\smallertext{-}}}{\cE(\beta^\prime A)^{1/2}_{s\smallertext{-}}}\d A_s \int_t^{t^\prime \land T} \cE(\beta^\prime A)_s \frac{|f^{\bar{\P}}_s|^2}{\alpha^2_s}\d C_s \\
			&\leq \frac{(1+\beta^\prime\Phi)}{(\beta^\prime-\beta)} \int_t^{t^\prime \land T} \cE(\beta^\prime A)_s \frac{|f^{\bar{\P}}_s|^2}{\alpha^2_s}\d C_s, \; 0 \leq t \leq t^\prime < \infty.
		\end{align*}
		Here, we used Cauchy--Schwarz's inequality in the second inequality, and to bound the integral term containing only the stochastic exponentials, we applied an inequality established in the proof of \cite[Lemma 3.3]{possamai2024reflections}. Next, it follows from \cite[Lemma C.1.$(iii)$]{possamai2024reflections} that
		\[
		\cE(\beta A)^{1/2} = \cE(D^{\beta,1/2}), \; \textnormal{where} \; D^{\beta,1/2} \coloneqq \frac{1}{2}\beta A^c + \sum_{s (0,\cdot]}\big(\sqrt{1+\beta\Delta A_s}-1\big).
		\]
		Since $\sqrt{1+\beta\Delta A} - 1 \leq \beta\Delta A/2$, and then $\d\cE(\beta A)^{1/2} = \d\cE(D^{\beta,1/2}) = \cE(D^{\beta,1/2})_{\smallertext{-}}\d D^{\beta,1/2} \leq  \frac{\beta}{2}\cE(\beta A)^{1/2}_{\smallertext{-}}\d A,$ we obtain
		\begin{align*}
			\bigg(\int_t^{t^\prime \land T} \widehat{\cY}^\smallertext{+}_{s\smallertext{-}} \d\cE(\beta A)^{1/2}_s\bigg)^2
			&\leq \bigg( \frac{\beta}{2}\int_t^{t^\prime \land T} \cE(\beta A)^{1/2}_{s\smallertext{-}}|\widehat{\cY}^\smallertext{+}_{s\smallertext{-}}| \d A_s\bigg)^2 \\
			&\leq \bigg( \frac{\beta}{2} \int_t^{t^\prime \land T} \frac{\cE(\beta A)^{1/2}_{s\smallertext{-}}}{\cE(\beta^\prime A)^{1/2}_{s\smallertext{-}}} \cE(\beta^\prime A)^{1/2}_{s\smallertext{-}} |\widehat{\cY}^\smallertext{+}_{s\smallertext{-}}| \d A_s\bigg)^2 \\
			&\leq \sup_{s \in (t,T]} |\cE(\beta^\prime A)^{1/2}_s\widehat{\cY}^\smallertext{+}_s|^2 \bigg( \frac{\beta}{2} \int_t^{t^\prime \land T} \frac{\cE(\beta A)^{1/2}_{s\smallertext{-}}}{\cE(\beta^\prime A)^{1/2}_{s\smallertext{-}}} \d A_s\bigg)^2 \leq \sup_{s \in [t,T]} |\cE(\hat\beta A)^{1/2}_s\widehat{\cY}^\smallertext{+}_s|^2 \bigg(\frac{\beta(1+\beta^\prime\Phi)}{2(\beta^\prime-\beta)}\bigg)^2.
		\end{align*}
		Moreover, we have
		\[
		\E^{\bar{\P}}\bigg[\bigg(\int_t^{t^\prime \land T} \cE(\beta A)^{1/2}_s\d\eta^{\bar{\P}}_s\bigg)^2\bigg|\cF_{t\smallertext{+}}\bigg] 
		= \E^{\bar{\P}}\bigg[\int_t^{t^\prime \land T} \cE(\beta A)^{1/2}_s\d\langle\eta^{\bar{\P}}\rangle^{(\bar{\P})}_s\bigg|\cF_{t\smallertext{+}}\bigg], \; \textnormal{$\overline{\P}$--a.s.}
		\]
		It then follows from monotone convergence, from the Lipschitz-continuity property of the generator, from \Cref{lem::conditional_bound_2bsde}, and from $\widehat{\cY}^\smallertext{+}_T = \xi$ that there exists a constant $\mathfrak{C}(\beta,\beta^\prime,\hat\beta,\Phi) \in (0,\infty)$ such that 
		\begin{align*}
			& \E^{\bar{\P}}\bigg[\bigg(\int_t^{T} \cE(\beta A)^{1/2}_s\d\widehat{K}^{\bar{\P}}_s\bigg)^2\bigg|\cF_{t\smallertext{+}}\bigg] \\
			&\leq \mathfrak{C}(\beta,\beta^\prime,\hat\beta,\Phi) \E^{\bar{\P}}\bigg[\sup_{r \in [t,T]} |\cE(\hat\beta A)^{1/2}_r \widehat{\cY}^\smallertext{+}_r|^2 + \int_t^T\cE(\beta^\prime A)_{r}\frac{|f^{\bar{\P}}_r(0,0,0,\mathbf{0})|^2}{\alpha^2_r}\d C_r\bigg|\cF_{t\smallertext{+}}\bigg] \\
			&\leq \mathfrak{C}(\beta,\beta^\prime,\hat\beta,\Phi) \E^{\bar{\P}}\bigg[\sup_{t < s \in \D_\smalltext{+}} \underset{\P^\smalltext{\prime} \in \fP_\smalltext{0}(\cF_{s},\bar{\P})}{{\esssup}^{\bar\P}}\E^{\P^\smalltext{\prime}}\bigg[ \cE(\hat\beta A)_T |\xi|^2 +\int_s^T \cE(\hat\beta A)_r \frac{|f^{\P^\smalltext{\prime}}_r(0,0,0,\mathbf{0})|^2}{\alpha^2_r} \d C_r \bigg| \cF_{s}\bigg] \\
			&\quad  + \int_t^T\cE(\beta^\prime A)_{r}\frac{|f^{\bar{\P}}_r(0,0,0,\mathbf{0})|^2}{\alpha^2_r}\d C_r\bigg|\cF_{t\smallertext{+}}\bigg], \; \textnormal{$\bar{\P}$--a.s.}
		\end{align*}
		Here we used \eqref{eq_inequality_y_hat} in the last inequality.
		It remains to take the essential supremum over $\overline{\P}\in\fP_0(\cF_{t\smallertext{+}},\P)$, which concludes the proof.
	\end{proof}
	
	\begin{remark}\label{rem_increasing_process_bound}
		With \textnormal{\Cref{ass::intrinsic_2bsde}.$(ii)$}, it follows that
		\begin{equation}\label{eq_esssup_K_finite}
			\underset{\bar{\P} \in \fP_\smalltext{0}(\cF_{\smalltext{t}\tinytext{+}},\P)}{{\esssup}^{\P}} \E^{\bar{\P}}\bigg[ \bigg( \int_{t}^{T} \cE(\beta A)^{1/2}_s \d \widehat{K}^{\bar{\P}}_s\bigg)^{2}\bigg|\cF_{t\smallertext{+}}\bigg] < \infty, \; \textnormal{$\P$--a.s.}, \; t \in [0,\infty), \; \P\in\fP_0.
		\end{equation}
		The above condition is crucial in the intrinsic characterisation. If $\fP_0$ were $\F_\smallertext{+}$-stable, that is, for all $\P \in \fP_0$, $t \in [0,\infty)$, $B \in \cF_{t\smallertext{+}}$, and $(\P_1,\P_2) \in \big(\fP_0(\cF_{t\smallertext{+}},\P)\big)^2$, we have that
		\[
		\overline{\P}[A] \coloneqq \E^\P\big[\P_1[A|\cF_{t\smallertext{+}}]\1_B + \P_2[A|\cF_{t\smallertext{+}}] \1_{B^\smalltext{c}}\big], \; A \in \cF,
		\]
		belongs to $\fP_0$, then \textnormal{\eqref{eq_esssup_K_finite}} would be immediately satisfied, assuming $\phi^{2,\hat\beta}_{\xi,f} < \infty$. However, if $\fP_0$ is not $\F_{\smallertext{+}}$-stable, then the bound in \textnormal{\Cref{lem::bound_k_2bsde}} seems to us to be the weakest assumption to obtain \textnormal{\eqref{eq_esssup_K_finite}}. We point out here that the proofs of \textnormal{\cite[Theorem 4.1 and 4.2]{possamai2018stochastic}} only work under the assumption that $\fP_0$ is $\F_\smallertext{+}$-stable, which has not been assumed throughout \textnormal{\cite{possamai2018stochastic}}. Thus, an analogous condition to \textnormal{\Cref{ass::intrinsic_2bsde}.$(ii)$} has to be imposed in \textnormal{\cite[Section 4]{possamai2018stochastic}}.
	\end{remark}
	
	In the following lemma, we denote by $V^{p,\F,\P}$ the $(\F,\P)$--predictable compensator (that is, the dual predictable projection) of a process $V$, whenever it is well-defined. Moreover, the conditional expectation $\E^\P[K_\smallertext{\cdot}|\cF_{\smallertext{\cdot}\smallertext{-}}]$ should be understood as the $(\F,\P)$--predictable projection. Its proof relies on multiplicative decompositions of powers of the stochastic exponential; see \cite[Th\'eor\`eme 6.36]{jacod1979calcul} for further details.
	
		\begin{lemma}\label{eq::inverse_stochastic_exponential}
			Let $(\Omega,\cF,\F=(\cF_u)_{u \in [0,\infty)},\P)$ be a filtered probability space satisfying the usual conditions and supporting a right-continuous, locally square-martingale $M = (M_u)_{u \in [0,\infty)}$ satisfying $\Delta M > -1 + \delta$ outside a $\P$--null set, for some $\delta \in (0,\infty)$. Let $K \coloneqq (1+\Delta M)^{-1}$, and suppose that there exists $\mathfrak{C} \in (0,\infty)$ such that
			\[
				\log(D) \coloneqq \langle M^c \rangle + \Bigg(\sum_{s \in (0,\cdot]} \log(\E^\P[K_s|\cF_{s\smallertext{-}}]) + \frac{K_s}{\E^\P[K_s|\cF_{s\smallertext{-}}]} - 1 + \frac{\Delta M_s}{\E^\P[K_s|\cF_{s\smallertext{-}}]}\Bigg)^{p,\F,\P} \leq \mathfrak{C}, \; \textnormal{$\P$--a.s.}
			\]
			Then
			\begin{equation*}
				\E^\P\bigg[\sup_{u \in [t, \infty)} \cE(M)_t\cE(M)^{\smallertext{-}1}_u \bigg| \cF_t\bigg] \leq  4\mathrm{e}^{\mathfrak{C}}, \; \text{$\P$--{\rm a.s.}}, \; t \in [0,\infty).
			\end{equation*}
		\end{lemma}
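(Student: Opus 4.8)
The plan is to control the positive random variable $\cE(M)_t \cE(M)^{\smallertext{-}1}_u$ by expressing $\cE(M)^{-1}$ as the stochastic exponential of another semi-martingale, and then to identify the exponential compensator of that semi-martingale as $\log(D)$, so that dividing by the exponential compensator turns the expression into a martingale to which a maximal inequality can be applied. First I would recall the standard identity $\cE(M)^{-1} = \cE(\widetilde M)$, where $\widetilde M = -M + \langle M^c\rangle + \sum_{s\in(0,\cdot]}\big(\tfrac{(\Delta M_s)^2}{1+\Delta M_s}\big)$; this is where the hypothesis $\Delta M > -1+\delta$ is used, as it guarantees $1+\Delta M$ is bounded away from zero and the inverse is well-defined (and in fact that $\widetilde M$ is again a locally square-integrable semi-martingale). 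Then $\cE(M)_t\cE(M)^{-1}_u = \cE(\widetilde M)_u/\cE(\widetilde M)_t$ for $u \geq t$, and since $\cE(\widetilde M)$ is the stochastic exponential of $\widetilde M$ with the jumps $\Delta\widetilde M = \tfrac{-\Delta M}{1+\Delta M} = K - 1 > -1$, it is a positive process, so $\cE(\widetilde M)_u/\cE(\widetilde M)_t = \cE(\widetilde M - \widetilde M_{\cdot\land t})_u$.

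Next I would invoke the multiplicative decomposition of $\cE(\widetilde M)$ into a local martingale times a predictable process of finite variation; this is exactly the content of the reference \cite[Th\'eor\`eme 6.36]{jacod1979calcul} mentioned before the statement. Writing $\cE(\widetilde M) = \cE(\widetilde M)_{\mathrm{loc}}\cdot D$ with $\cE(\widetilde M)_{\mathrm{loc}}$ a positive local martingale and $D$ predictable, non-decreasing (since the generator of the compensator here is non-negative), starting at $1$, the process $D$ must coincide with the exponential of the $(\F,\P)$--predictable compensator of the "jump plus continuous" characteristic of $\log\cE(\widetilde M)$. A direct computation of $\log\cE(\widetilde M)$ via It\^o's formula, expressing it through $\langle\widetilde M^c\rangle = \langle M^c\rangle$ and the sum over jumps $\log(1+\Delta\widetilde M_s) - \Delta\widetilde M_s = \log K_s - (K_s - 1)$, combined with taking the predictable projection of the jump part (which replaces $K_s$ by $\E^\P[K_s|\cF_{s\smallertext{-}}]$ in the normalising denominators, as it appears in the definition of $\log(D)$) should yield precisely the formula for $\log(D)$ given in the statement. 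I would carry out this identification carefully, since the exact form of the compensator — in particular why the terms $\tfrac{K_s}{\E^\P[K_s|\cF_{s\smallertext{-}}]} - 1 + \tfrac{\Delta M_s}{\E^\P[K_s|\cF_{s\smallertext{-}}]}$ appear — is the delicate bookkeeping step.

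Once the decomposition $\cE(\widetilde M) = \cE(\widetilde M)_{\mathrm{loc}}\cdot D$ with $D \leq e^{\mathfrak C}$ is established, the conclusion follows quickly: for $u \geq t$ we have
\[
	\cE(\widetilde M)_u/\cE(\widetilde M)_t = \frac{\cE(\widetilde M)_{\mathrm{loc},u}}{\cE(\widetilde M)_{\mathrm{loc},t}}\cdot\frac{D_u}{D_t} \leq e^{\mathfrak C}\,\frac{\cE(\widetilde M)_{\mathrm{loc},u}}{\cE(\widetilde M)_{\mathrm{loc},t}},
\]
using $D_t \geq 1$ and $D_u \leq e^{\mathfrak C}$. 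The process $u\mapsto \cE(\widetilde M)_{\mathrm{loc},u}/\cE(\widetilde M)_{\mathrm{loc},t}$ is a non-negative $(\F,\P)$--local martingale on $[t,\infty)$ started at $1$, hence a supermartingale; by a localisation argument and Doob's $L^1$ maximal inequality in its conditional form (see e.g. \cite[Equation 12.3, page 446]{doob1984classical} or the conditional Doob inequality), $\E^\P[\sup_{u\geq t}\cE(\widetilde M)_{\mathrm{loc},u}/\cE(\widetilde M)_{\mathrm{loc},t}\,|\,\cF_t] \leq 4$. (The factor $4$ is the $L^1$ maximal constant $p/(p-1)$ at $p=2$, obtained after first bounding the supremum of the supermartingale by the supremum of the closure of its uniformly integrable part or passing through $L^2$ and Cauchy--Schwarz; the constant $4\mathrm e^{\mathfrak C}$ in the statement is consistent with using the $L^2$ maximal inequality and then the supermartingale property, exactly as in the proof of \cite[Lemma 7.4]{possamai2024reflections}.) Combining the two displays gives $\E^\P[\sup_{u\geq t}\cE(M)_t\cE(M)^{-1}_u\,|\,\cF_t] \leq 4\mathrm e^{\mathfrak C}$ $\P$--a.s., as claimed. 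The main obstacle I anticipate is the precise identification of $D$ with $\exp(\log(D))$ for the formula given — i.e., verifying that the multiplicative decomposition's predictable factor has exactly the stated compensator, including the correct treatment of the predictable projections $\E^\P[K_s|\cF_{s\smallertext{-}}]$ in the denominators; the rest is routine once that is in place.
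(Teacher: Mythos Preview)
Your overall strategy---identify $\cE(M)^{-1}$ as a stochastic exponential, apply Jacod's multiplicative decomposition to write it as (local martingale) $\times$ $D$, and use the bound $D\le e^{\mathfrak C}$---matches the paper exactly. The gap is in your final step. You claim that for the non-negative local martingale (hence supermartingale) ratio $S_u \coloneqq \cE(\widetilde M)_{\mathrm{loc},u}/\cE(\widetilde M)_{\mathrm{loc},t}$ with $S_t=1$, one has $\E^\P[\sup_{u\ge t} S_u \,|\,\cF_t]\le 4$ by ``Doob's $L^1$ maximal inequality''. This does not follow: Doob's inequality for non-negative supermartingales gives only the weak-$L^1$ bound $\lambda\,\P[\sup S>\lambda\,|\,\cF_t]\le 1$, which does not control $\E[\sup S]$. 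Your parenthetical remarks about ``passing through $L^2$ and Cauchy--Schwarz'' do not repair this, since you have no $L^2$ control on the local martingale part by itself.

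The paper obtains the constant $4$ by a different maximal argument that avoids the local martingale factor entirely. Having first shown $\cE(M)^{-1}_s\le e^{\mathfrak C}\cE(\tilde N)_s$ with $\cE(\tilde N)$ a non-negative supermartingale, one deduces that $\cE(M)^{-1}_s$ is integrable for each $s$. Since $\cE(M)$ is a positive martingale and $x\mapsto x^{-1/2}$ is convex on $(0,\infty)$, Jensen's inequality makes $\cE(M)^{-1/2}$ a \emph{submartingale}. Doob's $L^2$-maximal inequality applied to this submartingale on $[t,t']$ then gives
\[
\E^\P\Big[\sup_{u\in[t,t']}\cE(M)^{-1}_u\,\Big|\,\cF_t\Big]
=\E^\P\Big[\sup_{u\in[t,t']}\big(\cE(M)^{-1/2}_u\big)^2\,\Big|\,\cF_t\Big]
\le 4\,\E^\P\big[\cE(M)^{-1}_{t'}\,\big|\,\cF_t\big]\le 4e^{\mathfrak C},
\]
and monotone convergence handles $t'\to\infty$. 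The key idea you are missing is this submartingale-via-Jensen trick on $\cE(M)^{-1/2}$; once you have it, the multiplicative decomposition is only used to establish integrability of $\cE(M)^{-1}$ and the bound $\E[\cE(M)^{-1}_{t'}]\le e^{\mathfrak C}$, not to produce the process whose supremum is controlled.
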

		
		\begin{remark}\label{rem::inverse_exponential}
			Suppose that $M$ is $\F$--quasi--left-continuous. Then $M$ does not jump at $\F$--predictable stopping times, and therefore $\E^\P[K_{\smallertext{\cdot}}|\cF_{\smallertext{\cdot}\smallertext{-}}] = 1$. A sufficient condition for the assumption here to hold is that $\langle M \rangle$ is $\P$--essentially bounded, say by $\mathfrak{C}^\prime \in (0,\infty)$. Indeed, we then obtain, using \textnormal{\cite[Lemma A.4]{kazi2015second}}, that there exists a constant $\mathfrak{C}(\delta) \in (0,\infty)$, only depending on $\delta$, such that
			\[
				\E^\P[\log(D_T)] \leq \E^\P[\langle M^c \rangle_T] + \mathfrak{C}(\delta)\E^\P\Bigg[\sum_{0 < s \leq T}(\Delta M_s)^2 \1_{\{s < \infty\}}\Bigg] 
				\leq \max\{1,\mathfrak{C}(\delta)\}\E^\P[\langle M \rangle_T] \leq\max\{1,\mathfrak{C}(\delta)\} \mathfrak{C}^\prime,
			\]
			for any $\F$--predictable stopping time $T$. An application of the predictable section theorem $($see the proof of \textnormal{\cite[Proposition C.3]{possamai2024reflections}}$)$ yields $\log(D) \leq \max\{1,\mathfrak{C}(\delta)\}\mathfrak{C}^\prime \eqqcolon \mathfrak{C}$ outside a $\P$--null set.		
		\end{remark}

		\begin{proof}
			Note that $M$ is an $(\F,\P)$--square-integrable martingale since $\langle M \rangle$ is $\P$--essentially bounded. This implies that $\cE(M)_u / \cE(M)_t = \cE(\int_t^\cdot \d M)_u$ for $t \leq u$, so we can suppose without loss of generality that $M\mathbf{1}_{\llbracket 0, t \rrbracket} = 0$, $\P$--a.s., in what follows. Since $\Delta M \geq -1 + \delta$, the processes $\cE(M)$ and $\cE(M)^{\smallertext{-}1}$ are both non-negative outside a $\P$--null set. By \cite[Th\'eor\`eme 6.36]{jacod1979calcul}, we can write
			\[
				\cE(M)^{-1} = \cE(\tilde{N}) D \leq \cE(\tilde{N}) \mathrm{e}^\mathfrak{C}, \; \textnormal{$\P$--a.s.},
			\]
			for an $(\F,\P)$--local martingale $\tilde{N}$ starting at zero, where $D$ is the (non-decreasing) process starting at zero defined in the statement of the lemma. The process $\cE(\tilde{N})$, being a non-negative $(\F,\P)$--local martingale starting at $1$, is therefore an $(\F,\P)$--super-martingale.
			This implies, in particular, that $\cE(M)^{\smallertext{-}1}_s$ is $\P$-integrable for every $s \in [0,\infty)$. It follows that $\cE(M)^{\smallertext{-}1/2}$ is an $(\F,\P)$--sub-martingale by \cite[Lemma 3.0.3]{weizsaecker1990stochastic} together with Jensen's inequality in the form \cite[Theorem 15.3, page 116]{bauer1996probability} for the convex function $x \longmapsto x^{\smallertext{-}1/2}$ defined on $(0,\infty)$. It then follows from a conditional version of Doob's $\L^2$-inequality for sub-martingales (see \cite[page 444]{doob1984classical}) that
			\[
				\E^\P
				\bigg[\sup_{u \in [t, t^\smalltext{\prime}]} \cE(M)^{\smallertext{-}1}_u \bigg| \cF_t\bigg] 
				\leq 4\E^\P\big[\cE(M)^{\smallertext{-}1}_{t^\smalltext{\prime}}\big|\cF_t\big] \leq 4\mathrm{e}^\mathfrak{C} \E^\P\big[\cE(\tilde{N})_{t^\smalltext{\prime}}\big|\cF_t\big] \leq 4\mathrm{e}^\mathfrak{C}, \; \text{$\P$--a.s.}, \; 0 \leq t < t^\prime < \infty.
			\] 
			Here, the last equality follows the supermartingale property of $\cE(\tilde{N})$.
			By monotone convergence, we obtain
			\[
				\E^\P
				\bigg[\sup_{u \in [t, \infty)} \cE(M)^{\smallertext{-}1}_u \bigg| \cF_t\bigg] 
				\leq 4\mathrm{e}^\mathfrak{C}, \; \text{$\P$--a.s.}, \; t \in [0,\infty),
			\] 
			which concludes the proof.
		\end{proof}
	
	\begin{proof}[Proof of \Cref{prop::minimality}]
		Let $\P \in \fP_0$, and fix an arbitrary $\bar{\P} \in \fP_0(\cG_{t\smallertext{+}}, \P)$ for the time being. To simplify the notation, let us write
		\begin{gather*}
			\delta Y \coloneqq \widehat\cY^\smallertext{+} - \cY^{\bar{\P}}, \; \delta Z \coloneqq Z-\cZ^{\bar{\P}}, \; \delta U \coloneqq U - \cU^{\bar{\P}}, \; \delta N \coloneqq N^{\P} - \cN^{\bar{\P}}, \; K \coloneqq K^{\bar{\P}}, \; X^c \coloneqq X^{c,\bar{\P}}, \; \tilde{\mu}^{X} \coloneqq \tilde{\mu}^{X,\bar{\P}},\\
			\delta f \coloneqq f^{\bar{\P}}\big(\widehat\cY^\smallertext{+},\widehat\cY^\smallertext{+}_\smallertext{-}, Z, U(\cdot)\big) - f^{\bar{\P}}\big(\cY^{\bar{\P}},\cY^{\bar{\P}}_\smallertext{-}, \cZ^{\bar{\P}}, \cU^{\bar{\P}}(\cdot)\big),
		\end{gather*}
		and, see \eqref{eq::definition_lambda_hat_lambda},
		\begin{equation*}
			\lambda_s(\omega) \coloneqq \lambda^{\bar{\P}}_s(\omega), 
			\; \widehat\lambda_s(\omega) \coloneqq \widehat\lambda^{\widehat\cY^\smalltext{+}_{\smalltext{s}\tinytext{-}}(\omega),\cY^{\bar{\P}}_{\smalltext{s}\tinytext{-}}(\omega)}_s(\omega), 
			\; \eta_s(\omega) \coloneqq \eta^{Z_\smalltext{s}(\omega),\cZ^{\bar{\P}}_\smalltext{s}(\omega),\mathsf{a}_\smalltext{s}(\omega)}_s(\omega).
		\end{equation*}
		Now consider $v \coloneqq \int_0^{\cdot \land T} \gamma_s \d C_s,$ where  $\gamma \coloneqq {\widehat\lambda}/{(1-\widehat\lambda\Delta C)}.$ Here, we note that the process $\gamma$ is $\F^{\bar{\P}}_\smallertext{+}$-predictable and $v$ is real-valued by \Cref{ass::generator2}, since $|\widehat\lambda_s \Delta C_s| = |\widehat\lambda_s| \Delta C_s \leq \sqrt{\mathrm{r}_s}\Delta C_s \leq \Phi < 1 $ and thus
		\begin{equation*}
			|\gamma| \leq \frac{|\widehat\lambda|}{1-\Phi} \leq \frac{\sqrt{\mathrm{r}}}{1-\Phi}.
		\end{equation*}
		Moreover, $\Delta v = \gamma\Delta C = \widehat\lambda\Delta C/(1-\widehat\lambda\Delta C) > -1$, and therefore $\cE(v)$ is positive, $\F^{\bar{\P}}_\smallertext{+}$--predictable and of $\P$--finite variation. An application of the integration by parts formula yields
		\begin{align*}
			\d \big(\cE(v)\delta Y\big)_s &= \cE(v)_{s\smallertext{-}}\d (\delta Y)_s + \delta Y_{s\smallertext{-}} \d \cE(v)_s + \d [\cE(v),\delta Y]_s \\
			&= -\cE(v)_{s\smallertext{-}} \delta f_s \d C_s + \cE(v)_{s\smallertext{-}} \d(\delta Z \bcdot X^c)_s + \cE(v)_{s\smallertext{-}}\d (\delta U \ast\tilde{\mu}^X)_s + \cE(v)_{s\smallertext{-}}\d (\delta N)_s - \cE(v)_{s\smallertext{-}}\d K_s  \\
			&\quad + \cE(v)_{s\smallertext{-}}\delta Y_{s\smallertext{-}}\gamma_s\d C_s + \cE(v)_{s\smallertext{-}}\gamma_s\d [C,\delta Y]_s \\
			&= -\cE(v)_{s\smallertext{-}} \delta f_s \d C_s + \cE(v)_{s\smallertext{-}} \d(\delta Z \bcdot X^c)_s + \cE(v)_{s\smallertext{-}}\d (\delta U \ast\tilde{\mu}^X)_s + \cE(v)_{s\smallertext{-}}\d (\delta N)_s - \cE(v)_{s\smallertext{-}}\d K_s  \\
			&\quad + \cE(v)_{s\smallertext{-}}\delta Y_{s\smallertext{-}}\gamma_s\d C_s - \cE(v)_{s\smallertext{-}}\gamma_s\delta f_s \Delta C_s \d C_s + \cE(v)_{s\smallertext{-}}\gamma_s \d [C,\delta Z \bcdot X^c]_s + \cE(v)_{s\smallertext{-}}\gamma_s \d [C,\delta U \ast\tilde{\mu}^X]_s \\
			&\quad + \cE(v)_{s\smallertext{-}}\gamma_s \d [C,\delta N]_s - \cE(v)_{s\smallertext{-}}\gamma_s \d [C,K]_s \\
			&= - \cE(v)_{s\smallertext{-}} \big(\delta f_s(1+\gamma_s\Delta C_s) - \gamma_s\delta Y_{s\smallertext{-}}\big)\d C_s + \cE(v)_{s\smallertext{-}} \d(\delta Z \bcdot X^c)_s + \cE(v)_{s\smallertext{-}}\d (\delta U \ast\tilde{\mu}^X)_s + \cE(v)_{s\smallertext{-}}\d (\delta N)_s \nonumber\\
			&\quad - \cE(v)_{s\smallertext{-}}\d K_s - \cE(v)_{s\smallertext{-}}\gamma_s \d [C,K]_s \\
			&\quad + \cE(v)_{s\smallertext{-}}\gamma_s \d [C,\delta Z \bcdot X^c]_s + \cE(v)_{s\smallertext{-}}\gamma_s \d [C,\delta U \ast\tilde{\mu}^X]_s + \cE(v)_{s\smallertext{-}}\gamma_s \d [C,N]_s, \; \textnormal{$\overline{\P}$--a.s.}
		\end{align*}
		Since $C$ is $\F^{\bar{\P}}_{\smallertext{+}}$-predictable, the integral processes
		\[
			\int_0^\cdot \cE(v)_{s\smallertext{-}}\gamma_s \d [C,\delta Z \bcdot X^c]_s, \; \int_0^\cdot\cE(v)_{s\smallertext{-}}\gamma_s \d [C,\delta U \ast\tilde{\mu}^X]_s, \; \textnormal{and} \int_0^\cdot\cE(v)_{s\smallertext{-}}\gamma_s \d [C,N]_s,
		\]
		are $(\F^{\bar{\P}}_\smallertext{+},\bar{\P})$--local martingales (see \cite[Proposition I.4.49.(c)]{jacod2003limit}). Now let $\overline{\Q}$ denote the probability measure on $(\Omega,\cF^{\overline{\P}})$ defined through the density
		\begin{equation*}
			\frac{\d\overline{\Q}}{\d\overline{\P}} \coloneqq \cE(L)_\infty \coloneqq \cE\big(\eta \bcdot X^c + \rho_1 \ast \tilde{\mu}^X\big)_\infty = \cE\big(\eta \bcdot X^c + \rho_1 \ast \tilde{\mu}^X\big)_T,
		\end{equation*}
		where $\rho_1$ comes from \Cref{ass::intrinsic_2bsde}.$(i)$. Since $\langle L \rangle^{(\bar{\P})}$ is $\P$--essentially bounded by \Cref{ass::intrinsic_2bsde}, we have $\cE(L) \in \cH^2_T(\F^{\bar{\P}}_{\smallertext{+}},\overline{\P})$ (see \cite[Lemma~7.4]{possamai2024reflections}), and, in particular
		\[
		\E^{\bar{\P}}\big[|\cE(L)_\infty|^2\big] = \E^{\bar{\P}}\big[|\cE(L)_T|^2\big] < \infty.
		\]
		We now express the $(\F^{\bar{\P}}_\smallertext{+},\overline{\P})$--local martingales appearing in the decomposition of $\Ec(v)\delta Y$ in term of their $(\F^{\bar{\P}}_\smallertext{+},\overline{\Q})$--semi-martingale decompositions. By Girsanov's theorem (see \cite[Th\'eor\`eme 7.24, Proposition 7.25 and 7.26]{jacod1979calcul}), we obtain
		\begin{align*}
			\cE(v)_{s\smallertext{-}} \d (\delta Z \bcdot X^c)_s 
			&= \cE(v)_{s\smallertext{-}}\d (\delta Z\bcdot X^c - \langle \delta Z \bcdot X^c,L \rangle^{(\bar{\P})})_s + \cE(v)_{s\smallertext{-}}\d\langle \delta Z \bcdot X^c,L \rangle^{(\bar{\P})}_s \\
			&= \cE(v)_{s\smallertext{-}}\d (\delta Z \bcdot X^c - \langle \delta Z \bcdot X^c,L \rangle^{(\bar{\P})})_s + \cE(v)_{s\smallertext{-}}\d\langle \delta Z \bcdot X^c, \eta \bcdot X^c \rangle^{(\bar{\P})}_s \\
			&= \cE(v)_{s\smallertext{-}}\d (\delta Z \bcdot X^c - \langle \delta Z\bcdot X^c,L \rangle^{(\bar{\P})})_s + \cE(v)_{s\smallertext{-}} \eta^\top_s \mathsf{a}_s\delta Z_s \d C_s,\\
			\cE(v)_{s\smallertext{-}}\d(\delta U \ast \tilde\mu^X)_s 
			&= \cE(v)_{s\smallertext{-}}\d\big(\delta U \ast \tilde\mu^X - \langle \delta U \ast\tilde\mu^X,L\rangle^{(\bar{\P})}\big)_s + \cE(v)_{s\smallertext{-}}\d \langle \delta U \ast\tilde\mu^X,L\rangle^{(\bar{\P})}_s \\
			&= \cE(v)_{s\smallertext{-}}\d\big(\delta U \ast \tilde\mu^X - \langle \delta U \ast\tilde\mu^X,L\rangle^{(\bar{\P})}\big)_s + \cE(v)_{s\smallertext{-}}\d \langle \delta U \ast\tilde\mu^X,\rho_1\ast\tilde\mu^X\rangle^{(\bar{\P})}_s,\\
			\cE(v)_{s\smallertext{-}}\d (\delta N)_s 
			&= \cE(v)_{s\smallertext{-}}\d(\delta N-\langle \delta N, L \rangle^{(\bar{\P})}_s) + \cE(v)_{s\smallertext{-}}\d\langle \delta N,L\rangle^{(\bar{\P})}_s = \cE(v)_{s\smallertext{-}}\d(\delta N-\langle \delta N, L \rangle^{(\bar{\P})}_s), \; \textnormal{$\overline{\P}$--a.s.},
		\end{align*}
		where the last equality follows from the (strong) $\bar{\P}$-orthogonality of $\delta N$ with respect to $X^c$ and $\mu^X$. For the remaining $(\F^{\overline{\P}}_\smallertext{+},\overline{\P})$--local martingale terms appearing in the decomposition of $\Ec(v)\delta Y$, we find, using $\d [C,\delta Z \bcdot X^c]_s = \Delta C_s \d(\delta Z \bcdot X^c)_s$ (see \cite[Proposition I.4.49.b)]{jacod2003limit}), that
		\begin{align*}
			\cE(v)_{s\smallertext{-}}\gamma_s \d[C,\delta Z \bcdot X^c]_s 
			&= \cE(v)_{s\smallertext{-}}\gamma_s \d\big([C,\delta Z \bcdot X^c]-\big\langle [C,\delta Z \bcdot X^c],L \big\rangle^{(\bar{\P})}\big)_s + \cE(v)_{s\smallertext{-}}\gamma_s \d\big\langle [C,\delta Z \bcdot X^c],L \big\rangle^{(\bar{\P})}_s \\
			&= \cE(v)_{s\smallertext{-}}\gamma_s \d\big([C,\delta Z \bcdot X^c]-\big\langle [C,\delta Z \bcdot X^c],L \big\rangle^{(\bar{\P})}\big)_s + \cE(v)_{s\smallertext{-}}\gamma_s \Delta C_s \d\langle \delta Z \bcdot X^c,L \rangle^{(\bar{\P})}_s \\
			&= \cE(v)_{s\smallertext{-}}\gamma_s \d\big([C,\delta Z \bcdot X^c]-\big\langle [C,\delta Z \bcdot X^c],L \big\rangle^{(\bar{\P})}\big)_s + \cE(v)_{s\smallertext{-}} \gamma_s \Delta C_s \eta_s^\top c_s \delta Z_s \d C_s, \; \textnormal{$\overline{\P}$--a.s.}
		\end{align*}
		Similarly
		\begin{align*}
			\cE(v)_{s\smallertext{-}}\gamma_s \d[C,\delta U \ast\tilde\mu^X]_s 
			&= \cE(v)_{s\smallertext{-}}\gamma_s\d\big([C,\delta U \ast\tilde\mu^X]-\big\langle [C,\delta U \ast\tilde\mu^X],L \big\rangle^{(\bar{\P})}\big)_s + \cE(v)_{s\smallertext{-}} \gamma_s \d\big\langle [C,\delta U \ast\tilde\mu^X],L \big\rangle^{(\bar{\P})}_s \\
			&= \cE(v)_{s\smallertext{-}}\gamma_s \d\big([C,\delta U \ast\tilde\mu^X]-\big\langle [C,\delta U \ast\tilde\mu^X],L \big\rangle^{(\bar{\P})}\big)_s + \cE(v)_{s\smallertext{-}}\gamma_s \Delta C_s \d\langle \delta U \ast\tilde\mu^X,\rho_1\ast\tilde\mu^X \rangle^{(\bar{\P})}_s,\\
			\cE(v)_{s\smallertext{-}}\gamma_s \d [C,\delta N]_s 
			&= \cE(v)_{s\smallertext{-}}\gamma_s \d \big([C,\delta N] - \big\langle [C,\delta N],L\big\rangle^{(\bar{\P})} \big)_s + \cE(v)_{s\smallertext{-}}\d\big\langle [C,\delta N],L\big\rangle^{(\bar{\P})}_s \\
			&= \cE(v)_{s\smallertext{-}}\gamma_s \d \big([C,\delta N] - \big\langle [C,\delta N],L\big\rangle^{(\bar{\P})} \big)_s + \cE(v)_{s\smallertext{-}}\Delta C_s\d\big\langle \delta N,L\big\rangle^{(\bar{\P})}_s \\
			&= \cE(v)_{s\smallertext{-}}\gamma_s \d \big([C,\delta N] - \big\langle [C,\delta N],L\big\rangle^{(\bar{\P})} \big)_s, \; \textnormal{$\overline{\P}$--a.s.}
		\end{align*}
		We now substitute the $\overline{\Q}$--semi-martingale decompositions into the decomposition of $\Ec(v)\delta Y$ and obtain
		\begin{align*}
			\d \big(\cE(v)\delta Y\big)_s 
			&= - \cE(v)_{s\smallertext{-}} \big(\delta f_s(1+\gamma_s\Delta C_s) - \gamma_s\delta Y_{s\smallertext{-}}\big)\d C_s + \cE(v)_{s\smallertext{-}} \d(\delta Z \bcdot X^c)_s + \cE(v)_{s\smallertext{-}}\d (\delta U \ast\tilde\mu^X)_s + \cE(v)_{s\smallertext{-}}\d(\delta N)_s \\
			&\quad + \cE(v)_{s\smallertext{-}}\gamma_s \d [C,\delta Z \bcdot X^c]_s + \cE(v)_{s\smallertext{-}}\gamma_s \d [C,\delta U \ast\tilde\mu^X]_s + \cE(v)_{s\smallertext{-}}\gamma_s \d [C,\delta N]_s - \cE(v)_{s\smallertext{-}}\d K_s  \\
			&\quad- \cE(v)_{s\smallertext{-}}\gamma_s \d [C,K]_s \\
			&= - \cE(v)_{s\smallertext{-}} \big(\delta f_s(1+\gamma_s\Delta C_s) - \gamma_s\delta Y_{s\smallertext{-}}\big)\d C_s + \cE(v)_{s\smallertext{-}}\d (\delta Z \bcdot X^c_s - \langle \delta Z \bcdot X^c,L \rangle^{(\bar{\P})})_s \\
			&\quad + \cE(v)_{s\smallertext{-}} \eta^\top_s \mathsf{a}_s\delta Z_s \d C_s + \cE(v)_{s\smallertext{-}}\d\big(\delta U \ast \tilde\mu^X - \langle \delta U \ast\tilde\mu^X,L\rangle^{(\bar{\P})}\big)_s + \cE(v)_{s\smallertext{-}}\d \langle \delta U \ast\tilde\mu^X,\rho_1\ast\tilde\mu^X\rangle^{(\bar{\P})}_s \\
			&\quad + \cE(v)_{s\smallertext{-}}\d(\delta N)_s + \cE(v)_{s\smallertext{-}}\gamma_s \d\big([C,\delta Z \bcdot X^c]-\big\langle [C,\delta Z \bcdot X^c],L \big\rangle^{(\bar{\P})}\big)_s + \cE(v)_{s\smallertext{-}}\eta^\top_s \mathsf{a}_s \delta Z_s \gamma_s \Delta C_s \d C_s \\
			&\quad + \cE(v)_{s\smallertext{-}}\gamma_s \d\big([C,\delta U \ast\tilde\mu^X]-\big\langle [C,\delta U \ast\tilde\mu^X],L \big\rangle^{(\bar{\P})}\big)_s + \cE(v)_{s\smallertext{-}}\gamma_s \Delta C_s \d\langle \delta U \ast\tilde\mu^X,\rho_1\ast\tilde\mu^X \rangle^{(\bar{\P})}_s \\
			&\quad + \cE(v)_{s\smallertext{-}}\gamma_s \d [C,\delta N]_s \nonumber - \cE(v)_{s\smallertext{-}}\d K_s - \cE(v)_{s\smallertext{-}}\gamma_s \d [C,K]_s \\
			&= -  \cE(v)_{s\smallertext{-}} \big(\delta f_s(1+\gamma_s\Delta C_s) - \gamma_s\delta Y_{s\smallertext{-}}\big)\d C_s + \cE(v)_{s\smallertext{-}}\d (\delta Z \bcdot X^c_s - \langle \delta Z \bcdot X^c,L \rangle^{(\bar{\P})})_s \\
			&\quad + \cE(v)_{s\smallertext{-}} \eta^\top_s \mathsf{a}_s\delta Z_s (1+\gamma_s\Delta C_s) \d C_s + \cE(v)_{s\smallertext{-}}\d\big(\delta U \ast \tilde\mu^X - \langle \delta U \ast\tilde\mu^X,L\rangle^{(\bar{\P})}\big)_s \\
			&\quad + \cE(v)_{s\smallertext{-}}(1+\gamma_s \Delta C_s)\d \langle \delta U \ast\tilde\mu^X,\rho_1\ast\tilde\mu^X\rangle^{(\bar{\P})}_s  + \cE(v)_{s\smallertext{-}}\d(\delta N)_s \\
			&\quad + \cE(v)_{s\smallertext{-}}\gamma_s \d\big([C,\delta Z \bcdot X^c]-\big\langle [C,\delta Z \bcdot X^c],L \big\rangle^{(\bar{\P})}\big)_s + \cE(v)_{s\smallertext{-}}\gamma_s \d\big([C,\delta U \ast\tilde\mu^X]-\big\langle [C,\delta U \ast\tilde\mu^X],L \big\rangle^{(\bar{\P})}\big)_s \\
			&\quad + \cE(v)_{s\smallertext{-}}\gamma_s \d [C,\delta N]_s - \cE(v)_{s\smallertext{-}}\d K_s - \cE(v)_{s\smallertext{-}}\gamma_s \d [C,K]_s, \; \textnormal{$\overline{\P}$--a.s.}
		\end{align*}
		Note now that $\cE(v)_{\smallertext{-}}(1+\gamma\Delta C) = \cE(v) > 0$ and $\cE(v)_{s\smallertext{-}}\gamma_s \d [C,K]_s = \cE(v)_{s\smallertext{-}}\gamma_s \Delta C_s \d K_s$ imply
		\begin{equation*}
			\cE(v)_{s\smallertext{-}}\d K_s + \cE(v)_{s\smallertext{-}}\gamma_s \d [C,K]_s = \cE(v)_{s\smallertext{-}}\d K_s + \cE(v)_{s\smallertext{-}}\gamma_s \Delta C_s \d K_s = \cE(v)_{s\smallertext{-}}(1+\gamma_s\Delta C_s)\d K_s = \cE(v)_s\d K_s, \; \textnormal{$\overline{\P}$--a.s.},
		\end{equation*}
		and
		\begin{align*}
			\cE(v)_{s\smallertext{-}}\delta f_s(1+\gamma_s\Delta C_s) 
			&\geq \cE(v)_{s\smallertext{-}}\bigg(\lambda_s\delta Y_s + \widehat\lambda_s\delta Y_{s\smallertext{-}} + \eta^\top_s c_s \delta Z_s + \frac{\d\langle \delta U \ast\tilde\mu^X, \rho_1\ast\tilde\mu^X\rangle^{(\bar{\P})}}{\d C_s}\bigg)(1+\gamma_s\Delta C_s) \\
			&= \cE(v)_{s\smallertext{-}}\bigg(\lambda_s\delta Y_s + \eta^\top_s c_s \delta Z_s + \frac{\d\langle \delta U \ast\tilde\mu^X, \rho_1\ast\tilde\mu^X\rangle^{(\bar{\P})}}{\d C_s}\bigg)(1+\gamma_s\Delta C_s) + \cE(v)_{s\smallertext{-}}\gamma_s\delta Y_{s\smallertext{-}} \; \textnormal{$\overline{\P}$--a.s.}
		\end{align*}		
		Therefore
		\begin{align}\label{eq::inequality_exponential_y}
			 \d \big(\cE(v)\delta Y\big)_s 
			& \leq - \cE(v)_{s\smallertext{-}}\lambda_s \delta Y_s (1+\gamma_s\Delta C_s) \d C_s + \d M^{\bar{\Q}}_s - \cE(v)_s\d K_s = - \cE(v)_s\lambda_s \delta Y_s \d C_s + \d M^{\bar{\Q}}_s - \cE(v)_s\d K_s, \; \textnormal{$\overline{\P}$--a.s.}
		\end{align}
		Here we used $\cE(v) = \cE(v)_{\smallertext{-}}(1+\gamma\Delta C)$ in the last equality, and we denoted by $M^{\bar{\Q}}$ the sum of the $\overline{\Q}$--local martingales appearing in the decomposition of $\Ec(v)\delta Y$. Consider now 
		\[
		w \coloneqq \int_0^{\cdot \land T} \lambda_s \d C_s,
		\]
		which is well-defined and real-valued by \Cref{ass::generator2} since $|\lambda_s| \leq \sqrt{r_s}$. Moreover, $|\lambda_s \Delta C_s| = |\lambda_s|\Delta C_s \leq \sqrt{r_s} \Delta C_s \leq \Phi < 1$ implies $\cE(w) > 0$. Another application of the integration by parts formula, together with \cite[Proposition I.4.49.(a)]{jacod2003limit}, yields
		\begin{align*}
			\d (\cE(w)\cE(v)\delta Y)_s 
			&= \cE(w)_{s\smallertext{-}}\d (\cE(v)\delta Y)_s + (\cE(v)\delta Y)_{s\smallertext{-}}\d \cE(w)_s + \d [\cE(w),\cE(v)\delta Y] \\
			&= \cE(w)_{s\smallertext{-}}\d (\cE(v)\delta Y)_s + (\cE(v)\delta Y)_s\d \cE(w)_s \\
			&= \cE(w)_{s\smallertext{-}}\d (\cE(v)\delta Y)_s + \cE(w)_{s\smallertext{-}}\cE(v)_s \lambda_s \delta Y_s\d C_s \\
			&\leq -\cE(w)_{s\smallertext{-}}\cE(v)_s\lambda_s\delta Y_s \d C_s + \cE(w)_{s\smallertext{-}}\d M^{\bar{\Q}}_s + \cE(w)_{s\smallertext{-}}\cE(v)_s \lambda_s \delta Y_s\d C_s - \cE(w)_{s\smallertext{-}}\cE(v)_s\d K_s \\
			&= \cE(w)_{s\smallertext{-}}\d M^{\bar{\Q}}_s - \cE(w)_{s\smallertext{-}}\cE(v)_s\d K_s, \; \textnormal{$\overline{\P}$--a.s.}
		\end{align*}
		Here we used \eqref{eq::inequality_exponential_y} to obtain the inequality.
		We find
		\begin{equation*}
			\int_{t}^{t^\smalltext{\prime}} \d (\cE(w)\cE(v)\delta Y)_s \leq \int_{t}^{t^\smalltext{\prime}}\cE(w)_{s\smallertext{-}}\d M^{\bar{\Q}}_s - \int_{t}^{t^\prime} \cE(w)_{s\smallertext{-}}\cE(v)_s \d K_s, \; 0 \leq t \leq t^\prime < \infty, \; \textnormal{$\overline{\P}$--a.s.},
		\end{equation*}
		which, together with $\delta Y_{t^\prime} \geq 0$, $\overline{\P}$--a.s., implies
		\begin{align}\label{eq::comparison_ineq_martingale}
		\nonumber	\cE(w)_{t}\cE(v)_{t}\delta Y_{t} 
			&\geq \cE(w)_{t^\smalltext{\prime}}\cE(v)_{t^\smalltext{\prime}}\delta Y_{t^\smalltext{\prime}} - \int_{t}^{t^\smalltext{\prime}}\cE(w)_{s\smallertext{-}}\d M^{\bar{\Q}}_s + \int_{t}^{t^\prime} \cE(w)_{s\smallertext{-}}\cE(v)_s \d K_s \\
			&\geq - \int_{t}^{t^\smalltext{\prime}}\cE(w)_{s\smallertext{-}}\d M^{\bar{\Q}}_s + \int_{t}^{t^\smalltext{\prime}} \cE(w)_{s\smallertext{-}}\cE(v)_s \d K_s, \; 0 \leq t \leq t^\prime < \infty, \; \textnormal{$\overline{\P}$--a.s.}
		\end{align}
		Since $0 < \cE(v) \leq \mathrm{e}^{v}$ (see \cite[Lemma 4.1]{cohen2012existence}) we have that $\cE(v)$ is $\P$--essentially (and thus $\overline{\Q}$--essentially) bounded, by \Cref{ass::intrinsic_2bsde}. Thus,
		\begin{align*}
			\E^{\bar{\Q}}\bigg[\sup_{s \in [0,T]}|\cE(v)_s\cE(w)_s\delta Y_s|\bigg] 
			&\leq \E^{\bar{\P}}\big[|\cE(L)_T|^2\big]^{1/2} \E^{\bar{\P}}\bigg[\sup_{s \in [0,T]}|\cE(v)_s\cE(w)_s\delta Y_s|^2\bigg]^{1/2} \\
			&\leq \E^{\bar{\P}}\big[|\cE(L)_T|^2\big]^{1/2}  \|\cE(v)_T\|_{\L^\smalltext{\infty}(\bar{\P})} \E^{\bar{\P}}\bigg[\sup_{s \in [0,T]}|\cE(w)_s\delta Y_s|^2\bigg]^{1/2} \\
			&\leq \E^{\bar{\P}}\big[|\cE(L)_T|^2\big]^{1/2}  \|\cE(v)_T\|_{\L^\smalltext{\infty}(\bar{\P})} \E^{\bar{\P}}\bigg[\sup_{s \in [0,T]}|\cE(\hat\beta A)^{1/2}_s\delta Y_s|^2\bigg]^{1/2} < \infty,
		\end{align*}
		where the last inequality follows from the fact that $\widetilde{M}^\Phi_1(\hat\beta) < 1$ implies $\hat\beta > 3 > 2+\Phi$, and therefore
		\begin{equation}\label{eq::inequality_w_phi}
			|\cE(w)|^2 = \cE(2w + [w]) = \cE(2w + \Delta w \bcdot w) = \cE((2 + \Delta w)\bcdot w) \leq \cE((2+\Phi)A) \leq \cE(\hat\beta A).
		\end{equation}
		Let $(\tau_n)_{n \in \N}$ be an $(\F^{\bar{\P}}_{\smallertext{+}},\overline{\Q})$--localising sequence such that for each $n \in \N$, the stopped process $\int_0^{\cdot \land \tau_\smalltext{n}} \cE(w)_{s\smallertext{-}} \d M^{\bar{\Q}}_s,$ is a uniformly integrable martingale relative to $(\F^{\bar{\P}}_\smallertext{+},\overline{\Q})$. Fix $0 \leq t \leq t^\prime < \infty$. After taking conditional expectations in \eqref{eq::comparison_ineq_martingale}, we obtain
		\begin{equation}\label{eq::comp_limit_n}
			\cE(w)_{t \land \tau_\smalltext{n}}\cE(v)_{t \land \tau_\smalltext{n}}\delta Y_{t \land \tau_\smalltext{n}} 
			\geq 
			\E^{\bar{\Q}}\bigg[\int_{t \land \tau_\smalltext{n}}^{t^\smalltext{\prime} \land \tau_\smalltext{n}} \cE(w)_{s\smallertext{-}}\cE(v)_s \d K_s\bigg|\cF^{\bar{\P}}_{t\smallertext{+}}\bigg], \; \textnormal{$\bar{\P}$--a.s.}
		\end{equation}
		An application of Fatou's lemma for conditional expectations (see \cite[Theorem 2.(d), page 259]{shiryaev2016probability}) yields
		\begin{align*}
			\cE(w)_{t}\cE(v)_{t}\delta Y_{t} 
			\geq \liminf_{n \rightarrow \infty}\E^{\bar{\Q}}\bigg[\int_{t \land \tau_\smalltext{n}}^{t^\smalltext{\prime} \land \tau_\smalltext{n}} \cE(w)_{s\smallertext{-}}\cE(v)_s \d K_s\bigg|\cF^{\bar{\P}}_{t\smallertext{+}}\bigg] 
			\geq \E^{\bar{\Q}}\bigg[\int_{t}^{t^\smalltext{\prime}} \cE(w)_{s\smallertext{-}}\cE(v)_s \d K_s\bigg|\cF^{\bar{\P}}_{t\smallertext{+}}\bigg], \; \textnormal{$\overline{\P}$--a.s.}
		\end{align*}
		We now use Bayes's formula for conditional expectation, the property from \cite[Equation VI.58.1]{dellacherie1982probabilities}, along with the fact that the $(\F^{\bar{\P}}_\smallertext{+},\overline{\P})$--optional projection of the time-constant process $\cE(L)_T$ is the martingale $\cE(L)$ itself, and obtain
		\begin{align}\label{eq::conluding_comparison}
			\nonumber	\cE(w)_{t}\cE(v)_{t}\delta Y_{t} 
			&\geq \E^{\bar{\Q}}\bigg[\int_{t}^{t^\smalltext{\prime}} \cE(w)_{s\smallertext{-}}\cE(v)_s \d K_s\bigg|\cF^{\bar{\P}}_{t\smallertext{+}}\bigg] \nonumber\\
			&= \frac{1}{\cE(L)_t}\E^{\bar{\P}}\bigg[\int_{t}^{t^\smalltext{\prime}} \cE(L)_{T}\cE(w)_{s\smallertext{-}}\cE(v)_s \d K_s\bigg|\cF^{\bar{\P}}_{t\smallertext{+}}\bigg] \nonumber\\
			&= \frac{1}{\cE(L)_t}\E^{\bar{\P}}\bigg[\int_{t}^{t^\smalltext{\prime}} \cE(L)_{s}\cE(w)_{s\smallertext{-}}\cE(v)_s \d K_s\bigg|\cF^{\bar{\P}}_{t\smallertext{+}}\bigg], \; \text{$\overline{\P}$--a.s.}
		\end{align}
		We then obtain
		\begin{equation}\label{eq::inequality_delta_y_minimality}
			\delta Y_{t} 
			\geq \E^{\bar{\P}}\bigg[\int_{t}^{t^\smalltext{\prime}} \frac{\cE(L)_s \cE(w)_{s\smallertext{-}}\cE(v)_s}{\cE(L)_t\cE(w)_t \cE(v)_t} \d K_s\bigg|\cF^{\bar{\P}}_{t\smallertext{+}}\bigg]
			\geq \E^{\bar{\P}}\bigg[\inf_{s \in (t, t^\smalltext{\prime}]} \bigg\{\frac{\cE(L)_s \cE(v)_s}{\cE(L)_t \cE(v)_t}\bigg\} \bigg(\int_{t}^{t^\smalltext{\prime}} \frac{\cE(w)_{s\smallertext{-}}}{\cE(w)_t} \d K_s\bigg)\bigg|\cF^{\bar{\P}}_{t\smallertext{+}}\bigg], \; \text{$\overline{\P}$--a.s.}
		\end{equation}
		Since, for $0 \leq t < s < \infty$,
		\[
		\frac{\cE(v)_s}{\cE(v)_t} = \cE\bigg(\int_t^\cdot \d v_u\bigg)_s = \cE\bigg(\int_t^\cdot |\d v_u|\bigg)_s \leq \cE\bigg(\int_t^\cdot |\gamma_u|\d C_u\bigg)_s \leq \cE\bigg(\frac{1}{1-\Phi}\int_t^\cdot \sqrt{\mathrm{r}_u}\d C_u\bigg)_s  \leq \exp\bigg(\frac{1}{1-\Phi}\int_0^s \sqrt{\mathrm{r}_u}\d C_u\bigg)
		\]
		and,  using \cite[Lemma 4.4]{cohen2012existence}
		\begin{align*}
			\frac{\cE(v)_t}{\cE(v)_s}  
			= \bigg(\frac{\cE(v)_s}{\cE(v)_t} \bigg)^{-1}
			&= \bigg(\cE\bigg(\int_t^\cdot \d v_r\bigg)_s \bigg)^{-1}
			= \cE\bigg(-\int_t^\cdot \d v_r + \int_t^\cdot \frac{\Delta v_r}{1+\Delta v_r}\d v_r\bigg) \\
			&\leq \cE\bigg(\int_t^\cdot |\d v_r| + \int_t^\cdot \Big|\frac{\Delta v_r}{1+\Delta v_r}\Big||\d v_r|\bigg) \\
			&\leq \cE\bigg(\int_t^\cdot |\d v_r| + \frac{\Phi}{1-\Phi}\int_t^\cdot|\d v_r|\bigg) \\
			&= \cE\bigg( \frac{1}{1-\Phi}\int_t^\cdot|\d v_r|\bigg)
			\leq \exp\bigg(\frac{1}{(1-\Phi)^2}\int_0^s \sqrt{\mathrm{r}_u}\d C_u\bigg)_s.
		\end{align*}
		Thus, by \Cref{ass::intrinsic_2bsde}, there exists $\mathfrak{C}^\prime$ such that
		\begin{equation}\label{eq::boundedness_ev}
			\frac{1}{\mathfrak{C}^\prime} \leq \frac{\cE(v)_s}{\cE(v)_t}, \, \frac{\cE(v)_t}{\cE(v)_s} \leq \mathfrak{C}^\prime, \; 0 \leq t < s < \infty, \; \textnormal{$\fP_0$--q.s.}
		\end{equation}
		This implies, by \cite[Proposition III.3.5]{jacod2003limit} and the fact that $\overline{\Q}$ and $\overline{\P}$ are equivalent, that
		\begin{gather*}
			\mathfrak{C}^\prime \inf_{s \in (t, t^\smalltext{\prime}]} \bigg\{\frac{\cE(L)_s}{\cE(L)_t}\bigg\} 
			\geq \inf_{s \in (t, t^\smalltext{\prime}]} \bigg\{\frac{\cE(L)_s \cE(v)_s}{\cE(L)_t \cE(v)_t}\bigg\} 
			\geq \frac{1}{\mathfrak{C}^\prime} \inf_{s \in (t, t^\smalltext{\prime}]} \bigg\{\frac{\cE(L)_s}{\cE(L)_t}\bigg\} > 0, \; \text{$\overline{\P}$--a.s.},\\
			\mathfrak{C}^\prime \sup_{s \in (t, t^\smalltext{\prime}]} \bigg\{\frac{\cE(L)_t}{\cE(L)_s}\bigg\} \geq \sup_{s \in (t, t^\smalltext{\prime}]} \bigg\{\frac{\cE(L)_t \cE(v)_t}{\cE(L)_s \cE(v)_s}\bigg\} \geq \frac{1}{\mathfrak{C}^\prime} \sup_{s \in (t, t^\smalltext{\prime}]} \bigg\{\frac{\cE(L)_t}{\cE(L)_s}\bigg\} > 0, \; \text{$\overline{\P}$--a.s.}
		\end{gather*}
		Note also that
		\[
		\cE\bigg(\int_t^\cdot \lambda_r \d C_r\bigg)_s = \frac{\cE(w)_s}{\cE(w)_t} \leq \cE(\beta A)^{1/2}_s, \; 0 \leq t < s < \infty,
		\]
		for any $2+\Phi < \beta < \hat\beta$. We then obtain, with \Cref{eq::inverse_stochastic_exponential}, \Cref{rem::inverse_exponential} and \Cref{ass::intrinsic_2bsde}, by changing the constant $\mathfrak{C}^\prime$ if necessary, that
		\[
		\E^{\bar{\P}}\bigg[\sup_{s \in (t, t^{\smalltext{\prime}}]} \big\{\cE(L)_t (\cE(L)_s)^{-1}\big\}  \bigg|\cF^{\bar{\P}}_{t\smallertext{+}}\bigg] \leq 4 \mathrm{e}^{\mathfrak{C}^\prime}, \; \textnormal{$\overline{\P}$--a.s.}
		\]
		This yields
		\begin{align}\label{eq::minimality_condition}
			&\E^{\bar{\P}}\bigg[\int_{t}^{t^{\smalltext{\prime}}} \cE\bigg(\int_t^\cdot \lambda_r\d C_r\bigg)_{s\smallertext{-}} \d K_s\bigg|\cF^{\bar{\P}}_{t\smallertext{+}}\bigg] 
			= \E^{\bar{\P}}\bigg[\int_{t}^{t^{\smalltext{\prime}}} \frac{\cE(w)_{s\smallertext{-}}}{\cE(w)_t} \d K_s\bigg|\cF^{\bar{\P}}_{t\smallertext{+}}\bigg] \nonumber\\
			&= \E^{\bar{\P}}\bigg[\bigg(\inf_{s \in (t, t^{\smalltext{\prime}}]} \bigg\{\frac{\cE(L)_s \cE(v)_s}{\cE(L)_t \cE(v)_t}\bigg\} \bigg)^{1/3} \bigg( \int_{t}^{t^{\smalltext{\prime}}} \frac{\cE(w)_{s\smallertext{-}}}{\cE(w)_t} \d K_s\bigg)^{1/3}\bigg(\inf_{s \in (t, t^{\smalltext{\prime}}]} \bigg\{\frac{\cE(L)_s \cE(v)_s}{\cE(L)_t \cE(v)_t}\bigg\} \bigg)^{-1/3} \bigg( \int_{t}^{t^{\smalltext{\prime}}} \frac{\cE(w)_{s\smallertext{-}}}{\cE(w)_t} \d K_s\bigg)^{2/3}\bigg|\cF^{\bar{\P}}_{t\smallertext{+}}\bigg] \nonumber\\
			&= \Bigg(\E^{\bar{\P}}\bigg[\bigg(\inf_{s \in (t, t^{\smalltext{\prime}}]} \bigg\{\frac{\cE(L)_s \cE(v)_s}{\cE(L)_t \cE(v)_t}\bigg\} \bigg) \bigg( \int_{t}^{t^{\smalltext{\prime}}} \frac{\cE(w)_{s\smallertext{-}}}{\cE(w)_t} \d K_s\bigg)\bigg|\cF^{\bar{\P}}_{t\smallertext{+}}\bigg] \Bigg)^{1/3} \Bigg( \E^{\bar{\P}}\bigg[\bigg(\inf_{s \in (t, t^{\smalltext{\prime}}]} \bigg\{\frac{\cE(L)_s \cE(v)_s}{\cE(L)_t \cE(v)_t}\bigg\} \bigg)^{-1} \bigg|\cF^{\bar{\P}}_{t\smallertext{+}}\bigg]\Bigg)^{1/3} \nonumber\\
			&\quad \times \Bigg(\E^{\bar{\P}}\bigg[ \bigg( \int_{t}^{t^{\smalltext{\prime}}} \frac{\cE(w)_{s\smallertext{-}}}{\cE(w)_t} \d K_s\bigg)^{2}\bigg|\cF^{\bar{\P}}_{t\smallertext{+}}\bigg]\Bigg)^{1/3} \nonumber\\
			&\leq \big(\delta Y_{t}\big)^{1/3} \Bigg( \E^{\bar{\P}}\bigg[\bigg(\inf_{s \in (t, t^{\smalltext{\prime}}]} \bigg\{\frac{\cE(L)_s \cE(v)_s}{\cE(L)_t \cE(v)_t}\bigg\} \bigg)^{-1} \bigg|\cF^{\bar{\P}}_{t\smallertext{+}}\bigg]\Bigg)^{1/3} \Bigg(\E^{\bar{\P}}\bigg[ \bigg( \int_{t}^{t^{\smalltext{\prime}}} \frac{\cE(w)_{s\smallertext{-}}}{\cE(w)_t} \d K_s\bigg)^{2}\bigg|\cF^{\bar{\P}}_{t\smallertext{+}}\bigg]\Bigg)^{1/3} \nonumber\\
			&= \big(\delta Y_{t}\big)^{1/3} \Bigg( \E^{\bar{\P}}\bigg[\sup_{s \in (t, t^{\smalltext{\prime}}]} \bigg\{\frac{\cE(L)_t \cE(v)_t}{\cE(L)_s \cE(v)_s}\bigg\}  \bigg|\cF^{\bar{\P}}_{t\smallertext{+}}\bigg]\Bigg)^{1/3} \Bigg(\E^{\bar{\P}}\bigg[ \bigg( \int_{t}^{t^{\smalltext{\prime}}} \frac{\cE(w)_{s\smallertext{-}}}{\cE(w)_t} \d K_s\bigg)^{2}\bigg|\cF^{\bar{\P}}_{t\smallertext{+}}\bigg]\Bigg)^{1/3} \nonumber\\
			&\leq \big(\delta Y_{t}\big)^{1/3} \mathfrak{C}^\prime \Bigg( \E^{\bar{\P}}\bigg[\sup_{s \in (t, t^{\smalltext{\prime}}]} \big\{\cE(L)_t (\cE(L)_s)^{-1}\big\}  \bigg|\cF^{\bar{\P}}_{t\smallertext{+}}\bigg]\Bigg)^{1/3} \Bigg(\E^{\bar{\P}}\bigg[ \bigg( \int_{t}^{t^{\smalltext{\prime}}} \frac{\cE(w)_{s\smallertext{-}}}{\cE(w)_t} \d K_s\bigg)^{2}\bigg|\cF^{\bar{\P}}_{t\smallertext{+}}\bigg]\Bigg)^{1/3} \nonumber\\
			&\leq \big(\delta Y_{t}\big)^{1/3} \mathfrak{C}^\prime 4^{1/3} \mathrm{e}^{\mathfrak{C}^\prime/3} \Bigg(\E^{\bar{\P}}\bigg[ \bigg( \int_{t}^{t^{\smalltext{\prime}}} \frac{\cE(w)_{s\smallertext{-}}}{\cE(w)_t} \d K_s\bigg)^{2}\bigg|\cF^{\bar{\P}}_{t\smallertext{+}}\bigg]\Bigg)^{1/3} \nonumber\\
			&\leq \big(\delta Y_{t}\big)^{1/3} \mathfrak{C}^\prime 4^{1/3} \mathrm{e}^{\mathfrak{C}^\prime/3} \Bigg(\E^{\bar{\P}}\bigg[ \bigg( \int_{t}^{t^{\smalltext{\prime}}} \cE(\beta A)^{1/2}_s \d K_s\bigg)^{2}\bigg|\cF^{\bar{\P}}_{t\smallertext{+}}\bigg]\Bigg)^{1/3} \nonumber\\
			&\leq \big(\delta Y_{t}\big)^{1/3} \mathfrak{C}^\prime 4^{1/3} \mathrm{e}^{\mathfrak{C}^\prime/3} \Bigg(\E^{\bar{\P}}\bigg[ \bigg( \int_{t}^{T} \cE(\beta A)^{1/2}_s \d K_s\bigg)^{2}\bigg|\cF^{\bar{\P}}_{t\smallertext{+}}\bigg]\Bigg)^{1/3} \nonumber\\
			&\leq \big(\delta Y_{t}\big)^{1/3} \mathfrak{C}^\prime 4^{1/3} \mathrm{e}^{\mathfrak{C}^\prime/3} \Bigg( \underset{\bar{\P} \in \fP_\smalltext{0}(\cF_{\smalltext{t}\tinytext{+}},\P)}{{\esssup}^{\P}} \E^{\bar{\P}}\bigg[ \bigg( \int_{t}^{T} \cE(\beta A)^{1/2}_s \d \widehat{K}^{\bar{\P}}_s\bigg)^{2}\bigg|\cF_{t\smallertext{+}}\bigg]\Bigg)^{1/3},
		\end{align}
		holds $\P$--a.s., since both measures agree on $\cF_{t\smallertext{+}}$. The last term in \eqref{eq::minimality_condition} is $\P$--a.s. finite by \Cref{lem::bound_k_2bsde} and \Cref{ass::intrinsic_2bsde}. It then remains to apply Fatou's lemma for conditional expectations again and let $t^\prime$ tends to infinity, take the essential infimum under $\P$ over $\overline{\P} \in \fP(\cF_{t+},\P)$, and apply \ref{2BSDE::aggregation}. This concludes the proof.
	\end{proof}
	
	\begin{proof}[Proof of \Cref{thm::2BSDE_wellposed_2}]
		Assertion $(i)$ follows from \Cref{thm::2BSDE_wellposed}.$(i)$ and \Cref{prop::minimality}. We turn to $(ii)$ and use notation analogous to that in the proof of \Cref{prop::minimality}. Fix $\P \in \fP_0$, $t \in [0,\infty)$, and then $\overline{\P} \in \fP_0(\cG_{t+},\P)$. By following the arguments leading to \eqref{eq::conluding_comparison}, we obtain $Y_t \geq \cY^{\bar{\P}}_t$, $\overline{\P}$--a.s., and then $\P$--a.s., and therefore
		\[
		Y_t \geq \underset{\bar{\P} \in \fP_\smalltext{0}(\cG_{\smallertext{t}\smalltext{+}},\P)}{{\esssup}^\P} \cY^{\bar{\P}}_t, \; \textnormal{$\P$--a.s.}
		\]
		For the converse inequality, we use $\rho_2$ instead of $\rho_1$ in the arguments that lead to \eqref{eq::inequality_delta_y_minimality}, which reverses the inequalities, and obtain instead
		\begin{align*}
			Y_t - \cY^{\bar{\P}}_t
			\leq \E^{\bar{\P}}\bigg[\int_{t}^{t^\smalltext{\prime}} \frac{\cE(L)_s \cE(w)_{s\smallertext{-}}\cE(v)_s}{\cE(L)_t\cE(w)_t \cE(v)_t} \d K^{\bar{\P}}_s\bigg|\cF^{\bar{\P}}_{t\smallertext{+}}\bigg]
			\leq \E^{\bar{\P}}\bigg[\sup_{s \in (t, t^\smalltext{\prime}]} \bigg\{\frac{\cE(L)_s \cE(v)_s}{\cE(L)_t \cE(v)_t}\bigg\} \bigg(\int_{t}^{t^\smalltext{\prime}} \frac{\cE(w)_{s\smallertext{-}}}{\cE(w)_t} \d K^{\bar{\P}}_s\bigg)\bigg|\cF^{\bar{\P}}_{t\smallertext{+}}\bigg], \; \text{$\overline{\P}$--a.s.}
		\end{align*}
		Then, \eqref{eq::boundedness_ev}, H\"{o}lder's inequality applied twice, \Cref{eq::inverse_stochastic_exponential}, \Cref{rem::inverse_exponential}, and \eqref{eq::inequality_w_phi} for $2+\Phi < \beta < \hat{\beta}$ yield
		\begin{align*}
			Y_t - \cY^{\bar{\P}}_t
			&\leq \mathfrak{C}^\prime \,\E^{\bar{\P}}\bigg[\sup_{s \in (t, t^\smalltext{\prime}]} \bigg\{\frac{\cE(L)_s}{\cE(L)_t}\bigg\} \bigg(\int_{t}^{t^\smalltext{\prime}} \frac{\cE(w)_{s\smallertext{-}}}{\cE(w)_t} \d K^{\bar{\P}}_s\bigg)\bigg|\cF^{\bar{\P}}_{t\smallertext{+}}\bigg] \\
			&\leq \mathfrak{C}^\prime \,\E^{\bar{\P}}\bigg[\sup_{s \in (t, t^\smalltext{\prime}]} \bigg\{\frac{\cE(L)_s}{\cE(L)_t}\bigg\}^3 \bigg|\cF^{\bar{\P}}_{t\smallertext{+}}\bigg]^{1/3} \, \E^{\bar{\P}}\bigg[\bigg(\int_{t}^{t^\smalltext{\prime}} \frac{\cE(w)_{s\smallertext{-}}}{\cE(w)_t} \d K^{\bar{\P}}_s\bigg)^{3/2}\bigg|\cF^{\bar{\P}}_{t\smallertext{+}}\bigg]^{2/3} \\
			&\leq \mathfrak{C}^\prime \, \E^{\bar{\P}}\bigg[\sup_{s \in (t, t^\smalltext{\prime}]} \bigg\{\frac{\cE(L)_s}{\cE(L)_t}\bigg\}^3 \bigg|\cF^{\bar{\P}}_{t\smallertext{+}}\bigg]^{1/3} \, \E^{\bar{\P}}\bigg[\bigg(\int_{t}^{t^\smalltext{\prime}} \frac{\cE(w)_{s\smallertext{-}}}{\cE(w)_t} \d K^{\bar{\P}}_s\bigg)^{2}\bigg|\cF^{\bar{\P}}_{t\smallertext{+}}\bigg]^{1/3} \, \E^{\bar{\P}}\bigg[\int_{t}^{t^\smalltext{\prime}} \frac{\cE(w)_{s\smallertext{-}}}{\cE(w)_t} \d K^{\bar{\P}}_s\bigg|\cF^{\bar{\P}}_{t\smallertext{+}}\bigg]^{1/3} \\
			&\leq \mathfrak{C}^\prime  \, \E^{\bar{\P}}\bigg[\sup_{s \in (t, t^\smalltext{\prime}]} \bigg\{\frac{\cE(L)_s}{\cE(L)_t}\bigg\}^3 \bigg|\cF^{\bar{\P}}_{t\smallertext{+}}\bigg]^{1/3} \, \E^{\bar{\P}}\bigg[\bigg(\int_{t}^{t^\smalltext{\prime}} \cE(\beta A)^{1/2}_s \d K^{\bar{\P}}_s\bigg)^{2}\bigg|\cF^{\bar{\P}}_{t\smallertext{+}}\bigg]^{1/3} \, \E^{\bar{\P}}\bigg[\int_{t}^{T} \frac{\cE(w)_{s\smallertext{-}}}{\cE(w)_t} \d K^{\bar{\P}}_s\bigg|\cF^{\bar{\P}}_{t\smallertext{+}}\bigg]^{1/3}, \; \textnormal{$\overline{\P}$--a.s.},
		\end{align*}
		which also outside a $\P$--null set. An application of Doob's $\L^p$-inequality together with \cite[Th\'eor\`eme 3.c)]{lepingle1978sur} implies that there exists a constant $\mathfrak{C}^\prime \in (0,\infty)$ such that, $\overline{\P}$--a.s. and then also $\P$--a.s., 
		\[
			\E^{\bar{\P}}\bigg[\sup_{s \in (t, t^\smalltext{\prime}]} \bigg\{\frac{\cE(L)_s}{\cE(L)_t}\bigg\}^3 \bigg|\cF^{\bar{\P}}_{t\smallertext{+}}\bigg] \leq \mathfrak{C}^{\prime\prime}.
		\]
		It now follows from \Cref{lem::bound_k_2bsde} and from \Cref{ass::intrinsic_2bsde} that there exists a finite-valued, $\cF_{t\smallertext{+}}$-measurable random variable $\mathfrak{C}(\P)$, possibly depending on $\P$, such that
		\[
		Y_t - \cY^{\bar{\P}}_t 
		\leq \mathfrak{C}^\prime \mathfrak{C}^{\prime\prime} \mathfrak{C}(\P) \, \E^{\bar{\P}}\bigg[\int_{t}^{T} \frac{\cE(w)_{s\smallertext{-}}}{\cE(w)_t} \d K^{\bar{\P}}_s\bigg|\cF^{\bar{\P}}_{t\smallertext{+}}\bigg]^{1/3}, \; \textnormal{$\P$--a.s.}
		\]
		Since
		\[
		\frac{\cE(w)_{s\smallertext{-}}}{\cE(w)_t} = \cE\bigg(\int_t^\cdot \lambda^\P_r \d C_r\bigg)_{s\smallertext{-}}, \quad 0 \leq t < s < \infty,
		\]
		it remains to take the essential infimum on both sides over $\bar{\P} \in \fP_0(\cG_{t+},\P)$ and then use \ref{2BSDE::minimality_condition_2}. This yields
		\[
		Y_t \leq \underset{\bar{\P} \in \fP_\smalltext{0}(\cG_{\smallertext{t}\smalltext{+}},\P)}{{\esssup}^\P} \cY^{\bar{\P}}_t, \; \textnormal{$\P$--a.s.}
		\]
		We now see that the family $(Y,Z,(U^\P,N^\P,K^\P)_{\P \in \fP_0})$ satisfies the same properties described in \Cref{thm::2BSDE_wellposed}.$(ii)$, from which the stated uniqueness immediately follows. This concludes the proof.
	\end{proof}
	
	\appendix

	\section{Proofs of Section \ref{sec::preliminaries}}\label{sec::proofs_preliminaries}
	
	In this part, we provide proofs of the results mentioned in \Cref{sec::preliminaries}.
	
	\begin{proof}[Proof of \Cref{lem::martingale_decomposition}]
		By \cite[Lemma I.4.18]{jacod2003limit} and \cite[Lemma 4.3.5]{weizsaecker1990stochastic}, there exists a right-continuous, $(\G_\smallertext{+},\P)$--local martingale $M^c$ with $\P$--a.s. continuous paths such that $M - M_0 - M^c$ is a $(\G_\smallertext{+},\P)$--purely discontinuous local martingale. By the arguments used in \Cref{rem::G_local_martingale} or by \cite[Appendix I, Lemma 7.(a), page 399]{dellacherie1982probabilities}, \cite[Theorem IV.78, page 133]{dellacherie1978probabilities} and \cite[Lemma 4.3.5]{weizsaecker1990stochastic}, the process $M^c$ is a $(\G,\P)$--local martingale with $\P$--a.s. continuous paths. We then use again the arguments in \Cref{rem::G_local_martingale} to deduce that with $M^d \coloneqq M - M_0 - M^c$, the pair $(M^c,M^d)$ satisfies the conditions described in the statement of the lemma. Uniqueness can be argued as in the proof of \cite[Lemma I.4.18]{jacod2003limit}. For the remaining claim, we refer to \cite[Proposition I.4.27]{jacod2003limit} for details.
	\end{proof}

	\begin{proof}[Proof of \Cref{lem::existence_predictable_compensator_mu}]
		Instead of repeating the whole argument in the proof of \cite[Theorem II.1.8]{jacod2003limit}, we merely want to point out that one just has to replace in that proof the process $V$ by the one constructed in \cite[Lemma 6.5]{neufeld2014measurability} and then the process $(V\ast\mu)^p$ by the dual predictable projection (predictable compensator) constructed in \cite[Appendix I, Theorem 12, page 405]{dellacherie1982probabilities}.
		
		\medskip
		As in the proof of \cite[Proposition II.1.17]{jacod2003limit}, we construct a `good' version of $\nu$ as follows. First, the set $D \coloneqq \{\Delta X \neq 0\}$ is $\G$-optional and has countable $\omega$-sections. By \cite[Theorem B, page xiii, and Remark E, page xvii]{dellacherie1982probabilities} and then \cite[Theorem IV.88.(a), page 139]{dellacherie1978probabilities}, the $\G$-optional set $D$ is the countable union of disjoint graphs of $\G$--stopping times $(\tau_n)_{n \in \N}$. We now follow exactly the steps in the proof of \cite[Proposition II.1.17.b)]{jacod2003limit} together with an application of \cite[Theorem 88.(b), page 139]{dellacherie1978probabilities} to find a `good' version of $\nu$ that additionally satisfies $\nu(\omega;\{t\}\times \R^d) \leq 1$ identically and such that the $\G$-predictable set  $J \coloneqq \big\{(\omega,t) \in \Omega \times [0,\infty) : \nu(\omega ; \{t\} \times \R^d) > 0 \}$ is equal to a countable union of disjoint graphs of predictable $\G$--stopping times and is the $\G$-predictable support of $D$.
	\end{proof}
	
	\begin{proof}[Proof of \Cref{lem::absolute_continuity_nu}]
		The existence of the $(\G_\smallertext{+},\P)$--localising sequence $(\tau_n)_{n \in \N}$ satisfying $\E^\P[(|x|^2 \land 1)\ast\nu_{\tau_\smalltext{n}}] < \infty$ for each $n \in \N$ follows from \cite[II.2.13, page 77]{jacod2003limit}. We then choose corresponding $\G$--stopping times sequence by applying {\rm\cite[Th\'eor\`eme 3]{dellacherie1978quelques}} and then {\rm\cite[Theorem IV.78, page 133]{dellacherie1978probabilities}}. Note that this implies that the Lebesgue--Stieltjes measure induced by $(|x|^2 \land 1)\ast\nu$ on $\cB([0,\infty))$ is $\sigma$-finite up to a $\P$--null set. 
		
		\medskip
		Next, that $(ii)$ implies $(i)$ is clear. We thus suppose that $(i)$ holds, and with \eqref{eq::representation_nu}, we write
		\begin{equation*}
			\nu(\d s, \d x) = K_s(\d x)\d A_s, \; \text{$\P$--a.s.},
		\end{equation*}
		and then
		\begin{equation*}
			(|x|^2 \land 1)\ast\nu = \int_0^\cdot\int_{\R^\smalltext{d}}(|x|^2 \land 1)\nu(\d s,\d x) = \int_0^\cdot\int_{\R^\smalltext{d}} (|x|^2 \land 1) K_s(\d x)\d A_s, \; \text{$\P$--a.s.}
		\end{equation*}
		Let us denote by $\d A^\textnormal{ac}$ and $\d A^\textnormal{si}$ the absolutely continuous and singular components, respectively, of the measure $\d A$ relative to $\d C$ which are defined for $\P$--a.e. $\omega \in \Omega$. We have
		\begin{equation*}
			\nu(\d s, \d x) = K_s(\d x) \frac{\d A^\textnormal{ac}_s}{\d C_s} \d C_s + K_s(\d x) \d A^\textnormal{si}_s, \; \textnormal{$\P$--a.s.},
		\end{equation*}
		and then
		\begin{equation*}
			(|x|^2 \land 1)\ast\nu = \int_0^\cdot\int_{\R^\smalltext{d}}(|x|^2 \land 1)K_s(\d x) \frac{\d A^\textnormal{ac}_s}{\d C_s} \d C_s + \int_0^\cdot\int_{\R^\smalltext{d}}(|x|^2 \land 1)K_s(\d x)  \d A^\textnormal{si}_s, \; \text{$\P$--a.s.}
		\end{equation*}
		Since $(i)$ holds, $(|x|^2 \land 1)\ast\nu$ is $\P$--a.s. $\sigma$-finite, and $\d A^\textnormal{si}$ is $\P$--a.s. singular to $\d C$, the integral containing $A^\textnormal{si}$ has to vanish, $\P$--a.s., that is,
		\begin{equation}\label{eq::integral_decomposition_singular}
			\int_0^\infty\int_{\R^\smalltext{d}}(|x|^2 \land 1)K_s(\d x)  \d A^\textnormal{si}_s = 0, \; \text{$\P$--a.s.}
		\end{equation}
		From
		\begin{equation*}
			\int_0^\infty\int_{\R^\smalltext{d}} \1_{\{(|x|^2 \land 1) = 0\}}K_s(\d x)\d A^\textnormal{si}_s \leq \int_0^\infty\int_{\R^\smalltext{d}} \1_{\{(|x|^2 \land 1) = 0\}}\nu(\d s,\d x) = \int_0^\infty\int_{\R^\smalltext{d}} \1_{\{x = 0\}}\nu(\d s,\d x), \; \text{$\P$--a.s.},
		\end{equation*}
		and $\E^\P[\1_{\{x = 0\}}\ast\nu_\infty] = \E^\P[\1_{\{x = 0\}}\ast\mu^X_\infty] = 0$, we deduce that
		\begin{equation*}
			(|x|^2 \land 1) > 0, \; \text{$K_s(\d x)\d A^\textnormal{si}_s$--a.e.}, \; \text{$\P$--a.s.}
		\end{equation*}
		This, however, together with \eqref{eq::integral_decomposition_singular}, implies $K_s(\d x) \d A^\textnormal{si}_s = 0$, $\P$--a.s., and we therefore must have
		\begin{equation*}
			\nu(\d s, \d x) = K_s(\d x) \frac{\d A^\textnormal{ac}_s}{\d C_s} \d C_s, \; \text{$\P$--a.s.}
		\end{equation*}
		Hence $(iii)$ holds. That $(iii)$ implies $(i)$ is immediate. Lastly, that $(iii)$ implies $(ii)$ follows from defining
		\begin{equation*}
				\mathsf{K}_{\omega,t}(\d x) \coloneqq K_{\omega,t}(\d x) \mathsf{a}_t(\omega),
			\end{equation*}
		where
		\begin{equation*}
				\mathsf{a}_t \coloneqq \hat{\mathsf{a}}_t \1_{[0,\infty)}(\hat{\mathsf{a}}_t), \; \hat{\mathsf{a}}_t \coloneqq \limsup_{n \rightarrow \infty} \frac{A_t - A_{(t-1/n)\lor 0}}{C_t - C_{(t-1/n)\lor 0}}
			\end{equation*}
		is $\cP(\G)$-predictable. Then $\mathsf{a} = \d A^\textnormal{ac}/\d C$, $\P$--a.s., by \cite[Section X.4, page 159]{doob1994measure}, which yields $(ii)$ and completes the proof.
	\end{proof}
	
	\begin{proof}[Proof of \Cref{lem::uniqueness_of_kernel}]
		Let $(A,B) \in \cF \times \overline{\cF}$. Then
		\[
		\int_A \mu(\d \omega) K(\omega,B) = \mu\otimes K (A\times B) = \mu\otimes K^\prime (A\times B) = \int_A \mu(\d\omega) K^\prime(\omega,B).
		\]
		So that $\sigma$-finiteness of $\mu$ implies $K(\cdot,B) = K^\prime(\cdot,B)$, $\mu$--a.e.; alternatively, this follows from the uniqueness in the Radon--Nikod\'ym theorem or from \cite[Satz IV.4.5]{elstrodt2018mass}. Then \cite[Corollary 1.6.4]{cohn2013measure} (or \cite[Korollar II.5.7]{elstrodt2018mass}) and the $\mu$--a.e. $\sigma$-finiteness of the kernels, together with the separability of $\overline{\cF}$, then yields
		\[
		K(\omega,B) = K^\prime(\omega,B), \; B \in \overline{\cF}, \; \textnormal{$\mu$--a.e. $\omega \in \Omega$}.
		\]
		This completes the proof.
	\end{proof}

	\begin{proof}[Proof of \Cref{prop::good_version_stochastic_integral}]
		If $Z = (Z_t)_{t \in [0,\infty)}$ is $\G^\P_\smallertext{+}$-predictable, then there exists a $\G$-predictable process $Z^\prime =(Z^\prime_t)_{t \in [0,\infty)}$ such that $Z^\prime$ and $Z$ are $\P$-indistinguishable. This follows by applying \cite[Remark IV.74, Theorem IV.78]{dellacherie1978probabilities} together with a monotone class argument. An analogous reasoning also yields a $\widetilde{\cP}(\G)$-measurable $U^\prime$ such that $U_t(\omega;x) = U^\prime_t(\omega;x)$ for all $(t,x) \in [0,\infty) \times \R^d$, for $\P$--a.e. $\omega \in \Omega$. It then follows from the characterising properties of the stochastic integrals that $(Z\bcdot M)^{(\G^\P_\smallertext{+},\P)} = (Z^\prime\bcdot M)^{(\G_\smallertext{+},\P)}$ and $(U\ast\tilde\mu^X)^{(\G^\P_\smalltext{+},\P)} = (U^\prime\ast\tilde\mu^X)^{(\G_\smalltext{+},\P)}$ up to $\P$-indistinguishability. We thus suppose, without loss of generality, that $Z$ is $\G$-predictable and $U$ is $\widetilde{\cP}(\G)$-measurable.

		\medskip
		We turn to the stochastic integral of $M$. By \cite[Theorem III.6.4.a)]{jacod2003limit} and \cite[Theorem 4.3.3]{weizsaecker1990stochastic}, we can suppose without loss of generality that $M$ is one-dimensional and $Z \in \H^2_\text{loc}(M;\G,\P)$. Let $(\tau_n)_{n \in \N}$ be a localising sequence of $\G_\smallertext{+}$--stopping times such that $M^{\tau_\smalltext{n}}$ is a square-integrable $(\G_\smallertext{+},\P)$-martingale and $Z \in \H^2(M^{\tau_\smalltext{n}};\G,\P)$ for each $n \in \N$. Then $(Z\bcdot X)^{\tau_\smalltext{n}} = Z\bcdot (M^{\tau_\smalltext{n}})$, up to $\P$-indistinguishability, and thus the sequence $Z \bcdot (M^{\tau_\smalltext{n}})$ converges uniformly on compacts in $\P$-probability to $Z \bcdot M$. Since $M^{\tau_\smalltext{n}}$ is $\G$-adapted by \cite[Proposition 2.3.11.(b)]{weizsaecker1990stochastic}, another application of \cite[Theorem 4.3.3]{weizsaecker1990stochastic} thus implies that it is enough to consider the case where $M$ is $\G$-adapted and a square-integrable $(\G_\smallertext{+},\P)$-martingale and $Z \in \H^2(M;\G,\P)$. In this case, we can find by the proof of \cite[Theorem I.4.40]{jacod2003limit}, a sequence of elementary $\G_\smallertext{+}$-predictable processes $(Z^n)_{n \in \N}$ in the sense of \cite[Definition 4.4.1, Proposition 4.4.2.(b)]{weizsaecker1990stochastic} such that
		\begin{equation*}
			\E^\P\bigg[\sup_{t \in [0,\infty)}|Z^n\bcdot M_t - Z\bcdot M_t|^2\bigg]
			\leq 4 \E^\P\bigg[\int_0^\infty (Z^n_u-Z_u)^2\d \langle M \rangle_u\bigg] \longrightarrow 0, 
		\end{equation*}
		as $n$ tends to infinity. Here the inequality follows from Doob's martingale inequality. Since $Z^n\bcdot M$ is right-continuous and $\G$-adapted, it follows from Markov's inequality and from \cite[Theorem 4.3.3]{weizsaecker1990stochastic}, that there is a version of $Z\bcdot M$ that is right-continuous and $\G$-adapted.
		
		\medskip
		We now turn to the construction of $U \ast\tilde\mu^X$. We denote by $\nu$ the `good version' of the compensator constructed in \Cref{lem::existence_predictable_compensator_mu}, and we let $D \coloneqq \{\Delta X \neq 0\}$ and $J \coloneqq \big\{(\omega,t) \in \Omega \times [0,\infty) : \nu(\omega ; \{t\} \times \R^d) > 0 \}$. As noted in the proof of \Cref{lem::existence_predictable_compensator_mu}, we can write $D$ (resp. $J$) as the countable union of disjoint graphs of $\G$--stopping times (resp. predictable $\G$--stopping times.)  Let $\widetilde U_t(\omega) \coloneqq U(\omega,t,\Delta X_t(\omega))\1_D(\omega,t) - \int_{\R^\smalltext{m}}U(\omega,t,x)\nu(\omega; \{t\} \times \d x)$.	 Then $\widetilde U$ is $\G$-optional and satisfies $D^\prime \coloneqq \{\widetilde U\neq0\} \subseteq D \cup J$. Hence, by \cite[Theorem 88.(a), page 139]{dellacherie1978probabilities}, the set $D^\prime$ is the countable union of disjoint graphs of $\G$--stopping times. Let 
		\begin{equation*}
			S(\widetilde U^2) \coloneqq \sum_{0 < s \leq \cdot} (\widetilde U_s)^2.
		\end{equation*}
		Since $U \in \H^2(\mu^X;\G,\P)$, we have $\E^\P[S(\widetilde U^2)_\infty] < \infty$ (see the proof of \cite[Theorem II.1.33.a)]{jacod2003limit}). We now follow the proof of \cite[Theorem I.4.56.a)]{jacod2003limit} rather closely to construct a purely discontinuous, square-integrable $(\G_\smallertext{+},\P)$-martingale $M$ satisfying $\Delta M = \widetilde U$ up to $\P$-indistinguishability. First, let us denote by $J^\prime$ the predictable support of $D^\prime$. By the proof of \cite[Proposition I.2.34]{jacod2003limit} and an application of \cite[Theorem 88.(b), page 139]{dellacherie1978probabilities}, we can choose a $\P$-version of $J^\prime$ such that it is exactly the countable union of disjoint graphs of predictable $\G$--stopping times $(T_n)_{n \in \N}$. Then by another application of \cite[Theorem 88.(a), page 139]{dellacherie1978probabilities}, the set $D^\prime \setminus J^\prime = D^\prime \setminus (\cup_{n\in\N} \llbracket T_n\rrbracket)$ is itself equal to a countable union of disjoint graphs of $\G$--stopping times $(S_n)_{n \in \N}$. We thus have that $D^\prime \subseteq (\bigcup_{n \in \N} \llbracket S_n\rrbracket)\cup(\bigcup_{n \in \N} \llbracket T_n\rrbracket)$ and the graphs of $\{S_n: n \in \N\}\cup\{T_n : n \in \N\}$ are pairwise disjoint. Since $J^\prime$ is the $\G$-predictable support of $D^\prime$ it follows that the $\G$-predictable support of $D^\prime \setminus J^\prime$ is $\P$-evanescent. This in turn implies that each $S_n$ is totally $\P$-inaccessible (see \cite[Remark I.2.33]{jacod2003limit}). Now let $A^n \coloneqq \widetilde U_{S_\smalltext{n}}\1_{\llbracket S_\smalltext{n},\infty \rrparenthesis}$. Since $A^n$ is right-continuous, $\G$-adapted, and of integrable variation due to $\E^\P[S(\widetilde U^2)_\infty] < \infty$, the compensator $A^{n, p}$ of $A^n$ can be chosen to be right-continuous and $\G$-predictable by \cite[Theorem 6.6.1]{weizsaecker1990stochastic}. Let $M^n \coloneqq A^n - A^{n, p}$, $N^n \coloneqq \widetilde U_{T_\smalltext{n}}\1_{\llbracket T_\smalltext{n},\infty\rrparenthesis}$ and $Y^n \coloneqq \sum_{m=1}^n(M^m + N^m)$. The proof of \cite[Lemma I.4.51]{jacod2003limit} then shows that $(Y^n)_{n \in \N}$ converges in the space of square-integrable $(\G_\smallertext{+},\P)$-martingales to some purely discontinuous, square-integrable $(\G_\smallertext{+},\P)$-martingale $Y$ that satisfies $\Delta Y = \widetilde U$ up to $\P$-evanescence. We then let  $U \ast\tilde\mu \coloneqq Y$. Since
		\begin{equation*}
			\E^\P\bigg[\sup_{t \in [0,\infty)} |Y^n_t - Y_t|^2\bigg] \underset{n\to\infty}{\longrightarrow} 0,
		\end{equation*}
		and since each $Y^n$ is right-continuous and $\G$-adapted, an application of Markov's inequality together with \cite[Theorem 4.3.3]{weizsaecker1990stochastic} implies that the limit $Y = U \ast\tilde\mu$ can be chosen to be right-continuous and $\G$-adapted. This completes the proof.
	\end{proof}
	
		\begin{proof}[Proof of \Cref{lem::conditioning_martingale2}]
			We follow the arguments in the proof of \cite[Lemma 3.3]{neufeld2016nonlinear}. It follows from Galmarino's test (see \cite[Theorem IV.101.(b)]{dellacherie1978probabilities}) that $M^{\tau,\omega}_{\tau\smallertext{+}\smallertext{\cdot}}$ is $\F_\smallertext{+}$-adapted for $\omega \in \Omega$. The square-integrability follows from
			\begin{equation*}
				\E^{\P^{\smalltext{\tau}\smalltext{,}\smalltext{\omega}}}\bigg[\sup_{t \in [0,\infty)} |M^{\tau,\omega}_{\tau\smallertext{+}t}|^2 \bigg]
				= \E^{\P^{\smalltext{\tau}\smalltext{,}\smalltext{\omega}}}\bigg[\bigg(\sup_{t \in [0,\infty)} |M_{\tau\smallertext{+}t}|^2 \bigg)^{\tau,\omega}\bigg]
				= \E^{\P}\bigg[\sup_{t \in [0,\infty)} |M_{\tau\smallertext{+}t}|^2\bigg|\cF_\tau\bigg](\omega) < \infty, \; \textnormal{$\P$--a.e. $\omega \in \Omega$.}
			\end{equation*}
			We turn to the martingale property.
			Fix $0 \leq s < s +\varepsilon < t < \infty$, a bounded, $\cF_{s\smallertext{+}\varepsilon}$-measurable function $g$, and define $\tilde{g}(\omega)\coloneqq g(\omega_{\tau(\omega)\smallertext{+}\smallertext{\cdot}}-\omega_{\tau(\omega)})$. Then $\tilde{g}^{\tau,\omega} = g$ and $\tilde{g}$ is $\cF_{\tau\smallertext{+}s\smallertext{+}\varepsilon}$-measurable. By the optional sampling theorem under $\P$ (see \cite[Corollary 3.2.8]{weizsaecker1990stochastic}), we have
			\begin{equation*}
				\E^{\P^{\smalltext{\tau}\smalltext{,}\smalltext{\omega}}}\big[ \big(M^{\tau,\omega}_{\tau\smallertext{+}t}-M^{\tau,\omega}_{\tau\smallertext{+}s\smallertext{+}\varepsilon} \big) g\big] 
				= \E^{\P}\big[ \big(M_{\tau\smallertext{+}t}-M_{\tau\smallertext{+}s\smallertext{+}\varepsilon} \big) \tilde{g} \big| \cF_{\tau\smallertext{+}}\big](\omega)
				= \E^{\P}\Big[ \E^\P\big[ \big(M_{\tau\smallertext{+}t}-M_{\tau\smallertext{+}s\smallertext{+}\varepsilon} \big) \big| \cF_{(\tau\smallertext{+}s\smalltext{+}\varepsilon)\smallertext{+}} \big] \tilde{g} \Big| \cF_{\tau\smallertext{+}}\Big](\omega) = 0,
			\end{equation*}
			for $\P$--a.e. $\omega \in \Omega$. A functional monotone class argument, together with the separability of $\cF_{s\smallertext{+}\varepsilon}$, then implies that
			\begin{equation*}
				\E^{\P^{\smalltext{\tau}\smalltext{,}\smalltext{\omega}}}\big[ \big(M^{\tau,\omega}_{\tau\smallertext{+}t}-M^{\tau,\omega}_{\tau\smallertext{+}s\smallertext{+}\varepsilon} \big) g\big]=0,
			\end{equation*}
			holds for all $\cF_{s\smallertext{+}\varepsilon}$-measurable and bounded functions $g$ on the complement of a $\P$--null set. This implies that
			\begin{equation*}
				\E^{\P^{\smalltext{\tau}\smalltext{,}\smalltext{\omega}}}\big[ M^{\tau,\omega}_{\tau\smallertext{+}t}\big|\cF_{s\smallertext{+}\varepsilon}\big] = \E^{\P^{\smalltext{\tau}\smalltext{,}\smalltext{\omega}}}\big[M^{\tau,\omega}_{\tau\smallertext{+}s\smallertext{+}\varepsilon}\big|\cF_{s\smallertext{+}\varepsilon}\big], \; \textnormal{$\P^{\tau,\omega}$--a.s.}, \; \textnormal{$\P$--a.e. $\omega \in \Omega$}.
			\end{equation*}
			Conditioning with respect to $\cF_{s\smallertext{+}}$, and by right-continuity and $\F_\smallertext{+}$-adaptedness of $M^{\tau,\omega}_{\tau\smallertext{+}\smallertext{\cdot}}$, we find by letting $\varepsilon$ tend to zero along rationals, that
			\begin{equation*}
				\E^{\P^{\smalltext{\tau}\smalltext{,}\smalltext{\omega}}}\big[ M^{\tau,\omega}_{\tau\smallertext{+}t}\big|\cF_{s\smallertext{+}}\big] = \E^{\P^{\smalltext{\tau}\smalltext{,}\smalltext{\omega}}}\big[ M^{\tau,\omega}_{\tau\smallertext{+}s\smallertext{+}\varepsilon}\big|\cF_{s\smallertext{+}}\big] 
				\xrightarrow{\varepsilon\downarrow\downarrow 0} \E^{\P^{\smalltext{\tau}\smalltext{,}\smalltext{\omega}}}\big[M^{\tau,\omega}_{\tau\smallertext{+}s}\big|\cF_{s\smallertext{+}}\big] = M^{\tau,\omega}_{\tau\smallertext{+}s}, \; \textnormal{$\P^{\tau,\omega}$--a.s.}, \; \textnormal{$\P$--a.e. $\omega \in \Omega$.}
			\end{equation*}
			Thus 
			\begin{equation*}
				\E^{\P^{\smalltext{\tau}\smalltext{,}\smalltext{\omega}}}\big[ M^{\tau,\omega}_{\tau\smallertext{+}t}\big|\cF_{s\smallertext{+}}\big] = M^{\tau,\omega}_{\tau\smallertext{+}s}, \; \textnormal{$\P^{\tau,\omega}$--a.s.,}\; \textnormal{$\P$--a.e. $\omega \in \Omega$.}
			\end{equation*}
			The above martingale property can be extended to all rational times $0 \leq s < t < \infty$ on the complement of a $\P$--null set, and by an application of the backward martingale convergence theorem \cite[Theorem V.33]{dellacherie1982probabilities} together with the right-continuity of $M^{\tau,\omega}_{\tau\smallertext{+}\smallertext{\cdot}}$ the martingale property then also holds on  $[0,\infty)$, for $\P$--a.e. $\omega \in \Omega$.
			
			\medskip
			The result for the filtration $\F$ is now immediate: using the backward martingale convergence theorem, it follows that $M$ is an $(\F_\smallertext{+}, \P)$--square-integrable martingale, and Galmarino's test (see \cite[Theorem IV.100(b)]{dellacherie1978probabilities}) implies that $M^{\tau,\omega}_{\tau\smallertext{+}\smallertext{\cdot}}$ is adapted to both $\F$ and $\F_\smallertext{+}$ for $\omega \in \Omega$ (see \cite[Theorems IV.100(b) and IV.101(b)]{dellacherie1978probabilities}). The preceding considerations then imply that $M^{\tau,\omega}_{\tau\smallertext{+}\cdot}$ is an $(\F_\smallertext{+}, \P)$--square-integrable martingale for $\mathbb{P}$--a.e. $\omega \in \Omega$. The $\F$--adaptedness of $M^{\tau,\omega}_{\tau\smallertext{+}\cdot}$ then implies that this also holds with respect to $\F$. This completes the proof.
		\end{proof}
	
	\begin{proof}[Proof of \Cref{lem::measurability_characteristics}]
		We closely follow the arguments in the proof of \cite[Lemma 7.1]{neufeld2014measurability}, with only minor adjustments at specific points. Note that the concatenation map $\Omega \times \Omega \ni (\omega, \tilde{\omega}) \longmapsto \omega \otimes_\tau \tilde{\omega} \in \Omega$ can be viewed as the composition of the Borel-measurable maps $\Omega \times \Omega \ni (\omega, \tilde{\omega}) \longmapsto (\omega, \tilde{\omega}, \tau(\omega)) \in \Omega \times \Omega \times [0, \infty)$ and $\Omega \times \Omega \times [0, \infty) \ni (\omega, \tilde{\omega}, s) \longmapsto \omega \otimes_s \tilde{\omega} \in \Omega.$ The former map is clearly Borel-measurable, and the Borel-measurability of the latter follows from \cite[Theorem IV.96(d)]{dellacherie1978probabilities}. This then yields the Borel-measurability of
		\[
		\Omega \times \Omega \times [0,\infty) \ni (\omega,\tilde\omega,t) \longmapsto C^{\tau,\omega}_{\tau+t}(\tilde\omega) \in \R,
		\]
		since $C^{\tau,\omega}_{\tau+t}(\tilde\omega) = C_{\tau(\omega)+t}(\omega\otimes_\tau\tilde\omega)$.
		
		\medskip
		We turn to the Borel-measurability of $\widehat{\Omega}^C_\tau \subseteq \Omega\times\fP_\textnormal{sem}$. Let $(\mathsf{B}^\P,\mathsf{C},\nu^\P)$ be the versions of the $(\F,\P)$-characteristics of $X$ for $\P \in \fP_\textnormal{sem}$ which are measurable in the probability parameter $\P$ and constructed in \cite[Theorem 2.5]{neufeld2014measurability}.	Let
		\[
		R^\P \coloneqq \sum_{i = 1}^d \textnormal{Var}(\mathsf{B}^{\P,i}) + \sum_{i,j = 1}^d \textnormal{Var}(\mathsf{C}^{i,j}) + (|x|^2\land1)\ast\nu^\P,
		\]
		where
		\[
		\textnormal{Var}(f)_t \coloneqq \lim_{n \rightarrow \infty}\sum_{k = 1}^{2^n} |f_{kt/2^n} - f_{(k-1)t/2^n}|
		\]
		for any $f: [0,\infty) \longrightarrow \R$; in case $f$ is right-continuous, $\textnormal{Var}(f)_t$ exactly corresponds to the total variation of $f$ on $[0,t]$. It follows from \cite[Theorem 2.5]{neufeld2014measurability} that $\fP_\textnormal{sem}\times\Omega\times[0,\infty)\ni (\P,\omega,t) \longmapsto R^\P_t(\omega) \in [0,\infty]$ is Borel-measurable and that $R^\P$ is, $\P$--a.s., finite-valued and right-continuous. Moreover, $\mathsf{B}^\P$, $\mathsf{C}$ and $\mathsf{A}^\P$ are $\P$--a.s. absolutely continuous (component-wise) relative to $R^\P$. Define (with as usual for us $0/0 = 0$ and $\infty - \infty = -\infty$)
		\[
		\varphi^{\omega,\P,n}_t(\tilde\omega) \coloneqq \sum_{k = 0}^\infty \frac{\big(R^\P_{(k+1)2^{\smalltext{-}\smalltext{n}}}(\tilde\omega) - R^\P_{k2^{\smalltext{-}\smalltext{n}}}(\tilde\omega)\big)}{\big(C^{\tau,\omega}_{\tau\smallertext{+}(k\smallertext{+}1)2^{\smalltext{-}\smalltext{n}}}(\tilde\omega) - C^{\tau,\omega}_{\tau\smallertext{+}k2^{\smalltext{-}\smalltext{n}}}(\tilde\omega)\big)}\mathbf{1}_{(k2^{\smalltext{-}\smalltext{n}},(k+1)2^{\smalltext{-}\smalltext{n}}]}(t), \; (\omega,\P,\tilde\omega,t) \in \Omega\times\fP_\textnormal{sem}\times\Omega\times[0,\infty),
		\]
		and then
		\begin{equation*}
			\varphi^{\omega,\P}_t(\tilde\omega) \coloneqq \limsup_{n \rightarrow \infty} \varphi^{\omega,\P,n}_t(\tilde\omega), \; (\omega,\P,\tilde\omega,t) \in \Omega\times\fP_\textnormal{sem}\times\Omega\times[0,\infty).
		\end{equation*}
		Then $\Omega\times\fP_\textnormal{sem}\times\Omega\times[0,\infty) \ni (\omega,\P,\tilde\omega,t) \longmapsto \varphi^{\omega,\P}_t(\tilde\omega) \in [-\infty,\infty]$ is Borel-measurable and for each $(\omega,\P) \in \Omega\times\fP_\textnormal{sem}$, the function each $\varphi^{\omega,\P}$ is, $\P$--a.s., the Radon--Nikod\'ym derivative with respect to $\d(C^{\tau,\omega}_{\tau\smallertext{+}\smallertext{\cdot}}-C^{\tau,\omega}_{\tau(\omega)}(\omega))$ of the absolutely continuous component of $\d R^\P$ relative to $\d(C^{\tau,\omega}_{\tau\smallertext{+}\smallertext{\cdot}}-C_{\tau(\omega)}(\omega))$ (see \cite[Section XI.17, pages 199--201]{doob1994measure} or \cite[Theorem V.58]{dellacherie1982probabilities}, as well as the remark that immediately follows). It follows that
		\begin{equation}\label{eq::borel_measurability_hat_omega}
			\widehat{\Omega}^C_\tau = \big\{(\omega,\P) \in \Omega\times\fP_\textnormal{sem} : \E^\P\big[\1_{G}(\omega,\P,\cdot)\big] = 1 \big\},
		\end{equation}
		where
		\[
		G = \bigg\{(\omega,\P,\tilde\omega) \in \Omega\times\fP_\textnormal{sem}\times\Omega :R^\P_t(\tilde\omega) = \int_0^t \varphi^{\omega,\P}_s(\tilde\omega)\d(C^{\tau,\omega}_{\tau\smallertext{+}\smallertext{\cdot}}-C_{\tau(\omega)}(\omega))_s(\omega), \, \textnormal{$t \in \Q\cap[0,\infty)$}\bigg\}.
		\]
		The Borel measurability of the set $G$ follows from Fubini's theorem for kernels (see \cite[Proposition 6.9, page 40]{cinlar2011probability}). Moreover, for any bounded and Borel-measurable function $h$ defined on $\Omega\times\fP_\textnormal{sem}\times\Omega$, the map
		\[
		\Omega\times\fP_\textnormal{sem} \ni (\omega,\P) \longmapsto \E^\P[h(\omega,\P,\cdot)] \in \R,
		\]
		is Borel-measurable; this follows from a functional monotone class argument by considering $h(\omega,\P,\tilde\omega) = f(\omega)g(\P,\tilde\omega)$, noting that $\cB(\Omega\times\fP_\textnormal{sem}) = \cB(\Omega)\otimes\cB(\fP_\textnormal{sem})$, and the fact that $\fP_\textnormal{sem}\ni \P \longmapsto \E^\P[g(\P,\cdot)]$ is Borel-measurable (see \cite[Lemma 3.1]{neufeld2014measurability}). Thus \eqref{eq::borel_measurability_hat_omega} implies that  $\widehat{\Omega}^C_\tau \subseteq \Omega\times\fP(\Omega)$ is Borel-measurable.
		
		\medskip
		We turn to the differential characteristics. For $(i)$ and $(ii)$, we define
		\begin{gather*}
			\mathsf{b}^{\tau,\omega,\P}_t \coloneqq \tilde{\mathsf{b}}^{\tau,\omega,\P}_t \1_{\{\tilde{\mathsf{b}}^{\smalltext{\tau}\smalltext{,}\smalltext{\omega}\smalltext{,}\smalltext{\P}}_\smalltext{t} \in \R^\smalltext{d}\}}, \; \text{where} \; \tilde{\mathsf{b}}^{\tau,\omega,\P}_t \coloneqq \limsup_{n\rightarrow\infty}\frac{\mathsf{B}^{\P}_t - \mathsf{B}^{\P}_{(t-1/n)\lor 0}}{C^{\tau,\omega}_{\tau+t} - C^{\tau,\omega}_{\tau+(t-1/n) \lor 0}}, \; (\omega,\P,t) \in \Omega\times\fP_\textnormal{sem}\times[0,\infty),\\
			\mathsf{a}^{\tau,\omega}_t \coloneqq \tilde{\mathsf{a}}^{\tau,\omega}_t \1_{\{\tilde{\mathsf{a}}^{\smalltext{\tau}\smalltext{,}\smalltext{\omega}}_\smalltext{t}\in\S^\smalltext{d}_\tinytext{+}\}}, \; \text{where} \; \tilde{\mathsf{a}}^{\tau,\omega}_t \coloneqq \limsup_{n\rightarrow\infty}\frac{\mathsf{C}_t - \mathsf{C}_{(t-1/n)\lor 0}}{C^{\tau,\omega}_{\tau+t} - C^{\tau,\omega}_{\tau+(t-1/n) \lor 0}}, \; t \in [0,\infty),
		\end{gather*}
		which then satisfy the desired properties in the statement.

		\medskip
		We turn to the third characteristic. It follows from the proof of \cite[Proposition 6.4]{neufeld2014measurability} that there exists a kernel $(\P,\tilde\omega,t) \longmapsto \bar{\mathsf{K}}^{\P}_{\tilde\omega,t}(\d x)$ on $(\R^d,\cB(\R^d))$ given $(\fP_\textnormal{sem} \times \Omega \times [0,\infty),\cB(\fP_\textnormal{sem}) \otimes \cP)$ and a Borel-measurable map $\fP_\textnormal{sem} \times \Omega \times [0,\infty) \ni (\P,\tilde\omega,t) \longmapsto \mathsf{A}^{\P}_t(\tilde\omega) \in [0,\infty)$ such that $\mathsf{A}^{\P} = (\mathsf{A}^\P_t)_{t \in [0,\infty)}$ is right-continuous, $\P$--a.s. non-decreasing, $\F_\smallertext{+}$-adapted, $\F_\smallertext{+}^\P$-predictable, starting at zero and satisfying 
		\[
			\nu^{\P}(\,\cdot\,;\d t,\d x) = \bar{\mathsf{K}}^{\P}_{\cdot,t}(\d x) \d \mathsf{A}^{\P}_t, \; \textnormal{$\P$--a.s.}, \; \P\in\fP_\textnormal{sem}.
		\] 
		Let
		\begin{equation*}
			\bar{\mathsf{k}}^{\tau,\omega,\P}_t \coloneqq \tilde{\mathsf{k}}^{\tau,\omega,\P}_t\1_{\{\tilde{\mathsf{k}}^{\smalltext{\tau}\smalltext{,}\smalltext{\omega}\smalltext{,}\smalltext{\P}}_\smalltext{t} \in [0,\infty)\}}, \; 
			\text{where} \;
			\tilde{\mathsf{k}}^{\tau,\omega,\P}_t \coloneqq \limsup_{n \rightarrow \infty}\frac{\mathsf{A}^{\P}_t - \mathsf{A}^{\P}_{(t-1/n)\lor 0}}{C^{\tau,\omega}_{\tau+t} - C^{\tau,\omega}_{\tau+(t-1/n)\lor 0}}, \; (\omega,\P,t) \in \Omega\times\fP_\textnormal{sem}\times[0,\infty),
		\end{equation*}
		and then
		\begin{equation*}
			N \coloneqq \bigg\{(\omega,\P,\tilde\omega,t) \in \Omega \times \fP_\textnormal{sem} \times \Omega \times [0,\infty): \int_{\R^d}(|x|^2 \land 1) \bar{\mathsf{k}}^{\tau,\omega,\P}_t(\tilde\omega)\bar{\mathsf{K}}^{\P}_{\tilde\omega,t}(\d x) < \infty \; \text{and} \; \bar{\mathsf{k}}^{\tau,\omega,\P}_t(\tilde\omega)\bar{\mathsf{K}}^{\P}_{\omega,t}(\{0\}) = 0\bigg\}.
		\end{equation*}
		The set $N$ is Borel-measurable, and for each fixed $(\omega,\P) \in \Omega\times\fP_\textnormal{sem}$, the function $\1_N(\omega,\P,\cdot,\cdot)$ is $\F_\smallertext{+}$-progressive and $\F^\P_\smallertext{+}$-predictable.
		Define
		\begin{equation*}
			\mathsf{K}^{\tau,\omega,\P}_{\tilde\omega,t}(\d x) \coloneqq \1_{N}(\omega,\P,\tilde\omega,t) \bar{\mathsf{k}}^{\tau,\omega,\P}_t(\tilde\omega)  \bar{\mathsf{K}}^{\P}_{\tilde\omega,t}(\d x)  , \; (\omega,\P,\tilde\omega,t) \in \Omega\times\fP_\textnormal{sem}\times\Omega\times[0,\infty),
		\end{equation*}
		which satisfies $\mathsf{K}^{\tau,\omega,\P}_{\tilde\omega,t}(\d x) \in \cL$ by construction. Note also that $(\omega,\P,\tilde\omega,t)\longmapsto \mathsf{K}^{\tau,\omega,\P}_{\tilde\omega,t}(\d x)$ is a kernel on $(\R^d,\cB(\R^d))$ given $(\Omega \times \fP_\textnormal{sem} \times \Omega \times [0,\infty), \cB(\Omega)\otimes\cB(\fP_\textnormal{sem}) \otimes \cF\otimes\cB([0,\infty)))$, and for each fixed $(\omega,\P) \in \widehat{\Omega}^C_\tau$, we have that $(\tilde\omega,t) \longmapsto \mathsf{K}^{\tau,\omega,\P}_{\tilde\omega,t}(\d x)$ is a kernel on $(\R^d,\cB(\R^d))$ given $(\Omega\times[0,\infty),\textnormal{Prog}(\F_\smallertext{+}))$ and given $(\Omega\times[0,\infty),\cP^\P)$. The Borel-measurability of 
		\begin{equation*}
			\Omega \times \fP_\textnormal{sem} \times \Omega \times [0,\infty) \ni (\omega,\P,\tilde\omega,t) \longmapsto \mathsf{K}^{\tau,\omega,\P}_{\tilde\omega,t}(\d x) \in \cL,
		\end{equation*}
		follows from an application of \cite[Lemma 2.4]{neufeld2014measurability}. That this indeed is the correct third differential characteristic can be argued as follows. For fixed $(\omega,\P) \in \widehat{\Omega}^C_\tau$, it follows from \Cref{lem::absolute_continuity_nu} that
		\[
			\nu^\P(\d t, \d x) = \bar{\mathsf{k}}^{\tau,\omega,\P}_t \bar{\mathsf{K}}^{\P}_t(\d x)   \d (C^{\tau,\omega}_{\tau\smallertext{+}\smallertext{\cdot}}-C_{\tau(\omega)}(\omega))_t, \; \text{$\P$--a.s.},
		\]
		and therefore also $\1_N(\omega,\P,\cdot,\cdot) = 1$ up to a $\P\otimes\d(C^{\tau,\omega}_{\tau\smallertext{+}\smallertext{\cdot}}-C_{\tau(\omega)}(\omega))$--null set. Thus
		\[
			\nu^\P(\d t, \d x) = \bar{\mathsf{k}}^{\tau,\omega,\P}_t \bar{\mathsf{K}}^{\P}_t(\d x)   \d (C^{\tau,\omega}_{\tau\smallertext{+}\smallertext{\cdot}}-C_{\tau(\omega)}(\omega))_t = \mathsf{K}^{\tau,\omega,\P}_{\tilde\omega,t}(\d x) \d (C^{\tau,\omega}_{\tau\smallertext{+}\smallertext{\cdot}}-C_{\tau(\omega)}(\omega))_t, \; \textnormal{$\P$--a.s.}, \; (\omega,\P) \in \widehat{\Omega}^C_\tau.
		\]
		This completes the proof.
	\end{proof}
	
		\begin{proof}[Proof of \Cref{cor::shif_quadratic_variation_continuous_martingale_part}]
		We only prove the first assertion, as the second one follows immediately from \Cref{prop::conditioning_characteristics2}.$(i)$ or \cite[Theorem 3.1]{neufeld2016nonlinear}. We also suppose, without loss of generality, that $X$ is one-dimensional; otherwise, we argue component-wise. First, the process $(X^{c,\P})^{\tau,\omega}_{\tau\smallertext{+}\smallertext{\cdot}} - X^{c,\P}_{\tau(\omega)}(\omega)$ is a right-continuous, $(\F,\P^{\tau,\omega})$--local martingale with $\P^{\tau,\omega}$--a.s. continuous paths for $\P$--a.e. $\omega \in \Omega$ by \cite[Lemma 3.4]{neufeld2016nonlinear}. Since $X^{c,\P}$ is the continuous local martingale part of $X$ relative to $(\F,\P)$, we have
		\[
		[X-X^{c,\P}]^{(\F,\P)} = \sum_{0 < s \leq \cdot} (\Delta X_s)^2, \; \textnormal{$\P$--a.s.},
		\]
		which then yields, using $X^{\tau,\omega}_{\tau\smallertext{+}\smallertext{\cdot}} - X_{\tau(\omega)}(\omega) = X$,
		\begin{align*}
			\big[X - \big((X^{c,\P})^{\tau,\omega}_{\tau\smallertext{+}\smalltext{\cdot}} - X^{c,\P}_{\tau(\omega)}(\omega)\big) \big]^{(\F,\P^{\smalltext{\tau}\smalltext{,}\smalltext{\omega}})} 
			&= \big[X^{\tau,\omega}_{\tau\smallertext{+}\smalltext{\cdot}} - X_{\tau(\omega)}(\omega) - \big((X^{c,\P})^{\tau,\omega}_{\tau\smallertext{+}\smalltext{\cdot}} - X^{c,\P}_{\tau(\omega)}(\omega)\big) \big]^{(\F,\P^{\smalltext{\tau}\smalltext{,}\smalltext{\omega}})} \\
			&= [X - X^{c,\P}]^{(\F,\P)}_{\tau\smallertext{+}\smalltext{\cdot}}(\omega\otimes_\tau\cdot) - [X-X^{c,\P}]^{(\F,\P)}_{\tau(\omega)}(\omega) \\
			&= \sum_{\tau(\omega) < s \leq \tau(\omega)\smallertext{+}\cdot} (\Delta X_s(\omega\otimes_\tau\cdot))^2 \\
			&= \sum_{0 < s \leq \cdot} (\Delta X_{\tau\smallertext{+}s}(\omega\otimes_\tau\cdot))^2 = \sum_{0 < s \leq \cdot} (\Delta X_{s})^2, \; \textnormal{$\P^{\tau,\omega}$--a.s.},
		\end{align*}
		for $\P$--a.e. $\omega \in \Omega$. Here we used \cite[Theorem I.4.47.a)]{jacod2003limit} in the second equality. Therefore, by \cite[Theorem I.4.52]{jacod2003limit}, we have
		$\big\langle X^{c,\P^{\smalltext{\tau}\smalltext{,}\smalltext{\omega}}} - (X^{c,\P})^{\tau,\omega}_{\tau\smallertext{+}\smallertext{\cdot}} - X^{c,\P}_{\tau(\omega)}(\omega)\big\rangle^{(\F,\P^{\smalltext{\tau}\smalltext{,}\smalltext{\omega}})} = 0$, $\P^{\tau,\omega}$--a.s., for $\P$--a.e. $\omega \in \Omega$; the continuous local martingale part of the sum of two semi-martingales is the sum of the respective continuous local martingale parts. This completes the proof.
	\end{proof}
	
	\section{Proofs of auxiliary lemmata from Section \ref{sec::proofs_main_results}}\label{sec::lemmas_main_results}
	
	This part is devoted to the proofs of the lemmata introduced in \Cref{sec::proofs_main_results}. In \Cref{sec::proofs_lemmata_measurability} and \ref{sec::proofs_lemmata_regularisation}, we provide the proofs to the lemmata of \Cref{sec::proof_measurability} and \ref{sec::proof_regularisation}, respectively. 
	
	\subsection{Auxiliary lemmata from Section \ref{sec::proof_measurability}}\label{sec::proofs_lemmata_measurability}
	Before proving \Cref{lem::measurable_decomposition}, we need to establish the following two results.
	
	\begin{lemma}\label{lem::measurability_cond_M_tilde_P2}
		Suppose that $\Omega \times \fP(\Omega) \times \Omega \times [0,\infty) \times \R^d \ni (\omega,\P,\tilde\omega,t,x) \longmapsto \cW^{\omega,\P}_t(\tilde\omega; x) \in \overline\R$ is Borel-measurable. For each $\P \in \fP(\Omega)$, there exists a $\P$-version of $M^\P_{\mu^\smalltext{X}}\big[\cW^{\omega,\P}\big|\widetilde\cP(\F)\big]$ such that
		\begin{equation*}
			\Omega \times \fP(\Omega) \times \Omega \times [0,\infty) \times \R^d \ni (\omega,\P,\tilde\omega,t,x) \longmapsto M^\P_{\mu^\smalltext{X}}\big[\cW^{\omega,\P}\big|\widetilde\cP(\F^s)\big](\tilde\omega,t,x) \in \overline\R,
		\end{equation*}
		is $\cB(\Omega)\otimes\cB(\fP(\Omega))\otimes\widetilde\cP(\F)$-measurable.	
	\end{lemma}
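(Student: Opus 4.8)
The strategy mirrors the construction of measurable versions of conditional expectations in \cite[Section 3]{neufeld2014measurability}: first reduce, by a functional monotone class argument, from the general integrand $\cW$ to a fixed integrand not depending on the parameters $(\omega,\P)$, and then produce a jointly measurable version of the $M^\P_{\mu^\smalltext{X}}$--conditional expectation of a fixed integrand via the usual martingale approximation along a generating sequence of finite $\sigma$-algebras. I first reduce to bounded non-negative $\cW$: using the convention $\infty-\infty=-\infty$ and the definition $M^\P_{\mu^\smalltext{X}}[W|\widetilde\cP(\F)] \coloneqq M^\P_{\mu^\smalltext{X}}[W^{\smallertext{+}}|\widetilde\cP(\F)] - M^\P_{\mu^\smalltext{X}}[W^{\smallertext{-}}|\widetilde\cP(\F)]$, together with conditional monotone convergence applied to $\cW^{\smallertext{+}} \land m \uparrow \cW^{\smallertext{+}}$ and $\cW^{\smallertext{-}} \land m \uparrow \cW^{\smallertext{-}}$, it suffices to treat bounded non-negative $\cW$; the passage to the limit preserves joint measurability by taking $\limsup_m$ of the jointly measurable versions obtained for each $m$, which for every fixed $(\omega,\P)$ agrees $M^\P_{\mu^\smalltext{X}}$--a.e.\ with the conditional expectation.

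Next, let $\cD$ denote the family of bounded $\cB(\Omega)\otimes\cB(\fP(\Omega))\otimes\cF\otimes\cB([0,\infty))\otimes\cB(\R^d)$-measurable functions $\cW$ for which $(\omega,\P,\tilde\omega,t,x)\longmapsto M^\P_{\mu^\smalltext{X}}[\cW^{\omega,\P}|\widetilde\cP(\F)](\tilde\omega,t,x)$ admits a $\cB(\Omega)\otimes\cB(\fP(\Omega))\otimes\widetilde\cP(\F)$-measurable version. By linearity of conditional expectations $\cD$ is a vector space, and by the first step it is closed under bounded monotone limits. It contains every product $\1_G(\omega,\P)\,\1_D(\tilde\omega,t,x)$ with $G \in \cB(\Omega\times\fP(\Omega))$ and $D \in \cF\otimes\cB([0,\infty))\otimes\cB(\R^d)$, because for such functions the parameter factor pulls out of the conditional expectation,
\[
M^\P_{\mu^\smalltext{X}}\big[(\1_{G\times D})^{\omega,\P}\big|\widetilde\cP(\F)\big] = \1_G(\omega,\P)\,M^\P_{\mu^\smalltext{X}}\big[\1_D\big|\widetilde\cP(\F)\big],
\]
so it remains to prove the following sub-claim: \emph{for every bounded $\cF\otimes\cB([0,\infty))\otimes\cB(\R^d)$-measurable $W$, the map $(\P,\tilde\omega,t,x)\longmapsto M^\P_{\mu^\smalltext{X}}[W|\widetilde\cP(\F)](\tilde\omega,t,x)$ has a $\cB(\fP(\Omega))\otimes\widetilde\cP(\F)$-measurable version}. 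Granting this, the sets $G\times D$ form a $\pi$-system generating the product $\sigma$-algebra, and the functional monotone class theorem gives $\cD \supseteq$ all bounded measurable functions, which is the assertion.

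For the sub-claim, note that $\widetilde\cP(\F) = \cP(\F)\otimes\cB(\R^d)$ is countably generated (on the canonical space $\cP(\F)$ is generated by $\{(s,t]\times F : s<t \text{ rational},\ F \in \cF_s\}$, and each $\cF_s$ is countably generated), so fix such generators and let $(\cG_n)_{n\in\N}$ be the increasing sequence of finite sub-algebras they generate, with $\bigvee_n \cG_n = \widetilde\cP(\F)$. Let $V>0$ be a $\widetilde\cP(\F)$-measurable process with $M^\P_{\mu^\smalltext{X}}[V] = \E^\P[V\ast\nu^\P_\infty] = \E^\P[V\ast\mu^X_\infty]\le 1$, chosen measurably in $\P$ as in \cite[Lemma 6.5]{neufeld2014measurability}. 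Writing $\tilde\mu^\P \coloneqq V\cdot M^\P_{\mu^\smalltext{X}}$, one checks that $M^\P_{\mu^\smalltext{X}}[W|\widetilde\cP(\F)] = V\cdot\tilde\mu^\P[W/V\,|\,\widetilde\cP(\F)]$, $\tilde\mu^\P$--a.e., and on the atoms $a$ of $\cG_n$,
\[
V\cdot\tilde\mu^\P\big[W/V\,\big|\,\cG_n\big] = V\sum_{a}\frac{M^\P_{\mu^\smalltext{X}}[W\1_a]}{M^\P_{\mu^\smalltext{X}}[V\1_a]}\1_a, \quad \text{with } 0/0 \coloneqq 0.
\]
Since $M^\P_{\mu^\smalltext{X}}[W\1_a] = \E^\P[(W\1_a)\ast\mu^X]$ and $M^\P_{\mu^\smalltext{X}}[V\1_a] = \E^\P[(V\1_a)\ast\mu^X]$ are Borel in $\P$ by \cite[Lemma 3.1]{neufeld2014measurability}, and $V$ and $\1_a$ are $\widetilde\cP(\F)$-measurable, the displayed quantity is $\cB(\fP(\Omega))\otimes\widetilde\cP(\F)$-measurable; taking $\limsup_{n\to\infty}$ and invoking martingale convergence for the bounded $(\cG_n)$-martingale $\tilde\mu^\P[W/V\,|\,\cG_n]$ under the finite measure $\tilde\mu^\P$ yields the desired jointly measurable version of $M^\P_{\mu^\smalltext{X}}[W|\widetilde\cP(\F)]$, valid outside an $M^\P_{\mu^\smalltext{X}}$--null set for each $\P$.

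\textbf{Main obstacle.} The delicate point is the third step. Two issues must be dealt with: first, the martingale limit has to be taken simultaneously for all $\P$, which is handled by passing to $\limsup_n$ of the finite-stage jointly measurable representatives rather than to an abstract a.e.\ limit; second, the reference process $V$ rendering $M^\P_{\mu^\smalltext{X}}$ finite must be chosen measurably in (or uniformly over) the probability parameter, which requires revisiting the construction of \cite[Lemma 6.5]{neufeld2014measurability} and making it measurable in $\P$, in the same spirit as the compensator itself is made measurable in \cite[Theorem 2.5]{neufeld2014measurability}. The remaining parts are routine adaptations of the arguments in \cite[Section 3]{neufeld2014measurability}.
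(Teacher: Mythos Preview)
Your proposal is correct and its core mechanism---reduction to a finite measure via $V$, then martingale approximation along a generating sequence of finite sub-$\sigma$-algebras of $\widetilde\cP(\F)$, with joint measurability preserved by taking $\limsup$---is exactly what the paper does. Two remarks are in order, though.

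First, the monotone class detour through product integrands $\1_G(\omega,\P)\,\1_D(\tilde\omega,t,x)$ is unnecessary. The paper works directly with the full integrand $\cW^{\omega,\P}$: for each atom $A^m_n$ of the finite partition, the quantity
\[
M^\P_{\mu^V}\big[\cW^{\omega,\P}\1_{A^\smalltext{m}_\smalltext{n}}\big] \;=\; \E^\P\big[(\cW^{\omega,\P}\1_{A^\smalltext{m}_\smalltext{n}}V)\ast\mu^X_\infty\big]
\]
is already Borel in $(\omega,\P)$ by \Cref{lem::measurable_martingale_modification}.$(i)$, so the finite-stage approximation is jointly measurable in all variables without first factoring $\cW$. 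Your reduction works, but adds a layer that the paper avoids.

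Second, the obstacle you flag regarding $V$ is not an obstacle. The process $V$ from \cite[Lemma~6.5]{neufeld2014measurability} is a single fixed $\widetilde\cP(\F)$-measurable function satisfying $0 \leq V\ast\mu^X_\infty \leq 1$ \emph{identically} (pathwise, for every $\tilde\omega$), not merely $\P$--a.s.; it therefore does not depend on $\P$ at all, and no measurable selection in $\P$ is needed. This is precisely the ``uniformly over'' alternative you mention in passing, and it is what makes the construction go through cleanly.
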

	
	\begin{proof}
		We closely follow the proof of \cite[Lemma 3.1]{neufeld2014measurability} and make appropriate modifications. By \cite[Lemma 6.5]{neufeld2014measurability}, there exists a positive, $\widetilde\cP(\F)=\cP(\F)\otimes\cB(\R^d)$-measurable function $V$ satisfying $0\leq V\ast\mu^X_\infty \leq 1$ identically. Let $\mu^V(\omega;\d s, \d x) \coloneqq V_s(\omega,x)\mu^X(\omega;\d s, \d x)$, and suppose that the map $(\omega,\P,\tilde\omega,t,x) \longmapsto \cW^{\omega,\P}_t(\tilde\omega;x)$ is non-negative and bounded. Since the $\sigma$-algebra $\cP(\F)$ is separable by \cite[Lemma 6.3]{neufeld2014measurability}, we know that $\widetilde\cP(\F) = \cP(\F)\otimes\cB(\R^d)$ is separable as well. So let $(A_n)_{n\in \N}$ be a sequence generating $\cP(\F)\otimes\cB(\R^d)$. For any $n\in\N$, let $(A^m_n)_{m \in\{0,\dots,k_\smalltext{n}\}}$ be a finite partition generating $\cA_n \coloneqq \sigma(A_1,\ldots,A_n)$.
		Then
		\begin{align*}
			M^{\P}_{\mu^\smalltext{V}}\big[\cW^{\omega,\P}\big|\widetilde\cP(\F)\big] \coloneqq
			\frac{1}{M^\P_{\mu^\smalltext{V}}[\mathbf{1}_{\tilde\Omega}]} \limsup_{n \rightarrow\infty}\sum_{m = 0}^{k_\smalltext{n}}\frac{M^\P_{\mu^\smalltext{V}}\big[\cW^{\omega,\P}\mathbf{1}_{A^\smalltext{m}_\smalltext{n}}\big]}{M^\P_{\mu^\smalltext{V}}[\mathbf{1}_{A^\smalltext{m}_\smalltext{n}}]/M^\P_{\mu^\smalltext{V}}[\mathbf{1}_{\tilde\Omega}]}\mathbf{1}_{A^\smalltext{m}_\smalltext{n}}, \; (\omega,\P) \in \Omega \times \fP(\Omega),
		\end{align*} 
		with convention $\lambda / 0 \coloneqq 0$ for any $\lambda \in \R$, is a version of the Radon--Nikodým derivative of the finite measure $A \longmapsto M^{\omega,\P}_{\mu^\smalltext{V}}[\1_A \cW^\P]$ with respect to the finite measure $A \longmapsto M^{\omega,\P}_{\mu^\smalltext{V}}[\1_A]$ on $(\widetilde{\Omega},\widetilde\cP(\F))$; see \cite[V.56, pages 50--51]{dellacherie1982probabilities} or \cite[XI.17, pages 199--201]{doob1994measure}. Note that $(\omega,\P) \longmapsto M^{\P}_{\mu^\smalltext{V}}[\cW^{\omega,\P}] = E[\cW^{\omega,\P}\ast\mu^V]$ is measurable by \Cref{lem::measurable_martingale_modification}.$(i)$. We then extend this to the case of unbounded, non-negative $(\omega,\P,\tilde\omega,t,x) \longmapsto \cW^{\omega,\P}_t(\tilde\omega;x)$ by setting
		\begin{gather*}
			M^\P_{\mu^\smalltext{V}}\big[\cW^{\omega,\P}\big|\widetilde\cP(\F)\big] \coloneqq \limsup_{n \rightarrow \infty} M^\P_{\mu^\smalltext{V}}\big[\cW^{\omega,\P} \land n\big|\widetilde\cP(\F)\big], \; (\omega,\P) \in \Omega \times \fP(\Omega),\\
			M^\P_{\mu^\smalltext{X}}\big[\cW^{\omega,\P}\big|\widetilde\cP(\F)\big] \coloneqq V M^\P_{\mu^\smalltext{V}}\big[\cW^{\omega,\P}/V\big|\widetilde\cP(\F)\big], \; (\omega,\P) \in \Omega \times \fP(\Omega).
		\end{gather*}
		It is then straightforward to verify that
		\begin{equation*}
			M^\P_{\mu^\smalltext{X}}\big[\cU M^\P_{\mu^\smalltext{X}}[\cW^{\omega,\P}|\widetilde\cP(\F)] \big] 
			= M^\P_{\mu^\smalltext{V}}\big[\cU M^\P_{\mu^\smalltext{V}}[\cW^{\omega,\P}/V|\widetilde\cP(\F)]\big] = M^\P_{\mu^\smalltext{V}}\big[\cU \cdot \cW^{\P}/V\big] = M^\P_{\mu^\smalltext{X}}\big[\cU \cdot \cW^{\omega,\P}\big], \; (\omega,\P) \in \Omega \times \fP(\Omega),
		\end{equation*}
		holds for each bounded, non-negative, $\widetilde\cP(\F)$-measurable function $\cU$. 
		
		\medskip
		The map $(\omega,\P,\tilde\omega,t,x) \longmapsto M^\P_\mu\big[\cW^{\omega,\P}|\widetilde\cP(\F)\big](\tilde\omega,t,x)$ is clearly $\cB(\Omega)\otimes\cB(\fP(\Omega))\otimes\widetilde{\cP}(\F)$-measurable. For a general Borel-measurable map $(\omega,\P,\tilde\omega,t,x) \longmapsto \cW^{\omega,\P}_t(\tilde\omega;x)$, we define 
		\begin{equation*}
			M^\P_{\mu}\big[\cW^{\omega,\P}\big|\widetilde\cP(\F)\big] 
			\coloneqq M^\P_{\mu}\big[(\cW^{\omega,\P})^{+}\big|\widetilde\cP(\F)\big] - M^\P_{\mu}\big[(\cW^{\omega,\P})^{-}\big|\widetilde\cP(\F)\big], \; (\omega,\P) \in \Omega \times \fP(\Omega),
		\end{equation*}
		where we use our usual convention $\infty - \infty = -\infty$. This completes the proof.
	\end{proof}

	\begin{lemma}\label{lem::borel_quadratic_variation2}
		Let $\Omega \times \fP_\textnormal{sem} \times \Omega \times [0,\infty) \ni (\omega,\P,\tilde\omega,t) \longmapsto \cM^{\omega,\P}_t(\tilde\omega) \in \R$ be Borel-measurable such that for each $(\omega,\P) \in \Omega \times \fP_\textnormal{sem}$, the process $\cM^{\omega,\P}$ is a right-continuous, $(\F_\smallertext{+},\P)$--square-integrable martingale. Then there exists a Borel-measurable function $\Omega \times \fP_\textnormal{sem} \times \Omega \times [0,\infty) \ni (\omega,\P,\tilde\omega,t) \longmapsto \langle \cM,X^c\rangle^{\omega,\P}_t(\tilde\omega) \in \R^d$ such that for each $(\omega,\P) \in \Omega \times \fP_\textnormal{sem}$, the process $\langle \cM, X^c\rangle^{\omega,\P}$ is $\F$-predictable and
		\begin{equation*}
			\langle \cM,X^c\rangle^{\omega,\P} = \langle \cM^{\omega,\P},X^{c,\P}\rangle^{(\F_\tinytext{+},\P)}
			, \;  \P\text{\rm--a.s.}
		\end{equation*}
	\end{lemma}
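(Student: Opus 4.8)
The plan is to follow the argument in the proof of \cite[Proposition 5.1]{neufeld2014measurability}, with the canonical process $X$ replaced throughout by its continuous local martingale part $X^{c,\P}$, and then to exploit that—because $X^{c,\P}$ is continuous—the resulting predictable covariation is continuous, which upgrades the $\F^\P_\smallertext{+}$-predictability coming from that argument to genuine $\F$-predictability. Since the covariation is $\R^d$-valued with $j$-th component $\langle\cM^{\omega,\P},(X^{c,\P})^j\rangle$, it suffices to treat a one-dimensional continuous local martingale, which I again call $X^{c,\P}$; and since $\cM^{\omega,\P}$ is a square-integrable martingale and $X^{c,\P}$ is continuous, one has $\langle\cM^{\omega,\P},X^{c,\P}\rangle^{(\P)} = [\cM^{\omega,\P},X^{c,\P}]^{(\P)}$, a continuous process vanishing at zero.

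First I would fix, as in \Cref{lem::measurability_characteristics} and \cite[Theorem 2.5 and Proposition 5.1]{neufeld2014measurability} (see also \Cref{lem::martingale_decomposition} and \Cref{rem::equivalence_semimartingale}), a jointly Borel-measurable map $(\omega,\P,\tilde\omega,t)\longmapsto X^{c,\P}_t(\tilde\omega)$ such that for every $(\omega,\P)$ the process $X^{c,\P}$ is continuous, $\F_\smallertext{+}$-adapted, and $\P$-indistinguishable from the $(\F_\smallertext{+},\P)$--continuous local martingale part of $X$; this is obtained from the measurable versions of $(\mathsf{B}^\P,\nu^\P)$ by writing $X^{c,\P}$ as $X^\prime-X_0-\mathsf{B}^\P$ minus its purely discontinuous part, the latter being constructed measurably in \cite{neufeld2014measurability}. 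Running the measurable-selection scheme of \cite[Proposition 5.1]{neufeld2014measurability} on the two measurable families $\cM^{\omega,\P}$ and $X^{c,\P}$—for instance by forming the dyadic Riemann covariation sums
\begin{equation*}
	[\cM,X^c]^{\omega,\P,n}_t(\tilde\omega) \coloneqq \sum_{k = 0}^\infty \big(\cM^{\omega,\P}_{(k\smallertext{+}1)2^{\smalltext{-}\smalltext{n}}\land t}(\tilde\omega) - \cM^{\omega,\P}_{k2^{\smalltext{-}\smalltext{n}}\land t}(\tilde\omega)\big)\big(X^{c,\P}_{(k\smallertext{+}1)2^{\smalltext{-}\smalltext{n}}\land t}(\tilde\omega) - X^{c,\P}_{k2^{\smalltext{-}\smalltext{n}}\land t}(\tilde\omega)\big),
\end{equation*}
which are Borel-measurable in $(\omega,\P,\tilde\omega,t)$, right-continuous and $\F_\smallertext{+}$-adapted for fixed $(\omega,\P)$, and whose u.c.p.\ limit under $\P$ is $[\cM^{\omega,\P},X^{c,\P}]^{(\P)}$—yields a Borel-measurable process $\widetilde Z^{\omega,\P}$, right-continuous and $\F^\P_\smallertext{+}$-adapted for fixed $(\omega,\P)$, that is $\P$-indistinguishable from $\langle\cM^{\omega,\P},X^{c,\P}\rangle^{(\F_\tinytext{+},\P)}$; here I would borrow verbatim the measurable-selection bookkeeping of \cite[Proposition 5.1]{neufeld2014measurability} needed to pass from the $\P$-prelimit to such a version uniformly in $\P$.

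It remains to turn $\widetilde Z^{\omega,\P}$ into a process that is honestly $\F$-predictable. Since $\widetilde Z^{\omega,\P}_q$ is $\cF_{q\smallertext{+}}$-measurable for every $q\in\Q_\smallertext{+}$, I would set $\langle\cM,X^c\rangle^{\omega,\P}_0(\tilde\omega)\coloneqq 0$ and, for $t\in(0,\infty)$,
\begin{equation*}
	\langle\cM,X^c\rangle^{\omega,\P}_t(\tilde\omega)\coloneqq \lim_{\Q_\tinytext{+}\ni q\uparrow\uparrow t}\widetilde Z^{\omega,\P}_q(\tilde\omega)
\end{equation*}
whenever this limit exists in $\R$, and $0$ otherwise. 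This map is Borel-measurable; for each fixed $(\omega,\P)$ the process $\langle\cM,X^c\rangle^{\omega,\P}$ is left-continuous on $(0,\infty)$ and, being obtained from $\{\widetilde Z^{\omega,\P}_q : q\in\Q_\smallertext{+},\, q<t\}\subseteq\cup_{q<t}\cF_{q\smallertext{+}}$ and starting at zero, it is $\F_\smallertext{-}$-adapted, hence $\F$-predictable; and on the $\P$--full set on which $\widetilde Z^{\omega,\P}$ coincides with the continuous process $\langle\cM^{\omega,\P},X^{c,\P}\rangle^{(\F_\tinytext{+},\P)}$, the left limit recovers exactly that value, so the claimed $\P$--a.s.\ identity holds.

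The main obstacle is not conceptual but is the measurable-selection bookkeeping: securing the jointly Borel-measurable, $\F_\smallertext{+}$-adapted version of $X^{c,\P}$ (which requires tracking the measurable construction of the purely discontinuous part of $X^\prime-X_0-\mathsf{B}^\P$ through \cite{neufeld2014measurability}), and, more delicately, passing from the $\P$-by-$\P$ convergence of the covariation prelimits to a single Borel-measurable process that is $\P$-indistinguishable from the covariation for every $\P$ simultaneously—precisely the content imported from \cite[Proposition 5.1]{neufeld2014measurability}. The continuity of $\langle\cM^{\omega,\P},X^{c,\P}\rangle$, the resulting identity with the optional covariation, and the final left-limit regularisation are routine.
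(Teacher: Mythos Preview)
Your approach is different from the paper's, and the difference matters. The paper never constructs a jointly measurable version of $X^{c,\P}$; instead it works with the raw canonical process $X$ throughout. Concretely, it builds a pathwise stochastic integral via Karandikar's scheme (using \emph{process-dependent} stopping times tracking the oscillations of the integrand, not fixed dyadic times), then forms $[\cM,X]^{\omega,\P}$ by integration by parts, subtracts the explicitly measurable jump sum $\sum_{s\leq\cdot}\Delta\cM^{\omega,\P}_s\Delta X_s$, and uses the identity $[\cM,X]-\sum\Delta\cM\Delta X=\langle\cM,X^{c,\P}\rangle$ valid for any semi-martingale $X$. The final left-limit regularisation step is as you describe.

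This sidesteps both obstacles you flag. First, your approach needs $X^{c,\P}$ jointly Borel in $(\P,\tilde\omega,t)$; you sketch this as $X'-\mathsf{B}^\P$ minus a measurably constructed purely discontinuous part, but the latter is exactly a stochastic integral against the compensated jump measure, whose joint measurability is precisely one of the delicate points the paper has to develop elsewhere---so you would be importing rather than avoiding the difficulty. Second, your fixed dyadic Riemann sums converge only in $\P$-probability, not pathwise, so there is no deterministic limit to take uniformly in $\P$; the scheme in \cite[Proposition~5.1]{neufeld2014measurability} that you plan to borrow does \emph{not} use fixed dyadic sums---it uses Karandikar's random-time construction for exactly this reason. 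So while your outline is conceptually sound, both of its acknowledged gaps are genuine, and the paper's route through $[\cM,X]$ with the raw $X$ is the cleaner way to close them.
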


	\begin{proof}
		As we cannot directly apply \cite[Proposition 6.6]{neufeld2014measurability}, we will adapt the arguments in its proof. Moreover, we suppose without loss of generality that $X$ is one-dimensional; otherwise we argue component-wise. Suppose we are given a Borel-measurable map
		\begin{equation*}
			\Omega \times \fP_\textnormal{sem} \times \Omega \times [0,\infty) \ni (\omega,\P,\tilde\omega,t) \longmapsto (X^{\omega,\P}_t(\tilde\omega),Z^{\omega,\P}_t(\tilde\omega)) \in \R \times \R,
		\end{equation*}
		such that every $Z^{\omega,\P}$ is a right-continuous, $\P$--a.s. c\`adl\`ag, and $\F_\smallertext{+}$-adapted, and every $X^{\omega,\P}$ is a right-continuous, $\P$--a.s. c\`adl\`ag, $(\F_\smallertext{+},\P)$--semi-martingale. We will, in a first step, show that there exists a measurable map
		\begin{equation*}
			\Omega \times \fP_\textnormal{sem} \times \Omega \times [0,\infty) \ni (\omega,\P,\tilde\omega,t) \longmapsto I(Z,X)^{\omega,\P}_t(\tilde\omega) \in [-\infty,\infty],
		\end{equation*}
		such that every $I(Z,X)^{\omega,\P}$ is $\F_\smallertext{+}$-progressive and satisfies
		\begin{equation*}
			I(Z,X)^{\omega,\P}_t = \bigg(\int_0^t Z^\P_{r\smallertext{-}} \d X^\P_r\bigg)^{(\F_\tinytext{+},\P)}, \; t \in [0,\infty), \; \text{$\P$--a.s.}
		\end{equation*}
		For $n \in \N$ and $(\omega,\P) \in \Omega \times \fP_\textnormal{sem}$, we let $\tau^{\omega,\P,n}_0 = 0$ and then inductively define
		\begin{equation*}
			\tau^{\omega,\P,n}_{\ell + 1} = \inf\big\{t > \tau^{\omega,\P,n}_\ell : |Z^{\omega,\P}_t - Z^{\omega,\P}_{\tau^{\smalltext{\omega}\smalltext{,}\smalltext{\P}\smalltext{,}\smalltext{n}}_\smalltext{\ell}}| > 2^{-n} \; \text{or} \; {\limsup}_{\D_\tinytext{+} \ni s \uparrow\uparrow t}|Z^{\omega,\P}_{s} - Z^{\omega,\P}_{\tau^{\smalltext{\omega}\smalltext{,}\smalltext{\P}\smalltext{,}\smalltext{n}}_\smalltext{\ell}}| > 2^{-n}\big\}, \; \ell \in \N.
		\end{equation*} 
		By noting as in the proof of \cite[Theorem IV.64, page 123]{dellacherie1978probabilities}, that
		\begin{equation*}
			\big\{\tau^{\omega,\P,n}_1 < t\big\} = \bigcap_{k \in \N^\star} \bigcup_{r_k \in (0,t) \cap \D_\tinytext{+}} \big\{|Z^{\omega,\P}_0 - Z^{\omega,\P}_{r_\tinytext{k}}| > 2^{-n} -1/k \big\} \in \cF_{t\smallertext{-}}, \; t \in (0,\infty),
		\end{equation*}
		it follows that $\tau^{\omega,\P,n}_1$ is an $\F_\smallertext{+}$--stopping time and similarly that the map $\Omega \times \fP_\textnormal{sem} \times \Omega \ni (\omega,\P,\tilde\omega) \longmapsto \tau^{\omega, n}_1(\tilde\omega) \in [0,\infty]$ is Borel-measurable. That this also holds for each $\tau^{\omega,\P,n}_\ell$ follows by induction. It is important to note that we might not have that $\lim_{\ell \rightarrow\infty}\tau^{\omega,\P,n}_\ell(\tilde\omega) = \infty$ for every $\tilde\omega \in \Omega$, but only for $\P$--a.e. $\tilde\omega \in \Omega$, since $Z^{\omega,\P}$ is only $\P$--a.s. c\`adl\`ag. We then define
		\begin{equation*}
			I^{\omega,\P,n}_t \coloneqq \limsup_{\ell \rightarrow \infty} \sum_{k = 0}^{\ell} Z^{\omega,\P}_{\tau^{\smalltext{\omega}\smalltext{,}\smalltext{\P}\smalltext{,}\smalltext{n}}_k} (X^{\omega,\P}_{\tau^{\smalltext{\omega}\smalltext{,}\smalltext{\P}\smalltext{,}\smalltext{n}}_{\smalltext{\ell}\smalltext{+}\smalltext{1}}\land t} - X^\P_{\tau^{\smalltext{\omega}\smalltext{,}\smalltext{\P}\smalltext{,}\smalltext{n}}_{\smalltext{\ell}}\land t}),
		\end{equation*}
		which is $\F_\smallertext{+}$-adapted and right-continuous, and therefore $\F_\smallertext{+}$-progressive, and then
		\begin{equation*}
			I(Z,X)^{\omega,\P}_t(\omega) \coloneqq \limsup_{n \rightarrow \infty}I^{\omega,\P,n}_t(\omega),
		\end{equation*}
		which is $\F_\smallertext{+}$-progressive too. Note that the Borel-measurability of
		\begin{equation*}
			\Omega \times \fP_\textnormal{sem} \times \Omega \times [0,\infty) \ni (\omega,\P,\tilde\omega,t) \longmapsto I(Z,X)^{\omega,\P}_t(\tilde\omega) \in [-\infty,\infty],
		\end{equation*}
		is preserved along the way. In particular by \cite[Theorem 2]{karandikar1995pathwise}
		\begin{equation*}
			I(Z,X)^{\omega,\P}_t = \bigg(\int_0^t Z^{\omega,\P}_{r\smallertext{-}}\d X^{\omega,\P}_r\bigg)^{(\F_\tinytext{+},\P)}, \; t \in [0,\infty), \; \text{$\P$--a.s.}
		\end{equation*}

		We now turn to the construction of the desired function appearing in the statement. We denote by $X^i$ the $i$th component of the canonical process $X$. Let
		\begin{align*}
			[\cM,X^i]^{\omega,\P} 
			&\coloneqq \cM^{\omega,\P} X^i - \cM^{\omega,\P}_0 X^i_0 - I(\cM,X^i)^{\omega,\P} - I(X^i,\cM)^{\omega,\P} \\
			&= \cM^{\omega,\P} X^i - \cM^{\omega,\P}_0 X^i_0 - \bigg(\int_0^\cdot \cM^{\omega,\P}_{r\smallertext{-}}\d X^i_r\bigg)^{(\F,\P)} - \bigg(\int_0^\cdot X^i_{r\smallertext{-}}\d \cM^{\omega,\P}_r\bigg)^{(\F_\tinytext{+},\P)} = [\cM^{\omega,\P},X^i]^{(\F_\tinytext{+},\P)}, \; \text{$\P$--a.s.}
		\end{align*}
		Define $\Delta\cM^{\omega,\P}$ as $\Delta\cM^{\omega,\P}_t \coloneqq \limsup_{\D_\tinytext{+} \ni s \uparrow\uparrow t}(\cM^\P_t - \cM^\P_s)$ since every $\cM^{\omega,\P}$ is only $\P$--a.s. c\`adl\`ag. The $\F$-optional set $\{\Delta X^i \neq 0\}$ is the countable union of disjoint graphs of $\F$--stopping times $(\sigma^i_n)_{n \in \N}$ by \cite[Theorem B, page xiii, and Remark E, page xvii]{dellacherie1982probabilities} and then \cite[Theorem IV.88.(a), page 139]{dellacherie1978probabilities}. We then define the $\F_\smallertext{+}$-optional process
		\begin{equation*}
			S(\cM,X^i)^{\omega,\P} \coloneqq \limsup_{k \rightarrow \infty} \sum_{n = 1}^k \Delta \cM^{\omega,\P}_{\sigma_\smalltext{n}} \Delta X^i_{\sigma_\smalltext{n}} \1_{\llbracket \sigma_\smalltext{n},\infty \rrparenthesis},
		\end{equation*}
		see also \cite[Definition 7.39]{he1992semimartingale}. We then let
		\begin{equation*}
			Q(\cM,X^i)^{\omega,\P} \coloneqq [\cM,X^i]^{\omega,\P} - S(\cM,X^i)^{\omega,\P} = [\cM^{\omega,\P},X^i]^{(\F_\tinytext{+},\P)} - \sum_{0 < s \leq \cdot}\Delta\cM^{\omega,\P}_r\Delta X^i_r = \langle \cM^{\omega,\P},X^{c,\P}\rangle^{(\F_\tinytext{+},\P)}, \; \text{$\P$--a.s.},
		\end{equation*}
		and $Q(\cM,X^i)^{\omega,\P}$ is thus $\F_\smalltext{+}$-progressive and $\P$--a.s. continuous. We then define component-wise
		\begin{equation*}
			\langle \cM,X^{c} \rangle^{\omega,\P} \coloneqq \widetilde{Q}(\cM,X)^{\omega,\P} \1_{\R}\big(\widetilde{Q}(\cM,X)^{\omega,\P}\big), \; \text{where} \; \widetilde{Q}(\cM,X)^{\omega,\P}_t \coloneqq \limsup_{n \rightarrow \infty} Q(\cM,X)^{\omega,\P}_{(t - 1/n) \lor 0}.
		\end{equation*}
		Then $\langle \cM,X^{c} \rangle^{\omega,\P}$ is $\F$-predictable, coincides with the $(\F_\smallertext{+},\P)$-predictable quadratic co-variation of $\cM^{\omega,\P}$ with $X^{c,\P}$ and is thus in particular also $\P$--a.s. continuous for each $(\omega,\P) \in \Omega \times \fP_\textnormal{sem}$. Moreover, the Borel-measurability of
		\begin{equation*}
			\Omega \times \fP_\textnormal{sem} \times \Omega \times [0,\infty) \ni (\omega,\P,\tilde\omega,t) \longmapsto \langle \cM, X^c\rangle^{\omega,\P}_t(\tilde\omega) \in \R,
		\end{equation*}
		is preserved along the way, which completes the proof.
	\end{proof}

	\begin{proof}[Proof of \Cref{lem::measurable_decomposition}]
		For each $(\omega,\P) \in \Omega \times \fP_\textnormal{sem}$, we find by \cite[Proposition 2.6]{possamai2024reflections}  a unique triple $(\overline\cZ^{\omega,\P},\overline\cU^{\omega,\P},\overline\cN^{\omega,\P})$ in $\H^2(X^{c,\P};\F,\P) \times \H^2(\mu^X;\F,\P) \times \cH^{2,\perp}_0(X^{c,\P},\mu^X,\F_\smallertext{+},\P)$ for which \eqref{eq::martingale_representation2} holds. What we need to demonstrate now is that every $\overline\cZ^{\omega,\P}$ and $\overline\cU^{\omega,\P}$ can be chosen in such a way that the stated Borel-measurability holds.
		
		\medskip
		We start with $(\overline\cZ^{\omega,\P})_{(\omega,\P) \in \Omega\times\fP_\smalltext{\textnormal{sem}}}$. By \cite[Proposition 6.6]{neufeld2014measurability}, there exists an $\F$-predictable, $\S^d_\smallertext{+}$-valued process $\mathsf{C}$ which coincides with the second characteristic of $X$ up to $\P$-evanescence for each $\P\in\fP_\textnormal{sem}$. Let $\mathsf{A} \coloneqq \textnormal{Tr}[\mathsf{C}]$ be the trace process of $\mathsf{C}$. Then $\d\mathsf{C} = \mathsf{c}\d\mathsf{A}$, $\fP_\textnormal{sem}$--q.s., where
			\[
				\mathsf{c}_t \coloneqq \mathsf{c}^\prime_t \1_{\{\mathsf{c}^\prime_t \in \S^d_\smallertext{+}\}}, \; \textnormal{with} \; \mathsf{c}^\prime_t \coloneqq \limsup_{n \rightarrow \infty}\frac{\mathsf{C}_t - \mathsf{C}_{(t-1/n)\lor 0}}{\mathsf{A}_t - \mathsf{A}_{(t-1/n)\lor 0}}, \; t \in [0,\infty).
			\]
			Here the limit on the right-hand side is taken component-wise, and we use the convention $0/0 = 0$. For each $(\omega,\P) \in \Omega\times\fP_\textnormal{sem}$, the process $\overline\cZ^{\omega,\P}$ satisfies (see \cite[Theorem III.6.4.b)]{jacod2003limit})
			\begin{equation*}
				\langle \cM^{\omega,\P},(X^{c,\P})^j\rangle^{(\F_\tinytext{+},\P)} = \bigg(\sum_{i = 1}^m(\overline\cZ^{\omega,\P})^i  \mathsf{c}^{i,j}\bigg) \bcdot \mathsf{A} , \; j \in \{1,\ldots,d\}, \; \text{$\P$--a.s.},
			\end{equation*}
			or, equivalently component-wise
			\begin{equation*}
				\frac{\d\langle \cM^{\omega,\P},X^{c,\P}\rangle^{(\F_\tinytext{+},\P)}}{\d \mathsf{A}} = \mathsf{c} \overline\cZ^{\omega,\P}, \; \text{$\d \mathsf{A}$--a.e.}, \; \text{$\P$--a.s.}
			\end{equation*}
			By \Cref{lem::borel_quadratic_variation2}, there exists a Borel-measurable map $\Omega \times \fP_\textnormal{sem} \times \Omega \times [0,\infty) \ni (\omega,\P,\tilde\omega,t) \longmapsto \langle \cM,X^{c}\rangle^{\omega,\P}_t(\tilde\omega) \in \R^d$ such that for each $(\omega,\P) \in \Omega\times\fP_\textnormal{sem}$, the process $\langle \cM, X^{\P}\rangle^{\omega,\P}$ is $\F$-predictable and
			\begin{equation*}
				\langle \cM,X^{c}\rangle^{\omega,\P} = \langle \cM^{\omega,\P},X^{c,\P}\rangle^{(\F_\tinytext{+},\P)}, \; \text{$\P$--a.s.}
			\end{equation*}	
			We now define the $\F$-predictable process $\cZ^{\omega,\P}$ by
			\begin{equation*}
				\cZ^{\omega,\P}_t \coloneqq \mathsf{c}_t^\oplus \mathsf{Z}^{\omega,\P}_t\1_{\{\mathsf{Z}^{\smalltext{\omega}\smalltext{,}\smalltext{\P}}_\smalltext{t} \in \R^\smalltext{d}\}}, 
				\; 
				\text{where} 
				\; 
				\mathsf{Z}^{\omega,\P}_t \coloneqq \limsup_{n \rightarrow \infty} \frac{\langle\cM,X^{c}\rangle^{\omega,\P}_t - \langle \cM,X^{c}\rangle^{\omega,\P}_{(t-1/n)\lor 0}}{\mathsf{A}_{t} - \mathsf{A}_{(t-1/n) \lor 0}},\; t \in [0,\infty),
			\end{equation*}
			where $\mathsf{c}_t^\oplus$ denotes the Moore--Penrose pseudo-inverse of $\mathsf{c}_t$ and where we used the convention $0/0 = 0$. Then $\cZ^{\omega,\P} = \mathsf{c}^\oplus \mathsf{c} \overline\cZ^{\omega,\P}$, $\d \mathsf{A}$--a.e., $\P$--a.s., which then implies, together with $\mathsf{c}\mathsf{c}^\oplus\mathsf{c} = \mathsf{c}$ and $(\mathsf{c}^\oplus)^\top = (\mathsf{c}^\top)^\oplus = \mathsf{c}^\oplus$, that
			\begin{equation*}
				(\cZ^{\omega,\P} - \overline\cZ^{\omega,\P})^\top \mathsf{c} (\cZ^{\omega,\P} - \overline\cZ^{\omega,\P}) = 0, \; \text{$\d \mathsf{A}$--a.e.}, \; \text{$\P$--a.s.}
			\end{equation*}
			Therefore $\cZ^{\omega,\P} = \overline\cZ^{\omega,\P}$ in $\H^2(X^{c,\P};\F,\P)$ and thus $(\cZ^{\omega,\P}\bcdot X^{c,\P})^{(\P)} = (\overline\cZ^{\omega,\P}\bcdot X^{c,\P})^{(\P)}$, $\P$--a.s., by \cite[Theorem III.6.4.c)]{jacod2003limit}, for each $(\omega,\P) \in \Omega \times \fP_\textnormal{sem}$. Moreover, this construction of the family $(\cZ^{\omega,\P})_{(\omega,\P) \in \Omega \times \fP_{\smalltext{\textnormal{sem}}}}$ immediately implies that
			\begin{equation*}
				\Omega \times \fP_\textnormal{sem} \times \Omega \times [0,\infty) \ni (\omega,\P,\tilde\omega,t) \longmapsto \cZ^{\omega,\P}_t(\tilde\omega) \in \R^d,
			\end{equation*}
			is Borel-measurable.
		
		\medskip
		We turn to $(\overline\cU^{\omega,\P})_{(\omega,\P) \in \Omega\times\fP_{\smalltext{\textnormal{sem}}}}$. By \cite[Proposition 2.5]{neufeld2014measurability}, there exists a Borel-measurable function $\fP_\textnormal{sem} \times \Omega \times [0,\infty)\ni (\P,\tilde\omega,t) \longmapsto \mathsf{A}^\P_t(\tilde\omega)$ such that $\mathsf{A}^\P$ is $\F_\smallertext{+}$-adapted, $\F^\P_\smallertext{+}$-predictable, $\P$-integrable, right-continuous and $\P$--a.s. non-decreasing, and a kernel $(\P,\tilde\omega,t) \longmapsto \mathsf{K}^\P_{\tilde\omega,t}(\d x)$ on $(\R^d,\cB(\R^d))$ given $(\fP_\textnormal{sem}\times\Omega\times[0,\infty),\cB(\fP_\textnormal{sem})\otimes\cF\otimes\cB([0,\infty)))$ such that $\mathsf{K}^\P$ is a kernel on $(\R^d,\cB(\R^d))$ given $(\Omega\times[0,\infty),\cP^\P)$ for every $\P\in \fP_\textnormal{sem}$, such that
		\begin{equation*}
			\nu^\P(\d t,\d x) = \mathsf{K}^\P_{t}(\d x)\d\mathsf{A}^\P_t, \; \textnormal{$\P$--a.s.}, \; \P \in \fP_\textnormal{sem}.
		\end{equation*}
		For $\P \in \times \fP_\textnormal{sem}$, let $\mathsf{a}^\P = (\mathsf{a}^\P_t)_{t \in [0,\infty)}$ be the process defined by $\mathsf{a}^{\P}_t \coloneqq \mathsf{K}^{\P}_{t}(\R^d)\Delta \mathsf{A}^\P_t$, where
		\[
			\Delta \mathsf{A}^\P_t \coloneqq \mathsf{A}^{\P,\Delta}_t \1_{\{\mathsf{A}^{\smalltext{\P}\smalltext{,}\smalltext{\Delta}}_{\smalltext{t}} \in \R\}}, \; \textnormal{for} \; \mathsf{A}^{\P,\Delta}_t \coloneqq \limsup_{n \rightarrow \infty} \big(\mathsf{A}^\P_t - \mathsf{A}^\P_{(t-1/n)\lor 0}\big).
		\]

		Then, for $(\omega,\P,\tilde\omega,t,x) \in \Omega\times\fP_\textnormal{sem}\times\Omega\times[0,\infty) \times \R^d$, we let $\cU^{\omega,\P} \coloneqq \mathsf{U}^{\omega,\P} \mathbf{1}_{\{\mathsf{U}^{\smalltext{\omega}\smalltext{,}\smalltext{\P}} \in \R\}}$, where
		\begin{equation*}
			\mathsf{U}^{\omega,\P}_t(\tilde\omega;x) \coloneqq \cW^{\omega,\P}_t(\tilde\omega;x) + \frac{1}{1-\mathsf{a}^{\P}_t(\tilde\omega)} \mathbf{1}_{\{\mathsf{a}^{\smalltext{\P}}_\smalltext{t}(\tilde\omega) < 1\}}\int_{\R^\smalltext{d}}\cW^{\omega,\P}_t(\tilde\omega;x)\mathsf{K}^{\P}_{\tilde\omega,t}(\d x)\Delta \mathsf{A}^\P_t(\tilde\omega),
		\end{equation*}
		and $\cW^{\omega,\P} \coloneqq M^\P_{\mu^\smalltext{X}}\big[(\Delta\cM^{\omega,\P})\big| \widetilde\cP(\F)\big] = M^\P_{\mu^\smalltext{X}}\big[\Delta(\cM^{\omega,\P}-(\cZ^{\omega,\P}\bcdot X^{c,\P})^{(\F_\tinytext{+},\P)})\big|\widetilde\cP(\F)\big]$, $M^\P_{\mu^\smalltext{X}}$--a.e., is defined using \Cref{lem::measurability_cond_M_tilde_P2} with
		\begin{equation*}
			(\Delta\cM^\P)^{\omega,\P}_t \coloneqq \cM^{\omega,\P,\Delta}_t \mathbf{1}_{\{\cM^{\smalltext{\omega}\smalltext{,}\smalltext{\P}\smalltext{,}\smalltext{\Delta}}_\smalltext{t} \in \R\}}, \; \text{where} \; \cM^{\omega,\P,\Delta}_t \coloneqq \limsup_{n \rightarrow \infty}\big(\cM^{\omega,\P}_t - \cM^{\omega,\P}_{(t-1/n) \lor 0}\big).
		\end{equation*}
		Then $\cU^{\omega,\P} \in \H^2(\mu^X;\F^\P_\smallertext{+},\P)$ (see the proof of \cite[Theorem III.4.20]{jacod2003limit}) and
		\begin{equation*}
			\cN^{\omega,\P} \coloneqq \cM^{\omega,\P} - \cM^{\omega,\P}_0 - (\cZ^{\omega,\P}\bcdot X^{c,\P})^{(\P)}- (\cU^{\omega,\P}\ast\tilde\mu^{X,\P})^{(\P)},
		\end{equation*}
		satisfies $M^{\P}_{\mu^\smalltext{X}}\big[\Delta\cN^{\omega,\P}\big|\widetilde\cP(\F)\big] = M^\P_{\mu^\smalltext{X}}\big[\Delta(\cN^{\omega,\P}+(\cZ^{\omega,\P}\bcdot X^{c,\P})^{(\F,\P)})\big|\widetilde\cP(\F)\big]=0,$ and
		\begin{align*}
			\langle \cN^{\omega,\P},X^{c,\P}\rangle^{(\F_\tinytext{+},\P)} &= \langle \cN^{\omega,\P} + (\cU^{\omega,\P}\ast\tilde\mu^{X,\P})^{(\P)},X^{c,\P}\rangle^{(\F_\tinytext{+},\P)} = \langle \cM^{\omega,\P} - (\cZ^{\omega,\P}\bcdot X^{c,\P})^{(\P)}, X^{c,\P} \rangle^{(\F_\tinytext{+},\P)} \\
			&= \big\langle \cM^{\omega,\P} - ({\overline\cZ}^{\omega,\P}\bcdot X^{c,\P})^{(\P)}, X^{c,\P} \big\rangle^{(\F_\tinytext{+},\P)} = \big\langle ({\overline\cU}^{\omega,\P}\ast\tilde\mu^{X,\P})^{(\P)} + {\overline\cN}^{\omega,\P}, X^{c,\P} \big\rangle^{(\F_\tinytext{+},\P)} = 0.
		\end{align*}
		Here, we used the $(\F_\smallertext{+},\P)$-orthogonality of $\overline\cN^{\omega,\P}$ with respect to $X^{c,\P}$ and $\mu^X$ in the last equality.
		This yields the $(\F_\smallertext{+},\P)$-orthogonality of $\cN^{\omega,\P}$ and $X^{c,\P}$. It follows by uniqueness of the decomposition of $\cM^{\omega,\P}$, that $\cU^{\omega,\P} = \overline\cU^{\omega,\P}$ in $\H^2(\mu^X;\F_\smallertext{+},\P)$ and thus $\cN^{\omega,\P} = \overline\cN^{\omega,\P}$ in $\cH^{2,\perp}(X^{c,\P},\mu^X;\F_\smallertext{+},\P)$. This shows that $(i)$ is satisfied
		
		\medskip
		For $(ii)$ to hold, we need to slightly modify the integrand $\cU^{\omega,\P}$. Since $(\omega,\P,\tilde\omega,r) \longmapsto \|\cU^{\omega,\P}_r(\tilde\omega;\cdot)\|_{\hat\L^\smalltext{2}_{\smalltext{\omega}\smalltext{\otimes}_\tinytext{s}\smalltext{\tilde\omega}\smalltext{,}\smalltext{s}\smalltext{+}\smalltext{r}}(\mathsf{K}^{\smalltext{s}\smalltext{,}\smalltext{\omega}\smalltext{,}\smalltext{\P}}_{\smallertext{\omega}\smalltext{\otimes}_\tinytext{s}\smalltext{\tilde\omega}\smalltext{,}\smalltext{r}})}$ is Borel-measurable on $\Omega^C_s \times \Omega \times [0,\infty)$, $(\tilde\omega,r) \longmapsto \|\cU^{\omega,\P}_r(\tilde\omega;\cdot)\|_{\hat\L^\smalltext{2}_{\smalltext{\omega}\smalltext{\otimes}_\tinytext{s}\smalltext{\tilde\omega}\smalltext{,}\smalltext{s}\smalltext{+}\smalltext{r}}(\mathsf{K}^{\smalltext{s}\smalltext{,}\smalltext{\omega}\smalltext{,}\smalltext{\P}}_{\smallertext{\omega}\smalltext{\otimes}_\tinytext{s}\smalltext{\tilde\omega}\smalltext{,}\smalltext{r}})}$ is $\F^\P_\smallertext{+}$-predictable, and $\|\cU^{\omega,\P}_r(\tilde\omega;\cdot)\|_{\hat\L^\smalltext{2}_{\smalltext{\omega}\smalltext{\otimes}_\tinytext{s}\smalltext{\tilde\omega}\smalltext{,}\smalltext{s}\smalltext{+}\smalltext{r}}(\mathsf{K}^{\smalltext{s}\smalltext{,}\smalltext{\omega}\smalltext{,}\smalltext{\P}}_{\smallertext{\omega}\smalltext{\otimes}_\tinytext{s}\smalltext{\tilde\omega}\smalltext{,}\smalltext{r}})} < \infty$, $\P\otimes\d (C^{s,\omega}_{s\smallertext{+}\smallertext{\cdot}} - C_s(\omega))$--a.e., we can redefine the Borel-measurable map
				\begin{equation*}
					\Omega \times \fP_\textnormal{sem} \times \Omega \times [0,\infty) \times \R^d \ni (\omega,\P,\tilde\omega,t,x) \longmapsto \cU^{\omega,\P}_t(\tilde\omega;x) \in \R,
				\end{equation*}
				to be zero on the Borel-measurable subset
				\[
					\big\{ (\omega,\P,\tilde\omega,t,x) \in \Omega^C_s \times \Omega \times [0,\infty) \times \R^d : \|\cU^{\omega,\P}_r(\tilde\omega;\cdot)\|_{\hat\L^\smalltext{2}_{\smalltext{\omega}\smalltext{\otimes}_\tinytext{s}\smalltext{\tilde\omega}\smalltext{,}\smalltext{s}\smalltext{+}\smalltext{r}}(\mathsf{K}^{\smalltext{s}\smalltext{,}\smalltext{\omega}\smalltext{,}\smalltext{\P}}_{\smallertext{\omega}\smalltext{\otimes}_\tinytext{s}\smalltext{\tilde\omega}\smalltext{,}\smalltext{r}})} = \infty \big\} \subseteq \Omega \times \fP_\textnormal{sem} \times \Omega \times [0,\infty),
				\]
				while still maintaining the properties described in $(i)$. This completes the proof.
	\end{proof}

	\begin{proof}[Proof of \Cref{lem::conditioning_bsde2}]
		For simplicity, we assume $s = 0$, so $\P \in \fP_0$; the general case follows with analogous arguments. Moreover, we denote by $\sN$ a $\P$--null set that may change from line to line as we add at most countably many more $\P$--null sets to it each time. Additionally, we refrain from explicitly writing the terminal time $T$ and terminal condition $\xi$ when referring to the solution of the BSDEs. Furthermore, we observe that whenever $\P[A] = 1$ for $A \in \cF$, we can find a $\P$--null set $\sN$ such that $\E^{\P^{t,\omega}}[\1_{A}(\omega\otimes_t\cdot)] = 1$ for each $\omega \in \Omega \setminus \sN$.
		
		\medskip
		With these initial remarks, we now proceed to prove the stated equality. We clearly have 
		\begin{equation*}
			\E^\P[\cY^\P_t|\cF_t](\omega) = \E^{\P^{\smalltext{t}\smalltext{,}\smalltext{\omega}}}[\cY^\P_t(\omega\otimes_t\cdot)], \; \omega \in \Omega\setminus\sN.
		\end{equation*}
		In the following, we show that $(\cY^{\omega},\cZ^{\omega},\cU^{\omega},\cN^{\omega}) \coloneqq (\cY^\P_{t\smallertext{+}\smallertext{\cdot}}(\omega\otimes_t\cdot), \cZ^{\P}_{t\smallertext{+}\smallertext{\cdot}}(\omega\otimes_t\cdot), \cU^{\P}_{t\smallertext{+}\smallertext{\cdot}}(\omega\otimes_t\cdot),\cN^\P_{t\smallertext{+}\smallertext{\cdot}}(\omega\otimes_t\cdot) -\cN^\P_{t}(\omega\otimes_t\cdot))$, is the solution to the well-posed BSDE relative to $\P^{t,\omega}$ with terminal time $(T - t \land T)^{t,\omega}$, terminal condition $\xi^{t,\omega}$ and generator $f^{t,\omega,\P^{\smalltext{t}\smalltext{,}\smalltext{\omega}}}$ for $\omega \in \Omega\setminus\sN$. Since
		\begin{align*}
			\cY^\P_{t\smallertext{+}v} 
			&= \cY^\P_{t} + (\cY^\P_{t\smallertext{+}v} - \cY^\P_{t}) \\
			&= \cY^\P_{t}-\int_{t \land T}^{(t\smallertext{+}v)\land T} f^{\P}_r\big(\cY^\P_r,\cY^\P_{r\smallertext{-}}, \cZ^\P_r,\cU^\P_r(\cdot)\big)\d C_r + \bigg(\int_{t\land T}^{(t\smallertext{+}v)\land T}\cZ^\P_r\d X^{c,\P}_r\bigg)^{(\P)} + \bigg(\int_{t\land T}^{(t\smallertext{+}v)\land T}\d(\cU^\P\ast\tilde{\mu}^{X,\P})_r\bigg)^{(\P)} \\
			&\quad + \int_{t\land T}^{(t\smallertext{+}v)\land T}\d\cN^\P_{r} \\
			&= \cY^\P_{t}-\int_0^{v \land (T-t\land T)} f^{\P}_{t\smallertext{+}r}\big(\cY^\P_{t\smallertext{+}r},\cY^\P_{(t\smallertext{+}r)\smallertext{-}}, \cZ^\P_{t\smallertext{+}r},\cU^\P_{t\smallertext{+}r}(\cdot)\big)\d (C_{t\smallertext{+}\smallertext{\cdot}}-C_{t})_r + \bigg(\int_{t\land T}^{(t\smallertext{+}v)\land T}\cZ^\P_r\d X^{c,\P}_r\bigg)^{(\P)} \\
			&\quad + \bigg(\int_{t\land T}^{(t\smallertext{+}v)\land T}\d(\cU^\P\ast\tilde{\mu}^{X,\P})_r\bigg)^{(\P)} + \int_{t\land T}^{(t\smallertext{+}v)\land T}\d\cN^\P_{r}, \; v \in [0,\infty), \; \text{$\P$--a.s.}
		\end{align*}
		we have
		\begin{align*}
			\cY^{\omega}_{t\smallertext{+}v} 
			&= \cY^{\omega}_{t}-\int_0^{v \land (T-t\land T)^{t,\omega}} f^{t,\omega,\P}_{r}\big(\cY^{\omega}_{r},\cY^{\omega}_{r\smallertext{-}}, \cZ^{\omega}_{r},\cU^{\omega}_{r}(\cdot)\big)\d (C^{t,\omega}_{t\smallertext{+}\smallertext{\cdot}}-C_{t}(\omega))_r + \bigg(\int_{t\land T}^{(t\smallertext{+}v)\land T}\cZ^\P_r\d X^{c,\P}_r\bigg)^{(\P)}(\omega\otimes_t\cdot) \\
			&\quad + \bigg(\int_{t\land T}^{(t\smallertext{+}v)\land T}\d(\cU^\P\ast\tilde{\mu}^{X,\P})_r\bigg)^{(\P)}(\omega\otimes_t\cdot) + \int_0^{v \land (T-t\land T)^{t,\omega}}\d\cN^{\omega}_{r}, \; t \in [0,\infty), \; \text{$\P^{t,\omega}$--a.s.,} \; \omega \in \Omega\setminus\sN.
		\end{align*}
		It thus suffices to show that for $\P$--a.e. $\omega \in \Omega$, the $\P^{t,\omega}$-BSDE with terminal time $(T-t\land T)^{t,\omega}$, terminal condition $\xi^{t,\omega}$ and generator $f^{t,\omega,\P}$ is well-posed (meaning that the required integrability of the data is satisfied), that $\cY^{\omega}$, $\cZ^{\omega}$, $\cU^{\omega}$ and $\cN^{\omega}$ satisfy the required measurability and integrability, and that
		\begin{gather*}
			\bigg(\int_{t\land T}^{(t\smallertext{+}v)\land T}\cZ^\P_r\d X^{c,\P}_r\bigg)^{(\P)}(\omega\otimes_t\cdot) = \bigg(\int_0^{v \land (T-t\land T)^{t,\omega}}\cZ^{\omega}_r\d X^{c,\P^{\smalltext{t}\smalltext{,}\smalltext{\omega}}}_r\bigg)^{(\P^{\smalltext{t}\smalltext{,}\smalltext{\omega}})}, \, v \in [0,\infty),  \; \textnormal{$\P^{t,\omega}$--a.s.},\\
			\bigg(\int_{t\land T}^{(t\smallertext{+}v)\land T}\d(\cU^\P\ast\tilde{\mu}^{X,\P})_r\bigg)^{(\P)}(\omega\otimes_t\cdot) = \cU^{\omega}\ast\tilde\mu^{X,\P^{\smalltext{t}\smalltext{,}\smalltext{\omega}}}_{v \land (T-t\land T)^{t,\omega}}, \; v \in [0,\infty), \; \textnormal{$\P^{t,\omega}$--a.s.}
		\end{gather*}
		
		We start with the integrability of the data. Since
		\begin{equation*}
			\E^{\P}[\cE(\hat\beta A)_T\xi^2] + \E^{\P}\bigg[\int_{\tau\land T}^T \cE(\hat\beta A)_r \frac{|f^{\P}_r(0,0,0,\mathbf{0})|^2}{\alpha^2_r}\d C_r\bigg] < \infty, 
		\end{equation*}
		implies
		\begin{align*}
			&\E^{\P^{\smalltext{t}\smalltext{,}\smalltext{\omega}}}\big[\cE(\hat\beta (A^{t,\omega}_{t\smallertext{+}\smallertext{\cdot}}-A_{t}(\omega)))_{(T - t\land T)^{\smalltext{t}\smalltext{,}\smalltext{\omega}}} |\xi^{t,\omega}|^2\big] \\
			&\quad + \E^{\P^{\smalltext{t}\smalltext{,}\smalltext{\omega}}}\bigg[\int_0^{(T-t\land T)^{t,\omega}} \cE(\hat\beta (A^{t,\omega}_{t\smallertext{+}\smallertext{\cdot}}-A_{t}(\omega)))_r \frac{|f^{t,\omega,\P}_r(0,0,0,\mathbf{0})|^2}{|\alpha^{t,\omega}_{t\smallertext{+}r}|^2}\d (C^{t,\omega}_{t\smallertext{+}\smallertext{\cdot}}-C_{t}(\omega))_r\bigg] < \infty,
		\end{align*}
		for $\omega \in \Omega \setminus \sN$, the $\P^{t,\omega}$-BSDE is well-posed. Since $\cY^\P$ and $\cN^\P$ are $\F_\smallertext{+}$-adapted, it follows that both processes $\cY^{\omega}$ and $\cN^{\omega}$ are $\F_\smallertext{+}$-adapted by Galmarino's test; see \cite[Theorem IV.101.(b), page 150]{dellacherie1978probabilities}. Note that $\cN^{\omega}$ is still right-continuous, and the $\P$--a.s. c\`adl\`ag property of $\cY^\P$ is then transferred to a $\P^{t,\omega}$--a.s. c\`adl\`ag property of $\cY^{\omega}$ for $\omega \in \Omega\setminus \sN$, which then implies that $\cY^{\omega}$ is $\F^{\P^{\smalltext{t}\smalltext{,}\smalltext{\omega}}}_\smallertext{+}$-optional.\footnote{The arguments in the proof of \cite[Remark 6.4.3]{weizsaecker1990stochastic} can be adapted. However, there is a typo in the statement and the proof: the sigma-algebra denoted by \( \mathbf{F} \) should be replaced by \( \prescript{\circ}{}{\mathbf{F}} \), which includes all \( (\prescript{\circ}{}{\mathbf{F}},\mathbb{P}) \)-null sets.} The $\F$-predictability of $\cZ^{\omega}$ follows from \cite[Theorem IV.97.(b) and Theorem IV.99.(b), page 147]{dellacherie1978probabilities}, and the $\widetilde{\cP}(\F)$-measurability of $\cU^{\omega}$ follows from the same result together with a functional monotone class argument. 
		
		\medskip
		We turn to the required integrability. From $(\cY^\P,\alpha\cY^\P,\alpha\cY^\P_\smallertext{-}) \in \cS^2_T(\F_\smallertext{+},\P) \times \big(\H^{2}_{T,\hat\beta}(\F_\smallertext{+},\P)\big)^2$, we deduce for $\omega \in \Omega \setminus \sN$ that
		\begin{align*}
			&\|\cY^{\omega}\|^2_{\cS^{\smalltext{2}\smalltext{,}\smalltext{t}\smalltext{,}\smalltext{\omega}}_{\smalltext{T}}(\F^{\P^{\tinytext{t}\tinytext{,}\tinytext{\omega}}}_\tinytext{+},\P^{\smalltext{t}\smalltext{,}\smalltext{\omega}})} 
			+ \|\alpha^{t,\omega}_{t\smallertext{+}\smallertext{\cdot}}\cY^{\omega}\|^2_{\H^{\smalltext{2}\smalltext{,}\smalltext{t}\smalltext{,}\smalltext{\omega}}_{\smalltext{T}\smalltext{,}\smalltext{\hat\beta}}(\F^{\P^{\tinytext{t}\tinytext{,}\tinytext{\omega}}}_\tinytext{+},\P^{\smalltext{t}\smalltext{,}\smalltext{\omega}})}
			+ \|\alpha^{t,\omega}_{t\smallertext{+}\smallertext{\cdot}}\cY^{\omega}_{\smallertext{-}}\|^2_{\H^{\smalltext{2}\smalltext{,}\smalltext{t}\smalltext{,}\smalltext{\omega}}_{\smalltext{T}\smalltext{,}\smalltext{\hat\beta}}(\F^{\P^{\tinytext{t}\tinytext{,}\tinytext{\omega}}}_\tinytext{+},\P^{\smalltext{t}\smalltext{,}\smalltext{\omega}})} \\
			&=\E^{\P^{\smalltext{t}\smalltext{,}\smalltext{\omega}}}\bigg[\sup_{r \in [0,(T-t\land T)^{\smalltext{t}\smalltext{,}\smalltext{\omega}}]}|\cY^{\P}_{t\smallertext{+}r}(\omega\otimes_t\cdot)|^2+\int_0^{(T-t\land T)^{t,\omega}}\cE(\hat\beta (A^{t,\omega}_{t\smallertext{+}\smallertext{\cdot}}-A_{t}(\omega)))_r |\cY^\P_{t\smallertext{+}r}(\omega\otimes_t\cdot)|^2\d (A^{t,\omega}_{t\smallertext{+}\smallertext{\cdot}}-A_{t}(\omega))_r\bigg] \\
			&\quad + \E^{\P^{\smalltext{t}\smalltext{,}\smalltext{\omega}}}\bigg[\int_0^{(T-t\land T)^{t,\omega}}\cE(\hat\beta (A^{t,\omega}_{t\smallertext{+}\smallertext{\cdot}}-A_{t}(\omega)))_r |\cY^\P_{(t\smallertext{+}r)\smallertext{-}}(\omega\otimes_t\cdot)|^2\d (A^{t,\omega}_{t\smallertext{+}\smallertext{\cdot}}-A_{t}(\omega))_r\bigg] \\
			&= \E^{\P^{\smalltext{t}\smalltext{,}\smalltext{\omega}}}\bigg[\Big(\sup_{r \in [t\land T,T]}|\cY^\P_r|^2\Big)^{t,\omega}\bigg] + \E^{\P^{\smalltext{t}\smalltext{,}\smalltext{\omega}}}\Bigg[\bigg(\int_0^{(T-t\land T)}\cE(\hat\beta (A_{t\smallertext{+}\smallertext{\cdot}}-A_{t}))_r |\cY^\P_{t\smallertext{+}r}|^2\d (A_{t\smallertext{+}\smallertext{\cdot}}-A_{t})_r\bigg)^{t,\omega}\Bigg] \\
			&\quad +  \E^{\P^{\smalltext{t}\smalltext{,}\smalltext{\omega}}}\Bigg[\bigg(\int_0^{(T-t\land T)}\cE(\hat\beta (A_{t\smallertext{+}\smallertext{\cdot}}-A_{t}))_r |\cY^\P_{(t\smallertext{+}r)\smallertext{-}}|^2\d (A_{t\smallertext{+}\smallertext{\cdot}}-A_{t})_r\bigg)^{t,\omega}\Bigg] \\
			&= \E^{\P^{\smalltext{t}}_\smalltext{\omega}}\bigg[\sup_{r \in [t\land T,T]}|\cY^\P_r|^2\bigg] 
			+ \E^{\P^{\smalltext{t}}_\smalltext{\omega}}\bigg[\int_{t}^{T}\frac{\cE(\hat\beta A)_{r}}{\cE(A_{\cdot\land t}(\omega))_r} |\cY^\P_{r}|^2\d A_r\bigg] + \E^{\P^{\smalltext{t}}_\smalltext{\omega}}\bigg[\int_{t}^{T}\frac{\cE(\hat\beta A)_{r}}{\cE(A_{\cdot\land t}(\omega))_r} |\cY^\P_{r-}|^2\d A_r\bigg] < \infty, \; \text{for $\omega \in \Omega\setminus\sN$.}
		\end{align*}
		
		\medskip
		The following are the remaining conditions that need to be verified:
		\begin{enumerate}
			\item[$(i)$] $\cZ^{\omega} \in \H^{2,t,\omega}_{T,\hat\beta}(X^{c,\P^{\smalltext{t}\smalltext{,}\smalltext{\omega}}};\F,\P^{t,\omega})$ and $\displaystyle \bigg(\int_{t\land T}^{(t\smallertext{+}\smallertext{\cdot})\land T}\cZ^\P_r\d X^{c,\P}_r\bigg)^{(\P)}(\omega\otimes_t\cdot) = \bigg(\int_0^\cdot\cZ^{\omega}_{r}\d X^{c,\P^{\smalltext{t}\smalltext{,}\smalltext{\omega}}}_r\bigg)^{(\P^{\smalltext{t}\smalltext{,}\smalltext{\omega}})}$, $\P^{t,\omega}$--a.s., 
			\item[$(ii)$] $\cU^{\omega} \in \H^{2,t,\omega}_{T,\hat\beta}(\mu^{X};\F,\P^{t,\omega})$ and $\displaystyle \bigg(\int_{t\land T}^{(t\smallertext{+}\smallertext{\cdot})\land T}\d(\cU^\P\ast\tilde{\mu}^{X,\P})_r\bigg)^{(\P)}(\omega\otimes_t\cdot) = \big(\cU^{\omega}\ast\tilde\mu^{X,\P^{\smalltext{t}\smalltext{,}\smalltext{\omega}}}\big)^{(\F,\P^{\smalltext{t}\smalltext{,}\smalltext{\omega}})}$, $\P^{t,\omega}$--a.s., 
			\item[$(iii)$] $\cN^{\omega} \in \cH^{2,t,\omega,\perp}_{T,\hat\beta}(X^{c,\P^{\smalltext{t}\smalltext{,}\smalltext{\omega}}},\mu^{X};\F_\smallertext{+},\P^{t,\omega})$,
		\end{enumerate}
		for $\omega \in \Omega\setminus\sN$.

		\medskip
		We start with $(iii)$. By \Cref{lem::conditioning_martingale2}, the process $\cN^\omega \equiv \cN^\P_{t\smallertext{+}\smallertext{\cdot}}(\omega\otimes_t\cdot) -\cN^\P_{t}(\omega\otimes_t\cdot)$ is a square-integrable $(\F_\smallertext{+},\P^{t,\omega})$-martingale for $\omega \in \Omega\setminus\sN$. We also have
		\begin{equation*}
			[\cN^{\omega}]^{(\F_\smalltext{+},\P^{\smalltext{t}\smalltext{,}\smalltext{\omega}})} = [\cN^\P - \cN^\P_{\cdot\land t}]^{(\F_\smalltext{+},\P)}_{t\smallertext{+}\smallertext{\cdot}}(\omega\otimes_t\cdot), \; \textnormal{for $\P^{t,\omega}$--a.e. $\omega \in \Omega$,}
		\end{equation*}
		by \cite[Theorem I.4.47.a), page 52]{jacod2003limit}. Then
		\begin{align*}
			& \int\|\cN^{\omega}\|^2_{\cH^{\smalltext{2}\smalltext{,}\smalltext{t}\smalltext{,}\smalltext{\omega}}_{\smalltext{T}\smalltext{,}\smalltext{\hat\beta}}(\F_\smalltext{+},\P^{\smalltext{t}\smalltext{,}\smalltext{\omega}})}\P(\d\omega) \\
			&\quad= \int\E^{\P^{\smalltext{t}\smalltext{,}\smalltext{\omega}}}\bigg[\int_0^{(T-t\land T)^{\smalltext{t}\smalltext{,}\smalltext{\omega}}}\cE(\hat\beta (A^{t,\omega}_{t\smallertext{+}\smallertext{\cdot}}-A_{t}(\omega)))_{r}\d[\cN^{\omega}]^{(\F_\smalltext{+},\P^{\smalltext{t}\smalltext{,}\smalltext{\omega}})}_r\bigg]\P(\d\omega) \\
			&\quad= \int\E^{\P^{\smalltext{t}\smalltext{,}\smalltext{\omega}}}\bigg[\int_0^{(T-t\land T)^{\smalltext{t}\smalltext{,}\smalltext{\omega}}}\cE(\hat\beta (A^{t,\omega}_{t\smallertext{+}\smallertext{\cdot}}-A_{t}(\omega)))_{r}\d[\cN^\P - \cN^\P_{\cdot\land t}]^{(\F_\smalltext{+},\P)}_{t\smallertext{+}r}(\omega\otimes_t\cdot)\bigg]\P(\d\omega) \\
			&\quad= \int\E^{\P^{\smalltext{t}\smalltext{,}\smalltext{\omega}}}\Bigg[\bigg(\int_0^{(T-t\land T)}\cE(\hat\beta (A_{t\smallertext{+}\smallertext{\cdot}}-A_{t}))_{r}\d[\cN^\P - \cN^\P_{\cdot\land t}]^{(\F_\smalltext{+},\P)}_{t\smallertext{+}r}\bigg)^{t,\omega}\Bigg]\P(\d\omega) \\
			&\quad= \int\E^{\P^{\smalltext{t}}_{\smalltext{\omega}}}\bigg[\int_{t}^{T}\frac{\cE(\hat\beta A)_{r}}{\cE(\hat\beta A_{\cdot\land t})_r}\d[\cN^\P]^{(\F_\smalltext{+},\P)}_{r}\bigg]\P(\d\omega) \\
			&\quad= \int\E^{\P}\bigg[\int_{\tau}^{T}\frac{\cE(\hat\beta A)_{r}}{\cE(\hat\beta A_{\cdot\land t})_r}\d[\cN^\P]^{(\F_\smalltext{+},\P)}_{r}\bigg|\cF_t\bigg](\omega)\P(\d\omega) \\
			&\quad\leq \E^{\P}\bigg[\int_{t}^{T}\cE(\hat\beta A)_{r}\d[\cN^\P]^{(\F_\smalltext{+},\P)}_{r}\bigg] \leq \|\cN^\P\|^2_{\cH^{\smalltext{2}}_{\smalltext{T}\smalltext{,}\smalltext{\hat\beta}}} < \infty,
		\end{align*}
		where the first equality follows by dual predictable projection (see \cite[Proposition 6.6.5]{weizsaecker1990stochastic}). This yields
		\begin{equation*}
			\|\cN^{\omega}\|^2_{\cH^{\smalltext{2}\smalltext{,}\smalltext{t}\smalltext{,}\smalltext{\omega}}_{\smalltext{T}\smalltext{,}\smalltext{\hat\beta}}(\F_\smalltext{+},\P^{\smalltext{t}\smalltext{,}\smalltext{\omega}})} < \infty, \; \textnormal{$\omega \in \Omega\setminus\sN$.}
		\end{equation*}
		From
		\begin{equation*}
			[\cN^\P - \cN^\P_{\cdot\land t}, X^{c,\P} - X^{c,\P}_{\cdot\land t} ]^{(\F_\tinytext{+},\P)} = \langle \cN^\P - \cN^\P_{\cdot\land t}, X^{c,\P} - X^{c,\P}_{\cdot\land t} \rangle^{(\F_\tinytext{+},\P)} = 0, \; \text{$\P$--a.s.},
		\end{equation*}
		and then
		\begin{equation*}
			[\cN^\P - \cN^\P_{\cdot\land t}, X^{c,\P} - X^{c,\P}_{\cdot\land t} ]^{(\F_\tinytext{+},\P)}(\omega\otimes_t\cdot) = \langle \cN^\P - \cN^\P_{\cdot\land t}, X^{c,\P} - X^{c,\P}_{\cdot\land t} \rangle^{(\F_\tinytext{+},\P)}(\omega\otimes_t\cdot) = 0, \; \text{$\P^{t,\omega}$--a.s.}, \; \omega \in \Omega\setminus\sN,
		\end{equation*}
		we deduce together with \cite[Theorem I.4.47.a)]{jacod2003limit} and since $X^{c,\P}_{t\smallertext{+}\smallertext{\cdot}}(\omega\otimes_t\cdot) - X^{c,\P}_{t}(\omega) = X^{c,\P^{\smalltext{t}\smalltext{,}\smalltext{\omega}}}$, $\P^{t,\omega}$--a.s., for $\omega \in \Omega\setminus\sN$ (see \Cref{cor::shif_quadratic_variation_continuous_martingale_part}) that
		\begin{align*}
			\langle\cN^{\omega}, X^{c,\P^{\smalltext{t}\smalltext{,}\smalltext{\omega}}} \rangle^{(\F_\smalltext{+},\P^{\smalltext{t}\smalltext{,}\smalltext{\omega}})} 
			&= \big[\cN^{\P}_{t\smallertext{+}\smallertext{\cdot}}(\omega\otimes_t\cdot)-\cN^{\P}_{t}(\omega\otimes_t\cdot), X^{c,\P}_{t\smallertext{+}\smallertext{\cdot}}(\omega\otimes_t\cdot) - X^{c,\P}_{t}(\omega) \big]^{(\F_\smalltext{+},\P^{\smalltext{t}\smalltext{,}\smalltext{\omega}})} \\
			&= \big[\cN^\P - \cN^\P_{\cdot\land t}, X^{c,\P} - X^{c,\P}_{\cdot \land t} \big]^{(\F_\smalltext{+},\P)}_{t\smallertext{+}\smallertext{\cdot}}(\omega\otimes_t\cdot) = 0, \; \text{$\P^{t,\omega}$--a.s., $\omega \in \Omega\setminus\sN$.}
		\end{align*}
		This proves the orthogonality with respect to the continuous local martingale part of $X$ for each $\omega \in \Omega\setminus\sN$.
		
		\medskip
		The orthogonality of $\cN^{\omega}$ and the jump measure is more involved. We write $(\widetilde{\Omega},\widetilde{\cP}) \coloneqq (\Omega \times [0,\infty)\times \R^d, \widetilde{\cP}(\F))$ for simplicity. Let $0 < V = V \land 1 \leq 1$ be the $\widetilde\cP$-measurable function satisfying $0 \leq V \ast\mu^X \leq 1$ constructed in \cite[Lemma 6.5]{neufeld2014measurability}. We write $\mu(\omega;\d t, \d x) \coloneqq V_t(\omega;x)\mu^X(\omega;\d t, \d x)$. The property $M^\P_{\mu^\smalltext{X}}[\Delta\cN^\P|\widetilde\cP] = 0$, $M^\P_{\mu^\smalltext{X}}$--a.e., is equivalent to $M^\P_{\mu}[\Delta\cN^\P|\widetilde\cP] = 0$, $M^\P_\mu$--a.e., which in turn is equivalent to
		\begin{align*}
			\E^\P\big[(W\Delta\cN^\P) \ast\mu_\infty\big] = 0,
		\end{align*}
		for each $\widetilde\cP$-measurable and bounded function $W$. This follows from $A \longmapsto M^\P_{\mu}[\1_A\Delta\cN^\P]$ being a finite-valued, signed measure on $(\widetilde\Omega,\widetilde{\cP})$; we have
		\begin{align*}
			M^\P_{\mu}\big[|\Delta\cN^\P|\big] =\E^\P\big[|\Delta\cN^\P|\ast\mu_\infty\big]^2 &=  M^\P_{\mu^\smalltext{X}}\big[|V\Delta\cN^\P|\big]^2 \leq M^\P_{\mu^\smalltext{X}}[V^2]M^\P_{\mu^\smalltext{X}}\big[|\Delta\cN^\P|^2\big] \leq M^\P_{\mu^\smalltext{X}}[V]M^\P_{\mu^\smalltext{X}}\big[|\Delta\cN^\P|^2\big] \\
			&\leq  \E^\P[( \Delta\cN^\P)^2\ast\mu^X_\infty] \leq \E^\P\Bigg[\sum_{s \in (0,\infty)} (\Delta\cN^\P_s)^2\Bigg] \leq \E^\P\big[[\Delta\cN^\P]^{(\F_\tinytext{+},\P)}_\infty\big] < \infty.
		\end{align*}
		Fix a $\widetilde\cP$-measurable and bounded function $W$ and an $\cF_t$-measurable and bounded random variable $\eta$. Then
		\begin{align*}
			\E^{\P}\bigg[\eta \E^{\P} \Big[ (W\1_{\llparenthesis t,\infty\rrparenthesis}\Delta\cN^\P)\ast\mu_\infty  \Big|\cF_t\Big] \bigg] 
			= \E^{\P}\bigg[\E^{\P} \Big[ (W \eta\1_{\llparenthesis t,\infty\rrparenthesis}\Delta\cN^\P)\ast\mu_\infty  \Big|\cF_t\Big] \bigg] 
			= \E^{\P}\Big[ (W \eta\1_{\llparenthesis t,\infty\rrparenthesis}\Delta\cN^\P)\ast\mu_\infty\Big] = 0,
		\end{align*}
		since $W\eta\1_{\llparenthesis t, \infty \rrparenthesis}$ is $\widetilde\cP$-measurable and bounded. Therefore, 
		\begin{equation*}
			\E^{\P^{\smalltext{t}\smalltext{,}\smalltext{\omega}}} \Big[ \big((W\1_{\llparenthesis t,\infty\rrparenthesis}\Delta\cN^\P)\ast\mu_\infty\big)(\omega\otimes_t\cdot)  \Big]
			=\E^{\P} \Big[ (W\1_{\llparenthesis t,\infty\rrparenthesis}\Delta\cN^\P)\ast\mu_\infty  \Big|\cF_t\Big] = 0, \textnormal{for $\P$--a.e. $\omega \in \Omega$.}
		\end{equation*}
		Since $\widetilde{\cP} = \cP \otimes\cB(\R^d)$ is separable (see \cite[Lemma 6.3]{neufeld2014measurability}), a monotone class argument then implies that
		\begin{equation}\label{eq::rcpd_stochastic_integral_compensated_zero}
			\E^{\P^{\smalltext{t}\smalltext{,}\smalltext{\omega}}} \Big[ (W\1_{\llparenthesis t,\infty\rrparenthesis}\Delta\cN^\P)\ast\mu_\infty(\omega\otimes_t\cdot)  \Big] = 0,
		\end{equation}
		holds for all $\widetilde{\cP}$-measurable and bounded functions $W$ for $\P$--a.e. $\omega \in \Omega$, and then for every $\omega \in \Omega\setminus\sN$. We now fix $\omega \in \Omega\setminus\sN$, a $\widetilde{\cP}$-measurable and bounded function $W$, and another $\widetilde{\cP}$-measurable and bounded function $\widetilde{W}$ such that $\widetilde{W}_{t+r}(\omega\otimes_t\tilde\omega,x) = W(\tilde\omega,r,x)$ for all $(\tilde\omega,r,x) \in \Omega \times (0,\infty)\times \R^d$, see \cite[Lemma 3.6]{neufeld2016nonlinear}. Since $X^{t,\omega}_{t\smallertext{+}\smallertext{\cdot}}-X_t(\omega) = X$ and thus $\Delta(X^{t,\omega}_{t\smallertext{+}\smallertext{\cdot}}) = \Delta X$, it follows that
		
		\begin{align*}
			\int_0^\infty V_{t\smallertext{+}r}(\omega\otimes_t\tilde\omega;x)\mu^X(\tilde\omega;\d r, \d x) 
			&= \int_0^\infty V_{t\smallertext{+}r}(\omega\otimes_t\tilde\omega;x)\mu^{X^{\smalltext{t}\smalltext{,}\smalltext{\omega}}_{\smalltext{t}\smalltext{+}\smalltext{\cdot}}}(\tilde\omega;\d r, \d x) \\
			&= \sum_{r \in (0,\infty)} V_{t\smallertext{+}r}(\omega\otimes_t\tilde\omega; \Delta(X^{t,\omega}_{t\smallertext{+}\smallertext{\cdot}})_r(\tilde\omega))\1_{\{\Delta(X^{t,\omega}_{t\smallertext{+}\smallertext{\cdot}})_r(\tilde\omega) \neq 0\}} \\
			&= \sum_{r \in (0,\infty)} V_{t\smallertext{+}r}(\omega\otimes_t\tilde\omega; \Delta X_{t+r}(\omega\otimes_t\tilde\omega))\1_{\{(\Delta X)_{t+r}(\omega\otimes_t\tilde\omega) \neq 0\}} \\
			&= \sum_{r \in (t,\infty)} V_{r}(\omega\otimes_t\tilde\omega; \Delta X_{r}(\omega\otimes_t\tilde\omega))\1_{\{\Delta X_{r}(\omega\otimes_t\tilde\omega) \neq 0\}} \\
			&= \bigg(\int_{t}^\infty V_{r}(x) \mu^X(\d r, \d x)\bigg)(\omega\otimes_t\tilde\omega) \leq 1,
		\end{align*}
		and therefore $M^{\P^{\smalltext{t}\smalltext{,}\smalltext{\omega}}}_{\mu^\smalltext{X}}[\Delta\cN^{\omega}|\widetilde{\cP}] = 0$, $M^{\P^{\smalltext{t}\smalltext{,}\smalltext{\omega}}}_{\mu^\smalltext{X}}$--a.e., is equivalent to $M^{\P^{\smalltext{t}\smalltext{,}\smalltext{\omega}}}_{\mu^\smalltext{X}}[V^{t,\omega}_{t\smallertext{+}\smallertext{\cdot}}\Delta\cN^{\omega}|\widetilde{\cP}] = 0$, $M^{\P^{\smalltext{t}\smalltext{,}\smalltext{\omega}}}_{\mu^\smalltext{X}}$--a.e., as before. The latter follows immediately from
		\begin{align*}
			& \E^{\P^{\smalltext{t}\smalltext{,}\smalltext{\omega}}}\bigg[\int_0^\infty W_r(x)V_{t\smallertext{+}r}(\omega\otimes_t\cdot; x) \Delta\cN^{\omega}_r\mu^X(\d r, \d x)\bigg] \\
			&\quad= \E^{\P^{\smalltext{t}\smalltext{,}\smalltext{\omega}}}\bigg[\int_0^\infty \widetilde{W}_{t\smallertext{+}r}(\omega\otimes_t\cdot; x)V_{t\smallertext{+}r}(\omega\otimes_t\cdot; x) \Delta\cN^{\omega}_r\mu^X(\d r, \d x)\bigg] \\
			&\quad = \E^{\P^{\smalltext{t}\smalltext{,}\smalltext{\omega}}}\bigg[\int_0^\infty \widetilde{W}_{t\smallertext{+}r}(\omega\otimes_t\cdot; x)V_{t\smallertext{+}r}(\omega\otimes_t\cdot; x) \Delta\cN^{\omega}_r\mu^{X^{\smalltext{t}\smalltext{,}\smalltext{\omega}}_{\smalltext{t}\smalltext{+}\smalltext{\cdot}}}(\d r, \d x)\bigg] \\
			&\quad = \E^{\P^{\smalltext{t}\smalltext{,}\smalltext{\omega}}}\bigg[\int_0^\infty \widetilde{W}_{t\smallertext{+}r}(\omega\otimes_t\cdot; x)V_{t\smallertext{+}r}(\omega\otimes_t\cdot; x) \Delta\cN^\P_{t\smallertext{+}r}(\omega\otimes_t\cdot)\mu^{X_{\smalltext{t}\smalltext{+}\smalltext{\cdot}}}(\omega\otimes_t\cdot; \d r, \d x)\bigg] \\
			&\quad = \E^{\P^{\smalltext{t}\smalltext{,}\smalltext{\omega}}}\bigg[\int_{t}^\infty \widetilde{W}_{r}(\omega\otimes_t\cdot; x)V_{r}(\omega\otimes_t\cdot\,;x) \Delta\cN^\P_{r}(\omega\otimes_t\cdot)\mu^{X}(\omega\otimes_t\cdot; \d r, \d x)\bigg] \\
			&\quad = \E^{\P^{\smalltext{t}\smalltext{,}\smalltext{\omega}}}\bigg[\bigg(\int_{t}^\infty \widetilde{W}_{r}(x)V_{r}(x) \Delta\cN^\P_{r}\mu^{X}(\d r, \d x)\bigg)(\omega\otimes_t\cdot)\bigg] 
			= \E^{\P^{\smalltext{t}\smalltext{,}\smalltext{\omega}}} \Big[ (\widetilde{W}\1_{\llparenthesis t,\infty\rrparenthesis}\Delta\cN^\P)\ast\mu_\infty(\omega\otimes_t\cdot)  \Big]
			= 0,
		\end{align*}
		where in the second equality we used $X = X^{t,\omega}_{t\smallertext{+}\smallertext{\cdot}} - X_{t}(\omega)$ again and in the last equality \eqref{eq::rcpd_stochastic_integral_compensated_zero}. This proves the required orthogonality for each $\omega \in \Omega\setminus\sN$.

		\medskip
		We turn to $(ii)$ and start with the first assertion. Since $\mathsf{K}^{t,\omega,\P^{\smalltext{t}\smalltext{,}\smalltext{\omega}}}_{\tilde\omega,r} = \mathsf{K}^{\P}_{\omega\otimes_t\tilde\omega,t+r}$, $\d(C^{t,\omega}_{t\smallertext{+}\smallertext{\cdot}}-C_t(\omega))_r$--a.e., for $\omega \in \Omega\setminus\sN$ (see \Cref{lem::uniqueness_of_kernel} and \Cref{prop::conditioning_characteristics2}), we find
		\begin{align*}
			& \int\|\cU^{\omega}\|^2_{\H^{\smalltext{2}\smalltext{,}\smalltext{t}\smalltext{,}\smalltext{\omega}}_{\smalltext{T}\smalltext{,}\smalltext{\hat\beta}}(\mu^X;\F,\P^{\smalltext{t}\smalltext{,}\smalltext{\omega}})}\P(\d\omega) \\
			&\quad= \int\E^{\P^{\smalltext{t}\smalltext{,}\smalltext{\omega}}}\bigg[\int_0^{(T-t\land T)^{\smalltext{t}\smalltext{,}\smalltext{\omega}}}\cE(\hat\beta (A^{t,\omega}_{t\smallertext{+}\smallertext{\cdot}}-A_{t}(\omega)))_{r}\|\cU^{\omega}_{t\smallertext{+}r}(\cdot)\|^2_{\hat\L^\smalltext{2}_{\smalltext{\omega}\smalltext{\otimes}_\tinytext{t}\smalltext{\cdot}\smalltext{,}\smalltext{t}\smalltext{+}\smalltext{r}}(\mathsf{K}^{\P}_{\smalltext{\omega}\smalltext{\otimes}_\tinytext{t}\smalltext{\cdot}\smalltext{,}\smalltext{t}\smalltext{+}\smalltext{r}})}\d(C^{t,\omega}_{t\smallertext{+}\smallertext{\cdot}}-C_{t}(\omega))_r\bigg]\P(\d\omega) \\
			&\quad= \int\E^{\P^{\smalltext{t}\smalltext{,}\smalltext{\omega}}}\bigg[\int_0^{(T-t\land T)^{\smalltext{t}\smalltext{,}\smalltext{\omega}}}\cE(\hat\beta (A_{t\smallertext{+}\smallertext{\cdot}}-A_t))^{t,\omega}_{r}\|\cU^\P_{t\smallertext{+}r}(\omega\otimes_t\cdot)\|^2_{\hat\L^\smalltext{2}_{\smalltext{\omega}\smalltext{\otimes}_\tinytext{t}\smalltext{\cdot}\smalltext{,}\smalltext{t}\smalltext{+}\smalltext{r}}(\mathsf{K}^{\P}_{\smalltext{\omega}\smalltext{\otimes}_\tinytext{t}\smalltext{\cdot}\smalltext{,}\smalltext{t}\smalltext{+}\smalltext{r}})}\d(C_{t\smallertext{+}\smallertext{\cdot}}-C_t)^{t,\omega}_r\bigg]\P(\d\omega) \\
			&\quad= \int\E^{\P^{\smalltext{t}\smalltext{,}\smalltext{\omega}}}\Bigg[\bigg(\int_0^{(T-t\land T)}\cE(\hat\beta (A_{t\smallertext{+}\smallertext{\cdot}}-A_t))_{r}\|\cU^\P_{t\smallertext{+}r}(\cdot)\|^2_{\hat\L^\smalltext{2}_{\smalltext{t}\smalltext{+}\smalltext{r}}(\mathsf{K}^{\P}_{\smalltext{t}\smalltext{+}\smalltext{r}})}\d(C_{t\smallertext{+}\smallertext{\cdot}}-C_t)_r\bigg)(\omega\otimes_t\cdot)\Bigg]\P(\d\omega) \\
			&\quad= \int\E^{\P}\bigg[\int_{t}^{T}\frac{\cE(\hat\beta A)_{r}}{\cE(\hat\beta A_{\cdot\land t})}_r\|\cU^{\P}_{r}(\cdot)\|^2_{\hat\L^\smalltext{2}_{\smalltext{r}}(\mathsf{K}^{\P}_{\smalltext{r}})}\d C_r \bigg| \cF_t \bigg](\omega)\P(\d\omega) \\
			&\quad\leq \E^{\P}\bigg[\int_{t}^{T}\cE(\hat\beta A)_{r}\|\cU^\P_{r}(\cdot)\|^2_{\hat\L^\smalltext{2}_{\smalltext{r}}(\mathsf{K}^{\P}_{\smalltext{r}})}\d C_r \bigg] \leq \|\cU^\P\|^2_{\H^\smalltext{2}_{\smalltext{T}\smalltext{,}\smalltext{\hat\beta}}(\mu^X;\F,\P)}  < \infty,
		\end{align*}
		which implies
		\begin{equation*}
			\|\cU^{\omega}\|^2_{\H^{\smalltext{2}\smalltext{,}\smalltext{t}\smalltext{,}\smalltext{\omega}}_{\smalltext{T}\smalltext{,}\smalltext{\hat\beta}}(\mu^X;\F_\smalltext{+},\P^{\smalltext{t}\smalltext{,}\smalltext{\omega}})} < \infty, \; \omega \in \Omega\setminus\sN.
		\end{equation*}
		Therefore $\cU^{\omega} \in \H^{2,t,\omega}_{T,\hat\beta}\big(\mu^X; \F,\P^{t,\omega}\big)$ for $\omega \in \Omega\setminus\sN$, since $\cU^{\omega} = \cU^{\omega}\1_{\llbracket 0,(T-t\land T)^{\smalltext{t}\smalltext{,}\smalltext{\omega}}\rrbracket}$ and $\cU^{\omega}$ is $\widetilde\cP$-measurable by a monotone class argument and \cite[Theorem IV.97.(b), page 147]{dellacherie1978probabilities}. We turn to the second assertion. By \Cref{lem::conditioning_martingale2}, the process 
		\begin{equation*}
			\bigg(\int_{t\land T}^{(t\smallertext{+}\smallertext{\cdot})\land T}\d(\cU^\P\ast\tilde{\mu}^{X,\P})_r\bigg)^{(\P)}(\omega\otimes_t\cdot)  =(\cU^\P\ast\tilde{\mu}^{X,\P})^{t,\omega}_{t\smallertext{+}\smallertext{\cdot}} - (\cU^\P\ast\tilde{\mu}^{X,\P})^{t,\omega}_{t},
		\end{equation*}
		is a square-integrable martingale relative to $(\F,\P^{t,\omega})$ for $\omega \in \Omega\setminus\sN$.
		Since
		\begin{equation*}
			\Delta(\cU^\P\ast\tilde{\mu}^{X,\P})_r 
			= \cU^\P_r(\Delta X_r)\1_{\{\Delta X_{\smalltext{r}}\neq 0\}} - \int_{\R^\smalltext{d}}\cU^\P_r(x)\nu^{\P}(\{r\}\times \d x), \; r \in [0,\infty), \; \textnormal{$\P$--a.s.},
		\end{equation*}
		we have
		\begin{align*}
			&\Delta(\cU^\P\ast\tilde{\mu}^{X,\P})_r(\omega\otimes_t\cdot) \\
			&= \cU^\P_r(\omega\otimes_t\cdot\,;\Delta X_r(\omega\otimes_t\cdot))\1_{\{\Delta X_{\smalltext{r}}(\omega\otimes_\smalltext{t}\cdot)\neq 0\}} - \int_{\R^\smalltext{d}}\cU^\P_r(\omega\otimes_t\cdot\,;x)\nu^{\P}(\omega\otimes_t\cdot\,;\{r\}\times \d x) \\
			&= \cU^\P_r(\omega\otimes_t\cdot;\Delta X_r(\omega\otimes_t\cdot))\1_{\{\Delta X_{\smalltext{r}}(\omega\otimes_\smalltext{t}\cdot)\neq 0\}} - \int_{\R^\smalltext{d}}\cU^\P_r(\omega\otimes_t\cdot\,;x)\mathsf{K}^{\P}_{\omega\otimes_\smalltext{t}\cdot, r}(\d x) \Delta C_r(\omega\otimes_t\cdot), \; r \in [0,\infty), \; \textnormal{$\P^{t,\omega}$--a.s.},
		\end{align*}
		for $\omega \in \Omega\setminus\sN$. This then implies
		\begin{align*}
			&\Delta((\cU^\P\ast\tilde{\mu}^{X,\P})^{t,\omega}_{t\smallertext{+}\smallertext{\cdot}} - (\cU^\P\ast\tilde{\mu}^{X,\P})^{t,\omega}_{t})_r = \Delta(\cU^\P\ast\tilde{\mu}^{X,\P})_{t\smallertext{+}r}(\omega\otimes_t\cdot) =  \\
			&= \cU^\P_{t\smallertext{+}r}(\omega\otimes_t\cdot;\Delta X_{t\smallertext{+}r}(\omega\otimes_t\cdot))\1_{\{\Delta X_{\smalltext{t}\smalltext{+}\smalltext{r}}(\omega\otimes_\smalltext{t}\cdot)\neq 0\}} - \int_{\R^\smalltext{d}}\cU^\P_{t\smallertext{+}r}(\omega\otimes_t\cdot; x)\mathsf{K}^{\P}_{\omega\otimes_\smalltext{t}\cdot,{t\smallertext{+}r}}(\d x) \Delta C_{t\smallertext{+}r}(\omega\otimes_t\cdot) \\
			&= \cU^{\omega}_r(\Delta X_r)\1_{\{\Delta X_{\smalltext{r}}\neq 0\}} - \int_{\R^\smalltext{d}}\cU^{\omega}_r(x)\mathsf{K}^{\P}_{\omega\otimes_\smalltext{t}\cdot,{t\smallertext{+}r}}(\d x) \Delta (C^{t,\omega}_{t\smallertext{+}\smallertext{\cdot}}-C_{t}(\omega))_{r} \\
			&= \cU^{\omega}_r(\Delta X_r)\1_{\{\Delta X_{\smalltext{r}}\neq 0\}} - \int_{\R^\smalltext{d}}\cU^{\omega}_r(x)\nu^{\P^{\smalltext{t}\smalltext{,}\smalltext{\omega}}}(\{r\}\times \d x) \\
			&= \Delta(\cU^{\omega}\ast\tilde\mu^{X,\P^{\smalltext{t}\smalltext{,}\smalltext{\omega}}})_r, \; r \in (0,\infty), \; \textnormal{$\P^{t,\omega}$--a.s.}, \; \omega \in \Omega \setminus\sN.
		\end{align*}
		Moreover
		\begin{equation*}
			[\cU^\P\ast\tilde{\mu}^{X,\P}]^{(\F,\P)} = \sum_{r \in (0,\cdot]} (\Delta (\cU^\P\ast\tilde{\mu}^{X,\P})_r)^2, \; \textnormal{$\P$--a.s.},
		\end{equation*}
		implies
		\begin{align*}
			[(\cU^\P\ast\tilde{\mu}^{X,\P})^{t,\omega}_{t\smallertext{+}\smallertext{\cdot}} - (\cU^\P\ast\tilde{\mu}^{X,\P})^{t,\omega}_{t}]^{(\F,\P^{\smalltext{t}\smalltext{,}\smalltext{\omega}})}
			&= [\cU^\P\ast\tilde{\mu}^{X,\P} - \cU^\P\ast\tilde{\mu}^{X,\P}_{\cdot\land t}]^{(\F,\P)}_{t\smallertext{+}\smallertext{\cdot}}(\omega\otimes_t\cdot) \\
			&= \sum_{r \in (0,\cdot]} (\Delta (\cU^\P\ast\tilde{\mu}^{X,\P})^{t,\omega}_{t\smallertext{+}\smallertext{\cdot}} - (\cU^\P\ast\tilde{\mu}^{X,\P})^{t,\omega}_{t})_{r})^2, \; r \in [0,\infty), \; \textnormal{$\P^{t,\omega}$--a.s.}, \; \omega \in \Omega\setminus\sN.
		\end{align*}
		This implies that $(\cU^\P\ast\tilde{\mu}^{X,\P})^{t,\omega}_{t\smallertext{+}\smallertext{\cdot}} - (\cU^\P\ast\tilde{\mu}^{X,\P})^{t,\omega}_{t}$ is a purely discontinuous, $(\F,\P^{t,\omega})$--square-integrable martingale, see \cite[Theorem I.4.52]{jacod2003limit}. Since its jumps agree with the jumps of $\cU^{t,\omega}\ast\tilde\mu^{X,\P^{\smalltext{t}\smalltext{,}\smalltext{\omega}}}$, up to $\P^{t,\omega}$-evanescence, for each $\Omega\setminus\sN$, this therefore implies
		\begin{equation*}
			(\cU^\P\ast\tilde{\mu}^{X,\P})^{t,\omega}_{t\smallertext{+}\smallertext{\cdot}} - (\cU^\P\ast\tilde{\mu}^{X,\P})^{t,\omega}_{t} = \cU^{\omega}\ast\tilde\mu^{X,\P^{\smalltext{t}\smalltext{,}\smalltext{\omega}}}, \; \textnormal{$\P^{t,\omega}$--a.s.,} \; \omega \in \Omega\setminus\sN
		\end{equation*}
		by \cite[Corollary I.4.19]{jacod2003limit}.
		
		\medskip
		We turn to $(i)$ and start with the first assertion again. As before, we have 
		\begin{align*}
			& \int\|\cZ^{\omega}\|^2_{\H^{\smalltext{2}\smalltext{,}\smalltext{t}\smalltext{,}\smalltext{\omega}}_{\smalltext{T}\smalltext{,}\smalltext{\hat\beta}}(X^{\smalltext{c}\smalltext{,}\smalltext{\P}^{\tinytext{t}\tinytext{,}\tinytext{\omega}}};\F,\P^{\smalltext{t}\smalltext{,}\smalltext{\omega}})}\P(\d\omega) \\
			&= \int\E^{\P^{\smalltext{t}\smalltext{,}\smalltext{\omega}}}\bigg[\int_0^{(T-t\land T)^{\smalltext{t}\smalltext{,}\smalltext{\omega}}}\cE(\hat\beta (A^{t,\omega}_{t\smallertext{+}\smallertext{\cdot}}-A_{t}(\omega)))_{r}\big(\cZ^{\omega}_r\big)^\top \hat{\mathsf{a}}^{t,\omega}_r \cZ^{\omega}_{r}\d(C^{t,\omega}_{t\smallertext{+}\smallertext{\cdot}}-C_{t}(\omega))_r\bigg]\P(\d\omega) \\
			&= \int\E^{\P^{\smalltext{t}\smalltext{,}\smalltext{\omega}}}\Bigg[\bigg(\int_0^{(T-t\land T)}\cE(\hat\beta (A_{t\smallertext{+}\smallertext{\cdot}}-A_{t}))_{r}\big(\cZ^{\P}_{t\smallertext{+}r}\big)^\top \hat{\mathsf{a}}_{t\smallertext{+}r} \cZ^{\P}_{t\smallertext{+}r}\d(C_{t\smallertext{+}\cdot}-C_{t})_r\bigg)^{t,\omega}\Bigg]\P(\d\omega) \\
			&= \int\E^{\P}\bigg[\int_t^{T}\frac{\cE(\hat\beta A)_{r}}{\cE(\hat\beta A_{\cdot\land t})_r}\big(\cZ^{\P}_{r}\big)^\top \hat{\mathsf{a}}_{r} \cZ^{\P}_{r}\d C_r \bigg| \cF_t\bigg](\omega)\P(\d\omega) \\
			&\leq \E^{\P}\bigg[\int_t^{T}\cE(\hat\beta A)_{r}\big(\cZ^{\P}_{r}\big)^\top \hat{\mathsf{a}}_{r} \cZ^{\P}_{r}\d C_r \bigg] \leq \|\cZ^\P\|^2_{\H^\smalltext{2}_{\smalltext{T}\smalltext{,}\smalltext{\hat\beta}}(X^{\smalltext{c}\smalltext{,}\smalltext{\P}};\F,\P)} < \infty,
		\end{align*}
		which implies 
		\begin{equation*}
			\|\cZ^{\omega}\|^2_{\H^{\smalltext{2}\smalltext{,}\smalltext{t}\smalltext{,}\smalltext{\omega}}_{\smalltext{T}\smalltext{,}\smalltext{\hat\beta}}(X^{\smalltext{c}\smalltext{,}\smalltext{\P}^{\tinytext{t}\tinytext{,}\tinytext{\omega}}};\F,\P^{\smalltext{t}\smalltext{,}\smalltext{\omega}})} < \infty, \; \omega \in \Omega\setminus\sN.
		\end{equation*}
		Therefore $\cZ^{\omega} \in \H^{2,t,\omega}_{T,\hat\beta}\big(X^{c,\P^{\smalltext{t}\smalltext{,}\smalltext{\omega}}}; \F,\P^{t,\omega}\big)$ for $\omega \in \Omega\setminus\sN$ since as $\cZ^{\omega}$ is $\F$-predictable and satisfies $\cZ^{\omega} = \cZ^{\omega}\1_{\llbracket 0,(T-t\land T)^{\smalltext{t}\smalltext{,}\smalltext{\omega}}\rrbracket}$. We turn to the equality of the stochastic integrals. By \cite[Theorem III.6.4.a)]{jacod2003limit}, the integral $(\cZ^\P\bcdot X^{c,\P})^{(\P)}$ is the limit, as $n$ tends to infinity, in the sense of uniform convergence on compacts in $\P$-measure, of the component-wise stochastic integrals
		\begin{equation*}
			(\cZ(n) \bcdot X^{c,\P})^{(\P)} 
			\coloneqq \sum_{i = 1}^d (\cZ^{i}(n) \bcdot X^{c,\P,i})^{(\P)}, \; n \in \N,
		\end{equation*}
		where each $\cZ(n) \coloneqq \cZ^\P\1_{\{|\cZ|\leq n\}}$, and $\cZ^{i}(n)$ and $X^{s,c,\P,i}$ denote the $i$-th components of $\cZ(n)$ and $X^{c,\P}$, respectively. Note that $\cZ^i(n) \in \H^2_\textnormal{loc}(X^{c,\P,i};\F,\P)$. Up to extracting a subsequence if necessary, we have that $(\cZ(n) \bcdot X^{c,\P})^{(\F,\P)}(\omega\otimes_t\cdot)$ converges uniformly on compacts in $\P^{t,\omega}$-measure to $(\cZ^\P\bcdot X^{c,\P})^{(\F,\P)}(\omega\otimes_t\cdot)$ for every $\omega\in\Omega\setminus\sN$. It thus suffices to show for $i\in\{1,\dots,d\}$
		\begin{equation*}
			(\cZ^i(n)\bcdot X^{c,\P,i})^{(\P)}_{t\smallertext{+}\smallertext{\cdot}}(\omega\otimes_t\cdot) - (\cZ^i(n)\bcdot X^{c,\P,i})^{(\P)}_{t}(\omega) 
			= \big((\cZ^i(n))^{t,\omega}_{t\smallertext{+}\smallertext{\cdot}} \bcdot X^{c,\P^{\smalltext{t}\smalltext{,}\smalltext{\omega}},i}\big)^{(\P^{\smalltext{t}\smalltext{,}\smalltext{\omega}})}, \; \text{$\P^{t,\omega}$--a.s.}, \; \omega \in \Omega\setminus\sN,
		\end{equation*}
		since
		\begin{gather*}
			\lim_{n \rightarrow \infty}\sum_{i = 1}^d \Big((\cZ^i(n)\bcdot X^{c,\P,i})^{(\P)}_{t\smallertext{+}\smallertext{\cdot}}(\omega\otimes_t\cdot) - (\cZ^i(n)\bcdot X^{c,\P,i})^{(\P)}_{t}(\omega) \Big)
			= \bigg(\int_{t}^{t\smallertext{+}\smallertext{\cdot}}\cZ^\P_r \d X^{c,\P}_r\bigg)^{(\P)}(\omega\otimes_t\cdot),\\
			\lim_{n \rightarrow \infty}\sum_{i = 1}^d\big((\cZ^i(n))^{t,\omega}_{t\smallertext{+}\smallertext{\cdot}} \bcdot X^{c,\P^{\smalltext{t}\smalltext{,}\smalltext{\omega}},i}\big)^{(\P^{\smalltext{t}\smalltext{,}\smalltext{\omega}})}
			= (\cZ^{\omega} \bcdot X^{c,\P^{\smalltext{t}\smalltext{,}\smalltext{\omega}}})^{(\P^{\smalltext{t}\smalltext{,}\smalltext{\omega}})},
		\end{gather*}
		uniformly on compacts in $\P^{t,\omega}$-measure for every $\omega \in \Omega\setminus\sN$.
		We can therefore suppose, without loss of generality, that $X$ is one-dimensional and that $\cZ^\P \in \H^2_\textnormal{loc}(X^{c,\P};\F,\P)$. By \cite[Theorem 6.2.2, Lemma 6.2.9]{weizsaecker1990stochastic} and \cite[Theorem VI.84, page 144]{dellacherie1982probabilities}, there exists an $(\F,\P)$--localising sequence $(\tau_k)_{k \in \N}$ and a sequence $(\cZ^\ell)_{\ell \in \N}$ of elementary predictable processes in the sense of \cite[Definition 4.4.1, Proposition 4.4.2.(b)]{weizsaecker1990stochastic} such that for each $k \in \N$,
		\begin{enumerate}
			\item[$(a)$] $\displaystyle \E^\P\bigg[\int_0^{\tau_\smalltext{k}}| \cZ_r|^2 \d\langle X^{c}\rangle_r\bigg] < \infty$ and $\displaystyle \lim_{\ell \rightarrow \infty}\E^\P\bigg[\int_0^{\tau_\smalltext{k}}\big(\cZ^\ell_r - \cZ_r\big)^2 \d\langle X^{c}\rangle_r\bigg] = 0$;
			\item[$(b)$] $X^{c,\P}_{\cdot \land \tau_\smalltext{k}}$ is an $(\F,\P)$--square-integrable martingale;
			\item[$(c)$] $\displaystyle \lim_{\ell \rightarrow \infty} \cZ^\ell \bcdot X^{c,\P} = (\cZ \bcdot X^{c,\P})^{(\P)}$ uniformly on compacts in $\P$-measure.
		\end{enumerate}
		The third property $(c)$ implies that
		\begin{equation}\label{eq::convergence_ucp}
			\lim_{\ell \rightarrow \infty} (\cZ^\ell \bcdot X^{c,\P})(\omega\otimes_t\cdot) = (\cZ^\P \bcdot X^{c,\P})^{(\P)},(\omega\otimes_t\cdot),
		\end{equation}
		uniformly on compacts in $\P^{t,\omega}$-measure for $\omega \in \Omega\setminus\sN$. The first property described in $(a)$ together with \Cref{cor::shif_quadratic_variation_continuous_martingale_part} implies, using previous arguments, that for $\omega \in \Omega\setminus\sN$ and every $k \in \N$
		\begin{align*}
			\E^{\P^{\smalltext{t}\smalltext{,}\smalltext{\omega}}}\bigg[\int_0^{(\tau_\smalltext{k}-t\land\tau_\smalltext{k})^{\tinytext{t}\tinytext{,}\tinytext{\omega}}}| \cZ^{\omega}_{r}|^2 \d\langle X^{c}\rangle_{r}\bigg]
			&= \E^{\P^{\smalltext{t}\smalltext{,}\smalltext{\omega}}}\bigg[\int_0^{(\tau_\smalltext{k}-t\land\tau_\smalltext{k})^{t,\omega}}| \cZ^{\P}_{t\smallertext{+}r}(\omega\otimes_t\cdot)|^2 \d\langle X^{c}\rangle^{t,\omega}_{t\smallertext{+}r}\bigg] = \E^{\P^{\smalltext{t}\smalltext{,}\smalltext{\omega}}}\bigg[\int_{t}^{\tau_\smalltext{k}}| \cZ^{\P}_{r}|^2 \d\langle X^{c}\rangle_{r}\bigg] < \infty ,
		\end{align*}
		and the second one property in $(a)$, for $k = 0$, yields
		\begin{align*}
			& \lim_{\ell\rightarrow\infty}\int \E^{\P^{\smalltext{t}\smalltext{,}\smalltext{\omega}}}\bigg[\int_0^{(\tau_\smalltext{0}-t\land\tau_\smalltext{0})^{\smalltext{t}\smalltext{,}\smalltext{\omega}}}\big((\cZ^\ell)^{t,\omega}_{t\smallertext{+}r} - \cZ^{\omega}_{r}\big)^2 \d\langle X^{c}\rangle_r\bigg]\P(\d\omega) \\
			&= \lim_{\ell\rightarrow\infty}\int \E^{\P^{\smalltext{t}\smalltext{,}\smalltext{\omega}}}\bigg[\int_0^{(\tau_\smalltext{0}-t\land\tau_\smalltext{0})^{\smalltext{t}\smalltext{,}\smalltext{\omega}}}\big((\cZ^\ell)^{t,\omega}_{t\smallertext{+}r} - \cZ^{\omega}_{r}\big)^2 \d\langle X^{c}\rangle^{t,\omega}_{t\smallertext{+}r}\bigg] \P(\d\omega) \\
			&= \lim_{\ell\rightarrow\infty}\int \E^{\P}\bigg[\int_{t}^{\tau_\smalltext{0}}\big(\cZ^\ell_{r} - \cZ^\P_{r}\big)^2 \d\langle X^{c}\rangle_r\bigg|\cF_t\bigg](\omega) \P(\d\omega) \leq \lim_{\ell\rightarrow\infty} \E^{\P}\bigg[\int_{0}^{\tau_\smalltext{0}}\big(\cZ^\ell_{r} - \cZ^\P_{r}\big)^2 \d\langle X^{c}\rangle_r\bigg] = 0.
		\end{align*}
		Thus there exists a subsequence $(\cZ^{\ell^\smalltext{0}_\smalltext{m}})_{m \in \N}$ for which
		\begin{equation*}
			\lim_{m \rightarrow \infty}\E^{\P^{\smalltext{t}\smalltext{,}\smalltext{\omega}}}\bigg[\int_0^{(\tau_\smalltext{0}-t\land\tau_\smalltext{0})^{\smalltext{t}\smalltext{,}\smalltext{\omega}}}\big((\cZ^{\ell^\smalltext{0}_\smalltext{m}})^{t,\omega}_{t\smallertext{+}r} - \cZ^{\omega}_{r}\big)^2 \d\langle X^{c}\rangle_r\bigg] = 0, \; \text{for $\P$--a.e. $\omega \in \Omega$},
		\end{equation*}
		holds. However, $(a)$ is still satisfied for each $k \in \N$ when replacing $(\cZ^\ell)_{\ell \in \N}$ by $(\cZ^{\ell^\smalltext{0}_\smalltext{m}})_{m \in \N}$ since the latter is a subsequence of the former. We can therefore find a further subsequence of $(\cZ^{\ell^{\smalltext{1}}_m})_{m \in \N}$ of $(\cZ^{\ell^\smalltext{0}_\smalltext{m}})_{m \in \N}$ for which 
		\begin{equation*}
			\lim_{m \rightarrow \infty}\E^{\P^{\smalltext{t}\smalltext{,}\smalltext{\omega}}}\bigg[\int_0^{(\tau_\smalltext{1}-t\land\tau_\smalltext{1})^{\smalltext{t}\smalltext{,}\smalltext{\omega}}}\big((\cZ^{\ell^\smalltext{1}_\smalltext{m}})^{t,\omega}_{t\smallertext{+}r} - \cZ^{\omega}_{r}\big)^2 \d\langle X^{c}\rangle_r\bigg] = 0, \; \text{for $\P$--a.e. $\omega \in \Omega$,}
		\end{equation*}
		holds. Since $(a)$ again holds, we can inductively build subsequences $(\cZ^{\ell^\smalltext{j}_m})_{m \in \N}$ such that $(\cZ^{\ell^{\smalltext{j}\smalltext{+}\smalltext{1}}_m})_{m \in \N}$ is a subsequence of $(\cZ^{\ell^\smalltext{j}_m})_{m \in \N}$ for each $j \in \N$ and
		\begin{equation*}
			\lim_{m \rightarrow \infty}\E^{\P^{\smalltext{t}\smalltext{,}\smalltext{\omega}}}\bigg[\int_0^{(\tau_\smalltext{j}-t\land\tau_\smalltext{j})^{\smalltext{t}\smalltext{,}\smalltext{\omega}}}\big((\cZ^{\ell^\smalltext{j}_\smalltext{m}})^{t,\omega}_{t\smallertext{+}r} - \cZ^{\omega}_{r}\big)^2 \d\langle X^{c}\rangle_r\bigg] = 0, \; \text{for $\P$--a.e. $\omega \in \Omega$,}
		\end{equation*}
		Using a diagonal argument, one can therefore find a subsequence $(\cZ^{\ell_\smalltext{m}})_{m \in \N}$ such that $(\cZ^{\ell_\smalltext{m}})_{m \in \N} \subseteq (\cZ^{\ell^\smalltext{k}_\smalltext{m}})_{m \in \N}$ for each $k \in \N$ and therefore
		\begin{equation*}
			\lim_{m \rightarrow \infty}\E^{\P^{\smalltext{t}\smalltext{,}\smalltext{\omega}}}\bigg[\int_0^{(\tau_\smalltext{k}-t\land\tau_\smalltext{k})^{\smalltext{t}\smalltext{,}\smalltext{\omega}}}\big((\cZ^{\ell_\smalltext{m}})^{t,\omega}_{t\smallertext{+}r} - \cZ^{\omega}_{r}\big)^2 \d\langle X^{c}\rangle_r\bigg] = 0, \; k \in \N, \; \text{for $\omega \in \Omega\setminus\sN$.}
		\end{equation*}
		Note that $((\tau_k-t\land\tau_k)^{t,\omega})_{k \in \N}$ is  an $(\F,\P^{t,\omega})$--localising sequence for $\omega \in \Omega\setminus\sN$. To summarise, we found that
		\begin{enumerate}
			\item[$(i^\prime)$] $\displaystyle \E^{\P^{\smalltext{t}\smalltext{,}\smalltext{\omega}}}\bigg[\int_0^{(\tau_\smalltext{k}-t\land\tau_\smalltext{k})^{\smalltext{t}\smalltext{,}\smalltext{\omega}}}| \cZ^{\omega}_{r}|^2 \d\langle X^{c}\rangle_r\bigg] < \infty$, $k \in \N$, and $\displaystyle \lim_{m \rightarrow \infty}\E^{\P^{\smalltext{t}\smalltext{,}\smalltext{\omega}}}\bigg[\int_0^{(\tau_\smalltext{k}-t\land\tau_\smalltext{k})^{\smalltext{t}\smalltext{,}\smalltext{\omega}}}\big((\cZ^{\ell_\smalltext{m}})^{t,\omega}_{t\smallertext{+}r} - \cZ^{\omega}_{r}\big)^2 \d\langle X^{c}\rangle_r\bigg] = 0$, $\omega \in \Omega\setminus\sN$;
			\item[$(ii^\prime)$] $X^{c,\P^{\smalltext{t}\smalltext{,}\smalltext{\omega}}}_{\cdot\land(\tau_\smalltext{k}-t\land\tau_\smalltext{k})^{\smalltext{t}\smalltext{,}\smalltext{\omega}}}$ is an $(\F,\P^{t,\omega})$--square-integrable martingale for $k \in \N$ and $\omega \in \Omega\setminus\sN$.
		\end{enumerate}
		Since $X^{c,\P}_{t\smallertext{+}\smallertext{\cdot}}(\omega\otimes_t\cdot) - X^{c,\P}_{t}(\omega) = X^{c,\P^{\smalltext{t}\smalltext{,}\smalltext{\omega}}}$, $\P^{t,\omega}$--a.s., for $\omega \in \Omega\setminus\sN$ (see \Cref{cor::shif_quadratic_variation_continuous_martingale_part}) implies 
		\begin{equation*}
			\big(\cZ^{\ell_\smalltext{m}}_{t\smallertext{+}\smallertext{\cdot}} \bcdot (X^{c,\P}_{t\smallertext{+}\smallertext{\cdot}} - X^{c,\P}_{t}(\omega))\big)(\omega\otimes_t\cdot) = (\cZ^{\ell_\smalltext{m}})^{t,\omega}_{t\smallertext{+}\smallertext{\cdot}} \bcdot X^{c,\P^{\smalltext{t}\smalltext{,}\smalltext{\omega}}}, \; \textnormal{$\P^{t,\omega}$--a.s.}, \; \omega \in \Omega\setminus\sN,
		\end{equation*}
		it follows from \eqref{eq::convergence_ucp} and \cite[Theorem 6.2.2.(b)]{weizsaecker1990stochastic} that
		\begin{equation*}
			\bigg(\int_{t}^{t\smallertext{+}\smallertext{\cdot}}\cZ^\P_r \d X^{c,\P}_r\bigg)^{(\P)}(\omega\otimes_t\cdot) 
			\underset{\infty \leftarrow m}{\longleftarrow}
			\big(\cZ^{\ell_\smalltext{m}}_{t\smallertext{+}\smallertext{\cdot}} \bcdot (X^{c,\P}_{t\smallertext{+}\smallertext{\cdot}} - X^{c,\P}_{t}(\omega))\big)(\omega\otimes_t\cdot) 
			=(\cZ^{\ell_\smalltext{m}})^{t,\omega}_{t\smallertext{+}\smallertext{\cdot}} \bcdot X^{c,\P^{\smalltext{t}\smalltext{,}\smalltext{\omega}}} 
			\underset{m\to\infty}{\longrightarrow} \bigg(\int_0^\cdot\cZ^{\omega}_{r} \d X^{c,\P^{\smalltext{t}\smalltext{,}\smalltext{\omega}}}\bigg)^{(\P^{\smalltext{t}\smalltext{,}\smalltext{\omega}})},
		\end{equation*}
		uniformly on compacts in $\P^{t,\omega}$-measure for each $\omega\in\Omega\setminus\sN$. We therefore find the desired equality
		\begin{equation*}
			\bigg(\int_{t}^{t\smallertext{+}\smallertext{\cdot}}\cZ^\P_r \d X^{c,\P}_r\bigg)^{(\P)}(\omega\otimes_t\cdot) 
			= \bigg(\int_0^\cdot\cZ^{\omega}_{r} \d X^{c,\P^{\smalltext{t}\smalltext{,}\smalltext{\omega}}}\bigg)^{(\P^{\smalltext{t}\smalltext{,}\smalltext{\omega}})}, \; \text{$\P^{t,\omega}$--a.s.}, \; \omega \in \Omega\setminus\sN.
		\end{equation*}
		
		\medskip
		Lastly, we show that the exceptional set outside of which \eqref{eq::conditioning_solution_bsde} holds can actually be chosen to belong to $\cF_{t-s}$. Denote by $\mathsf{g} : \Omega \times \fP(\Omega) \longrightarrow \R$ an arbitrary Borel-measurable extension of the Borel-measurable function
				\[
				\{(\omega^\prime,\P^\prime) : \omega^\prime \in \Omega, \; \P^\prime\in\fP(t,\omega)\} \ni (\omega^\prime,\P^\prime) \longmapsto \E^{\P^\smalltext{\prime}}[\cY^{t,\omega^\smalltext{\prime},\P^\smalltext{\prime}}_0((T-s\land T)^{t,\omega^\smalltext{\prime}},\xi^{t,\omega^\smalltext{\prime}})] \in \R,
				\]
				from \eqref{eq::borel_measurability_value_function}. The map
				\[
				\mathsf{h} : \Omega \ni \omega \longmapsto (\bar{\omega}\otimes_s\omega_{\cdot\land(t-s)},\P^{t-s,\omega_{\smalltext{\cdot}\smalltext{\land}\smalltext{(}\smalltext{t}\smalltext{-}\smalltext{s}\smalltext{)}}}) \in \Omega \times \fP(\Omega),
				\]
				is Borel-measurable and, by Galmarino's test, even $\cF_{t-s}$-measurable. Note that we have $\P^{t-s,\omega_{\cdot \land (t-s)}} = \P^{t-s,\omega}$ identically, again by Galmarino's test. By {\rm\Cref{ass::probabilities2}}.$(ii)$, the map $\mathsf{h}$ takes values $\P$-a.s. in the analytic set $\{(\omega^\prime,\P^\prime) : \omega^\prime \in \Omega, \; \P^\prime \in \fP(t,\omega)\}$. Since analytic sets are universally measurable, the pre-image of $\{(\omega^\prime,\P^\prime) : \omega^\prime \in \Omega, \; \P^\prime \in \fP(t,\omega)\}$ under $\mathsf{h}$ is therefore $\cF_{t-s}$-universally measurable. Consequently, there exists an $\P$-null set $\sN_1 \in \cF_{t-s}$ outside of which $\mathsf{h}$ takes values in $\{(\omega^\prime,\P^\prime) : \omega^\prime \in \Omega, \; \P^\prime \in \fP(t,\omega)\}$. In particular, this yields
				\[
				(\mathsf{g} \circ \mathsf{h})(\omega)
				= \E^{\P^{\smalltext{t}\smalltext{-}\smalltext{s}\smalltext{,}\smalltext{\omega}}}\big[\cY^{t,\bar\omega\otimes_\smalltext{s}\omega,\P^{\smalltext{t}\smalltext{-}\smalltext{s}\smalltext{,}\smalltext{\omega}}}_0 ((T -t\land T)^{t,\bar\omega\otimes_\smalltext{s}\omega},\xi^{t,\bar\omega\otimes_\smalltext{s}\omega})\big], \omega \in \Omega\setminus \sN_1,
				\]
				and then
				\[
				\E^\P\big[\cY^{s,\bar\omega,\P}_{t-s}((T-s \land T)^{s,\bar\omega},\xi^{s,\bar\omega}) \big| \cF_{t\smallertext{-}s}\big]
				= (\mathsf{g} \circ \mathsf{h}), \; \textnormal{$\P$--a.s.}
				\]
				Since both sides are $\cF_{t-s}$-measurable, there exists a further $\P$--null set $\sN_2 \in \cF_{t-s}$ outside of which equality holds above. For $\omega \in \Omega\setminus(\sN_1 \cup \sN_2)$, we then have
				\[
				\E^\P\big[\cY^{s,\bar\omega,\P}_{t-s}((T-s \land T)^{s,\bar\omega},\xi^{s,\bar\omega}) \big| \cF_{t\smallertext{-}s}\big](\omega)
				= (\mathsf{g}\circ\mathsf{h})(\omega) 
				= \E^{\P^{\smalltext{t}\smalltext{-}\smalltext{s}\smalltext{,}\smalltext{\omega}}}\big[\cY^{t,\bar\omega\otimes_\smalltext{s}\omega,\P^{\smalltext{t}\smalltext{-}\smalltext{s}\smalltext{,}\smalltext{\omega}}}_0 ((T -t\land T)^{t,\bar\omega\otimes_\smalltext{s}\omega},\xi^{t,\bar\omega\otimes_\smalltext{s}\omega})\big].		
				\]
				Thus, the set $\sN \coloneqq \sN_1 \cup \sN_2 \in \cF_{t-s}$ satisfies the desired properties, which completes the proof.
	\end{proof}

	\subsection{Auxiliary lemmata from Section \ref{sec::proof_regularisation}}\label{sec::proofs_lemmata_regularisation}

	\begin{proof}[Proof of \Cref{lem::solv_bsde_cond}]
		The first equality is simply the well-known flow property of solutions to (classical) BSDEs. A proof in our generality is straightforward; see, for example, the arguments in the proof of \cite[Proposition 2.5]{el1997backward}. We therefore immediately turn to the second equality. The $\F_\smallertext{+}$-optionality of the finite-variation process appearing in the BSDE prevents us from following the arguments in the proof of \cite[Lemma 2.7]{possamai2018stochastic}. 
		More precisely, we cannot simply take the $\F$-optional projection of the BSDE and identify the orthogonal martingale parts, since in our case, the finite variation process might not be $\F$-adapted in general. 
		We thus employ a different argument. But before that, let us note that for two arbitrary $\F_\smallertext{+}$--stopping times $S$ and $T$ and an integrable random variable $\zeta$, we have
		\begin{equation*}
			\E^\P[\zeta| \cF_{S\smallertext{+}} \cap \cF_{T\smallertext{-}}] \1_{\{S < T\}} = \E^\P[\zeta|\cF_{S\smallertext{+}}] \1_{\{S < T\}}, \; \textnormal{$\P$--a.s.}
		\end{equation*}
		This follows from the $\cF_{S\smallertext{+}}\cap\cF_{T\smallertext{-}}$--measurability of $\{S<T\}$ and $\E^\P[\zeta|\cF_{S\smallertext{+}}] \1_{\{S < T\}}$ (see \cite[Theorem IV.56]{dellacherie1978probabilities}). Indeed, by the tower property
		\begin{align*}
			\E^\P[\zeta| \cF_{S\smallertext{+}} \cap \cF_{T\smallertext{-}}] \1_{\{S < T\}} 
			&= \E^\P\big[\E^\P[\zeta | \cF_{S\smallertext{+}}] \big| \cF_{S\smallertext{+}} \cap \cF_{T\smallertext{-}}] \1_{\{S < T\}} \\
			&= \E^\P\big[\E^\P[\zeta | \cF_{S\smallertext{+}}] \1_{\{S < T\}} \big| \cF_{S\smallertext{+}} \cap \cF_{T\smallertext{-}}] 
			= \E^\P[\zeta | \cF_{S\smallertext{+}}] \1_{\{S < T\}}, \; \textnormal{$\P$--a.s.}
		\end{align*}

		We turn to the proof of the claim. For notational simplicity, we prove the result for $s = 0$, and thus $\P \in \fP_0$. However, analogous arguments apply to the general case. We write $(\cY, \cZ, \cU, \cN)$ and $(\cY^{\prime},\cZ^{\prime},\cU^{\prime},\cN^{\prime})$ for the solution of the BSDE with terminal condition $\xi_{\tau \land T} \coloneqq \cY^\P_{\tau \land T}(T,\xi)$ and $\xi^{\prime}_{\tau \land T} \coloneqq \E^\P[\cY^\P_{\tau \land T}(T,\xi)|\cF_{\tau \land T}]$, respectively, at terminal time $\tau \land T$. Let $\delta \cY \coloneqq \cY - \cY^{\prime}$ and $\delta\xi_{\tau \land T} = \xi_{\tau \land T} - \xi^{\prime}_{\tau \land T}$, $\delta f^{\P} \coloneqq f^{\P}(\cY,\cY_{-},\cZ,\cU(\cdot)) - f^{\P}(\cY^\prime,\cY_{-}^\prime,\cZ^\prime,\cU^\prime(\cdot))$, and then
		\begin{equation*}
			w \coloneqq \int_0^{\cdot \land \tau \land T} \lambda_s \d C_s \; 
			\text{and} \; v \coloneqq \int_0^{\cdot \land \tau \land T} \frac{\widehat\lambda^{\cY_{\smalltext{s}\tinytext{-}}, \cY^{\smalltext{\prime}}_{\smalltext{s}\tinytext{-}}}_s}{1-\widehat\lambda^{\cY_{\smalltext{s}\tinytext{-}}, \cY^{\smalltext{\prime}}_{\smalltext{s}\tinytext{-}}}_s \Delta C_s} \d C_s.
		\end{equation*}
		Here $\lambda$ is the process from \Cref{ass::crossing}.$(i)$ and $\widehat{\lambda}$ is defined through \eqref{eq::definition_lambda_hat_lambda}. We note that $\cE(v)$ is $\P$--essentially bounded,
		and (recall \eqref{eq::lipschitz_linearisation})
		\begin{equation}\label{eq::stoch_exp_measure_change}
			\frac{\d\overline{\P}}{\d\P} \coloneqq \cE(L)_{\tau \land T} \coloneqq \cE\bigg(\int_0^{\cdot \land \tau \land T}\eta^{\cZ_\smalltext{s},\cZ^{\smalltext{\prime}}_\smalltext{s}}_s\d X^{c,\P}_s + \rho \ast\tilde\mu^{X,\P}_{\cdot \land \tau \land T}\bigg)_{\tau \land T},
		\end{equation}
		satisfies $\d\overline{\P}/\d\P \in \L^2(\P)$ by \cite[Lemma 7.4]{possamai2024reflections} as $\langle L \rangle$ is $\P$--essentially bounded by \Cref{ass::crossing}.$(ii)$--$(iii)$. Given that
		\begin{equation*}
			\cE(L) = 1 + \int_0^\cdot \cE(L)_{s\smallertext{-}} \d L_s, \; \text{$\P$--a.s.}, 
		\end{equation*}
		we choose with \Cref{prop::good_version_stochastic_integral} an $\F$-adapted a version of $\cE(L)$. Since $|\cE(w)|^2 \leq \cE(\hat\beta A)$ (see the proof of \Cref{prop::comparison}), we find
		\[
			\E^{\P}\bigg[\sup_{t \in [0,\infty)}|\cE(w)_{t\land\tau\land T}\delta\cY_{t\land\tau\land T}|^2\bigg] 
			\leq \E^{\P}\bigg[\sup_{t \in [0,\tau\land T]}|\cE(\hat\beta A)^{1/2}_{t}\delta\cY_{t}|^2\bigg] < \infty,
		\]
		and, by Doob's martingale inequality,
		\[
			\E^{\P}\bigg[\sup_{t \in \D_\smalltext{+}}\big|\cE(w)_{t\land\tau\land T}\E^\P[\delta\xi_{\tau \land T}|\cF_{t\smallertext{+}}]\big|^2\bigg] 
			\leq \E^{\P}\bigg[\sup_{t \in \D_\smalltext{+}}\Big|\E^\P\big[\cE(\hat\beta A)^{1/2}_{\tau\land T}\delta\xi_{\tau \land T}\big|\cF_{t\smallertext{+}}\big]\Big|^2\bigg] 
			\leq 4 \E^\P\Big[ \big|\cE(\hat\beta A)^{1/2}_{\tau\land T}\delta\xi_{\tau \land T}\big|^2 \Big] < \infty.
		\]
		We also have, using \cite[Equation 5.25]{possamai2024reflections}, that
		\begin{align*}
			\Big| \cE(w)_{t \land \tau \land T}\big(\delta \cY_{t \land \tau \land T} - \E^\P\big[\delta\xi_{\tau \land T} \big| \cF_{t\smallertext{+}}\big]\big) \Big| 
			&= \bigg|\cE(w)_{t \land \tau \land T} \E^\P\bigg[\int_{t}^{\tau\land T} \delta f^\P_r \d C_r \bigg| \cF_{t\smallertext{+}} \bigg]\bigg| \\
			&\leq \bigg|\cE(\hat\beta A)^{1/2}_{t \land \tau \land T} \E^\P\bigg[\int_{t}^{\tau\land T} \delta f^\P_r \d C_r \bigg| \cF_{t\smallertext{+}} \bigg]\bigg| \\
			&\leq \E^\P\bigg[\bigg(\int_{t}^{\tau\land T} \cE(\hat\beta A)^{1/2}_{r} \frac{|\delta f^\P_r|^2}{\alpha^2_r} \d C_r \bigg)^{1/2} \bigg| \cF_{t\smallertext{+}} \bigg], \; \textnormal{$\P$--a.s.}, \; t \in [0,\infty).
		\end{align*}
		which then yields
		\begin{align*}
			\E^{\bar{\P}}\bigg[ \Big| \cE(w)_{t \land \tau \land T}\big(\delta \cY_{t \land \tau \land T} - \E^\P\big[\delta\xi_{\tau \land T} \big| \cF_{t\smallertext{+}}\big]\big) \Big| \bigg] 
			\leq \E^\P\bigg[\bigg(\frac{\d\overline{\P}}{\d\P}\bigg)^2\bigg]^{1/2} \E^\P\bigg[\int_{t}^{\tau\land T} \cE(\hat\beta A)^{1/2}_{r} \frac{|\delta f^\P_r|^2}{\alpha^2_r} \d C_r \bigg]^{1/2} \xrightarrow{t \rightarrow \infty} 0,
		\end{align*}
		by the Lipschitz-continuity property of the generator and the integrability of the solutions. 
		For fixed $t \in [0,\infty)$, we can thus find a sequence $(t_n)_{n \in \N}$ of non-negative numbers strictly bigger than $t$ and converging to infinity such that
		\[
			\E^{\bar{\P}}\big[ \cE(w)_{t_\smalltext{n} \land \tau \land T}\big(\delta \cY_{t_\smalltext{n} \land \tau \land T} - \E^\P\big[\delta\xi_{\tau \land T} \big| \cF_{t_\smalltext{n}\smallertext{+}}\big]\big) \big| \cF_{t\smallertext{+}} \big] 
			\xlongrightarrow{n \rightarrow \infty} 0, \; \textnormal{$\P$--a.s.}
		\]
		With \Cref{ass::crossing}.$(i)$--$(iii)$, we follow the arguments that lead to \cite[Equation 7.7]{possamai2024reflections}, recalling that $\cE(v)$ is non-negative and bounded, and obtain for $n \in \N$, $\P$--a.s.
		\begin{align}\label{eq::flow_property_conditional_expectation}
			&\cE(w)_{t \land \tau \land T}\cE(v)_{t \land \tau \land T}\delta \cY_{t \land \tau \land T} \nonumber\\
			&\geq \E^{\bar{\P}}\big[ \cE(w)_{t_\smalltext{n} \land \tau \land T}\cE(v)_{t_\smalltext{n} \land \tau \land T}\delta\cY_{t_\smalltext{n} \land \tau \land T} \big| \cF_{t\smallertext{+}}\big] \nonumber\\
			& = \E^{\bar{\P}}\big[ \cE(w)_{t_\smalltext{n} \land \tau \land T}\cE(v)_{t_\smalltext{n} \land \tau \land T}\big(\delta \cY_{t_\smalltext{n} \land \tau \land T} - \E^\P\big[\delta\xi_{\tau \land T} \big| \cF_{t_\smalltext{n}\smallertext{+}}\big]\big) \big| \cF_{t\smallertext{+}} \big] + \E^{\bar{\P}}\big[ \cE(w)_{t_\smalltext{n} \land \tau \land T}\cE(v)_{t_\smalltext{n} \land \tau \land T} \E^\P\big[\delta\xi_{\tau \land T} \big| \cF_{t_\smalltext{n}\smallertext{+}}\big] \big| \cF_{t\smallertext{+}} \big] \nonumber\\
			&= \E^{\bar{\P}}\big[ \cE(w)_{t_\smalltext{n} \land \tau \land T}\cE(v)_{t_\smalltext{n} \land \tau \land T}\big(\delta \cY_{t_\smalltext{n} \land \tau \land T} - \E^\P\big[\delta\xi_{\tau \land T} \big| \cF_{t_\smalltext{n}\smallertext{+}}\big]\big) \big| \cF_{t\smallertext{+}} \big] + \E^{\bar{\P}}\big[ \cE(w)_{t_\smalltext{n} \land \tau \land T}\cE(v)_{t_\smalltext{n} \land \tau \land T} \delta\xi_{\tau \land T} \big| \cF_{t\smallertext{+}} \big]. \nonumber
		\end{align}
		The first term after the last equality converges $\P$--a.s. to zero as $n$ tends to infinity, and the second term satisfies
		\begin{align*}
			&\E^{\bar{\P}}\big[ \cE(w)_{t_\smalltext{n} \land \tau \land T}\cE(v)_{t_\smalltext{n} \land \tau \land T} \delta\xi_{\tau \land T} \big| \cF_{t\smallertext{+}} \big]\1_{\{t<\tau\land T\}}
			\\&\geq \frac{1}{\cE(L)_{t\land\tau\land T}}\E^\P\big[ \cE(L)_{\tau \land T} \cE(w)_{t_\smalltext{n}\tau \land T}\cE(v)_{t_\smalltext{n}\tau \land T}\delta\xi_{\tau \land T} \big| \cF_{t\smallertext{+}}\big] \1_{\{t<\tau\land T\}} \\
			&= \frac{1}{\cE(L)_{t\land\tau\land T}}\E^\P\big[ \cE(L)_{\tau \land T} \cE(w)_{t_\smalltext{n} \land \tau \land T}\cE(v)_{t_\smalltext{n} \land \tau \land T}\delta\xi_{\tau \land T} \big| \cF_{t\smallertext{+}} \cap \cF_{(\tau\land T)\smallertext{-}}\big] \1_{\{t<\tau\land T\}} \\
			&= \frac{1}{\cE(L)_{t\land\tau\land T}}\E^\P\Big[ \E^\P\big[ \cE(L)_{\tau \land T} \cE(w)_{t_\smalltext{n} \land \tau \land T}\cE(v)_{t_\smalltext{n} \land \tau \land T}\delta\xi_{\tau \land T} \big|\cF_{\tau\land T}\big] \Big| \cF_{t\smallertext{+}} \cap \cF_{(\tau\land T)\smallertext{-}}\Big] \1_{\{t<\tau\land T\}} \\
			&= \frac{1}{\cE(L)_{t\land\tau\land T}}\E^\P\Big[ \cE(L)_{\tau \land T} \cE(w)_{t_\smalltext{n} \land \tau \land T}\cE(v)_{t_\smalltext{n} \land \tau \land T} \E^\P\big[\delta\xi_{\tau \land T} \big|\cF_{\tau\land T}\big] \Big| \cF_{t\smallertext{+}} \cap \cF_{(\tau\land T)\smallertext{-}}\Big] \1_{\{t<\tau\land T\}} = 0, \; \textnormal{$\P$--a.s.}
		\end{align*}
		Here we used the fact $\tau \land T$ is an $\F$--stopping time, that $\cF_{(\tau\land T)\smallertext{-}} \subseteq \cF_{\tau\land T}$, that $\cE(w)$ and $\cE(v)$ are $\F$-adapted by \eqref{eq::definition_lambda_hat_lambda} and \Cref{ass::crossing}.$(i)$, and that $\E^\P[\delta\xi_{\tau \land T}|\cF_{\tau\land T}] = 0$. Since both $\cE(w)$ and $\cE(v)$ are non-negative by the fact that $\Phi < 1$ and \Cref{ass::crossing}.$(i)$, we conclude that $\delta\cY_{t \land \tau\land T}\1_{\{t < \tau\land T\}} \geq 0$, $\P$--almost surely. A symmetric argument then also yields $\delta\cY_{t \land \tau\land T}\1_{\{t < \tau\land T\}} \leq 0$, $\P$--a.s., which completes the proof.
	\end{proof}

	\begin{proof}[Proof of \Cref{lem::stopping_value_function}]
		We follow the proofs of \cite[Theorem 2.4, pages 570--571]{possamai2018stochastic} and \cite[Theorem 2.3]{nutz2013constructing}. We will also repeatedly use the identity $(t-s) \land (T - s \land T)^{s,\bar{\omega}} = (t \land T - s \land T)^{s,\bar{\omega}}$ throughout this proof. From \Cref{thm::measurability2}, we immediately obtain
		\[
			\widehat\cY_{t}(T,\xi)^{s,\bar\omega}(\omega) = \widehat\cY_{t}(T,\xi)(\bar{\omega}\otimes_s\omega) = \widehat\cY_{t \land T(\bar{\omega}\otimes_\smalltext{s}\omega)}(T,\xi)(\bar{\omega}\otimes_s\omega) = \widehat\cY_{t \land T^{\smalltext{s}\smalltext{,}\smalltext{\bar\omega}}(\omega)}(T,\xi)^{s,\bar\omega}(\omega), \; \omega \in \Omega.
		\]
		The stated measurability can be argued as follows. Let $\theta \coloneqq (t\land T - s\land T)^{s,\bar\omega} = (t-s)\land(T-s\land T)^{s,\bar\omega}$, and let $\omega \in \Omega$ be arbitrary. We have
		\[
			\tilde{\omega} \coloneqq \bar{\omega}\otimes_s\omega_{\cdot\land\theta(\omega)} = \bar{\omega}\otimes_s\omega \; \textnormal{on $[0,t\land T(\bar{\omega}\otimes_s\omega)]$}.
		\]
		Thus $t \land T(\tilde{\omega}) = t \land T(\bar{\omega}\otimes_s\omega)$ by Galmarino's test (see \cite[Theorem IV.100(a)]{dellacherie1978probabilities}), and we then obtain
		\begin{align*}
			\widehat{\cY}_t(T,\xi)^{s,\bar\omega}(\omega_{\cdot\land\theta(\omega)}) 
			&= \widehat{\cY}_t(T,\xi)(\tilde{\omega})
			= \widehat{\cY}_{t \land T(\tilde{\omega})}(T,\xi)(\tilde{\omega}) 
			= \widehat{\cY}_{t \land T(\tilde{\omega})}(T,\xi)(\tilde{\omega}_{\cdot \land t\land T(\tilde{\omega})}) 
			= \widehat{\cY}_{t \land T(\tilde{\omega})}(T,\xi)((\bar{\omega}\otimes_s\omega)_{\cdot \land t\land T(\tilde{\omega})}) \\
			&= \widehat{\cY}_{t \land T(\tilde{\omega})}(T,\xi)(\bar{\omega}\otimes_s\omega)
			= \widehat{\cY}_{t \land T(\bar{\omega}\otimes_\smalltext{s}\omega)}(T,\xi)(\bar{\omega}\otimes_s\omega)
			= \widehat{\cY}_t(T,\xi)(\bar{\omega}\otimes_s\omega) = \widehat{\cY}_t(T,\xi)^{s,\bar{\omega}}(\omega).
		\end{align*}
		The stated measurability then follows from the arguments in the proof of \cite[Lemma 2.5]{nutz2013constructing}.
		
		\medskip
		We turn to the integrability \eqref{eq::integrability_y_hat_tau}. Fix some $\varepsilon \in (0,\infty)$. By \cite[Proposition 7.50]{bertsekas1978stochastic}, there exists a $\cF^\ast$-measurable map $\Q^\prime : \Omega \longrightarrow \fP(\Omega)$ such that 
		\begin{equation}
			\Q^\prime(\omega) \in \fP(t,\omega)
			\;
			\text{and}
			\;
			\E^{\Q^\smalltext{\prime}(\omega)} \big[\cY^{t,\omega,{\Q^\smalltext{\prime}(\omega)}}_0((T-t\land T)^{t,\omega},\xi^{t,\omega})\big] 
			\geq \big(\widehat\cY_t(T,\xi)(\omega) - \varepsilon\big) \1_{\{\hat\cY_\smalltext{t}(T,\xi) < \infty\}}(\omega) + \frac{1}{\varepsilon}\1_{\{\hat\cY_\smalltext{t}(T,\xi) = \infty\}}(\omega),
		\end{equation}
		for every $\omega \in \Omega$ with $\fP(t,\omega) \neq \varnothing$. We now fix $\bar\omega \in \Omega$ and $\P \in \fP(s,\bar\omega)$. The map $\widetilde\Q : \Omega \longrightarrow \fP(\Omega)$ given by $\widetilde\Q(\omega) \coloneqq \Q^\prime(\bar\omega\otimes_{s}{\omega_{\cdot \land (t-s)}})$ is $\cF^\ast_{t-s}$-measurable; see \cite[Proposition 7.44]{bertsekas1978stochastic} and the proof of \cite[Lemma 2.5]{nutz2013constructing} for the arguments. We now choose an $\cF_{t-s}$-measurable map $\Q : \Omega \longrightarrow \fP(\Omega)$ satisfying $\Q(\omega) = \widetilde\Q(\omega)$ for $\P$--a.e. $\omega \in \Omega$, see \cite[Lemma 1.27]{kallenberg2021foundations}. Since $\P^{t-s,\omega} \in \fP(t,\bar\omega\otimes_s\omega) \neq \varnothing$ for $\P$--a.e. $\omega \in \Omega$, this implies that $\Q(\omega) \in \fP(t,\bar\omega\otimes_s\omega)$ and
		\begin{equation}
			\E^{\Q(\omega)} \big[\cY^{t,\bar\omega\otimes_s\omega,{\Q(\omega)}}_0((T-t\land T)^{t,\bar\omega\otimes_s\omega},\xi^{t,\bar\omega\otimes_s\omega})\big] 
			\geq \big(\widehat\cY_t(T,\xi)(\bar\omega\otimes_s\omega) - \varepsilon\big) \1_{\{\hat\cY_\smalltext{t}(T,\xi) < \infty\}}(\bar\omega\otimes_s\omega) + \frac{1}{\varepsilon}\1_{\{\hat\cY_\smalltext{t}(T,\xi) = \infty\}}(\bar\omega\otimes_s\omega),
		\end{equation}
		for $\P$--a.e. $\omega \in \Omega$. Here we used $(\bar\omega\otimes_s\omega)\otimes_t\tilde\omega = (\bar\omega\otimes_s\omega_{\cdot\land(t-s)}(\omega))\otimes_t\tilde\omega$ for every $\tilde\omega\in\Omega$. We now define
		\begin{equation*}
			\overline{\P}[A] \coloneqq  \iint_{\Omega\times\Omega} \big(\1_A\big)^{t-s,\omega}(\omega^\prime)\Q(\omega;\d\omega^\prime)\P(\d\omega), \; A \in \cF.
		\end{equation*}
		Then $\overline\P \in \fP(s,\bar\omega)$ by \Cref{ass::probabilities2}.$(iii)$, $\overline\P = \P$ on $\cF_{t-s}$, and $\Q(\omega) = \bar\P^{t-s,\omega}$ for $\P$--a.e. $\omega \in \Omega$, which, together with \Cref{lem::conditioning_bsde2}, yields
		\begin{align*}
			\E^{\bar\P}\big[\cY^{s,\bar\omega,\bar\P}_{t-s}((T-s\land T)^{s,\bar\omega},\xi^{s,\bar\omega})\big|\cF_{t-s}\big](\omega)
			&= \E^{\Q(\omega)}\big[\cY^{t,\bar\omega\otimes_s\omega,\Q(\omega)}_0((T-t\land T)^{t,\bar\omega\otimes_s\omega},\xi^{t,\bar\omega\otimes_s\omega})\big] \\
			&\geq 
			\big(\widehat\cY_t(T,\xi)(\bar\omega\otimes_s\omega) - \varepsilon\big) \1_{\{\hat\cY_\smalltext{t}(T,\xi) < \infty\}}(\bar\omega\otimes_s\omega) + \frac{1}{\varepsilon}\1_{\{\hat\cY_\smalltext{t}(T,\xi) = \infty\}}(\bar\omega\otimes_s\omega),
		\end{align*}
		for $\P$--a.e. $\omega \in \Omega$. This implies
		\begin{equation*}
			\E^{\bar\P}\big[\cY^{s,\bar\omega,\bar\P}_{t-s}((T-s\land T)^{s,\bar\omega},\xi^{s,\bar\omega})\big|\cF_{t-s}\big] \land \varepsilon^{-1} \leq \widehat\cY_t(T,\xi)(\bar\omega\otimes_s\cdot) \land \varepsilon^{-1} \leq \E^{\bar\P}\big[\cY^{s,\bar\omega,\bar\P}_{t-s}((T-s\land T)^{s,\bar\omega},\xi^{s,\bar\omega})\big|\cF_{t-s}\big] + \varepsilon, \; \textnormal{$\P$--a.s.}
		\end{equation*}
		and then
		\begin{equation*}
			\big|\widehat\cY_t(T,\xi)(\bar\omega\otimes_s\cdot) \land \varepsilon^{-1}\big| \leq \E^{\bar\P}\Big[\big|\cY^{s,\bar\omega,\bar\P}_{t-s}((T-s\land T)^{s,\bar\omega},\xi^{s,\bar\omega})\big|\Big|\cF_{t-s}\Big] + \varepsilon, \; \textnormal{$\P$--a.s.}
		\end{equation*}
		Since $\cE(\hat\beta A^{s,\bar\omega}_{s\smallertext{+}\smallertext{\cdot}})_{(t-s) \land (T-s\land T)^{s,\bar\omega}} < \infty$, we have
		\begin{align*}
			&\frac{1}{2}\big(\cE(\hat\beta A^{s,\bar\omega}_{s\smallertext{+}\smallertext{\cdot}})_{(t-s) \land (T-s\land T)^{s,\bar\omega}} \land \varepsilon^{-1}\big)\big|\widehat\cY_t(T,\xi)(\bar\omega\otimes_s\cdot) \land \varepsilon^{-1}\big|^2 \\
			& \leq \cE(\hat\beta A^{s,\bar\omega}_{s\smallertext{+}\smallertext{\cdot}})_{(t-s) \land (T-s\land T)^{s,\bar\omega}}\E^{\bar\P}\Big[\big|\cY^{s,\bar\omega,\bar\P}_{t-s}((T-s\land T)^{s,\bar\omega},\xi^{s,\bar\omega})\big|^2\Big|\cF_{t-s}\Big] + \varepsilon, \; \textnormal{$\P$--a.s.}
		\end{align*}
		and then
		\begin{align*}
			& \frac{1}{2}\E^\P\Big[ \big(\cE(\hat\beta A^{s,\bar\omega}_{s\smallertext{+}\smallertext{\cdot}})_{(t-s) \land (T-s\land T)^{s,\bar\omega}} \land \varepsilon^{-1}\big)\big|\widehat\cY_t(T,\xi)(\bar\omega\otimes_s\cdot) \land \varepsilon^{-1}\big|^2 \Big] \\
			& \leq \E^{\bar\P}\Big[ \cE(\hat\beta A^{s,\bar\omega}_{s\smallertext{+}\smallertext{\cdot}})_{(t-s) \land (T-s\land T)^{s,\bar\omega}}\big|\cY^{s,\bar\omega,\bar\P}_{t-s}((T-s\land T)^{s,\bar\omega},\xi^{s,\bar\omega})\big|^2\Big] + \varepsilon \\
			& \leq \sup_{\P^\smalltext{\prime}\in \fP(s,\bar\omega)}\E^{\P^\smalltext{\prime}}\Big[ \cE(\hat\beta A^{s,\bar\omega}_{s\smallertext{+}\smallertext{\cdot}})_{(t-s) \land (T-s\land T)^{s,\bar\omega}}\big|\cY^{s,\bar\omega,\P^\smalltext{\prime}}_{t-s}((T-s\land T)^{s,\bar\omega},\xi^{s,\bar\omega})\big|^2\Big] + \varepsilon \\
			& \leq \sup_{\P^\smalltext{\prime} \in \fP(s,\bar{\omega})} \E^{\P^\smalltext{\prime}} \bigg[\cE(\hat\beta  A^{s,\bar{\omega}}_{s\smallertext{+}\smallertext{\cdot}})_{(T-s\land T)^{\smalltext{s}\smalltext{,}\smalltext{\bar{\omega}}}}|\xi^{s,\bar{\omega}}|^2 + \int_{t-s}^{(T-s\land T)^{\smalltext{s}\smalltext{,}\smalltext{\bar{\omega}}}} \cE(\hat\beta  A^{s,\bar{\omega}}_{s\smallertext{+}\smallertext{\cdot}} )_r \frac{|f^{s,\bar{\omega},\P^\smalltext{\prime}}_r(0,0,0,\mathbf{0})|^2}{|\alpha^{s,\bar{\omega}}_r|^2} \d (C^{s,\bar{\omega}}_{s\smallertext{+}\smallertext{\cdot}})_r\bigg] + \varepsilon.
		\end{align*}
		Here we used the stability of solutions to BSDEs from \Cref{prop::stability}. Since the right-hand side is finite by \Cref{ass::generator2}, it follows from $\widehat{\cY}_{t\land T(\cdot)}(T,\xi)(\cdot) = \widehat{\cY}_{t}(T,\xi)(\cdot)$ and an application of Fatou's lemma for $\varepsilon = 1/n$ where $n \in \N^\star$ that
		\begin{align*}
			& \frac{1}{2}\E^\P\Big[ \cE(\hat\beta A^{s,\bar\omega}_{s\smallertext{+}\smallertext{\cdot}})_{(t-s) \land (T-s\land T)^{s,\bar\omega}}\big|\widehat\cY_{t\land T(\bar\omega\otimes_\smalltext{s}\cdot)}(T,\xi)(\bar\omega\otimes_s\cdot)\big|^2 \Big] \\
			& \leq \liminf_{n \rightarrow \infty} \frac{1}{2} \E^\P\Big[ \big(\cE(\hat\beta A^{s,\bar\omega}_{s\smallertext{+}\smallertext{\cdot}})_{(t-s) \land (T-s\land T)^{s,\bar\omega}} \land n\big)\big|\widehat\cY_t(T,\xi)(\bar\omega\otimes_s\cdot) \land n\big|^2 \Big] \\
			& \leq \sup_{\P^\smalltext{\prime} \in \fP(s,\bar{\omega})} \E^{\P^\smalltext{\prime}} \bigg[\cE(\hat\beta  A^{s,\bar{\omega}}_{s\smallertext{+}\smallertext{\cdot}})_{(T-s\land T)^{\smalltext{s}\smalltext{,}\smalltext{\bar{\omega}}}}|\xi^{s,\bar{\omega}}|^2 + \int_{t-s}^{(T-s\land T)^{\smalltext{s}\smalltext{,}\smalltext{\bar{\omega}}}} \cE(\hat\beta  A^{s,\bar{\omega}}_{s\smallertext{+}\smallertext{\cdot}})_r \frac{|f^{s,\bar{\omega},\P^\smalltext{\prime}}_r(0,0,0,\mathbf{0})|^2}{|\alpha^{s,\bar{\omega}}_r|^2} \d (C^{s,\bar{\omega}}_{s\smallertext{+}\smallertext{\cdot}})_r\bigg] < \infty,
		\end{align*}
		which implies \eqref{eq::integrability_y_hat_tau} by \Cref{ass::generator2}.
		
		\medskip
		We turn to \eqref{eq::dpp_y_hat_sup}. We again fix $\varepsilon \in (0,\infty)$. Since
		\begin{equation*}
			\cE(\hat\beta A_{s\smallertext{+}\smallertext{\cdot}})^{1/2}_{t\land T - s\land T}(\omega) \widehat{\cY}_t(T,\xi)(\omega) 
			= \sup_{\P \in \fP(t,\omega)} 
			\cE(\hat\beta A_{s\smallertext{+}\smallertext{\cdot}})^{1/2}_{t\land T - s\land T}(\omega) \E^\P\big[\cY^{t,\omega,\P}_0((T-t\land T)^{t,\omega},\xi^{t,\omega})\big], \;  \omega \in \Omega,
		\end{equation*}
		we employ \cite[Proposition 7.50]{bertsekas1978stochastic} similar to before to find an $\cF^\ast$-measurable map $\Q^\prime:\Omega \longrightarrow \fP(\Omega)$ such that for every $\omega \in \Omega$ with $\fP(t,\omega) \neq 0$, we have $\Q^\prime(\omega) \in \fP(t,\omega)$ and
		\begin{align*}
			&\cE(\hat\beta A_{s\smallertext{+}\smallertext{\cdot}})^{1/2}_{t\land T - s\land T}(\omega) \E^{\Q^\smalltext{\prime}(\omega)}\big[\cY^{t,\omega,\Q^\smalltext{\prime}(\omega)}_0((T-t\land T)^{t,\omega},\xi^{t,\omega})\big] 
			\geq 
			\begin{cases}
				\cE(\hat\beta A_{s\smallertext{+}\smallertext{\cdot}})^{1/2}_{t\land T - s\land T}(\omega) \widehat{\cY}_t(T,\xi)(\omega) - \varepsilon,\; \textnormal{if this is finite,} \\[0.5em]
				\displaystyle \frac{1}{\varepsilon}, \; \textnormal{otherwise}.
			\end{cases}
		\end{align*}
		Fix $\bar\omega \in \Omega$ and $\P \in \fP(s,\bar\omega)$. As before, we choose an $\cF_{t-s}$-measurable map $\Q : \Omega \longrightarrow \fP(\Omega)$ satisfying $\Q(\omega) = \Q^\prime(\bar\omega\otimes_s\omega_{\cdot\land(t-s)})$ for $\P$--a.e. $\omega \in \Omega$. Since $\P^{t-s,\omega} \in \fP(t,\bar\omega\otimes_s\omega) = \fP(t,\bar\omega\otimes_s\omega_{\cdot\land(t-s)}) \neq \varnothing$ for $\P$--a.e. $\omega \in \Omega$, this implies that $\Q(\omega) \in \fP(t,\bar\omega\otimes_s\omega)$ and 
		\begin{align*}
			&\cE(\hat\beta A^{s,\bar\omega}_{s\smallertext{+}\smallertext{\cdot}})^{1/2}_{(t\land T - s\land T)^{\smalltext{s}\smalltext{,}\smalltext{\bar\omega}}}(\omega)  \E^{\Q(\omega)} \big[\cY^{t,\bar\omega\otimes_s\omega,{\Q(\omega)}}_0((T-t\land T)^{t,\bar\omega\otimes_s\omega},\xi^{t,\bar\omega\otimes_s\omega})\big] \\
			&\geq 
			\begin{cases}
				\cE(\hat\beta A^{s,\bar\omega}_{s\smallertext{+}\smallertext{\cdot}})^{1/2}_{(t\land T - s\land T)^{\smalltext{s}\smalltext{,}\smalltext{\bar\omega}}}(\omega) \widehat{\cY}_t(T,\xi)(\bar\omega\otimes_s\omega) - \varepsilon, \; \textnormal{if this is finite,} \\[0.5em]
				\displaystyle \frac{1}{\varepsilon}, \; \textnormal{otherwise},
			\end{cases}
		\end{align*}
		for $\P$--a.e. $\omega \in \Omega$. Here we used the fact that $\cE(\hat\beta A^{s,\bar\omega}_{s\smallertext{+}\smallertext{\cdot}})_{(t\land T - s\land T)^{\smalltext{s}\smalltext{,}\smalltext{\bar\omega}}}$ is $\cF_{t-s}$-measurable and that $(\bar\omega\otimes_s\omega_{\cdot\land (t-s)})\otimes_t\tilde\omega = (\bar\omega\otimes_s\omega)\otimes_t\tilde\omega$ for every $\tilde\omega\in\Omega$.  By \eqref{eq::integrability_y_hat_tau} we know that we must be $\P$--a.s. in the first case. We now define
		\begin{equation*}
			\overline{\P}[A] \coloneqq  \iint_{\Omega\times\Omega} \big(\1_A\big)^{t-s,\omega}(\omega^\prime)\Q(\omega;\d\omega^\prime)\P(\d\omega), \; A \in \cF.
		\end{equation*}
		As before, $\overline\P \in \fP(s,\bar\omega)$ by \Cref{ass::probabilities2}.$(iii)$, $\overline\P = \P$ on $\cF_{t-s}$, and $\bar\P^{t-s,\omega} = \Q(\omega)$ for $\P$--a.e. $\omega \in \Omega$, which, together with \Cref{lem::conditioning_bsde2}, yields
		\begin{align*}
			\E^{\bar\P}\big[\cY^{s,\bar\omega,\bar\P}_{t-s}((T-s\land T)^{s,\bar\omega},\xi^{s,\bar\omega})\big|\cF_{t-s}\big]
			&\geq 
			\widehat{\cY}_t(T,\xi)(\bar\omega\otimes_s\cdot) - \frac{\varepsilon}{\cE(\hat\beta A^{s,\bar\omega}_{s\smallertext{+}\smallertext{\cdot}})^{1/2}_{(t\land T - s\land T)^{\smalltext{s}\smalltext{,}\smalltext{\bar\omega}}}} \\
			&\geq 
			\E^{\bar\P}\big[\cY^{s,\bar\omega,\bar\P}_{t-s}((T-s\land T)^{s,\bar\omega},\xi^{s,\bar\omega})\big|\cF_{t-s}\big] - \frac{\varepsilon}{\cE(\hat\beta A^{s,\bar\omega}_{s\smallertext{+}\smallertext{\cdot}})^{1/2}_{(t\land T - s\land T)^{\smalltext{s}\smalltext{,}\smalltext{\bar\omega}}}}, \; \textnormal{$\P$--a.s.},
		\end{align*}
		and also $\overline{\P}$--almost surely. Therefore
		\begin{align*}
			\big|\widehat{\cY}_t(T,\xi)(\bar\omega\otimes_s\cdot) - \E^{\bar\P}\big[\cY^{s,\bar\omega,\bar\P}_{t-s}((T-s\land T)^{s,\bar\omega},\xi^{s,\bar\omega})\big|\cF_{t-s}\big]\big| 
			&=  \widehat{\cY}_t(T,\xi)(\bar\omega\otimes_s\cdot) - \E^{\bar\P}\big[\cY^{s,\bar\omega,\bar\P}_{t-s}((T-s\land T)^{s,\bar\omega},\xi^{s,\bar\omega})\big|\cF_{t-s}\big] \\
			&\leq \frac{\varepsilon}{\cE(\hat\beta A^{s,\bar\omega}_{s\smallertext{+}\smallertext{\cdot}})^{1/2}_{(t\land T - s\land T)^{\smalltext{s}\smalltext{,}\smalltext{\bar\omega}}}}, \; \textnormal{$\P$--a.s.},
		\end{align*}
		and also $\overline{\P}$--almost surely. Note that
		\begin{align*}
			\E^{\bar\P}\big[\cY^{s,\bar\omega,\bar\P}_{t-s}((T-s\land T)^{s,\bar\omega},\xi^{s,\bar\omega})\big|\cF_{t-s}\big]
			&= \E^{\bar\P}\big[\cY^{s,\bar\omega,\bar\P}_{(t\land T-s\land T)^{\smalltext{s}\smalltext{,}\smalltext{\bar\omega}}}((T-s\land T)^{s,\bar\omega},\xi^{s,\bar\omega})\big|\cF_{t-s}\big] \\
			&= \E^{\bar\P}\Big[ \E^{\bar\P}\big[ \cY^{s,\bar\omega,\bar\P}_{(t\land T-s\land T)^{\smalltext{s}\smalltext{,}\smalltext{\bar\omega}}}((T-s\land T)^{s,\bar\omega},\xi^{s,\bar\omega})\big| \cF_{(T-s\land T)^{\smalltext{s}\smalltext{,}\smalltext{\bar\omega}}}\big]\Big|\cF_{t-s}\Big] \\
			&= \E^{\bar\P}\big[ \cY^{s,\bar\omega,\bar\P}_{(t\land T-s\land T)^{\smalltext{s}\smalltext{,}\smalltext{\bar\omega}}}((T-s\land T)^{s,\bar\omega},\xi^{s,\bar\omega})\big| \cF_{(t \land T-s\land T)^{\smalltext{s}\smalltext{,}\smalltext{\bar\omega}}}\big], \; \textnormal{$\overline{\P}$--a.s.},
		\end{align*}
		holds by the $\cF_{(T-s\land T)^{\smalltext{s}\smalltext{,}\smalltext{\bar\omega}}}$-measurability of the solution of the BSDE, and since, as before, $(t-s) \land (T-s\land T)^{s,\bar\omega} = (t\land T - s\land T)^{s,\bar\omega}$. The stability of solutions to BSDEs from \Cref{cor::stability} then implies that there exists a constant $\mathfrak{C}\in (0,\infty)$ which only depends on $\Phi$ and $\hat\beta$ such that
		\begin{align*}
			\E^{\bar\P}\Big[\big|\cY^{s,\bar\omega,\bar\P}_0 & ((t\land T - s\land T)^{s,\bar\omega},\widehat{\cY}_{t}(T,\xi)(\bar\omega\otimes_s\cdot)) \\
			&- \cY_0^{s,\bar\omega,\bar\P}\big((t\land T-s\land T)^{s,\bar\omega},\E^{\bar\P}\big[ \cY^{s,\bar\omega,\bar\P}_{(t\land T-s\land T)^{\smalltext{s}\smalltext{,}\smalltext{\bar\omega}}}((T-s\land T)^{s,\bar\omega},\xi^{s,\bar\omega})\big| \cF_{(t \land T-s\land T)^{\smalltext{s}\smalltext{,}\smalltext{\bar\omega}}}\big]\big)\big|^2\Big] \leq \varepsilon^2\mathfrak{C},
		\end{align*}
		which then yields		
		\begin{align*}
			\E^\P\big[&\cY^{s,\bar\omega,\P}_0((t\land T - s\land T)^{s,\bar\omega},\widehat{\cY}_{t}(T,\xi)(\bar\omega\otimes_s\cdot))\big] = \E^{\bar\P}\big[\cY^{s,\bar\omega,\bar\P}_0((t\land T - s\land T)^{s,\bar\omega},\widehat{\cY}_{t}(T,\xi)(\bar\omega\otimes_s\cdot))\big]\\ 
			&\leq \varepsilon \mathfrak{C}^{1/2} + \E^{\bar\P}\Big[ \cY_0^{s,\bar\omega,\bar\P}\big((t\land T-s\land T)^{s,\bar\omega},\E^{\bar\P}\big[ \cY^{s,\bar\omega,\bar\P}_{(t\land T-s\land T)^{\smalltext{s}\smalltext{,}\smalltext{\bar\omega}}}((T-s\land T)^{s,\bar\omega},\xi^{s,\bar\omega})\big| \cF_{(t \land T-s\land T)^{\smalltext{s}\smalltext{,}\smalltext{\bar\omega}}}\big]\big)\Big] \\
			&= \varepsilon \mathfrak{C}^{1/2} + \E^{\bar\P}\Big[ \cY_0^{s,\bar\omega,\bar\P}\big((t\land T-s\land T)^{s,\bar\omega}, \cY^{s,\bar\omega,\bar\P}_{(t\land T-s\land T)^{\smalltext{s}\smalltext{,}\smalltext{\bar\omega}}}((T-s\land T)^{s,\bar\omega},\xi^{s,\bar\omega})\big)\Big] \\
			&= \varepsilon \mathfrak{C}^{1/2} + \E^{\bar\P}\Big[\cY_0^{s,\bar\omega,\bar\P}\big((T-s\land T)^{s,\bar\omega},\xi^{s,\bar\omega})\Big] 
			\leq \varepsilon \mathfrak{C}^{1/2} + \widehat{\cY}_s(T,\xi)(\bar\omega).
		\end{align*}
		Here, we used \Cref{lem::solv_bsde_cond} in the second-to-last equality and the fact that $\P = \overline{\P}$ on $\cF_{t-s}$ in the first equality, ensuring that the solutions of the BSDEs with horizon $\leq t-s$ relative to $\P$ and $\overline{\P}$ must agree. Since $\P \in \fP(s,\bar\omega)$ and $\varepsilon \in (0,\infty)$ were arbitrary, we find
		\begin{equation}\label{eq::dpp_geq}
			\sup_{\P^\smalltext{\prime}\in\fP(s,\bar\omega)}\E^{\P^\smalltext{\prime}}\big[\cY^{s,\bar\omega,\P^\smalltext{\prime}}_0((t\land T - s\land T)^{s,\bar\omega},\widehat{\cY}_{t}(T,\xi)(\bar\omega\otimes_s\cdot))\big] 
			\leq \widehat{\cY}_s(T,\xi)(\bar\omega).
		\end{equation}
		Conversely, \Cref{lem::conditioning_bsde2} implies for any $\P\in\fP(s,\bar{\omega})$ that
		\begin{align*}
			\E^{\P}\big[ \cY^{s,\bar\omega,\P}_{(t\land T-s\land T)^{\smalltext{s}\smalltext{,}\smalltext{\bar\omega}}}((T-s\land T)^{s,\bar\omega},\xi^{s,\bar\omega})\big| \cF_{(t \land T-s\land T)^{\smalltext{s}\smalltext{,}\smalltext{\bar\omega}}}\big] 
			&= \E^\P\big[\cY^{s,\bar\omega,\P}_{t-s}((T-s \land T)^{s,\bar\omega},\xi^{s,\bar\omega}) \big| \cF_{t-s}\big] \\
			&= \E^{\P^{\smalltext{t}\smalltext{-}\smalltext{s}\smalltext{,}\smalltext{\cdot}}}\big[\cY^{t,\bar\omega\otimes_\smalltext{s}\cdot,\P^{\smalltext{t}\smalltext{-}\smalltext{s}\smalltext{,}\smalltext{\cdot}}}_0 ((T -t\land T)^{t,\bar\omega\otimes_\smalltext{s}\cdot},\xi^{t,\bar\omega\otimes_\smalltext{s}\cdot})\big] \\
			&\leq \widehat{\cY}_t(T,\xi)(\bar\omega\otimes_s\cdot), 
			\; \textnormal{$\P$--a.s.,}
		\end{align*}
		we find using \Cref{lem::solv_bsde_cond} again and the comparison principle for BSDEs from \Cref{prop::comparison} that
		\begin{align*}
			&\E^{\P}\Big[\cY_0^{s,\bar\omega,\P}\big((t\land T-s\land T)^{\sigma,\bar\omega},\xi^{s,\bar\omega})\Big] \\
			&= \E^\P\Big[\cY_0^{s,\bar\omega,\P}\big((t\land T-s\land T)^{s,\bar\omega},\E^{\P}\big[ \cY^{s,\bar\omega,\P}_{(t\land T\smallertext{-}s\land T)^{\smalltext{s}\smalltext{,}\smalltext{\bar\omega}}}((T-s\land T)^{s,\bar\omega},\xi^{s,\bar\omega})\big| \cF_{(t \land T\smallertext{-}s\land T)^{\smalltext{s}\smalltext{,}\smalltext{\bar\omega}}}\big]\big)\Big] \\
			&\leq \E^\P\big[\cY^{s,\bar\omega,\P}_0((t\land T - s\land T)^{s,\bar\omega},\,\widehat{\cY}_{t}(T,\xi)(\bar\omega\otimes_s\cdot))\big] \leq \sup_{\P^\smalltext{\prime} \in \fP(s,\bar\omega)} \E^\P\big[\cY^{s,\bar\omega,\P^\smalltext{\prime}}_0((t\land T - s\land T)^{s,\bar\omega},\,\widehat{\cY}_{t}(T,\xi)(\bar\omega\otimes_s\cdot))\big].
		\end{align*}
		Since $\P \in \fP(s,\bar\omega)$ was arbitrary, this yields
		\begin{equation*}
			\widehat{\cY}_s(T,\xi)(\bar\omega) 
			\leq \sup_{\P^\smalltext{\prime}\in\fP(s,\bar\omega)}\E^{\P^\smalltext{\prime}}\big[\cY^{s,\bar\omega,\P^\smalltext{\prime}}_0((t\land T - s\land T)^{s,\bar\omega},\widehat{\cY}_{t}(T,\xi)(\bar\omega\otimes_s\cdot))\big],
		\end{equation*}
		which, together with \eqref{eq::dpp_geq}, yields the desired equality and thus completes the proof.
	\end{proof}
	
	\begin{proof}[Proof of \Cref{lem::linearising_bsde}]
		We note first that \Cref{ass::crossing}.$(ii)$ implies that $\langle\eta\bcdot X^{c,\P}\rangle^{(\P)}$, $\langle\eta^\prime\bcdot X^{c,\P}\rangle^{(\P)}$ and $\langle \rho\ast\tilde\mu^{X,\P}\rangle^{(\P)}$ are $\P$--essentially bounded. Moreover, since we assume that $\Delta(\rho\ast\tilde\mu^{X,\P}) > -1$, $\P$--a.s., the random variables $\d\mathscr{Q}/\d\P$ and $\d\mathcal{Q}/\d\P$ are well-defined and in $\L^2_T(\P)$ by \cite[Lemma~7.4]{possamai2024reflections}. Moreover, they define proper probability densities with respect to $\P$.
		
		\medskip
		We now turn to the representation of $\cY$. As $\zeta$ is in $\L^2(\cF_T,\P)$, the conditional expectation $\E^{\mathcal{Q}}[\zeta | \cF_{t\smallertext{+}}]$ is well-defined. Since $\langle \eta\bcdot X^{c,\P}, \cZ \bcdot X^{c,\P}\rangle^{(\P)} = (\eta^\top \mathsf{a} \cZ)\bcdot C$ , we can rewrite the first BSDE in the statement of the lemma to
		\begin{equation*}
			\cY_t = \zeta - \int_t^T \bigg( \cZ_s \d X^{c,\P}_s - \d \langle \eta\bcdot X^{c,\P}, \cZ \bcdot X^{c,\P}\rangle^{(\P)}_s\bigg) - \int_t^T \d (\cU\ast\tilde\mu^{X,\P}_s - \langle \rho \ast\tilde\mu^{X,\P}, \cU \ast\tilde\mu^{X,\P}\rangle^{(\P)}_s) - \int_t^T\d\cN_s.
		\end{equation*}
		By Girsanov's theorem, and since $\d\mathcal{Q}/\d\P$ is square-integrable with respect to $\P$, the three last terms on the right-hand side of the previous equality are $(\mathcal{Q},\F_\smallertext{+})$--uniformly integrable martingales, and thus vanish upon taking the conditional expectation under $\mathcal{Q}$ with respect to $\cF_{t\smallertext{+}}$. This implies that $\cY_t = \E^{\mathcal{Q}}[\zeta|\cF_{t\smallertext{+}}]$, $\P$--a.s., for $t \in [0,\infty]$. The same argument yields the stated representation of $\sY$. Lastly, the comparison principle \Cref{prop::comparison} implies ${\sY} \leq \cY$, $\P$--a.s., which completes the proof.
	\end{proof}

	\section{Stability and comparison of solutions to BSDEs}\label{sec_stability}
	
	In this section, we discuss the stability and comparison of solutions to our BSDEs. To ensure that this section is self-contained, we depart from the assumptions made in the rest of the manuscript and adopt the framework in \cite{possamai2024reflections}. Consider an arbitrary probability space $(\Omega, \cG, \P)$. Throughout this section, we fix once and for all the data $(X,\mu,\G,T,\xi,f,C)$, where
	\begin{enumerate}[{\bf(D1)}, leftmargin=1cm]
		\item \label{data::filtration} $\G = (\cG_t)_{t \in [0,\infty)}$ is a filtration on $(\Omega,\cF,\P)$, with the convention $\cG_{0\smallertext{-}} \coloneqq \{\varnothing,\Omega\}$;
		\item \label{data::martingale} $X = (X_t)_{t \in [0,\infty)}$ is an $\R^d$-valued process whose components are right-continuous, $\G_\smallertext{+}$-adapted, $\G_\smallertext{+}$--locally square-integrable martingales starting at zero, $\mu$ is a $\G_\smallertext{+}$--integer-valued random measure on $[0,\infty) \times E$, where $(E,\cE)$ is some Blackwell space,\footnote{See \cite[Definition III.24]{dellacherie1978probabilities}. However, for simplicity, one can think of $E$ as a Polish space with its Borel $\sigma$-algebra $\cE = \cB(E)$.} and $M_\mu[\Delta X^i | \widetilde\cP(\G)] = 0$ for each $i \in \{1,\ldots,m\}$;
		\item $C = (C_t)_{t \in [0,\infty)}$ is a real-valued, right-continuous and $\P$--a.s. non-decreasing,\footnote{See also \cite[Lemma 6.5.10]{weizsaecker1990stochastic}.}  $\G$-predictable process starting at zero satisfying 
		\begin{equation*}
			\d\langle X\rangle_s = \pi_s\d C_s, \; \text{and} \; \nu(\,\cdot\,;\d s, \d x) = K_{\cdot,s}(\d x)\d C_s, \; \text{$\P$--a.s.},
		\end{equation*}
		where $\pi=(\pi_t)_{t \in [0,\infty)}$ is a $\G$-predictable process with values in $\S^d_\smallertext{+}$, and $K$ is a transition kernel on $(E,\cE)$ given $(\Omega \times [0,\infty),\cP(\G))$;
		\item $T$ is a $\G_\smallertext{+}$--stopping time;
		\item \label{data::expectation_sup}$\xi$ is a real-valued, $\cG_{T\smallertext{+}}$-measurable random variable satisfying $\E[|\xi|^2] < \infty;$
		\item \label{data::generator}$f : \bigsqcup_{(\omega,t) \in \Omega \times [0,\infty)} \big(\R \times \R \times \R^d \times \widehat{\L}^2(K_{\omega,t})\big) \longrightarrow \R$ is such that for each $(y,\mathrm{y},z) \in \R \times \R \times \R^d$ and $u \in \H^2_T(\mu^X)$, the map		
		\begin{equation*}
			\Omega \times [0,\infty) \ni (\omega,t) \longmapsto f_t(\omega, y,\mathrm{y},z, u_t(\omega; \cdot) ) \in \R,
		\end{equation*}
		is $\G_\smallertext{+}$-progressive\footnote{Although we assumed optional measurability in \cite[Assumption \textbf{(D6)}]{possamai2024reflections}, we could have instead assumed progressive measurability.} and $f$ is $(r,\theta^X,\theta^\mu)$--Lipschitz-continuous on $\llparenthesis 0, T \rrbracket \coloneqq \{(\omega,t) \in \Omega \times (0,\infty) : t \leq T(\omega) \}$ in the sense that
		\begin{align*}
			& |f_t(\omega, y,\mathrm{y},z, u_t(\omega; \cdot)) - f_t(\omega, y^\prime,\mathrm{y}^\prime,z^\prime,u_t^\prime(\omega; \cdot))\big|^2 \\
			&\leq r_t(\omega) |y-y^\prime|^2 + \mathrm{r}_t(\omega) |\mathrm{y}-\mathrm{y}^\prime|^2 + \theta^X_t(\omega) (z-z^\prime)^\top \pi_t(\omega)(z-z^\prime) 
			+ \theta^\mu_t(\omega) \|u_t(\omega;\cdot) - u^\prime_t(\omega;\cdot)\|^2_{\hat{\L}^\smalltext{2}_{\smalltext{\omega}\smalltext{,}\smalltext{t}}(K_{\smalltext{\omega}\smalltext{,}\smalltext{t}})} ,
		\end{align*}
		for $\P \otimes \mathrm{d}C$--a.e. $(\omega,t) \in \llparenthesis 0, T \rrbracket$, where $r = (r_t)_{t \in [0,\infty)}$, $\mathrm{r} = (\mathrm{r}_t)_{t \in [0,\infty)}$, $\theta^X = (\theta^X_t)_{t \in [0,\infty)}$ and $\theta^\mu =(\theta^\mu_t)_{t \in [0,\infty)}$ are $[0,\infty)$-valued, $\G$-predictable processes;
		\item \label{data::integ_f0} the process $f_\cdot(0,0,0,\mathbf{0})$ satisfies
		\begin{equation*}
			\E^\P \Bigg[\bigg(\int_0^T |f_s(0,0,0,\mathbf{0})| \d C_s \bigg)^2 \Bigg] < \infty;
		\end{equation*}
		\item \label{data::process_A} the nonnegative, $\G$-predictable process $\alpha = (\alpha_t)_{t \in [0,\infty)}$ defined through $\alpha^2_t = \max\{\sqrt{r_t},\sqrt{\mathrm{r}_t},\theta^X_t,\theta^\mu_t\}$ satisfies $\alpha_t(\omega) > 0$ for $\P \otimes \mathrm{d}C$--a.e. $(\omega,t) \in \llparenthesis 0,T\rrbracket$, and the $\G$-predictable process $A = (A_t)_{t \in [0,\infty)}$ defined by $A_t \coloneqq \int_0^{t \land T} \alpha^2_s \d C_s$ is real-valued and satisfies $\Delta A \leq \Phi$, up to $\P$-evanescence, for some $\Phi \in [0,\infty)$.
	\end{enumerate}
	
	A solution $(\cY,\cZ,\cU,\cN)$ to the BSDE with generator $f$ (even without the Lipschitz-continuity assumption) and terminal condition $\xi$ is always understood to satisfy the conditions:
	\begin{enumerate}[{\bf(B1)}, leftmargin=1cm]
		\item $(\cZ,\cU,\cN) \in \H^2_T(X;\G,\P) \times \H^2_T(\mu;\G,\P) \times \cH^{2,\perp}_T(X,\mu;\G_\smallertext{+},\P)$$;$
		\item $\cY = (\cY_t)_{t \in [0,\infty]}$ is a real-valued, right-continuous and $\P$--a.s. c\`adl\`ag, $\G_\smallertext{+}$-optional process satisfying
		\begin{equation*}
			\E^\P\bigg[\int_0^T |f_s\big(\cY_s,\cY_{s\smallertext{-}},\cZ_s,\cU_s(\cdot)\big)\big|\d C_s\bigg] < \infty,
		\end{equation*}
		and
		\begin{equation*}
			\cY_t = \xi + \int_t^T f_s\big(\cY_s,\cY_{s\smallertext{-}},\cZ_s,\cU_s(\cdot)\big)\d C_s - \int_t^T \cZ_s \d X_s - \int_t^T \cU_s(x)\tilde\mu(\d s, \d x) - \int_t^T \d \cN_s, \; t \in [0,\infty], \; \text{$\P$--a.s.}
		\end{equation*}
	\end{enumerate}
	
	\medskip
	To state the stability and comparison result, we define, for $\beta \in (0,\infty)$, the constants
	\begin{align*}\label{eq::tilde_pi_psi_1} 
		\widetilde M_1^\Phi(\beta) \coloneqq \ff^\Phi(\beta) + \frac{1}{\beta} + \max\bigg\{1,\frac{(1+\beta\Phi)}\beta\bigg\}\bigg(\frac{1}{\beta} + \beta \fg^\Phi(\beta)\bigg),\;	\widetilde M_2^\Phi(\beta) \coloneqq \ff^\Phi(\beta) + \bigg(\frac{1}{\beta} + \beta \fg^\Phi(\beta)\bigg),
	\end{align*}
	\begin{equation*}
		\widetilde M_3^\Phi(\beta) \coloneqq \frac{1}{\beta} + \max\bigg\{1,\frac{(1+\beta\Phi)}\beta\bigg\}\bigg(\frac{1}{\beta} + \beta \fg^\Phi(\beta)\bigg).
	\end{equation*}
	
	\begin{definition}
		The pair $(\xi, f)$ is standard data for $\hat\beta \in (0,\infty)$ if
		\begin{equation*}
			\E^\P\big[|\cE(\hat\beta A)^{1/2}_T\xi|^2\big] + \E^\P\bigg[\int_0^T \cE(\hat\beta A)_s \frac{|f_s(0,0,0,\mathbf{0})|^2}{\alpha^2_s}\d C_s\bigg] < \infty.
		\end{equation*}
	\end{definition}
	
	By \cite[Theorem 3.7 and Remark 3.8]{possamai2024reflections}, the assumption that $\widetilde{M}_1^\Phi(\hat\beta) < 1$ for $\hat\beta \in (0,\infty)$ ensures that the BSDE with standard data $(\xi,f)$ for $\hat\beta$ is well-posed. Similarly, if $f$ does not depend on the $\mathrm{y}$-variable, then $\widetilde{M}_2^\Phi(\hat\beta) < 1$ ensures well-posedness, and if $f$ does not depend on the $y$-variable, then $\widetilde{M}_3^\Phi(\hat\beta) < 1$ ensures well-posedness.
		
	\subsection{Stability}
	
	Suppose now that we are given another terminal condition $\xi^\prime$ satisfying \ref{data::expectation_sup} and another generator $f^\prime$ satisfying \ref{data::generator}--\ref{data::integ_f0}, with the same Lipschitz-continuity coefficients $(r,\mathrm{r},\theta^X,\theta^\mu)$ as $f$. We denote by $(\cY,\cZ,\cU,\cN)$ the solution to the BSDE with generator $f$ and terminal condition $\xi$, and by $(\cY^\prime,\cZ^\prime,\cU^\prime,\cN^\prime)$ the solution to the BSDE with generator $f^\prime$ and terminal condition $\xi^\prime$, in case the BSDEs are well-posed in the sense of the results established in \cite[Section 3.2]{possamai2024reflections}. Let
	\begin{equation*}
		\delta\cY \coloneqq \cY - \cY^\prime, \; \delta\cZ \coloneqq \cZ - \cZ^\prime, \; \delta\cU \coloneqq \cU - \cU^\prime, \; \delta\cN \coloneqq \cN - \cN^\prime,
	\end{equation*}
	\begin{equation*}
		\delta\xi \coloneqq \xi - \xi^\prime, \; \delta_1 f \coloneqq f\big(\cY,\cY_\smallertext{-},\cZ,\cU(\cdot)\big) - f\big(\cY^\prime,\cY^\prime_\smallertext{-},\cZ^\prime,\cU^\prime(\cdot)\big), \; \delta_2 f \coloneqq f\big(\cY^\prime,\cY^\prime_\smallertext{-},\cZ^\prime,\cU^\prime(\cdot)\big) - f^\prime\big(\cY^\prime,\cY^\prime_\smallertext{-},\cZ^\prime,\cU^\prime(\cdot)\big).
	\end{equation*}
	
	We obtain the following stability result.
	
	\begin{proposition}\label{prop::stability}
		Suppose that both $(\xi, f)$ and $(\xi^\prime, f^\prime)$ are standard data for $\beta \in (0,\infty)$, and let $\sigma$ be a $\G_\smallertext{+}$--stopping time.

\medskip
$(i)$ If $\widetilde{M}^\Phi_1(\beta) < 1$, then there exists a constant $\mathfrak{C} \in (0,\infty)$ that depends only on $\beta$ and on $\Phi$ such that
			\begin{align*}
				&\cE(\beta A)_{\sigma\land T}|\delta\cY_\sigma|^2 + \E^\P\bigg[\int_\sigma^T\cE(\beta A)_s|\delta\cY_s|^2\d A_s +\int_\sigma^T \cE(\beta A)_s|\delta \cY_{s\smallertext{-}}|^2\d A_s \bigg|\cG_{\sigma\smallertext{+}}\bigg]  \\
				&\quad + \E^\P\bigg[ \int_\sigma^T \cE(\beta A)_s (\delta\cZ_s)^\top\pi_s\delta\cZ_s \d C_s
				+ \int_\sigma^T \cE(\beta A)_s \|\delta\cU_s(\cdot)\|^2_{\hat{\L}^\smalltext{2}_{\smalltext{\cdot}\smalltext{,}\smalltext{s}}(K_{\smalltext{\cdot}\smalltext{,}\smalltext{s}})} \d C_s 
				+ \int_\sigma^T \cE(\beta A)_s \d \langle\delta\cN\rangle_s\bigg|\cG_{\sigma\smallertext{+}}\bigg]\\
				& \leq \mathfrak{C}\Bigg(\E^\P\bigg[\cE(\beta A)_T|\delta\xi|^2 + \int_\sigma^T \cE(\beta A)_s \frac{|\delta_2 f_s|^2}{\alpha^2_s}\d C_s \bigg| \cG_{\sigma\smallertext{+}}\bigg] \Bigg), \; \textnormal{$\P$--a.s.}
			\end{align*}
			
$(ii)$ If $\widetilde{M}^\Phi_2(\beta) < 1$ and both $f$ and $f^\prime$ do not depend on the $\mathrm{y}$-variable, then there exists a constant $\mathfrak{C} \in (0,\infty)$ that depends only on $\beta$ and on $\Phi$ such that
			\begin{align*}
				&\E^\P\bigg[\int_\sigma^T\cE(\beta A)_s|\delta\cY_s|^2\d A_s+\int_\sigma^T \cE(\beta A)_s (\delta\cZ_s)^\top\pi_s\delta\cZ_s \d C_s +\int_\sigma^T \cE(\beta A)_s \|\delta\cU_s(\cdot)\|^2_{\hat{\L}^\smalltext{2}_{\smalltext{\cdot}\smalltext{,}\smalltext{s}}(K_{\smalltext{\cdot}\smalltext{,}\smalltext{s}})} \d C_s +\int_\sigma^T \cE(\beta A)_s \d \langle\delta\cN\rangle_s\bigg|\cG_{\sigma\smallertext{+}}\bigg]\\
				&\quad \leq \mathfrak{C}\Bigg(\E^\P\bigg[\cE(\beta A)_T|\delta\xi|^2 +\int_\sigma^T \cE(\beta A)_s \frac{|\delta_2 f_s|^2}{\alpha^2_s}\d C_s \bigg| \cG_{\sigma\smallertext{+}}\bigg] \Bigg), \; \textnormal{$\P$--a.s.}
			\end{align*}
			
$(iii)$ If $\widetilde{M}^\Phi_3(\beta) < 1$ and both $f$ and $f^\prime$ do not depend on the $y$-variable, then there exists a constant $\mathfrak{C} \in (0,\infty)$ that depends only on $\beta$ and on $\Phi$ such that
			\begin{align*}
				&\cE(\beta A)_{\sigma\land T}|\delta\cY_\sigma|^2 + \E^\P\bigg[\int_\sigma^T\cE(\beta A)_s|\delta\cY_{s\smallertext{-}}|^2\d A_s + \int_\sigma^T \cE(\beta A)_s (\delta\cZ_s)^\top\pi_s\delta\cZ_s \d C_s \bigg|\cG_{\sigma\smallertext{+}}\bigg] \\
				&\quad+ \E^\P\bigg[\int_\sigma^T \cE(\beta A)_s \|\delta\cU_s(\cdot)\|^2_{\hat{\L}^\smalltext{2}_{\smalltext{\cdot}\smalltext{,}\smalltext{s}}(K_{\smalltext{\cdot}\smalltext{,}\smalltext{s}})} \d C_s + \int_\sigma^T \cE(\beta A)_s \d \langle\delta\cN\rangle_s\bigg|\cG_{\sigma\smallertext{+}}\bigg]\\
				& \leq \mathfrak{C}\Bigg(\E^\P\bigg[\cE(\beta A)_T|\delta\xi|^2 +\int_\sigma^T \cE(\beta A)_s \frac{|\delta_2 f_s|^2}{\alpha^2_s}\d C_s \bigg| \cG_{\sigma\smallertext{+}}\bigg] \Bigg), \; \textnormal{$\P$--a.s.}
			\end{align*}
	\end{proposition}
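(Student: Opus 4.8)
\textbf{Proof strategy for Proposition \ref{prop::stability}.}

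The plan is to establish the estimate by applying It\^o's formula to the process $\cE(\beta A)|\delta\cY|^2$, exactly following the template of the \emph{a priori} estimates in \cite[Section 5]{possamai2024reflections}, but keeping careful track of how the remainder term $\delta_2 f$ enters. First I would write down the dynamics of $\delta\cY$: subtracting the two BSDEs, $\delta\cY$ solves a BSDE with terminal condition $\delta\xi$, generator $\delta_1 f + \delta_2 f$ (evaluated along the appropriate solution triplets), and martingale part $\delta\cZ\bcdot X + \delta\cU\ast\tilde\mu + \delta\cN$. The key point is that the Lipschitz-continuity hypothesis bounds $|\delta_1 f|$ by a combination of $\sqrt{r}|\delta\cY|$, $\sqrt{\mathrm{r}}|\delta\cY_\smallertext{-}|$, $\sqrt{\theta^X}\|\pi^{1/2}\delta\cZ\|$, and $\sqrt{\theta^\mu}\|\delta\cU\|_{\hat\L^\smalltext{2}}$, all of which can be absorbed into the left-hand side provided the contraction constant is strictly below $1$; the term $\delta_2 f$ has no such structure and must be estimated by Cauchy--Schwarz against the quadratic-variation terms, which is exactly why it appears on the right-hand side divided by $\alpha$.

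The concrete steps are as follows. (1) Apply the It\^o formula for $\cE(\beta A)|\delta\cY|^2$ between a $\G_\smallertext{+}$--stopping time $\sigma$ and $T$; this produces the drift terms $2\cE(\beta A)\delta\cY\,\delta_1 f\,\d C$, $2\cE(\beta A)\delta\cY\,\delta_2 f\,\d C$, the exponential-weight terms $-\beta\cE(\beta A)_{\smallertext{-}}|\delta\cY_\smallertext{-}|^2\d A$ arising from the non-increasing behaviour of the reciprocal weight, the quadratic-variation increments $\cE(\beta A)\d[\delta\cZ\bcdot X + \delta\cU\ast\tilde\mu + \delta\cN]$, and local-martingale terms. (2) Take conditional expectation with respect to $\cG_{\sigma\smallertext{+}}$, using a localising sequence and monotone/dominated convergence to dispose of the local martingale; the standard-data hypothesis guarantees all integrability needed here, exactly as in \cite[Lemma 5.2 and Lemma 5.3]{possamai2024reflections}. (3) Bound $2\cE(\beta A)|\delta\cY||\delta_1 f|$ using the Lipschitz estimate together with Young's inequality (with a free parameter $\varpi$), and similarly $2\cE(\beta A)|\delta\cY||\delta_2 f| \leq \varepsilon\cE(\beta A)\alpha^2|\delta\cY|^2 + \varepsilon^{-1}\cE(\beta A)|\delta_2 f|^2/\alpha^2$. (4) Use the two auxiliary integral inequalities from the proof of \cite[Proposition 5.1]{possamai2024reflections} (reproduced in the excerpt as the analogues of Equations 5.17, 5.19, 5.21, 5.26 and Lemma \ref{eq::FV_exponential}) to convert products of running integrals of $|\delta_1 f|$ into weighted $L^2$-norms, which is where the constants $\ff^\Phi$, $\fg^\Phi$ appear. (5) Rearrange: the resulting inequality has the schematic form $(1 - c(\varpi)\widetilde M^\Phi_i(\beta))\cdot(\text{LHS}) \leq \mathfrak{C}\cdot(\text{RHS})$; since $\widetilde M^\Phi_i(\beta) < 1$ by hypothesis, one chooses $\varpi, \varepsilon$ small enough that $c(\varpi)\widetilde M^\Phi_i(\beta) < 1$ and divides through, obtaining the claimed bound with $\mathfrak{C}$ depending only on $\beta$ and $\Phi$. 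For parts $(ii)$ and $(iii)$, the generator lacks the $\mathrm{y}$- or $y$-dependence respectively, so the corresponding term is simply dropped from the Lipschitz estimate, and the relevant contraction constant becomes $\widetilde M^\Phi_2$ or $\widetilde M^\Phi_3$; one should also keep the $\cE(\beta A)_{\sigma\land T}|\delta\cY_\sigma|^2$ term on the left in cases $(i)$ and $(iii)$ but only the integral of $|\delta\cY|^2\d A$ in case $(ii)$, reflecting which estimates survive the absorption.

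The main obstacle I anticipate is \emph{not} conceptual but bookkeeping: one must be scrupulous about the distinction between $\cE(\beta A)$ and $\cE(\beta A)_{\smallertext{-}}$ (they differ by the factor $1+\beta\Delta A \leq 1+\beta\Phi$, which is precisely where the $\max\{1,(1+\beta\Phi)/\beta\}$ factor in $\widetilde M^\Phi_1$ and $\widetilde M^\Phi_3$ originates), about the jump contributions $\sum (f_s\Delta C_s)^2$ that appear when one squares the running integral, and about which terms can legitimately be moved to the left-hand side under a conditional expectation (only the predictably-measurable ones, via the dual predictable projection as in \cite[Theorem VI.57]{dellacherie1982probabilities}). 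A secondary subtlety is that in case $(i)$ the estimate asserts control of \emph{both} $\int|\delta\cY|^2\d A$ and $\int|\delta\cY_\smallertext{-}|^2\d A$; obtaining the latter requires, as in \cite[Lemma C.5]{possamai2024reflections}, working with predictable stopping times and left-limits, which is a routine but delicate addendum to the main argument. None of these require genuinely new ideas beyond \cite{possamai2024reflections}; the novelty, as the excerpt emphasises, is that the whole argument avoids any further appeal to It\^o's formula beyond this single application and is carried out under the weaker contraction condition $\widetilde M^\Phi_i(\beta)<1$ rather than $M^\Phi_i(\beta)<1$.
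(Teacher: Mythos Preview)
Your strategy is sound and is the classical route, but it is \emph{not} the one the paper takes, and the difference is precisely what the authors flag as a contribution. You plan to apply It\^o's formula to $\cE(\beta A)|\delta\cY|^2$ and then bound the cross terms $2\cE(\beta A)\delta\cY\,\delta f\,\d C$ via Young's inequality. The paper instead never applies It\^o to the weighted square: it starts from the elementary identity
\[
(\delta\cY_S)^2 + \E^\P\bigg[\int_S^T\d\langle\delta\eta\rangle_s\,\Big|\,\cG_{S\smallertext{+}}\bigg] = \E^\P\bigg[\Big(\delta\xi + \int_S^T \delta f_s\,\d C_s\Big)^2\,\Big|\,\cG_{S\smallertext{+}}\bigg],
\]
obtained simply by squaring the representation $\delta\cY_S = \delta\xi + \int_S^T\delta f_s\,\d C_s - \int_S^T\d\delta\eta_s$ and taking conditional expectation (together with the corresponding left-limit version at predictable $S$). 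The exponential weight is then introduced \emph{a posteriori} via the Fubini-type identity of Lemma~\ref{eq::FV_exponential}, $\int_t^T\cE(\beta A)_r\,\d V_r = \cE(\beta A)_{t\land T}(V_T-V_{t\land T}) + \beta\int_t^T\cE(\beta A)_{u-}\int_{u-}^T\d V_r\,\d A_u$, combined with the predictable projection. The splitting $\delta f = \delta_1 f + \delta_2 f$ and the absorption argument using $\widetilde M^\Phi_i(\beta)<1$ then proceed exactly as you describe.

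What the paper's route buys is that the quadratic-variation bookkeeping you anticipate (jump cross terms of $\cE(\beta A)$ with $|\delta\cY|^2$, the $\sum(f_s\Delta C_s)^2$ contributions, the $\cE(\beta A)$ versus $\cE(\beta A)_{-}$ distinction inside the It\^o expansion) is almost entirely bypassed: the energy identity is exact and the only inequality before absorption is $(a+b)^2\leq(1+\varpi)a^2+(1+1/\varpi)b^2$ applied to $\delta\xi + \int\delta f$. Your It\^o-based approach would certainly succeed (it is essentially what \cite[Section~5]{possamai2024reflections} does for the reflected case), but the paper's argument is shorter and more transparent in this unreflected setting.
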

	
	\begin{proof}
		We only prove $(i)$, as an analogous argument yields the bounds in $(ii)$ and $(iii)$. Moreover, we suppose, without loss of generality, that $\sigma = \sigma \land T$. Since all random variables considered below are $\mathcal{G}_{T\smallertext{+}}$-measurable, this additional assumption is harmless. Let
		\begin{equation*}
			\delta\eta \coloneqq (\cZ_s-\cZ^\prime_s) \bcdot X_s +  \big(\cU_s(x)-\cU^\prime_s(x)\big)\ast\tilde{\mu} + (\cN-\cN^\prime).
		\end{equation*}
		We fix a $\G_\smallertext{+}$--stopping time $S$ and define $\delta f \coloneqq f\big(\cY,\cY_\smallertext{-},\cZ,\cU(\cdot)\big) - f^\prime\big(\cY^\prime,\cY^\prime_\smallertext{-},\cZ^\prime,\cU^\prime(\cdot)\big) = \delta_1 f + \delta_2 f$. Note that
		\begin{equation}\label{eq::representation_delta_y}
			\delta\cY_S = \delta\xi + \int_S^T \delta f_s \d C_s - \int_S^T \d\delta\eta_s \; \text{and} \;
			\delta\cY_S = \E^\P\bigg[\delta\xi + \int_S^T \delta f_s \d C_s \bigg| \cG_{S\smallertext{+}}\bigg], \; \text{$\P$--a.s.},
		\end{equation}
		yields
		\begin{equation*}
			(\delta\cY_S)^2 + (\delta\cY_S)\int_S^T\d\delta\eta_s + \bigg(\int_S^T\d\delta\eta_s\bigg)^2 = \bigg(\delta\cY_S + \int_S^T \d \delta\eta_s\bigg)^2 = \bigg(\delta\xi + \int_S^T \delta f_s \d C_s\bigg)^2, \; \text{$\P$--a.s.}
		\end{equation*}
		By taking conditional expectation with respect to $\cG_{S\smallertext{+}}$, we obtain
		\begin{equation*}
			(\delta\cY_S)^2 + \E^\P\bigg[\int_S^T\d\langle\delta\eta\rangle_s\bigg|\cG_{S\smallertext{+}}\bigg] = \E^\P\bigg[\bigg(\delta\xi + \int_S^T \delta f_s \d C_s\bigg)^2\bigg|\cG_{S\smallertext{+}}\bigg], \; \text{$\P$--a.s.}
		\end{equation*}
		Let $\varpi \in (0,\infty)$ be arbitrary. Using $(a+b)^2 = a^2 + 2ab + b^2 \leq a^2 + \varpi a^2 + b^2/\varpi + b^2 = (1+\varpi)a^2 + (1+1/\varpi)b^2$ for real numbers $a$ and $b$, we obtain
		\begin{equation}\label{eq::stability_conditional_expectation}
			(\delta\cY_S)^2 + \E^\P\bigg[\int_S^T\d\langle\delta\eta\rangle_s\bigg|\cG_{S\smallertext{+}}\bigg] 
			\leq (1+\varpi)\E^\P\big[|\delta\xi|^2 \big| \cG_{S\smallertext{+}}\big] + \bigg(1+\frac{1}{\varpi}\bigg)\E^\P\bigg[\bigg(\int_S^T \delta f_s \d C_s\bigg)^2\bigg|\cG_{S\smallertext{+}}\bigg], \; \text{$\P$--a.s.},
		\end{equation}
		and in case $S$ is a $\G$-predictable stopping time, we obtain
		\begin{equation}\label{eq::stability_conditional_expectation2}
			(\delta\cY_{S\smallertext{-}})^2 + \E^\P\bigg[\int_{S\smallertext{-}}^T\d\langle\delta\eta\rangle_s\bigg|\cG_{S\smallertext{-}}\bigg] \leq (1+\varpi)\E^\P\big[|\delta\xi|^2 \big|\cG_{S\smallertext{-}}\big] + \bigg(1+\frac{1}{\varpi}\bigg)\E^\P\bigg[\bigg(\int_{S\smallertext{-}}^T \delta f_s \d C_s\bigg)^2\bigg|\cG_{S\smallertext{-}}\bigg], \; \text{$\P$--a.s.}
		\end{equation}
		We can now proceed as in the proof of \cite[Proposition 5.4]{possamai2024reflections}. First, by \Cref{eq::FV_exponential}, we obtain
		\begin{align*}
			\E^\P\bigg[\int_\sigma^T \cE(\beta A)_s \d \langle\delta\eta\rangle_s\bigg|\cG_{\sigma\smallertext{+}}\bigg]
			&= \cE(\beta A)_{\sigma \land T} \E^\P[\langle\delta\eta\rangle_T - \langle\delta\eta\rangle_{\sigma}|\cG_{\sigma\smallertext{+}}] 
			+ \beta \E^\P\bigg[\int_\sigma^T \cE(\beta A)_{t\smallertext{-}}\int_{t\smallertext{-}}^T \d\langle\delta\eta\rangle_s\d A_t\bigg| \cG_{\sigma\smallertext{+}}\bigg], \; \textnormal{$\P$--a.s.}
		\end{align*}
		We then find for an arbitrary $\gamma \in (0,\beta)$
		\begin{align*}
			&\E^\P\bigg[\int_\sigma^T \cE(\beta A)_{t\smallertext{-}}\int_{t\smallertext{-}}^T \d\langle\delta\eta\rangle_s\d A_t\bigg| \cG_{\sigma\smallertext{+}}\bigg] \\
			&\leq (1+\varpi)\E^\P\bigg[\int_\sigma^T \cE(\beta A)_{t\smallertext{-}}|\delta\xi|^2\d A_t \bigg| \cG_{\sigma\smallertext{+}}\bigg] 
			+ \bigg(1+\frac{1}{\varpi}\bigg) \E^\P\bigg[\int_\sigma^T \cE(\beta A)_{t\smallertext{-}}\bigg(\int_{t\smallertext{-}}^T |\delta f_s|\d C_s\bigg)^2\d A_t \bigg| \cG_{\sigma\smallertext{+}} \bigg] \\
			&\quad - \E^\P\bigg[\int_\sigma^T \cE(\beta A)_{t\smallertext{-}}|\delta \cY_{t\smallertext{-}}|^2\d A_t \bigg| \cG_{\sigma\smallertext{+}} \bigg] \\
			&\leq \frac{(1+\varpi)}{\beta} \E^\P\big[ \cE(\beta A)_{T}|\delta\xi|^2 \big| \cG_{\sigma\smallertext{+}}\big] 
			+ \bigg(1+\frac{1}{\varpi}\bigg) \frac{(1+\gamma\Phi)}{\gamma(\beta-\gamma)} \E^\P\bigg[ \int_\sigma^T \cE(\beta A)_s\frac{|\delta f_s|^2}{\alpha^2_s}\d C_s\bigg| \cG_{\sigma\smallertext{+}}\bigg] \\
			&\quad - \E^\P\bigg[\int_\sigma^T \cE(\beta A)_{t\smallertext{-}}|\delta \cY_{t\smallertext{-}}|^2\d A_t \bigg| \cG_{\sigma\smallertext{+}} \bigg], \; \textnormal{$\P$--a.s.}
		\end{align*}
		The first inequality follows from \eqref{eq::stability_conditional_expectation2} and from the predictable projection theorem \cite[Remark VI.58.(b), page 123]{dellacherie1982probabilities}, and the second inequality follows from the arguments used to deduce \cite[Equation (5.23)]{possamai2024reflections}. Similarly, using \cite[Equation (5.21)]{possamai2024reflections},
		\begin{align*}
			\E^\P[\langle\delta\eta\rangle_T - \langle\delta\eta\rangle_{\sigma}|\cG_{\sigma\smallertext{+}}] 
			&\leq (1+\varpi)\E^\P\big[|\delta\xi|^2\big|\cG_{\sigma\smallertext{+}}\big] + \bigg(1+\frac{1}{\varpi}\bigg)\E^\P\bigg[\bigg(\int_\sigma^T \delta f_s \d C_s\bigg)^2\bigg| \cG_{\sigma\smallertext{+}}\bigg] \\
			&\leq (1+\varpi)\E^\P\big[|\delta\xi|^2\big|\cG_{\sigma\smallertext{+}}\big] + \bigg(1+\frac{1}{\varpi}\bigg) \frac{1}{\beta} \frac{1}{\cE(\beta A)_{\sigma}} \E^\P\bigg[ \int_\sigma^T \cE(\beta A)_s\frac{|\delta f_s|^2}{\alpha^2_s}\d C_s\bigg| \cG_{\sigma\smallertext{+}}\bigg], \; \textnormal{$\P$--a.s.}
		\end{align*}
		This then yields, after a rearrangement of the terms, that
		\begin{align}\label{eq::stability_delta_eta_norm}
			&\beta \E^\P\bigg[\int_\sigma^T \cE(\beta A)_{t\smallertext{-}}|\delta \cY_{t\smallertext{-}}|^2\d A_t \bigg| \cG_{\sigma\smallertext{+}} \bigg] 
			+ \E^\P\bigg[\int_\sigma^T \cE(\beta A)_s \d \langle\delta\eta\rangle_s\bigg|\cG_{\sigma\smallertext{+}}\bigg] \nonumber\\
			& \leq (1+\varpi)\E^\P\big[\cE(\beta A)_{\sigma}|\delta\xi|^2\big|\cG_{\sigma\smallertext{+}}\big] + (1+\varpi)\E^\P\big[\cE(\beta A)_T|\delta\xi|^2\big| \cG_{\sigma\smallertext{+}}\big] \nonumber\\
			& \quad + \bigg(1+\frac{1}{\varpi}\bigg)\bigg(\frac{1}{\beta} + \beta\fg^\Phi(\beta)\bigg) \E^\P\bigg[ \int_\sigma^T \cE(\beta A)_s\frac{|\delta f_s|^2}{\alpha^2_s}\d C_s\bigg| \cG_{\sigma\smallertext{+}}\bigg], \; \textnormal{$\P$--a.s.}
		\end{align}
		Since
		\begin{align*}
			\frac{\beta}{(1+\beta\Phi)}\E^\P\bigg[\int_\sigma^T \cE(\beta A)_{t}|\delta \cY_{t\smallertext{-}}|^2\d A_t \bigg| \cG_{\sigma\smallertext{+}} \bigg] 
			&= \frac{\beta}{(1+\beta\Phi)} \E^\P\bigg[\int_\sigma^T \cE(\beta A)_{s\smallertext{-}}(1+\beta\Delta A_s)|\delta\cY_{t\smallertext{-}}|^2\d A_t \bigg| \cG_{\sigma\smallertext{+}} \bigg] \\
			&\leq \beta \E^\P\bigg[\int_\sigma^T \cE(\beta A)_{t\smallertext{-}}|\delta\cY_{t\smallertext{-}}|^2\d A_t \bigg| \cG_{\sigma\smallertext{+}} \bigg], \; \textnormal{$\P$--a.s.},
		\end{align*}
		we have
		\begin{align*}
			&\min\bigg\{1,\frac{\beta}{(1+\beta\Phi)}\bigg\} \bigg( \E^\P\bigg[\int_\sigma^T \cE(\beta A)_{t}|\delta \cY_{t\smallertext{-}}|^2\d A_t \bigg| \cG_{\sigma\smallertext{+}} \bigg] 
			+ \E^\P\bigg[\int_\sigma^T \cE(\beta A)_s \d \langle\delta\eta\rangle_s\bigg|\cG_{\sigma\smallertext{+}}\bigg] \bigg) \\
			&\leq \frac{\beta}{(1+\beta\Phi)}\E^\P\bigg[\int_\sigma^T \cE(\beta A)_{t}|\delta \cY_{t\smallertext{-}}|^2\d A_t \bigg| \cG_{\sigma\smallertext{+}} \bigg] 
			+ \E^\P\bigg[\int_\sigma^T \cE(\beta A)_s \d \langle\delta\eta\rangle_s\bigg|\cG_{\sigma\smallertext{+}}\bigg] \\
			&\leq 2(1+\varpi)\E^\P\big[\cE(\beta A)_T|\delta\xi|^2\big|\cG_{\sigma\smallertext{+}}\big] + \bigg(1+\frac{1}{\varpi}\bigg)\bigg(\frac{1}{\beta} + \beta\fg^\Phi(\beta)\bigg) \E^\P\bigg[ \int_\sigma^T \cE(\beta A)_s\frac{|\delta f_s|^2}{\alpha^2_s}\d C_s\bigg| \cG_{\sigma\smallertext{+}}\bigg], \; \textnormal{$\P$--a.s.},
		\end{align*}
		which we can rewrite to
		\begin{align}\label{eq::stability_conditional_y_minus}
			&\E^\P\bigg[\int_\sigma^T \cE(\beta A)_{t}|\delta \cY_{t\smallertext{-}}|^2\d A_t \bigg| \cG_{\sigma\smallertext{+}} \bigg] 
			+ \E^\P\bigg[\int_\sigma^T \cE(\beta A)_s \d \langle\delta\eta\rangle_s\bigg|\cG_{\sigma\smallertext{+}}\bigg] \nonumber\\
			& \leq 2(1+\varpi)\max\bigg\{1,\frac{(1+\beta\Phi)}{\beta}\bigg\}\E^\P\big[\cE(\beta A)_T|\delta\xi|^2\big|\cG_{\sigma\smallertext{+}}\big] 
			\nonumber\\
			&\quad + \bigg(1+\frac{1}{\varpi}\bigg)\max\bigg\{1,\frac{(1+\beta\Phi)}{\beta}\bigg\} \bigg(\frac{1}{\beta} + \beta\fg^\Phi(\beta)\bigg) \E^\P\bigg[ \int_\sigma^T \cE(\beta A)_s\frac{|\delta f_s|^2}{\alpha^2_s}\d C_s\bigg| \cG_{\sigma\smallertext{+}}\bigg], \; \textnormal{$\P$--a.s.}
		\end{align}
		Next, it follows from \eqref{eq::stability_conditional_expectation} and from the arguments used to derive \cite[Equation~(5.23)]{possamai2024reflections} that
		\begin{align}\label{eq::stability_delta_y}
			&\E^\P\bigg[\int_\sigma^T\cE(\beta A)_s|\delta\cY_s|^2\d A_s\bigg|\cG_{\sigma\smallertext{+}}\bigg] \nonumber\\
			&\leq (1+\varpi)\E^\P\bigg[\int_\sigma^T\cE(\beta A)_s|\delta\xi|^2\d A_s \bigg|\cG_{\sigma\smallertext{+}}\bigg] + \bigg(1+\frac{1}{\varpi}\bigg)\E^\P\bigg[\int_\sigma^T\cE(\beta A)_s\bigg(\int_s^T |\delta f_u| \d C_u\bigg)^2\d A_s \bigg| \cG_{\sigma\smallertext{+}} \bigg] \nonumber\\
			&\leq (1+\varpi)\frac{(1+\beta\Phi)}{\beta}\E^\P\bigg[|\delta\xi|^2 \int_\sigma^T \cE(\beta A)_{s\smallertext{-}}\d (\beta A)_s \bigg|\cG_{\sigma\smallertext{+}} \bigg] + \bigg(1+\frac{1}{\varpi}\bigg)\ff^\Phi(\beta) \E^\P\bigg[\int_\sigma^T\cE(\beta A)_s\frac{|\delta f_s|^2}{\alpha^2_s} \d C_s \bigg| \cG_{\sigma\smallertext{+}} \bigg]  \nonumber\\
			&\leq (1+\varpi)\frac{(1+\beta\Phi)}{\beta}\E^\P\big[\cE(\beta A)_T|\delta\xi|^2\big|\cG_{\sigma\smallertext{+}} \big] + \bigg(1+\frac{1}{\varpi}\bigg)\ff^\Phi(\beta) \E^\P\bigg[\int_\sigma^T\cE(\beta A)_s\frac{|\delta f_s|^2}{\alpha^2_s} \d C_s \bigg| \cG_{\sigma\smallertext{+}} \bigg], \; \textnormal{$\P$--a.s.}
		\end{align}
		Lastly, \eqref{eq::stability_conditional_expectation}, together with the arguments that lead to \cite[Equation~(5.21) and (5.25)]{possamai2024reflections}, yields
		\begin{align*}
			\cE(\beta A)_{\sigma}|\delta\cY_\sigma|^2 
			&\leq (1+\varpi)\cE(\beta A)_{\sigma}\E^\P\big[|\delta\xi|^2 \big| \cG_{\sigma\smallertext{+}}\big] + \bigg(1+\frac{1}{\varpi}\bigg)\cE(\beta A)_{\sigma}\E^\P\bigg[\bigg(\int_\sigma^T \delta f_s \d C_s\bigg)^2\bigg|\cG_{\sigma\smallertext{+}}\bigg] \\
			&\leq (1+\varpi)\E^\P\big[\cE(\beta A)_{T}|\delta\xi|^2 \big| \cG_{\sigma\smallertext{+}}\big] + \bigg(1+\frac{1}{\varpi}\bigg)\frac{1}{\beta}\E^\P\bigg[\int_\sigma^T \cE(\beta A)_s\frac{|\delta f_s|^2}{\alpha^2_s}\d C_s\bigg|\cG_{\sigma\smallertext{+}}\bigg], \; \textnormal{$\P$--a.s.}
		\end{align*}
		Combining this with \eqref{eq::stability_delta_y} and \eqref{eq::stability_conditional_y_minus}, we obtain, $\P$--a.s.,
		\begin{align}\label{eq::stability_M_1_delta_f}
			&\cE(\beta A)_{\sigma}|\delta\cY_\sigma|^2 + \E^\P\bigg[\int_\sigma^T\cE(\beta A)_s|\delta\cY_s|^2\d A_s + \int_\sigma^T \cE(\beta A)_{s}|\delta \cY_{s\smallertext{-}}|^2\d A_s + \int_\sigma^T \cE(\beta A)_s \d \langle\delta\eta\rangle_s\bigg|\cG_{\sigma\smallertext{+}}\bigg] \nonumber\\
			&\leq (1+\varpi)\bigg( 1 + \frac{(1+\beta\Phi)}{\beta} + 2\max\bigg\{1,\frac{(1+\beta\Phi)}{\beta}\bigg\}\bigg)\E^\P\big[\cE(\beta A)_T|\delta\xi|^2\big| \cG_{\sigma\smallertext{+}}\big] \nonumber\\
			&\quad + \bigg(1+\frac{1}{\varpi}\bigg)\widetilde M^\Phi_1(\beta)\E^\P\bigg[\int_\sigma^T \cE(\beta A)_s\frac{|\delta f_s|^2}{\alpha^2_s}\d C_s\bigg|\cG_{\sigma\smallertext{+}}\bigg].
		\end{align}
		We now bound the last term in the above inequality. Letting $\kappa \in (0,\infty)$ be arbitrary, and recalling $\delta f = \delta_1 f + \delta_2 f$, we find using $2ab \leq a^2/k + k b^2$ and the Lipschitz-continuity property of $f$ that
		\begin{align}\label{eq::stability_decomposition_delta_f}
			&\E^\P\bigg[\int_\sigma^T \cE(\beta A)_s\frac{|\delta f_s|^2}{\alpha^2_s}\d C_s\bigg|\cG_{\sigma\smallertext{+}}\bigg] \nonumber\\
			&= \E^\P\bigg[\int_\sigma^T \cE(\beta A)_s \frac{|\delta_1 f_s + \delta_2 f_s|^2}{\alpha^2_s}\d C_s \bigg| \cG_{\sigma\smallertext{+}}\bigg] \nonumber\\
			&=\E^\P\bigg[\int_\sigma^T \cE(\beta A)_s \frac{|\delta_1 f_s|^2  + 2 (\delta_1 f_s)(\delta_2 f_s) + |\delta_2 f_s|^2}{\alpha^2_s}\d C_s \bigg| \cG_{\sigma\smallertext{+}}\bigg] \nonumber\\
			&\leq \bigg(1+\frac{1}{\kappa}\bigg)\E^\P\bigg[\int_\sigma^T \cE(\beta A)_s \frac{|\delta_1 f_s|^2}{\alpha^2_s}\d C_s \bigg| \cG_{\sigma\smallertext{+}}\bigg] + (1+\kappa)\E^\P\bigg[\int_\sigma^T \cE(\beta A)_s \frac{|\delta_2 f_s|^2}{\alpha^2_s}\d C_s \bigg| \cG_{\sigma\smallertext{+}}\bigg] \nonumber\\
			&\leq
			\bigg(1+\frac{1}{\kappa}\bigg)\Bigg(\E^\P\bigg[\int_\sigma^T\cE(\beta A)_s|\delta\cY_s|^2\d A_s + \int_\sigma^T \cE(\beta A)_{s}|\delta \cY_{s\smallertext{-}}|^2\d A_s + \int_\sigma^T \cE(\beta A)_s \d \langle\delta\eta\rangle_s\bigg|\cG_{\sigma\smallertext{+}}\bigg]\Bigg) \nonumber\\
			&\quad + (1+\kappa)\E^\P\bigg[\int_\sigma^T \cE(\beta A)_s \frac{|\delta_2 f_s|^2}{\alpha^2_s}\d C_s \bigg| \cG_{\sigma\smallertext{+}}\bigg], \; \textnormal{$\P$--a.s.}
		\end{align}
		Since $\widetilde M^\Phi_1(\beta) < 1$, we choose $(\varpi,\kappa) \in (0,\infty)^2$ large enough so that $(1+1/\varpi)(1+1/\kappa)\widetilde M^\Phi_1(\beta) < 1$. With \eqref{eq::stability_decomposition_delta_f}, we rearrange the terms in \eqref{eq::stability_M_1_delta_f} and obtain
		\begin{align*}
			&\cE(\beta A)_{\sigma}|\delta\cY_\sigma|^2 + \E^\P\bigg[\int_\sigma^T\cE(\beta A)_s|\delta\cY_s|^2\d A_s + \int_\sigma^T \cE(\beta A)_s|\delta \cY_{s\smallertext{-}}|^2\d A_s \bigg| \cG_{\sigma\smallertext{+}} \bigg]  \\
			&\quad+ \E^\P\bigg[\int_\sigma^T \cE(\beta A)_s (\cZ_s)^\top\pi_s\delta\cZ_s \d C_s  + \int_\sigma^T \cE(\beta A)_s \|\delta\cU_s(\cdot)\|^2_{\hat{\L}^\smalltext{2}_{\smalltext{\cdot}\smalltext{,}\smalltext{s}}(K_{\smalltext{\cdot}\smalltext{,}\smalltext{s}})} \d C_s + \int_\sigma^T \cE(\beta A)_s \d \langle\delta\cN\rangle_s\bigg|\cG_{\sigma\smallertext{+}}\bigg]\\
			& \leq \frac{1}{1- \big(1+\frac{1}{\varpi}\big)\big(1+\frac{1}{\kappa}\big)\widetilde M^\Phi_1(\beta)} \Bigg( (1+\varpi)\bigg( 1 + \frac{(1+\beta\Phi)}{\beta} + 2\max\bigg\{1,\frac{(1+\beta\Phi)}{\beta}\bigg\}\bigg)\E^\P\big[\cE(\beta A)_T|\delta\xi|^2\big| \cG_{\sigma\smallertext{+}}\big] \\
			&\quad + \bigg(1+\frac{1}{\varpi}\bigg)(1+\kappa) \widetilde M^\Phi_1(\beta)\E^\P\bigg[\int_\sigma^T \cE(\hat\beta A)_s \frac{|\delta_2 f_s|^2}{\alpha^2_s}\d C_s \bigg| \cG_{\sigma\smallertext{+}}\bigg] \Bigg), \; \textnormal{$\P$--a.s.}
		\end{align*}
		This immediately implies $(i)$, and thus concludes the proof.
	\end{proof}
	
	\begin{corollary}\label{cor::stability}
		Suppose that both $(\xi, f)$ and $(\xi^\prime, f^\prime)$ are standard data for $\beta \in (0,\infty)$. Let $\sigma$ be a $\G_\smallertext{+}$--stopping time. If
		\begin{enumerate}
			\item[$(i)$] $\widetilde M^\Phi_1(\beta) < 1$, or
			\item[$(ii)$] $\widetilde M^\Phi_2(\beta) < 1$ and both $f$ and $f^\prime$ do not depend on the $\mathrm{y}$-variable, or
			\item[$(iii)$] $\widetilde M^\Phi_3(\beta) < 1$ and both $f$ and $f^\prime$ do not depend on the $y$-variable,
		\end{enumerate}
		then there exists a constant $\mathfrak{C}^\prime \in (0,\infty)$ that depends only on $\beta$ and on $\Phi$ such that
		\begin{equation*}
			\E^\P\bigg[\sup_{s \in [0,T]} \big|\cE(\beta A)^{1/2}_{s}\delta\cY_{s}\big|^2\bigg] 
			\leq \mathfrak{C}^\prime \bigg( \E^\P\bigg[ \cE(\beta A)_{T} |\xi|^2 + \int_0^T \cE(\beta A)_s\frac{|\delta_2 f_s|^2}{\alpha^2_s} \d C_s\bigg]\bigg).
		\end{equation*}
	\end{corollary}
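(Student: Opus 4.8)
The plan is to upgrade the conditional $\cE(\beta A)$-weighted $\mathbb{L}^2$-bound on $\delta\cY$ from Proposition \ref{prop::stability} to a bound on the supremum of $|\cE(\beta A)^{1/2}\delta\cY|^2$ by means of Doob's $\mathbb{L}^2$-inequality. The starting point is the representation, valid for each $\G_\smallertext{+}$--stopping time $S$,
\[
	\cE(\beta A)^{1/2}_{S\land T}\delta\cY_S = \E^\P\bigg[\cE(\beta A)^{1/2}_{S\land T}\delta\xi + \cE(\beta A)^{1/2}_{S\land T}\int_S^T \delta f_s\,\mathrm{d}C_s\,\bigg|\,\cG_{S\smallertext{+}}\bigg], \; \text{$\P$--a.s.},
\]
which follows from the second identity in \eqref{eq::representation_delta_y} together with the $\cG_{S\smallertext{+}}$-measurability of $\cE(\beta A)_{S\land T}$. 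First I would introduce the nonnegative random variable
\[
	H \coloneqq \cE(\beta A)^{1/2}_T|\delta\xi| + \int_0^T \cE(\beta A)^{1/2}_s |\delta f_s|\,\mathrm{d}C_s,
\]
and observe that $\cE(\beta A)^{1/2}_{S\land T}|\delta\cY_S| \leq \E^\P[H|\cG_{S\smallertext{+}}]$ for every $\G_\smallertext{+}$--stopping time $S$, since $\cE(\beta A)$ is nondecreasing. Thus the process $\cE(\beta A)^{1/2}\delta\cY$ is dominated, in absolute value, by the càdlàg version of the martingale $(\E^\P[H|\cG_{t\smallertext{+}}])_{t\in[0,\infty)}$, and Doob's $\mathbb{L}^2$-inequality gives
\[
	\E^\P\bigg[\sup_{s\in[0,T]}\big|\cE(\beta A)^{1/2}_s\delta\cY_s\big|^2\bigg] \leq \E^\P\bigg[\sup_{s\in[0,\infty)}\big|\E^\P[H|\cG_{s\smallertext{+}}]\big|^2\bigg] \leq 4\,\E^\P[H^2],
\]
provided $H\in\mathbb{L}^2(\P)$, which I address next.

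The remaining task is therefore to bound $\E^\P[H^2]$ by the right-hand side of the claimed inequality. Using $(a+b)^2\leq 2a^2+2b^2$,
\[
	\E^\P[H^2] \leq 2\,\E^\P\big[\cE(\beta A)_T|\delta\xi|^2\big] + 2\,\E^\P\bigg[\bigg(\int_0^T \cE(\beta A)^{1/2}_s|\delta f_s|\,\mathrm{d}C_s\bigg)^2\bigg].
\]
For the second term I would insert the factor $\alpha_s$ and apply Cauchy--Schwarz together with the estimate used to derive \cite[Equation (5.21)]{possamai2024reflections}, namely
\[
	\bigg(\int_0^T \cE(\beta A)^{1/2}_s\frac{|\delta f_s|}{\alpha_s}\,\alpha_s\,\mathrm{d}C_s\bigg)^2 \leq \bigg(\int_0^T \cE(\beta A)_s\frac{|\delta f_s|^2}{\alpha^2_s}\,\mathrm{d}C_s\bigg)\bigg(\int_0^T \alpha^2_s\,\mathrm{d}C_s\bigg),
\]
but since $\int_0^T\alpha^2_s\,\mathrm{d}C_s = A_T$ need not be bounded, I would instead reweight: writing $\mathrm{d}A_s = \alpha^2_s\mathrm{d}C_s$ and using that $A$ is $\P$-integrable is not guaranteed either, so the cleaner route is to bound $\cE(\beta A)^{1/2}_s \leq \cE(\beta A)^{1/2}_T$ is false (wrong direction). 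Instead I would use, as in \cite[Equation (5.25)--(5.26)]{possamai2024reflections}, the inequality
\[
	\E^\P\bigg[\bigg(\int_0^T \cE(\beta A)^{1/2}_s|\delta f_s|\,\mathrm{d}C_s\bigg)^2\bigg] \leq \frac{1}{\beta}\,\E^\P\bigg[\int_0^T \cE(\beta A)_s\frac{|\delta f_s|^2}{\alpha^2_s}\,\mathrm{d}C_s\bigg],
\]
which holds for any nonnegative integrand by the very computation carried out in the proof of Proposition \ref{prop::stability} (it is the $\sigma=0$, unconditioned instance of the bounds established there). Decomposing $\delta f = \delta_1 f + \delta_2 f$ exactly as in \eqref{eq::stability_decomposition_delta_f}, and invoking the Lipschitz-continuity of $f$ to control $\E^\P[\int_0^T \cE(\beta A)_s|\delta_1 f_s|^2/\alpha^2_s\,\mathrm{d}C_s]$ by $\E^\P[\int_0^T\cE(\beta A)_s|\delta\cY_s|^2\mathrm{d}A_s + \int_0^T\cE(\beta A)_s|\delta\cY_{s\smallertext{-}}|^2\mathrm{d}A_s + \int_0^T\cE(\beta A)_s\,\mathrm{d}\langle\delta\eta\rangle_s]$, I can now apply Proposition \ref{prop::stability}.$(i)$ (respectively $(ii)$ or $(iii)$) with $\sigma = 0$ to bound this last expectation by $\mathfrak{C}\,\E^\P[\cE(\beta A)_T|\delta\xi|^2 + \int_0^T \cE(\beta A)_s|\delta_2 f_s|^2/\alpha^2_s\,\mathrm{d}C_s]$. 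Chaining these estimates yields $\E^\P[H^2] \leq \mathfrak{C}''\,\E^\P[\cE(\beta A)_T|\delta\xi|^2 + \int_0^T \cE(\beta A)_s|\delta_2 f_s|^2/\alpha^2_s\,\mathrm{d}C_s]$ with a constant depending only on $\beta$ and $\Phi$, and combining with the Doob estimate gives the claim (replacing $|\delta\xi|^2$ by $|\xi|^2$ is harmless since $\delta\xi = \xi - \xi'$ and, in the statement as written, the bound is in terms of $|\xi|$; one absorbs $|\delta\xi|^2 \leq 2|\xi|^2 + 2|\xi'|^2$ or, more precisely, notes that the corollary is applied with $\xi' $ chosen so that the $|\delta\xi|^2$ term is already controlled — I would keep the bound in terms of $|\delta\xi|^2$ and remark it coincides with the stated form in the applications).

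The main obstacle I anticipate is the integrability bookkeeping around $\cE(\beta A)$: since $A$ is merely assumed real-valued with $\Delta A \leq \Phi$ and $(\xi,f)$, $(\xi',f')$ are only standard data for $\beta$, one must be careful that every expectation written above is finite before manipulating it — in particular that $H \in \mathbb{L}^2(\P)$, which is exactly what the standard-data hypothesis plus \cite[Equation (5.21)]{possamai2024reflections} deliver, and that the conditional bounds of Proposition \ref{prop::stability} may be integrated against $\P$. A secondary technical point is justifying that $\cE(\beta A)^{1/2}\delta\cY$ is genuinely dominated pathwise by a true martingale rather than only in a conditional-expectation sense at each stopping time; this is handled by taking the càdlàg modification of $(\E^\P[H|\cG_{t\smallertext{+}}])_{t\geq 0}$ and applying the optional sampling / section argument (as in \cite[Proposition C.3]{possamai2024reflections}) to upgrade the stopping-time inequality to a pathwise one outside a $\P$--null set. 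Everything else is a routine reprise of the computations already performed in the proof of Proposition \ref{prop::stability}.
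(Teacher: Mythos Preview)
Your overall strategy---dominate $\cE(\beta A)^{1/2}_{S\land T}|\delta\cY_S|$ by the conditional expectation of a single nonnegative random variable, apply Doob's $\L^2$-inequality, then close with the Lipschitz bound and Proposition~\ref{prop::stability}---is exactly the paper's. The one concrete gap is in your choice of dominating variable $H$ and the inequality you invoke to control it. You claim
\[
\E^\P\bigg[\bigg(\int_0^T \cE(\beta A)^{1/2}_s|\delta f_s|\,\mathrm{d}C_s\bigg)^2\bigg] \leq \frac{1}{\beta}\,\E^\P\bigg[\int_0^T \cE(\beta A)_s\frac{|\delta f_s|^2}{\alpha^2_s}\,\mathrm{d}C_s\bigg],
\]
citing \cite[Equations (5.21), (5.25)--(5.26)]{possamai2024reflections} and the proof of Proposition~\ref{prop::stability}. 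But those references establish the bound with the weight \emph{outside} the integral, namely $\cE(\beta A)_\sigma(\int_\sigma^T|\delta f_s|\,\mathrm{d}C_s)^2 \leq \tfrac{1}{\beta}\int_\sigma^T \cE(\beta A)_s|\delta f_s|^2/\alpha^2_s\,\mathrm{d}C_s$, which is a different statement. With the weight inside, the displayed inequality is false in general: take $\Phi=0$, $\alpha\equiv 1$, $C_s=A_s=s$, $T=\infty$, $\delta f_s = \mathrm{e}^{-\beta s}$; then the left-hand side equals $4/\beta^2$ while the right-hand side equals $1/\beta^2$. (A version with $\cE(\beta' A)$ on the right for some $\beta'>\beta$ does hold---see the proof of Lemma~\ref{lem::bound_k_2bsde}---but you have no room here since $(\xi,f)$ is only standard data for $\beta$.)

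The fix is exactly what the paper does: do not push the weight inside. Keep $\cE(\beta A)^{1/2}_{S\land T}$ outside, use $|a+b|\leq\sqrt{2}\sqrt{a^2+b^2}$, apply the genuine inequality $\cE(\beta A)_{S\land T}(\int_S^T\delta f_s\,\mathrm{d}C_s)^2 \leq \tfrac{1}{\beta}\int_S^T\cE(\beta A)_s|\delta f_s|^2/\alpha^2_s\,\mathrm{d}C_s$, and dominate by the single random variable
\[
\sqrt{2}\bigg(\cE(\beta A)_T|\delta\xi|^2 + \frac{1}{\beta}\int_0^T\cE(\beta A)_s\frac{|\delta f_s|^2}{\alpha^2_s}\,\mathrm{d}C_s\bigg)^{1/2},
\]
which \emph{is} square-integrable by the standard-data hypothesis. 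From that point on your closing argument (Lipschitz bound on $\delta_1 f$, then Proposition~\ref{prop::stability} at $\sigma=0$) is correct and identical to the paper's.
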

	
	\begin{proof}
		We only prove the statement for $(i)$, as an analogous argument yields the bounds in the other cases. Let $S$ be a $\G_\smallertext{+}$--stopping time. The representation \eqref{eq::representation_delta_y} and \cite[Equation 5.21]{possamai2024reflections} together with $|a+b| \leq \sqrt{2}\sqrt{a^2 + b^2}$ yields
		\begin{align*}\label{eq::inequality_stability_y}
			\cE(\beta A)^{1/2}_{S\land T} |\delta\cY_S| 
			&\leq \E^\P\Bigg[\bigg|\cE(\beta A)^{1/2}_{S\land T} \delta\xi + \cE(\beta A)^{1/2}_{S\land T} \bigg(\int_S^T \delta f_s \d C_s\bigg) \bigg| \Bigg|\cG_{S\smallertext{+}}\Bigg] \nonumber\\
			&\leq \sqrt{2} \E^\P\Bigg[ \bigg(\cE(\beta A)_{S\land T} \big|\delta\xi\big|^2 + \cE(\beta A)_{S\land T} \bigg(\int_S^T \delta f_s \d C_s\bigg)^2 \bigg)^{1/2} \Bigg|\cG_{S\smallertext{+}}\Bigg] \nonumber\\
			&\leq \sqrt{2} \E^\P\Bigg[ \bigg(\cE(\beta A)_{S\land T} \big|\delta\xi\big|^2 + \frac{1}{\beta}\int_S^T \cE(\beta A)_s \frac{|\delta f_s|^2}{\alpha^2_s} \d C_s  \bigg)^{1/2} \Bigg|\cG_{S\smallertext{+}}\Bigg] \nonumber \\
			&\leq \sqrt{2} \E^\P\Bigg[ \bigg(\cE(\beta A)_{T} \big|\delta\xi\big|^2 + \frac{1}{\beta}\int_0^T \cE(\beta A)_s \frac{|\delta f_s|^2}{\alpha^2_s} \d C_s  \bigg)^{1/2} \Bigg|\cG_{S\smallertext{+}}\Bigg], \; \textnormal{$\P$--a.s.}
		\end{align*}

		The random variable inside the last conditional expectation is in square-integrable, and thus, by Doob's $\L^2$-inequality for martingales (see \cite[page 444]{doob1984classical}) and by \cite[Proposition C.3]{possamai2024reflections}, we obtain
		\begin{equation}\label{eq::bound_sup_delta_y_sigma}
			\E^\P\bigg[\sup_{s \in [0,T]}\big|\cE(\beta A)^{1/2}_{s}\delta\cY_{s}\big|^2\bigg] \leq 8 \E^\P\Bigg[\cE(\beta A)_T \big|\delta\xi\big|^2 + \frac{1}{\beta}\int_0^T \cE(\beta A)_s \frac{|\delta f_s|^2}{\alpha^2_s} \d C_s\Bigg].
		\end{equation}
		We now use the Lipschitz-continuity property of $f$ and then apply \Cref{prop::stability} to deduce 
		\begin{align*}
			&\E^\P\Bigg[\int_0^T \cE(\beta A)_s \frac{|\delta f_s|^2}{\alpha^2_s} \d C_s\Bigg] \\
			& = \E^\P\Bigg[\int_0^T \cE(\beta A)_s \frac{|\delta_1 f_s + \delta_2 f_s|^2}{\alpha^2_s} \d C_s\Bigg] \\
			&\leq 2\Bigg(\E^\P\bigg[\int_0^T\cE(\beta A)_s|\delta\cY_s|^2\d A_s +\int_0^T \cE(\beta A)_s|\delta \cY_{s\smallertext{-}}|^2\d A_s + \int_0^T \cE(\beta A)_s (\delta\cZ_s)^\top\pi_s\delta\cZ_s \d C_s \bigg]  \\
			&\quad + \E^\P\bigg[ \int_0^T \cE(\beta A)_s \|\delta\cU_s(\cdot)\|^2_{\hat{\L}^\smalltext{2}_{\smalltext{\cdot}\smalltext{,}\smalltext{s}}(K_{\smalltext{\cdot}\smalltext{,}\smalltext{s}})} \d C_s  + \E^\P\bigg[\int_\sigma^T \cE(\beta A)_s \frac{|\delta_2 f_s|^2}{\alpha^2_s}\d C_s \bigg]\Bigg) \\
			&\leq \widetilde{\mathfrak{C}} \bigg( \E^\P\bigg[\cE(\beta A)_T|\delta\xi|^2 + \int_0^T \cE(\beta A)_s \frac{|\delta_2 f_s|^2}{\alpha^2_s}\d C_s \bigg] \bigg),
		\end{align*}
		for some $\widetilde{\mathfrak{C}} \in (0,\infty)$ depending only on $\beta$ and on $\Phi$. This, together with \eqref{eq::bound_sup_delta_y_sigma}, then yields the stated result, which concludes the proof.
	\end{proof}
	
	\subsection{Comparison}
	
	A comparison principle for our BSDEs already appeared in \cite[Proposition 7.3]{possamai2024reflections}. However, \cite[Assumption 7.1]{possamai2024reflections} can be weakened. Since the comparison principle is crucial in this work, we include a weaker assumption and a generalised statement for completeness.
	
	\medskip
	Suppose we are given another terminal condition $\xi^\prime$ satisfying \ref{data::expectation_sup} and another generator $f^\prime$, as in \ref{data::generator}, but without the requirement that $f^\prime$ be Lipschitz-continuous. The Lipschitz-continuity property of $f$ then allows us to write
	\begin{align*}
		f_s\big(\omega,y,\mathrm{y},z,u_s(\omega;\cdot)\big) - f^\prime_s\big(\omega,y^\prime,\mathrm{y}^\prime,z^\prime,u^\prime_s(\omega;\cdot)\big) 
		&\geq \lambda^{y,y^\smalltext{\prime}}_s(\omega)(y-y^\prime) + \widehat\lambda^{\mathrm{y},\mathrm{y}^\smalltext{\prime}}_s(\omega)(\mathrm{y}-\mathrm{y}^\prime)  + (\eta^{z,z^\smalltext{\prime}}_s(\omega))^\top \pi_s(\omega)(z-z^\prime) \\
		&\quad+ f_s\big(\omega,y^\prime,\mathrm{y}^\prime,z^\prime,u_s(\omega;\cdot)\big) - f_s\big(\omega,y^\prime,\mathrm{y}^\prime,z^\prime,u^\prime_s(\omega;\cdot)\big) \\
		&\quad + f_s\big(\omega,y^\prime,\mathrm{y}^\prime,z^\prime,u^\prime_s(\omega;\cdot)\big) - f^\prime_s\big(\omega,y^\prime,\mathrm{y}^\prime,z^\prime,u^\prime_s(\omega;\cdot)\big),
	\end{align*}
	where
	\begin{equation*}
		\lambda^{y,y^\smalltext{\prime}}_s(\omega) \coloneqq -\sqrt{r_s(\omega)}\sgn(y-y^\prime), \; \widehat\lambda^{\mathrm{y},\mathrm{y}^\smalltext{\prime}}_s(\omega) \coloneqq - \sqrt{\mathrm{r}_s(\omega)}\sgn(\mathrm{y}-\mathrm{y}^\prime),
	\end{equation*}
	\begin{equation*}
		\eta^{z,z^\smalltext{\prime}}_s(\omega) \coloneqq -\sqrt{\theta^X_s(\omega)} \frac{(z-z^\prime)}{|(z-z^\prime)^\top\pi_s(\omega)(z-z^\prime)|^{1/2}}\mathbf{1}_{\{(z-z^\smalltext{\prime})^\smalltext{\top}\pi_\smalltext{s}(z-z^\smalltext{\prime})\neq 0\}}(\omega).
	\end{equation*}
	
	\medskip
	The structural assumptions on the generator $f$ and its Lipschitz-continuity coefficients that ensure the comparison of solutions are as follows.
	
	\begin{assumption}\label{ass::comparison}
		The following conditions hold
		\begin{enumerate}[leftmargin=0.8cm]
			\item[$(i)$] $\int_0^T \sqrt{\mathrm{r}_t}\d C_t$ and $\int_0^T\theta^{X}_t \d C_t$ are both $\P$--essentially bounded$;$
			
			\item[$(ii)$] For each $\P$--{\rm a.s.} c\`adl\`ag process $\cY \in \cS^{2}_{T,\hat\beta}(\G_\smallertext{+},\P)$,  and $(\cZ,\cU,\cU^{\prime}) \in \H^{2}_{T,\hat\beta}(X;\G,\P) \times \big(\H^{2}_{T,\hat\beta}(\mu;\G,\P)\big)^2$, there exists $\rho  = \rho^{\cY,\cZ,\cU,\cU^{\smalltext\prime}} \in \H^2_T(\mu;\G^\P_\smallertext{+},\P)$ such that $\eta^{\cZ,\cZ^\smalltext{\prime}}\Delta X +\Delta (\rho \ast \tilde\mu) > -1$, \textnormal{$\P$--a.s.}, $\langle \rho^{\cY,\cZ,\cU,\cU^{\smalltext\prime}} \ast \tilde\mu\rangle_T$ is $\P$--essentially bounded, and
			\begin{equation*}
				f\big(\cY,\cY_{\smallertext{-}},\cZ,\cU(\cdot)\big) - f\big(\cY,\cY_{\smallertext{-}},\cZ,\cU^{\prime}(\cdot)\big) 
				\geq \frac{\d\langle \rho \ast\tilde\mu,(\cU-\cU^{\prime})\ast\tilde\mu\rangle}{\d C}, \; \text{$\P \otimes \mathrm{d}C$--{\rm a.e.} on $\llparenthesis 0, T \rrbracket$}.
			\end{equation*}
		\end{enumerate}
	\end{assumption}
	
	\begin{proposition}\label{prop::comparison}
		Suppose that both $(\xi, f)$ and $(\xi^\prime, f^\prime)$ are standard data for $\hat\beta \in (0,\infty)$, that \textnormal{\Cref{ass::comparison}} holds and that $\min\big\{\widetilde{M}^\Phi_1(\hat\beta),\widetilde{M}^\Phi_2(\hat\beta),\widetilde{M}^\Phi_3(\hat\beta)\big\} < 1$. Suppose that $(\cY,\cZ,\cU,\cN)$ and $(\cY^\prime,\cZ^\prime,\cU^\prime,\cN^\prime)$ are solutions in $\cS^2_{T,\hat{\beta}}(\G,\P) \times \H^2_{T,\hat\beta}(X;\G,\P) \times \H^2_{T,\hat\beta}(\mu;\G,\P) \times \cH^{2,\perp}_{T,\hat\beta}(X,\mu;\G,\P)$ to the \textnormal{BSDEs} with generator $f$ and terminal condition $\xi$ and generator $f^\prime$ and terminal condition $\xi^\prime$, respectively, both with terminal time $T$. If $\xi^\prime \leq \xi$, \textnormal{$\P$--a.s.}, and
		\begin{equation*}
			f^\prime\big(\cY^\prime,\cY^\prime_{\smallertext{-}},\cZ^\prime,\cU^\prime(\cdot)\big) 
			\leq f\big(\cY^\prime,\cY^\prime_{\smallertext{-}},\cZ^\prime,\cU^\prime(\cdot)\big), \; \textnormal{$\P\otimes \mathrm{d}C$--a.e.},
		\end{equation*}
		then $\cY^\prime \leq \cY$ up to $\P$-indistinguishability.
	\end{proposition}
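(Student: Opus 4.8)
The plan is to run the classical linearisation argument: express the difference $\delta\cY \coloneqq \cY - \cY^\prime$ as the first component of a \emph{linear} BSDE with a generator that is a Lipschitz perturbation of the data, realise this linear BSDE as a conditional expectation under an equivalent change of measure, and conclude non-negativity of $\delta\cY$ from the non-negativity of $\delta\xi$ together with the sign condition on the generators.

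First I would write, on $\llparenthesis 0,T\rrbracket$,
\begin{align*}
	f_s\big(\cY_s,\cY_{s\smallertext{-}},\cZ_s,\cU_s(\cdot)\big) - f^\prime_s\big(\cY^\prime_s,\cY^\prime_{s\smallertext{-}},\cZ^\prime_s,\cU^\prime_s(\cdot)\big)
	&= \lambda_s \delta\cY_s + \widehat\lambda_s \delta\cY_{s\smallertext{-}} + \eta_s^\top \pi_s \delta\cZ_s \\
	&\quad + \Big(f_s\big(\cY^\prime_s,\cY^\prime_{s\smallertext{-}},\cZ^\prime_s,\cU_s(\cdot)\big) - f_s\big(\cY^\prime_s,\cY^\prime_{s\smallertext{-}},\cZ^\prime_s,\cU^\prime_s(\cdot)\big)\Big) + g_s,
\end{align*}
where $\lambda_s \coloneqq \lambda^{\cY_\smalltext{s},\cY^{\smalltext{\prime}}_\smalltext{s}}_s$, $\widehat\lambda_s \coloneqq \widehat\lambda^{\cY_{\smalltext{s}\tinytext{-}},\cY^{\smalltext{\prime}}_{\smalltext{s}\tinytext{-}}}_s$, $\eta_s \coloneqq \eta^{\cZ_\smalltext{s},\cZ^{\smalltext{\prime}}_\smalltext{s}}_s$, and $g_s \coloneqq f_s(\cY^\prime_s,\cY^\prime_{s\smallertext{-}},\cZ^\prime_s,\cU^\prime_s(\cdot)) - f^\prime_s(\cY^\prime_s,\cY^\prime_{s\smallertext{-}},\cZ^\prime_s,\cU^\prime_s(\cdot)) \geq 0$ by hypothesis. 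Then I apply \Cref{ass::comparison}.$(ii)$ with $\cY \coloneqq \cY^\prime$, $\cZ \coloneqq \cZ^\prime$, $\cU \coloneqq \cU$, $\cU^\prime \coloneqq \cU^\prime$ to find $\rho \in \H^2_T(\mu;\G^\P_\smallertext{+},\P)$ with $\eta\Delta X + \Delta(\rho\ast\tilde\mu) > -1$ $\P$--a.s., $\langle\rho\ast\tilde\mu\rangle_T$ $\P$--essentially bounded, and $f_s(\cY^\prime,\cY^\prime_{\smallertext{-}},\cZ^\prime,\cU(\cdot)) - f_s(\cY^\prime,\cY^\prime_{\smallertext{-}},\cZ^\prime,\cU^\prime(\cdot)) \geq \mathrm{d}\langle\rho\ast\tilde\mu,\delta\cU\ast\tilde\mu\rangle/\mathrm{d}C$. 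Hence $\delta\cY$ satisfies the dynamics
\begin{align*}
	\delta\cY_t \geq \delta\xi + \int_t^T\bigg(\lambda_s\delta\cY_s + \widehat\lambda_s\delta\cY_{s\smallertext{-}} + \eta_s^\top\pi_s\delta\cZ_s + \frac{\mathrm{d}\langle\rho\ast\tilde\mu,\delta\cU\ast\tilde\mu\rangle_s}{\mathrm{d}C_s} + g_s\bigg)\mathrm{d}C_s - \int_t^T\mathrm{d}(\delta\cZ\bcdot X + \delta\cU\ast\tilde\mu + \delta\cN)_s,
\end{align*}
which, by the comparison-via-linearisation logic, reduces the problem to the scalar linear BSDE with the same coefficients and a non-negative driver.

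The second step is the change of measure. I would introduce $L \coloneqq \int_0^{\cdot\land T}\eta_s\,\mathrm{d}X_s + (\rho\ast\tilde\mu)_{\cdot\land T}$, which is a genuine martingale density: by \Cref{ass::comparison}.$(i)$--$(ii)$, $\langle L\rangle_T$ is $\P$--essentially bounded, the jump condition $\eta\Delta X + \Delta(\rho\ast\tilde\mu) > -1$ makes $\cE(L)$ positive, and \cite[Lemma 7.4]{possamai2024reflections} gives $\cE(L)_T \in \L^2(\P)$, so $\mathrm{d}\mathcal{Q} \coloneqq \cE(L)_T\,\mathrm{d}\P$ defines an equivalent probability. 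Under $\mathcal{Q}$, Girsanov's theorem absorbs the $\eta^\top\pi\delta\cZ$ and $\mathrm{d}\langle\rho\ast\tilde\mu,\delta\cU\ast\tilde\mu\rangle$ terms; the remaining $\lambda\delta\cY + \widehat\lambda\delta\cY_{\smallertext{-}}$ terms are handled by the two stochastic exponentials $\cE(w)$ and $\cE(v)$ with $w \coloneqq \int_0^{\cdot\land T}\lambda_s\,\mathrm{d}C_s$ and $v \coloneqq \int_0^{\cdot\land T}\widehat\lambda_s(1-\widehat\lambda_s\Delta C_s)^{-1}\,\mathrm{d}C_s$, exactly as in the proof of \Cref{lem::solv_bsde_cond} or in \cite[Equation 7.7]{possamai2024reflections}. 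Both $\cE(w)$ and $\cE(v)$ are positive and of $\P$--finite variation (using $|\lambda_s|\leq\sqrt{r_s}$, $|\widehat\lambda_s|\leq\sqrt{\mathrm{r}_s}$, $\Delta A\leq\Phi$, together with $\Phi<1$ which forces $\Delta w,\Delta v>-1$), and $|\cE(w)|^2 \leq \cE(\hat\beta A)$ by the computation in the proof of \Cref{prop::comparison} referenced in the excerpt, which guarantees the integrability needed to justify the conditional-expectation manipulations against the $\cS^2_{T,\hat\beta}$-bounds on $\delta\cY$ and the square-integrability of $\cE(L)_T$. Integration by parts on $\cE(w)\cE(v)\delta\cY$ then yields, after a localisation and Fatou argument at infinity (as in \Cref{lem::solv_bsde_cond}),
\begin{align*}
	\cE(w)_{t\land T}\cE(v)_{t\land T}\delta\cY_{t} \geq \E^{\mathcal{Q}}\bigg[\cE(w)_{T}\cE(v)_{T}\delta\xi + \int_t^T\cE(w)_s\cE(v)_s g_s\,\mathrm{d}C_s\,\bigg|\,\mathcal{G}_{t\smallertext{+}}\bigg], \; \textnormal{$\P$--a.s.}, \; t\in[0,\infty],
\end{align*}
and since $\cE(w),\cE(v)$ are positive, $\delta\xi\geq 0$, and $g\geq 0$, we conclude $\delta\cY_t\geq 0$ $\P$--a.s.\ for every $t$; right-continuity of $\delta\cY$ upgrades this to $\P$-indistinguishability.

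The main obstacle I expect is \textbf{integrability bookkeeping for the change of measure and the conditional-expectation limits}: one must make sure that $\cE(L)_T\cE(w)_T\cE(v)_T\delta\xi$ and the driver term $\cE(L)_T\int_t^T\cE(w)_s\cE(v)_s g_s\,\mathrm{d}C_s$ are genuinely $\P$-integrable, that the $\mathcal{Q}$-local martingale appearing after Girsanov is a true martingale (or at least that the localising sequence can be passed to the limit via Fatou for conditional expectations), and that the limit $t\to\infty$ in the linear-BSDE representation is legitimate — all of which hinge on combining the essential-boundedness hypotheses in \Cref{ass::comparison}.$(i)$--$(ii)$, the bound $|\cE(w)|^2\leq\cE(\hat\beta A)$, the $\cS^2_{T,\hat\beta}$-membership of the solutions, and $\cE(L)_T\in\L^2(\P)$. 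The structural steps (linearisation, Girsanov, the two stochastic exponentials) are routine once one has already carried them out in \Cref{lem::solv_bsde_cond} and \cite[Proposition 7.3]{possamai2024reflections}; the only genuinely new point here, relative to \cite[Proposition 7.3]{possamai2024reflections}, is that \Cref{ass::comparison} is weaker than \cite[Assumption 7.1]{possamai2024reflections}, so one must verify that the weakened hypotheses — in particular that $\rho$ is allowed to depend on the quadruple $(\cY,\cZ,\cU,\cU^\prime)$ and need only be $\G^\P_\smallertext{+}$-predictable rather than $\G$-predictable — still suffice for every appeal to \cite[Lemma 7.4]{possamai2024reflections} and to Girsanov's theorem, which they do because those results only require the essential boundedness of $\langle\rho\ast\tilde\mu\rangle_T$ and the jump condition, both of which are retained.
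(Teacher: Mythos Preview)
Your proposal is correct and follows essentially the same approach as the paper: linearise via $\lambda,\widehat\lambda,\eta$ and the $\rho$ from \Cref{ass::comparison}.$(ii)$, change measure via $\cE(L)$ with $L = \eta\bcdot X + \rho\ast\tilde\mu$, absorb the $\lambda,\widehat\lambda$ terms into the deterministic exponentials $\cE(w),\cE(v)$, and use the bound $|\cE(w)|^2 \le \cE(\hat\beta A)$ (which the paper derives from $\widetilde M^\Phi_i(\hat\beta)<1 \Rightarrow \hat\beta > 2+\Phi$) together with $\cE(L)_T\in\L^2(\P)$ to justify the integrability. The only cosmetic difference is that the paper first establishes the super-martingale inequality $\cE(w)_{t\land T}\cE(v)_{t\land T}\delta\cY_{t\land T} \ge \E^{\cQ}[\cE(w)_{t'\land T}\cE(v)_{t'\land T}\delta\cY_{t'\land T}\,|\,\cG_{t\smallertext{+}}]$ and then passes $t'\to\infty$ along a subsequence by explicitly bounding the difference with $\delta\xi$, whereas you jump directly to the limiting inequality with $\delta\xi$ and the $g_s$-integral; both routes are equivalent and rely on the same integrability bookkeeping you correctly identified as the crux.
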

	
	\begin{proof}
		The arguments are analogous to those in the proof of \cite[Proposition 7.3]{possamai2024reflections}. Thus, we mention only the necessary changes and omit the details. We write 
		\begin{equation*}
			\delta \cY \coloneqq \cY-\cY^\prime, \; \delta \cZ \coloneqq \cZ-\cZ^\prime, \; \delta \cU \coloneqq \cU - \cU^\prime, \; \delta \cN \coloneqq \cN - \cN^\prime, \; \delta \xi \coloneqq \xi - \xi^\prime,
		\end{equation*}
		\begin{equation*}
			\delta f \coloneqq f\big(\cY,\cY_\smallertext{-},\cZ,\cU(\cdot)) - f^\prime\big(\cY^\prime,\cY^\prime_\smallertext{-},\cZ^\prime,\cU^\prime(\cdot)\big),
		\end{equation*}
		and
		\begin{equation*}
			\lambda_s(\omega) \coloneqq \lambda^{\cY_\smalltext{s}(\omega),\cY^\smalltext{\prime}_\smalltext{s}(\omega)}_s(\omega), 
			\; \widehat\lambda_s(\omega) \coloneqq \widehat\lambda^{\cY_{\smalltext{s}\tinytext{-}}(\omega),\cY^\smalltext{\prime}_{\smalltext{s}\tinytext{-}}(\omega)}_s(\omega), 
			\; \eta_s(\omega) \coloneqq \eta^{\cZ_\smalltext{s}(\omega),\cZ^\smalltext{\prime}_\smalltext{s}(\omega)}_s(\omega),
			\; \rho_s(\omega;x) = \rho^{\cY^\smalltext{\prime},\cZ^\smalltext{\prime},\cU,\cU^\smalltext{\prime}}_s(\omega;x),
		\end{equation*}
		for simplicity. Let $v \coloneqq \int_0^{\cdot \land T} \gamma_s \d C_s,$ where  $\gamma \coloneqq \frac{\widehat\lambda}{1-\widehat\lambda\Delta C}$, let $w \coloneqq \int_0^{\cdot \land T} \lambda_s \d C_s$, and lastly, let
		\begin{equation}\label{eq::stoch_exp_measure_change_2}
			\frac{\d\Q}{\d\P} \coloneqq \cE(L)_T \coloneqq \cE\big(\eta \bcdot X + \rho \ast\tilde\mu\big)_T.
		\end{equation}
		By following the arguments in the proof of \cite[Proposition 7.3]{possamai2024reflections}, we will eventually arrive at the inequality
		\begin{equation}\label{eq::comp_limit_t2}
			\cE(w)_{t \land T}\cE(v)_{t \land T}\delta \cY_{t \land T} 
			\geq \E^\Q\bigg[ \cE(w)_{t^\smalltext{\prime} \land T}\cE(v)_{t^\smalltext{\prime} \land T}\delta \cY_{t^\smalltext{\prime} \land T} \bigg| \cG_{t\smallertext{+}}\bigg], \; 0 \leq t \leq t^\prime < \infty, \; \text{$\P$--a.s.}
		\end{equation}
		This is \cite[Equation (7.7)]{possamai2024reflections}. Since $\int_0^T \sqrt{\mathrm{r}_u} \d C_u$ is $\P$--essentially bounded, the process $\cE(v)$ is so too, say by $\mathfrak{C} \in (0,\infty)$. Note also that
		\begin{equation*}
			|\cE(w)|^2 = \cE(2w + [w]) = \cE\big(2 w + (\Delta w)^2\big) \leq \cE(2 A + \Phi A) = \cE\big((2+\Phi)A\big) \leq \cE(\hat\beta A),
		\end{equation*}
		where the last inequality holds because $\widetilde{M}^\Phi_i(\hat\beta) < 1$ implies $\hat\beta > 3 > 2+\Phi$ for each $i \in \{1,2,3\}$. This then implies that
		\begin{align*}
			& \bigg| \E^\Q\bigg[ \cE(w)_{t^\smalltext{\prime} \land T}\cE(v)_{t^\smalltext{\prime} \land T}\delta \cY_{t^\smalltext{\prime} \land T} \bigg| \cG_{t\smallertext{+}}\bigg] - \E^\Q\bigg[ \cE(w)_{t^\smalltext{\prime} \land T}\cE(v)_{t^\smalltext{\prime} \land T}\delta \xi \bigg| \cG_{t\smallertext{+}}\bigg] \bigg| \\
			&\quad \leq \frac{1}{\cE(L)_t} \bigg| \E^\P\bigg[ \cE(L)_{t^\smalltext{\prime}} \cE(w)_{t^\smalltext{\prime} \land T}\cE(v)_{t^\smalltext{\prime} \land T}\delta \cY_{t^\smalltext{\prime} \land T} \bigg| \cG_{t\smallertext{+}}\bigg] - \E^\P\bigg[ \cE(L)_{t^\smalltext{\prime}} \cE(w)_{t^\smalltext{\prime} \land T}\cE(v)_{t^\smalltext{\prime} \land T}\delta \xi \bigg| \cG_{t\smallertext{+}}\bigg] \bigg| \\
			&\quad \leq \frac{1}{\cE(L)_t} \bigg| \E^\P\bigg[ \cE(L)_{t^\smalltext{\prime}} \cE(w)_{t^\smalltext{\prime} \land T}\cE(v)_{t^\smalltext{\prime} \land T}\big(\delta \cY_{t^\smalltext{\prime} \land T} -  \E^\P\big[\delta\xi|\cG_{t\smallertext{+}}\big] \big) \bigg| \cG_{t^\smalltext{\prime}\smallertext{+}}\bigg] \bigg| \\
			&\quad \leq \mathfrak{C}^2 \frac{1}{\cE(L)_t} \E^\P\big[\cE(L)^2_{t^\smalltext{\prime}}\big|\cG_{t\smallertext{+}}\big]^{1/2} \E^\P\bigg[ \cE(\hat\beta A)_{t^\smalltext{\prime}\land T}\big(\delta \cY_{t^\smalltext{\prime} \land T} -  \E^\P\big[\delta\xi|\cG_{t^\smalltext{\prime}\smallertext{+}}\big] \big)^2 \bigg| \cG_{t\smallertext{+}}\bigg]^{1/2} \\
			&\quad \leq \mathfrak{C}^2 \frac{1}{\cE(L)_t} \E^\P\big[\cE(L)^2_{t^\smalltext{\prime}}\big|\cG_{t\smallertext{+}}\big]^{1/2} \E^\P\bigg[ \cE(\hat\beta A)_{t^\smalltext{\prime}\land T}\E^\P\bigg[\int_{t^\smalltext{\prime}}^T \delta f_s \d C_s\bigg|\cG_{t^\smalltext{\prime}\smallertext{+}}\bigg]^2 \bigg| \cG_{t\smallertext{+}}\bigg]^{1/2} \\
			&\quad \leq \mathfrak{C}^2 \frac{1}{\cE(L)_t} \E^\P\big[\cE(L)^2_{t^\smalltext{\prime}}\big|\cG_{t\smallertext{+}}\big]^{1/2} \E^\P\bigg[ \cE(\hat\beta A)_{t^\smalltext{\prime}\land T}\bigg(\int_{t^\smalltext{\prime}}^T \delta f_s \d C_s\bigg)^2 \bigg| \cG_{t\smallertext{+}}\bigg]^{1/2} \\
			&\quad \leq \mathfrak{C}^2 \frac{1}{\cE(L)_t} \E^\P\big[\cE(L)^2_{t^\smalltext{\prime}}\big|\cG_{t\smallertext{+}}\big]^{1/2} \E^\P\bigg[ \int_{t^\smalltext{\prime}}^T \cE(\hat\beta A)_s \frac{|\delta f_s|^2}{\alpha^2_s} \d C_s \bigg| \cG_{t\smallertext{+}}\bigg]^{1/2}, \; \textnormal{$\P$--a.s.}
		\end{align*}
		By \cite[Theorem 2, page 259]{shiryaev2016probability}, we deduce that, $\P$--a.s. along a sequence $(t^\prime_n)_{n \in \N}$ tending to infinity, the first term in the last line converges to $\E^\P[\cE(L)_{T}|\cG_{t\smallertext{+}}]$ and the second term converges to zero. Since $\delta \xi \geq 0$, $\P$--a.s., this yields
		\begin{equation*}
			\cE(w)_{t \land T}\cE(v)_{t \land T}\delta \cY_{t \land T} 
			\geq 0, \; t \in [0,\infty), \; \text{$\P$--a.s.},
		\end{equation*}
		and since $\cE(w)$ and $\cE(v)$ are non-negative,
		this concludes the proof.
	\end{proof}

	{\footnotesize
		\bibliography{bibliographyDylan}}

\begin{thebibliography}{168}
\providecommand{\natexlab}[1]{#1}
\providecommand{\url}[1]{\texttt{#1}}
\expandafter\ifx\csname urlstyle\endcsname\relax
  \providecommand{\doi}[1]{doi: #1}\else
  \providecommand{\doi}{doi: \begingroup \urlstyle{rm}\Url}\fi

\bibitem[A{\"\i}d et~al.(2022)A{\"\i}d, Possama{\"\i}, and
  Touzi]{aid2022optimal}
R.~A{\"\i}d, D.~Possama{\"\i}, and N.~Touzi.
\newblock Optimal electricity demand response contracting with responsiveness
  incentives.
\newblock \emph{Mathematics of Operations Research}, 47\penalty0 (3):\penalty0
  2112--2137, 2022.

\bibitem[A{\"\i}d et~al.(2023)A{\"\i}d, Kemper, and Touzi]{aid2023principal}
R.~A{\"\i}d, A.~Kemper, and N.~Touzi.
\newblock A principal--agent framework for optimal incentives in renewable
  investments.
\newblock \emph{ArXiv preprint arXiv:2302.12167}, 2023.

\bibitem[Aliprantis and Border(2006)]{aliprantis2006infinite}
C.D. Aliprantis and K.~Border.
\newblock \emph{Infinite dimensional analysis: a hitchhiker's guide}.
\newblock Springer-Verlag Berlin Heidelberg, 3rd edition, 2006.

\bibitem[Arkin and Saksonov(1979)]{arkin1979necessary}
V.I. Arkin and M.T. Saksonov.
\newblock Necessary conditions of optimality in the problems of control of
  stochastic differential equations.
\newblock \emph{Doklady Akademii Nauk SSSR}, 244\penalty0 (1):\penalty0 11--15,
  1979.

\bibitem[Baldacci and Possama{\"\i}(2022)]{baldacci2022governmental}
B.~Baldacci and D.~Possama{\"\i}.
\newblock Governmental incentives for green bonds investment.
\newblock \emph{Mathematics and Financial Economics}, 16:\penalty0 539--585,
  2022.

\bibitem[Barrasso and Touzi(2022)]{barrasso2022controlled}
A.~Barrasso and N.~Touzi.
\newblock Controlled diffusion mean field games with common noise, and
  {M}c{K}ean--{V}lasov second order backward {SDE}s.
\newblock \emph{Theory of Probability \& Its Applications}, 66\penalty0
  (4):\penalty0 613--639, 2022.

\bibitem[Bartl(2020)]{bartl2020conditional}
D.~Bartl.
\newblock Conditional nonlinear expectations.
\newblock \emph{Stochastic Processes and their Applications}, 130\penalty0
  (2):\penalty0 785--805, 2020.

\bibitem[Bauer(1996)]{bauer1996probability}
H.~Bauer.
\newblock \emph{Probability theory}, volume~23 of \emph{De Gruyter studies in
  mathematics}.
\newblock Walter de Gruyter, Berlin, New York, 1996.

\bibitem[Beck et~al.(2019)Beck, E, and Jentzen]{beck2019machine}
C.~Beck, W.~E, and A.~Jentzen.
\newblock Machine learning approximation algorithms for high-dimensional fully
  nonlinear partial differential equations and second-order backward stochastic
  differential equations.
\newblock \emph{Journal of Nonlinear Science}, 29\penalty0 (4):\penalty0
  1563--1619, 2019.

\bibitem[Bensoussan(1983{\natexlab{a}})]{bensoussan1983lectures}
A.~Bensoussan.
\newblock Lectures on stochastic control.
\newblock In S.K. Mitter and A.~Moro, editors, \emph{Nonlinear filtering and
  stochastic control, proceedings of the 3rd 1981 session of the Centro
  Internazionale Matematico Estivo $($C.I.M.E.$)$, held at Cortona, July 1--10,
  1981}, volume 972 of \emph{Lecture notes in mathematics}, pages 1--62,
  1983{\natexlab{a}}.

\bibitem[Bensoussan(1983{\natexlab{b}})]{bensoussan1983maximum}
A.~Bensoussan.
\newblock Maximum principle and dynamic programming approaches of the optimal
  control of partially observed diffusions.
\newblock \emph{Stochastics: An International Journal of Probability and
  Stochastic Processes}, 9\penalty0 (3):\penalty0 169--222, 1983{\natexlab{b}}.

\bibitem[Bertsekas and Shreve(1978)]{bertsekas1978stochastic}
D.P. Bertsekas and S.E. Shreve.
\newblock \emph{Stochastic optimal control: the discrete time case}, volume 139
  of \emph{Mathematics in science and engineering}.
\newblock Academic Press New York, 1978.

\bibitem[Bismut(1973{\natexlab{a}})]{bismut1973analyse}
J.-M. Bismut.
\newblock \emph{Analyse convexe et probabilit{\'e}s}.
\newblock PhD thesis, Facult{\'e} des sciences de Paris, 1973{\natexlab{a}}.

\bibitem[Bismut(1973{\natexlab{b}})]{bismut1973conjugate}
J.-M. Bismut.
\newblock Conjugate convex functions in optimal stochastic control.
\newblock \emph{Journal of Mathematical Analysis and Applications}, 44\penalty0
  (2):\penalty0 384--404, 1973{\natexlab{b}}.

\bibitem[Bismut(1978{\natexlab{a}})]{bismut1978controle}
J.-M. Bismut.
\newblock Contr{\^o}le des syst{\`e}mes lin{\'e}aires quadratiques :
  applications de l'int{\'e}grale stochastique.
\newblock \emph{S\'eminaire de probabilit\'es de Strasbourg}, XII:\penalty0
  180--264, 1978{\natexlab{a}}.

\bibitem[Bismut(1978{\natexlab{b}})]{bismut1978introductory}
J.-M. Bismut.
\newblock An introductory approach to duality in optimal stochastic control.
\newblock \emph{SIAM Review}, 20\penalty0 (1):\penalty0 62--78,
  1978{\natexlab{b}}.

\bibitem[Bogachev(2007)]{bogachev2007measure}
V.I. Bogachev.
\newblock \emph{Measure theory}.
\newblock Springer-Verlag Berlin Heidelberg, 2007.

\bibitem[Bouchard and Tan(2022)]{bouchard2022understanding}
B.~Bouchard and X.~Tan.
\newblock Understanding the dual formulation for the hedging of path-dependent
  options with price impact.
\newblock \emph{The Annals of Applied Probability}, 32\penalty0 (3):\penalty0
  1705--1733, 2022.

\bibitem[Bouchard et~al.(2016)Bouchard, Possama{\"\i}, and
  Tan]{bouchard2016general}
B.~Bouchard, D.~Possama{\"\i}, and X.~Tan.
\newblock A general {D}oob--{M}eyer--{M}ertens decomposition for
  $g$-supermartingale systems.
\newblock \emph{Electronic Journal of Probability}, 21\penalty0 (36):\penalty0
  1--21, 2016.

\bibitem[Bouchard et~al.(2019)Bouchard, Loeper, Soner, and
  Zhou]{bouchard2019second}
B.~Bouchard, G.~Loeper, H.M. Soner, and C.~Zhou.
\newblock Second-order stochastic target problems with generalized market
  impact.
\newblock \emph{SIAM Journal on Control and Optimization}, 57\penalty0
  (6):\penalty0 4125--4149, 2019.

\bibitem[Buckdahn and Hu(2010)]{buckdahn2010probabilistic}
R.~Buckdahn and Y.~Hu.
\newblock Probabilistic interpretation of a coupled system of
  {H}amilton--{J}acobi--{B}ellman equations.
\newblock \emph{Journal of Evolution Equations}, 10:\penalty0 529--549, 2010.

\bibitem[\c{C}\i nlar(2011)]{cinlar2011probability}
E.~\c{C}\i nlar.
\newblock \emph{Probability and stochastics}, volume 261 of \emph{Graduate
  texts in mathematics}.
\newblock Springer, New York, 2011.

\bibitem[Chen(2015)]{chen2015g}
W.~Chen.
\newblock ${G}$--{D}oob--{M}eyer decomposition and its applications in bid--ask
  pricing for derivatives under {K}nightian uncertainty.
\newblock \emph{Journal of Applied Mathematics}, 2015\penalty0 (1):\penalty0
  910809, 2015.

\bibitem[Chen and Peng(2000)]{chen2000general}
Z.~Chen and S.~Peng.
\newblock A general downcrossing inequality for $g$-martingales.
\newblock \emph{Statistics \& Probability Letters}, 46\penalty0 (2):\penalty0
  169--175, 2000.

\bibitem[Chen and Peng(2001)]{chen2001continuous}
Z.~Chen and S.~Peng.
\newblock Continuous properties of $g$-martingales.
\newblock \emph{Chinese Annals of Mathematics}, 22\penalty0 (1):\penalty0
  115--128, 2001.

\bibitem[Cheridito et~al.(2007)Cheridito, Soner, Touzi, and
  Victoir]{cheridito2007second}
P.~Cheridito, H.M. Soner, N.~Touzi, and N.~Victoir.
\newblock Second-order backward stochastic differential equations and fully
  nonlinear parabolic {PDE}s.
\newblock \emph{Communications on Pure and Applied Mathematics}, 60\penalty0
  (7):\penalty0 1081--1110, 2007.

\bibitem[Chitashvili(1983)]{chitashvili1983martingale}
R.J. Chitashvili.
\newblock Martingale ideology in the theory of controlled stochastic processes.
\newblock In J.V. Prokhorov and K.~It{\^o}, editors, \emph{Probability theory
  and mathematical statistics. Proceedings of the fourth USSR--Japan symposium,
  held at Tbilisi, USSR, August 23--29, 1982}, volume 1021 of \emph{Lecture
  notes in mathematics}, pages 73--92. Springer, 1983.

\bibitem[Chitashvili and Mania(1987{\natexlab{a}})]{chitashvili1987optimal}
R.J. Chitashvili and M.G. Mania.
\newblock Optimal locally absolutely continuous change of measure. {F}inite set
  of decisions. {P}art {I}.
\newblock \emph{Stochastics: An International Journal of Probability and
  Stochastic Processes}, 21\penalty0 (2):\penalty0 131--185,
  1987{\natexlab{a}}.

\bibitem[Chitashvili and Mania(1987{\natexlab{b}})]{chitashvili1987optimal2}
R.J. Chitashvili and M.G. Mania.
\newblock Optimal locally absolutely continuous change of measure. {F}inite set
  of decisions. {P}art {II}: optimization problems.
\newblock \emph{Stochastics: An International Journal of Probability and
  Stochastic Processes}, 21\penalty0 (3):\penalty0 187--229,
  1987{\natexlab{b}}.

\bibitem[Chiusolo and Hubert(2024)]{chiusolo2024new}
A.~Chiusolo and E.~Hubert.
\newblock A new approach to principal--agent problems with volatility control.
\newblock \emph{ArXiv preprint arXiv:2407.09471}, 2024.

\bibitem[Cohen(2012)]{cohen2012quasi}
S.N. Cohen.
\newblock Quasi-sure analysis, aggregation and dual representations of
  sublinear expectations in general spaces.
\newblock \emph{Electronic Journal of Probability}, 17\penalty0 (62):\penalty0
  1--15, 2012.

\bibitem[Cohen and Elliott(2012)]{cohen2012existence}
S.N. Cohen and R.J. Elliott.
\newblock Existence, uniqueness and comparisons for {BSDE}s in general spaces.
\newblock \emph{The Annals of Probability}, 40\penalty0 (5):\penalty0
  2264--2297, 2012.

\bibitem[Cohn(2013)]{cohn2013measure}
D.L. Cohn.
\newblock \emph{Measure theory}.
\newblock Birkh{\"a}user advanced texts. Birkh{\"a}user New York, NY, 2nd
  edition, 2013.

\bibitem[Confortola and Fuhrman(2013)]{confortola2013backward}
F.~Confortola and M.~Fuhrman.
\newblock Backward stochastic differential equations and optimal control of
  marked point processes.
\newblock \emph{SIAM Journal on Control and Optimization}, 51\penalty0
  (5):\penalty0 3592--3623, 2013.

\bibitem[Confortola et~al.(2016)Confortola, Fuhrman, and
  Jacod]{confortola2014backward}
F.~Confortola, M.~Fuhrman, and J.~Jacod.
\newblock Backward stochastic differential equation driven by a marked point
  process: an elementary approach with an application to optimal control.
\newblock \emph{The Annals of Applied Probability}, 26\penalty0 (3):\penalty0
  1743--1773, 2016.

\bibitem[Cvitani{\'c} and Xing(2018)]{cvitanic2018asset}
J.~Cvitani{\'c} and H.~Xing.
\newblock Asset pricing under optimal contracts.
\newblock \emph{Journal of Economic Theory}, 173:\penalty0 142--180, 2018.

\bibitem[Cvitani{\'c} et~al.(2017)Cvitani{\'c}, Possama{\"\i}, and
  Touzi]{cvitanic2017moral}
J.~Cvitani{\'c}, D.~Possama{\"\i}, and N.~Touzi.
\newblock Moral hazard in dynamic risk management.
\newblock \emph{Management Science}, 63\penalty0 (10):\penalty0 3328--3346,
  2017.

\bibitem[Cvitani{\'c} et~al.(2018)Cvitani{\'c}, Possama{\"\i}, and
  Touzi]{cvitanic2018dynamic}
J.~Cvitani{\'c}, D.~Possama{\"\i}, and N.~Touzi.
\newblock Dynamic programming approach to principal--agent problems.
\newblock \emph{Finance and Stochastics}, 22\penalty0 (1):\penalty0 1--37,
  2018.

\bibitem[Dakaou and Soumana~Hima(2021)]{dakaou2021large}
I.~Dakaou and A.~Soumana~Hima.
\newblock Large deviations for backward stochastic differential equations
  driven by {$G$}--{B}rownian motion.
\newblock \emph{Journal of Theoretical Probability}, 34\penalty0 (2):\penalty0
  499--521, 2021.

\bibitem[Darrell and Epstein(1992)]{duffie1992stochastic}
D.~Darrell and L.G. Epstein.
\newblock Stochastic differential utility.
\newblock \emph{Econometrica}, 60\penalty0 (2):\penalty0 353--394, 1992.

\bibitem[Davis and Varaiya(1973)]{davis1973dynamic}
M.H.A. Davis and P.~Varaiya.
\newblock Dynamic programming conditions for partially observable stochastic
  systems.
\newblock \emph{SIAM Journal on Control and Optimization}, 11\penalty0
  (2):\penalty0 226--261, 1973.

\bibitem[Dellacherie(1978)]{dellacherie1978quelques}
C.~Dellacherie.
\newblock Quelques applications du lemme de {B}orel--{C}antelli \`a la
  th\'{e}orie des semimartingales.
\newblock \emph{S\'eminaire de probabilit\'es de Strasbourg}, XII:\penalty0
  742--745, 1978.

\bibitem[Dellacherie and Meyer(1978)]{dellacherie1978probabilities}
C.~Dellacherie and P.-A. Meyer.
\newblock \emph{Probabilities and potential}, volume~29 of \emph{Mathematics
  studies}.
\newblock North-Holland, 1978.

\bibitem[Dellacherie and Meyer(1982)]{dellacherie1982probabilities}
C.~Dellacherie and P.-A. Meyer.
\newblock \emph{Probabilities and potential {B}: theory of martingales}.
\newblock North-Holland Mathematics Studies. Elsevier Science, 1982.

\bibitem[Denis and Martini(2006)]{denis2006theoretical}
L.~Denis and C.~Martini.
\newblock A theoretical framework for the pricing of contingent claims in the
  presence of model uncertainty.
\newblock \emph{The Annals of Applied Probability}, 16\penalty0 (2):\penalty0
  827--852, 2006.

\bibitem[Denis et~al.(2011)Denis, Hu, and Peng]{denis2011function}
L.~Denis, M.~Hu, and S.~Peng.
\newblock Function spaces and capacity related to a sublinear expectation:
  application to ${G}$--{B}rownian motion paths.
\newblock \emph{Potential Analysis}, 34\penalty0 (2):\penalty0 139--161, 2011.

\bibitem[Denis et~al.(2024)Denis, Matoussi, and Zhou]{denis2024second}
L.~Denis, A.~Matoussi, and C.~Zhou.
\newblock Second order {BSDE}s with jumps by measurable selection argument.
\newblock \emph{HAL preprint hal-04822047}, 2024.

\bibitem[Dolinsky et~al.(2012)Dolinsky, Nutz, and Soner]{dolinsky2012weak}
Y.~Dolinsky, M.~Nutz, and H.M. Soner.
\newblock Weak approximation of ${G}$-expectations.
\newblock \emph{Stochastic Processes and their Applications}, 122\penalty0
  (2):\penalty0 664--675, 2012.

\bibitem[Doob(1984)]{doob1984classical}
J.L. Doob.
\newblock \emph{Classical potential theory and its probabilistic counterpart},
  volume 262 of \emph{Grundlehren der mathematischen {W}issenschaften}.
\newblock Springer, 1984.

\bibitem[Doob(1994)]{doob1994measure}
J.L. Doob.
\newblock \emph{Measure theory}, volume 143 of \emph{Graduate texts in
  mathematics}.
\newblock Springer New York, NY, 1994.

\bibitem[El~Karoui and Tan(2013{\natexlab{a}})]{karoui2013capacities}
N.~El~Karoui and X.~Tan.
\newblock Capacities, measurable selection and dynamic programming part {I}:
  abstract framework.
\newblock Technical report, \'Ecole Polytechnique and universit{\'e}
  Paris-Dauphine, 2013{\natexlab{a}}.

\bibitem[El~Karoui and Tan(2013{\natexlab{b}})]{karoui2013capacities2}
N.~El~Karoui and X.~Tan.
\newblock Capacities, measurable selection and dynamic programming part {II}:
  application in stochastic control problems.
\newblock Technical report, \'Ecole Polytechnique and universit{\'e}
  Paris-Dauphine, 2013{\natexlab{b}}.

\bibitem[El~Karoui et~al.(1997{\natexlab{a}})El~Karoui, Kapoudjian, Pardoux,
  Peng, and Quenez]{el1997reflected}
N.~El~Karoui, C.~Kapoudjian, \'E. Pardoux, S.~Peng, and M.-C. Quenez.
\newblock Reflected solutions of backward {SDE}'s, and related obstacle
  problems for {PDE}'s.
\newblock \emph{The Annals of Probability}, 25\penalty0 (2):\penalty0 702--737,
  1997{\natexlab{a}}.

\bibitem[El~Karoui et~al.(1997{\natexlab{b}})El~Karoui, Pardoux, and
  Quenez]{el1997reflected2}
N.~El~Karoui, \'E. Pardoux, and M.-C. Quenez.
\newblock Reflected backward {SDE}s and {A}merican options.
\newblock In L.C.G. Rogers and D.~Talay, editors, \emph{Numerical methods in
  finance}, pages 215--231. Cambridge University Press, 1997{\natexlab{b}}.

\bibitem[El~Karoui et~al.(1997{\natexlab{c}})El~Karoui, Peng, and
  Quenez]{el1997backward}
N.~El~Karoui, S.~Peng, and M.-C. Quenez.
\newblock Backward stochastic differential equations in finance.
\newblock \emph{Mathematical Finance}, 7\penalty0 (1):\penalty0 1--71,
  1997{\natexlab{c}}.

\bibitem[\'Elie et~al.(2021)\'Elie, Hubert, Mastrolia, and
  Possama{\"\i}]{elie2021mean}
R.~\'Elie, E.~Hubert, T.~Mastrolia, and D.~Possama{\"\i}.
\newblock Mean-field moral hazard for optimal energy demand response
  management.
\newblock \emph{Mathematical Finance}, 31\penalty0 (1):\penalty0 399--473,
  2021.

\bibitem[Elstrodt(2018)]{elstrodt2018mass}
J.~Elstrodt.
\newblock \emph{Ma{\ss}- und {I}ntegrationstheorie}.
\newblock Springer Spektrum Berlin, Heidelberg, 8th edition, 2018.

\bibitem[Fabre(2012)]{fabre2012some}
\'E. Fabre.
\newblock \emph{Some contributions to stochastic control and backward
  stochastic differential equations in finance.}
\newblock PhD thesis, \'Ecole Polytechnique, 2012.

\bibitem[Gennaro and Mastrolia(2025)]{gennaro20252bsde}
A.~Gennaro and T.~Mastrolia.
\newblock {2BSDE} with uncertain horizon and application to stochastic control
  in erratic environments.
\newblock \emph{ArXiv preprint arXiv:2506.15037}, 2025.

\bibitem[Grigorova et~al.(2020)Grigorova, Imkeller, Ouknine, and
  Quenez]{grigorova2020optimal}
M.~Grigorova, P.~Imkeller, Y.~Ouknine, and M.-C. Quenez.
\newblock Optimal stopping with $f$-expectations: the irregular case.
\newblock \emph{Stochastic Processes and their Applications}, 130\penalty0
  (3):\penalty0 1258--1288, 2020.

\bibitem[Haussmann(1976)]{haussmann1976general}
U.G. Haussmann.
\newblock General necessary conditions for optimal control of stochastic
  systems.
\newblock In R.J.-B. Wets, editor, \emph{Stochastic systems: modeling,
  identification and optimization, II}, volume~6 of \emph{Mathematical
  programming studies}, pages 30--48. Springer, Berlin, Heidelberg, 1976.

\bibitem[He et~al.(1992)He, Wang, and Yan]{he1992semimartingale}
S.~He, J.~Wang, and J.A. Yan.
\newblock \emph{Semimartingale theory and stochastic calculus}.
\newblock Science Press, 1992.

\bibitem[He and Li(2024)]{he2024mean}
W.~He and H.~Li.
\newblock Mean reflected backward stochastic differential equations driven by
  {$G$}--{B}rownian motion with double constraints.
\newblock \emph{ArXiv preprint arXiv:2405.09103}, 2024.

\bibitem[Hern{\'a}ndez and Possama{\"\i}(2023)]{hernandez2023me}
C.~Hern{\'a}ndez and D.~Possama{\"\i}.
\newblock Me, myself and {I}: a general theory of non-{M}arkovian
  time-inconsistent stochastic control for sophisticated agents.
\newblock \emph{The Annals of Applied Probability}, 33\penalty0 (2):\penalty0
  1396--1458, 2023.

\bibitem[Hern{\'a}ndez et~al.(2024)Hern{\'a}ndez,
  Hern{\'a}ndez~Santib{\'a}{\~n}ez, Hubert, and
  Possama{\"\i}]{hernandez2024closed}
C.~Hern{\'a}ndez, N.~Hern{\'a}ndez~Santib{\'a}{\~n}ez, E.~Hubert, and
  D.~Possama{\"\i}.
\newblock Closed-loop equilibria for {S}tackelberg games: it's all about
  stochastic targets.
\newblock \emph{ArXiv preprint arXiv:2406.19607}, 2024.

\bibitem[Hern{\'a}ndez~Santib{\'a}{\~n}ez and
  Mastrolia(2019)]{hernandez2019moral}
N.~Hern{\'a}ndez~Santib{\'a}{\~n}ez and T.~Mastrolia.
\newblock Contract theory in a {VUCA} world.
\newblock \emph{SIAM Journal on Control and Optimization}, 57\penalty0
  (4):\penalty0 3072--3100, 2019.

\bibitem[Hu and Ji(2017)]{hu2017dynamic}
M.~Hu and S.~Ji.
\newblock Dynamic programming principle for stochastic recursive optimal
  control problem driven by a {$G$}--{B}rownian motion.
\newblock \emph{Stochastic Processes and their Applications}, 127\penalty0
  (1):\penalty0 107--134, 2017.

\bibitem[Hu and Peng(2021)]{hu2021g}
M.~Hu and S.~Peng.
\newblock {$G$}--{L}{\'e}vy processes under sublinear expectations.
\newblock \emph{Probability, Uncertainty and Quantitative Risk}, 6\penalty0
  (1):\penalty0 1--22, 2021.

\bibitem[Hu and Wang(2018{\natexlab{a}})]{hu2018ergodic}
M.~Hu and F.~Wang.
\newblock Ergodic {BSDE}s driven by {$G$}--{B}rownian motion and applications.
\newblock \emph{Stochastics and Dynamics}, 18\penalty0 (06):\penalty0 1850050,
  2018{\natexlab{a}}.

\bibitem[Hu and Wang(2018{\natexlab{b}})]{hu2018stochastic}
M.~Hu and F.~Wang.
\newblock Stochastic optimal control problem with infinite horizon driven by
  {$G$}--{B}rownian motion.
\newblock \emph{ESAIM: Control, Optimisation and Calculus of Variations},
  24\penalty0 (2):\penalty0 873--899, 2018{\natexlab{b}}.

\bibitem[Hu et~al.(2014{\natexlab{a}})Hu, Ji, Peng, and Song]{hu2014backward}
M.~Hu, S.~Ji, S.~Peng, and Y.~Song.
\newblock Backward stochastic differential equations driven by
  ${G}$--{B}rownian motion.
\newblock \emph{Stochastic Processes and their Applications}, 124\penalty0
  (1):\penalty0 759--784, 2014{\natexlab{a}}.

\bibitem[Hu et~al.(2014{\natexlab{b}})Hu, Ji, Peng, and Song]{hu2014comparison}
M.~Hu, S.~Ji, S.~Peng, and Y.~Song.
\newblock Comparison theorem, {F}eynman--{K}ac formula and {G}irsanov
  transformation for {BSDE}s driven by ${G}$-{B}rownian motion.
\newblock \emph{Stochastic Processes and their Applications}, 124\penalty0
  (2):\penalty0 1170--1195, 2014{\natexlab{b}}.

\bibitem[Hu et~al.(2020)Hu, Qu, and Wang]{hu2020bsde}
M.~Hu, B.~Qu, and F.~Wang.
\newblock {BSDE}s driven by ${G}$--{B}rownian motion with time-varying
  {L}ipschitz condition.
\newblock \emph{Journal of Mathematical Analysis and Applications},
  491\penalty0 (2):\penalty0 124342, 2020.

\bibitem[Hu et~al.(2024)Hu, Ji, and Li]{hu2024bsde}
M.~Hu, S.~Ji, and X.~Li.
\newblock {BSDE}s driven by {$G$}--{B}rownian motion under degenerate case and
  its application to the regularity of fully nonlinear {PDE}s.
\newblock \emph{Transactions of the American Mathematical Society},
  377:\penalty0 3287--3323, 2024.

\bibitem[Hu et~al.(2018)Hu, Lin, and Soumana~Hima]{hu2018quadratic}
Y.~Hu, Y.~Lin, and A.~Soumana~Hima.
\newblock Quadratic backward stochastic differential equations driven by
  ${G}$--{B}rownian motion: discrete solutions and approximation.
\newblock \emph{Stochastic Processes and their Applications}, 128\penalty0
  (11):\penalty0 3724--3750, 2018.

\bibitem[Hu et~al.(2022)Hu, Tang, and Wang]{hu2022quadratic}
Y.~Hu, S.~Tang, and F.~Wang.
\newblock Quadratic {$G$}-{BSDE}s with convex generators and unbounded terminal
  conditions.
\newblock \emph{Stochastic Processes and their Applications}, 153:\penalty0
  363--390, 2022.

\bibitem[Hu and Wang(2015)]{hu2015jensen}
Z.-C. Hu and Z.-L. Wang.
\newblock Jensen's inequality for backward {SDE}s driven by ${G}$--{B}rownian
  motion.
\newblock \emph{Archiv der Mathematik}, 104\penalty0 (5):\penalty0 491--500,
  2015.

\bibitem[Hubert(2023)]{hubert2023continuous}
E.~Hubert.
\newblock Continuous-time incentives in hierarchies.
\newblock \emph{Finance and Stochastics}, 27:\penalty0 605--661, 2023.

\bibitem[Jacod(1979)]{jacod1979calcul}
J.~Jacod.
\newblock \emph{Calcul stochastique et probl{\`e}mes de martingales}, volume
  714 of \emph{Lecture notes in mathematics}.
\newblock Springer, 1979.

\bibitem[Jacod and Shiryaev(2003)]{jacod2003limit}
J.~Jacod and A.N. Shiryaev.
\newblock \emph{Limit theorems for stochastic processes}, volume 288 of
  \emph{Grundlehren der mathematischen Wissenschaften}.
\newblock Springer-Verlag Berlin Heidelberg, 2003.

\bibitem[Kabanov(1978)]{kabanov1978on}
Yu.M. Kabanov.
\newblock On the {P}ontryiagin maximum principle for the linear stochastic
  differential equations.
\newblock In \emph{Probabilistic models and control of economic processes}.
  TsEMI akademii nauk SSSR, Moscow, 1978.

\bibitem[Kallenberg(2021)]{kallenberg2021foundations}
O.~Kallenberg.
\newblock \emph{Foundations of modern probability}, volume~99 of
  \emph{Probability theory and stochastic modelling}.
\newblock Springer, Cham, 3rd edition, 2021.

\bibitem[Karandikar(1995)]{karandikar1995pathwise}
R.L. Karandikar.
\newblock On pathwise stochastic integration.
\newblock \emph{Stochastic Processes and their Applications}, 57\penalty0
  (1):\penalty0 11--18, 1995.

\bibitem[Kazi-Tani et~al.(2015{\natexlab{a}})Kazi-Tani, Possama{\"\i}, and
  Zhou]{kazi2015second}
N.~Kazi-Tani, D.~Possama{\"\i}, and C.~Zhou.
\newblock Second-order {BSDE}s with jumps: formulation and uniqueness.
\newblock \emph{The Annals of Applied Probability}, 25\penalty0 (5):\penalty0
  2867--2908, 2015{\natexlab{a}}.

\bibitem[Kazi-Tani et~al.(2015{\natexlab{b}})Kazi-Tani, Possama{\"\i}, and
  Zhou]{kazi2015second2}
N.~Kazi-Tani, D.~Possama{\"\i}, and C.~Zhou.
\newblock Second order {BSDE}s with jumps: existence and probabilistic
  representation for fully-nonlinear {PIDE}s.
\newblock \emph{Electronic Journal of Probability}, 20\penalty0 (65):\penalty0
  1--31, 2015{\natexlab{b}}.

\bibitem[Kushner(1972)]{kushner1972necessary}
H.J. Kushner.
\newblock Necessary conditions for continuous parameter stochastic optimization
  problems.
\newblock \emph{SIAM Journal on Control}, 10\penalty0 (3):\penalty0 550--565,
  1972.

\bibitem[Le~Gall(2016)]{legall2016brownian}
J.-F. Le~Gall.
\newblock \emph{Brownian motion, martingales, and stochastic calculus}, volume
  274 of \emph{Graduate texts in mathematics}.
\newblock Springer Cham, 2016.

\bibitem[L\'epingle and M\'emin(1978)]{lepingle1978sur}
D.~L\'epingle and J.~M\'emin.
\newblock Int\'egrabilit{\'e} uniforme et dans $l^r$ des martingales
  exponentielles.
\newblock \emph{Publications des s\'eminaires de math\'ematiques et
  informatique de Rennes}, 1:\penalty0 1--14, 1978.

\bibitem[Li(2020)]{li2020doubly}
H.~Li.
\newblock Doubly reflected backward {SDE}s driven by ${G}$--{B}rownian
  motion---a monotone approach.
\newblock \emph{ArXiv preprint arXiv:2008.09973}, 2020.

\bibitem[Li(2022)]{li2022reflected}
H.~Li.
\newblock Reflected {BSDE}s driven by {$G$}--{B}rownian motion with
  non-{L}ipschitz coefficients.
\newblock \emph{ArXiv preprint arXiv:2212.12108}, 2022.

\bibitem[Li and Liu(2024)]{li2024multi}
H.~Li and G.~Liu.
\newblock Multi-dimensional reflected backward stochastic differential
  equations driven by {$G$}--{B}rownian motion with diagonal generators.
\newblock \emph{Journal of Theoretical Probability}, 37\penalty0 (3):\penalty0
  2615--2645, 2024.

\bibitem[Li et~al.(2018)Li, Peng, and Soumana~Hima]{li2018reflected}
H.~Li, S.~Peng, and A.~Soumana~Hima.
\newblock Reflected solutions of backward stochastic differential equations
  driven by ${G}$--{B}rownian motion.
\newblock \emph{Science China Mathematics}, 61\penalty0 (1):\penalty0 1--26,
  2018.

\bibitem[Li and Peng(2020)]{li2020reflected}
L.~Li and S.~Peng.
\newblock Reflected backward stochastic differential equation driven by
  {$G$}--{B}rownian motion with an upper obstacle.
\newblock \emph{Stochastic Processes and their Applications}, 130\penalty0
  (11):\penalty0 6556--6579, 2020.

\bibitem[Lin(2003)]{lin2003nonlinear}
Q.Q. Lin.
\newblock Nonlinear {D}oob--{M}eyer decomposition with jumps.
\newblock \emph{Acta Mathematica Sinica, English Series}, 19\penalty0
  (1):\penalty0 69--78, 2003.

\bibitem[Lin(2016)]{lin2016new}
Y.~Lin.
\newblock A new existence result for second-order {BSDE}s with quadratic growth
  and their applications.
\newblock \emph{Stochastics: An International Journal of Probability and
  Stochastic Processes}, 88\penalty0 (1):\penalty0 128--146, 2016.

\bibitem[Lin et~al.(2020)Lin, Ren, Touzi, and Yang]{lin2020second}
Y.~Lin, Z.~Ren, N.~Touzi, and J.~Yang.
\newblock Second order backward {SDE} with random terminal time.
\newblock \emph{Electronic Journal of Probability}, 25\penalty0 (99):\penalty0
  1--43, 2020.

\bibitem[Lin et~al.(2022)Lin, Ren, Touzi, and Yang]{lin2022random}
Y.~Lin, Z.~Ren, N.~Touzi, and J.~Yang.
\newblock Random horizon principal--agent problem.
\newblock \emph{SIAM Journal on Control and Optimization}, 60\penalty0
  (1):\penalty0 355--384, 2022.

\bibitem[Liu(2019)]{liu2019multi}
G.~Liu.
\newblock Multi-dimensional {BSDE}s driven by ${G}$--{B}rownian motion and
  related system of fully nonlinear {PDE}s.
\newblock \emph{Stochastics: An International Journal of Probability and
  Stochastic Processes}, 92\penalty0 (5):\penalty0 659--683, 2019.

\bibitem[Liu and Wang(2019)]{liu2019bsde}
G.~Liu and F.~Wang.
\newblock {BSDE}s with mean reflection driven by ${G}$--{B}rownian motion.
\newblock \emph{Journal of Mathematical Analysis and Applications},
  470\penalty0 (1):\penalty0 599--618, 2019.

\bibitem[Mastrolia and Possama{\"\i}(2018)]{mastrolia2015moral}
T.~Mastrolia and D.~Possama{\"\i}.
\newblock Moral hazard under ambiguity.
\newblock \emph{Journal of Optimization Theory and Applications}, 179\penalty0
  (2):\penalty0 452--500, 2018.

\bibitem[Matoussi et~al.(2013)Matoussi, Possama{\"\i}, and
  Zhou]{matoussi2013second}
A.~Matoussi, D.~Possama{\"\i}, and C.~Zhou.
\newblock Second order reflected backward stochastic differential equations.
\newblock \emph{The Annals of Applied Probability}, 23\penalty0 (6):\penalty0
  2420--2457, 2013.

\bibitem[Matoussi et~al.(2014)Matoussi, Piozin, and
  Possama{\"\i}]{matoussi2014second}
A.~Matoussi, L.~Piozin, and D.~Possama{\"\i}.
\newblock Second-order {BSDE}s with general reflection and game options under
  uncertainty.
\newblock \emph{Stochastic Processes and their Applications}, 124\penalty0
  (7):\penalty0 2281--2321, 2014.

\bibitem[Matoussi et~al.(2015)Matoussi, Possama{\"\i}, and
  Zhou]{matoussi2015robust}
A.~Matoussi, D.~Possama{\"\i}, and C.~Zhou.
\newblock Robust utility maximization in nondominated models with 2{BSDE}: the
  uncertain volatility model.
\newblock \emph{Mathematical Finance}, 25\penalty0 (2):\penalty0 258--287,
  2015.

\bibitem[Matoussi et~al.(2019)Matoussi, Possama{\"\i}, and
  Sabbagh]{matoussi2014probabilistic}
A.~Matoussi, D.~Possama{\"\i}, and W.~Sabbagh.
\newblock Probabilistic interpretation for solutions of fully nonlinear
  stochastic {PDE}s.
\newblock \emph{Probability Theory and Related Fields}, 174\penalty0
  (1--2):\penalty0 177--233, 2019.

\bibitem[Matoussi et~al.(2021)Matoussi, Possama{\"\i}, and
  Zhou]{matoussi2021corrigendum}
A.~Matoussi, D.~Possama{\"\i}, and C.~Zhou.
\newblock Corrigendum for {``}{S}econd-order reflected backward stochastic
  differential equations{$"$} and {``}{S}econd-order {BSDE}s with general
  reflection and game options under uncertainty{$"$}.
\newblock \emph{The Annals of Applied Probability}, 31\penalty0 (3):\penalty0
  1505--1522, 2021.

\bibitem[Neufeld and Nutz(2013)]{neufeld2013superreplication}
A.~Neufeld and M.~Nutz.
\newblock Superreplication under volatility uncertainty for measurable claims.
\newblock \emph{Electronic Journal of Probability}, 18\penalty0 (48):\penalty0
  1--14, 2013.

\bibitem[Neufeld and Nutz(2014)]{neufeld2014measurability}
A.~Neufeld and M.~Nutz.
\newblock Measurability of semimartingale characteristics with respect to the
  probability law.
\newblock \emph{Stochastic Processes and their Applications}, 124\penalty0
  (11):\penalty0 3819--3845, 2014.

\bibitem[Neufeld and Nutz(2017)]{neufeld2016nonlinear}
A.~Neufeld and M.~Nutz.
\newblock Nonlinear {L}{\'e}vy processes and their characteristics.
\newblock \emph{Transactions of the American Mathematical Society},
  369\penalty0 (1):\penalty0 69--95, 2017.

\bibitem[Neveu(1975)]{neveu1975discrete}
J.~Neveu.
\newblock \emph{Discrete-parameter martingales}, volume~10.
\newblock North Holland, 1975.

\bibitem[Noubiagain(2017)]{noubiagain2017equations}
F.~Noubiagain.
\newblock \emph{Contribution aux {\'e}quations diff{\'e}rentielles
  stochastiques r{\'e}trogrades r{\'e}fl{\'e}chies du second ordre}.
\newblock PhD thesis, Universit{\'e} du Maine and Universit{\'e} Bretagne
  Loire, 2017.

\bibitem[Nutz(2012)]{nutz2012quasi}
M.~Nutz.
\newblock A quasi-sure approach to the control of non-{M}arkovian stochastic
  differential equations.
\newblock \emph{Electronic Journal of Probability}, 17\penalty0 (23):\penalty0
  1--23, 2012.

\bibitem[Nutz(2013)]{nutz2013random}
M.~Nutz.
\newblock Random ${G}$-expectations.
\newblock \emph{The Annals of Applied Probability}, 23\penalty0 (5):\penalty0
  1755--1777, 2013.

\bibitem[Nutz(2015)]{nutz2015robust}
M.~Nutz.
\newblock Robust superhedging with jumps and diffusion.
\newblock \emph{Stochastic Processes and their Applications}, 125\penalty0
  (12):\penalty0 4543--4555, 2015.

\bibitem[Nutz and Soner(2012)]{nutz2012superhedging}
M.~Nutz and H.M. Soner.
\newblock Superhedging and dynamic risk measures under volatility uncertainty.
\newblock \emph{SIAM Journal on Control and Optimization}, 50\penalty0
  (4):\penalty0 2065--2089, 2012.

\bibitem[Nutz and van Handel(2013)]{nutz2013constructing}
M.~Nutz and R.~van Handel.
\newblock Constructing sublinear expectations on path space.
\newblock \emph{Stochastic Processes and their Applications}, 123\penalty0
  (8):\penalty0 3100--3121, 2013.

\bibitem[O et~al.(2020)O, Kim, and Pak]{o2020representation}
H.~O, M.-C. Kim, and C.-G. Pak.
\newblock Representation of solutions to {2BSDE}s in an extended monotonicity
  setting.
\newblock \emph{Bulletin des Sciences Math{\'e}matiques}, 164\penalty0
  (102907):\penalty0 1--25, 2020.

\bibitem[O et~al.(2024)O, Kim, and Kim]{o2024wellposedness}
H.~O, M.-C. Kim, and K.-G. Kim.
\newblock Wellposedness of second order reflected {BSDE}s: a new formulation.
\newblock \emph{ESAIM: Probability and Statistics}, 28:\penalty0 1--21, 2024.

\bibitem[Paczka(2014{\natexlab{a}})]{paczka2014gmartingale}
K.~Paczka.
\newblock ${G}$-martingale representation in the ${G}$-{L}{\'e}vy setting.
\newblock Technical report, University of Oslo, 2014{\natexlab{a}}.

\bibitem[Paczka(2014{\natexlab{b}})]{paczka2014ito}
K.~Paczka.
\newblock It{\^o} calculus and jump diffusions for ${G}$--{L}{\'e}vy processes.
\newblock Technical report, University of Oslo, 2014{\natexlab{b}}.

\bibitem[Pak et~al.(2025)Pak, Hwang, and Kim]{pak2025nonequidistant}
C.-G. Pak, H.-J. Hwang, and M.-C. Kim.
\newblock A nonequidistant multistep scheme for second order backward
  stochastic differential equations with applications to stochastic optimal
  control.
\newblock \emph{International Journal of Applied and Computational
  Mathematics}, 11\penalty0 (58):\penalty0 1--19, 2025.

\bibitem[Papapantoleon et~al.(2018)Papapantoleon, Possama{\"\i}, and
  Saplaouras]{papapantoleon2016existence}
A.~Papapantoleon, D.~Possama{\"\i}, and A.~Saplaouras.
\newblock Existence and uniqueness for {BSDE}s with jumps: the whole nine
  yards.
\newblock \emph{Electronic Journal of Probability}, 23\penalty0 (121):\penalty0
  1--68, 2018.

\bibitem[Papapantoleon et~al.(2023)Papapantoleon, Possama{\"\i}, and
  Saplaouras]{papapantoleon2021stability}
A.~Papapantoleon, D.~Possama{\"\i}, and A.~Saplaouras.
\newblock Stability of backward stochastic differential equations: the general
  {L}ipschitz case.
\newblock \emph{Electronic Journal of Probability}, 28\penalty0 (51):\penalty0
  1--56, 2023.

\bibitem[Pardoux and Peng(1990)]{pardoux1990adapted}
\'E. Pardoux and S.~Peng.
\newblock Adapted solution of a backward stochastic differential equation.
\newblock \emph{System and Control Letters}, 14\penalty0 (1):\penalty0 55--61,
  1990.

\bibitem[Pardoux and Peng(1992)]{pardoux1992backward}
\'E. Pardoux and S.~Peng.
\newblock Backward stochastic differential equations and quasilinear parabolic
  partial differential equations.
\newblock In B.L. Rozovskii and R.B. Sowers, editors, \emph{Stochastic partial
  differential equations and their applications. Proceedings of IFIP WG 7/1
  international conference University of North Carolina at Charlotte, NC June
  6--8, 1991}, volume 176 of \emph{Lecture notes in control and information
  sciences}, pages 200--217. Springer, 1992.

\bibitem[Pardoux and Tang(1999)]{pardoux1999forward}
\'E. Pardoux and S.~Tang.
\newblock Forward--backward stochastic differential equations and quasilinear
  parabolic {PDE}s.
\newblock \emph{Probability Theory and Related Fields}, 114\penalty0
  (2):\penalty0 123--150, 1999.

\bibitem[Pardoux et~al.(1997)Pardoux, Pradeilles, and
  Rao]{pardoux1997probabilistic}
{\'E}.~Pardoux, F.~Pradeilles, and Z.~Rao.
\newblock Probabilistic interpretation of a system of semi-linear parabolic
  partial differential equations.
\newblock \emph{Annales de l'institut Henri Poincar{\'e}, Probabilit{\'e}s et
  Statistiques $(${\rm B}$)$}, 33\penalty0 (4):\penalty0 467--490, 1997.

\bibitem[Peng(1992)]{peng1992generalized}
S.~Peng.
\newblock A generalized dynamic programming principle and
  {H}amilton--{J}acobi--{B}ellman equation.
\newblock \emph{Stochastics: An International Journal of Probability and
  Stochastic Processes}, 38\penalty0 (2):\penalty0 119--134, 1992.

\bibitem[Peng(1999)]{peng1999monotonic}
S.~Peng.
\newblock Monotonic limit theorem of {BSDE} and nonlinear decomposition theorem
  of {D}oob--{M}eyer's type.
\newblock \emph{Probability Theory and Related Fields}, 113\penalty0
  (4):\penalty0 473--499, 1999.

\bibitem[Peng(2007)]{peng2007g}
S.~Peng.
\newblock ${G}$-expectation, ${G}$--{B}rownian motion and related stochastic
  calculus of {I}t{\^o} type.
\newblock In F.E. Benth, G.~di~Nunno, T.~Lindstr{\o}m, B.~{\O}ksendal, and
  T.~Zhang, editors, \emph{Stochastic analysis and applications. The Abel
  symposium 2005}, volume~2 of \emph{Abel Symposia}, pages 541--567. Springer,
  2007.

\bibitem[Peng(2008)]{peng2008multi}
S.~Peng.
\newblock Multi-dimensional ${G}$--{B}rownian motion and related stochastic
  calculus under ${G}$-expectation.
\newblock \emph{Stochastic Processes and their Applications}, 118\penalty0
  (12):\penalty0 2223--2253, 2008.

\bibitem[Peng(2019)]{peng2019nonlinear}
S.~Peng.
\newblock \emph{Nonlinear expectations and stochastic calculus under
  uncertainty: with robust {CLT} and ${G}$--{B}rownian motion}, volume~95 of
  \emph{Probability theory and stochastic modelling}.
\newblock Springer Berlin, Heidelberg, 2019.

\bibitem[Peng and Song(2015)]{peng2015g}
S.~Peng and Y.~Song.
\newblock ${G}$-expectation weighted {S}obolev spaces, backward {SDE} and path
  dependent {PDE}.
\newblock \emph{Journal of the Mathematical Society of Japan}, 67\penalty0
  (4):\penalty0 1725--1757, 2015.

\bibitem[Peng and Xu(2007)]{peng2007expectations}
S.~Peng and M.~Xu.
\newblock ${G}_{\Gamma}$-expectations and the related nonlinear {D}oob--{M}eyer
  decomposition theorem.
\newblock In S.~Tang and J.~Yong, editors, \emph{Control theory and related
  topics---in memory of Professor Xunjing Li, Fudan University, China, 3--5
  June 2005}, pages 122--140, 2007.

\bibitem[Peng et~al.(2014)Peng, Song, and Zhang]{peng2014complete}
S.~Peng, Y.~Song, and J.~Zhang.
\newblock A complete representation theorem for ${G}$-martingales.
\newblock \emph{Stochastics: An International Journal of Probability and
  Stochastic Processes}, 86\penalty0 (4):\penalty0 609--631, 2014.

\bibitem[Popier and Zhou(2019)]{popier2019second}
A.~Popier and C.~Zhou.
\newblock Second order {BSDE} under monotonicity condition and liquidation
  problem under uncertainty.
\newblock \emph{The Annals of Applied Probability}, 29\penalty0 (3):\penalty0
  1685--1739, 2019.

\bibitem[Possama{\"\i}(2013)]{possamai2013second1}
D.~Possama{\"\i}.
\newblock Second order backward stochastic differential equations under a
  monotonicity condition.
\newblock \emph{Stochastic Processes and their Applications}, 123\penalty0
  (5):\penalty0 1521--1545, 2013.

\bibitem[Possama{\"\i} and Rodrigues(2024)]{possamai2024reflections}
D.~Possama{\"\i} and M.~Rodrigues.
\newblock Reflections on {BSDE}s.
\newblock \emph{Electronic Journal of Probability}, 29\penalty0 (66):\penalty0
  1--82, 2024.

\bibitem[Possama{\"\i} and Tan(2015)]{possamai2015weak}
D.~Possama{\"\i} and X.~Tan.
\newblock Weak approximation of second-order {BSDE}s.
\newblock \emph{The Annals of Applied Probability}, 25\penalty0 (5):\penalty0
  2535--2562, 2015.

\bibitem[Possama{\"\i} and Zhou(2013)]{possamai2013second}
D.~Possama{\"\i} and C.~Zhou.
\newblock Second order backward stochastic differential equations with
  quadratic growth.
\newblock \emph{Stochastic Processes and their Applications}, 123\penalty0
  (10):\penalty0 3770--3799, 2013.

\bibitem[Possama{\"\i} et~al.(2013)Possama{\"\i}, Royer, and
  Touzi]{possamai2013robust}
D.~Possama{\"\i}, G.~Royer, and N.~Touzi.
\newblock On the robust superhedging of measurable claims.
\newblock \emph{Electronic Communications in Probability}, 18\penalty0
  (95):\penalty0 1--13, 2013.

\bibitem[Possama{\"\i} et~al.(2018)Possama{\"\i}, Tan, and
  Zhou]{possamai2018stochastic}
D.~Possama{\"\i}, X.~Tan, and C.~Zhou.
\newblock Stochastic control for a class of nonlinear kernels and applications.
\newblock \emph{The Annals of Probability}, 46\penalty0 (1):\penalty0 551--603,
  2018.

\bibitem[Possama{\"\i} et~al.(2020)Possama{\"\i}, Touzi, and
  Zhang]{possamai2020zero}
D.~Possama{\"\i}, N.~Touzi, and J.~Zhang.
\newblock Zero-sum path-dependent stochastic differential games in weak
  formulation.
\newblock \emph{The Annals of Applied Probability}, 30\penalty0 (3):\penalty0
  1415--1457, 2020.

\bibitem[Ren and Tan(2017)]{ren2015convergence}
Z.~Ren and X.~Tan.
\newblock On the convergence of monotone schemes for path-dependent {PDE}.
\newblock \emph{Stochastic Processes and their Applications}, 127\penalty0
  (6):\penalty0 1738--1762, 2017.

\bibitem[Ren et~al.(2022)Ren, Touzi, and Yang]{ren2022nonlinear}
Z.~Ren, N.~Touzi, and J.~Yang.
\newblock Nonlinear predictable representation and $\mathbb{L}^1$-solutions of
  backward {SDE}s and second-order backward {SDE}s.
\newblock \emph{Annales de l'institut Henri Poincar{\'e}, Probabilit{\'e}s et
  Statistiques $({\mathrm{B}})$}, 58\penalty0 (2):\penalty0 639--666, 2022.

\bibitem[Ren et~al.(2023)Ren, Tan, Touzi, and Yang]{ren2023entropic}
Z.~Ren, X.~Tan, N.~Touzi, and J.~Yang.
\newblock Entropic optimal planning for path-dependent mean field games.
\newblock \emph{SIAM Journal on Control and Optimization}, 61\penalty0
  (3):\penalty0 1415--1437, 2023.

\bibitem[Royer(2006)]{royer2006backward}
M.~Royer.
\newblock Backward stochastic differential equations with jumps and related
  non-linear expectations.
\newblock \emph{Stochastic Processes and their Applications}, 116\penalty0
  (10):\penalty0 1358--1376, 2006.

\bibitem[Sheng(2022)]{sheng2022quadratic}
B.~Sheng.
\newblock \emph{Quadratic second-order backward stochastic differential
  equation and numeric analysis for {S}annikov's optimal contracting problem}.
\newblock PhD thesis, {\'E}cole Polytechnique, 2022.

\bibitem[Shi et~al.(2014)Shi, Jiang, and Ji]{shi2014nonlinear}
X.~Shi, L.~Jiang, and R.~Ji.
\newblock Nonlinear decomposition of {D}oob--{M}eyer's type for continuous
  $g$-supermartingale with uniformly continuous coefficient.
\newblock \emph{Journal of Applied Mathematics}, 2014\penalty0
  (743508):\penalty0 1--9, 2014.

\bibitem[Shiryaev(2016)]{shiryaev2016probability}
A.N. Shiryaev.
\newblock \emph{Probability--1}, volume~95 of \emph{Graduate texts in
  mathematics}.
\newblock Springer New York, 3rd edition, 2016.

\bibitem[Shiryaev and Cherny(2002)]{shiryaev2002vector}
A.N. Shiryaev and A.S. Cherny.
\newblock Vector stochastic integrals and the fundamental theorems of asset
  pricing.
\newblock \emph{Trudy Matematicheskogo Instituta imeni VA Steklova},
  237:\penalty0 12--56, 2002.

\bibitem[Soner et~al.(2011{\natexlab{a}})Soner, Touzi, and
  Zhang]{soner2011martingale}
H.M. Soner, N.~Touzi, and J.~Zhang.
\newblock Martingale representation theorem for the ${G}$-expectation.
\newblock \emph{Stochastic Processes and their Applications}, 121\penalty0
  (2):\penalty0 265--287, 2011{\natexlab{a}}.

\bibitem[Soner et~al.(2011{\natexlab{b}})Soner, Touzi, and
  Zhang]{soner2011quasi}
H.M. Soner, N.~Touzi, and J.~Zhang.
\newblock Quasi-sure stochastic analysis through aggregation.
\newblock \emph{Electronic Journal of Probability}, 16\penalty0 (2):\penalty0
  1844--1879, 2011{\natexlab{b}}.

\bibitem[Soner et~al.(2012)Soner, Touzi, and Zhang]{soner2012wellposedness}
H.M. Soner, N.~Touzi, and J.~Zhang.
\newblock Wellposedness of second order backward {SDE}s.
\newblock \emph{Probability Theory and Related Fields}, 153\penalty0
  (1--2):\penalty0 149--190, 2012.

\bibitem[Soner et~al.(2013)Soner, Touzi, and Zhang]{soner2013dual}
H.M. Soner, N.~Touzi, and J.~Zhang.
\newblock Dual formulation of second order target problems.
\newblock \emph{The Annals of Applied Probability}, 23\penalty0 (1):\penalty0
  308--347, 2013.

\bibitem[Song(2011)]{song2011some}
Y.~Song.
\newblock Some properties on ${G}$-evaluation and its applications to
  ${G}$-martingale decomposition.
\newblock \emph{Science China Mathematics}, 54\penalty0 (2):\penalty0 287--300,
  2011.

\bibitem[Stroock and Varadhan(1997)]{stroock1997multidimensional}
D.W. Stroock and S.R.S. Varadhan.
\newblock \emph{Multidimensional diffusion processes}, volume 233 of
  \emph{Grundlehren der mathematischen Wissenschaften}.
\newblock Springer-Verlag Berlin Heidelberg, 1997.

\bibitem[Sun(2020)]{sun2020mean2}
S.~Sun.
\newblock Mean‐field backward stochastic differential equations driven by
  {$G$}-‐{B}rownian motion and related partial differential equations.
\newblock \emph{Mathematical Methods in Applied Sciences}, 43\penalty0
  (12):\penalty0 7484--7505, 2020.

\bibitem[Sun(2022)]{sun2020mean}
S.~Sun.
\newblock Mean-field backward stochastic differential equations driven by
  {$G$}--{B}rownian motion with uniformly continuous coefficients.
\newblock \emph{Applicable Analysis}, 101\penalty0 (14):\penalty0 5053--5075,
  2022.

\bibitem[Sun(2025)]{sun2025mean}
S.~Sun.
\newblock Mean-field forward–backward stochastic differential equations
  driven by ${G}$--{B}rownian motion.
\newblock \emph{Statistics \& Probability Letters}, 223:\penalty0 110429, 2025.

\bibitem[Sun and Wang(2024)]{sun2024g}
Y.~Sun and F.~Wang.
\newblock {$G$}--forward performance process and representation of homothetic
  case via ergodic quadratic {$G$}--{BSDE}.
\newblock \emph{Probability, Uncertainty and Quantitative Risk}, 9\penalty0
  (1):\penalty0 85--106, 2024.

\bibitem[Sung(2022)]{sung2022optimal}
J.~Sung.
\newblock Optimal contracting under mean--volatility joint ambiguity
  uncertainties.
\newblock \emph{Economic Theory}, 74:\penalty0 593--642, 2022.

\bibitem[von Weizs{\"a}cker and Winkler(1990)]{weizsaecker1990stochastic}
H.~von Weizs{\"a}cker and G.~Winkler.
\newblock \emph{Stochastic integrals}.
\newblock Advanced lectures in mathematics. Vieweg+Teubner Verlag Wiesbaden,
  1990.

\bibitem[Wang and Zheng(2021)]{wang2021bsdes}
F.~Wang and G.~Zheng.
\newblock Backward stochastic differential equations driven by
  ${G}$--{B}rownian motion with uniformly continuous generators.
\newblock \emph{Journal of Theoretical Probability}, 34:\penalty0 660--681,
  2021.

\bibitem[Xiao et~al.(2024)Xiao, Qiu, and Nikan]{xiao2024numerical}
X.~Xiao, W.~Qiu, and O.~Nikan.
\newblock Numerical approximation based on deep convolutional neural network
  for high-dimensional fully nonlinear merged {PDE}s and {2BSDE}s.
\newblock \emph{Mathematical Methods in the Applied Sciences}, 47\penalty0
  (7):\penalty0 6184--6204, 2024.

\bibitem[Xu and Zhang(2009)]{xu2009martingale}
J.~Xu and B.~Zhang.
\newblock Martingale characterization of ${G}$--{B}rownian motion.
\newblock \emph{Stochastic Processes and their Applications}, 119\penalty0
  (1):\penalty0 232--248, 2009.

\bibitem[Yang et~al.(2017)Yang, Ren, and Hu]{yang2017multi}
F.~Yang, Y.~Ren, and L.~Hu.
\newblock Multi-valued backward stochastic differential equations driven by
  ${G}$--{B}rownian motion and its applications.
\newblock \emph{Mathematical Methods in the Applied Sciences}, 2017.

\bibitem[Zhang(2017)]{zhang2017backward}
J.~Zhang.
\newblock \emph{Backward stochastic differential equations---from linear to
  fully nonlinear theory}, volume~86 of \emph{Probability theory and stochastic
  modelling}.
\newblock Springer-Verlag New York, 2017.

\bibitem[Zhang and Jiang(2021)]{zhang2021solutions}
W.~Zhang and L.~Jiang.
\newblock Solutions of {BSDEs} with a kind of non-{L}ipschitz coefficients
  driven by ${G}$--{B}rownian motion.
\newblock \emph{Statistics \& Probability Letters}, 171:\penalty0 109024, 2021.

\end{thebibliography}

\end{document}